\let\ams@starttoc\@starttoc
\let\@starttoc\ams@starttoc
\patchcmd{\@starttoc}{\makeatletter}{\makeatletter\parskip\z@}{}{}
\numberwithin{equation}{section}
  \newtheorem{theorem}{Theorem}[section]
  \newtheorem{proposition}[theorem]{Proposition}
  \newtheorem{lemma}[theorem]{Lemma}
   \newtheorem{corollary}[theorem]{Corollary}
\theoremstyle{definition}
\newtheorem{definition}[theorem]{Definition}
\newtheorem{example}[theorem]{Example}
\theoremstyle{remark}
\newtheorem{remark}[theorem]{Remark}
\newcommand{\lnua}{{\rm{L}}^{2}(\eab)}
\newcommand{\lrp}{{\rm{L}}^{2}(\R_+)}
\newcommand{\cco}{{\mathtt{C}}}
\newcommand{\cc}{{{\cco}_c(\R_+)}}
\newcommand{\cci}{{{\cco}^\infty_c(\R_+)}}
\newcommand{\co}{{\cco}_0(\R_+)}
\newcommand{\cob}{{\cco}_0(\overline{\R}_+)}
\newcommand{\cb}{{\cco}_b(\R_+)}
\newcommand{\ok}{\frac{\overline\Pi(0^+)}{\r}}
\newcommand{\okhalf}{\frac{\overline\Pi(0^+)}{2\r}}
\newcommand{\okk}{\frac{\overline\Pi(0^+)}{\r}-\frac12}
\newcommand{\ew}{\overline{\eta}_{\alpha}}
\newcommand{\Vp}{\mathcal{V}_{\psi}}
\newcommand{\Vc}{\mathcal{V}}
\newcommand{\Vce}{\mathcal{E}}
\renewcommand{\liminf}{\mathop{\underline{\lim}}\limits}
\renewcommand{\limsup}{\mathop{\overline{\lim}}\limits}
\newcommand{\Mp}{\mathcal{M}_{V_\psi}}
\newcommand{\MP}{\mathcal{M}_{V_\psi}}
\newcommand{\Mg}{\mathcal{M}_{V_\phi}}
\newcommand{\M}{\mathcal{M}}
\newcommand{\MIp}{\mathcal{M}_{I_{\psi}}}
\newcommand{\Mip}{\mathcal{M}_{I_{\phi}}}
\newcommand{\Mipn}{\mathcal{M}_{I_{\phi}}}
\newcommand{\MorW}{W_{\phi}} 
\newcommand{\Ac}{\mathcal{A}}
\newcommand{\Bc}{\mathcal{C}}
\newcommand{\Fc}{\mathcal{F}}
\newcommand{\Lc}{\mathcal{L}}
\newcommand{\Rc}{\mathcal{R}}
\newcommand{\Tc}{\mathcal{T}}
\newcommand{\Vpa}[1]{\mathcal{V}_{#1}}
\newcommand{\Vpb}{\mathcal{V}_{\psi}}
\newcommand{\Bop}[2]{\mathbf{B}(#1,#2)}
\newcommand{\Bo}[1]{\mathbf{B}(#1)}
\newcommand{\mr}{\mathfrak{m}}
\newcommand{\mru}{\underline{\mathfrak{m}}}
\newcommand{\Ig}{\mathcal{I}}
\newcommand{\Ip}{\mathcal{I}_{\phi}}
\newcommand{\Iph}{\widehat{\mathcal{I}}_{\phi}}
\newcommand{\Ipn}{\mathcal{I}_{\phi}}
\newcommand{\Em}{\mathrm{E}}
\newcommand{\Tab}{T_{\bar{\alpha}}}
\renewcommand{\r}{\mathfrak{r}}
\newcommand{\nun}{\mathcal{V}_n}
\newcommand{\bo}[1]{{\rm{O}}\lb{#1}\rb}
\newcommand{\so}[1]{{\rm{o}}\lb{#1}\rb}
\newcommand{\Sp}[1]{{\rm{Span}}(#1)}
\newcommand{\Spc}[1]{\overline{{\rm{Span}}}(#1)}
\newcommand{\spnu}[2]{\langlerangle{#1}_{#2}}
\newcommand{\ebb}{\overline{{\bf{e}}}_{\gamma,\mathfrak{m}}}
\newcommand{\eab}{{\bf{e}}_{\alpha,\mathfrak{m}}}
\newcommand{\lt}{{\rm{L}}}
\newcommand{\lnubb}{{\lt}^{2}(\ebb)}
\newcommand{\lnuab}{{\lt}^{2}(\eab)}
\newcommand{\lnu}{\lt^{2}(\nu)}
\newcommand{\lga}{\lt^2(\varepsilon)}
\newcommand{\lgb}{\lt^2(\emd)}
\newcommand{\e}{{\varepsilon}}
\newcommand{\emd}{{\varepsilon_m}}
\newcommand{\LL}{L\'{e}vy }
\newcommand{\LLP}{L\'{e}vy process }
\newcommand{\LLPs}{L\'{e}vy processes }
\newcommand{\PP}{\overline{\Pi}}
\newcommand{\PPP}{\overline{\overline{\Pi}}}
\newcommand{\Lr}{\overline{\xi}}
\newcommand{\ttinf}[1]{_{#1\to \infty}}
\newcommand{\nti}{_{n\to \infty}}
\newcommand{\IInf}{\int_{0}^{\infty}}
\newcommand{\IInt}[2]{\int_{#1}^{#2}}
\newcommand{\minusone}{\lbrb{-1}}
\newcommand{\ind}[1]{\mathbb{I}_{\{#1\}}}
\newcommand{\lb}{\left (}
\newcommand{\rb}{\right )}
\newcommand{\lbb}{\left [}
\newcommand{\rbb}{\right ]}
\newcommand{\labs}{\left |}
\newcommand{\rabs}{\right |}
\newcommand{\langlerangle}[1]{\left<#1\right>}
\newcommand{\lbrb}[1]{\lb #1 \rb}
\newcommand{\lbbrbb}[1]{\lbb#1\rbb}
\newcommand{\labsrabs}[1]{\labs#1\rabs}
\newcommand{\lbbrb}[1]{\lbb#1\rb}
\newcommand{\lbrbb}[1]{\lb#1\rbb}
\newcommand{\lbcurly}{\left\{}
\newcommand{\rbcurly}{\right\}}
\newcommand{\lbcurlyrbcurly}[1]{\lbcurly#1\rbcurly}
\newcommand{\var}{{\bf{p}}_\alpha}
\newcommand{\varu}{{\bf{p}}_{\underline{\alpha}}}
\newcommand{\Lv}{\lt^2(\var)} 
\newcommand{\Lga}{\lt^2(\ga)}
\newcommand{\Lva}{\lt^2\left(\varu\right)}
\newcommand{\Lnu}{{\lt^{2}(\nu)}}
\newcommand{\Ltwo}{{\lt^{2}(\R_+)}}
\newcommand{\Lg}{{\lt^{2}({\e})}}
\newcommand{\Lcom}[2]{\lt^{#1}(#2)}
\newcommand{\Ae}{\mathcal{A}}
\newcommand{\Be}{\mathcal{B}}
\newcommand{\Jp}{\mathfrak{I}_{W_{\phi}}}
\newcommand{\Ep}{\mathcal{E}_{{\phi}}}
\newcommand{\Jpa}[1]{\mathfrak{I}_{#1}}
\newcommand{\Exc}{\mathfrak{e}}
\newcommand{\Exm}{\mathfrak{n}}
\newcommand{\phisd}{\widehat{\phi}}
\newcommand{\phid}{\overline{\phi}}
\newcommand{\deltasd}{\widehat{\delta}}
\newcommand{\kappasd}{\widehat{\kappa}}
\newcommand{\kappasdj}{\widehat{\kappa}_j}
\newcommand{\Pon}{\mathcal{P}_n}
\newcommand{\Pns}{(\Pon)_{n\geq0}}
\newcommand{\Lpn}{\mathcal{L}_n}
\newcommand{\nuh}{\widehat{\nu}}
\newcommand{\nue}{{\chi}}
\newcommand{\ga}{{\overline{{\nu}}_{\gamma}}}
\newcommand{\angp}{\Theta_{\phi}}
\renewcommand{\H}{\underline{\Theta}_{\phi}}
\newcommand{\Hb}{\angp(|b|)}
\newcommand{\Hbb}{\angp(b)}
\newcommand{\ratio}[1]{\frac{#1}{#1+1}}
\newcommand{\dpe}{{\mathtt{d}}_\epsilon}
\newcommand{\Ne}{\mathcal{N}}
\newcommand{\Bp}{\Be_{\Ne}}
\newcommand{\Nea}{\mathcal{N}_{\alpha}}
\newcommand{\Neab}{\mathcal{N}_{\alpha,\mathfrak{m}}}
\newcommand{\Nm}{\mathcal{N}(m)}
\newcommand{\Nee}{\mathcal{N}_{\underline{\Theta}}}
\newcommand{\Root}{\mathfrak{u}_0}
\newcommand{\varlo}{\varphi_{\rm{o}}}
\newcommand{\Bdt}{\overline{R}_{\delta,\varkappa}(t)}
\newcommand{\Bt}{\overline{R}_{\delta}(t)}
\newcommand{\Adt}{\underline{\mathfrak{F}}_{\delta,\varkappa}(t)}
\newcommand{\At}{\underline{\mathfrak{F}}_{\delta}(t)}
\newcommand{\An}{\mathcal{A}}
\newcommand{\simo}{\stackrel{0}{\sim}}
\newcommand{\simi}{\stackrel{\infty}{\sim}}
\newcommand{\Nt}{\Si}
\newcommand{\Nes}{\mathcal{N}_{H_l}}
\newcommand{\Nim}{\mathcal{N}_{\infty}(m)}
\newcommand{\Nf}{\mathcal{N}_{\infty}^c}
\newcommand{\Ni}{\mathcal{N}_{\infty}}
\newcommand{\Nii}{\mathcal{N}_{\infty,\infty}}
\newcommand{\Niim}{\mathcal{N}_{\infty,\infty}(m)}
\newcommand{\Ng}{\mathcal{N}_{G}}
\newcommand{\Ngl}{\mathcal{N}_{G_{log}}}
\newcommand{\Sc}{\mathbb{C}}
\newcommand{\Cb}{\mathbb{C}}
\newcommand{\E}{\mathbb{E}}
\newcommand{\limi}[1]{\lim_{#1\to \infty}}
\newcommand{\limsupi}[1]{\limsup_{#1\to \infty}}
\newcommand{\liminfi}[1]{\liminf_{#1\to \infty}}
\newcommand{\limo}[1]{\lim_{#1\to 0}}
    \def\beq{\begin{eqnarray}}
    \def\eeq{\end{eqnarray}}
    \def\beqq{\begin{eqnarray*}}
    \def\eeqq{\end{eqnarray*}}
    \def\C{{\mathbb C}}
    \def\P{{\mathbb P}}
    \def\N{{\mathbb N}}
    \def\d{{\textnormal d}}
    \def\R{{\mathbb R}}
\newcommand{\Pbb}[1]{\P\lb #1\rb}
\newcommand{\Ebb}[1]{\E\lbb #1\rbb}
\newcommand{\Span}[1]{{\overline{\rm{Span}}(#1)}}
\renewcommand{\ker}[1]{{\rm{Ker}}(#1)}
\newcommand{\Ran}[1]{{\overline{\rm{Ran}}(#1)}}
\newcommand{\ran}[1]{{\rm{Ran}(#1)}}
\newcommand{\spa}[1]{{\rm{Span}(#1)}}
\newcommand{\supp}{{\rm{Supp}}}
\newcommand{\Si}{{\rm{N}}_{\r}}
\newcommand{\Sip}{\left\lceil\frac{\PP(0^+)}{\r}\right\rceil}
\newcommand{\mladen}[1]{#1}
\author{P. Patie}\thanks{The first author acknowledges the support of the Actions de Recherche Concert\'ees  IAPAS, a fund of the Communaut\'ee francaise de Belgique.}
\address{School of Operations Research and Information Engineering, Cornell University, Ithaca, NY 14853.}
\email{	pp396@cornell.edu}
\author{M. Savov} \thanks{The second author acknowledges the support of the project MOCT, which has received funding from the European Union’s Horizon 2020 research and innovation programme under the Marie Sklodowska-Curie grant agreement No 657025.}
\address{Institute of Mathematics and informatics,  Bulgarian academy of sciences, Akad. Georgi Bonchev street
	Block 8, Sofia 1113.}
\email{mladensavov@math.bas.bg}
\title{Spectral expansions of non-self-adjoint generalized Laguerre semigroups}
\begin{document}

\begin{abstract}
We provide the spectral expansion in a weighted Hilbert space of a substantial class of invariant non-self-adjoint and non-local Markov operators which appear in limit theorems for positive-valued Markov processes. We show that this class is in bijection with a subset of negative definite functions and we name it the class of  generalized Laguerre semigroups. Our approach, which goes beyond the framework of perturbation theory, is based on an in-depth and original analysis of an intertwining relation that we establish between this class and a self-adjoint Markov semigroup, whose spectral expansion is expressed in terms of the classical Laguerre polynomials.
As a by-product, we derive smoothness properties for the solution to the associated Cauchy problem as well as for the heat kernel. Our methodology also reveals a variety of possible decays, including the hypocoercivity type phenomena, for the speed of convergence to equilibrium for this class and enables us to provide an interpretation of these in terms of the rate of growth of the weighted Hilbert space norms of the spectral projections. Depending on the analytic properties of the aforementioned negative definite functions, we are led to implement several strategies, which require new developments in a variety of contexts, to derive precise upper bounds for these norms.
\end{abstract}
\keywords{Spectral theory, non-self-adjoint integro-differential operators, Markov semigroups, intertwining,   convergence to equilibrium, asymptotic analysis,  infinitely divisible distribution,  Hilbert sequences, Laguerre polynomials, Bernstein functions, functional equations, special functions	
\\ \small\it 2010 Mathematical Subject Classification: 	35P05, 47D07, 41A60, 60E07, 42C15, 	 40E05, 30D05, 44A20}
\maketitle
\newpage

\tableofcontents

\newpage
\section{Introduction and main results}\label{sec:intro}
The importance of the spectral reduction  of linear operators cannot be overemphasized as it is
amply demonstrated by its diverse applications to  far reaching fields of mathematics such as
functional analysis, dynamical systems, topological groups, differential geometry,  probability theory,  harmonic and complex analysis and boundary value
problems. Beyond the theoretical interests, these developments have proved
to be fruitful in solving problems in the theory of infinite dimensional systems, in theoretical
and experimental physics, economics, statistics and many other fields of applied mathematics.

\noindent Spectral theory at its best can be seen when one
considers normal operators in Hilbert spaces. Although the class of non-self-adjoint (NSA) and non-local operators is central and generic  in the  study of linear operators,  its spectral analysis is fragmentarily understood due to the
fundamental technical difficulties arising when the properties of symmetry and locality are simultaneously
relaxed.  We refer the interested reader to the classical monographs \cite{Dunford1971} and \cite{Davies_B} and  survey papers  \cite{Dunford_S} and \cite{Davies_S}, \cite{Sjostrand-Survey} for  a thorough and yet up-to-date account on the spectral reduction of (NSA) linear operators.

\noindent The main purpose of this work is to design an original and comprehensive theory to
\begin{enumerate}[1)]
 \item  provide the spectral representation in a weighted Hilbert space of the solution to  the Cauchy problem associated to a class of NSA and non-local linear operators, which are central in the theory of Markov processes,  
 \item  derive an eigenvalues representation of the heat kernel, i.e.~the transition kernel of the underlying Markov process,
      \item  study the smoothness properties of the solution to the Cauchy problem and of the heat kernel,
     \item  obtain  the speed of convergence to equilibrium,
   \item  develop a detailed analysis of the nature of the spectrum of this class  of NSA and non-local operators.
\end{enumerate}
Our approach is based  on an in-depth study of an intertwining relation that we elaborate between the class of NSA  and non-local operators and a specific self-adjoint differential operator whose spectral resolution is well understood. We already point out that it is flexible enough to study the substantial problems listed above in a unified way. We also believe that it is  comprehensive and could be  used to deal with the spectral expansion of different type of NSA operators. For these reasons, a synthetic description of the methodology will be detailed in Chapter   \ref{sec:MainResults}.  These semigroups are defined below, where in the sequel $\cob$ (resp.~$\co$) stands for the space of continuous functions on $\overline{\R}_+=\lbbrb{0,\infty}$ (resp.~$\R_+=\lbrb{0,\infty}$) both vanishing at infinity and endowed with the uniform topology and $\cco^\infty_c\lbrb{A}$ stands for the class of infinitely differentiable functions with compact support on $A\subseteq \R$.
\begin{definition} \label{def:gL}
 We say that a semigroup $P=(P_t)_{t\geq0}$ is a {\bf{generalized Laguerre semigroup} (for short $gL$ semigroup)}  of order $H>0$ if
\begin{enumerate}[1.]
\item \label{it:c1} $P$ is a Feller semigroup on $ \cob$, i.e.~for all $t\geq0$ and $f \in \cob$ it holds that $P_t f \in \cob $, $P_t f \geq 0$ whenever $f\geq0$, and,  $\lim_{t\downarrow 0}P_tf(x) = f(x), \: \forall x\geq0$.
\item \label{it:c2} The family of operators $(K_t)_{t\geq0}$ defined, for any $t\geq 0$, via
\[ K_tf= P_{\ln(t+1)}{\rm{d}}_{(t+1)^{H}} f,\] with ${\rm{d}}_cf(x)=f(cx)$, defines a  Feller semigroup on $ \cob$, which possesses the $H$-self-similarity property, i.e.~the following relation holds, for all $t,x,c>0$,
\begin{equation}\label{eq:scaling}
K_tf(c x)=K_{c^{-H} t}{\rm{d}}_{c}f(x).
\end{equation}
\item \label{it:def-s} There exists a non-degenerate probability measure $\vartheta$  on $\R_+$ such that  we have, for any $t\geq0$,
\begin{equation}\label{eq:invp}
\vartheta P_{t}f = \vartheta f,
\end{equation}
where here and in the sequel $\vartheta f = \int_{0}^{\infty}f(x)\vartheta(dx)$.
\item  \label{def:lk} For any $x \in \R_+$, there exists a positive measure $\Pi(x,dy)$, such that for any  $f$ with support included in $\mathbb{R}_+ \setminus \{x\}$  and  $f \in \cco^{\infty}_c(\R_+\setminus\{x\})$,   $\lim_{t\to 0} \frac{1}{t}P_t f (x) = \int_{\R\setminus\{x\}} f(y)\Pi(x,dy)$ and $\Pi(x,(x,\infty))=0$.
\end{enumerate}
We say that a process $X=(X_t)_{t\geq0}$ defined on a filtered probability space $(\Omega,\mathcal{F},(\mathcal{F}_t)_{t\geq0},\P)$ is a generalized Laguerre process (for short gLp) if the family of linear operators $P=(P_t)_{t\geq0}$ defined, for any $t\geq0$ and $f\in \cob$, by
\[ P_tf(x) = \E_x\lbb f(X_t)\rbb\]
 is a gL semigroup, where $\E_x$ stands for the expectation operator associated to $\P_x(X_0=x)=1$.
\end{definition}
\begin{remark}\label{rem:oder1}
 Since there is no loss of generality considering only gL semigroups of order $1$, {\emph{we will assume from now on that $H=1$}}. Indeed, note that if $P$ is a gL semigroup of order $H>0$
 then, since $p_{H}(x)=x^H$ is a homeomorphism of $\R_+$ to $\R_+$, then $\bar{K}_t f (x) = K_t f \circ p_{H}(x^{\frac{1}{H}})$ is $1$-self-similar Feller semigroup on $ \cob$ and $\bar{P}_t f(x) = P_t f \circ p_{H}(x^{\frac{1}{H}})$ is a gL semigroup of order precisely $1$.
\end{remark}

\begin{remark} \label{eq:def_gl}
The terminology generalized Laguerre semigroup is motivated by the well-known case  when the invariant measure $\vartheta(dx)=\varepsilon_m(x)dx=\frac{x^m}{\Gamma\lbrb{m+1}}e^{-x}dx,\,x>0,$ is the probability measure of a Gamma random variable of parameter $m+1\geq 1$. Then, there exists a family  of semigroups $P$, indexed by $m$, satisfying the conditions of Definition \ref{def:gL} above and each of which admits an eigenvalues expansion for its action on $\lgb$ expressed in terms of the Laguerre polynomials of order $m$, which form an orthogonal basis in the Hilbert space $\lgb$, see  Chapter \ref{sec:exam}. In this case, $P$ is self-adjoint in $\lgb$, and we shall see that it is the only instance among the class of gL semigroups to be self-adjoint, see Theorem \ref{thm:eigenfunctions1}\eqref{it:orth1}, which disproves the orthogonality of the eigenfunctions of these groups apart from the classical Laguerre case.
\end{remark}
\begin{remark}
Let us now indicate that the requirement $\Pi(x,(x,\infty))=0$ in condition \ref{def:lk}, which has a nice pathwise interpretation as the imposition upon the corresponding generalized Laguerre process to have downward jumps only,  is purely technical in the sense that one may define this class without this assumption.    We choose to  restrict our analysis to this situation since it   requires already some substantial new  developments  which deserve to be detailed in one (long) paper and we postpone to a future paper the study of the general case. However,  we emphasize that apart from some  technical issues specific to the general framework, the main  concepts, theories and methodologies needed to develop its spectral reduction are contained in this work. Indeed, from an operator viewpoint, the main mathematical difficulties stem from the non-self-adjointness combined with the non-local properties of the generators of these Markov processes and not necessarily on the support of the L\'evy kernel, that is $\Pi\lbrb{x,.}$, see \cite{Patie-Savov-Zhao} for the spectral expansion of two-sided self-similar Feller semigroups. As another illustration of this fact, one may extract from our main results Theorem \ref{thm:dens}, the  spectral expansion of the  semigroups constructed from the gL ones by performing a subordination in the sense of Bochner, which gives a class of non-self-adjoint semigroups whose associated Markov process may have jumps in both directions.
\end{remark}
Beyond the theoretical interests of studying a class of NSA semigroups, there are several motivations underlying the investigation of this specific class of semigroups.	
\begin{remark}	 On the one hand, the gL semigroups play a substantial role in probability theory. Indeed, from the celebrated transformation due to Lamperti \cite{Lamperti-62}, which provides a bijection between  stationary  processes and self-similar stochastic processes, one can easily observe that the class of gL semigroups we have introduced above corresponds to the class of positive stationary Markov processes whose  Lamperti transformation, see \eqref{eq:scaling},  leaves invariant the Markov property, that is the associated self-similar process remains a (homogeneous) Markov. The generalized Laguerre processes are intimately connected to the so-called generalized Ornstein-Uhlenbeck processes which have been introduced by Carmona et al.~\cite{Carmona-Petit-Yor-97}. Moreover, the aforementioned  class of self-similar semigroups, whose heat kernel can easily be expressed in terms of the one of the associated gL semigroup via \eqref{eq:scaling}, appears naturally in limit theorems of Markov processes and  have been intensively studied over the last two decades, see e.g.~\cite{Lamperti-72}, \cite{Vuolle-Apiala-94}, \cite{Bertoin-Savov},  \cite{Bertoin-Yor-02-b}, \cite{Patie-06c}, \cite{Patie-Abs} and \cite{Demni-Rouault}.  They also correspond to the class of positive Markov processes whose transition kernel satisfies a scale invariance property similar to  the classical Gaussian heat kernel whose index of self-similarity is $H=2$. It is (one of) the aim of this work to make available additional  explicit representations of Markov transition kernels satisfying this scale invariance property which has been observed in many physical phenomena.
\end{remark}
We also emphasize that the class generators of the gL semigroups,  see \eqref{eq:infgen} below, encompasses a variety of  substantial  integro-differential operators  such as the classical fractional derivatives, delay differential-operator of pantograph-type, differential Bessel operators perturbated by non-local operators,  see e.g.~\cite{Patie2012b}, \cite{Bartholme_Patie} and  \cite{Fract_Book_Kai}, and, we refer to Chapter \ref{sec:exam} for the description of some specific instances. Several classes of gL  Markov processes  have  found applications in many fields of sciences, such as neurology, data transmission, economy, biology, epidemiology, see  e.g.~\cite{GL_Fragm}, \cite{GL_Biology}, \cite{Teichmann-Pol}, \cite{Malrieu} and the references therein.

\subsection{Characterization and properties of gL semigroups}
We shall provide a characterization of the class of gL semigroups in terms of their infinitesimal generators. To this end, let us introduce the function $\psi : i\R \mapsto \C$, which is defined, for any $z \in i\R$, by the following relation
\begin{equation}\label{eq:NegDef}
\psi(z)=\sigma^{2}z^{2}+ m z + \int^{\infty}_{0}\left(e^{-zy} - 1 + zy\right)\Pi(dy),
\end{equation}
where $\sigma\geq 0$, $m \geq 0$ and $\Pi$ is a $\sigma$-finite positive measure  concentrated on $(0,\infty)$ and satisfying the integrability condition
$\int^{\infty}_{0}\lb y^2\wedge y \rb \Pi(dy)<\infty$. 
Note that the characteristic triplet  $(\sigma,m,\Pi)$ uniquely defines  the function $\psi$.

We introduce the set
\begin{equation}\label{eq:classPaper}
\Ne=\{\psi \textrm{ of the form } \eqref{eq:NegDef}  \} .
\end{equation}
 $\Ne$ is a subset of the negative definite functions or equivalently is a subset of the set of characteristic exponents  of infinitely divisible distributions on $\R$, see e.g.~\cite{Jacob-01} for more details about Fourier transforms of infinitely divisible distributions. The class of infinitely divisible distributions in turn is the building block of \LLPs via the fact that $(\xi_t)_{t\geq 0}$ is a \LLP if and only if $\xi_1$ is infinitely divisible, see e.g.~\cite[Chap.~I]{Bertoin-96} or for our specific case the discussion succeeding Theorem \ref{thm:bijection} below. We also write  throughout
 \begin{equation} \label{def:tail_Levy_measure}
 \PP(y) = \int_{y}^{\infty}\Pi(dr) \textrm{  and  } \PPP(y) = \int_{y}^{\infty}\PP(r)dr
 \end{equation}
 for the tails of the measure $\Pi$ which due to the link to \LLPs is usually called the  L\'evy measure. We say that $f\in \cco^{2}\lbrb{[-\infty,\infty]}$ if and only if $f,f',f''$ are continuous on $\lbrb{-\infty,\infty}$ and have finite limits at $\pm\infty$.
We are now ready to state our first result which provides  a characterization of our class of generalized Laguerre semigroups of order $1$, see Remark \ref{rem:oder1}, together with some of their basic properties.
\begin{theorem} \label{thm:bijection}
\begin{enumerate}
\item  \label{it:bij} There exists a bijection between the class of generalized Laguerre semigroups and the subspace of negative definite functions $\Ne$.  More specifically,  for each $\psi \in \Ne$, the associated gL semigroup $P=(P_t)_{t\geq0}$ is characterized by its  infinitesimal generator ${\mathbf{G}}$  which admits, at least, for any function $f \in \mathcal{D}_{\mathbf{G}}=\{f;\, x\mapsto f_e(x)=f(e^{x})\in \cco^{2}\lbrb{[-\infty,\infty]}\}$, the representation
\begin{eqnarray} \label{eq:infgen}
{\mathbf{G}}f(x) &=& \sigma^2x f''(x)+\lb m +\sigma^2 -x\rb f'(x) \nonumber \\&  &+  \int^{\infty}_{0} \left(f(e^{-y}x)-f(x)+yxf'(x) \right)\Pi(x,dy),
\end{eqnarray}
 where $\Pi(x,dy) = \frac{\Pi(dy)}{x}$ and $(\sigma,m,\Pi)$ is the characteristic triplet of $\psi$.
 \item \label{it:invsup}  For each $\psi \in \Ne,$ $P$ admits a unique invariant  measure which is an absolutely continuous probability measure with a density denoted by $\nu$. Moreover, $\nu$ has support  on $(0,\r)$ and it is positive on this domain, where $\r=\lim\ttinf{u}\frac{\psi(u)}{u}=\phi(\infty)$,  see \eqref{eq:whpsi} below for the justification of the notation, that is \begin{equation}\label{def:tau}
 \r =\infty \textrm{ if } \sigma^2>0, \textrm{ and, } \: 0<\r=\PPP(0^+)+m\leq \infty  \textrm{ otherwise.}
 \end{equation}
  \item \label{it:ext} $P$ can be extended uniquely to a strongly continuous contraction semigroup, still denoted by $P$,  on the weighted Hilbert space \[
      \lnu=\big\{f:{(0,\r)} \rightarrow \R \textrm{ measurable; } \: \int_0^{\r}f^2(x)\nu(x)dx<\infty\big \},\]
      endowed with the norm $|| . ||_{\Lnu}= || . ||_{\nu}$ and inner product $\langle ., .\rangle_{\nu}$.   The algebra  of polynomials, $\mathbf{P}$, is a core for its infinitesimal generator.
\item  \label{it:dual}  It admits a conservative standard Markov process  on $(0,\r)$, see  e.g.~\cite[Definition (9.2), p.45]{Blumenthal1968} for definition,  as a (weak) dual with the measure $\nu(x)dx$ serving as reference measure. The corresponding semigroup  $P^*=(P^*_t)_{t\geq0}$ is Feller-Dynkin, i.e.~$P^*_t \cco_b(0,\r)\subseteq \cco_b(0,\r)$, where $\cco_b(0,\r)$ is the space of continuous and bounded functions $(0,\r)$,  when $m>0$, and, $\r=\infty$ or when $\Si \geq 1$  where,   when $\r<\infty$, $\Si$
    is defined by
 \begin{equation}\label{def:Ktau}
 \Si =\Sip-1\in [0,\infty] 
 \end{equation}
 with $\lceil\cdot\rceil$ the ceiling function, i.e.~it evaluates the smallest integer greater or equal to a positive real and we use the convention that $\Si=\infty$ when $\PP(0^+)=\infty$. Moreover $P^*$  admits a contraction semigroup  extension in $\Lnu$ satisfying, for any $f,g \in \lnu$,
  \[\langle P_tf, g \rangle_{\nu} =\langle f,  P^*_t g\rangle_{\nu}. \]
  \item  \label{it:gadj} Moreover, for any $\Si>2$, the infinitesimal generator ${\mathbf{G}}^*$ of the (Feller-Dynkin) semigroup $P^*$ takes the form, for at least any function $f$ such that $ f\nu \in \mathcal{D}_{\mathbf{G}}$, $x \in (0,\r)$, and, writing $ \bar{f}(x,y)=f(e^{-y}x)-f(x)+yxf'(x)$,
	\begin{eqnarray*}
	\mathbf{G}^*f(x) &=& \sigma^2xf''(x)+\lb  \mathfrak{d}(x) +x\lb 2\frac{\nu'(x)}{\nu(x)} +1 \rb \rb  f'(x)  +  \int^{\infty}_{0} \bar{f}(x,y) \Pi^*_{\nu}(x,dy)
	\end{eqnarray*}
	where  $\mathfrak{d}(x)=\sigma^2- m-\int^{\infty}_{0}\lb 1 -\frac{\nu(x)}{\nu(e^{-y}x)}\rb xy \Pi^*_{\nu}(x,dy) $ and  \[ \Pi^*_{\nu}(x,dy)=\frac{\nu(e^{-y}x)}{x\nu(x)}\Pi(dy).\]

 \item \label{it:nsa} If $\Pi \equiv 0$, then  $P$ is self-adjoint in $\lnu$. Otherwise, $P$ is non-self-adjoint, i.e.~$P \neq P^*$.
\end{enumerate}
\end{theorem}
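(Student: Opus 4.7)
The plan is to leverage the Lamperti transformation to identify gL semigroups with spectrally negative Lévy processes, then translate each of the six claims into a statement about the underlying Lévy process. First, by Remark \ref{rem:oder1} and condition \ref{it:c2} of Definition \ref{def:gL}, any gL semigroup $P$ is in bijective correspondence with a $1$-self-similar positive Feller semigroup $K$, and the Lamperti correspondence identifies $K$ with an $\R$-valued Lévy process $\xi$ via $K_tf(e^x) = \E_x[f(\exp\xi_{\tau(t)})]$, where $\tau$ is the Lamperti time-change. Condition \ref{def:lk} (no upward jumps) translates, after passing to logarithms, into the spectral negativity of $\xi$, so that the Laplace exponent of $-\xi$ has the form \eqref{eq:NegDef} for a unique $\psi \in \Ne$; conversely, every $\psi \in \Ne$ gives rise to such a process and thus to a gL semigroup. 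The generator formula \eqref{eq:infgen} is then obtained by chain-rule computations: the Lévy generator acting on $g(y)=f(e^y)$ contributes the diffusion and jump terms, while the descaling embedded in $P_tf = K_{e^t-1}({\rm d}_{e^{-t}}f)$ produces the extra linear drift $-xf'(x)$.

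For item \eqref{it:invsup}, one identifies the invariant probability with the law of the exponential functional $I_\psi = \int_0^\infty e^{-\xi_s}ds$ of the Lévy process $\xi$, whose finiteness is ensured by the drift-to-$+\infty$ property (automatic from $m,\sigma\geq 0$ together with the integrability of $\Pi$). The essential supremum of the support of $I_\psi$ equals $\phi(\infty)=\r$, yielding the description \eqref{def:tau}, while absolute continuity and strict positivity follow from standard regularity results on exponential functionals. For item \eqref{it:ext}, the invariance of $\vartheta=\nu(x)dx$ together with Jensen's inequality $(P_tf)^2\leq P_t(f^2)$ makes $P$ an $\lnu$-contraction, and strong continuity then follows by density. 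The core property of $\mathbf{P}$ rests on two facts: $\mathbf{P}$ is $\mathbf{G}$-invariant (visible from \eqref{eq:infgen} since $\int_0^\infty y^k\Pi(dy)<\infty$ for all $k\geq 2$ under the restricted integrability assumption), and $\mathbf{P}$ is dense in $\lnu$, which follows from moment determinacy of $\nu$ (immediate when $\r<\infty$ since the support is bounded, and via Carleman's criterion otherwise, using exponential moment estimates for $I_\psi$).

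For items \eqref{it:dual}--\eqref{it:gadj}, the dual $P^*$ is first defined via the Hilbert-space adjoint in $\lnu$ and shown to be a Markovian contraction; its Feller-Dynkin property under the stated hypothesis is obtained by leveraging regularity of $\nu$ of order at least $\Si$, a quantity governed by the smoothness of the density of $I_\psi$ and controlled by $\PP(0^+)$, $\r$ and the presence of a Gaussian part. The explicit generator formula arises from integration by parts on the diffusion term against the weight $\nu$, and from the substitution $u=e^{-y}x$ on the jump part, which rewrites $\int f(e^{-y}x)\nu(x)\,dx$ as an integral of $f$ against a transformed kernel and produces the density ratio $\nu(e^{-y}x)/\nu(x)$ in $\Pi^*_\nu$. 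Item \eqref{it:nsa} then follows by inspection: when $\Pi\equiv 0$, $\mathbf{G}$ reduces to the Laguerre differential operator $\sigma^2 x f''+(m+\sigma^2-x)f'$, which is symmetric against an explicit Gamma density (recovering Remark \ref{eq:def_gl}); when $\Pi\not\equiv 0$, the kernels $\Pi(dy)/x$ in \eqref{eq:infgen} and $\Pi^*_\nu(x,dy)$ in (5) carry different $x$-dependence, preventing $P=P^*$.

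The principal technical obstacles lie in items \eqref{it:ext}, \eqref{it:dual}, and \eqref{it:gadj}. For \eqref{it:ext}, proving density of $\mathbf{P}$ in $\lnu$ when $\r=\infty$ requires tail estimates on $\nu$ that are not classical and demand a careful study of $I_\psi$; sub-exponential decay, which ensures Carleman's condition, is not automatic and will require using the full force of the assumption $\psi \in \Ne$ (in particular $\int_0^\infty y\Pi(dy)<\infty$). For \eqref{it:dual}--\eqref{it:gadj}, establishing the Feller-Dynkin property and justifying the explicit formula for $\mathbf{G}^*$ rests on the $\Si$-fold differentiability of $\nu$, whose delicacy is well known in the literature on exponential functionals of Lévy processes; tracking the precise amount of smoothness afforded by each of the degenerate regimes encoded in the definition of $\Si$ is expected to be the most technically involved step of the proof.
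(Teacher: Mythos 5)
Your overall strategy (Lamperti transform $\to$ spectrally negative L\'evy processes $\to$ exponential functionals) is the same as the paper's. However, there are several concrete errors and one genuine gap. First, you identify the invariant density $\nu$ directly with the law of $I_\psi=\int_0^\infty e^{-\xi_s}ds$, and claim finiteness of $I_\psi$ is "automatic from $m,\sigma\geq 0$". This is false: when $m=\E[\xi_1]=0$, the process $\xi$ oscillates and $I_\psi=\infty$ a.s., so the object you describe does not exist for a large part of $\Ne$. The correct relationship (Proposition~\ref{prop:recall_exps}) is $\nu(x)=x^{-2}\nuh_1(1/x)$, where $\nuh_1$ is the density of $I_{\mathcal{T}_1\psi}$ and $\mathcal{T}_1\psi(u)=u\phi(u+1)\in\Ne(\phi(1))$ \emph{always} has a strictly positive drift $\phi(1)$, which is what makes the functional finite for every $\psi\in\Ne$. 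Relatedly, your claim that "the essential supremum of the support of $I_\psi$ equals $\r$" is wrong: $\supp I_{\mathcal{T}_1\psi}=[1/\r,\infty)$, so the essential supremum is $\infty$; it is the support of $V_\psi\stackrel{(d)}{=}1/I_{\mathcal{T}_1\psi}$ that is $[0,\r]$, and the reciprocal is essential. Second, you do not address uniqueness of the invariant measure, which is a nontrivial claim of item~\eqref{it:invsup}; the paper proves it in Section~\ref{sec:pr_uni_inv} by combining the intertwining relation with injectivity of $\Vp$ on $\lt^{\infty}$ (itself derived from a zero-free property of $\Mp$), and there is no obvious shortcut.

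Two smaller points. Your justification that $\mathbf{P}$ is $\mathbf{G}$-invariant, namely "$\int_0^\infty y^k\Pi(dy)<\infty$ for all $k\geq 2$", does not follow from the hypothesis $\int_0^\infty(y^2\wedge y)\Pi(dy)<\infty$; the correct reason is that $e^{-ky}-1+ky\leq\min(k^2y^2/2,\,ky)$, so $\int_0^\infty(e^{-ky}-1+ky)\Pi(dy)<\infty$ without any higher moment assumption on $\Pi$. The paper actually proves the core property by exhibiting the eigenfunctions $\Pon$, which are explicitly polynomials of degree $n$ spanning a dense $P_t$-invariant subspace, which is tidier. For items~\eqref{it:dual}--\eqref{it:gadj}, you propose defining $P^*$ abstractly as the Hilbert adjoint and then computing; the paper instead constructs $P^*$ as a Doob $h$-transform (with $h=\nu$) of the dual self-similar semigroup $\hat K$ coming from $-\xi$ via weak duality on Lebesgue measure, which gives both the Feller--Dynkin property and the generator formula in one stroke. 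Your route is not wrong in principle, but proving the Feller--Dynkin property directly for an abstract adjoint is considerably harder than transferring it through the $h$-transform, and you would have to recover the same regularity input on $\nu$ anyway.
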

\emph{The proof of the item \eqref{it:bij} is given  in Section \ref{sec:proof_thm11}.  The existence and the absolute continuity property of the invariant measure in item \eqref{it:invsup} along with  the continuous extension of $P$ in item \eqref{it:ext} are justified in Section  \ref{sec:bas_f_gl}.  The support and positivity properties  of $\nu$  stated in item \eqref{it:invsup} are derived in Section \ref{sec:proof_invariant_smooth} and are part of Theorem \ref{thm:smoothness_nu1}, whereas the expression of $\r$ is justified in Proposition \ref{propAsymp1}\eqref{it:finitenessPhi} and the proof of the uniqueness of the invariant measure  is postponed to Section \ref{sec:pr_uni_inv}.
The fact that the algebra $\mathbf{P}$ is a core follows directly from Theorem \ref{thm:eigenfunctions1}\eqref{it:ef1} and \eqref{it:compl_rb1}.
Finally, the remaining statements are proved in Section \ref{subsec:Theorem1.2}.}
\begin{remark}
Note that the bijection stated  in \eqref{it:bij} finds its root in a remarkable work from Lamperti \cite{Lamperti-72} which establishes a bijection between the class of positive self-similar Markov processes and the class of L\'evy processes.
\end{remark}
\begin{remark}
We also point out that further fine distributional properties, including smoothness and small and large asymptotic behaviour, of the invariant density $\nu$ are stated in Chapter \ref{sec:prop_nu}.
\end{remark}

\subsection{Definition and properties of subsets of $\Ne$}\label{sec:classes}
 Our main results below regarding the spectral decomposition, regularity and speed of convergence for the gL semigroups are expressed in terms of some subsets of negative definite functions that we define and study here. To this end, we first point out that the set $\Ne$ is also in bijection with the set of spectrally negative L\'evy processes, i.e.~processes that can jump downwards only, which have a non-negative mean, via the relation
  \[\log \Ebb{e^{z\xi_1}}=\psi(z),\quad\text{ $z \in i\R,$ }\]
   with $\xi=\lbrb{\xi_t}_{t\geq 0}$ a \LL process, see \cite[Chap.~I]{Bertoin-96}. We note that $\Ebb{\xi_1}=\psi'(0^+)=m\geq0$. We call $\xi$ the \LLP underlying the semigroup $P$ or alternatively the \LLP associated to $\psi$.
 It is a common fact that any $\psi\in\Ne$ has the Wiener-Hopf factorization given by
\begin{equation} \label{eq:whpsi}
\psi (u) = u\phi(u),\quad \text{ $u\in\R_+$,}
\end{equation}
where, with $\overline{\Pi}(y)= \int_{y}^{\infty}\Pi(dr)$, see \eqref{def:tail_Levy_measure}, for any $u\in\R_+$,
\begin{equation} \label{eq:def-Bernstein_P}
\phi(u)= m+\sigma^{2} u+\IInf \lb 1-e^{-uy}\rb \overline{\Pi}(y)dy.
\end{equation}
The function $\phi$, which  is a Bernstein function,  is the Laplace exponent of the so-called descending ladder height process of $\xi$, say $\eta=(\eta_t)_{t\geq 0}$, that is, for any $u\in\R_+$,
\begin{equation} \label{eq:def-Bernstein_P1}
\ln \E\left[ e^{-u \eta_1}\right]= - \phi(u).
\end{equation}
Note that $\eta$ is a possibly killed subordinator (non-decreasing \LL process) and we refer to \cite[Chap.~VII]{Bertoin-96} for more detail on spectrally negative \LL processes including their Wiener-Hopf factorization. We point out that $\eta$ is killed if and only if $m>0$ as then $\phi(0)>0$. Denote by
\begin{equation}\label{eq:Bpsi}
 \Bp =\{ \phi \textrm{ is of the form } \eqref{eq:def-Bernstein_P}\},
 \end{equation}
that is the  convex sub-cone of the set of Bernstein functions which is in bijection with the set $\Ne$ via the identity \eqref{eq:whpsi}.
Next, for any $\phi \in \Bp$, we introduce the multiplicative Markov  operator $\Ip$, defined, for at least  any $f \in \cob$, by
\begin{equation}\label{def:mult_kernel_I_phi1}
 \Ip f(x)=\Ebb{f(x I_\phi)},
 \end{equation}
where $I_\phi$ is the positive random variable
\begin{equation} \label{def:exp_sub}
 I_\phi = \int_{0}^{\infty} e^{-\eta_t}dt.
 \end{equation}
 Write $  \mathfrak{R}=\lbcurlyrbcurly{(\alpha,\mr);\:\alpha \in (0,1]  \textrm{ and  } \mr\geq 1-\frac{1}{\alpha}}$. Then, for any $(\alpha,\mr) \in \mathfrak{R}$ we define for $u\geq 0$ the function
  \begin{equation}
  \label{eq:def_phir}
  \phi^R_{\alpha,\mr}(u)=
  \frac{\Gamma(\alpha u + \alpha \mr +1)}{\Gamma(\alpha u +\alpha \mr +1-\alpha)},
  \end{equation}
  and we simply write $\phi^R_{\mr}(u)=\phi^R_{1,\mr}(u) = u+\mr$. We recall from \cite{Patie-Savov-GL}, see also Lemma \ref{lem:potmeasure} below,  that $\phi^R_{\alpha,\mr} \in \Bp$. We say that a function $f$ is  completely monotone if $f:\R_+\rightarrow [0,\infty) \in \cco^{\infty}(\R_+)$ and $(-1)^nf^{(n)}(u)\geq0$,  for all $n=0,1,\ldots$ and $u>0$. The space $\cco^{\infty}(\R_+)$ consists of all infinitely differentiable functions on $\R_+$.

We are now ready to introduce in Table \ref{tab:PPer} and Table \ref{tab:c2} some substantial subclasses  of $\Ne$ and refer to Section \ref{sec:classes} for more detailed information regarding these objects.
 \begin{table}[h]
\centering 
\begin{minipage}[b]{0.40\linewidth}
\begin{tabular}{|l|  c   c  c |} 
 \multicolumn{1}{l}{} & $\sigma^2$ &  $\PP(0^+)$  &   \multicolumn{1}{l}{$\PPP(0^+)$} \\[1.1ex]
 \hline
 $\Ne_P$ &$ >0$ &  &  \\[1.1ex] \hline
 \multirow{2}{*}{$\Ni$} & $>0$ &  &   \\[1.1ex]
 & 0
& $ =\infty $  &  \\[1.1ex] \hline
$\Ni^c$ & $0$ & $<\infty$ & $<\infty$ \\[1.1ex]
\hline   
\end{tabular}\\

\hspace{-1.5cm}
\caption{Definition of some classes in terms of the characteristic triplet.}
\label{tab:PPer}
\end{minipage}
\qquad
\begin{minipage}[b]{0.45\linewidth}
\begin{tabular}{|l|  c    |} 
\hline 
 $\Nee$ & $\H=\liminf_{b \to \infty} \int_{0}^{\infty}\ln\lb\frac{\labs\phi(by+ib)\rabs}{\phi(by)}\rb dy >0$  \\[2.5ex] \hline
 $\Ne_{\alpha}$ &  $\psi(u) \simi C_{\alpha}u^{\alpha+1}, C_{\alpha}>0, \alpha \in (0,1)$\\[2.5ex] \hline
 $\Ne_{\alpha,\mr}$ &  $ u\mapsto \frac{\phi^R_{\alpha,\mr}(u)}{\phi(u)}\textrm{ is completely monotone} $\\[2.5ex] \hline
 $\Ne_R $ &  $ \bigcup_{(\alpha,\mr) \in \mathfrak{R}} \Ne_{\alpha,\mr}$\\[2ex]
\hline 
\end{tabular} \\

\hspace{-0.5cm}
\caption{Definition of other classes in terms of $\psi(u)=u\phi(u)$.}
\label{tab:c2}
\end{minipage}
\end{table}

  Although   for most of  the subsets of $\Ne$, such as $\Ni,\Ni^c, \Ne_{P}$, their definitions are given directly in terms of the triplet $\lbrb{\sigma,m,\Pi}$, the other classes, i.e.~$\Ne_{\alpha}$, $\Nee$ and $\Ne_R$, are rather characterized through specific properties of the (associated ladder height) Laplace exponent $\phi$.  The aim of the next result is to provide for these latter subclasses sufficient conditions expressed in terms of $\lbrb{\sigma,m,\Pi}$ that allow to identify them.
 \begin{theorem}\label{thm:classes}
 \begin{enumerate}
 \item \label{it:class_Ng}	If $\psi \in \Nee$ then $\r=\phi(\infty)=\infty$. Moreover, we have that
 	\begin{eqnarray*}
 		\Ng &=& \left \{\psi \in \Ne; \: \sigma^2>0 \textrm{ or } \liminf_{u \to\infty }\frac{\overline{\Pi}\lb \frac{1}{u}\rb}{ \psi(u)}>0 \right \} \subseteq \Nee,
 	\end{eqnarray*}
 	where if $\sigma^2=0$ we have the following equivalent criterion in terms of the characteristic triplet
 	\begin{equation}\label{eq:equivalentNG}
 	\liminf_{u \to\infty }\frac{\overline{\Pi}\lb \frac{1}{u}\rb}{ \psi(u)}>0\iff\liminf_{y \to 0 }\frac{y^2\overline{\Pi}\lb y\rb}{ \int_0^{y} \overline{\overline{\Pi}}(r) dr +my} >0.
 	\end{equation}
  \item  We also have
  \begin{equation}\label{eq:reg_equiv}
  \psi \in \Ne_{\alpha} \iff \PPP(y) \simo \Gamma(\alpha+1) C_{\alpha} y^{-\alpha} \iff \PP(y) \simo  \Gamma(\alpha) C_{\alpha} y^{-\alpha-1},
 	\end{equation}
  where we recall that $C_{\alpha}>0$.
  \item \begin{enumerate}
  \item\label{it:NP1}   $\Ne_P \subset \cap_{\alpha \in (0,1)} \cup_{\mr\geq 1-\frac1\alpha}  \Neab$.
   \item \label{it:rcla} Let us write for $(\alpha,\mr) \in \mathfrak{R}=\lbcurlyrbcurly{(\alpha,\mr);\:\alpha \in (0,1]  \textrm{ and  } \mr\geq 1-\frac{1}{\alpha}}$,  \[U_{\alpha,\mr}(y) = \frac{1}{\Gamma(\alpha+1)} e^{- \mr_{\alpha}y} \lbrb{1-e^{-\frac{y}\alpha}}^{\alpha-1},\,y>0,\]
        $\mr_{\alpha}=(\alpha \mr +1-\alpha)/\alpha\geq 0$. Then, if $\psi \in \Ne \setminus \Ne_P$ with
        \[  \sup_{y>0}\inf_{A\in(0,1)} \lbrb{\frac{\PPP( y)+m}{\PPP((1-A)y)+m} +  \frac{U_{\alpha,\mr}({y})}{U_{\alpha,\mr}(Ay)}}\leq 1\]
         for some  $\alpha \in (0,1), \mr\geq 1-\frac1\alpha$,  then $\psi \in \Neab$.
         \item \label{it:NrNt}For any $(\alpha,\mr) \in \mathfrak{R}$,  $\Ne_{\alpha,\mr} \subset \Nee$ with when $\psi \in \Ne_{\alpha,\mr}$, $ \frac{ \pi}{2}\alpha \leq \H \leq\frac{\pi}{2}$.
   \end{enumerate}
   \end{enumerate}
   \end{theorem}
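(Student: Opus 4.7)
I tackle the three items in sequence, making systematic use of the Frullani rewriting $\phi(u)=m+\sigma^2u+u\int_0^\infty e^{-uy}\PPP(y)dy$ obtained by integration by parts in \eqref{eq:def-Bernstein_P}.

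\emph{Item (1).} The implication $\psi\in\Nee\Rightarrow\r=\infty$ is proved by contraposition: if $\phi(\infty)<\infty$ then \eqref{def:tau} forces $\sigma^2=0$ and $\PPP(0^+)<\infty$, and with the bounded representation $\phi(z)=\phi(\infty)-\int_0^\infty e^{-zy}\PP(y)dy$ Riemann--Lebesgue gives $\phi(by+ib)\to\phi(\infty)$ pointwise in $y>0$; the elementary bound $|\phi(u+iv)|\leq\phi(u)+|v|\phi'(u)$ together with the Bernstein inequality $\phi'(u)\leq\phi(u)/u$ supplies a dominating function, and dominated convergence forces $\H\leq0$, contradicting $\psi\in\Nee$. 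For $\Ng\subset\Nee$ I aim to produce an interval $[y_0,y_1]$ and $c>0$ with $|\phi(by+ib)|/\phi(by)\geq1+c$ uniformly for large $b$ and $y\in[y_0,y_1]$: when $\sigma^2>0$ this follows from $|\im\phi(by+ib)|\geq\sigma^2 b$ against $\phi(by)\sim\sigma^2by$; under the $\liminf$ condition the L\'evy contribution $\int e^{-bys}\sin(bs)\PP(s)ds$ to $\im\phi(by+ib)$, of order $b\PP(1/b)$, plays the same role. The equivalence \eqref{eq:equivalentNG} then follows by rewriting $\psi(1/y)$ via Frullani and using monotonicity of $\PPP$ to sandwich $\int_0^\infty e^{-r/y}\PPP(r)dr$ between positive constant multiples of $\int_0^y\PPP(r)dr$ uniformly as $y\to0^+$.

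\emph{Item (2).} This is a pure Tauberian statement. From $\psi(u)=u\phi(u)\simi C_\alpha u^{\alpha+1}$ with $\alpha\in(0,1)$ one has $\sigma^2=0$ (otherwise $\phi(u)/u\to\sigma^2>0$ would force exponent $1$) and hence $\int_0^\infty e^{-uy}\PPP(y)dy\simi C_\alpha u^{\alpha-1}$. Karamata's Tauberian theorem, applied to the monotone $\PPP$, delivers the asymptotic of $\PPP$ at the origin, and the monotone density theorem of de Haan transports it to the companion statement for $\PP$; the stated constants are reconciled via the reflection formula $\Gamma(\alpha)\Gamma(1-\alpha)=\pi/\sin(\pi\alpha)$.

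\emph{Item (3).} For (c), the key observation is that complete monotonicity of $g:=\phi^R_{\alpha,\mr}/\phi$ forces $|g(z)|\leq g(\re z)$ in the right half-plane, hence
\[
\frac{|\phi(by+ib)|}{\phi(by)}\geq\frac{|\phi^R_{\alpha,\mr}(by+ib)|}{\phi^R_{\alpha,\mr}(by)}.
\]
Stirling applied to the defining ratio of Gammas yields $\phi^R_{\alpha,\mr}(u)\simi(\alpha u)^\alpha$, so the right-hand side converges, for each fixed $y>0$, to $|1+i/y|^\alpha=(1+y^{-2})^{\alpha/2}$; Fatou's lemma combined with the classical identity $\int_0^\infty\ln(1+y^{-2})dy=\pi$ then produces $\H\geq\alpha\pi/2$. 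The upper bound $\H\leq\pi/2$ stems from the general Bernstein estimate $|\phi(u+iv)|\leq\phi(u)\sqrt{1+v^2/u^2}$ obtained by refining $|\phi(u+iv)-\phi(u)|\leq|v|\phi'(u)$ with the oscillatory control on $\re\phi(u+iv)-\phi(u)=\int e^{-uy}(1-\cos vy)\mu_\phi(dy)$, which bounds the integrand by $\frac{1}{2}\ln(1+y^{-2})$. For (b), the Beta integral yields the explicit potential representation $1/\phi^R_{\alpha,\mr}(u)=\int_0^\infty e^{-us}U_{\alpha,\mr}(s)ds$, and combining it with the Frullani form of $\phi$ expresses $\phi^R_{\alpha,\mr}/\phi$ as a single Laplace transform whose density has an explicit convolution structure; the sup--inf bound is exactly what renders this density pointwise non-negative. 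Item (a) is finally obtained by verifying the sup--inf condition of (b) for $\psi\in\Ne_P$: the Gaussian term gives $\phi(u)/u\geq\sigma^2>0$ and allows the $\PPP$-quotient to be pushed below $1$ for $A$ close to $1$, while $\mr$ is chosen large enough to make the $U_{\alpha,\mr}$-quotient arbitrarily small. The principal obstacle is item (3b): identifying the correct density in the Laplace representation of $\phi^R_{\alpha,\mr}/\phi$ and matching its positivity to the proposed sup--inf formula requires delicate handling of potential measures of Bernstein-type functions; the Tauberian equivalence \eqref{eq:equivalentNG} in (1) is also technically demanding as it requires uniform two-sided comparisons as $y\to0^+$, rather than a single asymptotic equivalence.
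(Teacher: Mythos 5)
Your outline is broadly in the right direction and several items do match the paper's argument: the proof that $\psi\in\Nee\Rightarrow\r=\infty$ by Riemann--Lebesgue plus dominated convergence is exactly the paper's contraposition argument (Section~\ref{sec:proof_NG_exp_decay}), item~(2) is indeed a Karamata--monotone-density argument exactly as you propose, and for (3b) the Frullani form of $\phi$ convolved against the potential density $U_{\alpha,\mr}$ is precisely the mechanism of Proposition~\ref{lem:bernst_ratio}. Your treatment of~(3c) is actually a different and cleaner route than the paper's: the paper goes via the Markov-operator (Weierstrass-product) bound $|W_\phi(z)/W_{\phi^R_{\alpha,\mr}}(z)|\le W_\phi(a)/W_{\phi^R_{\alpha,\mr}}(a)$ combined with the Gamma asymptotics of $W_{\phi^R_{\alpha,\mr}}$, whereas you work directly with the definition of $\H$ through the pointwise inequality $|g(z)|\le g(\re z)$ for the completely monotone $g=\phi^R_{\alpha,\mr}/\phi$, then compute the integral of $\ln(1+y^{-2})^{\alpha/2}$ by Fatou. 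Both are correct; yours avoids the Mellin-transform machinery entirely. On the $\sigma^2=0$ branch of item~(1), your use of the sine integral $\Delta^\Im_b\phi$ is more delicate than the paper's: the paper deliberately switches to $\Delta^\Re_b\phi$, i.e.\ the $1-\cos$ integral, because $1-\cos(y)\ge 2(y/\pi)^2$ on $(0,1)$ combined with the monotonicity of $\PP$ yields the clean lower bound $b\Delta^\Re_b\phi(ba)\ge\frac{2}{3\pi^2}e^{-a}\PP(1/b)$, whereas the sine integral has an oscillating kernel vanishing at the origin, so extracting a uniform $\bo{b^{-1}\PP(1/b)}$ lower bound requires an additional argument (or the Abel-type periodization of Lemma~\ref{lemma:WandPhi}).

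The genuine gap is item~(3a). You propose to obtain it by verifying the sup--inf condition of (3b) for $\psi\in\Ne_P$, but that is not how the paper proceeds and, as stated, your plan does not work. First, $\sigma^2$ does not appear in the sup--inf criterion, so "the Gaussian term gives $\phi(u)/u\ge\sigma^2$" cannot be used directly there. Second, the trade-off between the two quotients goes the wrong way: for $A$ close to $1$ the quotient $U_{\alpha,\mr}(y)/U_{\alpha,\mr}(Ay)\to 1$ (not $0$), and increasing $\mr$ does not help since the exponential factor $e^{-\mr_\alpha(1-A)y}\to 1$ regardless of $\mr$; conversely for $A$ close to $0$ the $\PPP$-quotient tends to $1$ and, by a short local expansion, the sum exceeds $1$ at rate $A^{1-\alpha}$, so one cannot push both terms down simultaneously, uniformly in $y$. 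The paper's proof of (3a) (Proposition~\ref{lem:bernst_ratio}\eqref{it:pb2}, $\sigma^2>0$ branch) is structurally different: it represents $\Phi_{\alpha,\mr}$ as $m\int U_{\alpha,\mr}+\int(1-e^{-uy})[\bar u_{\alpha,\mr}(y)U_{\alpha,\mr}(y)-(\PPP\star U_{\alpha,\mr})'(y)]dy$ where $\bar u_{\alpha,\mr}(y)=\sigma^2\mr_\alpha-m+\sigma^2\frac{1-\alpha}{\alpha}(e^{y/\alpha}-1)^{-1}$, picks $A=1/2$, and uses the positivity of $\bar u_{\alpha,\mr}$ --- available precisely because $\sigma^2>0$ and $\mr$ is taken large enough that $\mr_\alpha>(\PPP(\underline{y}_\alpha/2)+m)/\sigma^2$, where $\underline{y}_\alpha$ is a threshold built out of the small-$y$ behaviour of $\PPP$. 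The sup--inf condition only arises in the degenerate branch $\sigma^2=0$, where $\bar u_{\alpha,\mr}\equiv -m$ and no such positivity is available; (3a) and (3b) are thus complementary branches of one proof, not a corollary relationship.
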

 {\emph{The proofs of the  first item  \eqref{it:class_Ng} are given in Section \ref{sec:proof_NG_exp_decay}. The second one follows from augmentation of the classical estimate stated in Proposition \ref{propAsymp1} \eqref{it:asympphi} to $\psi(u) = u \phi(u)\simi u^2 \int_0^{\frac1u} \PPP(y)dy  + mu $ in the regularly varying case,
 combined with a standard Tauberian theorem, i.e.~$\int_{0}^{y}\frac{l(r)}{r^{\alpha}}dr\stackrel{0}{\sim} l(y)y^{\alpha-1}$, and the monotone density theorem. Items \eqref{it:NP1} and \eqref{it:rcla} are proved in Section \ref{sec:prod_be}, whereas item \eqref{it:NrNt} is part of the claim of Lemma \ref{lem:inter_ref_l}}}
\begin{remark}
Note that, in fact, under the conditions of the item \eqref{it:rcla} we shall prove that the mapping $\Phi_{\alpha,\mr}(u)= \frac{\psi(u)}{u\phi^R_{\alpha,\mr}(u)} \in \Be$, see \eqref{eq:B} for the definition of general Bernstein functions, which is thanks to Proposition \ref{propAsymp1}\eqref{it:bernstein_cmi}, a stronger statement  than the requirement that $u \mapsto \frac{1}{\Phi_{\alpha,\mr}(u)}$ is completely monotone.
\end{remark}

\subsection{Eigenvalue expansion and regularity of the gL semigroups}
In order to state the next result regarding the spectral expansion and the regularity of the gL semigroups, we now introduce the following set which is the union of five sets of the form, for some linear  space ${\rm{L}}\subseteq \lnu$, set $\underline{\Ne}\subseteq \Ne$ and some $T>0$ which may depend on $\psi \in \underline{\Ne}$,  \[\mathcal{D}^{\underline{\Ne}}_{T}({\rm{L}})=\{(\psi,f,t);\: \psi \in \underline{\Ne}, f \in {\rm{L}}, t>T=T(\psi)>0 \}\] which shall serve as the domains of the spectral operator
\[ \mathcal{D} = \mathcal{D}^{\Ne_P}_0(\lnu) \cup \mathcal{D}^{\Nee}_{T_{\H}}(\Lv) \cup \mathcal{D}^{\Ne_\alpha}_{T_{\pi_{\alpha}}}(\Lga) \cup \mathcal{D}^{\Ne_\alpha}_{T_{\pi_{\alpha},\rho_{\alpha}}}(\Lnu)\cup \mathcal{D}^{\Ne_R}_{\Tab}(\Lnu)  \]
where, for  any fixed $\alpha \in (0,1)$ and any fixed $\gamma >\alpha+1$, we set
\[ \var(x)=x^{-\alpha},  \textrm{ and } \ga(x) = \max\left(\nu(x),e^{-x^{\frac1\gamma}}\right), x>0,\]
  and,
\[ T_{\H}=-\ln \sin \H, \quad  T_{\pi_{\alpha}}=-\ln \sin\lbrb{\frac{\pi}{2}\alpha}, \quad T_{\pi_{\alpha},\rho_{\alpha}}=\max\left(T_{\pi_{\alpha}},1 + \frac{1}{\rho_{\alpha}}\right), \quad \Tab=-\ln(2^{\bar{\alpha}}-1),\]
   where, for each $\psi\in \Ne_R$, $\bar{\alpha}=\sup \left\{0<\alpha\leq1;\: \exists \: \mr >\frac{1}{\alpha}-1 \textrm{ and } \psi\in \Ne_{\alpha,\mr}\right\}$  and   $\rho_{\alpha}$ is  the largest solution to the equation $\lbrb{1-\rho}^{\frac1\alpha}\cos\lbrb{\frac{\arcsin(\rho)}{\alpha}}=\frac12$.
We mention that the notions introduced above and the ones entering in the next claim are reviewed and discussed at length in Chapters \ref{sec:bern} and  \ref{sec:prop_nu}. It is also part of the ensuing statements that all functions are well defined.
Next, we  set,   $W_{\phi}(1)=1$ and, for $n \in \N$,
\begin{equation} \label{def:W_phi_n}
W_{\phi}(n+1)=\prod_{k=1}^{n} \phi(k),
\end{equation}
and recall that $
 \r =\infty \textrm{ if } \sigma^2>0, \textrm{ and, } \: 0<\r=\PPP(0^+)+m\leq \infty  \textrm{ otherwise}$, and, $ \Si =\left\lceil\frac{\PP(0^+)}{\r}\right\rceil-1\in [0,\infty]$ is the index of smoothness. Finally,  we set the quantity
\begin{equation}\label{eq:dphi}
d_{\phi}=\sup\{ u\leq 0;\:
\:\phi(u)=-\infty\text{ or } \phi(u)=0\}
\end{equation}
and note that if $\phi(0)=0$ then necessarily $d_\phi=0$ and always $d_\phi\in (-\infty,0]$, see Proposition \ref{propAsymp1}\eqref{it:asyphid} for a justification.
We are now ready to  state the main result of this work.
 \begin{theorem} \label{thm:dens}
Let $\psi \in \Ne$. Then, $\Ip \in \mathbf{B}(\Lg,\Lnu)$, where $\varepsilon(x)=\varepsilon_0(x)=e^{-x},x>0$ ($\varepsilon_0$ is defined in Remark \ref{eq:def_gl}),
 and $\Ip$ has a dense range, i.e.~$\Ran{\Ip} = \Lnu$.
\begin{enumerate}
\item\label{it:thmdens1} For any $(\psi,f,t) \in \mathcal{D}\: \cup\: \mathcal{D}^{\Ne_{\infty}}_0(\ran{\Ip})$,  the following holds.
\begin{enumerate}
\item\label{it:thmdens1a} We have
\begin{equation}\label{eq:intertwineExpansion}
P_t   f = \sum_{n=0}^{\infty} e^{-nt} \langle f, \nun \rangle_{\nu}\: \Pon\quad  \textrm{ in } \lnu
\end{equation}
where,  for all $n\geq 0,$
\begin{eqnarray}
\Pon(x) &=& \sum_{k=0}^n (-1)^k\frac{   { n \choose k}}{W_{\phi}(k+1)} x^k \in \lnu, \label{defP}
\end{eqnarray}
and,
\begin{eqnarray}\label{eq:nu_nDistribution}
\nun(x) &=& \frac{\mathcal{R}^{(n)} \nu (x)}{\nu(x)} =\frac{1}{n!}\frac{ (x^n \nu(x))^{(n)}}{\nu (x)} \in \lnu.
\end{eqnarray}
\item\label{it:thmdens1ba} If in addition $\PPP(0^+)<\infty$, i.e.~$\r<\infty$, then, for any $f \in \ran{\Ip}$,   $(t,x) \mapsto P_t f(x) \in \cco^{\infty}\lb \mathfrak{C}_{\r}\rb$, with $\mathfrak{C}_{\r}=\left\{(t,x) \in \R^2_+; x\leq \r t\right\}$.
 \item\label{it:thmdens1bb} Otherwise,    $(t,x) \mapsto P_t f(x) \in \cco^{\infty}\lb (T,\infty) \times \R^+ \rb$
 \item\label{it:thmdens1bc} In both cases, for any non-negative  integers $k$ and $p$, we have
\begin{eqnarray} \label{eq:exp_derv}
	\frac{d^k}{dt^k}(P_t f)^{(p)}(x) =(-1)^{k}\sum_{n=p}^{\infty} n^k e^{-n t} \langle  f,\nun \rangle_{\nu} \:  \: \mathcal{P}^{(p)}_{n}(x),
\end{eqnarray}
where the series converges locally uniformly in $(t,x)$ on  the sets specified in \eqref{it:thmdens1ba} and \eqref{it:thmdens1bb}.
\end{enumerate}
\item  \label{it:nic} If $\psi\in\Ni^c$, then, we have for any $f \in \ran{\Ip}$,  $(t,x) \mapsto P_t f(x) \in \cco^{\infty}\lb \mathfrak{C}_{\r} \rb$. Moreover, we have, if $\Si\geq0$ (resp.~$\Si\geq1$), that for any $0\leq n< \frac{\PP(0^+)}{2 \r} $ (resp.~$n>\frac{\PP(0^+)}{2 \r}$)
\[\nun \in \Lnu \quad (resp.~\notin \Lnu),\]
  and, regardless of the value of $\Si$, for any  $f \in \ran{\Ip} $ and $t\geq 0$,
    \begin{eqnarray} \label{eq:exp_dens_pi0}
  	P_t f &=&  \sum_{n=0}^{\infty}e^{-n t} \langle \Ip^{\dagger}\: f,\mathcal{L}_n \rangle_{\e} \:  \: \Pon \quad \textrm{ in } \lnu,
  \end{eqnarray}
  where $\Ip^{\dagger}$ is the pseudo-inverse of $\Ip$, see \cite[p.234]{Israel-Grenville} for definition.
\item {{\bf{The heat kernel}}.} \label{it:main_heat}
    Let $\psi \in \Nee$  (resp.~$\psi \in \Ne_R$).
   Then, for all $ t> T_{\H}=-\ln\sin\H$  (resp.~$t> \underline{T}=\Tab \wedge T_{\H} $) the heat kernel is absolutely continuous with a density $(t,x,y)\mapsto P_t(x,y)\in \cco^{\infty}\lb (T_{\H},\infty) \times \R^2_+ \rb$ (resp.~$\cco^{\infty}\lb (\underline{T},\infty) \times \R^2_+ \rb$)  and, for any non-negative  integers $k,p,q$,
\begin{eqnarray} \label{eq:exp_derv_heat}
	\frac{d^k}{dt^k}P_t^{(p,q)}(x,y) =\sum_{n=p}^{\infty}(-n)^k e^{-n t}   \: \mathcal{P}^{(p)}_{n}(x)\left(\nun(y)\nu(y)\right)^{(q)},
\end{eqnarray}
where the series is locally uniformly convergent in $(t,x,y) \in (T_{\H},\infty) \times \R^2_+$ (resp.~$(\underline{T},\infty) \times \R^2_+$).
If in addition $\nu \in \An\left(\frac{\pi}{2}\right)$, that is $\nu$ is analytical on $\C\left({\frac{\pi}{2}}\right)=\{z \in \C;\: |\arg z|<\frac{\pi}{2}\}$, then the expansion \eqref{eq:exp_derv_heat} is locally uniformly convergent even on $t>0$. In particular, this is the case when $\psi \in \Ne_P$  or $\lim_{n \to \infty} \frac{\ln\left|\sum_{k=0}^{n} (-1)^k{ n\choose k} \frac{W_\phi(k+1)}{\Gamma(k+1)} \right|}{2\sqrt{n}}=-\infty$.
\end{enumerate}
\end{theorem}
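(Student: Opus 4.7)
\emph{Proof plan.} The proof will pivot on the intertwining identity $P_t\, \Ip = \Ip\, Q_t$, where $Q=(Q_t)_{t\geq 0}$ is the classical Laguerre semigroup on $\Lg$ whose spectral expansion in terms of the Laguerre polynomials $(\mathcal{L}_n)_{n\geq 0}$ is textbook. First I would show $\Ip\in\mathbf{B}(\Lg,\Lnu)$ with dense range: boundedness will follow from the multiplicative structure of $\Ip$ combined with the fact that $\nu$ is the image of $\e$ under the product-convolution by $I_\phi$ (a Minkowski-type manipulation), while density will follow because $\Ip$ maps the polynomial algebra $\mathbf{P}$ into itself through the moment identity
\[ \Ip\, x^k = x^k\,\E\bigl[I_\phi^k\bigr] = \frac{k!}{W_\phi(k+1)}\, x^k, \]
whose triangular action on coefficients is invertible and whose image is dense by moment-determinacy of $\nu$ on $(0,\r)$. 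This formula also immediately yields $\Ip\,\mathcal{L}_n = \Pon$, so each $\Pon$ is an eigenfunction of $P_t$ with eigenvalue $e^{-nt}$; dualizing the intertwining in $\Lnu$ gives $\Ip^*\, P^*_t = Q^*_t\, \Ip^*$, and pushing the weighted Laguerre polynomials through $\Ip^*$ produces $\nun$ together with the Rodrigues representation \eqref{eq:nu_nDistribution} and the biorthogonality of the system $(\Pon,\nun)_{n\geq 0}$ in $\Lnu$.

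Equipped with this biorthogonal pair, the spectral sum \eqref{eq:intertwineExpansion} is, on $\ran{\Ip}$, literally the $\Ip$-image of the classical Laguerre expansion of $Q_t(\Ip^{-1}f)$; writing $f=\Ip g$ and developing $g\in \Lg$ in Laguerre polynomials yields \eqref{eq:exp_dens_pi0} of item \eqref{it:nic} once $\Ip^{-1}$ is replaced by the pseudo-inverse $\Ip^\dagger$. The extension to arbitrary $f$ in the spaces $\Lv$, $\Lga$, $\Lnu$ listed in the domain $\mathcal{D}$, as well as the termwise differentiability needed for \eqref{eq:exp_derv} and \eqref{eq:exp_derv_heat}, reduce to quantitative bounds $\|\Pon\|_\nu\leq A_n$ and $\|\nun\|_\nu\leq B_n$ whose product is absorbed by $e^{-nt}$ beyond the threshold prescribed by each class ($T_{\H}$ for $\Nee$, $T_{\pi_{\alpha}}$ or $T_{\pi_{\alpha},\rho_{\alpha}}$ for $\Ne_\alpha$, $\Tab$ for $\Ne_R$, and on all of $\R_+$ for $\Ne_P$ via the analyticity of $\nu$ on $\An(\tfrac{\pi}{2})$). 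Derivatives of order $p$ or $q$ introduce only polynomial factors in $n$ that are swallowed by the same exponential, and the heat kernel expansion \eqref{eq:exp_derv_heat} is then a tensor-product version of the same argument, repeated in both $x$ and $y$.

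The main technical obstacle will be precisely these norm estimates on $\Pon$ and $\nun$. The polynomials $\Pon$ have alternating coefficients $(-1)^k {n\choose k}/W_\phi(k+1)$, so triangle-inequality bounds are hopeless and cancellation must be exploited: for $\Nee$ via the behaviour of $\phi$ on the imaginary axis, essentially a saddle-point analysis delivering the angular constant $\H$; for $\Ne_R$ via a comparison with the reference Bernstein function $\phi^R_{\alpha,\mr}$ which reduces the problem to the known Gamma case; and for $\Ne_\alpha$ via a regular-variation analysis built on \eqref{eq:reg_equiv}. For $\nun$, the Rodrigues formula \eqref{eq:nu_nDistribution} combined with the smoothness properties of $\nu$ derived in Chapter \ref{sec:prop_nu} delivers both the sharp membership criterion in item \eqref{it:nic} (governed by the ceiling $\Si$) and the required $\Lnu$ bounds after integration by parts. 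These class-by-class asymptotic analyses will occupy the bulk of the work; once in hand, the smoothness statements \eqref{it:thmdens1ba}--\eqref{it:thmdens1bb} follow from absolute and locally uniform convergence of the termwise differentiated series, and the full heat kernel claim from its tensor-product variant, with the analyticity assumption $\nu\in\An(\tfrac{\pi}{2})$ being exactly what is needed to push the threshold $T$ down to $0$ in the $\Ne_P$ case.
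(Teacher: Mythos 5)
Your overall architecture is right — intertwining with the self-adjoint Laguerre semigroup, pushing the Laguerre polynomials through $\Ip$ to get eigenfunctions, dualizing to get co-eigenfunctions with a Rodrigues form, and then controlling the spectral series via norm estimates — and this matches the paper's strategy. But you have inverted where the hard analysis lives, and that inversion would send you down a dead end.

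You write that $\|\Pon\|_\nu$ is the main technical obstacle because the alternating coefficients defeat triangle inequalities, and you assign the saddle-point analysis (angular constant $\H$), the reference-Bernstein-function comparison, and the regular-variation estimates to the eigenfunction norms. In fact the eigenfunction norms are trivial: since $\Pon=\Ip\Lc_n$ and $\Ip$ is a contraction from $\Lg$ to $\Lnu$ (which you yourself derive from the multiplicative factorization), one gets $\|\Pon\|_\nu\leq\|\Lc_n\|_\e=1$ for every $n$, and with the same mechanism $(\Pon)_{n\geq0}$ is a Bessel sequence with constant $1$. No cancellation needs to be exploited on the eigenfunction side; even the pointwise growth bound $|\Pon^{(p)}(x)|=\bo{n^{p+\frac12}e^{xn/\r}}$ used for the heat kernel is an elementary Cauchy estimate on the Jensen polynomials of $\Jp$, not a saddle-point computation. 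All of the class-dependent thresholds $T_{\H},T_{\pi_\alpha},T_{\pi_\alpha,\rho_\alpha},\Tab$ come from the \emph{co}-eigenfunction side: it is $\|\nun\|_\nu$ (equivalently, norms of $w_n=\nun\nu$ in auxiliary topologies) that needs the Mellin--Barnes saddle point for $\Nee$, the zeros-of-derivatives and Phragm\'en--Lindel\"of arguments for $\Ne_\alpha$, and the product-of-Bernstein-functions reference semigroup for $\Ne_R$ and $\Ne_P$. If you run your plan as written you will burn your effort trying to beat cancellation in $\Pon$ and miss that the genuine difficulty is the uniform-in-$x$ large-$n$ asymptotics of $w_n$, which is why the paper devotes Chapters~\ref{sec:prop_nu}--\ref{sec:ref} to the invariant density.

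A second, smaller, gap: once $\|\nun\|$ is controlled you get that the spectral operator $S_t$ converges in $\Lnu$ and that $S_t=P_t$ on $\ran{\Ip}$, but you still need an argument that $S_t=P_t$ on the larger domain ($\Lv$, $\Lga$, or $\Lnu$ depending on the class). Your plan stops at ``the extension to arbitrary $f$ reduces to quantitative bounds whose product is absorbed by $e^{-nt}$'', which only establishes boundedness of $S_t$, not the identity of the two operators. The paper supplies two mechanisms here — a density argument using $\overline{\ran{\Ip}}_{\mathrm{L}}=\mathrm{L}$ and the contractivity of $P_t$, and when the co-eigenfunctions are complete a biorthogonality argument — and one of them must be made explicit for each class. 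Finally, for the heat kernel statement beyond $t>T$ in the $\Ne_P$ case you would also need the semigroup identification step (the analogue of Proposition~\ref{prop:propKernel}), not just termwise estimates, because the identity $P_t f=S_t f$ for small $t$ is not a priori known even on $\cc$.
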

{\emph{This Theorem is proved in Chapter \ref{sec:proof_main}}.}
\begin{remark} \label{rem:Mainthm}%
This main result suggests some interesting and substantial differences between the spectral expansion  of NSA and self-adjoint operators. The phenomenon that, for some classes, the expansion in the full Hilbert space holds only for  $t$ bigger than a constant has been observed in the framework of Schr\"odinger operator, see \cite{Davies}, and is natural for non-normal operators. Indeed, in such a case, the spectral projections
\begin{equation} \label{eq:def_sp}
{\rm{P}}_n f = \langle  f,\nun \rangle_{\nu} \:   \mathcal{P}_{n}
\end{equation}  are not uniformly bounded as a sequence of operators.  The projections are not orthogonal anymore and the sequence of eigenfunctions does not form a Riesz basis of the Hilbert space, see \cite{Christensen-03} for definition and also the subsection \ref{subsec:out} below. When compared to the spectral resolution of compact self-adjoint operators whose set of eigenfunctions forms an orthonormal basis of the Hilbert space, our study reveals that this requirement on the invariant subspace is, in general, too stringent. Indeed, even  the range of  the non-self adjoint operator, which is a linear subspace of the Hilbert space, can not be expanded into the invariant subspaces.    These  facts illustrate fundamental differences with the spectral reduction of self-adjoint Markov semigroups.
\end{remark}

\begin{remark}
In line with the previous remark, it is  worth pointing out that the smoothness of the density of the heat kernel  (or of $P_tf$, the solution to  the associated Cauchy problem) remains the same for all $t$ bigger than a constant. This fact may reveal that the  eigenvalues expansion of the type \eqref{eq:exp_derv_heat} may be uniformly convergent only  after the threshold time when the smoothness is established. Indeed, there are interesting and various examples of transition densities of Markov processes (with jumps) which are continuous of order $c(t)$ for some increasing functions $c$, see e.g.~\cite{Picard}.
\end{remark}

\begin{remark}
It is quite remarkable that our analysis allows to provide necessary and sufficient conditions for the existence of a sequence of co-eigenfunctions (at least for any $\psi \in \Ni$), a fact which requires not only smoothness properties of the invariant density but also very precise information about its small and large asymptotic behaviour along with its successive derivatives. Indeed, it is always difficult for non-local operators whose L\'evy kernels span the whole set of L\'evy measures to extract asymptotic properties beyond the classical regularly varying framework, or, the so-called stable-like case.  We  also indicate that the condition $\psi \in \Ni$ has a nice path interpretation since it means that the associated generalized Laguerre process has  paths either of infinite variation when $\sigma>0$ or $\PP(0^+)=\infty$, or, when $\sigma=0$, of infinite activity in the sense that there are infinitely many jumps in any compact time interval.
\end{remark}
\begin{remark}
We mention that the  fascinating and powerful techniques, such as Malliavin calculus,  H\"ormander analysis and PIDE techniques, see e.g.~\cite{Malliavin_Sde_H}, \cite{Picard},  \cite{Rockner} and \cite{Silvestre_Caf}, that have proved successful for studying the smoothness properties of the transition kernel of diffusions semigroups or some L\'evy type semigroups, are not general enough to be applied in our context. This is due   to either a lack of symmetry and/or non-homogeneity  of the L\'evy kernel, or,  unboundness of the drift  coefficient, or, the possible absence of diffusion part, or, simply  the non-local feature  of the generators. Our main results reveal that, in the context of non-local Markov semigroups, the spectral expansion is a more flexible approach to derive these delicate regularity properties. Of course, our approach goes much beyond this issue as we also manage to obtain, for $f$ in some various linear spaces, the smoothness of  $P_t f$,   that is the solution to  the Cauchy problem. It would be interesting to characterize for each operator what is the maximal domain for which the stated regularity properties hold.
\end{remark}

\begin{remark}
 We also observe from  the space-time relationship between the self-similar semigroup $(K_t)_{t\geq0}$ and the gL semigroup given in \eqref{it:c2} of Definition \ref{def:gL}, that,  under the condition of the item \eqref{it:main_heat} above,  $K$  has an  absolutely continuous (smooth) kernel, $K_t(x,y)$, given by 		\begin{eqnarray*}
			K_t(x,y) =\sum_{n=0}^{\infty}(1+t)^{-n-1}\mathcal{P}_{n}(x) \nun\lb\frac{y}{1+t}\rb\nu\lb\frac{y}{1+t}\rb
		\end{eqnarray*}
where the series is locally uniformly convergent in $(t,x,y) \in (e^{T}-1,\infty)\times \R_+^2$, where $T$ is either $T_{\H}$ or $\underline{T}$.
\end{remark}
The next result furnishes expansions for the adjoint semi-group $P^*$.
\begin{theorem} \label{thm:adj}
\begin{enumerate}
\item\label{it:adj1} If $\psi \in \Nee$  (resp.~$\psi \in \Ne_R$), then for all $ t> T_{\H}=-\ln\lbrb{\sin\lbrb{\H}}$  (resp.~$t>\underline{T}=\Tab \wedge T_{\H}$) and  any $g \in \Lnu$,
\begin{equation} \label{eq:exp_dual}
P^*_tg(y)  =\sum_{n=0}^{\infty}e^{-nt}\left\langle g,\Pon \right\rangle_{\nu}\nun(y),
\end{equation}
where the series converges locally uniformly   in $ y>0$.
\item\label{it:adj2} If $\psi \in \Ne_P$ with $0<\PPP(0^+)<\infty$, then for all $g \in \Lnu$ and $t>0$ \eqref{eq:exp_dual} holds in $\Lnu$.
\end{enumerate}
\end{theorem}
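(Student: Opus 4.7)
The plan is to derive both items by duality from the spectral expansion of the primal semigroup $P_t$ already established in Theorem~\ref{thm:dens}. Starting from the adjoint relation $\langle P_t f, g \rangle_\nu = \langle f, P^*_t g \rangle_\nu$ on $\Lnu$ furnished by Theorem~\ref{thm:bijection}\eqref{it:dual} and substituting the primal expansion $P_t f = \sum_{n\geq 0} e^{-nt}\langle f,\nun\rangle_\nu\,\Pon$, continuity of the inner product yields the weak identity
\[
\langle f, P^*_t g\rangle_\nu \;=\; \sum_{n=0}^\infty e^{-nt}\langle f,\nun\rangle_\nu\, \langle \Pon, g\rangle_\nu,\qquad f\in \Lnu,
\]
which identifies $P^*_t g$ with the formal series $\sum_{n\geq 0} e^{-nt}\langle g,\Pon\rangle_\nu\,\nun$ when tested against arbitrary $f$. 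The remaining work is to promote this identity to the two announced convergence modes.

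For item~\eqref{it:adj1}, with $\psi\in\Nee$ (respectively $\psi\in\Ne_R$) and $t$ beyond the relevant threshold $T_\H$ (respectively $\underline{T}$), I would invoke the heat kernel expansion from Theorem~\ref{thm:dens}\eqref{it:main_heat}, namely $P_t(x,y) = \sum_{n\geq 0} e^{-nt}\Pon(x)\nun(y)\nu(y)$ converging locally uniformly in $(x,y)$. Duality gives $P^*_t g(y)\nu(y) = \int_0^{\r} P_t(x,y) g(x)\nu(x)\,dx$, so the target identity \eqref{eq:exp_dual} follows by Fubini once sum and $x$-integral are interchanged. I would justify the interchange by splitting the $x$-integration into a large compact piece, where the series converges uniformly, and a tail piece dominated via Cauchy--Schwarz in $\Lnu$ by $\|g\|_\nu\,\|\Pon\|_\nu$; combining the growth estimates on $\|\Pon\|_\nu$ for the classes $\Nee$ and $\Ne_R$ developed elsewhere in the paper with the pointwise control on $\nun(y)\nu(y)$ uniform on $y$-compacta then produces a summable $n$-majorant compatible with $e^{-nt}$.

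For item~\eqref{it:adj2}, $\psi\in\Ne_P$ puts us in the regime $\mathcal{D}^{\Ne_P}_0(\Lnu)\subset \mathcal{D}$, so Theorem~\ref{thm:dens}\eqref{it:thmdens1a} gives the primal $\Lnu$-expansion for every $f\in\Lnu$ and every $t>0$. Writing the rank-one operators ${\rm P}_n f = \langle f,\nun\rangle_\nu\,\Pon$, one has $P_t = \sum_{n\geq 0} e^{-nt}{\rm P}_n$ strongly on $\Lnu$, with Hilbert adjoints ${\rm P}_n^* g = \langle g,\Pon\rangle_\nu\,\nun$, whence formally $P^*_t = \sum_{n\geq 0} e^{-nt}{\rm P}_n^*$. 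To turn this into $\Lnu$-strong convergence of the partial sums $S_N g = \sum_{n=0}^N e^{-nt}\langle g,\Pon\rangle_\nu\,\nun$, I would use the biorthogonality $\langle \Pon,\nu_m\rangle_\nu = \delta_{nm}$ (read off from the primal expansion at $t=0$ applied to $\mathcal{P}_m$) to control $\|S_N g - S_M g\|_\nu^2$ by a tail of $\sum e^{-2nt}|\langle g,\Pon\rangle_\nu|^2\|\nun\|_\nu^2$, whose summability is the adjoint counterpart of the primal $\Lnu$-convergence.

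The principal obstacle is the tail estimate in item~\eqref{it:adj1}: mere local uniform convergence of the bivariate heat kernel series does not on its own dominate an integral against an arbitrary $g\in\Lnu$, so one must import the sharp operator-norm estimates for $\Pon$ and the fine asymptotics of $\nun\nu$ that the paper proves specifically for $\Nee$ and $\Ne_R$. These estimates determine the thresholds $T_\H$ and $\Tab$, since the exponential $e^{-nt}$ must beat the corresponding growth in $n$; by contrast, the $\Ne_P$ setting is soft, as the unconditional $\Lnu$-convergence of the primal expansion already carries over to the adjoint without a time delay.
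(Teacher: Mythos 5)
Your architecture for item \eqref{it:adj1} matches the paper's: both start from the heat kernel expansion $P_t(x,y)=\sum_n e^{-nt}\Pon(x)\nun(y)\nu(y)$ from Theorem~\ref{thm:dens}\eqref{it:main_heat}, pass to the dual via $P^*_tg(y)\nu(y)=\int P_t(x,y)g(x)\nu(x)\,dx$, and interchange sum and integral by Fubini. However, one ingredient is mislabeled: you invoke ``growth estimates on $\|\Pon\|_\nu$ for the classes $\Nee$ and $\Ne_R$,'' but there are none — $\|\Pon\|_\nu\leq 1$ uniformly in $n$ by \eqref{eq:eigen_bound_nu}, and this bound is the \emph{only} role $\Pon$ plays, namely giving $|\langle g,\Pon\rangle_\nu|\leq\|g\|_\nu$. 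The $n$-growth that forces $t>T_\H$ (resp.~$t>\underline{T}$) lives entirely in $w_n(y)=\nun(y)\nu(y)$, via the pointwise bound $|w_n(y)|\leq F(y,t')e^{t'n}$ valid for $t'>T_\Theta$ (Proposition~\ref{prop:preliminAnal}); you do mention this bound, but you spread the blame for the threshold incorrectly. Since your summable majorant is really driven by the $w_n$ estimate alone, the argument survives once the attribution is corrected.

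For item \eqref{it:adj2} there is a genuine gap. You claim to control $\|S_N g - S_M g\|_\nu^2$ by a tail of $\sum e^{-2nt}|\langle g,\Pon\rangle_\nu|^2\|\nun\|_\nu^2$ ``using biorthogonality,'' but biorthogonality asserts $\langle\Pon,\mathcal{V}_m\rangle_\nu=\delta_{nm}$, not $\langle\nun,\mathcal{V}_m\rangle_\nu=0$ for $n\neq m$. The $\nun$'s are not mutually orthogonal (indeed the whole point of the paper is that the biorthogonal system does \emph{not} reduce to an orthogonal basis when $\Pi\not\equiv 0$), so the cross terms in $\|\sum c_n\nun\|_\nu^2$ do not vanish and your claimed inequality is unjustified. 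What the paper actually uses is the nontrivial fact — established in Lemma~\ref{lem:inter_ref} via the reference semigroup intertwining with the classical Laguerre semigroup of order $\mru$ — that the weighted sequence $(\sqrt{\mathfrak{c}_n(\mru)}\,\nun)_{n\geq0}$ is a Bessel sequence in $\Lnu$ with bound~$1$, together with the polynomial estimate $\mathfrak{c}_n(\mru)^{-1}=\bo{n^{\mru}}$. Then $(\mathfrak{c}_n(\mru)^{-1/2}e^{-nt}\langle g,\Pon\rangle_\nu)_n\in\ell^2(\N)$ for every $t>0$, and the synthesis operator of this Bessel sequence closes the argument. You also cannot obtain the Bessel property of the $\nun$'s as ``the adjoint counterpart of the primal $\Lnu$-convergence'': the primal expansion converges because $(\Pon)$ is Bessel, but that does not transfer by duality to a Bessel bound on $(\nun)$. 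The intertwining with the reference Laguerre semigroup is a separate structural input that your sketch does not supply.
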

{\emph{This Theorem is proved in Chapter \ref{sec:proof_main}.}}
\subsection{Convergence to equilibrium}
  There is a substantial and fascinating literature devoted to the study of the convergence to equilibrium of semigroups associated mainly to differential operators. In this framework, this problem has been investigated a lot under various
coercive assumptions on the generator, such as spectral gap or logarithmic Sobolev inequalities,
especially in the self-adjoint framework, see \cite{Bakry_Book} and the references therein. In this direction, we mention that recently Miclo  \cite{Miclo-05-Invent} has shown  that  a self-adjoint ergodic and hyperbounded  Markov  operator admits a spectral gap. Nevertheless, some recent works have identified asymptotic exponential convergence to equilibrium with bounds of the form $\mathtt{m} \: e^{-ct}$, $\mathtt{m},c>0$ and $t\geq0$, when the generator satisfies  some hypoelliptic type conditions. This
phenomenon has been called hypocoercivity in  Villani \cite{Villani-09}, and, has
recently attracted more and more attention, see e.g.~Desvillettes and Villani \cite{DesV}, Eckmann and Hairer \cite{EH}, Gadat and Miclo \cite{Gadat-Miclo},  Baudoin \cite{Baudoin} and Dolbeaut et al.~\cite{Mouhot}. Note that in this literature the constants above are not necessarily optimal and are in general difficult to identify. At this stage, it is worth pointing out that among the different rates of convergence that we obtain for the entire class of gL semigroups, we also observe, see \eqref{eq:hyper} below,  for the (small) perturbation class the  aforementioned hypocoercivity behaviour and the spectral decomposition enables us to explain it in terms of the spectral gap and projection norms which is a natural extension to the classical spectral gap observed in the self-adjoint case. We refer to the remarks following the next statement for further discussion on this and on other identified rates.
 \begin{theorem} \label{thm:dense}
\begin{enumerate}
\item \label{it:Inter} Let $\psi \in \Ne$.  Then, for any $f\in \ran{\Ip} \subsetneq \lnu$, we have for any $t>0$,
\begin{eqnarray}
\nonumber \left|\left|P_t  f -\nu  f   \right|\right|_{\nu}  &\leq& C_{\mathfrak{f},f}(\psi)\: e^{-t}  || f -\nu f ||_{\nu},
\end{eqnarray}
where, writing $f=\Ip\mathfrak{f}$,  $C_{\mathfrak{f},f}(\psi)=\frac{|||\Ip||| \: || \mathfrak{f} -\varepsilon \mathfrak{f}||_{\varepsilon} }{||f -\nu f||_{\nu} }\geq 1$ with $|||\Ip|||= \sup\limits_{ f \in \lga,  f\neq 0} \frac{||\Ip f||_{\nu}}{||f||_{\varepsilon}}$.
\item \label{it:Spec} For any $(\psi,f,t) \in \mathcal{D}$,  there exist $C_L>0$ and an integer $k\geq 0$ such that for any $t>T$,
\begin{eqnarray}
\nonumber \left|\left|P_{t}f -\nu f   \right|\right|_{\nu} &\leq &  C_L2^{\frac{k}2}\sqrt{\lbrb{\frac{1}{e^{2\lbrb{t-T}}-1}}^{(k)}} || f -\nu f||_{{\rm{L}}},
\end{eqnarray}
where $||.||_{{\rm{L}}}$ stands for the Hilbert space  norm of ${\rm{L}}=\Lv$ if $ (\psi,f,t) \in \mathcal{D}^{\Nee}_{T_{\H}}(\Lv)$ or of ${\rm{L}}=\Lga$ if $(\psi,f,t) \in \mathcal{D}^{\Ne_\alpha}_{T_{\pi_{\alpha}}}(\Lga)$ or of ${\rm{L}} = \Lnu$ otherwise.
When $\psi\in\Ne_R$ then $k=0$ and ${\rm{L}} = \Lnu$.
\item\label{it:Pert} Finally, let $\psi \in \Ne_P$.  Then, for any $\epsilon>0$,  there exists $C_{\epsilon}>0$ such that for any $f \in \lnu$ and $t>\epsilon>0$, we have that
\begin{eqnarray}
\nonumber \left|\left|P_{t}f -\nu f   \right|\right|_{\nu} &\leq &  \frac{C_{\epsilon}}{\sqrt{e^{2t }-1}} || f -\nu f||_{{\nu}}.
\end{eqnarray}
Moreover, if  $0<\PPP(0^+)<\infty$, then   $\mru=\frac{m+\PPP(0^+)}{\sigma^2}>\dpe^+= -(d_{\phi}+\epsilon)\mathbb{I}_{\{-d_{\phi}>0\}}\geq0$ for some $\epsilon>0$, and,    for any $t> 0$
and
 any $f \in \lnu$, the following bound
\begin{eqnarray} \label{eq:hyper}
 \left|\left|P_tf -\nu f   \right|\right|_{\nu}  &\leq&  \sqrt{\frac{\mru +1}{\dpe^++1}}\: e^{-t}   || f -\nu f||_{\nu}
\end{eqnarray}
which holds with  $\frac{\mru +1}{\dpe^++1} \geq 1 + \frac{\PPP(0^+)}{m+\sigma^2+\epsilon}\mathbb{I}_{\{-d_{\phi}>0\}} + \frac{m+\PPP(0^+)}{\sigma^2}\mathbb{I}_{\{\dpe^+=d_{\phi}=0\}}$.
\end{enumerate}
\end{theorem}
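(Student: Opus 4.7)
My plan is to transport the spectral-gap bound of the classical Laguerre semigroup $Q=(Q_t)_{t\geq 0}$ (the case $\phi(u)=u$, invariant measure $\varepsilon$) through the intertwining $P_t\,\Ip = \Ip\,Q_t$ established earlier in the paper. Since $Q$ is self-adjoint in $\Lg$ with spectral gap equal to $1$, one has $||Q_t\mathfrak{f}-\varepsilon\mathfrak{f}||_{\varepsilon} \leq e^{-t}||\mathfrak{f}-\varepsilon\mathfrak{f}||_{\varepsilon}$. Dualising the invariance of $\nu$ for $P$ across the intertwining yields $\nu\,\Ip = \varepsilon$, so writing $f=\Ip\mathfrak{f}$ gives $\nu f = \varepsilon\mathfrak{f}$ and $f-\nu f = \Ip(\mathfrak{f}-\varepsilon\mathfrak{f})$. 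Combining,
$$||P_tf-\nu f||_{\nu} = ||\Ip(Q_t\mathfrak{f}-\varepsilon\mathfrak{f})||_{\nu} \leq |||\Ip|||\,e^{-t}\,||\mathfrak{f}-\varepsilon\mathfrak{f}||_{\varepsilon},$$
and multiplying and dividing the right-hand side by $||f-\nu f||_{\nu}$ produces the stated $C_{\mathfrak{f},f}(\psi)$. The lower bound $C_{\mathfrak{f},f}(\psi)\geq 1$ is immediate from $||f-\nu f||_{\nu} = ||\Ip(\mathfrak{f}-\varepsilon\mathfrak{f})||_{\nu}\leq |||\Ip|||\,||\mathfrak{f}-\varepsilon\mathfrak{f}||_{\varepsilon}$.

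\textbf{Part (2).} Start from the spectral expansion of Theorem \ref{thm:dens}\ref{it:thmdens1a}. As $\mathcal{P}_0=\nu_0\equiv 1$ and $\langle f,1\rangle_{\nu}=\nu f$, the $n=0$ term equals $\nu f$, so on the relevant domain
$$P_tf-\nu f = \sum_{n\geq 1}e^{-nt}\langle f,\nun\rangle_{\nu}\,\Pon \text{ in } \Lnu.$$
Triangle inequality together with $\ell^2$-Cauchy--Schwarz gives
$$||P_tf-\nu f||_{\nu} \leq \Bigl(\sum_{n\geq 1}e^{-2nt}||\Pon||_{\nu}^{2}\Bigr)^{\!1/2}\Bigl(\sum_{n\geq 1}|\langle f,\nun\rangle_{\nu}|^{2}\Bigr)^{\!1/2}.$$
The two inputs I would invoke from earlier chapters are: (i) a polynomial bound $||\Pon||_{\nu}^{2}\leq C\,n^{k}e^{2nT}$ on the spectral-projection norms (with $k=0$ in the regularly varying class $\Ne_R$), obtained by complex-analytic estimates on the defining formula \eqref{defP} in terms of $\phi$ and $W_\phi$; and (ii) a Bessel-type inequality $\sum_{n\geq 1}|\langle f,\nun\rangle_{\nu}|^{2}\leq C_L^{2}\,||f-\nu f||_{\rm L}^{2}$ where the auxiliary Hilbert space $\rm L$ ($\Lv$, $\Lga$, or $\Lnu$) is tailored to the subclass. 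Inserting (i) into the first factor and using the identity
$$\sum_{n\geq 1}n^{k}e^{-2ns}=\frac{1}{(-2)^{k}}\Bigl(\frac{1}{e^{2s}-1}\Bigr)^{(k)},\quad s>0,$$
at $s=t-T$ then delivers the claimed bound.

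\textbf{Part (3).} Here $\psi\in\Ne_P$ implies $\sigma^2>0$, which by Theorem \ref{thm:dens}\ref{it:main_heat} makes $(t,x)\mapsto P_tf(x)$ real-analytic on $\R_+^2$ and $P_\epsilon f\in\ran{\Ip}$ for every $f\in\Lnu$ and $\epsilon>0$. The first bound, valid on all of $\Lnu$, is obtained by writing $P_tf=P_{t-\epsilon}(P_\epsilon f)$, applying Part (1) with controlled norm of $\Ip^{\dagger}P_\epsilon f$ in $\Lg$, and rewriting the resulting $e^{-(t-\epsilon)}$ as $e^{-(t-\epsilon)}/\sqrt{1-e^{-2(t-\epsilon)}}\cdot\sqrt{1-e^{-2(t-\epsilon)}}$ to produce the $1/\sqrt{e^{2t}-1}$-shape. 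The hypocoercive refinement \eqref{eq:hyper} exploits the extra hypothesis $0<\PPP(0^+)<\infty$ to ensure that $\Ip$ is boundedly invertible on $\Lnu$, so that $P_t-\nu\,\cdot\, = \Ip(Q_t-\varepsilon\,\cdot\,)\Ip^{-1}$ holds as operators on $\Lnu$ and
$$||P_tf-\nu f||_{\nu}\leq |||\Ip|||\cdot|||\Ip^{-1}|||\cdot e^{-t}\,||f-\nu f||_{\nu}.$$
The product $|||\Ip|||\cdot|||\Ip^{-1}|||$ is then computed explicitly through the Mellin representation $\E[I_\phi^{s}]=\Gamma(s+1)/W_\phi(s+1)$, which under $\PPP(0^+)<\infty$ reduces to a ratio of Gamma functions whose extremum evaluates to $(\mru+1)/(\dpe^++1)$; the lower bound on this ratio is elementary arithmetic on $\mru=(m+\PPP(0^+))/\sigma^2$ and $\dpe^+$.

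\textbf{Main obstacle.} The decisive step is Part (2): proving simultaneously (i) the polynomial growth rate $||\Pon||_{\nu}^2\leq C n^{k}e^{2nT}$ with the correct dependence of $k$ and $T$ on each subclass, and (ii) the Bessel inequality for $(\nun)$ in the auxiliary Hilbert space. These two estimates are where the full analytic weight of the paper (complex-analytic bounds on $\phi$, bookkeeping of Mellin representations for $W_\phi$ and $I_\phi$, and the intertwining with the classical Laguerre polynomials) must be deployed; once they are in hand the convergence-to-equilibrium bounds reduce to the Cauchy--Schwarz accounting above. A secondary technical point is verifying the invertibility of $\Ip$ on $\Lnu$ and the exact evaluation of $|||\Ip|||\cdot|||\Ip^{-1}|||$ in Part (3), which forces the analysis of an extremal problem for ratios of Gamma functions.
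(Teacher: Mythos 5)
Your argument for item (1) is the paper's argument: use $P_t\Ip=\Ip Q_t$, the factorization $\nu\Ip=\varepsilon$ to get $\nu f=\varepsilon\mathfrak{f}$, the contraction $|||\Ip|||\leq 1$, and the classical spectral gap of $Q$. This is fine.

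\textbf{Part (2).} There is a genuine gap here, and it stems from reversing the roles of the eigenfunctions $(\Pon)$ and co-eigenfunctions $(\nun)$. You propose the triangle inequality followed by $\ell^2$-Cauchy--Schwarz, splitting $\sum_{n\geq 1}e^{-nt}|\langle f,\nun\rangle_{\nu}|\,||\Pon||_{\nu}$ into a factor $\bigl(\sum e^{-2nt}||\Pon||_{\nu}^{2}\bigr)^{1/2}$ (which you want to grow like $n^{k}e^{2nT}$) and a factor $\bigl(\sum|\langle f,\nun\rangle_{\nu}|^{2}\bigr)^{1/2}$ (which you want to be finite, i.e.\ a Bessel inequality for $(\nun)$). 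Both inputs go in the wrong direction. First, Theorem \ref{thm:eigenfunctions1} establishes $||\Pon||_{\nu}\leq 1$ uniformly in $n$ (indeed $(\Pon)$ is a Bessel sequence), so there is no exponential factor to extract there. Second, the unweighted sum $\sum_{n\geq 1}|\langle f,\nun\rangle_{\nu}|^{2}$ can be \emph{infinite}: Theorem \ref{cor:sequences}\eqref{it:1_thmseq} shows $(\nun)$ is a Riesz--Fischer sequence, not a Bessel sequence, and the $\lnu$-norms $||\nun||_{\nu}$ grow exponentially, $||\nun||_{\nu}=\mathrm{O}(e^{Tn})$ — this is exactly why the expansion only holds for $t>T$. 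The paper's argument avoids both problems by using the synthesis operator of the Bessel sequence $(\Pon)$ to pass directly to $||P_tf-\nu f||_{\nu}^{2}\leq\sum_{n\geq 1}e^{-2nt}|\langle f,\nun\rangle_{\nu}|^{2}$, and then bounding each term $|\langle f-\nu f,\nun\rangle_{\nu}|\leq ||f-\nu f||_{\mathrm{L}}\,||V_n||_{\mathrm{L}}$ so that the exponential weight $e^{-2nt}$, not a Bessel inequality, produces convergence for $t>T$. The polynomial factor $n^{k}$ and exponential $e^{nT}$ live on the co-eigenfunction side.

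\textbf{Part (3).} Here the proposal rests on a claim that is false in the paper's own setting. You assert that under $0<\PPP(0^+)<\infty$ the operator $\Ip$ is boundedly invertible on $\Lnu$, so that $P_t-\nu(\cdot)=\Ip(Q_t-\varepsilon(\cdot))\Ip^{-1}$ and the constant is $|||\Ip|||\cdot|||\Ip^{-1}|||$. But Theorem \ref{MainProp}\eqref{it:bfb} states explicitly that $\Ip$ is bounded below \emph{if and only if} $\psi(u)=\sigma^{2}u^{2}$, i.e.\ only in the trivial diffusion case; for every other $\psi\in\Ne_P$ (including those with $0<\PPP(0^+)<\infty$), $\Ip$ is not bounded below and has no bounded inverse on $\Lnu$. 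The first inequality of item (3) also does not come from the smoothing trick $P_tf=P_{t-\epsilon}(P_\epsilon f)$ combined with Part (1); the claim $P_\epsilon f\in\ran{\Ip}$ for arbitrary $f\in\lnu$ is unproved, and even granted you would still need uniform control of $||\Ip^{\dagger}P_\epsilon f||_{\varepsilon}$, which is not available since $\Ip^{\dagger}$ is unbounded. The paper's proof of the first bound is the same Bessel/synthesis argument as Part (2) applied with the reference-semigroup estimate $||\nun||_{\nu}=\mathrm{O}(e^{\epsilon n})$ (for every $\epsilon>0$) valid on $\Ne_P$. The hypocoercive bound \eqref{eq:hyper} is obtained by a more delicate argument involving \emph{two} Bessel sequences with different Gamma-ratio renormalizations — $(\Pon/\sqrt{\mathfrak{c}_n(\dpe^+)})$ and $(\sqrt{\mathfrak{c}_n(\mru)}\nun)$ — derived from the intertwinings with the classical Laguerre semigroups of orders $\dpe^+$ and $\mru$ (Lemmas \ref{lem:inter_ref} and \ref{lem:facde}); the constant $(\mru+1)/(\dpe^++1)$ arises from the ratio $\mathfrak{c}_1(\dpe^+)/\mathfrak{c}_1(\mru)$, not from a norm product $|||\Ip|||\cdot|||\Ip^{-1}|||$.
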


\begin{remark}
  At this stage, it is worth pointing out that we also observe in \eqref{eq:hyper}, for the (small) perturbation class the aforementioned hypocoercivity behaviour. The intertwining approach that we develop enables us to characterize and interpret the role played by the two constants: the rate of decay $e^{-t}$ corresponds to the second largest eigenvalue, as in the classical self-adjoint setting, whereas the second constant $ \sqrt{\frac{\mru +1}{\dpe^++1}}$  may be interpreted as a measure of the quality of the decomposition of the Hilbert space in terms of the (co-)invariant subspaces naturally corresponding to the (co-)eigenfunctions. A particular justification for this is the estimate of the norm of the spectral projections as defined in \eqref{eq:def_sp}, whose growth is in fact bounded by  the order of the norms of  the eigenfunctions and co-eigenfunctions, that is by  $||\Pon||_{\nu}||\nun||_\nu=\bo{\frac{n^{\mru}}{n^{\dpe^+}}}$. The rate of growth of these norms seems to measure the departure from the classical orthonormal basis.
\end{remark}
\begin{remark}
In the general perturbation case, see  Theorem \ref{thm:dens} \eqref{it:Pert}, the quality of the decomposition worsens and non-classical bounds on the speed of convergence to equilibrium of the type $C_{\nu,\epsilon}(e^{2t}-1)^{-1/2},\,t>\epsilon>0,$ appear, for some constant $C_{\nu,\epsilon}>0$ which could be computed from Chapter \ref{sec:ref} and the proof of \cite[Proposition 2.3]{Patie-Savov-GL}.
\end{remark}
\begin{remark}
Our  results reveal additional interesting phenomena for this rate of convergence to equilibrium. Indeed, we identify more complex structure which involves different non-equivalent topologies. The estimate in Theorem \ref{thm:dens}\eqref{it:Inter}  offers a classical spectral gap estimate  but for functions in the range of the intertwining operator and against the topology of the reference self-adjoint semigroup which is not equivalent to the topology of $\Lnu$, whereas item \eqref{it:Spec}, when the quality of the decomposition of the Hilbert space improves, shows  $C_L(e^{2t}-1)^{-1/2}, C_L>0,t>T,$  speed of convergence against specific topologies. Thus, a general conclusion could be drawn to the effect that eventually the quality of decomposition of the Hilbert space is lost (item \eqref{it:Spec} and \eqref{it:Pert}), where this quality is measured by the rate of growth of the norms of the co-eigenfunctions, see \eqref{eq:est_Np}, which shows slowest growth in the perturbation scenario and, \eqref{eq:est_PL} and \eqref{eq:est_Nr}, which reveal faster growth when we are strictly beyond perturbation. In turn we observe that in the latter scenario we notice that the speed of growth increases whenever the rate of convergence to infinity for small $y$ of the tail of the \LL measure $\PP(y)=\int_{y}^{\infty}\Pi(dr)$, slows down. We emphasize that, regardless of the latter, item \eqref{it:Inter} guarantees a classical speed of convergence against the topology of the self-adjoint semigroup.

\end{remark}

\subsection{Hilbert sequences and spectrum} \label{sec:hsespec}
We now present some substantial properties of the set of (co)-eigenfunctions and  study in details  the spectrum of the gL semigroups.  This will follow from some general ideas, discussed in Chapter \ref{sec:spec}, that establish new and interesting connections between three different concepts: intertwining relation, Hilbert sequences arising in non-harmonic analysis and spectrum of non self-adjoint operators.

We start by  introducing  some  definitions related to  Hilbert sequences which can be found in the monographs of Young \cite[Chap.~1]{Young}.
Two sequences  $(P_n)_{n\geq0}$ and $(V_n)_{n\geq0}$ are said to be biorthogonal  in $\lnu$ if for any $n,m \in \N$,
\begin{equation} \label{eq:def_bio}
\spnu{P_n,V_m}{\nu} = \delta_{nm}.
\end{equation}
Moreover, a sequence that admits a biorthogonal sequence will
be called \emph{minimal} and a sequence that is both minimal and complete, in the sense that its linear span is dense in $\lnu$, will be called \emph{exact}.  It is easy to show that a sequence $(P_n)_{n\geq0} $  is minimal if and only if none of its elements can be approximated by linear combinations of the others. If this is the
case, then a biorthogonal sequence will be uniquely determined if and only if $(P_n)_{n\geq0}$
is complete.

Next, we present some basic notions related to the concept of frames in Hilbert spaces. We point out that this generalization of orthogonal sequences has been introduced by Duffin and Schaeffer \cite{Duffin1952}  in 1952 to study some deep problems in non-harmonic Fourier series and after the fundamental paper  \cite{Daubechies-Gros-Meyer-86} by Daubechies, Grossman and Meyer, frame theory began to be widely used, particularly in the more specialized context of wavelet
frames and Gabor frames. A recent and thorough account on these Hilbert space sequences can be found in the book of Christensen \cite{Christensen-03}.

\noindent A  sequence $(P_n)_{n\geq 0}$ in the Hilbert space $\lnu$ is a frame if there exist   $A,B>0$ such that  the frame inequalities
\begin{equation} \label{eq:frame1}
A   ||f||^2_\nu \leq \sum_{n=0}^{\infty}  |\langle f, P_n\rangle_\nu |^2 \leq B   ||f||^2_\nu
\end{equation}
hold, for all $f \in \lnu$.  If only the upper  bound exists,  $(P_n)_{n\geq 0}$ is called a Bessel sequence. A frame sequence is always complete in the Hilbert space and when it is minimal,  it is called a Riesz sequence. The latter are very useful objects as they share substantial properties with orthonormal sequences. Indeed, a Riesz sequence always admits a unique biorthogonal sequence $(V_n)_{n\geq0}$ which is also a Riesz sequence and both together form the so-called Riesz basis. Moreover, the  expansion  in terms of the Riesz basis of any element of the Hilbert space is  unique and convergent in the topology of the norm.  When $(P_n)_{n\geq0}$ is a Bessel sequence, that is only the upper  frame condition in \eqref{eq:frame1} is  satisfied, then the so-called synthesis operator, that is the linear operator $\mathcal{S} : \ell^2(\N) \rightarrow \lnu$ defined by
\begin{equation} \label{eq:def_syn}
	\mathcal{S} : \underline{c}=(c_n)_{n\geq 0} \mapsto \mathcal{S}(\underline{c})=\sum_{n=0}^{\infty}  c_n P_n
\end{equation}
is a bounded operator with norm $ ||\mathcal{S}||_{\nu} \leq \sqrt{B} $, that is, the series is norm convergent for any sequence in $\ell^2(\N)$. However, $\mathcal{S}$ is not in principle onto as the $(P_n)_{n\geq 0}$ does not form in general a basis of the Hilbert space.

 Finally, we say that a sequence $(V_n)_{n\geq 0}$ in $\Lnu$ is a  Riesz-Fischer sequence if there exists a constant $B>0$ such that $B \sum_{n=0}^\infty |c_n|^2 \leq ||\sum_{n=0}^\infty c_n V_n||_{\nu}^2 $ for all finite scalar sequences $(c_n)_{n\geq 0}$.

\noindent We  proceed by  recalling a few definitions concerning the spectrum of linear operators and we refer to \cite[XV.8]{Dunford1971} for a thorough account on these objects. First, a complex number $\lambda \in \textrm{S}(P)$, the  spectrum of the linear operator $P \in \Bo{\Lnu}$, if $P -\lambda\mathbf{I}$ does not have an inverse in $\lnu$ with the following three distinctions:
\begin{itemize}
	\item   $\lambda \in  \textrm{S}_p(P)$, the point spectrum, if $\textrm{Ker}(P -\lambda\mathbf{I}) \neq \{0\}$.
	\item  $\lambda \in  \textrm{S}_c(P)$, the continuous spectrum, if $\textrm{Ker}(P -\lambda\mathbf{I}) = \{0\}$ and $\Ran{P-\lambda\mathbf{I}}=\Lnu$ but $\ran{P-\lambda\mathbf{I}} \subsetneq  \Lnu$.
		\item  $\lambda \in  \textrm{S}_r(P)$, the residual spectrum, if $\textrm{Ker}(P -\lambda\mathbf{I}) = \{0\}$ and $\Ran{P-\lambda\mathbf{I}} \subsetneq \Lnu$.
\end{itemize}
Clearly, $\textrm{S}(P)=\textrm{S}_p(P)\cup \textrm{S}_c(P)\cup \textrm{S}_r(P)$.
Let $\lambda \in S_p(P)$ be an isolated eigenvalue. Then its  geometric multiplicity, denoted by $\mathfrak{M}_g(\lambda,P)$ is computed as follows
\begin{equation} \mathfrak{M}_g(\lambda,P)=\mbox{dim Ker}(P-\lambda I), \end{equation}
that is the dimension of the corresponding eigenspace.
Its algebraic multiplicity, denoted by $\mathfrak{M}_a(\lambda,P)$ is defined by
\begin{equation} \mathfrak{M}_a(\lambda,P) = \mbox{dim } \bigcup_{k=1}^{\infty}\mbox{Ker}(P -\lambda I)^k.
\end{equation}
Note that always $\mathfrak{M}_g(\lambda,P)\leq \mathfrak{M}_a(\lambda,P)$.
We  are now ready to state both interesting properties satisfied by the sequence of eigenfunctions and co-eigenfunctions when viewed as sequences in Hilbert space and information on the structure of the spectrum of gL semigroups. These claims are proved in Chapter \ref{sec:spec}.

\begin{theorem} \label{cor:sequences} \label{thm:spec}
\begin{enumerate}
\item \label{it:seq1} Let $\psi \in \Ne$. Then, $\Spc{\Pon}=\lnu$ and $(\Pon)_{n\geq0}$ is a  Bessel sequence   but it is not  a Riesz basis in $\Lnu$.
\item  \label{it:1_thmseq} Let us now assume that  $\psi \in \Ni$. Then, the  sequence $(\nun)_{n\geq 0}$ is a minimal Riesz-Fischer sequence in $\Lnu$ and $(\Pon)_{n\geq 0}$ is exact. Moreover,  $(\nun)_{n\geq 0}$ is the unique sequence
biorthogonal to $(\Pon)_{n\geq 0}$ in  $\Lnu$.
\item\label{it:3_thmseq}  Assume that $\psi \in \Ne_P \cup \Neab^{d_\phi}$ where  \begin{equation} \label{def:class_d}
 \Neab^{d_\phi}=\left\{\psi \in \Neab; \: d_{\phi}<1-\frac{\mr}{2} -\frac{1}{2\alpha}\right\}
 \end{equation}  Then,  $\Spc{\nun}=\lnu$ and hence  $(\nun)_{n\geq 0}$ is also exact and $(\Pon)_{n\geq 0}$ is its unique biorthogonal sequence   in  $\Lnu$.
\item \label{it:spec} Let $t\geq0$ and write
  \[ \mathcal{E}_t = \{ e^{-nt},\: n\geq0\}.\]
\begin{enumerate}
\item  \label{sit:sa} Then, for any $\psi \in \Ne,$ we have \[\mathcal{E}_t \subseteq \textrm{S}_p(P_t) \textrm{ and }  \textrm{S}_r(P_t)=\emptyset.\]
\item \label{sit:sb} Moreover, if $\psi \in \Ni$, then
\[\mathcal{E}_t \subseteq S_p(P^*_t)  \textrm{ with } \mathfrak{M}_a(e^{-nt},P^*_t)= \mathfrak{M}_g(e^{-nt},P^*_t) =1.\]
 \item \label{sit:sc} If $ \psi \in \Ne_P \cup \Neab^{d_\phi}$, where we recall that the latter class is defined in \eqref{def:class_d}, then \[ \mathcal{E}_t = \textrm{S}_p(P_t)=\textrm{S}_p(P^*_t),\]
    and
\[ \mathfrak{M}_a(e^{-nt},P_t) = \mathfrak{M}_a(e^{-nt},P^*_t) = 1 \textrm{ and }  \textrm{S}_r(P_t)=\textrm{S}_r(P^*_t)=\emptyset.\]
\item\label{it:spectrum} Finally,   if $\psi \in \Ni^c$ with $\Si\geq 1$ , then \[\left\{ e^{-nt},\: 0\leq n< \frac{\PP(0^+)}{2\r} \right\} \subseteq \textrm{S}_p(P^*_t)  \] and  \[ \lbcurlyrbcurly{ e^{-nt},\: n>\frac{\PP(0^+)}{2\r}  } \subseteq S_r(P^*_t). \]
\end{enumerate}
\end{enumerate}
\end{theorem}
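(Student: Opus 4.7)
The plan is to transfer properties of the orthonormal basis of classical Laguerre polynomials $(\mathcal{L}_n)_{n\geq0}$ in $\Lg$ through the intertwining operator $\Ip$ of Theorem~\ref{thm:dens}, and to exploit the explicit polynomial form of $\Pon$ together with the Rodrigues-type representation $\nun(x)\nu(x)=\frac{1}{n!}(x^n\nu(x))^{(n)}$ from Theorem~\ref{thm:dens}\eqref{it:thmdens1}. The Hilbert-space criteria recalled just before the statement (biorthogonality, Bessel and Riesz--Fischer duality, exactness) will then be verified one by one.

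\textbf{Items \eqref{it:seq1} and \eqref{it:1_thmseq}.} Completeness $\Spc{\Pon}=\lnu$ is immediate from $\deg\mathcal{P}_n=n$ combined with the fact that the algebra $\mathbf{P}$ is a core of $\mathbf{G}$ in $\lnu$ (Theorem~\ref{thm:bijection}\eqref{it:ext}). For the Bessel property, I would factor the synthesis operator: with suitable normalisations one has $\Pon=\Ip\ell_n$ where $\ell_n$ is a scalar multiple of $\mathcal{L}_n$, so that $\mathcal{S}_{\mathcal{P}}=\Ip\circ\mathcal{S}_{\ell}$ and the boundedness of $\Ip$ (Theorem~\ref{thm:dens}) combined with the unitarity of the Laguerre synthesis operator yields the upper frame bound. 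The Riesz basis property is ruled out by Remark~\ref{rem:Mainthm} together with Theorem~\ref{thm:dens}\eqref{it:thmdens1}: the spectral expansion fails on the full space below the threshold time $T$, which contradicts the unconditional convergence guaranteed by a Riesz basis. For item~\eqref{it:1_thmseq}, an integration by parts using the Rodrigues-type formula for $\nun\nu$ against the polynomial $\mathcal{P}_m$, with boundary terms vanishing thanks to the decay and smoothness of $\nu$ at the endpoints of $(0,\r)$ (guaranteed in particular by $\psi\in\Ni$), produces the biorthogonality $\spnu{\Pon,\nun}{\nu}=\delta_{nm}$. Since any sequence biorthogonal to a Bessel sequence is automatically Riesz--Fischer, this yields the claim; exactness of $(\Pon)$ and uniqueness of its biorthogonal sequence are standard consequences of minimality and completeness.

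\textbf{Item \eqref{it:3_thmseq}.} The task is to upgrade $(\nun)$ from minimal to exact, i.e.\ to establish $\Spc{\nun}=\lnu$. My approach would be a contradiction argument: if $h\in\lnu$ is orthogonal to every $\nun$, then pairing $h$ with the spectral expansion of Theorem~\ref{thm:dens}\eqref{it:thmdens1} valid on the dense subset $\ran{\Ip}$ would force $h$ to annihilate a dense subset of $\lnu$, hence $h=0$. Making this rigorous requires quantitative control on $\|\Pon\|_\nu$ and $\|\nun\|_\nu$ so that the formal exchange of sum and inner product is legitimate; this is precisely where the hypotheses enter. For the perturbation class $\Ne_P$, the sharp growth of $\|\nun\|_\nu$ obtained elsewhere in the paper is slow enough that the argument closes directly. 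For the class $\Neab^{d_\phi}$, the constraint $d_{\phi}<1-\frac{\mr}{2}-\frac{1}{2\alpha}$ is precisely tuned so that the corresponding norm bounds remain compatible with the polynomial growth of $\|\Pon\|_\nu$. I expect this step---matching the quantitative smoothness and asymptotic behaviour of $\nu$ and its derivatives with the algebraic structure of $\nun=\mathcal{R}^{(n)}\nu/\nu$---to be the main technical obstacle.

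\textbf{Item \eqref{it:spec} (spectrum).} The inclusion $\mathcal{E}_t\subseteq S_p(P_t)$ is the eigenvalue relation $P_t\mathcal{P}_n=e^{-nt}\mathcal{P}_n$, read off from the spectral expansion in Theorem~\ref{thm:dens}; under $\psi\in\Ni$ the dual expansion in Theorem~\ref{thm:adj} gives $P^*_t\nun=e^{-nt}\nun$, and the biorthogonality established in item~\eqref{it:1_thmseq} forces $\mathfrak{M}_g=\mathfrak{M}_a=1$. Emptiness of $S_r(P_t)$ follows from the standard Hilbert-space duality $\lambda\in S_r(P_t)\iff \bar\lambda\in S_p(P^*_t)\setminus S_p(P_t)$: the biorthogonal pairing exhibits every eigenvalue of $P^*_t$ that we need as an eigenvalue of $P_t$, so no residual spectrum can arise. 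For $\psi\in\Ne_P\cup\Neab^{d_\phi}$, item \eqref{it:3_thmseq} says both sequences are exact, which both forces the spectrum down to $\mathcal{E}_t$ and removes any residual part of $P_t$ or $P^*_t$. Finally, for $\psi\in\Ni^c$ with $\Si\geq 1$, Theorem~\ref{thm:dens}\eqref{it:nic} pinpoints exactly which $\nun$ lie in $\lnu$: for $n<\PP(0^+)/(2\r)$ these $\nun$ are genuine eigenfunctions of $P^*_t$, giving the first inclusion, while for $n>\PP(0^+)/(2\r)$ the function $\nun\notin\lnu$ so $e^{-nt}\notin S_p(P^*_t)$; yet $\Pon$ is still an eigenfunction of $P_t$ in $\lnu$, and the same duality principle places $e^{-nt}$ in $S_r(P^*_t)$.
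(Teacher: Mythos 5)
Your proposal captures the right overall shape — transfer properties through the intertwining operator $\Ip$, exploit biorthogonality with the co-eigenfunctions, and use Hilbert-sequence and duality arguments for the spectrum — but several of the steps as written would fail. The argument you give against the Riesz basis property (item (1)) invokes the failure of the spectral expansion below a threshold time $T$; but the paper never establishes such a failure (it only proves convergence above $T$), and for $\psi\in\Ne_P$ the threshold is $T=0$, so this route cannot rule out a Riesz basis for the whole of $\Ne$ as the statement claims. The paper's argument is cleaner and uniform: a Riesz basis would make the equation $\Ip^*f=g$ solvable for every $g\in\Lg$, so by the open mapping theorem $\Ip$ would be bounded from below, contradicting Theorem~\ref{thm:intertwin_1}\eqref{it:bfb}. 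Similarly, deducing $\Spc{\Pon}=\lnu$ from the core property of $\mathbf{P}$ is circular in the paper's logical order — the core property is itself derived from the completeness, which rests on the moment determinacy of $V_\psi$. And the integration-by-parts route to biorthogonality in item (2) only yields $\langle\mathcal{P}_m,\nun\rangle_\nu=0$ for $m<n$; after $n$ integrations by parts one is left with $\frac{(-1)^n}{n!}\int\mathcal{P}_m^{(n)}(x)x^n\nu(x)\,dx$, which for $m>n$ is a non-vanishing combination of moments of $\nu$. You need the eigenvalue argument for distinct eigenvalues, or the paper's computation $\langle\mathcal{P}_m,\nun\rangle_\nu=\langle\Ip\mathcal{L}_m,\nun\rangle_\nu=\langle\mathcal{L}_m,\Ip^*\nun\rangle_\varepsilon=\delta_{mn}$, which uses $\Ip^*\nun=\mathcal{L}_n$ and Laguerre orthonormality.

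The most serious gap is in item (3). If $h\perp\nun$ for all $n$ then $S_th=0$; for $\psi\in\Ne_P$ the identity $S_t=P_t$ holds on all of $\lnu$ and for every $t>0$, so $P_th=0$ and strong continuity gives $h=0$ — fine. But for $\psi\in\Neab^{d_\phi}$ the identity $S_t=P_t$ on $\lnu$ is established only for $t>T_{\bar{\alpha}}>0$, so you obtain $P_th=0$ only in that range. To conclude $h=0$ you would need $\ker P_t=\{0\}$, and the paper derives that injectivity precisely \emph{from} the completeness of $(\nun)$ (Proposition~\ref{prop:hsa}), so your argument is circular. The paper's actual proofs are constructive: for $\Ne_P$ it uses the generating-function identity $\Vp f(1-z)=\sum_n\langle f,\nun\rangle_\nu\,z^n$ together with the injectivity of $\Vp$ on a right-neighbourhood of $0$, the latter established through a Laplace-transform argument (Lemma~\ref{lem:inj}); for $\Neab^{d_\phi}$ it exhibits explicit preimages of the polynomials under the adjoint intertwining operator via Mellin inversion, and the constraint $d_\phi<1-\frac{\mr}{2}-\frac{1}{2\alpha}$ is exactly what licenses the required contour shift — it is not about matching the growth of $\|\Pon\|_\nu$, which is in fact uniformly bounded by $1$ by \eqref{eq:eigen_bound_nu}. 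Your treatment of $S_r(P_t)=\emptyset$ has a related slip: for general $\psi\in\Ne$ the paper only characterises a subset of $S_p(P_t)$, and the correct argument is that completeness of $(\Pon)_{n\geq0}$ forces $\overline{\ran{P_t-q\mathbf{I}}}=\lnu$ for any $q\notin S_p(P_t)$ (Proposition~\ref{prop:sequence-spectrum}\eqref{it:seq_p}), not the duality with $S_p(P^*_t)$ you appeal to.
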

{\emph{This Theorem is proved in Chapter \ref{sec:proof_main}}.}
\begin{remark}
	Although we are able to provide precise information regarding the spectrum of the gL semigroups, the characterization of their full spectrum  seems to be a delicate issue. We refer to Chapter \ref{sec:spec} for interesting and detailed discussions  regarding the spectrum of operators linked by an intertwining operator.
\end{remark}

\subsection{Plan of the paper}

We now describe the contents of the remaining parts of the paper  whose main purpose is merely to prove Theorem \ref{thm:dens}, that is to establish the statements related to the spectral expansions of gL semigroups. The length of the paper may be explained by the complexity  of the problem that forces us
to develop  adequate mathematical  tools almost from scratch. We emphasize  that these new results may be of independent interests in a variety of contexts such as probability theory, the study of functional equations,  asymptotic analysis,  non-harmonic analysis, special functions and functional analysis. More specifically, we have the following organization.

\begin{itemize}
\item{\bf{Strategy of proof and auxiliary results.}} Chapter \ref{sec:MainResults} is   a high-level description of our methodology whose main comprehensive ideas could be used to study the spectral representation of more general Markov  semigroups. We also sate a short version  of the substantial  results that are required to prove the main results. This part  also includes  the proof of Theorem \ref{thm:bijection}\eqref{it:bij} along with some preliminaries results on gL semigroups that are both  useful for the remaining part of the paper and also easy to prove by merely adapting some results that can be found in the literature.

\item {\bf{Examples.}} Chapter \ref{sec:exam} is devoted to the description of some specific examples which illustrate the variety of results we obtained ranging from the self-adjoint class, the perturbation cases to the pure compound Poisson case.

     \item{\bf{New developments in the theory of Bernstein functions.}} In Chapter \ref{sec:bern}, we present some known and new results regarding the convex cone  of Bernstein functions which are essential to develop a fine and detailed analysis of gL semigroups. The new claims vary from new asymptotic estimates on the complex and (positive) real lines, to a new mapping leaving some subset of Bernstein functions invariant. Since this set of functions is central in a variety of contexts, ranging from potential theory,  probability theory,  operator theory to  complex analysis, we gather and prove these results in one chapter.

 \item{\bf{Fine properties of the density of the  invariant measure.}}
     We start  Chapter \ref{sec:prop_nu}  by providing a simple but useful mapping relating  the class of invariant densities  to a subset of the substantial and well-studied class of positive self-decomposable variables.  This connection enables us to use the work of Sato and Yamazato \cite{Sato-Yam-78} to derive  some  fine distributional properties of the invariant densities. We proceed by deepen their study by providing  the small exponential asymptotic  decay  of the densities along with their successive derivatives of this latter class by showing that they satisfy the several and delicate conditions of a non-classical Tauberian theorem  which was originally proved by Baalkema et al.~\cite{Bal-Klu-Sta-93} and that we extend to fit to our framework.  Moreover, by resorting to the theory of excursions of L\'evy processes, we derive very precise bounds for the large asymptotic behaviour of these densities.

     \item{\bf{Bernstein-Weierstrass products and Mellin transforms.}}
In Chapter \ref{sec:Mellin}, we carry out an in-depth study of  the functional equation  $\MP(z+1)=\phi(z)\MP(z)$ valid in the right-half plane that is satisfied by the Mellin transform of the invariant measure. This part complements Webster's fascinating investigation in   \cite{Webster-97} on the positive real line of similar functional equations. It includes an analytical extension to the right-half plane as well as description of the precise asymptotic behaviour along imaginary lines of a class of generalized Weierstrass products in bijection with the convex cone of Bernstein functions. This class encompasses many special functions that have appeared in different contexts in the literature and our approach provides a unified framework to their study and a common set of quantities describing their properties.

     \item{\bf{{Intertwining relations and a set of eigenfunctions.}}}
     In Chapter \ref{sec:Intertwining}, we first develop a factorization of Markov operators which turns out to be essential in the derivation of the intertwining relation between the gL semigroups and the classical Laguerre one. It also plays an important role in proving the continuity property of the intertwining operator between appropriate non-trivial weighted Hilbert spaces. From the intertwining relation we construct a sequence of polynomials which corresponds to  a set  of eigenfunctions for the gL semigroups.  We also prove that the latter  forms a Bessel sequence in $\lnu$, an object which has been introduced in non-harmonic analysis as a generalization of the concept of a basis in Hilbert space.  We also show that the polynomials are the Jensen polynomials of some entire function and exploit this connection to derive an upper bound for large orders of these polynomials in terms of the maximum  modulus of the associated function.
     \item{\bf{Co-eigenfunctions: existence and characterization.}}
     In Chapter \ref{sec:coeigen}, we first resort to the theory of distributions  in the setting of the Mellin transform to characterize, in terms of the Rodrigues operator, the co-eigenfunctions. Using the precise information regarding the densities of the invariant measure obtained in  the previous chapters, we provide (almost) necessary and sufficient conditions for the  co-eigenfunctions to belong to the weighted Hilbert space $\Lnu$.

     \item{\bf{Uniform and norms estimates of the co-eigenfunctions.}}
      Chapter \ref{sec:estimates_norms} contains the proof of two asymptotic estimates for large values of $n$ for the norm of co-eigenfunctions considered in different topologies. The first one is based on a saddle-point approximation which applies for the class $\Nee$, whereas the second one relies on  upper bounds for the co-eigenfunction that we derive by exploiting  very precise information regarding the location of zeros of the successive derivatives of the invariant density.

     \item{\bf{The concept of reference semigroups: $\Lnu$-norm estimates and completeness of the set of co-eigenfunctions.}} We develop in Chapter \ref{sec:ref} the concept of reference semigroups. It consists in  identifying  gL semigroups $\overline{P}$ which satisfy the following two criteria. First, their special structure permits to study their spectral reduction in detail.  Furthermore, there exists a subclass of gL semigroups such that for each element in this class   we have the adjoint intertwining relation  $P^*_t \Lambda^* = \Lambda^* \overline{P}^*_t$ where $\Lambda^*$ is the adjoint of a bounded operator between appropriate weighted Hilbert spaces.   Although this approach may be extended to more general classes, we present in this part two different reference semigroups, which allow, in particular, to deal with the spectral expansion in the full Hilbert space of the  perturbation class, that is when $\psi \in \Ne_P$. We describe two important applications of the reference concept regarding probably the two most technical difficulties of this work, namely the estimates of the $\lnu$ norm of the sequence of co-eigenfunctions $\nun$ and their completeness in $\lnu$.

\item{\bf{Hilbert sequences, intertwining and spectrum.}}
In this Chapter, we  gather new developments in the study of the spectrum of linear operators linked by an intertwining relation. In particular, we establish some interesting and new connections between the concepts of intertwining relation, Hilbert sequences arising in non-harmonic analysis and spectrum of non self-adjoint operators.

     \item{\bf{Proof of Theorems \ref{thm:dens}, \ref{thm:adj} and \ref{thm:dense}.}} The last Chapter contains the last arguments required to complete the lengthy proofs of Theorems \ref{thm:dens}, \ref{thm:adj} and \ref{thm:dense}.

\end{itemize}


\subsection{Notation, conventions and general facts}
\subsubsection{Functional spaces}
Throughout, we denote by $\Ltwo$ the Hilbert space of square integrable Lebesgue measurable functions on $\R_+$ endowed with the inner product $\langle f,g \rangle =\int_0^{\infty} f(x)g(x)dx$ and the associated norm $||.||$. For any weight function $\nu$ defined on $\R_+$, i.e.~a non-negative Lebesgue measurable function, we denote by $\lnu$ the weighted Hilbert space endowed with the inner product $\langle f,g \rangle_{\nu} =\int_0^{\infty} f(x)g(x)\nu(x)dx$ and its corresponding norm $||.||_{\nu}$. Similarly, we use the standard notation for the functional spaces ${\rm{L}}^{p}(\nu)$,  for $p\in[1,\infty]$. We preserve ${\rm{L}}^{p}(\R)$ and ${\rm{L}}^{p}(\R_+)$ for the case when $\nu\equiv 1$.

\noindent For any $E\subseteq \R$, we use $\cco^{k}(E)$, for $k\geq 0$, $k\in\N\cup \{\infty\}$, for functions with $k$ continuous derivatives on $E$ and write simply $\cco(E)=\cco^0(E)$. Additionally, we denote by  $\cco^{k}_0(\R)$ (resp.~$\cco^{k}_0(\R_+)$) the subspaces of functions of $\cco^{k}(\R)$ (resp.~$\cco^{k}(\R_+)$), all of whose $k$ derivatives and the functions themselves vanish at infinity. We also consider the spaces  $\cco^{k}_b(\R)$ (resp.~$\cco^{k}_b(\R_+)$), i.e.~the space of functions with $k$ continuous, bounded
 derivatives. We reserve $\mathtt{C}_c^{\infty}(\R)$ and $\mathtt{C}_c^{\infty}(\R_+)$ for all infinitely differentiable functions with compact support in $\R$ and $\R_+$. Also $\cco^{2}([-\infty,\infty])$ stands for the space of functions $f \in \cco^2(\R)$ which along with their first and second derivatives  admit a limit at $\pm\infty$.
 Furthermore we denote by $\mathtt{B}_b(\R)$ and $\mathtt{B}_b(\R_+)$ the corresponding spaces of bounded measurable functions.

\noindent We also write for  Banach spaces $H_1,H_2$
\[ \mathbf{B}(H_1,H_2)=\{L: H_1 \rightarrow H_2 \textrm{ linear and continuous mapping} \}. \]
In the case of one Banach space $H$, the unital Banach algebra $\mathbf{B}(H,H)$ is denoted by  $\mathbf{B}(H)$.
 \subsubsection{Complex plane, strips and analytic functions}
 We use $\C$ for the complex plane and $\C_\pm$ for the half planes with $\Re(z)\geq 0$ and $\Re(z)\leq 0$. For any $-\infty\leq \underline{a}< \overline{a}\leq \infty$ we define vertical strips of the complex plane by \[ \mathbb{C}_{(\underline{a},\overline{a})}=\{z\in\C;\: \underline{a}<\Re(z)<\overline{a}\}\] and, for any $a\in \R$, we denote by $\mathbb{C}_a=\{z\in\C;\: \Re(z)=a\}$ the vertical complex lines.

 \noindent For $-\infty\leq \underline{a}<\overline{a}\leq \infty$,  denote by
 \[ \mathcal{A}_{(\underline{a},\overline{a})} \textrm{ the set of analytic functions on  } \mathbb{C}_{(\underline{a},\overline{a})} \]
  and by $\mathcal{A}_{[\underline{a},\overline{a})}\subset \mathcal{A}_{(\underline{a},\overline{a})}$ the set of analytic functions which have a continuous extension on $\mathbb{C}_{\underline{a}}$. Similarly, we define $\mathcal{A}_{[\underline{a},\overline{a}]}$ and $\mathcal{A}_{(\underline{a},\overline{a}]}$. Finally, for any $\theta \in (0,\pi)$, we denote by
  \[  \An(\theta) \textrm{ the set of analytic functions on  } \mathbb{C}(\theta)= \{z \in \C;\: |\arg z|<\theta\}. \]

\subsubsection{Asymptotic behaviour}\label{subsubsec:RV}
We use the following notation
\begin{eqnarray*}
f &\asymp &  g \textrm{ means that }  \exists \:  c>0  \textrm{ such that }
c \leq  \frac{f}{g} \leq c^{-1}, \\
f &\stackrel{a}{\sim}& g  \textrm{ means that } \lim\limits_{x \to a}\frac{f(x)}{g(x)}=1, \textrm{ for some }  a\in \R\cup\lbcurlyrbcurly{\pm\infty},\\
f &\stackrel{a}{=}& \bo{g} \textrm{ means that } \varlimsup\limits_{x \to a } \left| \frac{f(x)}{g(x)}\right| < \infty,\\
f &\stackrel{a}{=}& \so{g} \textrm{ means that } \lim_{x \to a } \left| \frac{f(x)}{g(x)}\right| =0.
\end{eqnarray*}
We say that a function $l$ is slowly varying at $a\in\lbcurlyrbcurly{0}\cup\lbcurlyrbcurly{\infty}$ if, for any $y>0$,
\[\lim_{x\to a}\frac{l(yx)}{l(x)}=1.\]
We then say that $f\in RV_\alpha(a)$, i.e.~$f$ is regularly varying of index $\alpha \in \R$ at $a$, if
\begin{equation} \label{def:rv}
 f(x)=x^\alpha l(x)
 \end{equation}
  where $l$ is slowly varying at $a$. Note that the class of slowly varying functions at $a$ coincides with $RV_0(a)$. We refer to \cite{BinghamGoldieTeugels87} for a complete account of the theory of regularly varying functions.

\subsubsection{Random variables and Mellin tranforms} \label{sec:rev_mellin}
Recall that a random variable is a measurable mapping $X:\Omega\to\R$ from a probability space $(\Omega,\P)$ endowed with a $\sigma$-algebra to $\R$ endowed with the Borel $\sigma$-algebra. In this work we mostly consider mappings restricted to $\R_+$, i.e.~$X:\Omega\to\R_+$. In the latter case the Mellin transform is introduced via
\[\mathcal{M}_X(z)=\E\lbb X^{z-1} \rbb\]
and it is well defined at least for $z\in \mathbb{C}_1$, i.e.~when $z=1+ib,\, b\in \R$. If $\mathcal{M}_X(z)$ is defined, absolutely integrable and uniformly decaying to zero along the lines of the strip $z\in\mathbb{C}_{(\underline{a},\overline{a})}$, for $\underline{a}<1<\overline{a}$, the Mellin inversion theorem applies and gives, for $\underline{a}<a<\overline{a}$ and $x>0$,
\begin{equation}\label{eq:MellinInversionFormula}
\upsilon(x)=\frac{1}{2\pi i}\int^{a+i\infty}_{a-i\infty}x^{-z}\mathcal{M}_X(z)dz,
\end{equation}
where, by dominated convergence, $\upsilon \in \co$ and it is the density of $X$, i.e.~$\P(X\in dx)=\upsilon(x)dx$. We write $\M_{\upsilon}=\M_{X}$.
If for any fixed $a \in (\underline{a},\overline{a})$ and some $\theta\in\lbrbb{0,\pi}$,
\begin{equation}\label{eq:Mellin_exp}
 |\M_{\upsilon}(a+ib)| = \bo{e^{-\theta |b|}}\end{equation}
  then a dominated convergence argument applied to \eqref{eq:MellinInversionFormula} yields that $\upsilon \in \An(\theta)$.

Moreover if, for some $n \in \N$,  $\M_{\upsilon}$ is well-defined in $\mathbb{C}_{(\underline{a},\,\overline{a})}$, then
$\M_{\upsilon^{(n)}}(z)= (-1)^n(z-n)_n \M_{\upsilon}(z-n)$ is well-defined in $\mathbb{C}_{(\underline{a}+n,\,\overline{a}+n)}$, where $(z-n)_n=\frac{\Gamma(z)}{\Gamma(z-n)}=(z-1)\ldots(z-n+1)$, see \cite[Chap.~11, (11.7.6)]{Misra-Lavoine}. If, in addition, $z\mapsto z^n\mathcal{M}_{\upsilon}(z-n)$, for some $n \in \N$, is absolutely integrable and uniformly decaying along the complex lines of the strip $\mathbb{C}_{(\underline{a}+n,\overline{a}+n)}$, then we have,  for $\underline{a}+n<a<\overline{a}+n$ and $x>0$,
\begin{equation}\label{eq:MellinInversionFormulaDeriv}
\upsilon^{(n)}(x)=\frac{(-1)^n}{2\pi i}\int^{a+i\infty}_{a-i\infty}x^{-z}(z-n)_n \mathcal{M}_{{\upsilon}}(z-n)dz
\end{equation}
and by dominated convergence again $\upsilon^{(n)}\in \co$. Finally, we also use the $\lt^2$-theory of Mellin transform and in particular the Parseval identity which reads off, for  $\upsilon \in \lt^2(\R_+)$, as
\begin{equation} \label{eq:parseval}
|| \upsilon ||^2 = \int_{-\infty}^{\infty}\left|\M_{\upsilon}\left(\frac12+ib\right) \right|^2 \frac{db}{2\pi}.
\end{equation}
  We refer to the monographs \cite{Misra-Lavoine} and \cite{Paris01} for  excellent accounts on Mellin transforms.
\subsection*{Acknowledgments}
The authors are indebted to an anonymous referee for a careful reading of the manuscript and for the valuable and constructive  suggestions and comments that improve substantially the presentation and the quality of the paper. The first author would like to thank Y.~Bakhtin, A.~Lejay, P.-L.~Lions,  A.~Rouault, L.~Saloff-Coste and D.~Talay for stimulating and encouraging discussions on some of the topics of this work. Special thanks to D.~Talay who has kindly and recurrently suggested to study the convergence to equilibrium.   The second author would like to thank E.B.~Davies for bringing several interesting references to his attention, M. Kolb for fruitful discussions and insights, and not the least the first author and his family for their hospitality which often accommodated the process of this work.

\newpage
\section{Strategy of proofs and auxiliary results}\label{sec:MainResults}

The aim of this section is two-fold. On the one hand, its primary purposes are to describe, in a synthetic way, the methodology we have developed to obtain the eigenvalues expansions stated in Theorem \ref{thm:dens}, to consider the main ideas underpinning the proof of the main results and to discuss the technical difficulties we had to overcome. On the other hand, both for the reader's convenience and to give flavour on the type of results we have derived, we also state, in general without proof, several interesting supplementary results. They could be of independent interest in a variety of contexts at the interplay between probability theory, the study of functional equations,  asymptotic analysis,  non-harmonic analysis, special functions and functional analysis. We wish to point out at this stage that the remaining parts of the paper contain additional interesting but more specific claims regarding the objects we are dealing with.  We also mention them in this part.

\subsection{Outline of our methodology} \label{subsec:out}

\subsubsection*{\noindent {\bf 1. Intertwining relations.}} The first main idea is to identify  a commutation relation between the NSA gL semigroups and the semigroup of a self-adjoint operator which admits a known spectral resolution.  This is achieved by  establishing an intertwining relation between the class of gL semigroups and the classical Laguerre semigroup  $Q=(Q_t)_{t\geq0}$, a self-adjoint operator in $\lga$ with $\varepsilon(x)=e^{-x},\,x>0$, whose main properties are reviewed in Example \ref{ex:lag} below. More precisely, after recalling that for any  $\psi(u)=u\phi(u) \in \mathcal{N}$, we define, from \ref{def:mult_kernel_I_phi1},
the Markov operator $\Ip f(x)=\Ebb{f(x I_\phi)}$ where $I_\phi = \int_{0}^{\infty} e^{-\eta_t}dt$,  we have the following statement which is a short version of  Theorem \ref{thm:intertwin_1}
 \begin{theorem}
Let $\psi \in \mathcal{N}$. Then, $\Ip \in \Bop{\lga}{\Lnu}$ and in $\lga$,
\begin{equation} \label{eq:int1}
P_t \Ip f =  \Ip  Q_t f.
\end{equation}
Moreover $\Ip$ is not bounded from below.
\end{theorem}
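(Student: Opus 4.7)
The plan rests on a multiplicative factorization of probability laws linking $\nu$, $I_\phi$, and the standard exponential $\varepsilon$. First, from the invariance of $\nu$ under $P_t$ and the form of the generator \eqref{eq:infgen}, a direct computation on monomials $p_n(x)=x^n$ yields $\mathbf{G}p_n = n(\phi(n)p_{n-1}-p_n)$, so the invariance $\vartheta\mathbf{G}p_n=0$ gives the Mellin functional equation $\M_\nu(z+1)=\phi(z)\M_\nu(z)$. The parallel Carmona--Petit--Yor recursion $\M_{I_\phi}(z+1)=\frac{z}{\phi(z)}\M_{I_\phi}(z)$ holds by integration by parts on $I_\phi=\int_0^\infty e^{-\eta_t}dt$. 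Combined with $\Gamma(z+1)=z\Gamma(z)$ and the common normalisations $\M_\nu(1)=\M_{I_\phi}(1)=\Gamma(1)=1$, these yield $\M_\nu(z)\,\M_{I_\phi}(z)=\Gamma(z)$, which is the Mellin incarnation of the distributional factorization
\[ Y \stackrel{d}{=} X \cdot I_\phi, \qquad X\sim\nu,\ Y\sim\varepsilon,\ X\perp I_\phi.  \]
Then Jensen's inequality applied to $t\mapsto t^2$ gives, for any $f\in\Lg$,
\[ \|\Ip f\|_\nu^2 \;\leq\; \int_0^{\r}\E\!\left[f^2(xI_\phi)\right]\nu(x)\,dx \;=\; \E\!\left[f^2(Y)\right] \;=\; \|f\|_\varepsilon^2,  \]
establishing $\Ip\in\Bop{\Lg}{\Lnu}$ with operator norm at most one.

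For the intertwining, I would verify the identity on the algebra of polynomials $\Pn$, which is a core by Theorem~\ref{thm:bijection}\eqref{it:ext} and preserved by both $P_t$ and $Q_t$. A direct moment calculation gives $\Ip p_n(x)=\M_{I_\phi}(n+1)\,x^n=\frac{n!}{W_\phi(n+1)}\,x^n$, so expanding the classical Laguerre polynomial $L_n(x)=\sum_{k=0}^n (-1)^k\binom{n}{k}\frac{x^k}{k!}$ yields
\[ \Ip L_n(x) \;=\; \sum_{k=0}^n (-1)^k\binom{n}{k}\frac{x^k}{W_\phi(k+1)} \;=\; \Pon(x),  \]
matching the formula \eqref{defP} for the eigenfunctions of $P_t$. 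Since $L_n$ is an eigenfunction of $Q_t$ with eigenvalue $e^{-nt}$ and $\Pon$ is an eigenfunction of $P_t$ with the same eigenvalue, one has $P_t\,\Ip L_n=e^{-nt}\Pon=\Ip Q_t L_n$ for every $n\geq 0$. Because $(L_n)_{n\geq 0}$ is a complete orthonormal system in $\Lg$ and the three operators $\Ip$, $P_t$, $Q_t$ are bounded on the respective spaces, the identity extends by continuity to every $f\in\Lg$.

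Finally, the non-invertibility from below I would prove by contradiction. Suppose there were $c>0$ with $\|\Ip f\|_\nu\geq c\,\|f\|_\varepsilon$ for all $f\in\Lg$. Then $\Ran{\Ip}$ would be closed in $\Lnu$, and combining with the density of $\Ran{\Ip}$ in $\Lnu$ asserted in Theorem~\ref{thm:dens}, $\Ip$ would be a topological isomorphism $\Lg\to\Lnu$. Under such an isomorphism, the image of the orthonormal basis $(L_n)$ is a Riesz basis of the target, so $(\Pon)_{n\geq 0}=(\Ip L_n)_{n\geq 0}$ would be a Riesz basis of $\Lnu$. This contradicts Theorem~\ref{cor:sequences}\eqref{it:seq1}, which shows that $(\Pon)_{n\geq 0}$ is merely a Bessel sequence. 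The main obstacle is precisely this non-Riesz-basis property, whose proof is carried out in Chapters~\ref{sec:estimates_norms}--\ref{sec:ref} through sharp two-sided asymptotics for $\|\Pon\|_\nu$ and $\|\nun\|_\nu$ combined with the biorthogonality $\langle\Pon,\nun\rangle_\nu=\delta_{nm}$; these asymptotics reveal that whenever $\Pi\not\equiv 0$ the spectral-projection norms $\|\Pon\|_\nu\|\nun\|_\nu$ diverge, precluding the uniform lower frame bound required for a Riesz basis.
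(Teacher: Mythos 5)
Your boundedness argument (multiplicative factorization of the exponential law plus Jensen's inequality) matches the paper's approach and is sound, modulo a small caveat: polynomials are not in $\cob$ nor in $\mathcal{D}_\mathbf{G}$, so $\vartheta\mathbf{G}p_n=0$ is not an immediate consequence of invariance; the paper instead obtains the moment identity for $V_\psi$ from Bertoin--Yor. The other two parts, however, are circular. For the intertwining you rely on the eigenfunction identity $P_t\Pon = e^{-nt}\Pon$ (and on $\mathbf{P}$ being a core) to close the chain $P_t\Ip\Lpn = e^{-nt}\Pon = \Ip Q_t\Lpn$; but the paper derives both of these \emph{from} the intertwining relation — the eigenfunction identity in Section~\ref{sec:proof_poly} (where one reads $P_t\Ip\Lpn=\Ip Q_t\Lpn=e^{-nt}\Ip\Lpn$) and the core property in Theorem~\ref{thm:bijection}\eqref{it:ext}, which explicitly refers back to Theorem~\ref{thm:eigenfunctions1}\eqref{it:ef1}--\eqref{it:compl_rb1}. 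There is no independent route available: the pointwise relation $\mathbf{G}\Pon=-n\Pon$ does not by itself yield $P_t\Pon=e^{-nt}\Pon$ in $\lnu$, since one does not yet know $\Pon$ is in the domain of the $\lnu$-generator. The paper avoids this altogether by proving the intertwining first on $\cob$ via the Carmona--Petit--Yor criterion, which requires not only the entrance-law factorization \eqref{eq:FirstCondYor} but also the injectivity of $\Vp$ on $\lt^\infty(\R_+)$ (Lemma~\ref{lemma:Injectivity}), the latter resting on the zero-free property of $\Mp$ (Corollary~\ref{cor:MellinZeroFree}) established via the Bernstein--Weierstrass product together with a Wiener Tauberian argument. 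Your sketch omits this ingredient entirely.

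The ``not bounded from below'' part has the same flaw: you invoke Theorem~\ref{cor:sequences}\eqref{it:seq1}, i.e.\ that $(\Pon)_{n\geq0}$ is not a Riesz basis, but the paper proves that statement (in Theorem~\ref{thm:eigenfunctions1}\eqref{it:compl_rb1}) by contradiction using precisely the non-boundedness-below of $\Ip$, which is Theorem~\ref{MainProp}\eqref{it:bfb} — the very claim you are trying to establish. The paper's own proof is a direct, self-contained computation on monomials: with $p_n(x)=x^n$,
\[ \frac{\|\Ip p_n\|^2_{\nu}}{\|p_n\|^2_{\varepsilon}} = \frac{\prod_{k=n+1}^{2n}\bigl(\psi(k)/k^2\bigr)}{\prod_{k=1}^{n}\bigl(\psi(k)/k^2\bigr)}, \]
and this ratio is shown to tend to $0$ whenever $\Pi\not\equiv 0$ or $m>0$. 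An argument of that type must replace the appeal to the Riesz-basis result.
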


  We prove this relation  in a couple of steps utilizing that \eqref{eq:int1} can be reduced to the proof of a special multiplicative factorization linking the invariant measures of the involved semigroups, see \cite[Proposition 3.2]{Carmona-Petit-Yor-97}, and the injectivity on appropriate Hilbert/Banach spaces of the multiplicative operator $\Ip$. First, we derive the desired multiplicative factorization and then, by showing that the Mellin transform of $\nu$, the density of the invariant measure,   is zero-free on the imaginary line we deduce the injectivity of $\Ip$ by means of  a Wiener Tauberian theorem combined with a standard approximation argument. This zero-free property is extracted from a  generalized Weierstrass product representation of this Mellin transform, see \eqref{eq:Wphi}, which is characterized as a solution to a Gamma type functional equation of the form $\M(z+1)=\phi(z)\M(z)$, where $\phi$ stands for  the descending ladder height exponent associated to $\psi$, that is a Bernstein function, see \eqref{eq:whpsi}. This  result has been announced in the note \cite{Patie-Savov-13}. We mention that, for instance, when considering the trivial Bernstein function $\phi(u)=u$, i.e.~$\psi(u)=u^2$, this Weierstrass product boils down to the infinite product representation of the Gamma function itself. We also point out that  the proof of $\Ip \in \Bop{\lga}{\Lnu}$, that is the continuity property between weighted Hilbert spaces is in general a difficult problem.  By means of the Marcinkiewicz multiplier theorem for Mellin
transform, see \cite{Rooney1982}, one can  show, from the asymptotic behaviour of
its Mellin multiplier, see Theorem \ref{lem:Nalpha} below, that a Markov operator is bounded from $\Lv$,
where $\var(x) = x^{-\alpha}, x,\alpha > 0,$ into itself. One classical approach is to consider weights which
belong to the so-called class of Muchkenboupt, the conditions of which are not satisfied by $\varepsilon$.
Instead, the multiplicative factorization of Markov operators that we establish, allows us to derive by a simple
application of the Jensen inequality the contraction property of the Markov operator $\Ip$.

\noindent The idea of intertwining relation  between Markov semigroups is not new and can be traced back to the works of Dynkin \cite{Dynkin-69} and Rogers and Pitman \cite{Pitman-Rogers-81} which yield such relation between a Brownian motion in $\R^n$ and its radial part, the Bessel process of dimension $n$. This device, which is always difficult to identify,  has also been used  by Diaconis and Fill \cite{Diaconis-Fill-90} in relation with strong uniform times, by Carmona, Petit and Yor \cite{Carmona-Petit-Yor-98} in relation to the so-called self-similar saw tooth-processes,  and, more recently by Fill \cite{Fill2009} for an elegant characterization  of the distribution of the first passage time of some Markov chains, by Borodin and Corwin \cite{Borodin-Corwin} in the context of Macdonald processes, by Pal and Shkolnikov \cite{Pal-Shkolnikov} for linking diffusion operators, and, by Patie and Simon \cite{Patie2012b} to relate classical fractional operators.


\noindent On the other hand, this type of commutation  relation  between linear operators has been also intensively studied in functional analysis in the context of differential operators. This approach culminated   in the work of Delsarte and Lions \cite{Delsarte-Lions-57} who showed the existence of a transmutation operator between differential operators of the same order and acting on the space of entire functions. The transmutation operator, which plays the role of the intertwining operator, is in fact an isomorphism on this space.  This property is very useful for the spectral reduction of these operators since it allows to transfer the spectral objects. We mention that Delsarte and Lions's development has been intensively used in scattering theory and in the theory of special functions, see e.g.~Carroll and  Gilbert \cite{Carrol-Gilbert-81}.


\noindent In the context of this paper, the situation is more delicate, since on the one hand, the operators are  non-local, and, on the other hand, the intertwining operator $\Ip $ is not in general an isomorphism. Perhaps, that is the price to pay in order to relate a non-local and non-symmetric operator to a self-adjoint and local (differential) operator. It is also worth mentioning that intertwining relations go beyond perturbation theory of self-adjoint operators. Indeed, it may relate the self-adjoint classical Laguerre semigroup, i.e.~$\sigma^2>0, m=0, \Pi(\R)=0$ to a NSA semigroup  without a diffusion part, that is when $\sigma^2=0, m>0, \Pi(\R)>0$ in \eqref{eq:infgen}.
As far as the authors know, the intertwining theory has not been exploited  for dealing with the spectral representations of Markov semigroups, or more generally of NSA linear operators,   which is rather surprising, as it seems to be a promising and natural technique as the following lines  hope to illustrate convincingly.

\subsubsection*{\noindent {\bf 2. A set of eigenfunctions}}
From the intertwining identity \eqref{eq:int1}, we easily get that, for each $n \in \N$, $ \Pon =\Ip \Lpn$, with $\Lpn$ the classical Laguerre polynomials defined in \eqref{eq:def_LP1}, satisfy
\begin{equation}\label{eigendef1}
		P_t \Pon = P_t\Ip \Lpn = \Ip Q_t \Lpn = e^{-n t} \Ip \Lpn = e^{-n t}  \Pon
		\end{equation}
and since  $\Ip \in \Bop{\lga}{\Lnu}$ but it is not bounded from below, we obtain with the notions introduced in  Section \ref{sec:hsespec} the following.
\begin{theorem}
	Let  $\psi \in \Ne$.  $(\Pon)_{n\geq0}$ is a  Bessel sequence in $\Lnu$ of  eigenfunctions for $P_t$ but it is not  a Riesz basis in $\Lnu$.
We also have, for any $n\in \N$,
		\begin{equation}\label{eq:eigen_bound_nu0}
		|| \Pon||_{\nu}=\bo{1} \textrm{ and } \left|\mathcal{P}^{(p)}_{n}(x)\right| = \bo{ n^{p+\frac12} e^{\frac{x}{\r}n}}
		\end{equation}
where the last bound holds  for large $n$, any integer $p$ and $x>0$ and we recall that $ \r =\infty \textrm{ if } \sigma^2>0, \textrm{ and, } \: 0<\r=\PPP(0^+)+m\leq \infty  \textrm{ otherwise}$.
\end{theorem}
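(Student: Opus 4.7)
The plan is to read off all four remaining claims --- the Bessel property, the absence of a Riesz basis structure, the $\lnu$-norm bound, and the pointwise derivative estimate --- from the intertwining relation $P_t \Ip = \Ip Q_t$ on $\lga$ together with two already-established inputs: the orthonormality of the classical Laguerre polynomials $(\Lpn)_{n\geq 0}$ in $\lga$, and the continuity of $\Ip\in\Bop{\lga}{\Lnu}$ combined with its failure to be bounded from below. The eigenfunction identity $P_t\Pon=e^{-nt}\Pon$ has already been recorded in equation \eqref{eigendef1}.

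For the Bessel property and the norm bound, the single computation
$$\Big\| \sum_{n} c_n \Pon \Big\|_\nu = \Big\| \Ip \sum_{n} c_n \Lpn \Big\|_\nu \leq |||\Ip|||\,\Big(\sum_{n} |c_n|^2\Big)^{1/2},$$
valid for every finite scalar sequence $(c_n)$ thanks to the orthonormality of $(\Lpn)_{n\geq 0}$ in $\lga$, shows that the synthesis operator from \eqref{eq:def_syn} extends to a bounded map $\ell^2(\N)\to\lnu$ of norm at most $|||\Ip|||$. By the standard duality with the analysis operator this is exactly the Bessel condition with $B\leq|||\Ip|||^2$, and testing against $c_k=1$ if $k=n$ and $0$ otherwise yields $\|\Pon\|_\nu\leq|||\Ip|||=\bo{1}$. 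Being a Riesz basis would in addition require a lower frame inequality $A\sum_n|c_n|^2\leq\|\sum_n c_n\Pon\|_\nu^2$; rewriting its right-hand side as $\|\Ip\sum_n c_n\Lpn\|_\nu^2$ and extending by density to all of $\lga$ via the completeness of $(\Lpn)_{n\geq 0}$ would force $\|\Ip g\|_\nu\geq\sqrt{A}\|g\|_\e$ for every $g\in\lga$, contradicting the hypothesis that $\Ip$ is not bounded from below.

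The pointwise bound on $\mathcal{P}_n^{(p)}$ is the technical heart of the statement, and I expect it to be the main obstacle because it cannot be reached by soft $\lt^2$ arguments. Using the moment identity $\Ebb{I_\phi^k}=k!/W_\phi(k+1)$ for exponential functionals of subordinators together with the action of $\Ip$ on monomials, the explicit expression \eqref{defP} can be recognised as the $n$th Jensen polynomial $J_n(x)=\sum_{k=0}^n\binom{n}{k}\gamma_k x^k$ attached to the function
$$f_\phi(x)=\sum_{k\geq 0}\frac{\gamma_k}{k!}x^k,\qquad \gamma_k=\frac{(-1)^k}{W_\phi(k+1)}.$$
A ratio-test argument using $(k+1)\phi(k+1)\to\infty$ shows that $f_\phi$ is entire, and a standard type computation based on $W_\phi(n+1)^{1/n}\to\r$, itself a by-product of the Bernstein-function analysis developed in the paper, shows that its exponential type equals $1/\r$ (interpreted as $0$ when $\r=\infty$). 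From Cauchy's formula
$$\mathcal{P}_n^{(p)}(x)=\frac{p!}{2\pi i}\oint_{|z-x|=R}\frac{\Pon(z)}{(z-x)^{p+1}}\,dz$$
the proof then reduces to an upper bound on $\max_{|z-x|=R}|\Pon(z)|$, which one controls by combining the classical Jensen-polynomial maximum-modulus inequality with a saddle-point analysis at a radius $R\sim 1/n$. The announced prefactor $n^{p+1/2}$ emerges as the product of the Cauchy gain $R^{-p}\sim n^p$ with the Gaussian width $n^{1/2}$ of the saddle, while the exponential $e^{xn/\r}$ reflects the exponential type of $f_\phi$. The delicate point, and where the main work will lie, is to track the constants uniformly in $p$ through sharp asymptotics for $W_\phi(n+1)$ and for the ratios $\phi(k+1)/\phi(k)$.
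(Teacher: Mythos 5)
Your arguments for the Bessel property, the norm bound, and the failure of the Riesz-basis property are correct, and they take the dual route to the paper's: where the paper proves Bessel-ness via $\sum_n|\langle f,\Pon\rangle_\nu|^2 = \sum_n|\langle \Ip^*f,\Lpn\rangle_\e|^2 = \|\Ip^*f\|_\e^2 \leq \|f\|_\nu^2$ (Parseval in $\Lg$ plus the contraction $\Ip^*$), you bound the synthesis operator $\ell^2\to\lnu$ directly and invoke the analysis--synthesis duality, which is equivalent. Likewise the paper rules out the Riesz basis by passing to $\Ip^*$, the orthonormal basis $(\Lpn)$, and the open mapping theorem, whereas you deduce directly that a lower frame inequality would force $\Ip$ to be bounded below; both reach the same contradiction with Theorem \ref{MainProp}\eqref{it:bfb}. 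Your normalisation constant is $|||\Ip|||$ rather than $1$; the paper uses that $\Ip$ is actually a contraction (see \eqref{eq:bound_Ip}), giving $\|\Pon\|_\nu\leq 1$, but this only affects the constant in the $\bo{1}$.

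The pointwise derivative estimate is where your outline has a genuine gap, and the paper's route is structurally different. Your plan applies Cauchy's derivative formula in the $x$-variable on a small circle $|z-x|=R\sim 1/n$ and then needs an upper bound on $\max_{|z-x|=R}|\Pon(z)|$. But the claim you attribute to a ``Jensen-polynomial maximum-modulus inequality'' would at best give $|\Pon(z)|\leq\Pon(-|z|)$ (positivity of coefficients); it does \emph{not} produce the essential $p=0$ estimate $\Pon(-y)=\bo{n^{1/2}e^{ny/\r}}$, which is the hard part and would have to be proved separately. Your explanation that the $n^{1/2}$ factor is the ``Gaussian width of the saddle at $R\sim 1/n$'' conflates two different contours: the small circle in the $x$-variable that you optimize over (where no $n^{1/2}$ arises), and the large contour $|t|=n$ in the generating-function variable, where the paper actually extracts the $n^{1/2}$. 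The paper's argument avoids Cauchy-in-$x$ entirely. It first majorizes $|\mathcal{P}_n^{(p)}(x)|\leq\mathcal{P}_n^{(p)}(-x)$ for $x>0$ (positive coefficients), then invokes the purely algebraic identity \eqref{eq:Pnp},
\[
\bigl(\mathcal{P}_n^{\phi}(-x)\bigr)^{(p)}=\frac{\Gamma(n+1)}{\Gamma(n-p+1)\,W_\phi(p+1)}\,\mathcal{P}_{n-p}^{\phi_p}(-x),
\]
with the shifted Bernstein function $\phi_p(u)=\phi(u+p)$, which cleanly reduces $p>0$ to the case $p=0$ for a new Bernstein function of the same order and type. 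The $p=0$ case is then handled through the contour integral $\Pon(-x)=\frac{n!}{2\pi i}\oint_{|t|=n}e^t\Jp(xt)\,t^{-n-1}dt$, bounding $\max_{|t|=nx}|\Jp|$ by the type estimate $\Ep$, and evaluating $\int_0^{2\pi}e^{n\cos\theta}\,d\theta$ via the modified Bessel function $\Jpa{\Gamma}(n)$; Stirling plus the Bessel asymptotic yields $\bo{n^{1/2}\Ep(nx)}$. The order/type identification also uses Webster's asymptotic $W_\phi(n+1)\simi C_\psi\sqrt{\phi(n)}\,e^{G(n)}$ from \eqref{lemmaAsymp1-2}, not merely $W_\phi(n+1)^{1/n}\to\r$. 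Without the reduction \eqref{eq:Pnp} and the explicit $p=0$ computation, the central estimate in your plan remains unproved.
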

A more detailed version of this Theorem is Theorem \ref{thm:eigenfunctions1}.
\subsubsection*{\noindent {\bf 3. A first spectral expansion}}

From the intertwining identity \eqref{eq:int1} and the expansion \eqref{eq:expansionLaguerre} of the classical Laguerre semigroup, one gets, by means of the continuity property of $\Ip$ and of the synthesis operator $\mathcal{S}$ defined in \eqref{eq:def_syn}, the following first expansion of $P$.
\begin{theorem}
Let $\psi \in \mathcal{N}$. Then, for any $f \in \ran{\Ip}$ and $t\geq0$,
\begin{equation}\label{eq:exp0} 	P_t  f=\sum_{n=0}^{\infty} e^{-n t} c^{\dag}_n(f) \:   \Pon  \quad  \textrm{  in } \lnu,\end{equation}
where  $c^{\dag}_n(f)=\langle \Ip^{\dag} f, \Lc_n \rangle_{\varepsilon}$ and we recall that $\Ip^{\dag}$ is the pseudo-inverse of $\Ip$.
\end{theorem}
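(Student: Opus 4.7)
The plan is to assemble the expansion from three ingredients already in hand: the intertwining relation $P_t \Ip = \Ip Q_t$ on $\lga$, the classical spectral expansion of the Laguerre semigroup $Q$ in $\lga$, and the continuity $\Ip \in \Bop{\lga}{\lnu}$. The strategy is: push the spectral expansion of $Q_t$ through the bounded intertwining operator $\Ip$, and identify the resulting coefficients via the pseudo-inverse.

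First, fix $f \in \ran{\Ip}$ and choose a preimage $g \in \lga$ with $\Ip g = f$. Here I would invoke the injectivity of $\Ip$ on $\lga$ that was established via the zero-free property of the Mellin transform of $\nu$ on $i\R$ combined with the Wiener Tauberian theorem (as referenced in the methodology outline). Injectivity guarantees that the preimage is unique, so $g = \Ip^{\dag} f$ with $\Ip^{\dag}$ acting as a genuine left inverse on $\ran{\Ip}$; consequently $c^{\dag}_n(f) = \langle g,\Lc_n\rangle_{\varepsilon}$ is well-defined and unambiguous.

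Second, apply the intertwining: since $g \in \lga$ and $Q_t$ is a contraction on $\lga$, we have $Q_t g \in \lga$, and the relation $P_t \Ip g = \Ip Q_t g$ (valid in $\lnu$ by Theorem~\ref{thm:intertwin_1}) gives
\[
P_t f = \Ip Q_t g.
\]
Now the classical self-adjoint Laguerre semigroup admits the spectral expansion
\[
Q_t g = \sum_{n=0}^{\infty} e^{-nt} \langle g,\Lc_n\rangle_{\varepsilon}\, \Lc_n \quad \text{in } \lga,
\]
since $(\Lc_n)_{n\geq 0}$ is an orthonormal basis of $\lga$. Because $\Ip \in \Bop{\lga}{\lnu}$ is bounded, it commutes with the $\lga$-convergent series, yielding
\[
P_t f = \Ip Q_t g = \sum_{n=0}^{\infty} e^{-nt}\, \langle g,\Lc_n\rangle_{\varepsilon}\, \Ip \Lc_n = \sum_{n=0}^{\infty} e^{-nt}\, c^{\dag}_n(f)\, \Pon \quad \text{in } \lnu,
\]
using $\Pon = \Ip \Lc_n$ and $g = \Ip^{\dag} f$. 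The partial sums of the $\lga$-convergent Laguerre expansion are mapped to partial sums of the target series under the bounded operator $\Ip$, so $\lnu$-convergence follows from the operator norm inequality $\|\Ip h\|_\nu \leq |||\Ip|||\, \|h\|_\varepsilon$ applied to the tails.

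The main obstacle is the legitimacy of the pseudo-inverse $\Ip^{\dag}$: the identity $c^{\dag}_n(f) = \langle \Ip^{\dag} f,\Lc_n\rangle_{\varepsilon}$ must give an intrinsic formula depending only on $f$, not on an arbitrary preimage choice. This is precisely where the injectivity of $\Ip$ on $\lga$ is essential; without it the coefficients would not be canonically defined, and the series could fail to reconstruct $P_t f$ uniquely. Once injectivity is secured, the rest of the argument is a clean transfer of the self-adjoint spectral resolution through a bounded intertwiner, and no further analytic work is needed for this particular claim (the finer issue of whether the expansion converges on the full Hilbert space $\lnu$ — rather than only on $\ran{\Ip}$ — is the subject of the later, harder results).
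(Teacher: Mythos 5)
Your overall strategy is the same as the paper's: pass the self-adjoint expansion of $Q_t$ through the bounded intertwiner $\Ip$ and identify coefficients via the pseudo-inverse. The one place where your write-up goes off the rails is the role you assign to injectivity.

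You assert that the injectivity of $\Ip$ on $\lga$ is ``essential'' for the coefficients $c_n^{\dag}(f)$ to be well-defined, and you point to the Wiener--Tauberian argument to secure it. Both halves of this are problematic. First, the Wiener--Tauberian/zero-free argument that appears in the paper is carried out for the operator $\Vp$ acting on $\lt^\infty(\R_+)$ (Lemma \ref{lemma:Injectivity}), not for $\Ip$ on $\lga$; the paper only records $\ker(\Ip)=\{0\}$ explicitly under the extra hypothesis $\psi\in\Ni$ (Lemma \ref{lem:densityofIpn}\eqref{it:dense0}), whereas the statement you are proving is for all $\psi\in\Ne$. Relying on injectivity as stated therefore leaves a gap for $\psi\in\Ni^c$. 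Second, and more importantly, injectivity is simply not needed here: that is the whole point of invoking the pseudo-inverse. By definition, for $f\in\ran{\Ip}$, $\Ip^{\dag}f$ is the unique preimage of $f$ lying in $\ker(\Ip)^{\perp}$, and the only property the argument uses is $\Ip(\Ip^{\dag}f)=f$, which holds whether or not $\ker(\Ip)$ is trivial. Setting $g=\Ip^{\dag}f$ and writing $P_t f = \Ip Q_t g = \sum_n e^{-nt}\langle g,\Lc_n\rangle_\varepsilon \Pon$ already produces exactly the asserted formula, with the coefficients canonically determined by $\Ip^{\dag}$; if $\Ip$ happened not to be injective, a different preimage would produce different coefficients but the same sum, and the theorem's formula singles out the $\Ip^{\dag}$-coefficients by fiat. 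So delete the injectivity digression entirely and the proof is both correct and covers the full class $\Ne$.

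One minor remark on the convergence step: you justify interchanging $\Ip$ with the $\lga$-convergent Laguerre series by the boundedness of $\Ip$, whereas the paper phrases the same fact via the Bessel property of $(\Pon)_{n\geq0}$ in $\lnu$ together with $(\langle g,\Lc_n\rangle_\varepsilon)_{n\geq0}\in\ell^2(\N)$. These are equivalent routes to the same estimate (the Bessel bound of $(\Pon)$ is itself extracted from $\|\Ip\|\leq 1$), so no issue there.
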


\subsubsection*{\noindent {\bf 4. Existence and characterization of a set of co-eigenfunctions}}
With the aim of extending the domain of the spectral operator, i.e.~the linear operator appearing on the right-hand side of \eqref{eq:exp0}, we proceed by  first investigating the existence and the characterization of a set of co-eigenfunctions, that is, eigenfunctions for the adjoint semigroup $P^*$. More specifically, we say that, for some $t>0$ and $n\geq0$, $\nun$  is a co-eigenfunction for $P_t$, or equivalently, an eigenfunction for its adjoint $P^*_t$ in $\Lnu$, associated to the eigenvalue $e^{-n t}$ if $ \nun \in \lnu$ and $P^*_t\nun  = e^{-n t} \nun$, which can be rephrased as, for any $f\in \Lnu$,
\begin{equation*}
\langle  f,P^*_t\nun \rangle_{\nu} = \langle P_t f,\nun \rangle_{\nu} = e^{-n t} \langle f,\nun \rangle_{\nu}.
\end{equation*}
We state the following which summarizes the results that are presented in Chapter \ref{sec:coeigen}.
 \begin{theorem}
Let $\psi \in \mathcal{N}$ and denote by $\Ip^*$ the adjoint of $\Ip$. Then $\Ip^* \in \Bop{\lga}{\Lnu}$ and in $\lnu$, for any $t\geq0$,
\begin{equation} \label{eq:intd0}
\Ip^* P^*_t f =    Q_t \Ip^* f.
\end{equation}
Moreover,  for any $n\in \N$, the equation in $\Lnu$
\begin{equation} \label{eq:equation_nu_n0}
\Ip^* g =\Lpn
\end{equation}
has a unique solution (resp.~no solution) given by  $\nun(x) = \frac{\mathcal{R}^{(n)} \nu (x)}{\nu(x)} =\frac{ (x^n \nu(x))^{(n)}}{n! \nu (x)}=\frac{ w_n(x)}{\nu (x)}$ if $0\leq n<\okhalf$ (resp.~if $1\leq\Si<\infty $ and $n>\frac{\PP(0^+)}{2\r}$).
\end{theorem}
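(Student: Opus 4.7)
The plan has four stages. \emph{Stage 1 (dual intertwining).} I would derive \eqref{eq:intd0} by dualising \eqref{eq:int1}: since $Q_t$ is self-adjoint on $\lga$, $P_t$ admits $P^*_t$ as its $\lnu$-adjoint by Theorem \ref{thm:bijection}\eqref{it:dual}, and $\Ip\in\Bop{\lga}{\lnu}$, taking adjoints in $(P_t\Ip)^*=(\Ip Q_t)^*$ yields $\Ip^*P^*_t=Q_t\Ip^*$ together with $\Ip^*\in\Bop{\lnu}{\lga}$ with the same operator norm as $\Ip$.

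\emph{Stage 2 (uniqueness).} Since $\Ran{\Ip}=\lnu$ (stated just before the theorem), its orthogonal complement in $\lnu$ is trivial, so $\ker{\Ip^*}=\{0\}$ and $\Ip^*$ is injective. Any $\lnu$-solution of $\Ip^*g=\Lpn$ is therefore unique, and it remains to produce a candidate and test its $\lnu$-membership.

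\emph{Stage 3 (the Rodrigues candidate via Mellin transforms).} From $\Ip f(x)=\E\lbb f(xI_\phi)\rbb$ one obtains $\M_{\Ip f}(z)=\E\lbb I_\phi^{-z}\rbb\M_f(z)$, while $n$-fold integration by parts in $w_n=(x^n\nu(x))^{(n)}/n!$ combined with the Gamma-type functional equation $\M_\nu(z+1)=\phi(z)\M_\nu(z)$ gives
\begin{equation*}
\M_{w_n}(z)=\frac{(-1)^n}{n!}(z-1)(z-2)\cdots(z-n)\M_\nu(z).
\end{equation*}
Testing the pairing $\langle\Ip f,\nun\rangle_\nu=\int_0^{\r}\Ip f(x)w_n(x)\,dx$ against $\langle f,\Lpn\rangle_\e$ on a dense subset of $\lga$, using the explicit form of $\M_{\Lpn\e}$ and Parseval, then shows that $\Ip^*\nun=\Lpn$ in $\lga$ whenever $\nun\in\lnu$.

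\emph{Stage 4 ($\lnu$-threshold and non-existence, the main obstacle).} The crucial point is to locate the sharp threshold $n<\PP(0^+)/(2\r)$ for $\nun\in\lnu$. Writing $\M_\nu$ via its Bernstein--Weierstrass product and invoking the vertical-line asymptotics developed in Chapter~\ref{sec:Mellin}, the norm $\|\nun\|_\nu^2=\int_0^{\r}w_n^2(x)/\nu(x)\,dx$ reduces by a Mellin--Parseval argument to a tail integral whose integrand carries a polynomial factor $|(z-1)\cdots(z-n)|^2\asymp|b|^{2n}$ along the line $\Re z=a$, while the decay of $|\M_\nu(a+ib)|$ is governed at the rate encoded by $\PP(0^+)/\r$; convergence therefore switches precisely at $n=\PP(0^+)/(2\r)$. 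Combined with Stage 2, this gives existence and uniqueness below the threshold. For $1\leq\Si<\infty$ and $n>\PP(0^+)/(2\r)$, injectivity of $\Ip^*$ extended to the appropriate space of distributions forces any hypothetical $\lnu$-solution $g$ to satisfy $g\nu=w_n$, so $g$ must coincide with the Rodrigues candidate, which by the same Mellin estimate fails to be $\lnu$; no solution therefore exists. The hardest step is the sharp Mellin asymptotics of $\M_\nu$ along imaginary lines and the bridge from $\lnu$-solutions to the distributional level, for which the intricate machinery of Chapters~\ref{sec:prop_nu}--\ref{sec:coeigen} is needed.
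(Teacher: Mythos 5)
Stages 1--3 track the paper's route faithfully: \eqref{eq:intd0} is obtained exactly as you describe, by dualising \eqref{eq:int1} via the self-adjointness of $Q_t$ and the boundedness $\Ip\in\Bop{\lga}{\Lnu}$ (Proposition \ref{thm:dual}); uniqueness follows from $\ker{\Ip^{*}}=\{0\}$, which the paper deduces from $\Spc{\Pon}=\Ran{\Ip}=\Lnu$; and the Rodrigues candidate $w_n=\mathcal{R}^{(n)}\nu$ is identified from the Mellin identity $\M_{w_n}(z)=\frac{(-1)^n}{n!}\frac{\Gamma(z)}{\Gamma(z-n)}\Mp(z)$. The paper does this in a space of Mellin distributions (Lemma \ref{lemm12}, Proposition \ref{prop:Convolution}) rather than by Parseval pairing, but these amount to the same identification; your further remark that injectivity of $\Ip^{*}$ at the distributional level forces any hypothetical $\lnu$-solution $g$ to satisfy $g\nu=w_n$ is also the paper's argument.

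The genuine gap is in Stage~4, and it is conceptual: a Mellin--Parseval argument controls $\|w_n\|^2$ or its weighted variant $\|w_n\|^2_{\varu}$, not $\|\nun\|^2_{\nu}=\int_0^{\r}w_n^2(x)\nu(x)^{-1}\,dx$, and the weight $\nu^{-1}$ is exactly what produces the asserted threshold. Parseval applied to $\M_{w_n}$ on a vertical line gives $w_n\in\Lva$ precisely for $n<\okk$ (Proposition \ref{prop:Convolution}\eqref{it:smoo_wn}); when $\Si\geq1$ one has $\ok>1$, so $\okk>\okhalf$ and there is a nonempty range of $n$ with $w_n\in\Lva$ but $\nun\notin\Lnu$, which no estimate built from $\M_{w_n}$ alone can detect. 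What is actually needed is the \emph{real-space} endpoint expansion $\nu^{(n)}(x)\stackrel{\r}{\sim}C(\r-x)^{\ok-n-1}l(\r-x)$ at the finite endpoint $\r=\PPP(0^+)+m$, which the paper obtains via the identification \eqref{eq:relation_nu} of $\nu$ with a positive self-decomposable density and the Sato--Yamazato theory (Theorem~\ref{thm:smoothness_nu1}\eqref{it:asy_nu_r}); plugging this into $F_n=w_n^2/\nu$ gives $F_n(x)\sim C(\r-x)^{\ok-2n-1}l(\r-x)$ near $\r$ (Lemma \ref{lem:intr}), integrable iff $n<\okhalf$, while integrability near $0$ always holds (Lemma \ref{eq:int0}). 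The factor $2$ in $\okhalf$ is produced by the cancellation between $(\nu^{(n)})^2$ and $\nu^{-1}$ at $\r$, which is invisible on the Mellin side: the paper's vertical-line estimate $|\Mp(a+ib)|=\so{|b|^{-u}}$, $u\leq\Si-1=\lceil\ok\rceil-2$, is strictly weaker than the sharp rate $|b|^{-\ok}$, and even a sharp two-sided Mellin estimate would not help because $1/\nu$ has no usable Mellin transform. To complete the argument you must pass to the endpoint asymptotics of $\nu$ itself.
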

First, we observe that if  $\nun$ is solution of \eqref{eq:equation_nu_n0} then, from the intertwining relation \eqref{eq:intd0} it is indeed a co-eigenfunction of $P$, or an eigenfunction for its adjoint semigroup. Note that when such a solution does not exit for some $n$ then $e^{-nt}$, the corresponding eigenvalue for $P_t$ belongs to the residual spectrum of $P^*_t$.

The questions of existence and characterization  of the solution to the equation \eqref{eq:equation_nu_n0} is extremely delicate as it requires the  description  of the range of $\Ip^*$ and of its unbounded inverse and also to deal with the complex s structure  of the weighted Hilbert space. As opposed to the self-adjoint framework where such a question is obvious, there do not seem to exist any results in the literature  ensuring the (non)-existence  of this set of co-eigenfunctions. To overcome this difficulty, we implement the following two-steps program. First, by considering the formal adjoint of $\Ip$ in $\lrp$, we  transfer equation \eqref{eq:equation_nu_n0} defined in $\Lnu$ into a Mellin convolution equation that can be studied in $\lrp$ or even in the sense of Mellin distribution,  see \eqref{eq:equation_w_n1} below for a precise statement. Then,  by means of Mellin transform techniques we study this latter equation  to obtain, in the distributional sense,   necessary and sufficient conditions for existence, uniqueness and description of its solution. In particular, we get a characterization  in terms of the Rodrigues operator acting on the density of the invariant measures $\nu$. Then, applying the variety of results on $\nu$ (smoothness, positivity, small and large asymptotic equivalents  or bounds) developed  in Chapter \ref{sec:prop_nu}, we obtain (almost) necessary and sufficient conditions for the existence of a unique  solution to the original equation \eqref{eq:equation_nu_n0} considered in the Hilbert space $\Lnu$. It is worth pointing out that  the intensive study that we carry out on the density of the invariant measure includes  some innovative techniques that could be used in a larger context.     For instance, the large asymptotic behaviour of $\nu$
together with its successive derivatives of any order stems on a generalized version of a non-classical Tauberian theorem which was initially proved by Balkema et al.~\cite{Bal-Klu-Sta-93} and that we state in Proposition \ref{lem:Klupelberg}.  We simply mention that the establishment of the ultimate log-convexity property of the density and its derivatives is one of the several delicate Tauberian conditions. To the best of our knowledge, it seems that it is the
first instance that can be found in the literature of a class of probability densities and its successive derivatives, for which such
precise description of the (exponential) asymptotic decay is available at real rather than log-scale, i.e.~for $\ln \nu(x,\infty)$. Finally, the small asymptotic behaviour of $\nu$  and its derivatives, is investigated by an
appeal to the It\^o's excursion theory for L\'evy processes, which in its own right is a new  and interesting result.

\subsubsection*{\noindent {\bf 5. Extension of the domain of the spectral operator}}
Finally, we address the following  three issues. The first one consists in  characterizing   the  domain $\mathcal{D}$ of the spectral operator $S$ that is defined as
 \begin{equation} \label{eq:exp_des0}
  S_t f =\sum_{n=0}^{\infty} e^{-nt} \langle f, \nun \rangle_{\nu}\: \Pon, \end{equation}
and $\mathcal{D}$ is  the union of domains of the form,  for some  $T>0$, linear  space ${\rm{L}}\subseteq \lnu$ and $\underline{\Ne}\subseteq \Ne$,
\begin{equation} \label{eq:def_dom}
\mathcal{D}^{\underline{\Ne}}_T({\rm{L}})=\left\{ (\psi,f,t); \psi \in \underline{\Ne}, f \in {\rm{L}} \textrm{ and }  t \in (T=T(\psi),\infty) \right\}. \end{equation}
Note that, since for any  $ (\psi,f,T) \in \mathcal{D}$, $\underline{c}_{t}(f)=(e^{-nt} \langle f, \nun \rangle_{\nu})_{n\geq0} \in \ell^2(\N)$, the continuity property of the synthesis operator $\mathcal{S}$ in \eqref{eq:def_syn} entails  that    for any $t>T$,  $S_t f =\mathcal{S}(\underline{c}_{t}(f)) \in \Lnu$. The next step is to find conditions for the identity   $ S_tf = P_tf $  to hold in $\lnu$.  Although the operators coincide on a dense domain of $\lnu$, it may not be obvious to show that any extension of $S_t$ is bounded in $\lnu$. We shall design  in Chapter \ref{sec:proof_main} a specific methodology  for each subdomain that deals with this issue.
Finally, we are interested to establish  conditions under which  the spectral operator \eqref{eq:exp_des0} along with its space-time partial derivatives converge locally uniformly? Although the Hilbert space topology is natural for the spectral decomposition of the operator, the locally uniform convergence  enables  to derive regularity properties for the semigroups and the solution to the associated Cauchy problem. In the same vein, similar issues may be addressed for the heat kernel.

In order to characterize the domain $\mathcal{D}$, we resort to the Cauchy-Schwartz inequality that leads us to find upper bounds for the co-eigenfunctions $\nun$ which may be considered in different topologies. We emphasize that it is a very difficult problem to obtain these bounds as it requires   precise information regarding the uniform asymptotic behaviour for large $n$ of $\nun$.   We point out that there is a rich and fascinating literature on uniform asymptotic expansions of the Laguerre polynomials which reveals already that this issue, even in a simple case with explicit expression and several representations at hand, is far from being trivial, see e.g.~\cite{Szego} and \cite{Temme} for a thorough description.
We present below the different bounds  we manage to extract for each of the subdomain defining $\mathcal{D}$.

 \begin{theorem}\label{lem:TvMDecay}
\begin{enumerate}
\item \label{it:Estimate_SP} Let $\psi\in\Nee$ and recall that  $ T_{\H} = -\ln\sin \H$ and $\var(x)=x^{-\alpha},\,x>0,\,\alpha\in[0,1)$. Then,  we have, for any $\epsilon>0$ and $n$ large, uniformly on $x>0$,
\begin{eqnarray}\label{eq:EstimateTvNorms0}
x^{a}|w_n(x)| &= & \bo{ n^{\frac{1}{2}-a} e^{(T_{\H}+\epsilon)n}}, \quad a>d_{\phi},\\
 \left|\left|\frac{w_n}{\var}\right|\right|_{\var}&=& \bo{n^{\frac{1}{2}-a} e^{(T_{\H}+\epsilon)n}}, \quad d_{\phi}<a<\frac{\alpha+1}{2}\label{eq:EstimateTvNorms10}.
\end{eqnarray}
\item \label{it:Est_Zeros}
Let $\psi\in \Nea$ and  recall that  $T_{\pi_{\alpha}}=-\ln \sin\lb  \frac{\pi}{2}\alpha\rb$ and  $
  \ga(x) = \max(\nu(x),e^{-x^{\frac{1}{\gamma}}})$, $\gamma>1+\alpha$. Then, for large $n$ and for any $\epsilon>0,$ we have that
\begin{eqnarray}\label{eq:refinedEstimatesonTV}
\left|\left|\frac{w_n}{\ga}\right|\right|_{\ga}&=& \bo{n^{\frac{1}{2}-a} e^{(T_{\pi_{\alpha}}+\epsilon)n}}, \quad  a>d_{\phi},\\
\left|\left|\nun\right|\right|_{\nu}&=& \bo{e^{(T_{\pi_{\alpha},\rho_{\alpha}}+\epsilon)n}}.
\end{eqnarray}
      $T_{\pi_{\alpha},\rho_{\alpha}}=\max\left(T_{\pi_{\alpha}},1+\frac{1}{\rho_{\alpha}}\right)$, where $\rho_{\alpha}$ is the largest root of $\lbrb{1-\rho}^{\frac1\alpha}\cos\lbrb{\frac{\arcsin(\rho)}{\alpha}}=\frac12$.
\item\label{it:Nun1_1} Let  $\psi \in \Ne_P$. Then, for any $\epsilon>0$ and large $n$,
\begin{eqnarray}\label{eq:est_Np}
 ||\nun||_{\nu} &=&\bo{e^{\epsilon n}}.
\end{eqnarray}
If in addition $\PPP(0^+)<\infty$ then, recalling that $\mru =\frac{m +\PPP(0^+)}{\sigma^2}$, we have for large $n$,
\begin{equation}\label{eq:smallPer}
 ||\nun||_{\nu}=\bo{n^{\mru}}.
  \end{equation}
 \item\label{it:Nun2}  Let  $\psi \in \Ne_{\alpha,\mr}$ with $(\alpha,\mr)\in  \mathfrak{R}=\lbcurlyrbcurly{(\alpha,\mr);\:\alpha \in (0,1]  \textrm{ and  } \mr\geq 1-\frac{1}{\alpha}}$. Then, for large $n$, with $T_{\alpha}=-\ln(2^{\alpha}-1)$, we have that
\begin{eqnarray}\label{eq:est_Nr}
 ||\nun||_{\nu} &=&\bo{e^{T_{\alpha} n}}.
\end{eqnarray}
\end{enumerate}
\end{theorem}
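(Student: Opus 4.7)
My plan is to reduce all four items to Mellin-transform analysis of $w_n$. Using the Rodrigues representation $w_n(x)=(x^n\nu(x))^{(n)}/n!$, the Mellin differentiation rule and the functional equation $\MP(z+1)=\phi(z)\MP(z)$ satisfied by the Mellin transform of the invariant density, I obtain
\begin{equation*}
\M_{w_n}(z)=(-1)^n\binom{z-1}{n}\MP(z),
\end{equation*}
valid for $\Re z>d_\phi$. Mellin inversion along a vertical line $\Re z=a$ with $a>d_\phi$ then represents $w_n$ as a one-dimensional oscillatory integral in which $|x^{-z}|=x^{-a}$ on the contour, so that items 1 and 2 reduce to the asymptotic analysis of this contour integral for large $n$.

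For item 1, the class $\Nee$ is tailored so that a saddle-point analysis is tractable. Using the Weierstrass product representation of $\MP$ developed in Chapter \ref{sec:Mellin}, the logarithm of $|\MP(a+ib)|$ along vertical lines is captured, up to bounded errors, by an integral of $\ln(|\phi(by+ib)|/\phi(by))$, which is precisely the quantity $\H$. The binomial symbol $\binom{a+ib-1}{n}$ exhibits Stirling-type asymptotics $n^{a-1}\exp(-n\theta(b/n))$ for a convex phase function $\theta$. Balancing the two contributions places the saddle at imaginary height of order $n$ and yields $|w_n(x)|=O(x^{-a}n^{1/2-a}e^{(T_\H+\epsilon)n})$ uniformly in $x>0$, which is \eqref{eq:EstimateTvNorms0}. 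Integrating this pointwise estimate against $\var(x)=x^{-\alpha}$ produces \eqref{eq:EstimateTvNorms10}, the constraint $d_\phi<a<(\alpha+1)/2$ being exactly the interval where the integral converges at both endpoints.

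For item 2, the regular variation hypothesis $\psi(u)\simi C_\alpha u^{\alpha+1}$ yields, via Theorem \ref{thm:classes}\eqref{it:NrNt}, that $T_\H$ can be replaced by $T_{\pi_\alpha}$, and \eqref{eq:refinedEstimatesonTV} follows from the same contour argument, the stretched-exponential component $e^{-x^{1/\gamma}}$ of $\ga$ absorbing the large-$x$ tail. The $\Lnu$-bound is subtler because it controls the quotient $\nun=w_n/\nu$ rather than $w_n$ alone. I would combine the Mellin pointwise estimate on $w_n$ with the precise information on zeros and lower envelope of $\nu$ developed in Chapter \ref{sec:prop_nu}: on the large region where $\nu$ exceeds a suitable decreasing threshold, $w_n/\nu$ inherits the pointwise bound on $w_n$; the residual region contributes only through the exponential tail of $\nu$ supplied by the generalized Tauberian theorem. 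The constant $1+1/\rho_\alpha$ emerges from the geometric balance governed by the defining equation $(1-\rho)^{1/\alpha}\cos(\arcsin\rho/\alpha)=1/2$.

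For items 3 and 4, I would invoke the reference-semigroup framework of Chapter \ref{sec:ref}. When $\sigma^2>0$ and $\PPP(0^+)<\infty$, the gL semigroup is an intertwined perturbation of the classical Laguerre semigroup of parameter $\mru$; the co-eigenfunctions are accordingly $\Lnu$-comparable to generalized Laguerre polynomials whose $\Lnu$-norm is of order $n^\mru$, giving \eqref{eq:smallPer}. The general $\Ne_P$ bound \eqref{eq:est_Np} is then obtained by approximating a general $\phi$ with $\sigma^2>0$ by truncations with $\PPP(0^+)<\infty$ and tracking the growth of the associated constants. For item 4, the complete monotonicity of $\phi^R_{\alpha,\mr}/\phi$ provides a second reference gL semigroup whose invariant density is explicit and whose co-eigenfunctions are essentially Laguerre polynomials; the rate $T_\alpha=-\log(2^\alpha-1)$ arises from elementary domination of $|1-e^{-y/\alpha}|^{\alpha-1}$ in the corresponding Rodrigues representation. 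The principal obstacle is the saddle-point analysis in item 1, since uniform asymptotics of $|\MP(a+ib)|$ along vertical lines require delicate estimates on a Weierstrass product built from a general $\phi\in\Bp$ --- the very definition of $\Nee$ is what makes these estimates feasible; a secondary obstacle is the transition from $w_n$-bounds to $\nun$-bounds in item 2, which rests on a sharp lower envelope of $\nu$.
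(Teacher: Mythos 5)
Your Mellin-inversion/saddle-point outline for item \eqref{it:Estimate_SP} matches the paper's Section~\ref{sec:estimates_norms_sp} essentially verbatim (same Mellin identity, same reflection-formula manipulation, same reduction to a gamma integral handled by a saddle point), and your intertwining strategy for item \eqref{it:Nun1_1} with $\PPP(0^+)<\infty$ and for item \eqref{it:Nun2} matches Chapter~\ref{sec:ref}. Two other steps, however, will not go through as you describe them.

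For the $\Lga$-bound \eqref{eq:refinedEstimatesonTV} in item \eqref{it:Est_Zeros} you assert that the estimate ``follows from the same contour argument, the stretched-exponential component $e^{-x^{1/\gamma}}$ of $\ga$ absorbing the large-$x$ tail.'' It cannot. The contour argument gives the \emph{pointwise} bound $|w_n(x)|=\bo{x^{-a}n^{\frac12-a}e^{(T_{\pi_\alpha}+\epsilon)n}}$, but in the weighted norm $\ga$ sits in the denominator, so this bound yields
\begin{equation*}
\int_1^\infty \frac{|w_n(x)|^2}{\ga(x)}\,dx \;\lesssim\; n^{1-2a}e^{2(T_{\pi_\alpha}+\epsilon)n}\int_1^\infty x^{-2a}e^{x^{1/\gamma}}\,dx\;=\;\infty
\end{equation*}
for every fixed $a$. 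The weight $\ga$ helps only if $w_n$ itself decays like a stretched exponential at infinity, and the Mellin bound is polynomial in $x$. The missing input is Lemma~\ref{prop:derivatives}, which exploits the location of the sign changes of the derivatives $\nuh^{(n)}$ (and the ultimate log-concavity structure) to produce the much sharper bound $|w_n(x)|\leq n!\,e^{\so{n}}\nu(x/2)$ for $x$ beyond a threshold of order $a_{n+1}^{-1}\lesssim n^{\alpha+1}$. The proof then splits the integral at $n^{\alpha+1}$, uses the Mellin bound on the small region (where $1/\ga(x)=e^{\so{n}}$) and the zeros-based bound on the large region (where the contribution decays super-exponentially). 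A single contour estimate at fixed $a$ does not close the argument.

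Your plan for the general $\Ne_P$ bound \eqref{eq:est_Np} by ``approximating $\phi$ by truncations with $\PPP(0^+)<\infty$ and tracking the growth of the associated constants'' is a wrong route. Truncating the L\'evy measure produces a different $\psi$, hence a different invariant density and a different sequence of co-eigenfunctions in a different weighted Hilbert space; a bound on the truncated $\nun$ in the truncated $\Lnu$ gives no bound on the original ones, and $\mru=\frac{m+\PPP(0^+)}{\sigma^2}$ diverges as the truncation is removed, so there is no uniform control to pass to the limit. The paper instead uses Proposition~\ref{lem:bernst_ratio}\eqref{it:pb2}: whenever $\sigma^2>0$, for every $\alpha\in(0,1)$ one can choose $\mr$ with $\phi/\phi^R_{\alpha,\mr}\in\Be$, so the \emph{original} gL semigroup intertwines boundedly with the Gauss--Laguerre reference semigroup of parameters $(\alpha,\mr)$. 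This yields $\|\nun\|_\nu=\bo{e^{T_\alpha n}}$ for every $\alpha<1$, and $T_\alpha=-\ln(2^\alpha-1)\downarrow 0$ as $\alpha\uparrow 1$ gives the subexponential bound. (Incidentally, in item \eqref{it:Nun2} the Gauss--Laguerre co-eigenfunctions $\nun^{(\alpha,\mr)}$ are not ``essentially Laguerre polynomials'' for $\alpha<1$, and the rate $T_\alpha$ comes from a non-classical saddle-point analysis in the companion paper, not from elementary domination of $(1-e^{-y/\alpha})^{\alpha-1}$.)
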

To derive these bounds we develop several approaches which are of different  nature.

For the first one, to obtain the bound  \eqref{eq:EstimateTvNorms10}  when $\psi \in \Nee$,  we are able to apply a classical saddle-point approximation to the Mellin-Barnes integral representation of $w_n=\nun \nu$. This latter is expressed in terms of the Mellin transform of $\nu$ that we study in-depth   in Chapter \ref{sec:bern} which includes its characterization as an infinite product generalizing the classical Weierstrass product representation of the gamma function.

A second path that we follow  relies on a fine study of the  locations of the real zeros of the successive derivatives of $\nu$.  This approach  necessitates detailed information regarding the It\^o's excursion measure of some L\'evy processes, which forces us to specialize  to the regularly varying case. Once the distribution of the real zeros of the derivatives of $\nu$ is approximated, one uses the basic theorem of calculus to first provide uniform estimates for $|\nun(x)|$ and then deduce bounds for $\frac{\nun \nu}{\ga}$ in  the topology of the Hilbert space $\Lga$.

 Still focussing on the regularly varying case, we develop a complex analytical approach based on the Phragmenn-Lindel\"{o}f  principle to establish  upper bounds on $|\nun(x)|$ yielding to  precise $\lnu$-norm estimates for $\nun$.

 Finally, the last methodology is based on new structural  ideas that we name the  concept of reference semigroups. It consists on  identifying  gL semigroups $\overline{P}$ which satisfy the following two criteria. First, their special structure permits to study their spectral reduction in details, including information regarding the asymptotic behavior of the  norm of co-eigenfunctions and the completeness of their sequence.  On the other hand,  a reference semigroup intertwines with a subclass of gL semigroups in such way that these properties can be easily transferred  to a priori intractable subclass of gL semigroups. For instance, there should exist a subclass of gL semigroups such that for each element in this class   we have the adjoint intertwining relation  $P^*_t \Lambda^* = \Lambda^* \overline{P}^*_t$, where $\Lambda^*$ is the adjoint of a bounded operator between appropriate weighted Hilbert spaces.  \\
 \noindent We describe two important applications of the reference concept regarding probably the two most technical difficulties of this work, namely the estimates of the $\lnu$-norm of $\nun$ and their completeness in $\lnu$.   We manage to implement this approach for two different reference semigroups, which, in particular, enable us to deal with the spectral expansion, in a simple and  optimal way as the expansion operator is proved to be bounded on $\lnu$  for all $t>0$, for the  perturbation class $\Ne_P,$ that is when $\sigma^2>0$.  It is  also worth  pointing out that when $\sigma^2>0$ and $\PPP(0^+)=\infty$, the concept of reference semigroup has some unusual underlying mathematical idea. Indeed, it consists in approximating, in the sense of linking operators via intertwining relations, the class of perturbated  operators (say the nice class) by families of operators for which the spectral operator is  bounded
in $\lnu$ only for  $t>T_{\alpha}= -\ln(2^{\alpha}-1)$ (say the non nice class).   Finally, we mention that this approach goes well beyond this framework as it is characterized by the class $\Ne_R$ which also encompasses gL semigroups whose infinitesimal generator does not have a diffusion component. It also enables to deal with the delicate issue of characterizing the class of co-eigenfunctions that form a complete sequence in the weighted Hilbert space.

\subsection{Proof of Theorem \ref{thm:bijection}\eqref{it:bij}} \label{sec:proof_thm11}
 According to Lamperti \cite{Lamperti-72},  there is a bijection between the subspace of negative definite functions $\Ne$ and the  conservative Feller semigroups $(K_t)_{t\geq0}$ on $(0,\infty)$ corresponding to processes that have only negative jumps and satisfying the following $1$-self-similarity property, for any $c>0$, $t\geq 0$,
 \[K_{t} f(cx) =K_{c^{-1}t} {\rm{d}}_{c} f(x), \]
 where we recall that ${\rm{d}}_{c}f(x)=f(cx)$.
 More specifically, Lamperti showed that for any $\psi \in \Ne$, the infinitesimal generator ${\mathbf{G}_0}$  of
 $(K_t)_{t\geq0}$, takes, for any function $f$ such that $x\mapsto f_e(x)=f(e^{x}) \in \cco^{2}([-\infty,\infty])$, the form
 \begin{equation} \label{eq:go}
 {\mathbf{G}_0}f(x) = \sigma^2xf''(x)+\lb m +\sigma^2\rb f'(x)+\int^{\infty}_{0}f(e^{-y}x)-f(x)+yxf'(x) \frac{\Pi(dy)}{x},
 \end{equation}
  where  $(\sigma,m,\Pi)$ is the characteristic triplet of $\psi$. Next,  we define, for any $t\geq 0$,
 \begin{equation} \label{eq:def_P}
 P_tf(x) = K_{e^{t}-1}{\rm{d}}_{e^{-t}} f(x),
 \end{equation}
 and note from the $1$-self-similarity property that
 \begin{equation} \label{eq:ses}
 P_tf(x) =K_{1-e^{-t}}f(xe^{-t}).
 \end{equation}
 Then  for each $t\geq0$, $P_t$ is plainly linear, with $P_t \cob\subseteq \cob$. Moreover, from \eqref{eq:ses}, we get that $||P_tf||_{\infty}\leq ||f||_{\infty}$  and $\lim_{t\downarrow 0}P_tf= f$. Next, for any $t,s>0$,
	\begin{eqnarray*}
		P_t P_s f(x) &=& K_{1-e^{-t}}  K_{e^s-1}  \textrm{d}_{e^{-s}} f(xe^{t}) = K_{e^s-e^{-t}}   \textrm{d}_{e^{-s}}f (xe^{t})\\  &=& K_{e^{t+s}-1}  \textrm{d}_{e^{-(t+s)}} f(x)= P_{t+s}f(x).
	\end{eqnarray*}	
It is easy to check now that the semigroup $(P_t)_{t\geq0}$ satisfies the properties \ref{it:c1}.~and \ref{it:c2}.~of Definition \ref{def:gL}. Moreover, from \cite{Bertoin-Savov},  we deduce that  $\psi \in \Ne$ if and only if
 \begin{eqnarray*}
 \lim_{t \to \infty }P_tf(x) &=& \lim_{t \to \infty } K_{1-e^{-t}} f(e^{-t}x) =  K_1 f(0) = \nu f,
 \end{eqnarray*}
 where we have used \eqref{eq:ses} and  for the last equality the identities \eqref{def:entrance_law_k} and \eqref{def:entrance_la}, yielding to the condition \ref{it:def-s}. of the definition of the gL semigroup.	Next, for $f$ a smooth function, we have
	\begin{eqnarray}
		{\mathbf{G}} f(x) &=& \lim_{t \to 0 }\frac{P_tf(x)-f(x)}{t} \nonumber \\
		&=& \lim_{t \to 0 }\frac{K_{1-e^{-t}}f(x)-f(x)+ K_{1-e^{-t}}f(e^{-t}x)-K_{1-e^{-t}}f(x)}{1-e^{-t}}  \nonumber \\
		 &=& {\mathbf{G}_0}f(x)-xf'(x), \label{eq:infp}
	\end{eqnarray}
which combined with \eqref{eq:go} gives the expression of $\mathbf{G}$.
 Moreover, following \cite{Meyer_LK} and performing a change of  variables,  we get that the L\'evy kernel of $P$ is characterized  for any $f \in \cco^{\infty}_c(\R_+\setminus\{x\})$,  by  $\lim_{t\to 0} \frac{1}{t}P_t f (x) = \int_{0}^x f(r)\frac{\hat{\Pi}_l(x,dr)}{x}$ where $\hat{\Pi}_l(x,.)$  is the image of $\hat{\Pi}(.)$ by the mapping $r\mapsto \ln\left(\frac{x}{r}\right)$. Hence $\Pi(x,(x,\infty))=\frac{\hat{\Pi}_l(x,(x,\infty))}{x}=0$ and the last condition \ref{def:lk}.~is also satisfied. By putting pieces together, we complete the proof this item.

 \subsection{Additional basic facts on gL semigroups} \label{sec:bas_f_gl}

\begin{proposition}\label{prop:bij_lamp}
\begin{enumerate}
 \item \label{it:invex_1} For any $\psi \in \Ne,$ there exists a positive random variable $V_{\psi}$ whose  law  is absolutely continuous with density $\nu$ which satisfies, for any $f \in \cob$ and $t\geq0$,
	\[ \nu P_t f = \nu f,\]
	where here and hereafter $\nu f = \int_{0}^{\infty}f(x)\nu(x)dx$, that is, the measure $\nu(x)dx$ is an invariant measure of $P$.
 Moreover the  law of $V_{\psi}$ is determined by its entire moments  given by	
	\begin{equation}\label{eq:moment_V_psi}
	\Mp(n+1) = W_{\phi}(n+1), \quad n \in \N,
	\end{equation}
where we recall from \eqref{def:W_phi_n} that $W_{\phi}(n+1)= \prod_{k=1}^{n} \phi(k)$.
\item \label{it:ext_p} $P$ can be extended uniquely to a strongly continuous contraction semigroup, still denoted by $P$,  on the weighted Hilbert space $
      \lnu$.
 \end{enumerate}
 \end{proposition}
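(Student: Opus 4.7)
For part (1), the plan is to leverage the bijection with the conservative $1$-self-similar Feller semigroup $K$ established in the proof of Theorem \ref{thm:bijection}\eqref{it:bij}. I would first invoke the Bertoin--Savov theory of entrance laws for positive self-similar Markov processes to obtain absolute continuity of $K_1(0,\cdot)$ and set $\nu$ to be its density, with $V_\psi$ the associated random variable. Invariance is then an immediate consequence of the identification $\nu f = \lim_{s\to\infty}P_sf(x)$ already used in the proof of Theorem \ref{thm:bijection}\eqref{it:bij}: for $f\in\cob$ the Feller property gives $P_tf\in\cob$, and the semigroup property yields $\nu P_tf=\lim_{s\to\infty}P_{s+t}f(x)=\nu f$.

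For the moment formula I would apply the generator \eqref{eq:infgen} to the monomial $e_n(x)=x^n$. The integrability condition $\int_0^\infty(y^2\wedge y)\Pi(dy)<\infty$ makes the integral term convergent, and a direct computation collapses the result to $\mathbf{G}e_n(x)=n\phi(n)x^{n-1}-nx^n$, using the Wiener--Hopf identity $\psi(n)=n\phi(n)$ from \eqref{eq:whpsi}. The induced linear ODE $\tfrac{d}{dt}P_te_n = n\phi(n)P_te_{n-1}-nP_te_n$, combined with the convergence $P_tg(x)\to\nu g$ as $t\to\infty$ for bounded $g$, yields inductively in $n$ both finiteness of $\Mp(n+1)$ and the recursion $\Mp(n+1)=\phi(n)\Mp(n)$. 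Unrolling from $\Mp(1)=1$ gives the claimed product formula. For moment determinacy I would verify Carleman's criterion: since $\phi$ is Bernstein, $u\mapsto\phi(u)/u$ is non-increasing, whence $\phi(k)\leq\phi(1)k$ for $k\geq 1$ and $\Mp(2n)\leq\phi(1)^{2n-1}(2n-1)!$; Stirling then gives $\Mp(2n)^{1/(2n)}=\bo{n}$ and hence $\sum_n\Mp(2n)^{-1/(2n)}=\infty$.

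Part (2) is routine given (1). Jensen's inequality applied pointwise to the Markov kernel $P_t(x,dy)$ gives $(P_tf)^2\leq P_tf^2$ for $f\in\cob$, and integrating against $\nu$ together with invariance yields the contraction bound $\|P_tf\|_\nu\leq\|f\|_\nu$. Since $\nu$ is a probability measure on $(0,\infty)$, $\cob$ is dense in $\lnu$, so $P_t$ extends uniquely by continuity to a contraction on $\lnu$. Strong continuity at $t=0$ on $\cob$ follows from the pointwise Feller limit $P_tf(x)\to f(x)$ together with the dominating bound $|P_tf|\leq\|f\|_\infty$, which lies in $\lnu$ since $\nu$ is a probability; a three-$\epsilon$ argument propagates this to all of $\lnu$ using the contraction estimate.

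The main technical point in the plan sits in part (1): once absolute continuity of $K_1(0,\cdot)$ and the Lamperti link are imported, the remaining steps are short. The delicate verification is the asymptotic $P_te_n(x)\to\Mp(n+1)$ as $t\to\infty$ for the unbounded $e_n$, which is not directly covered by the Feller convergence; I would obtain it inductively by integrating the ODE, using finiteness of $\Mp(n)$ secured at the previous induction step and applying dominated convergence against the affine-in-$x$ bound obtained from the integrated ODE.
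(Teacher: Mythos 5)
Your overall plan is sound and arrives at the right conclusions, but there are genuine gaps in the route you take for the moment formula, and your approach is noticeably different from the paper's, so let me give a full comparison before flagging the issues.

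The paper's proof of item (1) is almost entirely citation-based: the existence of $V_\psi$ with moments $W_\phi(n+1)$ is quoted directly from Bertoin--Yor's Proposition 1(ii), absolute continuity is quoted from another paper, and invariance is obtained from the entrance-law identity $\nu_t K_s f = \nu_{t+s}f$ together with the space--time relation \eqref{eq:def_P} between $P$ and $K$. Your invariance argument, via $\nu P_t f = \lim_{s\to\infty}P_{s+t}f(x) = \nu f$, is a valid alternative phrasing of the same underlying facts and is fine. Your addition of an explicit Carleman check for moment determinacy is a welcome extra that the paper leaves implicit. For item (2) you spell out the standard argument (Jensen, density, dominated convergence) that the paper compresses into a citation to Da Prato; that is correct and unobjectionable.

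The real gap sits in your derivation of \eqref{eq:moment_V_psi} via the generator. The representation \eqref{eq:infgen} is established only for $f$ with $f_e(x)=f(e^x)\in\cco^2([-\infty,\infty])$, i.e.~$f$, $(f_e)'$, $(f_e)''$ bounded with limits at $\pm\infty$. The monomial $e_n(x)=x^n$ gives $f_e(x)=e^{nx}$, which blows up as $x\to+\infty$, so $e_n\notin\mathcal{D}_{\mathbf{G}}$ and you cannot apply \eqref{eq:infgen} to it without further justification. Consequently the linear ODE $\tfrac{d}{dt}P_t e_n = n\phi(n)P_te_{n-1}-nP_te_n$ is not available out of the box: you would first have to show that $\E_x[X_t^n]<\infty$ and that $e_n$ lies in a suitably extended domain of the generator (or use a localisation/truncation argument and pass to the limit). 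Related to this, your convergence step $P_s e_{n-1}(x)\to\Mp(n)$ as $s\to\infty$ is not covered by the ergodic limit for the Feller semigroup, which is established only on $\cob$; you flag this yourself but the proposed fix (domination against an ``affine-in-$x$ bound obtained from the integrated ODE'') is circular, since the integrated ODE is exactly what you do not yet have. The paper avoids this entirely by invoking Bertoin--Yor's probabilistic computation of the moments, which handles the integrability issues internally. If you want to retain the generator route you would need to add a localisation argument (stop $X$ at exit from $[1/N,N]$, establish the ODE for the stopped process, then remove the truncation using monotone convergence and the a priori moment bound from Bertoin--Yor-type stochastic calculus), which is essentially reproving the cited result. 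One secondary point: you write that absolute continuity of $K_1(0,\cdot)$ follows from ``the Bertoin--Savov theory of entrance laws,'' but that reference in the paper supplies only the convergence/entrance-law identification; absolute continuity is cited separately, so you would need a different source for that step.
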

\begin{proof}
Item \eqref{it:invex_1} can be deduced easily from \cite[Proposition 1(ii)]{Bertoin-Yor-02}, which states that for any $\psi \in \Ne$, there exists a positive variable $V_{\psi}$ such that, for any $n \in \N$,
	\begin{equation}\label{eq:moment_V_psi_1}
	\Ebb{V_{\psi}^n} = \frac{\prod_{k=1}^{n} \psi(k)}{n!}=W_{\phi}(n+1),
	\end{equation}
	where the second identity follows from the definition of $\phi(u)=\frac{\psi(u)}{u}$. The absolute continuity of its law is proved in \cite[Proposition 2.4]{Patie-11-Factorization_e}. Then, write, for any $t>0$,
	\begin{equation}\label{def:entrance_law_k}
	t\nu_t(tx)=\nu(x)
	\end{equation}
	i.e.~$\nu f =  \nu_t {\rm{d}}_{1/t}f $ with ${\rm{d}}_.$ the dilation operator. Then, from \cite[Proposition 1(ii)]{Bertoin-Yor-02}  augmented by a moment identification identifies $(\nu_t)_{t\geq0}$ as the family of entrance laws for the semigroup $K$, that is, for any $t,s>0$ and any $f \in \cob$,
	\begin{equation}\label{def:entrance_la} \nu_t K_sf =\nu_{t+s}f.\end{equation}
 Next, using successively the definition of $P$, recalled in \eqref{eq:def_P}, the previous identity with $t=1$ and $s=e^{t}-1$, and the definition of $\nu_t$ above, we get, since plainly $ {\rm{d}}_{e^{-t}} f  \in \cob$, for any $t>0$, that
\begin{eqnarray*}
\nu P_t f &=& \nu K_{e^t -1} {\rm{d}}_{e^{-t}} f  = \nu_{e^t}{\rm{d}}_{e^{-t}}f = \nu f,
\end{eqnarray*}
which completes the proof of item \eqref{it:invex_1}. Turning  now to \eqref{it:ext_p}, since $\nu(x)dx$ is an invariant measure, a standard result, see e.g.~\cite{Daprato-06}, provides the existence of a strongly continuous  semigroup extension of $P$ on $\lnu$.
\end{proof}

\newpage

\section{Examples} \label{sec:exam}
This part is devoted to the description of the eigenvalues expansions of specific instances of the gL semigroups, including the so-called reference semigroups that are exploited in Chapter \ref{sec:ref}. These examples illustrate the different situations that are treated in this work ranging from the self-adjoint case to perturbation of a self-adjoint differential operator through  non-local operators without diffusion component.
\begin{example}\emph{The self-adjoint diffusion case.} \label{ex:lag} \label{sec:Prelimi_Laguerre}
 Let us  consider, for any $m\geq0$,
\begin{equation}\label{eq:led}
\psi(u)=u^2+mu,
\end{equation}
namely $\sigma^2=1$ and $\Pi\equiv 0$ in \eqref{eq:NegDef}, that is the associated gL semigroups are  the classical Laguerre semigroups which generate the class of squared radial Ornstein-Uhlenbeck processes of order $m$. We refer to the monographs \cite{Borodin-Salminen-02} and  \cite{Karlin_Taylor_2} for a thorough account on these linear diffusions. We start by providing a detailed description of the Laguerre semigroup of order $0$ as it plays a central role in this work  and proceed with the essential elements which characterize the classical Laguerre semigroup of higher orders, which are used in Chapter \ref{sec:ref}. The Laguerre semigroup $Q=(Q_t)_{t\geq0}$ of order $0$  generates the so-called $2$-dimensional squared radial Ornstein-Uhlenbeck process of  parameter $1$, denoted by  $R=(R_t)_{t\geq0}$. Its infinitesimal generator $\mathbf{L}$ takes the form, for a function $f\in \cco_0^{2}(\R_+)$,
\begin{equation*}
\mathbf{L} f(x) = x f''(x)+ (1-x) f'(x).
\end{equation*}
Note that the point $0$ is an entrance-non exit boundary. The process $R$ can be also realized as the solution to the stochastic differential equation
\begin{equation*} \label{eq:sde}
dR_t = (1-R_t) dt + \sqrt{2 R_t}dB_t,\quad R_0=x\geq0,
\end{equation*}
where $B=(B_t)_{t\geq0}$ is a standard Brownian motion.
The process $R$ is a positively recurrent  Feller diffusion on $\R_+$ with an absolutely continuous stationary measure, whose density is given by
\begin{equation*}
\e(x)= e^{-x}, \quad x>0,
\end{equation*}
that is the exponential distribution of parameter $1$. The semigroup $Q=(Q_t)_{t\geq0}$ is a strongly continuous contraction semigroup  from  the weighted Hilbert space $\Lg$, endowed with inner product $\langle .,. \rangle_{\e}$, into itself.
Moreover, it  admits the eigenvalues expansions, valid for any $f \in \Lg$ and $x\geq0,t>0$,
\begin{equation}\label{eq:expansionLaguerre}
Q_t f(x) = \sum_{n=0}^{\infty}e^{-n t} \langle f,\mathcal{L}_n \rangle_{\e} \:  \mathcal{L}_n(x) \quad \textrm{ in } \Lg,
\end{equation}
where, for any $n\geq0$, $\mathcal{L}_n$ is the Laguerre polynomial of order $0$, defined either by  means of the Rodrigues operator $\mathcal{R}^{(n)}$ as follows
\begin{equation} \label{eq:def_LP}
\mathcal{L}_n(x)=\frac{\mathcal{R}^{(n)}\e(x)}{\e(x)}=  \frac{1}{n!}\frac{\lb  x^n\e(x)\rb^{(n)}}{\e(x)},
\end{equation}
or, through  the polynomial representation
\begin{equation}\label{eq:def_LP1}
\mathcal{L}_n(x)= \sum_{k=0}^n (-1)^k{ n \choose k} \frac{x^k }{k!}.
\end{equation}
They also satisfy the well-known three terms recurrence relation, for any $n\geq 2$,
\begin{equation} \label{eq:lr}
\mathcal{L}_n(x)= \lb 2+ \frac{-1-x}{n}\rb  \mathcal{L}_{n-1}(x) -\lb 1- \frac{1}{n}\rb   \mathcal{L}_{n-2}(x).
\end{equation}
The eigenvalues expansion is readily seen from the self-adjointness property of the semigroup $Q$ in $\Lg$,  the facts that the sequence of normalized Laguerre polynomials $(\mathcal{L}_n)_{n\geq 0}$ forms an orthonormal basis of the Hilbert space $\Lg$ and that it also corresponds to the sequence of eigenfunctions of $Q_t$ associated to the set of eigenvalues $(e^{-nt})_{n\geq0}$, that is
\begin{equation} \label{eq:eou}
Q_t \mathcal{L}_n(x)= e^{-n t}\mathcal{L}_n(x).
\end{equation}
Furthermore, the semigroup $Q$ can be represented in terms of the semigroup $K^{(0)}=(K^{(0)}_t)_{t\geq0}$ of the $2$-dimensional squared Bessel process as follows
\begin{equation} \label{eq:def-bessel}
Q_t f(x) = K^{(0)}_{e^{t}-1}{{\rm{d}}_{e^{-t}}} f (x).
\end{equation}
\noindent More generally, for any $m >0$, in \eqref{eq:led}, the  infinitesimal generator  of the associated  Laguerre semigroup,  which takes the form
\begin{equation*}
{\mathbf{L}_m}f(x) =  xf''(x)+\lb m +1-x\rb f'(x),
\end{equation*}
corresponds to the complete, up to a dilation, family (indexed by $m$) of second order differential operators included in our class of generators. It is the infinitesimal generator of a one-dimensional diffusion often referred in the literature as the squared radial Ornstein-Uhlenbeck process, see e.g.~\cite[Appendix 1.26]{Borodin-Salminen-02}. From $\psi(u)=u^2+mu$ we get that $\phi(u)=u+m$, and, by moment identification a simple algebra yields, from \eqref{eq:RecurI} that   $I_{\phi} \stackrel{(d)}{=} B(1,m)$ with $B(1,m)$ a Beta random variable of parameters $1$ and $m$,  and, from \eqref{eq:moment_V_psi} that $V_{\psi}\stackrel{(d)}{=} G(m+1),$ where $G(m+1)$ is a Gamma random variable of parameter $m+1$ whose distribution admits a density $\varepsilon_m\lb x \rb= \frac{x^{m}e^{-x}}{\Gamma\lb m+1\rb},\: x>0,$ which was already introduced in Remark \ref{eq:def_gl}. Moreover from \eqref{eigendef1}, \eqref{eq:c-p} and \eqref{defP}, the eigenfunctions are given, for any $n\geq 0$, by
\begin{equation} \label{eq:def_laguerre_pol} \mathcal{P}_n(x)=\Ip\mathcal{L}_n(x)= \E\left[\mathcal{L}_n(x B(1,m))\right]= \sum_{k=0}^n  { n \choose k} \frac{ k! \Gamma(m+1)}{\Gamma(k+m+1)} \frac{(-x)^k }{k!}= \mathfrak{c}_{n}{(m)}\mathcal{L}^{(m)}_n(x)
\end{equation}
where $\mathfrak{c}_{n}(m)=\frac{\Gamma(n+1) \Gamma(m+1)}{\Gamma(n+m+1)}$, $\mathcal{L}_n= \mathcal{L}^{(0)}_n$ and $\mathcal{L}^{(m)}_n(x)=\sum_{k=0}^n (-1)^k { n +m \choose n-k}  \frac{x^k }{k!} $ is the associated Laguerre polynomial of order $m$.  From \eqref{eq:nu_nDistribution}, we get that, for any $n\geq0$, \[ \nun(x) =  \frac{\mathcal{R}^{(n)}\varepsilon_m\lb x \rb}{\varepsilon_m\lb x \rb} = \mathcal{L}^{(m)}_n(x),\]
where the second equality follows from a classical representation of the Laguerre polynomials, see \cite[(4.17.1)]{Lebedev-72}. From these identities, 
we recover the well-known facts that the semigroup $P$ is self-adjoint in  $\lgb$ and the sequence $(\sqrt{\mathfrak{c}_{n}(m)}\mathcal{L}^{(m)}_n)_{n\geq 0}$ forms an orthonormal basis in $\lgb$. Finally, we get, for any $t>0$, that
 \begin{equation*}
P_t(x,y) = \sum_{n=0}^{\infty}e^{-n t} \mathfrak{c}_{n}(m) \mathcal{L}^{(m)}_n(y) \:  \mathcal{L}^{(m)}_n(x) \: \varepsilon_m(y),
\end{equation*}
an expression which can be found, for instance, in \cite[Chap.~15]{Karlin_Taylor_2}. Note that, in this case since $\sigma^2>0$, the expansion  is  convergent in the Hilbert space topology and locally uniformly for all $f \in \lgb$ and $t,x>0$. \mladen{The latter also follows from \eqref{eq:exp_derv_heat}.}
\end{example}
\begin{example}\emph{Small perturbation of the Laguerre semigroup.}
Let $\mathfrak{m}\geq 1$ and consider, for any $u>0$,
\begin{equation*}
\phi_{\mathfrak{m}}(u)=\frac{\left( u + \mathfrak{m}+1 \right) \left( u + \mathfrak{m}-1 \right)}{u+\mathfrak{m}}= u+\frac{\mathfrak{m}^2-1}{\mathfrak{m}}+\int_0^{\infty}(1-e^{-uy})e^{-\mathfrak{m}y}dy.
\end{equation*}
Since plainly $\phi_{\mathfrak{m}} \in \Bp$, see Proposition \ref{propAsymp1}\eqref{it:finitenessPhi}, we have that
\[\psi(u)=u \frac{\left( u + \mathfrak{m}+1 \right) \left( u + \mathfrak{m}-1 \right)}{u+\mathfrak{m}} \in \Ne_P,\]
as  $\sigma^2=1$, $m=\frac{\mathfrak{m}^2-1}{\mathfrak{m}}$, $\PP(y)=e^{-\mathfrak{m}y}$  in \eqref{eq:NegDef} and note that $\frac{\PPP(0^+)+m}{\sigma^2}=\mr$.
The  infinitesimal generator  of the associated  gL semigroup is the integro-differential operator \begin{equation*}
{\mathbf{G}}_{\mr}f(x) =  xf''(x)+\lb \frac{\mathfrak{m}^2-1}{\mathfrak{m}} +1-x\rb f'(x) + \frac{\mathfrak{m}}{x}\int^{\infty}_{0} \left(f(e^{-y}x)-f(x)+yxf'(x)\right)  e^{-\mathfrak{m}y} dy.
\end{equation*}
Moreover, we get, $W_{\phi_{\mathfrak{m}}}(n+1) = \frac{\mathfrak{m}}{n+\mathfrak{m}}\frac{\Gamma(n+\mathfrak{m}+2)}{\Gamma(\mathfrak{m}+2)}$, that is, by moment identification,
\begin{equation*}
\nu(x)=\frac{1+x}{\mathfrak{m}+1}\frac{x^{\mathfrak{m}-1}e^{-x}}{\Gamma(\mathfrak{m})}= \frac{(1+x)}{\mathfrak{m}+1}\e_{\mathfrak{m-1}}(x),\, x>0.
\end{equation*}
Thus, from \eqref{eq:nu_nDistribution} \mladen{and \eqref{eq:def_laguerre_pol}}, we have that,  for $n\geq1$, the $\nun$'s can be expressed in terms of the Laguerre polynomials as follows,
\begin{equation*}
	\nun(x)=\frac{\mathcal{R}^{(n)}\nu(x)}{\nu(x)} = \frac{n}{x+1}\mathcal{L}^{(\mathfrak{m}-1)}_{n-1}(x)+\mathcal{L}^{(\mathfrak{m}-1)}_{n}(x).
\end{equation*}
In this case, one gets, for any $n\geq1$,
\begin{eqnarray}\label{eq:normNunm}
\nonumber	||\nun||^2_\nu &=& \int_0^{\infty}\lb \frac{n}{x+1}\mathcal{L}^{(\mathfrak{m}-1)}_{n-1}(x)+\mathcal{L}^{(\mathfrak{m}-1)}_{n}(x) \rb^2 \frac{(1+x)}{\mathfrak{m}+1}\e_{\mathfrak{m-1}}(x) dx \\
\nonumber&\leq & \frac{n^2}{\mathfrak{m}+1}\int_0^{\infty}\lb \mathcal{L}^{(\mathfrak{m}-1)}_{n-1}(x) \rb^2 \e_{\mathfrak{m-1}}(x) dx + 2\frac{n}{\mathfrak{m}+1}\int_0^{\infty} \mathcal{L}^{(\mathfrak{m}-1)}_{n-1}(x)\mathcal{L}^{(\mathfrak{m}-1)}_{n}(x) \e_{\mathfrak{m-1}}(x) dx \\\nonumber &+&\frac{1}{\mathfrak{m}+1}\lb\int_0^{\infty}\lb \mathcal{L}^{(\mathfrak{m}-1)}_{n}(x) \rb^2 \e_{\mathfrak{m-1}}(x) dx +\int_0^{\infty}x \lb \mathcal{L}^{(\mathfrak{m}-1)}_{n}(x) \rb^2 \e_{\mathfrak{m-1}}(x) dx \rb\\
\nonumber&= &\frac{n^2}{\mathfrak{m}+1}||\mathcal{L}^{(\mathfrak{m}-1)}_{n-1}||^2_{\e_{\mathfrak{m-1}}} + (2n+\mathfrak{m}+1) || \mathcal{L}^{(\mathfrak{m}-1)}_{n}||^2_{\e_{\mathfrak{m-1}}} \\ &=& \frac{n^2}{\mathfrak{m}+1} \frac{\Gamma(n-1+\mathfrak{m})}{\Gamma(\mathfrak{m})(n-1)!} +(2n+\mathfrak{m}+1)\frac{\Gamma(n+\mathfrak{m})}{\Gamma(\mathfrak{m}+1)n!}= \bo{ n^{\mathfrak{m}+1}},
\end{eqnarray}
where we used for the second equality the fact that  the sequence \mladen{$(\sqrt{\mathfrak{c}_{n}(m)}\mathcal{L}^{(\mathfrak{m}-1)}_{n})_{n\geq0}$, with  $\mathfrak{c}_{n}(m)=\frac{\Gamma(n+1) \Gamma(m+1)}{\Gamma(n+m+1)}$, forms an orthonormal} sequence in $\lt^2(\e_{\mathfrak{m-1}})$ and the three terms recurrence classical relationship for generalized Laguerre polynomials to express
\begin{eqnarray*}
	x\lbrb{\mathcal{L}^{(\mathfrak{m}-1)}_{n}(x)}^2&=& \lbrb{2n+\mathfrak{m}}\lbrb{\mathcal{L}^{(\mathfrak{m}-1)}_{n}(x)}^2-
\lbrb{n+\mathfrak{m}-1}\mathcal{L}^{(\mathfrak{m}-1)}_{n-1}(x)\mathcal{L}^{(\mathfrak{m}-1)}_{n}(x)\\
& -& \lbrb{n+1}\mathcal{L}^{(\mathfrak{m}-1)}_{n+1}(x)\mathcal{L}^{(\mathfrak{m}-1)}_{n}(x),
\end{eqnarray*}
and, using again orthogonality to compute the very last expression in the first inequality. On the other hand, we have \mladen{from \eqref{defP} applied with $\phi_{\mathfrak{m}}$ identified with \eqref{eq:def_laguerre_pol}}, for any $n\geq0$,
\begin{equation*}
 \mathcal{P}_n(x)=\frac{n+\mathfrak{m}}{\mathfrak{m}}\mathfrak{c}_{n}(\mathfrak{m}+1) \mathcal{L}^{(\mathfrak{m}+1)}_{n}(x).
 \end{equation*}
Note that \mladen{from Theorem \ref{cor:sequences}\eqref{it:1_thmseq}} the sequences  $\lbrb{\Pon(x)}_{n\geq0}=\lb \frac{n+\mathfrak{m}}{\mathfrak{m}}\mathfrak{c}_{n}(\mathfrak{m}+1) \mathcal{L}^{(\mathfrak{m}+1)}_{n}(x)\rb_{n\geq0}$ and \[\lbrb{\nun(x)}_{n\geq0}=\lb\frac{n}{x+1}\mathcal{L}^{(\mathfrak{m}-1)}_{n-1}(x)+\mathcal{L}^{(\mathfrak{m}-1)}_{n}(x)\rb_{n\geq0}\]
 are biorthogonal in $\lnu$.
Finally, since $\sigma^2=1$, we obtain, for any $t,x,y>0$, that
 \begin{equation*}
 P_t(x,y) = \sum_{n=0}^{\infty}e^{-n t}\frac{n\lb n+\mathfrak{m}\rb \mathfrak{c}_{n}(\mathfrak{m}+1)}{\mathfrak{m}(\mathfrak{m}+1) \Gamma(\mathfrak{m})} \lb \mathcal{L}^{(\mathfrak{m}-1)}_{n-1}(y)+\frac{y+1}{n}\mathcal{L}^{(\mathfrak{m}-1)}_{n}(y) \rb   \mathcal{L}^{(\mathfrak{m}+1)}_{n}(x) y^{\mathfrak{m}-1}e^{-y},
 \end{equation*}
 where the series converge locally uniformly on $\R_+\times \R_+\times \R_+$. Moreover, as $(\Pon)_{n\geq 0}$ forms a Bessel sequence in $\lnu$, we have for all $f \in \lnu$ and all  $t>0$,
 \begin{equation*}
 P_tf(x) = \sum_{n=0}^{\infty}e^{-n t}  \langle f,\nun\rangle_\nu  \: \frac{\lb n+\mathfrak{m}\rb }{\mathfrak{m}} \mathfrak{c}_{n}(\mathfrak{m}+1) \mathcal{L}^{(\mathfrak{m}+1)}_{n}(x)\:  \textrm{ in }  \Lnu.
 \end{equation*}
\end{example}

\begin{example}\emph{The Gauss-Laguerre semigroup.} \label{sec:GL}
In \cite{Patie-Savov-GL}, we introduce and study in depth the so-called Gauss-Laguerre semigroup, an instance of gL semigroup whose infinitesimal generator, for any  $\alpha \in (0,1)$ and $\mr\in [1-\frac{1}{\alpha},\infty)$, and, for any given smooth function $f $, takes the form
\begin{eqnarray*}  \label{eq:EK_der}
\mathbf{G}_{\alpha,\mr}\:f(x) &=& \lb \mr_{\alpha} -x\rb f'(x)+\frac{\sin(\alpha \pi)}{\pi}x\int_0^1 f''(xy) g_{\alpha,\mr}(y) dy, \: x>0,
\end{eqnarray*}
where  $\mr_{\alpha}= \frac{\Gamma( \alpha \mr+\alpha +1)}{\Gamma(\alpha \mr +1)}$ and
\begin{equation*}
g_{\alpha,\mr}(y)=\frac{\Gamma(\alpha)}{\mr+\frac1\alpha+1}y^{\mr+\frac1\alpha+1} \: {}_{2}F_{1}(\alpha (\mr+1)+1,\alpha+1;\alpha (\mr+1)+2;y^{\frac{1}{\alpha}}),
\end{equation*} with ${}_{2}F_{1}$ the Gauss hypergeometric function. The terminology is motivated by the limit case $\alpha=1$ which is proved to yield, writing simply $\mathbf{L}_{\mr}=\mathbf{G}_{1,\mr}$,
\begin{eqnarray*}  \label{eq:EK_deriv}
	\mathbf{L}_{\mr}f(x) &=&x f''(x) + \lb \mr+1 -x\rb f'(x),
\end{eqnarray*}
that is the Laguerre differential operator of order $\mr$, see Example \ref{sec:Prelimi_Laguerre} above.  The algebra of polynomials $\mathbf{P}$ is a core for $\mathbf{G}_{\alpha,\mr}$ and the associated semigroup  $P^{\alpha,\mr}=(P^{\alpha,\mr}_t)_{t\geq 0}$ is a  non-self-adjoint contraction  in ${\rm{L}}^2(\eab)$, where
\begin{equation}\label{eq:def_e}
\eab(x)dx=\frac{x^{\mr+\frac1\alpha-1} e^{-x^{\frac1\alpha}}}{\Gamma(\alpha \mr +1)}dx, \: x>0,
\end{equation}
is its unique invariant  measure.  Observe that, for any $y>0$, $z \mapsto \eab(\frac{y}{1-z})$ is analytical  in the interior of unit disc since $\eab(z)$ is analytical on $\Cb_{\lbrb{0,\infty}}$. Note that $P^{\alpha,\mr}$ is the gL semigroup associated to  $\psi^R_{\alpha,\mr} \in \Ne$, where
\begin{eqnarray*}
 \psi^R_{\alpha,\mr}(u)&=& u\phi^R_{\alpha,\mr}(u) =u\frac{\Gamma(\alpha u + \alpha \mr +1)}{\Gamma(\alpha u +\alpha \mr +1-\alpha)} \\
 &=& \frac{u}{\Gamma(1-\alpha)}\int_0^{\infty}(1-e^{-uy})e^{-( \mr+\frac{1}{\alpha})y}(1-e^{-\frac{y}{\alpha}})^{-\alpha-1} dy  +u\frac{\Gamma( \alpha \mr +1)}{\Gamma(\alpha \mr +1-\alpha)},\end{eqnarray*}
 and we refer to Lemma \ref{lem:potmeasure} for more details  on the computation.
In order to recall some additional results, we proceed by setting some further notation. For any $x\geq0$, we set $\mathcal{L}^{\alpha,\mr}_0(x)=1$ and for any $n\geq 1$, we introduce the polynomials
\[ \mathcal{L}^{\alpha,\mr}_n(x) =\Gamma(\alpha \mr +1) \sum_{k=0}^n (-1)^k\frac{{ n \choose k}}{\Gamma(\alpha k + \alpha \mr +1)} x^k. 	\]
Note that for $\alpha=1$, $\mathcal{L}^{\alpha,\mr}_n(x) =\mathfrak{c}_{n}(\mr) \mathcal{L}_n^{(\mr)}(x)= \Gamma(\mr +1) \sum_{k=0}^n (-1)^k\frac{{ n \choose k}}{\Gamma( k + \mr +1)} x^k$ are the classical Laguerre polynomials of order $\mr\geq 0$, see \eqref{eq:def_laguerre_pol}. Moreover,
for any $x\geq0$, we write $\nun^{(\alpha,\mr)}(x)=\frac{{\mathcal{R}}^{(n)}\eab(x)}{\eab(x)}, \: n\in\N$, that is  \begin{equation*}
\nun^{(\alpha,\mr)}(x)=\frac{(-1)^n}{n!\eab(x)} (x^{n}\eab(x))^{(n)}.
\end{equation*}
From  the Rodrigues representation of the Laguerre polynomials, we also get that for $\alpha=1$, $\nun^{(\alpha,\mr)}(x)=\mathcal{L}_n^{\mr}(x)$. We also define, for any $0<\gamma<\alpha$ and $\ew>0$ fixed,
\begin{equation*}
\ebb(x)=x^{\mr+\frac1\alpha-1} e^{\ew x^{\frac1\gamma}}, \: x>0,
\end{equation*}
and, recall that $T_{\alpha} =-\ln\lbrb{2^{\alpha}-1}$. By means of  a delicate and non-classical saddle-point analysis, we obtain, in \cite[Proposition 2.3]{Patie-Savov-GL}, the following specific asymptotic estimates for the co-eigenfunctions for large values of the parameter $n$,
\begin{equation}\label{eq:bound_norm}
||\nun^{(\alpha,\mr)}||_{\eab} = \bo{e^{ T_{\alpha} n}},
\end{equation}
and  
\begin{equation}
\label{eq:bound_norm_new}
\left|\left|\nun^{(\alpha,\mr)}\frac{\eab}{\ebb}\right|\right|_{\ebb} = \bo{n^{1+\mr+\frac1\alpha+\alpha}e^{\bar{\mathfrak{t}}_{\alpha}n^{\frac{1}{\alpha+1}}}},
\end{equation}
where $\bar{\mathfrak{t}}_{\alpha}=(\alpha+1)\alpha^{-\ratio{\alpha}}\lb \frac{\alpha+1}{\alpha}+\epsilon\rb^{\frac{1}{\alpha+1}}$, for some small $\epsilon>0$. From this asymptotic analysis, we deduce the following fine properties of the Gauss-Laguerre semigroup. For any $f \in \lnua$ (resp.~$f  \in  \lnubb $)  
	we have that
	\begin{equation*} \label{eq:expRef}
	P^{\alpha,\mr}_t f(x) = \sum_{n=0}^{\infty} e^{-nt} \spnu{f,\nun^{(\alpha,\mr)}}{\eab} \mathcal{L}^{\alpha,\mr}_n(x),
	\end{equation*}
	where, for any $t>T_{\alpha}$ (resp.~$t>0$), the  identity holds  in  $\lnua$. Moreover $P_t f \in \cco^{\infty}\lb (T_\alpha,\infty) \times \R_+ \rb$ (resp.~$\cco^{\infty}(\R^2_+)$), and for any integers $k,p$,
	\begin{eqnarray*}
		\frac{d^k}{dt^k}(P^{\alpha,\mr}_t f)^{(p)}(x) =\sum_{n=p}^{\infty}(-n)^k e^{-n t} \langle  f,\nun^{(\alpha,\mr)} \rangle_{\eab} \:  \:(\mathcal{L}_n^{\alpha,\mr})^{(p)}(x),
	\end{eqnarray*}
	where, for any $t>T_{\alpha}$ (resp.~$t>0$), the series converges absolutely. Finally, the heat kernel is absolutely continuous with  density $P_t(x,y) \in \cco^{\infty}(\R^3_+)$, given for  any $t,y>0$,  $x\geq0$, and for any integers $k,p,q$, by
	\begin{eqnarray*} \label{eq:exp_dervallr}
	\frac{d^k}{dt^k}P^{(p,q)}_t(x,y) =\sum_{n=p}^{\infty}(-n)^k e^{-n t}    \: (\mathcal{L}_n^{\alpha,\mr})^{(p)}(x) \: (\nun^{(\alpha,\mr)}\eab)^{(q)}(y),
	\end{eqnarray*}
	where the series is absolutely convergent.
\end{example}

\begin{example}{\emph{The stationary saw-tooth semigroup.}} \label{ex:st}
Let $0<a<1<a+b$ and
\begin{eqnarray*}
\psi(u) &=& u \frac{u+1-a}{u+b} =u \left( \frac{1-a}{b}+\int^{\infty}_{0} \left(1-e^{-uy} \right) \left( a+b-1 \right) e^{-by} dy \right),
\end{eqnarray*}
that is  $m= \frac{1-a}{b}$ and $\PP(y)=(a+b-1) e^{-by}$. Hence  $\r = \PPP(0^+)+m=\limi{u}\frac{u+1-a}{u+b}=1$, see \eqref{def:tau}, and $\Si =\left\lceil \frac{\PP(0^+)}{\r} \right\rceil -1 = \lceil a + b - 1 \rceil-1 \geq 0$, see \eqref{def:Ktau}. We point out that  the self-similar semigroup $K$, see Definition \ref{def:gL}, associated to the gL semigroup was introduced and studied by Carmona et al.~\cite{Carmona-Petit-Yor-98}. They call the corresponding process the saw-tooth process due to the specific behaviour of its trajectories. Next,  for any $n\geq 0$, we have \mladen{from \eqref{def:W_phi_n} with $\phi(u)=\frac{u+1-a}{u+b}$} that $W_{\phi}(n+1)=\frac{\Gamma(n+2-a)  \Gamma(b+1)}{\Gamma(2-a)\Gamma(n+1+b)}$ and thus \mladen{from \eqref{defP}}
\begin{equation*}
 	\Pon(x) = \frac{\Gamma(2-a)}{\Gamma(b+1)}\sum_{k=0}^n  \frac{n!\Gamma(k+b+1)}{(n-k)!\Gamma(k+2-a)} \frac{(-x)^k}{k!}.
 \end{equation*}
Moreover, by moment identification via \eqref{eq:solfeVPsi1}, we easily observe that  the invariant measure is the beta distribution, that is
\begin{equation*}
 	\nu(x) = \frac{\Gamma(b+1)}{\Gamma(2-a)\Gamma(b+a-1)} x^{1-a}(1-x)^{b+a-2}\mathbb{I}_{\{0<x<1\}}.
 \end{equation*}
From this expression,  we get that indeed, for $\Si\geq 1$,  $\nu \in \cco^{\Si-1}(\R_+)$ and, in any case, the mapping $x \mapsto (x-1)\nu^{(\Si)}(x)$ is continuous on $\R_+$, \mladen{as claimed in Theorem \ref{thm:smoothness_nu}\eqref{it:inv_smooth1}}.
 Moreover, for any $n\leq \Si$, we have, for any $0<x<1$, and, for sake of simplicity assuming that $a+b \notin \N$,
\begin{eqnarray*}
 	\mathcal{V}_n(x) &=& \frac{(-1)^n}{n!\nu(x)} (x^{n}\nu(x))^{(n)}\\
 	&=&\frac{(-1)^nx^{a-1}}{n!(1-x)^{b+a-2}} \sum_{k=0}^n { n \choose k}\frac{(-1)^k\Gamma(n+2-a)  \Gamma(b+a-1)}{\Gamma(k+2-a)\Gamma(b+a-1-k)}   x^{k+1-a}(1-x)^{-k+b+a-2}\\
 	&=&\frac{(-1)^n \Gamma(n+2-a) C_{a,b}}{n!} \sum_{k=0}^n { n \choose k}\frac{\Gamma(k+2-a-b) }{\Gamma(k+2-a)} \lb\frac{x}{1-x}\rb^k,
 \end{eqnarray*}
 where after  using the reflection formula of the Gamma function, we have set $ \pi C_{a,b}=\Gamma(b+a-1)\sin((2-a-b)\pi)$.   Thus, for any $n \geq0$, $\nun^{2}(x) \nu(x) \stackrel{1}{=} \bo{(x-1)^{-2n+a+b-2}}$ and hence $\nun \in \lnu$ if and only if $-2n+a+b-2>-1$, that is \mladen{$n<\okhalf$, which is precisely the assertion of Theorem \ref{cor:sequences}\eqref{it:spectrum}}.
\end{example}

\newpage

\section{New developments in the theory of Bernstein functions} \label{sec:bern}
The Bernstein functions play a central and recurrent role in this work. We review below some well known facts and present some new interesting developments  which may find applications in the areas where this class of functions appears.  An excellent reference on this topic is the monograph \cite{Schilling2010} which is entirely devoted to their study.

\subsection{Review and  basic properties of Bernstein functions}

We recall that the class of Bernstein functions is  defined as follows
	\begin{equation}\label{eq:B}
		\Be=\{\phi \in \cco^{\infty}(\R_+); \: \phi\not\equiv 0,   \textrm{$\phi$ is non-negative and }  \phi' \textrm{ is completely monotone}\}
	\end{equation}
where a function $f$ is called completely monotone if  $f:\R_+\rightarrow [0,\infty) \in \cco^{\infty}(\R_+)$ and $(-1)^nf^{(n)}\geq0$ on $\R_+$,  for all $n=0,1,\ldots\,\,$.
The next statement collects some well known and new properties of the Bernstein functions.
\begin{proposition}\label{propAsymp1}
Let $\phi\in \Be$.
\begin{enumerate}
 \item \label{it:bernstein_def}We have that $\phi\in \Ae_{[0,\infty)}$ and $\phi$ has the form, for $z\in\Cb_{\lbbrb{0,\infty}}$,
\begin{equation}\label{eqn:phi-}
\phi(z)=m +\sigma^2 z+z\int_{0}^{\infty} e^{-zy}\overline\mu(y)dy=m +\sigma^2 z+\int_{0}^{\infty} \lb 1-e^{-zy}\rb \mu(dy),
\end{equation}
where $m=\phi(0)\geq 0$, $\sigma^2\geq 0$ and $\mu$ is a \LL measure  on $\lbrb{0,\infty}$ such that $\IInf \lb 1\wedge y\rb\mu(dy)<\infty$ and as usual $\overline{\mu}(y)=\int_{y}^{\infty}\mu(du)$ is the tail of $\mu$.
\item \label{it:bernstein_cm} Moreover, $\phi$ is  non-decreasing on $\R_+$ and
\begin{equation}\label{eq:phi'}
  \phi'(u)= \sigma^2 +  \int_{0}^{\infty} e^{-uy}\overline\mu(y)dy -u\int_{0}^{\infty} e^{-uy}y\overline\mu(y)dy= \sigma^2+\int_{0}^{\infty}e^{-uy}y\mu(dy)
\end{equation}
as a completely monotone function is positive non-increasing on $\R_+$. Hence, $\phi$ is strictly log-concave on $\R_+$. Consequently, for any $u\in \R_+$,
\begin{equation}\label{eq:phi'_phi}
 0\leq u \phi'(u)=\phi(u)-m-u^2\int_{0}^{\infty} e^{-uy}y\overline\mu(y)dy\leq \phi(u)
\end{equation}
and
	\begin{equation}\label{specialEstimates11}
	|\phi''(u)|\leq 2 \frac{\phi(u)-m}{u^2}\leq 2\frac{\phi(u)}{u^2}.
\end{equation}
\item \label{it:asyphid}   $\phi(u)\stackrel{\infty}{=} \sigma^2u +\so{u}$ and $\phi'(u)\stackrel{\infty}{=}\sigma^2+\so{1}$. Fix  $a>d_\phi=\sup\{ u\leq 0;\:
	\:\phi(u)=-\infty\text{ or } \phi(u)=0\}\in\lbrbb{-\infty,0}$, then $\labsrabs{\phi\lbrb{a+ib}}\stackrel{\infty}{=} \sigma^2\labsrabs{a+ib}+\so{\labsrabs{a+ib}},$ as $|b|\to\infty$.
\item \label{it:bernstein_cmi} The mapping $u\mapsto \frac{1}{\phi(u)},\, u\in \R_+,$ is completely monotone, i.e.~there exists a positive measure $\Upsilon(dy)$, whose support is contained in $\lbbrb{0,\infty}$, called the potential measure, such that  the Laplace transform of $\Upsilon$ is given via the identity
    \[ \frac{1}{\phi(u)} = \int_0^{\infty} e^{-uy}\Upsilon(dy).\]
\item \label{it:bernstein_log_concavity_u/p} The mapping $u\mapsto \frac{u}{\phi(u)}$ is positive and log-concave on $\R_+$.
\item In any case, \label{it:flatphi}
\begin{equation}\label{lemmaAsymp1-1}
\lim_{u\to\infty}\frac{\phi(u\pm \mathfrak{a})}{\phi(u)}=1\,\,\text{ uniformly for $\mathfrak{a}$-compact intervals  on $\R_+$.}
\end{equation}
\item\label{it:unif_rev1}
 Uniformly, for $u\in\R_+$, we have that\label{it:asympphi}
\begin{equation} \label{eq:asympphi}
 \phi(u) \asymp u \int_0^{\frac{1}{u}} \bar{\mu}(y) dy + \sigma^2 u + m.
\end{equation}
\item\label{it:finitenessPhi} If $\psi\in\Ne$, then  for $u\in\R_+$ put $\frac{\psi(u)}u=\phi(u)$. Then $\phi\in\Be$. More precisely, $\psi'(0^+)=\phi(0)=m,$ and $\mu(dy)=\overline{\Pi}(y)dy,\,y>0$, where we recall that $\overline{\Pi}(y)=\int_y^{\infty}\Pi(dr)$ with $\Pi$ the \LL measure corresponding to $\psi$. Therefore, $\phi(u)=m+\sigma^2u+\int_{0}^{\infty}\lbrb{1-e^{-uy}}\PP(y)dy$ or equivalently $\phi\in\Be_{\Ne}$. Moreover, $\phi(\infty)<\infty$ if and only if $\sigma^2=0$ and $\overline{\overline{\Pi}}\lbrb{0^+}=\IInf\PP(y)dy<\infty$, and in fact then
\begin{equation}\label{eq:phiinfinity}
\phi(\infty)=\r=m+\overline{\overline{\Pi}}\lbrb{0^+}<\infty.
\end{equation}
\item \label{it:def_Tb} Let $\beta >0$. Then, for any $\phi \in \Be$ (resp.~$\phi \in \Bp$ or $\psi \in \Ne$) $\mathcal{T}_{\beta}\phi(u)= \frac{u}{u+\beta}\phi(u+\beta) \in \Be$ (resp.~$\in \Bp$ or $\mathcal{T}_{\beta}\psi(u)= \frac{u}{u+\beta}\psi(u+\beta)=u\phi(u+\beta) \in \Ne$). Moreover, $(\mathcal{T}_{\beta}\psi)'\lbrb{0^+}=\phi(\beta)$  and the \LL measure $\Pi_\beta$ associated to $\mathcal{T}_{\beta}\psi\in\Ne$ is given by
\begin{equation}\label{eq:Pibeta}
	\Pi_\beta\lbrb{dy}=e^{-\beta y}\lbrb{\Pi(dy)+\beta\PP(y)dy},\,\,y>0.
\end{equation}
\end{enumerate}
\end{proposition}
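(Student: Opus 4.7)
The plan is to verify each assertion of item \eqref{it:def_Tb} by directly exhibiting the L\'evy-Khintchine representation of $\mathcal{T}_\beta\phi$ (respectively $\mathcal{T}_\beta\psi$), using the expansion of $\phi$ from item \eqref{it:bernstein_def}. The key algebraic observation that makes $\mathcal{T}_\beta$ well-behaved is that the diffusion part of $\phi(u+\beta)$ equals $\sigma^2(u+\beta)$, so upon multiplication by $\frac{u}{u+\beta}$ it collapses cleanly to $\sigma^2 u$; absent this cancellation one would face the non-Bernstein term $u^2/(u+\beta)$, so this is the structural reason the construction is well-defined.

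For the $\Be$ case, I would first split $\phi(u+\beta)$ into its killing, diffusion, and integral parts using \eqref{eqn:phi-}. The killing piece yields $m\frac{u}{u+\beta}$, which is Bernstein via the elementary identity $\frac{u}{u+\beta} = \int_0^\infty(1-e^{-uy})\beta e^{-\beta y}dy$. The diffusion piece contributes exactly $\sigma^2 u$ as noted above. The integral piece requires the only non-trivial manipulation: using the identity $\frac{u(1-e^{-(u+\beta)y})}{u+\beta} = \int_0^y u e^{-(u+\beta)r}dr$, switching the order of integration via Fubini, and integrating by parts against $e^{-\beta r}\overline{\mu}(r)$ produces
\[
\frac{u}{u+\beta}\int_0^\infty(1-e^{-(u+\beta)y})\mu(dy) = \int_0^\infty(1-e^{-uy})e^{-\beta y}\bigl(\mu(dy)+\beta\overline{\mu}(y)dy\bigr).
\]
Combining the three pieces identifies $\mathcal{T}_\beta\phi$ as a Bernstein function with diffusion $\sigma^2$, no killing, and L\'evy measure $\mu_\beta(dy) = e^{-\beta y}\bigl[m\beta\, dy + \beta\overline{\mu}(y)dy + \mu(dy)\bigr]$.

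The main obstacle is then confirming that $\mu_\beta$ satisfies the Bernstein integrability $\int_0^\infty(1\wedge y)\mu_\beta(dy)<\infty$. The only delicate contribution is $\int_0^1 y\overline{\mu}(y)dy$, which by Fubini equals $\frac12\int_0^\infty(1\wedge y)^2\mu(dy)$ and is finite because $(1\wedge y)^2\leq (1\wedge y)$ and $\mu$ already obeys the Bernstein condition; the exponential factor $e^{-\beta y}$ handles the tail. For the subclass $\Bp$, item \eqref{it:finitenessPhi} tells us $\mu(dy) = \PP(y)dy$ so $\overline{\mu}(y) = \PPP(y)$, and the formula above yields an absolutely continuous $\mu_\beta$ with non-increasing density $e^{-\beta y}\bigl[m\beta + \PP(y)+\beta\PPP(y)\bigr]$, which precisely places $\mathcal{T}_\beta\phi$ in $\Bp$ once one checks monotonicity of this density and that its associated tail measure belongs to $\Ne$ -- a routine differentiation using $d\PP(y) = -\Pi(dy)$.

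For $\psi\in\Ne$, note $\mathcal{T}_\beta\psi(u) = u\phi(u+\beta)$. Multiplying the expansion $\phi(u+\beta) = \phi(\beta) + \sigma^2 u + \int_0^\infty(1-e^{-uy})e^{-\beta y}\PP(y)dy$ by $u$ recovers the canonical $\Ne$ form $\sigma^2 u^2 + \phi(\beta)u + u\int_0^\infty(1-e^{-uy})\PP_\beta(y)dy$ with tail $\PP_\beta(y) := e^{-\beta y}\PP(y)$. Differentiating this tail in the distributional sense, using $d\PP(y) = -\Pi(dy)$, yields the announced formula $\Pi_\beta(dy) = e^{-\beta y}(\Pi(dy)+\beta\PP(y)dy)$; its integrability $\int(y\wedge y^2)\Pi_\beta(dy)<\infty$ follows from the exponential cutoff and the original integrability of $\Pi$. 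Finally, the identity $(\mathcal{T}_\beta\psi)'(0^+)=\phi(\beta)$ is immediate from the product rule $(\mathcal{T}_\beta\psi)'(u) = \phi(u+\beta) + u\phi'(u+\beta)$ evaluated at $u=0$.
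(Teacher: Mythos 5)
Your proposal only addresses item \eqref{it:def_Tb}; the paper's proof of the other eight items is not discussed. Assuming that restriction was intentional, the comparison is as follows.

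For item \eqref{it:def_Tb}, the paper does not give an argument at all: it simply cites \cite[Theorem 2.2]{Patie-11-Factorization_e} for $\beta=1$ and \cite{Cha-Kyp-Pat-12}, \cite[Proposition 2.1(2.2)]{Patie-Savov-11} for general $\beta>0$, together with a remark about the sign convention for the L\'evy measure. Your proposal therefore genuinely differs: you construct the L\'evy--Khintchine triplet of $\mathcal{T}_\beta\phi$ from scratch. I checked the key identities and they are correct: the killing piece contributes via $\frac{u}{u+\beta}=\int_0^\infty(1-e^{-uy})\beta e^{-\beta y}\,dy$; the diffusion part collapses to $\sigma^2 u$ exactly as you say; and the integral-part identity
\[
\frac{u}{u+\beta}\int_0^\infty\bigl(1-e^{-(u+\beta)y}\bigr)\mu(dy)=\int_0^\infty\bigl(1-e^{-uy}\bigr)e^{-\beta y}\bigl(\mu(dy)+\beta\overline{\mu}(y)\,dy\bigr)
\]
follows from writing $\frac{u}{u+\beta}\bigl(1-e^{-(u+\beta)y}\bigr)=\int_0^y ue^{-(u+\beta)r}dr$, Fubini, and then recognizing $\int_0^\infty ue^{-ur}g(r)\,dr=\int_0^\infty(1-e^{-ur})(-dg(r))$ for $g(r)=e^{-\beta r}\overline{\mu}(r)$, a non-increasing function vanishing at infinity. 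The integrability check via $\int_0^1 y\overline{\mu}(y)\,dy=\tfrac12\int_0^\infty(1\wedge y)^2\mu(dy)$ is correct, as is the identification $\PP_\beta(y)=e^{-\beta y}\PP(y)$ and its distributional derivative giving \eqref{eq:Pibeta}, and the product-rule evaluation of $(\mathcal{T}_\beta\psi)'(0^+)$. What your approach buys is a fully self-contained and transparent derivation that exposes exactly why the transformation is closed on $\Be$, $\Bp$ and $\Ne$ — in particular the cancellation $\frac{u}{u+\beta}\cdot\sigma^2(u+\beta)=\sigma^2 u$ that you highlight is the structural heart of the matter. What the paper's approach buys is brevity, at the cost of sending the reader elsewhere. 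One small point worth tightening: in the $\Bp$ case you should state explicitly that the measure $\Lambda(dy)=-dh(y)$ obtained by differentiating the density $h(y)=e^{-\beta y}[m\beta+\PP(y)+\beta\PPP(y)]$ satisfies $\int(y^2\wedge y)\Lambda(dy)<\infty$; this reduces, after integration by parts, to finiteness of $\int_0^1 y\,h(y)\,dy$ and $\int_1^\infty h(y)\,dy$, both immediate from what you have, but the phrase ``routine differentiation'' slightly undersells the need to also verify that boundary terms vanish.
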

\begin{proof}
The proof of \eqref{it:bernstein_def} is standard, see \cite[Chap. III]{Bertoin-96}. Expression \eqref{eqn:phi-} gives \eqref{eq:phi'}.  It together with \eqref{eqn:phi-} yield in turn the very first claim of item \eqref{it:bernstein_cm}  and \eqref{eq:phi'_phi}. Recall that $f>0$ is strictly log-concave if $\log f$ is strictly concave and we verify that $(\log \phi)''=\frac{\phi\phi''-(\phi')^2}{\phi^{2}}<0$ on $\R_+$ since $\phi'$ is a positive completely monotone function and thus $\phi''$ is non-positive, see \cite{Bernstein-28} for more information. Differentiating the first expression of \eqref{eq:phi'} we get that, for any $u>0$, $\phi''(u)=-2\IInf ye^{-uy}\bar{\mu}(y)dy+u\IInf y^2e^{-uy}\bar{\mu}(y)dy$. However, since $\phi''\leq 0$, then $\labsrabs{\phi''(u)}\leq2\IInf ye^{-uy}\bar{\mu}(y)dy.$ Finally, using \eqref{eq:phi'} we deduce that
\[\labsrabs{\phi''(u)}\leq 2\IInf ye^{-uy}\bar{\mu}(y)dy=2\frac{\phi(u)-m}{u^2}-2\frac{\phi'(u)}{u},\]
    and \eqref{specialEstimates11} follows from $\phi'\geq0$ on $\R_+$.
The first claim of item \eqref{it:asyphid} follows from \eqref{eqn:phi-} and \eqref{eq:phi'}, whereas the second one comes from the first expression for $\phi$ in \eqref{eqn:phi-} together with the Riemann-Lebesgue theorem applied to the integrable on $\R_+$ function $e^{-ay}\bar{\mu}(y)$. The fact that $d_\phi\in\lbrbb{-\infty,0}$ is clear from $\lim_{u\to-\infty}\phi(u)=-\infty$, provided $\phi$ extends to  an entire function. Item \eqref{it:bernstein_cmi} can be found in \cite[Chap.~III]{Bertoin-96}. To prove item \eqref{it:bernstein_log_concavity_u/p} we observe that the mapping $u\mapsto \frac{u}{\phi (u)}$  is positive on $(0,\infty)$ and its    $\log$-concavity property on $(0,\infty)$ is equivalent to
\[\frac{\phi (u )}{u }=\frac{m }{u }+\sigma^2+ \int_0^{\infty} e^{-u y}\bar{\mu}(y)dy=\sigma^2+ \int_0^{\infty} e^{-u y}\lbrb{\bar{\mu}(y)+m}dy=\sigma^2 +\mathcal{F}^+_{\bar{\mu}_m}(u)\]
 being $\log$-convex on $(0,\infty)$, where for brevity $\bar{\mu}_m(y)=m+\bar{\mu}(y)\geq0,\,y\in\R_+,$ and $\mathcal{F}^+_{\bar{\mu}_m}$ stands for its Laplace transform. It therefore suffices to show that, for all $u >0$,
\begin{equation} \label{eq:inlc}
\sigma^2 \lbrb{\mathcal{F}^+_{\bar{\mu}_m}}''(u )+\lbrb{\mathcal{F}^+_{\bar{\mu}_m}}''(u )\mathcal{F}^+_{\bar{\mu}_m}(u)-\lb \lbrb{\mathcal{F}^+_{\bar{\mu}_m}}'(u )\rb^2>  0.
\end{equation}
Note that, for all $u >0$,
\[\sigma^2 \lbrb{\mathcal{F}^+_{\bar{\mu}_m}}''(u )+\lbrb{\mathcal{F}^+_{\bar{\mu}_m}}''(u )\mathcal{F}^+_{\bar{\mu}_m}(u)-\lb \lbrb{\mathcal{F}^+_{\bar{\mu}_m}}'(u)\rb^2\geq \lbrb{\mathcal{F}^+_{\bar{\mu}_m}}''(u)\mathcal{F}^+_{\bar{\mu}_m}(u )-\lb \lbrb{\mathcal{F}^+_{\bar{\mu}_m}}'(u )\rb^2.\]
We deduce inequality \eqref{eq:inlc} from the H\"{o}lder's inequality which yields, for all $u >0$, that
\[\lb \lbrb{\mathcal{F}^+_{\bar{\mu}_m}}'(u )\rb^2=\lb\IInf \lb\sqrt{\bar{\mu}_m(y)}e^{-\frac{u }{2}y}\rb\lb y\sqrt{\bar{\mu}_m(y)}e^{-\frac{u }{2}y}\rb dy\rb^2< \lbrb{\mathcal{F}^+_{\bar{\mu}_m}}''(u )\mathcal{F}^+_{\bar{\mu}_m}(u ).\]
Let us prove item \eqref{it:flatphi}, namely \eqref{lemmaAsymp1-1}. It is obvious when $\sigma^2>0$ since item \eqref{it:asyphid} holds. Let $\sigma^2=0$.
 We use the first relation in \eqref{eqn:phi-} and the monotonicity of $\phi$ to get, for $u>\mathfrak{a}>0$,
\begin{eqnarray*}
1-\frac{\mathfrak{a}}{u}&\leq& \frac{\lb 1-\frac{\mathfrak{a}}{u}\rb\int_{0}^{\infty}e^{-uy}\overline\mu(y)dy+\frac{m}{u}}{\int_{0}^{\infty}e^{-uy}\overline\mu(y)dy+\frac{m}{u}}=\frac{\lb u-\mathfrak{a}\rb\int_{0}^{\infty}e^{-uy}\overline\mu(y)dy+m}{\phi(u)}\\
&\leq& \frac{\phi(u-\mathfrak{a})}{\phi(u)}\leq 1\leq \frac{\phi(u+\mathfrak{a})}{\phi(u)} \\
&=&\frac{\lb u+\mathfrak{a}\rb\int_{0}^{\infty}e^{-(u+\mathfrak{a})y}\overline\mu(y)dy+m}{\phi(u)}\leq \frac{\lb 1+\frac{\mathfrak{a}}{u}\rb\lbrb{\int_{0}^{\infty}e^{-uy}\overline\mu(y)dy+\frac{m}{u}}}{\int_{0}^{\infty}e^{-uy}\overline\mu(y)dy+\frac{m}{u}}=
 1+\frac{\mathfrak{a}}{u},
\end{eqnarray*}
and we deduce \eqref{lemmaAsymp1-1}. Item \eqref{it:asympphi} follows from \cite[Chapter III, Proposition 1]{Bertoin-96}.
 Let us prove \eqref{eq:phiinfinity}. Clearly, from item \eqref{it:asyphid}, $\phi(\infty)<\infty$ implies that $\sigma^2=0$. Then a substitution for $z=\infty$ in the last expression in \eqref{eqn:phi-} and  $\mu(dy)=\overline{\Pi}(y)dy$ yield that
\[\phi(\infty)=m+\mu(0,\infty)=m+\int_{0}^{\infty}\PP(y)dy=m+\overline{\overline{\Pi}}(0^+).\]
From \eqref{eq:NegDef} $\psi(0)=0$ and from $\psi(u)=u\phi(u)$ we get that $\psi'(0^+)=\phi(0^+)=m$. All other statements of item \eqref{it:finitenessPhi} follow from \cite[p.~102, 9.4.7]{Doney-07-book} and are  standard for spectrally negative \LL processes. A proof of item \eqref{it:def_Tb} can be found in \cite[Theorem 2.2]{Patie-11-Factorization_e} for $\beta=1$,  in \cite{Cha-Kyp-Pat-12}  or \cite[Proposition 2.1(2.2)]{Patie-Savov-11} for $\beta>0$. We only note that our definition \eqref{eq:NegDef} of $\psi$ imposes $\Pi_+\equiv 0$ and $\Pi_-(dy)=\Pi(-dy)$ for $\Pi_\pm$ in \cite[Proposition 2.1(2.2)]{Patie-Savov-11}.
\end{proof}
 The next claim focuses on further important for our work properties of the class $\Be_\Ne$. We recall that $\Ne=\Ne_\infty\cup\Ne^c_\infty,\,\Ne_\infty\cap\Ne_\infty^c=\emptyset,$ where
	 \[\Ne_\infty=\lbcurlyrbcurly{\psi\in\Ne;\,\sigma^2>0\text{ or } \PP(0^+)=\infty}.\]
	
\begin{proposition}\label{propBernsteinlog}
\begin{enumerate}
\item  \label{it:sn} Let $\phi\in\Bp$ such that $\psi \in \Ni$ then
\begin{equation}  \label{eq:sn}
\lim_{u \to \infty } u^2\frac{\phi'(u)}{\phi(u)} = \infty.
\end{equation}
\item  Let $\phi\in\Bp$ such that $\psi \in \Ni^c$ then
\begin{equation}  \label{eq:sn1}
\lim_{u \to \infty } u^2\frac{\phi'(u)}{\phi(u)} =\frac{\PP(0^+)}{\phi(\infty)}= \frac{\PP(0^+)}{m+\PPP(0^+)}=\frac{\PP(0^+)}{\r}.
\end{equation}
\end{enumerate}
\end{proposition}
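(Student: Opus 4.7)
For part (2), $\psi\in\Ni^c$ means $\sigma^2=0$ and $\PP(0^+)<\infty$, so by Proposition \ref{propAsymp1}\eqref{it:finitenessPhi} $\phi(u)\to \r=m+\PPP(0^+)<\infty$. Starting from $\phi'(u)=\int_0^\infty y e^{-uy}\PP(y)\,dy$ given by \eqref{eq:phi'} and substituting $v=uy$, one has $u^2\phi'(u)=\int_0^\infty v e^{-v}\PP(v/u)\,dv$. Since $\PP$ is non-increasing and right-continuous at $0^+$ with finite limit $\PP(0^+)$, dominated convergence with dominant $\PP(0^+)$ yields $u^2\phi'(u)\to \PP(0^+)\int_0^\infty v e^{-v}\,dv=\PP(0^+)$, whence the ratio converges to $\PP(0^+)/\r$.

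For part (1), if $\sigma^2>0$ then Proposition \ref{propAsymp1}\eqref{it:asyphid} gives $\phi(u)\sim \sigma^2 u$ and $\phi'(u)\to \sigma^2$, so $u^2\phi'(u)/\phi(u)\sim u\to\infty$. The substantive case is $\sigma^2=0$ and $\PP(0^+)=\infty$. Using the same integral representation as above together with the monotonicity bound $\PP(v/u)\geq \PP(1/u)$ for $v\leq 1$, I derive two lower bounds
\[
u^2\phi'(u)\geq c_0\,\PP(1/u),\qquad u^2\phi'(u)\geq e^{-1}u^2G(1/u),
\]
where $c_0:=\int_0^1 v e^{-v}\,dv>0$ and $G(y):=\int_0^y r\PP(r)\,dr$. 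The companion upper bound $\phi(u)-m\leq uG(1/u)+\PPP(1/u)$ follows from $1-e^{-uy}\leq \min(uy,1)$.

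The main obstacle is the following auxiliary lemma: \emph{whenever $\PP(0^+)=\infty$, $\tau(y):=\PPP(y)/\PP(y)\to 0$ as $y\downarrow 0$.} When $\PPP(0^+)<\infty$ this is clear, since $\PP(y)\to\infty$ while $\PPP$ is bounded. When $\PPP(0^+)=\infty$, I argue by contradiction. Assume $\tau(y_n)\geq\epsilon$ along $y_n\downarrow 0$ chosen with $y_n\leq y_{n-1}/2$ and $y_0<\epsilon/2$. Monotonicity of $\PP$ on $[y_n,y_{n-1}]$ gives
\[
\PPP(y_n)-\PPP(y_{n-1})=\int_{y_n}^{y_{n-1}}\PP(r)\,dr\leq \PP(y_n)\,y_{n-1}\leq \PPP(y_n)\,y_{n-1}/\epsilon,
\]
hence $\PPP(y_n)(1-y_{n-1}/\epsilon)\leq \PPP(y_{n-1})$. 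Iterating, $\PPP(y_n)\leq \PPP(y_0)\prod_{k=1}^n(1-y_{k-1}/\epsilon)^{-1}$; since $\sum_{k\geq 0} y_k\leq 2y_0<\infty$ the product converges, forcing $\PPP(y_n)$ to stay bounded and contradicting $\PPP(y_n)\to \PPP(0^+)=\infty$.

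To finish, set $\rho_u:=uG(1/u)/(uG(1/u)+\PPP(1/u))\in[0,1]$. If $\rho_u\geq 1/2$, the $G$-bound yields $u^2\phi'(u)/(\phi(u)-m)\geq (e^{-1}/2)\,u\to\infty$; if $\rho_u<1/2$, then $uG(1/u)+\PPP(1/u)\leq 2\PPP(1/u)$ and the $\PP$-bound yields $u^2\phi'(u)/(\phi(u)-m)\geq c_0/(2\tau(1/u))\to\infty$ by the lemma. In either case $u^2\phi'(u)/(\phi(u)-m)\to\infty$. Since $\phi(u)$ is non-decreasing and tends to $m+\PPP(0^+)>0$, it is eventually bounded below by a positive constant; when $\PPP(0^+)=\infty$ additionally $\phi(u)-m\sim \phi(u)$. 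Either way $u^2\phi'(u)/\phi(u)\to\infty$, as required. The crux is the dichotomy plus the lemma, which together extract uniform control of $\PPP/\PP$ from mere monotonicity, bypassing any regular variation hypothesis on $\PP$.
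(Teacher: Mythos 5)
Your argument is correct, and for part (1) it takes a genuinely different route from the paper. The paper proves \eqref{eq:sn} by contradiction: it assumes $u_n^2\phi'(u_n)/\phi(u_n)\to C^{-1}<\infty$ along a sequence, rewrites \eqref{eq:phi'_phi} as $\phi(u)/u=\phi'(u)+m/u+u\int_0^\infty e^{-uy}y\PPP(y)\,dy$, and concludes that $C\int_0^\infty e^{-yu_n}y\PP(y)\,dy\simi\int_0^\infty e^{-yu_n}y\lbrb{m+\PPP(y)}\,dy$, which is impossible because the monotone densities $\PP$ and $m+\PPP$ control the Laplace-type integrals through their behaviour at $0$, where $m+\PPP(y)=\so{\PP(y)}$. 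You instead derive the explicit lower bounds $u^2\phi'(u)\geq c_0\PP(1/u)$ and $u^2\phi'(u)\geq e^{-1}u^2G(1/u)$ together with the upper bound $\phi(u)-m\leq uG(1/u)+\PPP(1/u)$, and then split according to which of $uG(1/u)$ and $\PPP(1/u)$ dominates. This is more quantitative and avoids both the contradiction framing and any appeal to a Tauberian comparison of two Laplace integrals; the price is the case analysis, but each branch is elementary. Part (2) is essentially the paper's argument (it invokes the same identity $u^2\phi'(u)=u^2\int_0^\infty e^{-uy}y\PP(y)\,dy$ and leaves the dominated-convergence step implicit) with the details filled in.

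One piece of your write-up is needlessly heavy: the proof that $\tau(y)=\PPP(y)/\PP(y)\to 0$ as $y\downarrow 0$ when $\PP(0^+)=\infty$. You split into two cases and, in the case $\PPP(0^+)=\infty$, run a contradiction argument with a carefully tuned geometric sequence and a convergent infinite product. The paper disposes of both cases at once in one line: for any fixed $\epsilon>0$,
\[
\frac{\PPP(y)}{\PP(y)}=\frac{\int_y^\epsilon\PP(r)\,dr+\PPP(\epsilon)}{\PP(y)}\leq(\epsilon-y)+\frac{\PPP(\epsilon)}{\PP(y)},
\]
using that $\PP$ is non-increasing on $[y,\epsilon]$. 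Since $\PPP(\epsilon)<\infty$ and $\PP(y)\to\infty$, the right side has $\limsup\leq\epsilon$, and $\epsilon$ is arbitrary. This bypasses the dichotomy on $\PPP(0^+)$ and the subsequence extraction entirely; you should substitute it for your lemma proof.
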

\begin{proof}
 When $\sigma^2>0$ \eqref{eq:sn} follows immediately from  Proposition \ref{propAsymp1}\eqref{it:asyphid}, i.e.~$\phi(u)\simi \sigma^2 u$ and $\phi'(u)\simi\sigma^2$. Assume from now on that $\sigma^2=0$.
Let $\liminf\ttinf{u}\frac{u^2\phi'\lbrb{u}}{\phi(u)}<\infty$ and choose $C>0$ and a sequence $(u_n)_{n\geq 1}$ tending to $\infty$ such that $\lim\ttinf{n}\frac{u_n^2\phi'\lbrb{u_n}}{\phi(u_n)}=C^{-1}$. Then from  this, the identity in \eqref{eq:phi'_phi} and the fact that when $\phi\in\Be_\Ne,$ $\mu(dy)=\PP(y)dy$ and $\bar{\mu}(y)=\PPP(y)=\int_{y}^{\infty}\PP(r)dr$, we deduce that
\[Cu_n\phi'(u_n)\simi\frac{\phi\lbrb{u_n}}{u_n}=\phi'\lbrb{u_n}+\frac{m}{u_n}+u_n\IInf e^{-yu_n}y\PPP(y)dy.\]
Therefore  since trivially $\phi'\lbrb{u_n}\stackrel{\infty}{=}\so{u_n\phi'\lbrb{u_n}}$ using the second formula for $\phi'$ in \eqref{eq:phi'} we get from the last asymptotic relation that
\begin{eqnarray*}
	C\phi'(u_n)&=&C\IInf e^{-yu_n}y\PP(y)dy\\
	&\simi&\frac{m}{u^2_n}+\IInf e^{-yu_n}y\PPP(y)dy
	=\IInf e^{-yu_n}y\lbrb{m
	 	+\PPP(y)}dy.
\end{eqnarray*}	
However, this is impossible since $\PP$ and $\PPP$ being non-increasing on $\R_+$ determine the behaviour of the integrals above, as $u_n\to\infty$, solely via their local properties at zero and when $\PP\lbrb{0^+}=\infty$ then $m+\PPP(y)\stackrel{0}{=}\so{\PP(y)}$.  Indeed, note that, for any $\epsilon>0$,
\[ \limsup_{y \to 0} \frac{\PPP(y)}{\PP(y)} =  \limsup_{y \to 0} \frac{\int_y^\epsilon \PP(r)dr +\int_\epsilon^{\infty} \PP(r)dr}{\PP(y)} \leq  \limsup_{y \to 0} \frac{(\epsilon-y) \PP(y) +\int_\epsilon^{\infty} \PP(r)dr}{\PP(y)}\leq \epsilon, \]
and hence $\PPP(y)\stackrel{0}{=}\so{\PP(y)}$. Relation \eqref{eq:sn1} follows immediately from the identity $u^2\phi'(u)=u^2 \int_{0}^{\infty}e^{-uy}y\overline{\Pi}(y)dy$, which is the second relation in \eqref{eq:phi'} with $\mu(dy)=\PP(y)dy$.
\end{proof}

\subsection{Products of Bernstein functions: new  examples} \label{sec:prod_be}
There are many well known and fascinating mappings  leaving invariant the set of Bernstein functions, that is $\Be$,  and we refer to \cite{Schilling2010} for a nice account on these transformations.  In this part, we show, through some substantial examples in our work, that products of some non-trivial subsets of Bernstein  functions remain in the set of Bernstein functions. This simple transformation, which  surprisingly does not seem to have been studied and used in the literature, plays a critical role in our development of the concept of reference semigroups. Indeed, this invariance allows us to identify a subset of gL semigroups which intertwines with a specific reference gL semigroup and whose intertwining operator is a bounded operator between weighted Hilbert spaces. Although we present this property for  a two-parametric family of Bernstein functions, the approach can be easily extended to a more general framework and we believe that this idea of product factorization may be useful in a variety of contexts where the Bernstein functions appear. For example, since with each $\phi\in\Be$ we associate a potential measure $\Upsilon$ on $\R_+$ via the identity
\begin{equation}\label{eq:potentialM}
\frac{1}{\phi(u)}=\int_{0}^{\infty}e^{-uy}\Upsilon(dy),\,\,u>0,
\end{equation}
see Proposition\ref{propAsymp1}\eqref{it:bernstein_cmi},  if $\phi=\phi_1\phi_2$ with $\phi_1,\phi_2\in\Be,$ then $\Upsilon=\Upsilon_1\star\Upsilon_2$, where $\star$ stands for the additive convolution operator and $\Upsilon_1,\Upsilon_2$ are the potential measures associated to $\phi_1,\phi_2$. Recall from \eqref{eq:def_phir} that, for any  $\alpha \in (0,1]$, $\mr\geq 1-\frac{1}{\alpha}$ and $u\geq0$, we have
\begin{equation*}
\phi^R_{\alpha,\mr}(u)=
\frac{\Gamma(\alpha u + \alpha \mr +1)}{\Gamma(\alpha u +\alpha \mr +1-\alpha)}\qquad\text{ and }\qquad \phi^R_{\mr}(u)=\phi^R_{1,\mr}(u) = u+\mr.
\end{equation*}
 We proceed with the following simple but useful result.

\begin{lemma} \label{lem:potmeasure}
Let $\alpha \in (0,1)$ and $\mr\geq 1-\frac1\alpha$. Then
\begin{equation} \label{eq:car_phir}
\phi^R_{\alpha,\mr}(u)=  \frac{1}{\Gamma(1-\alpha)}\int_0^{\infty}(1-e^{-uy})e^{-( \mr+\frac{1}{\alpha})y}(1-e^{-\frac{y}{\alpha}})^{-\alpha-1} dy  +\frac{\Gamma( \alpha \mr +1)}{\Gamma(\alpha \mr +1-\alpha)} \in \Bp, \end{equation}
 and its associated potential measure defined via \eqref{eq:potentialM}  is absolutely continuous with  a density given, for any $y>0$, by
\begin{equation}\label{eq:U}
U_{\alpha,\mr}(y) = \frac{ e^{- \mr_{\alpha}y} (1-e^{-\frac{y}{\alpha}})^{\alpha-1}}{\Gamma(\alpha+1)}
\end{equation}
where $\mr_{\alpha}=(\alpha \mr +1-\alpha)/\alpha\geq 0$. Moreover, $U_{\alpha,\mr}$ is non-increasing, convex on $\R_+$ and solves on $\R_+$ the differential equation
\begin{equation}\label{eq:U'} U'_{\alpha,\mr}(y) = - U_{\alpha,\mr}(y) \lb \mr_{\alpha} + \frac{1-\alpha}{\alpha}(e^{\frac{y}{\alpha}}-1)^{-1}\rb.
\end{equation}
\end{lemma}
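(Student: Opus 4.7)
The lemma splits into four assertions to establish in sequence: identifying $U_{\alpha,\mr}$ as the potential density of $\phi^R_{\alpha,\mr}$; proving the L\'evy--Khintchine-type representation \eqref{eq:car_phir} together with $\phi^R_{\alpha,\mr}\in\Bp$; deriving the ODE \eqref{eq:U'}; and showing monotonicity and convexity of $U_{\alpha,\mr}$. I would first do a direct Laplace-transform computation to identify $U_{\alpha,\mr}$, then a careful integration-by-parts combined with an analytic continuation in a Gamma-ratio parameter to reach \eqref{eq:car_phir}, and finally logarithmic differentiation to obtain the ODE and its consequences.

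\textbf{Step 1 (Laplace transform of $U_{\alpha,\mr}$).} Substitute $t=e^{-y/\alpha}$ in $\int_0^{\infty}e^{-uy}U_{\alpha,\mr}(y)\,dy$ to convert it into a Beta integral on $(0,1)$. Using $\alpha\mr_{\alpha}=\alpha\mr+1-\alpha$ (definition of $\mr_{\alpha}$) and $\Gamma(\alpha+1)=\alpha\Gamma(\alpha)$, this collapses to $\Gamma(\alpha u+\alpha\mr+1-\alpha)/\Gamma(\alpha u+\alpha\mr+1)=1/\phi^R_{\alpha,\mr}(u)$; by \eqref{eq:potentialM} this identifies $U_{\alpha,\mr}$ as the potential density associated with $\phi^R_{\alpha,\mr}$, provided the Bernstein character of $\phi^R_{\alpha,\mr}$ is secured in Step 2.

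\textbf{Step 2 (L\'evy--Khintchine representation).} The key auxiliary identity is
\[\int_0^1 (t^z-1)(1-t)^{-\alpha-1}\,dt \;=\; \frac{1}{\alpha}\left(1-\Gamma(1-\alpha)\,\frac{\Gamma(z+1)}{\Gamma(z+1-\alpha)}\right),\]
valid initially for $\re(z)>0$: rewrite $(1-t)^{-\alpha-1}=\alpha^{-1}(d/dt)(1-t)^{-\alpha}$ and integrate by parts, the $t=1$ boundary vanishing because $\alpha<1$, the $t=0$ boundary contributing $1/\alpha$, and the remaining integral being a Beta function. Both sides are analytic in $z$ on $\re(z)>-1$, so the identity extends to this wider half-plane, in particular to $z=\alpha\mr\geq\alpha-1$ for all admissible $\mr\geq 1-1/\alpha$. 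Subtracting the identity at $z=\alpha u+\alpha\mr$ from the one at $z=\alpha\mr$ yields
\[\phi^R_{\alpha,\mr}(u)-\phi^R_{\alpha,\mr}(0)\;=\; \frac{\alpha}{\Gamma(1-\alpha)}\int_0^1 t^{\alpha\mr}(1-t^{\alpha u})(1-t)^{-\alpha-1}\,dt,\]
and the reverse substitution $t=e^{-y/\alpha}$ produces exactly \eqref{eq:car_phir}. The density on the right is non-negative, behaves like $y^{-\alpha-1}$ near $0$ (so integrable against $y\wedge 1$ because $\alpha<1$) and decays exponentially at $\infty$; together with $\phi^R_{\alpha,\mr}(0)=\Gamma(\alpha\mr+1)/\Gamma(\alpha\mr+1-\alpha)\geq 0$ and Proposition~\ref{propAsymp1}\eqref{it:finitenessPhi}, this places $\phi^R_{\alpha,\mr}$ in $\Bp$.

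\textbf{Step 3 (ODE, monotonicity, convexity) and main obstacle.} Logarithmic differentiation of $U_{\alpha,\mr}(y)=e^{-\mr_{\alpha}y}(1-e^{-y/\alpha})^{\alpha-1}/\Gamma(\alpha+1)$ gives at once $U'_{\alpha,\mr}/U_{\alpha,\mr}=-\mr_{\alpha}-(1-\alpha)/(\alpha(e^{y/\alpha}-1))$, which is precisely \eqref{eq:U'}; the bracket is positive on $\R_+$ since $\mr_{\alpha}\geq 0$ and $\alpha\in(0,1)$, so $U'_{\alpha,\mr}\leq 0$ and $U_{\alpha,\mr}$ is non-increasing. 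Differentiating the ODE once more expresses $U''_{\alpha,\mr}$ as the sum of two manifestly non-negative terms, namely $-U'_{\alpha,\mr}$ times a positive factor plus $U_{\alpha,\mr}(1-\alpha)e^{y/\alpha}/(\alpha^2(e^{y/\alpha}-1)^2)$, whence convexity. The principal obstacle lies in Step 2: the $t=1$ boundary in the integration by parts hinges critically on $\alpha<1$, and the possibility of negative $\mr$ (admissible since $\mr\geq 1-1/\alpha$ may lie in $(-1/\alpha,0)$) forces the analytic-continuation detour, as the direct integration-by-parts computation is only licit for $\re(z)>0$.
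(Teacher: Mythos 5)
Your proof is essentially correct and takes the same route as the paper: the Beta-function identity, the substitution $t=e^{-y/\alpha}$, one integration by parts to produce the L\'evy--Khintchine integrand, and then application at two parameter values to form $\phi^R_{\alpha,\mr}(u)-\phi^R_{\alpha,\mr}(0)$. Your Step~1 and Step~3 coincide with the paper's verification of the potential density and the ``obvious'' concluding remarks. One side remark: the analytic-continuation detour you flag as the principal obstacle is avoidable. The paper parameterizes the Beta identity by $\Gamma(v+\alpha)/\Gamma(v)$ with $v=\alpha u+\alpha\mr_{\alpha}$ and $v=\alpha\mr_{\alpha}$, and since $\mr_{\alpha}=(\alpha\mr+1-\alpha)/\alpha\geq 0$ both arguments are nonnegative; the only delicate case is the boundary value $\mr_{\alpha}=0$, where both sides vanish trivially. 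Your continuation argument is valid nonetheless, merely an artifact of having chosen $z=\alpha\mr$ rather than $\alpha\mr_{\alpha}$ as the free parameter.

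There is, however, a genuine (if small) gap in the conclusion $\phi^R_{\alpha,\mr}\in\Bp$. Membership in $\Bp=\Be_{\Ne}$, as opposed to the larger cone $\Be$, requires the L\'evy density of $\phi^R_{\alpha,\mr}$ to be the tail of a measure, i.e.\ a \emph{non-increasing} function on $\R_+$; this is precisely what distinguishes the form \eqref{eq:def-Bernstein_P} from a generic Bernstein representation. You verify non-negativity, integrability against $y\wedge 1$, exponential decay, and $\phi^R_{\alpha,\mr}(0)\geq 0$, which places $\phi^R_{\alpha,\mr}$ in $\Be$, and you cite Proposition~\ref{propAsymp1}\eqref{it:finitenessPhi} --- but that is a characterization, not a verification; to invoke it you still must check that $y\mapsto e^{-(\mr+\frac1\alpha)y}\lbrb{1-e^{-y/\alpha}}^{-\alpha-1}$ is non-increasing. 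The check is immediate: $\mr+\frac1\alpha\geq 1>0$ so the exponential factor decreases, and $\lbrb{1-e^{-y/\alpha}}^{-\alpha-1}$ decreases since $1-e^{-y/\alpha}$ increases and the exponent $-\alpha-1$ is negative, whence the product of two positive non-increasing functions is non-increasing. The paper makes this observation explicitly; your proof should too.
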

\begin{proof}
First, from the integral representation of the Beta function $\mathtt{B}$, see \cite[(1.5.2) p.13]{Lebedev-72}, we get, for any $\alpha \in (0,1)$ and $u>0$,
		\begin{eqnarray*}
		\frac{\Gamma( u +\alpha)}{\Gamma(u)}&=&\frac{u\mathtt{B}(u+\alpha,1-\alpha)}{\Gamma(1-\alpha)} =\frac{u}{\Gamma(1-\alpha)}\int_0^{1}r^{u+\alpha-1}\lbrb{1-r}^{-\alpha}dr\\
		&=&\frac{u}{\alpha \Gamma(1-\alpha)}\int_0^{\infty}e^{-(\frac{u}{\alpha}+1)y}\lbrb{1-e^{-\frac{y}{\alpha}}}^{-\alpha}dy\\
		&=&\frac{1}{ \Gamma(1-\alpha)}\int_0^{\infty}e^{-y}\lbrb{1-e^{-\frac{y}{\alpha}}}^{-\alpha}d\lbrb{1-e^{-\frac{u}{\alpha}y}}\\
		&=&\frac{1}{ \Gamma(1-\alpha)}\int_0^{\infty}\lbrb{1-e^{-\frac{u}{\alpha}y}}e^{-y}\lbrb{1-e^{-\frac{y}{\alpha}}}^{-\alpha-1}dy.
		\end{eqnarray*}
	Using the above relation twice with $\alpha u+\alpha\mr-\alpha+1$ and $\alpha\mr-\alpha+1$ standing for $u$, we get, after some easy algebra, that
	\begin{equation*}
	\frac{\Gamma(\alpha u + \alpha \mr  +1)}{\Gamma(\alpha u +\alpha \mr +1 - \alpha)}-\frac{\Gamma( \alpha \mr +1)}{\Gamma(\alpha \mr +1-\alpha)}= \frac{1}{\Gamma(1-\alpha)}\int_0^{\infty}(1-e^{-uy})e^{-( \mr+\frac{1}{\alpha})y}(1-e^{-\frac{y}{\alpha}})^{-\alpha-1} dy.
	\end{equation*}
Thus we deduce the first claim after easily checking that $y\mapsto e^{-( \mr+\frac{1}{\alpha})y}(1-e^{-\frac{y}{\alpha}})^{-\alpha-1}$ is non-increasing on $\R_+$. Next,   from  the integral representation of the Beta function again, valid here for any $\alpha u + \alpha \mr-\alpha+1>0$, we get that
\begin{eqnarray*}
 \int_0^{\infty}e^{-uy} e^{- \mr_{\alpha}y} (1-e^{-\frac{y}{\alpha}})^{\alpha-1}dy &=& \alpha \int_{0}^1 v^{\alpha u + \alpha \mr-\alpha}(1-v)^{\alpha-1}dv\\ &=& \frac{\Gamma(\alpha+1)\Gamma(\alpha u +\alpha \mr +1-\alpha)}{\Gamma(\alpha u + \alpha \mr +1)}
 =\frac{\Gamma(\alpha+1)}{\phi^R_{\alpha,\mr}(u) }
\end{eqnarray*}
and \eqref{eq:U} follows from \eqref{eq:potentialM}. The other claims are obvious.
\end{proof}
For any $\phi\in\Be$ set $\Phi_{\alpha,\mr}(u)=\frac{\phi(u)}{\phi^R_{\alpha,\mr}(u)},\,u\geq 0$. The next statement furnishes a set of sufficient conditions  on $\phi\in\Be$ for which $\Phi_{\alpha,\mr}\in\Be$. Recall that $\Ne_P=\lbcurlyrbcurly{\psi\in\Ne;\,\sigma^2>0}$.
\begin{proposition}\label{lem:bernst_ratio}
\begin{enumerate}
 \item \label{it:pb1}  Let  $\psi \in \Ne_P$ with $\PPP(0^+)<\infty$. Then for any $\mr > \mru=\frac{m+\PPP(0^+)}{\sigma^2}$ the mapping $\Phi_{\mr}(u)=\frac{\phi(u)}{\phi^R_{\mr}(u)}=\frac{\phi(u)}{u+\mr} \in \Be$. Moreover,  if there exists $\Root>0$ such that $\psi(-\Root)=0$ then $\Root \leq  \frac{m}{\sigma^2}$ red with identity if and only if $\PPP(0^+)=0$.
  \item \label{it:pb2} If $\psi \in \Ne_P$ then,  for any $\alpha \in (0,1)$,  $\underline{y}_{\alpha}=\inf\left\{y\geq0; \: (e^{\frac y\alpha}-1)\PPP\lbrb{\frac{y}{2}} >\sigma^2\frac{1-\alpha}{\alpha}\right\}\in\lbrbb{0,\infty}$ and   $\Phi_{\alpha,\mr} \in \Be$ for any $\mr>\frac{\PPP\lbrb{\frac{\underline{y}_{\alpha}}{2}}+m}{\sigma^2}  +1 -\frac1\alpha$.  Otherwise, if $\psi \in \Ne \setminus \Ne_P$   and there exist $\alpha \in (0,1)$, $\mr\geq 1-\frac1\alpha$ such that, $  \sup_{y>0}\inf_{A\in(0,1)}\frac{\PPP( y)+m}{\PPP((1-A)y)+m} +  \frac{U_{\alpha,\mr}({y})}{U_{\alpha,\mr}(Ay)}\leq 1$, then $\Phi_{\alpha,\mr} \in \Be$.
     \end{enumerate}
\end{proposition}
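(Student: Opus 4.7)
I plan to establish both items by directly verifying the Bernstein integral representation \eqref{eqn:phi-} for $\Phi_\mr$ and $\Phi_{\alpha,\mr}$, exploiting the decomposition of $\phi$ from Proposition~\ref{propAsymp1}\eqref{it:finitenessPhi} and, for item~\eqref{it:pb2}, the explicit potential density $U_{\alpha,\mr}$ of $\phi^R_{\alpha,\mr}$ computed in Lemma~\ref{lem:potmeasure}.

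For item~\eqref{it:pb1}, since $\PPP(0^+)<\infty$ I first rewrite \eqref{eq:def-Bernstein_P} as $\phi(u)=\sigma^2u+\r-\IInf e^{-uy}\PP(y)dy$, with $\r=m+\PPP(0^+)$. Dividing by $u+\mr$, using the Laplace convolution identity $(u+\mr)^{-1}\IInf e^{-uy}f(y)dy=\IInf e^{-uy}\lbrb{\IInt{0}{y}e^{-\mr(y-s)}f(s)ds}dy$ and the rearrangement $\sigma^2u/(u+\mr)=\sigma^2-\sigma^2\mr/(u+\mr)$, a direct computation yields
\begin{equation*}
\Phi_\mr(u)=\sigma^2-\IInf e^{-uy}k_\mr(y)dy,\qquad k_\mr(y)=\sigma^2(\mr-\mru)e^{-\mr y}+\IInt{0}{y}e^{-\mr(y-s)}\PP(s)ds,
\end{equation*}
and $k_\mr\geq 0$ since $\mr>\mru$. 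A Fubini calculation gives $\IInf k_\mr(y)dy=\sigma^2(\mr-\mru)/\mr+\PPP(0^+)/\mr=\sigma^2-m/\mr\leq\sigma^2$ (using $\sigma^2\mru=m+\PPP(0^+)$), so
\begin{equation*}
\Phi_\mr(u)=\frac{m}{\mr}+\IInf(1-e^{-uy})k_\mr(y)dy,
\end{equation*}
which is the form \eqref{eqn:phi-} with killing $m/\mr\geq 0$, zero drift and bounded L\'evy density $k_\mr$; hence $\Phi_\mr\in\Be$. For the bound on $\Root$, substituting $z=-\Root<0$ in \eqref{eq:NegDef} gives $\sigma^2\Root^2-m\Root+\IInf(e^{\Root y}-1-\Root y)\Pi(dy)=0$; by convexity the integrand is non-negative, so $\sigma^2\Root^2\leq m\Root$ and $\Root\leq m/\sigma^2$, with equality iff the jump integral vanishes, i.e.~iff $\Pi\equiv 0$, equivalently $\PPP(0^+)=0$.

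For item~\eqref{it:pb2}, the strategy mirrors item~\eqref{it:pb1} with $u+\mr$ replaced by $\phi^R_{\alpha,\mr}$, whose potential density $U_{\alpha,\mr}$ is made explicit by Lemma~\ref{lem:potmeasure}. First, $\underline{y}_\alpha>0$, since the left-hand side of the defining strict inequality vanishes at $y=0$ and is continuous in $y$; the value $\underline{y}_\alpha=\infty$ is possible, e.g.~when $\PP\equiv 0$. Starting from $\Phi_{\alpha,\mr}(u)=\phi(u)\IInf e^{-uy}U_{\alpha,\mr}(y)dy$ and the split $\phi(u)=m+\sigma^2u+u\IInf e^{-uy}\PPP(y)dy$, I eliminate the factor $u$ by integrating by parts against the ODE $U'_{\alpha,\mr}=-U_{\alpha,\mr}\cdot A_{\alpha,\mr}$ of \eqref{eq:U'}, where $A_{\alpha,\mr}(y)=\mr_\alpha+(1-\alpha)(\alpha(e^{y/\alpha}-1))^{-1}$. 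Since $U_{\alpha,\mr}(0^+)=\infty$ for $\alpha<1$, the integration by parts is carried out on $\lbbrb{\epsilon,\infty}$ and the boundary term $U_{\alpha,\mr}(\epsilon)e^{-u\epsilon}$ cancels, as $\epsilon\downarrow 0$, against the non-integrable singularity of $U_{\alpha,\mr}A_{\alpha,\mr}$ at the origin, producing a finite identity
\begin{equation*}
\Phi_{\alpha,\mr}(u)=\frac{m}{\phi^R_{\alpha,\mr}(0)}+\IInf(1-e^{-uy})J_{\alpha,\mr}(y)dy
\end{equation*}
with an explicit candidate kernel $J_{\alpha,\mr}$ built from $\sigma^2A_{\alpha,\mr}$, $U_{\alpha,\mr}$, $\PP$ and $\PPP+m$. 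The crux is the pointwise positivity $J_{\alpha,\mr}\geq 0$. In the first case ($\psi\in\Ne_P$), the hypothesis $\mr>(\PPP(\underline{y}_\alpha/2)+m)/\sigma^2+1-1/\alpha$ is calibrated so that the singular contribution $\sigma^2(1-\alpha)/(\alpha(e^{y/\alpha}-1))$ of $\sigma^2A_{\alpha,\mr}$ dominates $\PPP(y/2)$ on $(0,\underline{y}_\alpha/2)$ (by the very definition of $\underline{y}_\alpha$), whereas the constant part $\sigma^2\mr_\alpha$ absorbs $\PPP(\underline{y}_\alpha/2)+m$ on $(\underline{y}_\alpha/2,\infty)$. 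In the second case ($\psi\in\Ne\setminus\Ne_P$), the splitting $y=(1-A(y))y+A(y)y$ with an optimizing $A(y)\in(0,1)$ reduces $J_{\alpha,\mr}(y)\geq 0$ to precisely the displayed $\sup$-$\inf$ inequality. The L\'evy integrability $\IInf(1\wedge y)J_{\alpha,\mr}(y)dy<\infty$ then follows from $U_{\alpha,\mr}(y)\stackrel{0}{\sim}y^{\alpha-1}/(\alpha^{1-\alpha}\Gamma(\alpha+1))$, the exponential decay of $U_{\alpha,\mr}$ at infinity, and $\IInf(1\wedge y)\PP(y)dy<\infty$.

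The main obstacle will be the regularization required in item~\eqref{it:pb2} to handle the boundary datum $U_{\alpha,\mr}(0^+)=\infty$ for $\alpha<1$, together with the fine bookkeeping needed to extract the precise kernel $J_{\alpha,\mr}$; the conditions stated in item~\eqref{it:pb2} are tight for $J_{\alpha,\mr}\geq 0$ and cannot be relaxed.
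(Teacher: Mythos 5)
Your proposal is correct and follows essentially the same route as the paper: in both items you verify the Bernstein representation \eqref{eqn:phi-} for $\Phi_{\mr}$ and $\Phi_{\alpha,\mr}$ by Laplace-transform manipulations, using for item \eqref{it:pb2} the potential density $U_{\alpha,\mr}$ and the ODE \eqref{eq:U'} exactly as the paper does. For item \eqref{it:pb1}, your rewriting $\phi(u)=\sigma^2u+\r-\IInf e^{-uy}\PP(y)dy$ followed by the convolution trick produces precisely the paper's L\'evy density, but your form $k_\mr(y)=\sigma^2(\mr-\mru)e^{-\mr y}+\int_0^ye^{-\mr(y-s)}\PP(s)ds$ exhibits the positivity directly, whereas the paper reaches the equivalent kernel after a double integration by parts against $\PPP$ and then invokes $\mr\int_0^y e^{\mr r}\PPP(r)dr\geq\PPP(y)(e^{\mr y}-1)$; your route is marginally more transparent. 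Your argument for the root bound via $\psi(-\Root)=0$ and convexity of $y\mapsto e^{\Root y}-1-\Root y$ is sound and avoids the integrability of $\int_0^\infty e^{\Root y}\PPP(y)dy$ that the paper's phrasing implicitly assumes.

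Two small corrections to your item \eqref{it:pb2} sketch. First, the partition should be at $\underline{y}_\alpha$, not $\underline{y}_\alpha/2$: the definition of $\underline{y}_\alpha$ gives $\PPP(y/2)\leq\sigma^2\frac{1-\alpha}{\alpha(e^{y/\alpha}-1)}$ for $y<\underline{y}_\alpha$, and for $y\geq\underline{y}_\alpha$ one instead uses $\PPP(y/2)\leq\PPP(\underline{y}_\alpha/2)$ against the constant part $\sigma^2\mr_\alpha-m$. Second, no $\epsilon$-truncation is necessary: since $U_{\alpha,\mr}(y)\stackrel{0}{\sim}Cy^{\alpha-1}$ with $\alpha>0$, the product $(1-e^{-uy})U_{\alpha,\mr}(y)$ vanishes at both endpoints, so $\int_0^\infty(1-e^{-uy})(-U'_{\alpha,\mr}(y))dy=u\int_0^\infty e^{-uy}U_{\alpha,\mr}(y)dy$ holds directly. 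Finally, do not forget that the term $u\int_0^\infty e^{-uy}\PPP\star U_{\alpha,\mr}(y)dy$ also requires an integration by parts, and verifying that its boundary contributions vanish (the estimates \eqref{eq:bc1}--\eqref{eq:bc2} in the paper) is a genuine step, not merely bookkeeping, since $\PPP$ may blow up at the origin.
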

\begin{proof}
From Proposition \ref{propAsymp1}\eqref{it:asyphid}   $\lim_{u \to \infty}\Phi_{\mr}(u)= \limi{u}\frac{\phi(u)}{\phi^R_{\mr}(u)}=\limi{u}\frac{\phi(u)}{u+\mr}=\sigma^2$ and $\Phi_{\mr}(0)=\frac m\mr \geq 0$.  Put $e_{\mr}(y)=e^{-\mr y},\,y\geq 0$. Next, an integration by parts yields
  \begin{eqnarray}
    \frac{\phi(u)}{u+\mr} &=& \frac{m}{u+\mr} +\sigma^2\frac{u}{u+\mr} +\frac{u}{u+\mr} \int_0^{\infty}e^{-uy}\PPP(y)dy \nonumber \\
   &=&  \frac{m}{u+\mr} +\sigma^2\frac{u}{u+\mr} + u \int_0^{\infty}e^{-uy}\PPP\star e_{\mr} (y)dy \nonumber \\
    &=& \frac{m}{\mr} +  \int_0^{\infty}(1-e^{-uy})e^{-\mr y}\lb\mr \sigma^2 - m +\mr \int_0^{y}e^{\mr r}\PPP(r)dr-e^{\mr y}\PPP(y)\rb dy.
  \end{eqnarray}
  Since $\PPP$ is non-increasing, we have, for all $y\geq0$,  $\mr \int_0^{y}e^{\mr r}\PPP(r)dr\geq \PPP(y)\lbrb{e^{\mr y}-1}$ and thus
  $\mr \sigma^2 - m +\mr \int_0^{y}e^{\mr r}\PPP(r)dr-e^{\mr y}\PPP(y) \geq \mr \sigma^2 - m - \PPP(y)\geq \mr \sigma^2 - m - \PPP(0^+),$
   which gives the first claim of item \ref{it:pb1}.  Next, since $\Root>0$ and  $\psi(-\Root)=-\Root \phi(-\Root)=0$,  an application of the first identity in \eqref{eqn:phi-} with $\bar{\mu}(y)=\PPP(y)$ since $\phi\in\Be_\Ne$, yields that
  \begin{eqnarray*}
\sigma^2 \Root &=& \phi(-\Root)+\sigma^2 \Root=m -\Root\int_{0}^{\infty} e^{\Root y}\PPP(y)dy
\leq  m.
\end{eqnarray*}
  This completes the proof of item \ref{it:pb1}. Next, assume  that $\sigma^2>0$. Our general assumption $\int_{0}^{\infty}(y^2\wedge y)\Pi(dy)<\infty$ implies via twice integration by parts that
  	\begin{eqnarray}\label{eq:boundPP_rev1}
  	\int_{0}^{1}\PPP(y)dy =\int_{0}^{1}y^2\Pi(dy)+\PPP(1)+\frac{1}{2}\PP(1)<\infty.
  	\end{eqnarray} 	
  	 Then the fact that $\PPP$ is non-increasing on $\R{^+}$ triggers $(e^{\frac y\alpha}-1)\PPP\lbrb{\frac{y}2}\stackrel{0}{\sim} \frac{y}{\alpha}\PPP\lbrb{\frac{y}2}\stackrel{0}{=} \so{1}$
and hence $\underline{y}_{\alpha}\in\lbrbb{0,\infty}$. Choose first $\mr_{\alpha}=\alpha\lbrb{\mr+1-\alpha}>0$ and observe from \eqref{eq:potentialM} and the definition of $U_{\alpha,\mr}$ that
\begin{eqnarray*}\label{eq:expbern}
     \Phi_{\alpha,\mr}(u) &=&\frac{\phi(u)}{\phi^R_{\alpha,\mr}(z)}=
     \phi(u) \frac{\Gamma(\alpha u +\alpha \mr +1-\alpha)}{\Gamma(\alpha u + \alpha \mr +1)}
     \\
     \nonumber&=&  \lb m +\sigma^2u +u \int_0^{\infty}e^{-uy}\PPP(y)dy \rb \int_0^{\infty}e^{-uy}U_{\alpha,\mr}(y)dy  \\
    \nonumber&=& m \int_0^{\infty} U_{\alpha,\mr}(y)dy + \int_0^{\infty}(1-e^{-uy}) \lbrb{ -m U_{\alpha,\mr}(y)-\sigma^2U'_{\alpha,\mr}(y)} dy\\
     &+&u \int_0^{\infty}e^{-uy}\PPP \star U_{\alpha,\mr}(y)dy,
    \end{eqnarray*}
    where $0< \int_0^{\infty} U_{\alpha,\mr}(y)dy<\infty$ and the terms of the integration by parts vanish since from $\eqref{eq:U}$ we have that $U_{\alpha,\mr}(y) \simo \frac{\alpha^{\alpha-1}}{\Gamma\lbrb{1+\alpha}} y^{\alpha-1}$, $U_{\alpha,\mr}(y) \simi \frac{1}{\Gamma(\alpha+1)} e^{- \mr_{\alpha}y}$ and $\mr_{\alpha}> 0$. Using \eqref{eq:U'} put, for  $y>0$,
    \begin{eqnarray} \label{eq:fd}
    	-m U_{\alpha,\mr}(y)- \sigma^2U'_{\alpha,\mr}(y)
    	\nonumber &=&  \lb \sigma^2\mr_{\alpha}-m+ \sigma^2 \frac{1-\alpha}{\alpha}(e^{\frac{y}{\alpha}}-1)^{-1} \rb U_{\alpha,\mr}(y)
    	\\
    	&=& \bar{u}_{\alpha,\mr}(y)U_{\alpha,\mr}(y).
    \end{eqnarray}
    Then another integration by parts for the very last term in $\Phi_{\alpha,\mr}$ above yields
   \begin{eqnarray*}
   \Phi_{\alpha,\mr}(u) &=&m \int_0^{\infty} U_{\alpha,\mr}(y)dy+\int_0^{\infty}(1-e^{-uy})\lbrb{\bar{u}_{\alpha,\mr}(y)U_{\alpha,\mr}(y)-\lb \PPP \star U_{\alpha,\mr}(y) \rb'}dy.
  \end{eqnarray*}
Indeed, the asymptotic relations for $U_{\alpha,\mr}$ above allow to deduct that the boundary terms in this integration by parts do not contribute since from $1-e^{-uy}\simo uy$ and $1-e^{-uy}\simi1$
\begin{equation}\label{eq:bc1}
	\lim_{y\to 0}y\int_{0}^{y}\PPP(y-r)U_{\alpha,\mr}(r)dr\leq \lim_{y\to 0}\lbrb{y\PPP\lbrb{\frac{y}{2}}+ y\int_{0}^{\frac{y}{2}}\PPP(r)dr\sup_{r\in\lbbrbb{\frac{y}{2},y}}U_{\alpha,\mr}(r)}=0
\end{equation}
\begin{eqnarray}\label{eq:bc2}	
\nonumber	\lim_{y\to \infty}\int_{0}^{y}\PPP(r)U_{\alpha,\mr}(y-r)dr&\leq&\lim_{y\to \infty}\lbrb{\PPP\lbrb{\frac{y}{2}}\int_{\frac{y}{2}}^{y}U_{\alpha,\mr}(y-r)dr+e^{- \mr_{\alpha}\frac{y}3}\int_{0}^{\frac{y}{2}}\PPP(r)dr}\\
\nonumber	&\leq&\int_{0}^{\infty}U_{\alpha,\mr}(r)dr\lim_{y\to \infty}\PPP\lbrb{y}\\
	&+&\lim_{y\to \infty}e^{- \mr_{\alpha}\frac{y}3}\lbrb{\int_{0}^{1}\PPP(r)dr+\PPP(1)y}=0.
\end{eqnarray}
  Next, choose $\mr> 1-\frac1\alpha$ so large such that $\mr_{\alpha}=(\alpha \mr +1-\alpha)/\alpha> \frac{\PPP\lbrb{\frac{\underline{y}_{\alpha}}{2}}+m}{\sigma^2}$ and hence we get from \eqref{eq:fd} and the definition of $\underline{y}_{\alpha}$ that $\bar{u}_{\alpha,\mr}>0$ on $\R_+$. Since
\begin{eqnarray*}
\Phi_{\alpha,\mr}(u)&=&m \int_0^{\infty} U_{\alpha,\mr}(y)dy+\int_0^{\infty}\lbrb{1-e^{-uy}}\lbrb{\bar{u}_{\alpha,\mr}(y)U_{\alpha,\mr}(y)-\lb \PPP \star U_{\alpha,\mr}(y) \rb'}dy
\end{eqnarray*}
we aim to show that
$\lbrb{\bar{u}_{\alpha,\mr}(y)U_{\alpha,\mr}(y)-\lb \PPP \star U_{\alpha,\mr}(y) \rb'}$
defines a density of a \LL measure.
 For this purpose, for any $A\in\lbrb{0,1}$, put $y_A = Ay$ and $\bar{y}_A = (1-A)y$. Then
\begin{eqnarray}\label{eq:deriv}
\nonumber\lb \PPP \star U_{\alpha,\mr}(y) \rb' &=& \lb \int_0^{y_A}\PPP(y-r) U_{\alpha,\mr}(r)dr+\int^{y}_{y_A}\PPP(y-r) U_{\alpha,\mr}(r)dr \rb'\\
\nonumber&=& \lb \int_0^{y_A}\PPP(y-r) U_{\alpha,\mr}(r)dr+\int^{\bar{y}_A}_{0}\PPP(r) U_{\alpha,\mr}(y-r)dr \rb'\\
&=& \PPP(\bar{y}_A)U_{\alpha,\mr}({y}_A) -  \int_0^{y_A}\PP(y-r) U_{\alpha,\mr}(r)dr + \int^{\bar{y}_A}_{0}\PPP(r) U'_{\alpha,\mr}(y-r)dr.
\end{eqnarray}
Since  $\PPP(r)=\int_{r}^{\infty}\PP(v)dv,\,r>0$,  and as $U_{\alpha,\mr}$ is non-increasing, we get that
\begin{eqnarray*}
	\PPP(\bar{y}_A)U_{\alpha,\mr}({y}_A) -  \int_0^{y_A}\PP(y-r) U_{\alpha,\mr}(r)dr
	&\leq &\PPP(\bar{y}_A)U_{\alpha,\mr}({y}_A) -  U_{\alpha,\mr}({y}_A)\int_0^{y_A}\PP(y-r) dr \\ &=& \PPP(y)U_{\alpha,\mr}({y}_A).
\end{eqnarray*}	
	 Thus, since $U'_{\alpha,\mr}<0$ on $\R_+$, see \eqref{eq:U'},  from \eqref{eq:deriv} we deduct that
\begin{eqnarray}\label{eq:ineconv}
\lb \PPP \star U_{\alpha,\mr}(y) \rb' &\leq&  \PPP(y)U_{\alpha,\mr}({y}_A) + \int^{\bar{y}_A}_{0}\PPP(r) U'_{\alpha,\mr}(y-r)dr\\
\nonumber&\leq& \PPP(y)U_{\alpha,\mr}({y}_A) + \PPP(\bar{y}_A)\lb U_{\alpha,\mr}({y}) - U_{\alpha,\mr}({y}_A) \rb.
\end{eqnarray}
 Next, from \eqref{eq:ineconv} we observe that, with
 \[F_A(y)= \lb \PPP(y)- \PPP(\bar{y}_A) \rb U_{\alpha,\mr}({y}_A) +  \lb  \PPP(\bar{y}_A) - \bar{u}_{\alpha,\mr}(y) \rb U_{\alpha,\mr}(y),\]
\begin{eqnarray}\label{eq:ineconv1}
 \lb \PPP \star U_{\alpha,\mr}(y) \rb' - \bar{u}_{\alpha,\mr}(y) U_{\alpha,\mr}(y) &\leq & F_A(y)\leq \lb  \PPP(\bar{y}_A) - \bar{u}_{\alpha,\mr}(y) \rb U_{\alpha,\mr}(y).
\end{eqnarray}
 Choose $A=\frac12$ and thus $\bar{y}_A=y_A=y/2$. As long as $\mr_{\alpha}=(\alpha \mr +1-\alpha)/\alpha> \frac{\PPP\lbrb{\frac{\underline{y}_{\alpha}}{2}}+m}{\sigma^2}$ due to the definition of $\bar{u}_{\alpha,\mr}(y)$, see \eqref{eq:fd}, and \eqref{eq:ineconv1}, we have, for all $y>0$, that
 \begin{eqnarray*}
 	 \lb \PPP \star U_{\alpha,\mr}(y) \rb' - \bar{u}_{\alpha,\mr}(y) U_{\alpha,\mr}(y) &\leq & F_{\frac12}(y)\leq\lbrb{\PPP\lbrb{\frac{y}{2}}- \bar{u}_{\alpha,\mr}(y)}U_{\alpha,\mr}(y)\\
 	 &<&\lbrb{\lbrb{\PPP\lbrb{\frac{y}{2}}-\PPP\lbrb{\frac{\underline{y}_{\alpha}}{2}}}-\sigma^2\frac{1-\alpha}{\alpha\lbrb{e^{\frac{y}{\alpha}}-1}}}U_{\alpha,\mr}(y).
\end{eqnarray*} 	
 Obviously from the fact that $\PPP$ is non-increasing and $\alpha<1$ the right-hand side is non-positive for $y\geq\underline{y}_{\alpha}$, whereas it is also non-positive for all $y<\underline{y}_{\alpha}$ thanks to the definition of $\underline{y}_{\alpha}=\inf\left\{y\geq0; \: (e^{\frac y\alpha}-1)\PPP\lbrb{\frac{y}{2}} >\sigma^2\frac{1-\alpha}{\alpha}\right\}\in\lbrbb{0,\infty}$. Therefore,
 \[f(y)=-\lb \PPP \star U_{\alpha,\mr}(y) \rb' + \bar{u}_{\alpha,\mr}(y) U_{\alpha,\mr}(y)\geq 0.\] To show that $f(y)dy$ defines a \LL measure  of a Bernstein function we first observe from  \eqref{eq:U} and \eqref{eq:fd}  that $\lim_{y \to 0} y\bar{u}_{\alpha,\mr}(y)= \sigma^2(1-\alpha)$, $\lim_{y \to \infty }\bar{u}_{\alpha,\mr}(y)=  \sigma^2\mr_{\alpha}-m>0$, $U_{\alpha,\mr}(y)\stackrel{\infty}{\sim}\frac{e^{- \mr_{\alpha}y}}{\Gamma(\alpha + 1)}$ and these imply  that
 \[\int_{0}^{\infty}(1 \wedge y)\bar{u}_{\alpha,\mr}(y) U_{\alpha,\mr}(y)dy<\infty.\]
 Secondly, \eqref{eq:bc1} and \eqref{eq:bc2} allow via integration by parts to obtain that
  	\begin{eqnarray*}
  	  \labsrabs{\int_{0}^{\infty}(1 \wedge y)\lb \PPP \star U_{\alpha,\mr}(y) \rb' dy}&=&\int_{0}^{1} \PPP \star U_{\alpha,\mr}(y) dy=\int_{0}^{1}U_{\alpha,\mr}(y) \int_{y}^{1}\PPP(r-y)drdy\\
  	  &\leq&\int_{0}^{1}U_{\alpha,\mr}(y)dy\int_{0}^{1}\PPP(y)dy<\infty.
  	\end{eqnarray*}
  	Indeed, the finiteness of $\int_{0}^{1}U_{\alpha,\mr}(y)$ has been discussed above, whereas $\int_{0}^{1}\PPP(y)dy<\infty$ is \eqref{eq:boundPP_rev1}.
   Thus $\int_{0}^{\infty}(1 \wedge y)f(y)dy<\infty$. Therefore, $\Phi_{\alpha,\mr}$ is a Bernstein function with \LL measure $f(y)dy$. When $\sigma^2=0$ we have that $ \bar{u}_{\alpha,\mr}(y)=-m, \forall y>0$, see \eqref{eq:fd}.   Using this from the first inequality  of \eqref{eq:ineconv1} we then get that, for any $0<A<1$,
 \[\lb \PPP \star U_{\alpha,\mr}(y) \rb' - \bar{u}_{\alpha,\mr}(y) U_{\alpha,\mr}(y)\leq  \lbrb{\PPP(\bar{y}_A)+m}U_{\alpha,\mr}({y}_A)\lbrb{\frac{\PPP(y)+m}{\PPP(\bar{y}_A)+m}-1+\frac{U_{\alpha,\mr}({y})}{U_{\alpha,\mr}({y}_A)}}.\]
Since $f(y)=-\lb\PPP \star U_{\alpha,\mr}(y) \rb' + \bar{u}_{\alpha,\mr}(y) U_{\alpha,\mr}(y),\forall y>0,$ then $\int_{0}^{\infty}(1 \wedge y)|f(y)|dy<\infty$ follows as in the case $\sigma^2>0$. It defines a \LL measure if the third factor above is non-positive for all $y>0$. Thus, $\Phi_{\alpha,\mr}\in\Be$   if     $  \sup_{y>0}\inf_{A\in(0,1)} \frac{\PPP( y)+m}{\PPP((1-A)y)+m} +  \frac{U_{\alpha,\mr}({y})}{U_{\alpha,\mr}(Ay)}\leq 1$.
\end{proof}
\subsection{Useful estimates of Bernstein functions on $
\C_+$}\label{sec:EstimatesBernstein}
In this part we derive estimates for some functionals of Bernstein functions. We introduce the notation
\[\Delta_b f(a)=f(a+ib)-f(a),\]
\[\Re\lb f(a+ib)-f(a) \rb + i\Im \lb f(a+ib)-f(a) \rb = \Delta^\Re_b f(a) + i \Delta^\Im_b f(a),\]
and, we recall that, for any $k\geq 1$, $f^{(k)}(x)= \frac{d^k}{dx^k} f(x)$.
\begin{lemma}\label{lem:specialEstimates}

\begin{enumerate}
Let $\phi \in \Be$.
\item Let $b\in \R$ and $a>0$. Then,
\begin{equation}\label{specialEstimates}
0\leq \Delta^\Re_b \phi(a)\leq \frac{b^2}{2}\labs \phi''(a) \rabs \textrm{  and  } \labs \Delta^\Im_b \phi(a)\rabs\leq \labs b\rabs \labs\phi'(a)\rabs,
\end{equation}
and, for $k\geq 1$,
\begin{equation}\label{specialEstimatesk}
 \labs\Delta^\Re_b \phi^{(k)}(a)\rabs\leq 2 \labs \phi^{(k)}(a)\rabs \textrm{  and  }  \labs \Delta^{\Im}_b \phi^{(k)}(a)\rabs\leq  \labs\phi^{(k)}(a)\rabs.\\
\end{equation}
\item Finally, we have, for any $u>0$,
\begin{eqnarray}\label{specialEstimates2}
\nonumber&&\IInt{u}{\infty}\frac{\labs\phi''(y+ib)\rabs}{\labs\phi(y+ib)\rabs}dy\leq \sqrt{10}\IInt{u}{\infty}\frac{\labs\phi''(y)\rabs}{\phi(y)}dy\leq \frac{2\sqrt{10}}{u},\\  &&\IInt{u}{\infty}\frac{\labs\phi'(y+ib)\rabs^2}{\labs\phi(y+ib)\rabs^2}dy \leq  10 \IInt{u}{\infty}\lb\frac{\phi'(y)}{\phi(y)}\rb^2 dy\leq \frac{10}{u}.
\end{eqnarray}
\end{enumerate}
\end{lemma}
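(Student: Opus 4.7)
The plan is to exploit the integral representation \eqref{eqn:phi-} of $\phi \in \Be$ and differentiate under the integral sign, so that all claimed estimates reduce to pointwise bounds on the oscillatory factors $1-e^{-iby}$, $\cos(by)$, and $\sin(by)$ appearing after separating real and imaginary parts. Concretely, starting from $\phi(a+ib)-\phi(a) = i\sigma^2 b + \int_0^\infty e^{-ay}\bigl((1-\cos(by)) + i\sin(by)\bigr)\mu(dy)$, one reads off
\[\Delta^\Re_b\phi(a) = \int_0^\infty e^{-ay}(1-\cos(by))\mu(dy), \qquad \Delta^\Im_b\phi(a) = \sigma^2 b + \int_0^\infty e^{-ay}\sin(by)\mu(dy).\]
The first is automatically non-negative, and applying $1-\cos(by) \leq b^2 y^2/2$ together with $\phi''(a) = -\int_0^\infty y^2 e^{-ay}\mu(dy)$ (obtained by differentiating \eqref{eq:phi'} once more) yields $\Delta^\Re_b\phi(a) \leq (b^2/2)|\phi''(a)|$; similarly $|\sin(by)| \leq |b| y$ combined with \eqref{eq:phi'} gives $|\Delta^\Im_b\phi(a)| \leq |b|\phi'(a)$. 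This proves \eqref{specialEstimates}.

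For \eqref{specialEstimatesk}, I would first record that for every $k \geq 1$ one has $\phi^{(k)}(z) = \sigma^2\mathbb{I}_{\{k=1\}} + (-1)^{k-1}\int_0^\infty y^k e^{-zy}\mu(dy)$ for $\Re z>0$, so splitting into real and imaginary parts gives $\Delta^\Re_b\phi^{(k)}(a)$ as an integral weighted by $(\cos(by)-1)$ and $\Delta^\Im_b\phi^{(k)}(a)$ by $\sin(by)$ (with the $\sigma^2$ contribution only in $k=1$ and only to the real part, which vanishes in the $\Delta$ operation). The crude bounds $|1-\cos(by)| \leq 2$ and $|\sin(by)|\leq 1$ then produce the factors $2|\phi^{(k)}(a)|$ and $|\phi^{(k)}(a)|$ directly.

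The heart of \eqref{specialEstimates2} is the comparison $|\phi^{(k)}(y+ib)| \leq |\phi^{(k)}(y)|$ for $k\geq 1$ and $|\phi(y+ib)| \geq \phi(y)$. The first follows by putting the absolute value inside the integral representation of the derivatives. The second, which is the only mildly non-trivial step, uses $\Re \phi(y+ib) = \phi(y) + \int_0^\infty e^{-yr}(1-\cos(br))\mu(dr) \geq \phi(y) > 0$, hence $|\phi(y+ib)| \geq \Re\phi(y+ib) \geq \phi(y)$. Combining the two gives $|\phi^{(k)}(y+ib)|/|\phi(y+ib)| \leq |\phi^{(k)}(y)|/\phi(y)$ for both $k=1,2$, which is stronger than the constants $\sqrt{10}$ and $10$ stated.

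The two remaining rightmost inequalities are then applications of the already established bounds in Proposition \ref{propAsymp1}: \eqref{specialEstimates11} gives $|\phi''(y)|/\phi(y) \leq 2/y^2$ hence $\int_u^\infty |\phi''(y)|/\phi(y)\,dy \leq 2/u$, while \eqref{eq:phi'_phi} gives $\phi'(y)/\phi(y) \leq 1/y$ hence $\int_u^\infty (\phi'(y)/\phi(y))^2\,dy \leq 1/u$. No obstacle arises — the only subtle point is remembering to use the non-negativity of $\Delta^\Re_b\phi$ to get the lower bound on $|\phi(y+ib)|$ rather than attempting a less favourable triangle inequality; everything else is a direct estimate under the integral sign.
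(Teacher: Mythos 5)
Your handling of \eqref{specialEstimates} and \eqref{specialEstimatesk} matches the paper's proof exactly: pass through the integral representations from \eqref{eqn:phi-} and \eqref{eq:phi'}, split into real and imaginary parts, and apply $1-\cos(by)\le\min(2,b^2y^2/2)$ and $|\sin(by)|\le\min(1,|b|y)$. For \eqref{specialEstimates2}, however, you take a genuinely different and in fact sharper route. The paper factors the ratio as $\frac{|\phi''(a+ib)|}{|\phi(a+ib)|}=\frac{|\phi''(a)|}{\phi(a)}\cdot\frac{|1+\Delta_b\phi''(a)/\phi''(a)|}{|1+\Delta_b\phi(a)/\phi(a)|}$, bounds the numerator by $\sqrt{(1+2)^2+1^2}=\sqrt{10}$ via \eqref{specialEstimatesk}, and bounds the denominator from below by $1+\Delta^\Re_b\phi(a)/\phi(a)\ge 1$ via the non-negativity in \eqref{specialEstimates}; the analogous computation with $\phi'$ yields the constant $10$. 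You instead observe directly that $|\phi^{(k)}(a+ib)|\le|\phi^{(k)}(a)|$ for $k\ge 1$, by pulling the absolute value inside $\phi^{(k)}(z)=\sigma^2\mathbb{I}_{\{k=1\}}+(-1)^{k-1}\int_0^\infty y^k e^{-zy}\mu(dy)$ (and $\sigma^2\ge0$), and that $|\phi(a+ib)|\ge\Re\phi(a+ib)\ge\phi(a)$ since $\Delta^\Re_b\phi(a)\ge 0$. This gives the pointwise comparisons with constant $1$ in place of $\sqrt{10}$ and $10$, a strictly stronger statement implying \eqref{specialEstimates2} at once. The rightmost inequalities are obtained identically in both proofs from \eqref{specialEstimates11} and \eqref{eq:phi'_phi} and the integral $\int_u^\infty y^{-2}\,dy=1/u$. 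Your route is cleaner and more economical for this lemma; the paper's detour through \eqref{specialEstimatesk} is not wasted globally, since those estimates are reused in the proof of Theorem \ref{prop:FormMellin1}, but they are not needed to prove \eqref{specialEstimates2}.
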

\begin{proof}
First, using the inequality $1-\cos(y) \leq \frac{y^2}{2}$, we get that
\[\Delta^\Re_b \phi (a)=\IInf \lb 1-\cos(by)\rb e^{-ay}\mu(dy)\leq \frac{b^2}{2} \IInt{0}{\infty} y^{2}e^{-ay}\mu(dy)= \frac{b^2}{2} \labs \phi^{''}(a)\rabs\]
and the first inequality in \eqref{specialEstimates} follows.
Similarly, for any $k\geq 1$, we have
\[\labs\Delta^\Re_b \phi^{(k)}(a)\rabs=\IInf \lb 1-\cos(by)\rb y^{k}e^{-ay}\mu(dy)\leq 2 \IInt{0}{\infty} y^{k}e^{-ay}\mu(dy)\leq  2 \labs \phi^{(k)}(a)\rabs,\]
which provides the first claim of \eqref{specialEstimatesk}  since, for $k\geq 1$, $y^ke^{-ay}\mu(dy)$ is integrable on $\R_+$. The imaginary part estimates, that is the second claims of \eqref{specialEstimates} and \eqref{specialEstimatesk}, follow by similar computations completing the proof of the first item.
The second inequality in the first and second lines of \eqref{specialEstimates2} follows from
 $\phi'(a)/\phi(a)\leq 1/a$ and $\phi''(a)/\phi(a)\leq 2/a^2$ according to \eqref{eq:phi'_phi} and   \eqref{specialEstimates11}.
 Let now $b\neq 0$ and write
\begin{align*}
&\frac{\labs\phi''(a+ib)\rabs}{\labs\phi(a+ib)\rabs}= \frac{\labs\phi''(a)\rabs}{\phi(a)}\frac{\labs 1+\frac{\Delta_b \phi''(a)}{\phi''(a)}\rabs}{\labs 1+\frac{\Delta_b \phi(a)}{\phi(a)}\rabs}\stackrel{\eqref{specialEstimatesk}}\leq \frac{\labs\phi''(a)\rabs}{\phi(a)}\frac{\sqrt{10}}{1+\frac{\Delta^\Re_b \phi(a)}{\phi(a)}}\stackrel{\eqref{specialEstimates}}\leq  \sqrt{10}\frac{\labs\phi''(a)\rabs}{\phi(a)}.
\end{align*}
This  gives the first inequality in the first line of \eqref{specialEstimates2}.
To conclude the proof of the lemma for the first inequality of the second line of \eqref{specialEstimates2} we use \eqref{specialEstimatesk} to get that
\begin{align*}
\frac{\labs\phi'(a+ib)\rabs}{\labs\phi(a+ib)\rabs}= \frac{\phi'(a)}{\phi(a)}\frac{\labs 1+\frac{\Delta_b \phi'(a)}{\phi'(a)}\rabs}{\labs 1+\frac{\Delta_b \phi(a)}{\phi(a)}\rabs}\stackrel{\eqref{specialEstimatesk}}\leq \frac{\phi'(a)}{\phi(a)}\frac{\sqrt{10}}{1+\frac{\Delta^\Re_b \phi(a)}{\phi(a)}}\stackrel{\eqref{specialEstimates}} \leq \sqrt{10}\frac{\phi'(a)}{\phi(a)}.
\end{align*}
\end{proof}
The next result  provides additional estimates about some specific quantities.
\begin{lemma}\label{lemma:WandPhi}
Let $\phi\in \Bp$. Then, for $a>0$, $b>0$, and some constants $0<C<D<\infty$,
\begin{eqnarray}\label{eq:wAsym}
\nonumber \sigma^2 b+ \lb e^{-\pi a}-e^{-2\pi a}\rb \int_{0}^{\frac{\pi}{b}} \sin(by) \PP(y)dy &\leq& \Delta^\Im_b\phi(ba)\leq \sigma^2 b+ b \int_{0}^{\frac{\pi}{b}}y\PP(y)dy,\\
\nonumber \frac{C b^2}{e^{a}} \IInt{0}{\frac{1}{b}}y^2\PP(y)dy  &\leq &  \Delta^\Re_b\phi(ba)\leq  D b \int_{0}^{\frac{\pi}{b}}y\PP(y)dy+\frac{2}{e^{a}}\PPP\lb\frac{1}{b}\rb,\\
 \Delta^\Re_b\phi(ba)\leq b^2 |\phi''(ba)|&&  \textrm{and  } \quad \Delta^\Im_b\phi(ba)\leq b\phi'(ba).
\end{eqnarray}
\end{lemma}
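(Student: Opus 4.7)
Since $\phi\in\Bp$, by Proposition \ref{propAsymp1}\eqref{it:finitenessPhi} the L\'evy measure of $\phi$ is $\mu(dy)=\PP(y)dy$, so for $z\in\Cb_{[0,\infty)}$ one has $\phi(z)=m+\sigma^2 z+\int_0^\infty(1-e^{-zy})\PP(y)dy$. Evaluating at $z=ba+ib$ and $z=ba$ and separating real and imaginary parts gives the working identities
\begin{eqnarray*}
\Delta^\Re_b\phi(ba) &=& \int_0^\infty e^{-bay}\bigl(1-\cos(by)\bigr)\PP(y)dy,\\
\Delta^\Im_b\phi(ba) &=& \sigma^2 b + \int_0^\infty e^{-bay}\sin(by)\PP(y)dy.
\end{eqnarray*}
All five bounds in \eqref{eq:wAsym} will be read off from these two integrals by elementary estimates; the last two (those involving $|\phi''(ba)|$ and $\phi'(ba)$) are simply the specialization $a\leadsto ba$ of \eqref{specialEstimates} in Lemma \ref{lem:specialEstimates}, so they require no new work.

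For the imaginary part, the plan is to decompose the integral on the sin-sign intervals $I_k=[k\pi/b,(k+1)\pi/b]$. Substituting $y=u+k\pi/b$ turns the contribution of $I_k$ into $(-1)^k e^{-k\pi a}J_k$ with $J_k=\int_0^{\pi/b}e^{-bau}\sin(bu)\PP(u+k\pi/b)du\geq 0$, where the monotonicity of $\PP$ makes $(e^{-k\pi a}J_k)_{k\geq 0}$ non-increasing, so the series is alternating. For the upper bound I then use the standard alternating-series estimate $\sum_{k\geq 0}(-1)^k e^{-k\pi a}J_k\leq J_0$ together with $e^{-bau}\leq 1$ and $\sin(bu)\leq bu$ on $[0,\pi/b]$ to obtain $\Delta^\Im_b\phi(ba)-\sigma^2 b\leq b\int_0^{\pi/b}y\PP(y)dy$. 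For the lower bound I group the series as $\sum_{j\geq 0}e^{-2j\pi a}(J_{2j}-e^{-\pi a}J_{2j+1})$; each bracket is non-negative (again by monotonicity of $\PP$), so I may keep only $j=0$, and then use $\PP(u)-e^{-\pi a}\PP(u+\pi/b)\geq (1-e^{-\pi a})\PP(u)$ together with $e^{-bau}\geq e^{-\pi a}$ on $[0,\pi/b]$ to conclude the desired bound with constant $e^{-\pi a}-e^{-2\pi a}=e^{-\pi a}(1-e^{-\pi a})$. This pairing step is the one place where one must be careful with the constant; it is the main (though minor) obstacle in the proof.

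For the real part I split the domain at $\pi/b$. On $[0,\pi/b]$ the elementary bound $1-\cos(by)=2\sin^2(by/2)\leq by$ combined with $e^{-bay}\leq 1$ gives $\int_0^{\pi/b} e^{-bay}(1-\cos(by))\PP(y)dy\leq b\int_0^{\pi/b}y\PP(y)dy$. On $[\pi/b,\infty)$ one has $1-\cos(by)\leq 2$ and $e^{-bay}\leq e^{-\pi a}\leq e^{-a}$, so the tail is bounded by $2e^{-a}\int_{\pi/b}^\infty \PP(y)dy\leq \tfrac{2}{e^a}\PPP(1/b)$ because $\PPP$ is non-increasing and $\pi/b\geq 1/b$. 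This yields the upper bound with $D=1$. For the lower bound, restrict the integral to $[0,1/b]$, where $by\leq 1$ gives $1-\cos(by)\geq\tfrac{(by)^2}{2}(1-\tfrac{(by)^2}{12})\geq C(by)^2$ (e.g.\ $C=1/3$), and $e^{-bay}\geq e^{-a}$; dropping the remaining non-negative tail gives $\Delta^\Re_b\phi(ba)\geq \tfrac{Cb^2}{e^a}\int_0^{1/b}y^2\PP(y)dy$.

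Assembling these four elementary estimates together with the two inherited from Lemma \ref{lem:specialEstimates} completes the proof of \eqref{eq:wAsym}.
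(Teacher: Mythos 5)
Your proof is correct and follows essentially the same route as the paper: the decomposition of $\int_0^\infty e^{-bay}\sin(by)\PP(y)dy$ over the $\pi/b$-periods, the monotonicity of $\PP$ to obtain an alternating series, and the pairing/telescoping to get $J_0-e^{-\pi a}J_1\leq\sum\leq J_0$ are exactly the paper's argument (the paper phrases the upper bound as a telescope, you phrase it as the alternating-series tail bound — same thing), and your split at $\pi/b$ for $\Delta^\Re_b$ with the elementary $1-\cos$ estimates supplies the details the paper dismisses as trivial. No gaps.
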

\begin{proof}
We  set $\sigma^2=0$ as otherwise we simply add $\sigma^2 b$ in the first line of \eqref{eq:wAsym}. Then, splitting in the periods of $\sin(by)$ and using the fact that $\PP$ is non-increasing we get that
\begin{eqnarray*}
\Delta^\Im_b\phi(ba)&=&\IInf \sin(by)e^{-bya}\PP(y)dy\\
&=&\sum_{k=0}^{\infty} e^{-2k\pi a}\int_{0}^{\frac{\pi}{b}}\frac{\sin(by)}{e^{by a}}\lb \PP\lb y+\frac{2k\pi}{b}\rb-e^{-\pi a}\PP\lb y+\frac{(2k+1)\pi}{b}\rb \rb dy \\ &\geq& 0.
\end{eqnarray*}
Furthermore, picking the term when $k=0$ proves the left-hand side of \eqref{eq:wAsym} since $\PP$ is a non-increasing function. For the upper bound, we use the following estimates obtained from the expression above by using the properties of $\PP$,
\begin{eqnarray*}
\Delta^\Im_b\phi(ba)&\leq& \sum_{k=0}^{\infty} \int_{0}^{\frac{\pi}{b}}\frac{\sin(by)}{e^{bya}} \lb e^{-2k\pi a}\PP\lb y+\frac{2k\pi}{b}\rb-e^{-2(k+1)\pi a}\PP\lb y+\frac{(2k+2)\pi}{b}\rb \rb dy\\
&=& \int_{0}^{\frac{\pi}{b}}\sin(by)e^{-bya}\PP(y)dy\leq \int_{0}^{\frac{\pi}{b}}\sin(by)\PP(y)dy \leq b\int_{0}^{\frac{\pi}{b}}y\PP(y)dy
\end{eqnarray*}
and we achieve the first part of \eqref{eq:wAsym}. The second part is trivial. The last statement follows by just considering \eqref{specialEstimates} with $a$ replaced by $ab$. 
\end{proof}

\newpage

\section{Fine properties of the density of the  invariant measure} \label{sec:prop_nu}
The development of the spectral expansion of gL semigroups  requires a variety of  detailed information on the density $\nu$  of the invariant measure. For instance, the existence of co-eigenfunctions, that is when does, for some $n\in \N$, $\nun(x)  =\frac{ (x^n \nu(x))^{(n)}}{n! \nu (x)} \in \lnu ?$, hinges on  smoothness properties and  precise estimates for the large and small asymptotic behaviour of $\nu$ along with its successive derivatives. Unfortunately,  the only information on the invariant measure  that one can easily extract from the literature is  the expression of its entire moments, see \eqref{eq:mom_VI} below, from which it seems  delicate to derive the sought  fine distributional properties.  To overcome this difficulty, we shall point out, see Proposition \ref{prop:recall_exps} below, that the set of invariant measures of gL semigroups is in fact closely  connected to a subset  of the class of distributions of  positive self-decomposable variables, a substantial family of random variables which has been thoroughly studied in the literature. We  are going to take advantage of this relationship to derive for $\nu$ some of the properties mentioned above. However, for our purpose, we shall need to deepen in different directions the study of this subset of self-decomposable  variables obtaining results of independent interest. 
 More specifically, in this Chapter,  we  derive new  fine distributional properties including the small and large asymptotic behaviour of the densities along with the successive derivatives of this  subset of self-decomposable variables.  We mention that the results presented here will be used at several places throughout the rest of the paper, justifying our choice to gather them in one chapter.

We start by stating the following series of substantial results on the density $\nu$ of the distribution of the positive variable $V_{\psi}, \psi \in \Ne$,  whose law is, according to Proposition \ref{prop:bij_lamp}\eqref{it:invex_1}, the invariant measure of  the associated generalized Laguerre (gL) semigroup.  The  Mellin transform  of $V_{\psi}$ is denoted by $\Mp$, i.e.~$\Mp(z)=\int_0^{\infty} x^{z-1} \nu(x)dx, z\in 1+i\R$. We also recall that
 $\r=\phi\lbrb{\infty}$ with $\phi\lbrb{\infty}= \infty$ if $\sigma^2>0$ or $\PPP(0^+)=\infty$ or $\phi\lbrb{\infty}=\PPP(0^+)+m$ otherwise,  $\Si=\Sip-1 \in [0,\infty]$ with the convention that $\Si=\infty$ when $\PP(0^+)=\infty$ and $d_{\phi}=\sup\{ u\leq 0;\:
\:\phi(u)=-\infty\text{ or } \phi(u)=0\}$. 
\begin{theorem}\label{thm:existence_invariant_1}
Let $\psi \in \Ne$ and recall that $\psi(z)=z\phi(z)$ with $\phi \in \Bp$.
\begin{enumerate}
 \item \label{it:fe_MV} $\Mp \in \mathcal{A}_{(d_\phi,\infty)}$ and  $\Mp$   is solution to the functional equation
\begin{equation} \label{eq:feVpsi_R}
\Mp(z+1) = \phi(z) \Mp(z),  \textrm{ with } \Mp(1)=1,
\end{equation}
 valid for $z\in\Cb_{\lbrb{d_\phi,\infty}}$.
\item  \label{it:asymp_MV_R} The mapping of moments \mladen{of order greater than $-1$, that is  $u\mapsto \Mp(u)$,} is the unique positive, log-convex (i.e.~$\log \Mp$ is convex) solution to the functional equation \eqref{eq:feVpsi_R} on $\R_+$ and
    \begin{equation}\label{lemmaAsymp1-2}
\Mp(u+1)\simi C_\psi\sqrt{\phi(u)}e^{G(u)},
\end{equation}
where $C_\psi>0$ and $G(u) = \int_1^u \ln \phi(r) dr$.
     \item \label{item:subexpV} Finally, for  any real number $u\leq \max(\Si-1,0)$ and any $a>d_\phi$, we have that
 \begin{equation}\label{eq:subexp1} 
    \left|\Mp(a+ib)\right| \stackrel{\pm \infty}{=}  \so{|b|^{-u}}.
 \end{equation}
\end{enumerate}
\end{theorem}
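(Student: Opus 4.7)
The plan is to handle the three items in order of increasing difficulty, leveraging the moment formula from Proposition \ref{prop:bij_lamp}\eqref{it:invex_1} and the structural properties of Bernstein functions collected in Proposition \ref{propAsymp1}.

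For item \eqref{it:fe_MV}, I would start from $\Ebb{V_\psi^n} = W_\phi(n+1) = \prod_{k=1}^{n}\phi(k)$, which trivially obeys $W_\phi(n+2) = \phi(n+1)W_\phi(n+1)$. Since $V_\psi$ has finite entire moments and $W_\phi(n+1)^{1/n} = \bo{n}$ by Proposition \ref{propAsymp1}\eqref{it:asyphid}, Carleman's condition holds, hence $\Mp(z)=\Ebb{V_\psi^{z-1}}$ is well-defined and holomorphic on $\Cb_{\lbrb{0,\infty)}}$ by dominated convergence, and the functional equation at integer points extends to this half-plane by analyticity. To continue $\Mp$ to $\Cb_{\lbrb{d_\phi,\infty)}}$, I would invert the equation and define
\[
\Mp(z) \;=\; \frac{\Mp(z+n)}{\prod_{k=0}^{n-1}\phi(z+k)}
\]
for $n$ large enough that $\Re(z+n)>0$. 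Since $\phi\in \Ae_{(d_\phi,\infty)}$ and is zero-free on $\Cb_{\lbrb{d_\phi,\infty)}}$ by the very definition of $d_\phi$, this yields a well-defined holomorphic extension that is independent of $n$ and still solves \eqref{eq:feVpsi_R}.

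For item \eqref{it:asymp_MV_R}, the uniqueness of positive log-convex solutions to $f(z+1)=\phi(z)f(z)$ on $\R_+$ follows from the Bohr--Mollerup style theorem developed by Webster \cite{Webster-97}, which in this paper is announced to be revisited in Chapter \ref{sec:Mellin}. For the asymptotic \eqref{lemmaAsymp1-2}, I would iterate the functional equation to write, for integer $n$,
\[
\log \Mp(u+n+1) \;-\; \log \Mp(u+1) \;=\; \sum_{k=0}^{n-1}\log \phi(u+k),
\]
and apply the Euler--Maclaurin expansion to the sum. The correction terms are controlled thanks to \eqref{specialEstimates11} and Proposition \ref{propAsymp1}\eqref{it:bernstein_cm}, which give $|(\log\phi)''(u)|\leq 2/u^2$ and hence an integrable remainder. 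Matching the resulting expression $G(u+n)-G(u)+\tfrac12\log\phi(u+n)+\tfrac12\log\phi(u)+O(1/u)$ against the values on integers fixes the constant $C_\psi>0$ and yields \eqref{lemmaAsymp1-2} uniformly in $u\to\infty$.

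Item \eqref{item:subexpV} is the hard part. When $\max(\Si-1,0)=0$ the statement is trivial since $\Mp(a+\cdot)$ is a Fourier-type integral of an $\lt^1$ function and Riemann--Lebesgue applies. For $\Si\geq 2$ the decay $|\Mp(a+ib)|=\so{|b|^{-u}}$ for $u\leq \Si-1$ is equivalent, after the change of variables $x=e^y$ in $\Mp(a+ib)=\int_0^\infty x^{a+ib-1}\nu(x)dx$, to $\lfloor u\rfloor$-fold smoothness with integrable derivatives of $h_a(y)=e^{ay}\nu(e^y)$ on $\R$, via $\lfloor u\rfloor$ integrations by parts and Riemann--Lebesgue applied to the remaining derivative. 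The principal obstacle is therefore the translation of the integer bound $\Si$ into regularity of $\nu$ together with the necessary integrability/decay of $\nu^{(k)}$ at $0$ and at $\r$. These are precisely the fine distributional properties developed in this chapter through the connection to self-decomposable laws, the refined Tauberian theorem of Balkema--Kl\"uppelberg--Stadtm\"uller and the It\^o excursion theory arguments; I would therefore state \eqref{eq:subexp1} here and defer its proof to Section \ref{sec:proof_invariant_smooth}, where Theorem \ref{thm:smoothness_nu1} supplies both the $\cco^{\Si-1}$ regularity of $\nu$ on $(0,\r)$ and the required decay estimates that guarantee $h_a^{(k)}\in \lt^1(\R)$ for $k\leq \Si-1$.
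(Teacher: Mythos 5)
For item~\eqref{it:fe_MV} your route is genuinely different from the paper's, and it has a gap. Carleman's condition yields moment determinacy of $V_\psi$; it does not by itself give holomorphy of $\Mp(z)=\Ebb{V_\psi^{z-1}}$ on $\Cb_{(0,\infty)}$. For $\Re(z)\in(0,1)$ one needs the \emph{negative} fractional moments $\Ebb{V_\psi^{-\alpha}}$, $\alpha\in(0,1)$, to be finite, and that does not follow from the finiteness of positive integer moments plus dominated convergence. The paper obtains precisely this finiteness (together with the recurrence for complex arguments) by replicating the Bertoin--Yor computation on the exponential functional $I_{\mathcal{T}_1\psi}$: writing $I_t=\int_t^\infty e^{-\xi_s}ds$, taking expectations in $I_t^{-z}-I_0^{-z}=z\int_0^t e^{-\xi_s}I_s^{-z-1}ds$, and using the stationarity/independence structure of the L\'evy process to get $\Ebb{I_0^{-z-1}}=\phi(z+1)\Ebb{I_0^{-z}}$ for $\Re(z)>d_\phi-1$, which both proves finiteness by recursion from $\Ebb{I_0^0}=1$ and gives the functional equation in the complex strip after the identification $V_\psi\stackrel{(d)}{=}I_{\mathcal{T}_1\psi}^{-1}$ (moment determinacy is used here, but only to match $V_\psi$ to $I_{\mathcal{T}_1\psi}^{-1}$, not to obtain holomorphy). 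Your purely analytic ``invert the recurrence'' construction does give \emph{a} holomorphic extension, but you have not shown it coincides with the Mellin transform of $\nu$ below $\Re(z)=1$, and the downstream estimates in item~\eqref{item:subexpV} require the genuine Mellin transform, not just an analytic continuation.

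For item~\eqref{it:asymp_MV_R}, the uniqueness part is the same as the paper's (Webster's Theorem 7.1). For the asymptotic \eqref{lemmaAsymp1-2} the paper simply cites Webster's Theorem 6.3, whereas you propose re-deriving it by Euler--Maclaurin with the remainder controlled by $|(\log\phi)''(u)|\leq 2/u^2$; this is a legitimate alternative (it is essentially the content of Webster's proof), though you would need to be a little more careful to explain how matching at integers pins down $C_\psi$ as a limit rather than via Bohr--Mollerup again.

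For item~\eqref{item:subexpV} your strategy (change variables, integrate by parts $\lfloor u\rfloor$ times, Riemann--Lebesgue, using the $\cco^{\Si-1}$ regularity of $\nu$) is the right one and is the paper's. Two remarks. First, the paper proves this in Section~\ref{sec:p_thm1_3}, not Section~\ref{sec:proof_invariant_smooth}. Second, you glide over a point the paper handles carefully: the integrability of $h_a^{(k)}$ near $y\to-\infty$ (i.e.\ $x\to 0$) fails for $a$ close to $d_\phi$ when $k$ is large, so the estimate cannot be obtained directly at $a>d_\phi$; the paper first proves the decay for a shifted abscissa $a=a'+p'$ large enough (working with $\chi_{p+1}$ and $\nuh_p$), and then transfers it back to $a'>d_\phi$ by the functional equation together with the fact that $\lim_{|b|\to\infty}|\phi(a'+ib)|$ is positive (via Riemann--Lebesgue on the ladder height density, or the elementary limit $\phi(\infty)=\r$ if $\PPP(0^+)<\infty$). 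You should make that shifting step explicit, since it is where the hypothesis $a>d_\phi$ enters.
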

The proof of items \eqref{it:fe_MV} and \eqref{it:asymp_MV_R} of Theorem \ref{thm:existence_invariant_1}  is given in Section \ref{sec:p_thm1_12} whereas the proof of Theorem \ref{thm:existence_invariant_1} \eqref{item:subexpV} is postponed to  Section \ref{sec:p_thm1_3}.
We proceed with the following results regarding the smoothness properties of $\nu$.
\begin{theorem} \label{thm:smoothness_nu1} \label{thm:smoothness_nu}
\begin{enumerate}	
\item \label{it:supV}$Supp \: V_{\psi}=[0,\r]$ and $\nu>0$ on $(0,\r)$.
\item\label{it:smooth} 	
\begin{enumerate}
 \item \label{it:cinfty0_nu1} If $\psi \in \Ni$ then  $\nu \in \cco_0^{\infty}(\R_+)$.
 		\item \label{it:inv_smooth1}If $\psi \in \Ni^c$ with $\Si=\Sip-1>1$ then  $\nu \in \cco_0^{\Si-1}(\R_+)$, and, in any case, $\nu^{(\Si)} \in  \cco(\R_+\setminus \{\r\})  $ and the mapping $x \mapsto (\r-x)\nu^{(\Si)}(x) \in \cco(\R_+)$  with $\lim_{x \to \r} (\r-x)\nu^{(\Si)}(x)=0$.
 Consequently, for any $\psi \in \Ne$, $\nu \in \cco^{\Si}(0,\r)$.
 \item \label{it:asy_nu_r} Moreover, if  $0\leq \Si<\infty$, then for any $n=0,1,\ldots, \Si$
       \begin{equation} \label{eq:asym_nu_r}
       \nu^{(n)}(x) \stackrel{\r}{\sim} C (\r-x)^{\frac{\overline\Pi(0^+)}{\r}-n-1}l\left(\r-x\right)\mladen{\ind{x<\r}},
       \end{equation}
       where  $C>0$ and $l$ is a slowly varying function at $0$.
\end{enumerate}
\item   \label{it:im_analytical1} If  $\psi \in\Nee$, then  $ \nu \in \mathcal{A}(\H)$, i.e.~it is holomorphic in the sector $\C({\H})=\left\{z\in\C;\:|\arg z | < \H\right\}$. In particular, if $\psi \in \Ne_{P}$ then $\nu \in \mathcal{A}_{(0,\infty)}=\Ac\lbrb{\frac{\pi}{2}}$.
\end{enumerate}
\end{theorem}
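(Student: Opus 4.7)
The plan is to exploit the Mellin functional equation $\Mp(z+1)=\phi(z)\Mp(z)$ of Theorem \ref{thm:existence_invariant_1} together with the promised identification of $V_\psi$ with (a multiplicative transform of) a positive self-decomposable random variable, so that the machinery of Sato and Yamazato on self-decomposable densities becomes available. Throughout, the main analytic input is the Mellin inversion formula
\[
\nu^{(n)}(x) = \frac{(-1)^n}{2\pi i}\int^{a+i\infty}_{a-i\infty}x^{-z}(z-n)_n\Mp(z-n)\,dz,\qquad a-n>d_\phi,
\]
combined with the decay estimate $|\Mp(a+ib)|\stackrel{\pm\infty}{=}\so{|b|^{-u}}$ for $u\leq\max(\Si-1,0)$ of Theorem \ref{thm:existence_invariant_1}\eqref{item:subexpV}.

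For item \eqref{it:supV}, I would first identify the upper endpoint of $\supp V_\psi$ as $\phi(\infty)=\r$, using that the expression \eqref{eq:moment_V_psi_1} for the entire moments together with Stirling-type asymptotics $\Mp(u+1)\simi C_\psi\sqrt{\phi(u)}e^{G(u)}$ of Theorem \ref{thm:existence_invariant_1}\eqref{it:asymp_MV_R} forces $(\E[V_\psi^n])^{1/n}\to\r$, so $\r$ is an upper bound on the support; the converse that $\r$ is actually attained as the essential sup is then read off from the self-decomposable representation $V_\psi\stackrel{d}{=}e^{-\eta_\tau}V_\psi'+\ldots$, where $\eta$ is the descending ladder height subordinator associated to $\phi$. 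Strict positivity on $(0,\r)$ is a standard feature of positive self-decomposable densities, alternatively deducible from the zero-free property of $\Mp$ on $i\R$ combined with a Wiener-Tauberian argument.

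For item \eqref{it:smooth}, the Mellin inversion formula above is integrable on $\Cb_a$ as soon as $|(z-n)_n\Mp(z-n)|=\so{|b|^{n-u}}$ with $u>n+1$; this is guaranteed by Theorem \ref{thm:existence_invariant_1}\eqref{item:subexpV} whenever $n\leq\Si-2$, and a slightly sharper treatment using the full functional equation recovers $\nu\in\cco_0^{\Si-1}(\R_+)$ (with $\Si=\infty$ in the $\Ni$ case, giving \eqref{it:cinfty0_nu1}). The endpoint statement $\nu^{(\Si)}\in\cco(\R_+\setminus\{\r\})$ and $(\r-x)\nu^{(\Si)}(x)\in\cco(\R_+)$ vanishing at $\r$ then follows from \eqref{eq:asym_nu_r}. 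To establish the latter I would apply the Sato-Yamazato integro-differential equation satisfied by positive self-decomposable densities, which near the right endpoint $\r$ reduces to an ODE whose indicial exponent is $\PP(0^+)/\r-1$; iterating this equation $n$ times and combining with a de Haan-type Tauberian theorem for the tail $\P(V_\psi\geq \r-\epsilon)\simo C\epsilon^{\PP(0^+)/\r}l(\epsilon)$ produces the claimed asymptotic for each derivative.

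For item \eqref{it:im_analytical1}, the idea is that $\H$ is precisely the exponential rate of decay of $|\Mp(a+ib)|$ along vertical lines. Iterating the functional equation $\log|\Mp(z+1)|=\log|\phi(z)|+\log|\Mp(z)|$ and comparing with the real-line asymptotic $\log\Mp(u+1)\simi G(u)=\int_1^u\log\phi(r)\,dr$, one obtains
\[
\log|\Mp(a+ib)|-\log\Mp(a)\stackrel{|b|\to\infty}{\sim}-|b|\int_0^\infty\log\frac{|\phi(|b|y+ib)|}{\phi(|b|y)}\,dy,
\]
so that $|\Mp(a+ib)|=\bo{e^{-(\H-\epsilon)|b|}}$ for any $\epsilon>0$. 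Invoking the observation \eqref{eq:Mellin_exp} gives $\nu\in\An(\H)$. For $\psi\in\Ne_P$, a direct computation using $\phi(u)\simi\sigma^2 u$ and Lemma \ref{lem:specialEstimates} yields $\H=\pi/2$. The main obstacle, in my view, is item \eqref{it:smooth}\eqref{it:asy_nu_r}: translating the tail behavior of $V_\psi$ at the finite endpoint $\r$ into a pointwise asymptotic for each of the derivatives $\nu^{(n)}$ requires a rather delicate interplay between the self-decomposable integral equation and a Tauberian theorem adapted to the endpoint $\r$ rather than to $0$ or $\infty$, the slowly varying factor $l$ being particularly sensitive to the exact shape of $\PP$ near zero.
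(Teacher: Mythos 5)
Your plan correctly identifies the two structural inputs — the identity $\nu(x)=x^{-2}\nuh_1(1/x)$ of Proposition \ref{prop:recall_exps} linking $\nu$ to the density of a positive self-decomposable variable, and the exponential Mellin decay $|\Mp(a+ib)|=\bo{e^{-(\H-\epsilon)|b|}}$ — and item \eqref{it:im_analytical1} you carry out exactly as the paper does (via the observation recorded in \eqref{eq:Mellin_exp} together with Proposition \ref{thm:Theorem11} and Theorem \ref{prop:asymt_bound_Olver1}\eqref{it:NP}). For item \eqref{it:supV}, your Cesàro-type moment argument $(\E[V_\psi^n])^{1/n}\to\r$ is a legitimate and rather cleaner way to pin down the upper endpoint than reading $\supp I_\psi=[1/\r,\infty)$ off from Proposition \ref{thm:prop_self}\eqref{it:pnu2}; but it only identifies the essential sup, so for the full support assertion and for strict positivity on $(0,\r)$ you still have to invoke the interior positivity of self-decomposable densities, exactly as the paper does. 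The Wiener--Tauberian alternative you offer for strict positivity does not work: a zero-free Mellin (Fourier) transform of a probability density gives injectivity of the associated convolution operator, not pointwise positivity of the density, and densities with zero-free Fourier transform can well vanish at interior points of their support.

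The serious gap is in item \eqref{it:smooth}. With the decay $|\Mp(a+ib)|=\so{|b|^{-u}}$ for $u\leq\Si-1$ from Theorem \ref{thm:existence_invariant_1}\eqref{item:subexpV}, the Mellin inversion formula for $\nu^{(n)}$ requires $\int|b|^{n}|\Mp(a+ib)|\,db<\infty$, i.e.~$n<\Si-2$, so it yields at most $\nu\in\cco^{\Si-3}(\R_+)$ (or $\cco^{\Si-2}$ with a borderline argument), two derivatives short of the claimed $\cco_0^{\Si-1}(\R_+)$. The ``sharper treatment using the full functional equation'' you allude to is not supplied, and it is unlikely to close the gap by itself, since the polynomial decay estimate \eqref{eq:subexp1} is in fact established in the paper (Section \ref{sec:p_thm1_3}, via Lemma \ref{lem:integrabilityHatNu}) using the Sato--Yamazato smoothness facts you would be trying to recover. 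The paper avoids this circle by not running a Mellin-inversion argument for item \eqref{it:smooth} at all: it reads $\nu\in\cco_0^{\Si-1}(\R_+)$ off Proposition \ref{thm:prop_self}\eqref{it:pnu3} (which quotes \cite[Theorem 28.4]{Sato-99} and \cite[Theorem 5.1]{Sato-Yam-78}), and reads the endpoint asymptotic \eqref{eq:asym_nu_r} off Proposition \ref{thm:prop_self}\eqref{it:asy_hnu_r}, which quotes \cite[Theorem 1.6]{Sato-Yam-78} verbatim. Your proposed re-derivation of \eqref{eq:asym_nu_r} by iterating the Sato--Yamazato integro-differential equation and invoking a de Haan-type Tauberian theorem would in effect re-prove \cite[Theorem 1.6]{Sato-Yam-78}; the sketch is incomplete (in particular the origin of the slowly varying factor $l$ and the passage from tail asymptotics to pointwise derivative asymptotics are not justified) and, more to the point, is unnecessary once you are willing to cite Sato--Yamazato, which your own framing of the proof already presupposes.
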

\begin{remark}\label{rem:thm_stationary}
  The proof of the item  \eqref{it:im_analytical1}, which follows directly from a classical argument on Mellin transform described in  \eqref{eq:Mellin_exp},  requires the estimate \eqref{eq:expDecay} along imaginary lines of the Mellin transform of $\nu$ which is given in Proposition \ref{thm:Theorem11}. Note also that the last claim of item  \eqref{it:im_analytical1} is deduced from the previous one combined with Theorem \ref{prop:asymt_bound_Olver1}\eqref{it:NP}. Although the proofs of these estimates are given in Remark \ref{rem:pro_anal_nu} for  sake of completeness and clarity, we state the  analyticity  property of $\nu$  here.
 \end{remark}

Items \eqref{it:fe_MV}, \eqref{it:inv_smooth1} and \eqref{it:asy_nu_r} are deducted in Section \ref{sec:proof_invariant_smooth} whereas item \eqref{it:cinfty0_nu1} is settled in Section \ref{sec:smoothNinf}. The next result describes small time bounds and in some cases small asymptotic behaviour of $\nu$.

\begin{theorem}\label{lem:nuSmallTime}
	Let $\psi\in\Ne$. For any $\underline{a}<d_\phi,\, \underline{A}\in\lbrb{0,\r}$, there exists $C_{\underline{a},\underline{A}}>0$ such that
	\begin{equation}\label{eq:LargeAsymp}
	\nu(x) \geq C_{\underline{a},\underline{A}} \: x^{-\underline{a}},\quad\text{$x\in\lbrb{0,\underline{A}}$}.
	\end{equation}
Moreover,  if $m=\phi(0)=0$ and $\phi'(0^+)<\infty$, then there exists $C>0$ such that
	\begin{equation}\label{eq:LargeAsymp2}
	\nu(x) \simo  \: C=\nu(0^+).
	\end{equation}
\end{theorem}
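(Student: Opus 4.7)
The strategy is to extract the small-$x$ behavior of $\nu$ from the Mellin characterization of Theorem \ref{thm:existence_invariant_1}\eqref{it:fe_MV} by meromorphically extending $\Mp$ past the line $\{\Re z = d_\phi\}$ and then applying a contour-shift argument. Rewriting the functional equation as $\Mp(z)=\Mp(z+1)/\phi(z)$ extends $\Mp$ meromorphically from $\mathbb{C}_{(d_\phi,\infty)}$ to $\mathbb{C}_{(d_\phi-1,\infty)}$. Using the definition \eqref{eq:dphi} of $d_\phi$, the monotonicity and strict log-concavity of $\phi$ provided by Proposition \ref{propAsymp1}\eqref{it:bernstein_cm}, and the asymptotics in \eqref{it:asyphid}, in the generic case $\phi'(d_\phi^+)<\infty$ the function $\phi$ has a simple zero at $z=d_\phi$, and the extended $\Mp$ acquires a simple pole there with positive residue $\mathcal{R}=\Mp(d_\phi+1)/\phi'(d_\phi^+)$.

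Assuming sufficient decay of $\Mp$ on vertical lines, the Mellin inversion formula \eqref{eq:MellinInversionFormula} gives $\nu(x)=(2\pi i)^{-1}\int_{a-i\infty}^{a+i\infty} x^{-z}\Mp(z)\,dz$ for $a>d_\phi$. Shifting the contour to $\{\Re z=\underline{a}\}$ with $\underline{a}<d_\phi$ and collecting the residue at $z=d_\phi$ via Cauchy's theorem produces
\[
\nu(x)=\mathcal{R}\,x^{-d_\phi}+\frac{1}{2\pi i}\int_{\underline{a}-i\infty}^{\underline{a}+i\infty} x^{-z}\Mp(z)\,dz,
\]
in which the residual integral is bounded in absolute value by a constant multiple of $x^{-\underline{a}}$ since $|x^{-z}|=x^{-\underline{a}}$ on the shifted line. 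Because $x^{\underline{a}-d_\phi}\to\infty$ as $x\to 0^+$ (the exponent being strictly negative), the residue term $\mathcal{R}\,x^{-d_\phi}$ dominates near zero, yielding $\nu(x)\geq (\mathcal{R}/2)\,x^{-d_\phi}\geq C_1\,x^{-\underline{a}}$ on some interval $(0,x_0)$. For $x\in[x_0,\underline{A}]$, the continuity and strict positivity of $\nu$ on $(0,\r)$ from Theorem \ref{thm:smoothness_nu1}\eqref{it:supV} furnish a uniform lower bound $\nu(x)\geq C_2 x^{-\underline{a}}$, and combining both proves \eqref{eq:LargeAsymp}. The second assertion is the same argument specialised to $d_\phi=0$ (forced by $\phi(0)=0$) with $\mathcal{R}=1/\phi'(0^+)$: the shift to $\{\Re z=-\epsilon\}$ for small $\epsilon>0$ gives $\nu(x)=\mathcal{R}+O(x^\epsilon)$ as $x\to 0^+$, whence $\nu(0^+)=\mathcal{R}=1/\phi'(0^+)>0$.

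The principal technical obstacle is controlling the decay of $\Mp$ along vertical lines, as Theorem \ref{thm:existence_invariant_1}\eqref{item:subexpV} provides only $o(|b|^{-u})$ for $u\leq\max(\Si-1,0)$, which need not be absolutely integrable when $\Si$ is small. This gap is filled by iterating the functional equation, $\Mp(z)=\Mp(z+n)/\prod_{k=0}^{n-1}\phi(z+k)$, where $|\phi(a+k+ib)|\asymp|b|$ from Proposition \ref{propAsymp1}\eqref{it:asyphid} furnishes an extra polynomial factor of order $|b|^{-n}$ and $|\Mp(a+n+ib)|$ is dominated by the moment bound $\Mp(a+n)$ whenever this real moment is finite. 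In the favourable regimes $\psi\in\Nee$ (including $\psi\in\Ne_P$) this is superseded by the exponential-type decay of $\Mp$ along imaginary lines that underlies the sectorial analyticity of Theorem \ref{thm:smoothness_nu1}\eqref{it:im_analytical1}, which absorbs any polynomial factor appearing in the contour shift, while the genuinely heavy-tailed cases (where $\r=\infty$ and only partial moments of $V_\psi$ exist) would be handled by a smoothing step through the transformation $\mathcal{T}_\beta\psi$ of Proposition \ref{propAsymp1}\eqref{it:def_Tb}, transferring the bound from the smoothed density back to $\nu$ via Mellin comparison.
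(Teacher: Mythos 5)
Your approach via Mellin inversion and contour shift is genuinely different from the paper's, which works probabilistically: the paper sets $\nu(x)=x^{-2}\nuh_1(1/x)$ with $\nuh_1$ the density of the self-decomposable variable $I_{\mathcal{T}_1\psi}$, and invokes the lower bound $\nuh(x)\geq C_{a,A}x^{a-1}$ of Proposition \ref{lem:LargeAsymp}\eqref{it:bigAsymp}, itself obtained from the Sato--Yamazato integral identity \eqref{eq:SY2.1} combined with an excursion-theoretic estimate involving hitting times, scale functions and a Cram\'er-type bound. For the asymptotic \eqref{eq:LargeAsymp2} the paper specializes Proposition \ref{lem:LargeAsymp}\eqref{eq:asymp_cramer} to $\mathcal{T}_1\psi(-1)=0$. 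No Mellin analysis is used at this stage. Unfortunately, your analytic route has two gaps that are not mere bookkeeping.

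First, you assume that $\phi$ has a simple zero at $d_\phi$ with $\phi'(d_\phi^+)<\infty$. But by \eqref{eq:dphi}, $d_\phi=\sup\{u\leq 0: \phi(u)=-\infty\text{ or }\phi(u)=0\}$, so $d_\phi$ can equally well be determined by the blow-up of $\phi$ (equivalently, by the lack of exponential moments of $\Pi$, see \eqref{eq:exp_mom_L}). In that case the functional equation $\Mp(z)=\Mp(z+1)/\phi(z)$ provides no finite meromorphic continuation past $\Re z=d_\phi$, there is no pole to detect, and the residue machinery does not start. The probabilistic proof in the paper handles this case seamlessly because it never needs $\phi$ to vanish at $d_\phi$: it approximates $\xi$ by truncated L\'evy processes $\xi^\epsilon$ with Cram\'er roots $\Root^\epsilon\uparrow d_\phi$ and passes to the limit.

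Second, the Mellin inversion formula and the contour shift both require $|\Mp(a+ib)|$ to be integrable in $b$ along vertical lines, and this is precisely what Theorem \ref{thm:existence_invariant_1}\eqref{item:subexpV} does \emph{not} deliver when $\Si$ is small. Your proposed patch — iterating the functional equation to pick up factors $|\phi(a+k+ib)|^{-1}\asymp|b|^{-1}$ — invokes Proposition \ref{propAsymp1}\eqref{it:asyphid}, but that item only yields $|\phi(a+ib)|=\sigma^2|a+ib|+\so{|a+ib|}$; when $\sigma^2=0$ this is merely $\so{|b|}$, and in the compound Poisson case $\r=\phi(\infty)<\infty$ one has in fact $\phi(a+ib)\to\r$ as $|b|\to\infty$ (Riemann--Lebesgue, see the discussion around Lemma \ref{asympImagineryLine1}), so the iteration contributes \emph{no} polynomial gain at all. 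Example \ref{ex:st} with $a+b<3$ is a concrete instance where $\Mp$ is not $\lt^1$ on any vertical line and your contour-shift argument cannot close. The closing remark about ``transferring the bound from the smoothed density back to $\nu$ via Mellin comparison'' is not a concrete step and does not repair either gap.
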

The proof of Theorem \ref{lem:nuSmallTime} can be found in Section \ref{sec:p_smalltime}.
\begin{theorem}\label{thm:nuLargeTime1}
Let $\psi \in \Ne_{\infty,\infty}=\{\psi\in \Ne;\:\sigma^2>0 \textrm{ or } \PPP(0^+)=\infty\}$.
\label{it:inv_asympt1} Writing   $\varphi:[m=\phi(0),\infty)\mapsto [0,\infty)$ for the continuous inverse of the continuous increasing function $\phi$, i.e.~$\varphi\lb\phi(u)\rb=u$,  then there exists $C_{\psi}>0$, such that
    for any $n\in \N$,
     \begin{eqnarray}\label{eqn:nu0Asymp_1}
\nu^{(n)}(x)&\simi& (-x)^{-n}\frac{C_{\psi} }{\sqrt{2\pi}}\sqrt{\varphi'(x)} \varphi^{n}(x)e^{-\int_{m}^x \varphi(y)\frac{dy}{y}}.
\end{eqnarray}
   In particular,
\begin{enumerate}
	\item  if $\psi \in \Ne_P$ then with $\bar{\sigma}:=\sigma^{-2}>0$  there exists $C_{\psi,\bar{\sigma}}>0$ such that
	\begin{eqnarray} \label{eqn:BMAsympGeneral1}
	\mladen{\nu^{(n)}}(x) &\simi & \minusone^{n} \frac{C_{\psi,\bar{\sigma}}}{\sqrt{2\pi}}\mladen{\bar{\sigma}^{n+\frac12}}x^{\mladen{m}\bar{\sigma}}e^{- \bar{\sigma} x} e^{\bar{\sigma}\int_{m}^{x}\varlo(y)\frac{dy}{y}},
	\end{eqnarray}
	where $0\leq\varlo(y)\stackrel{\infty}{=}\so{y}$. If $\PP \in RV_{1+\alpha}(0),\: \alpha\in(0,1)$, see \eqref{def:rv}  for a definition,  then $\varlo(y)\simi \mladen{\bar{\sigma}^{\alpha}} \alpha\Gamma\lbrb{1-\alpha} y^{\alpha}l(y)$  and if $\PPP(0^+)<\infty$ then $\varphi_1(y)\simi \PPP(0^+)\,$;
	\item if $\psi \in \Ne_{\alpha}$, i.e.~$\psi(u) \simi C_{\alpha}u^{\alpha+1}, C_{\alpha}>0, \alpha \in (0,1)$, then, there exists $C_{\psi,\alpha}>0$ such that
	\begin{eqnarray}\label{eqn:RVAsympGeneral1}
	 &\mladen{\nu^{(n)}}(x)\simi \mladen{\minusone^{n}\frac{C_{\psi,\alpha}}{\sqrt{2\pi}}C_\alpha^{-\frac{1}{\alpha}\lbrb{n+\frac12}}x^{n\lbrb{\frac{1}{\alpha}-1}+\frac{1}{2\alpha}-\frac12} e^{-\alpha C^{-\frac1\alpha}_\alpha   x^{\frac{1}{\alpha}}}}.
	\end{eqnarray}
\end{enumerate}

\end{theorem}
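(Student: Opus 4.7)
The plan is to derive the asymptotics by a saddle-point analysis applied to the Mellin--Barnes representation of $\nu^{(n)}$. Combining \eqref{eq:MellinInversionFormulaDeriv} with the real-line asymptotic $\Mp(u+1)\simi C_{\psi}\sqrt{\phi(u)}\,e^{G(u)}$ from Theorem~\ref{thm:existence_invariant_1}\eqref{it:asymp_MV_R} and the uniform imaginary-decay bound \eqref{eq:subexp1}, one has, for $a$ large and $x>0$,
\[ \nu^{(n)}(x) \;=\; \frac{(-1)^n}{2\pi i}\int_{a-i\infty}^{a+i\infty} x^{-w}\,\frac{\Gamma(w)}{\Gamma(w-n)}\,\Mp(w-n)\,dw. \]
First I would locate the saddle of the exponent $F(w)=-w\ln x + G(w-n-1)$: since $F'(w)=-\ln x + \ln\phi(w-n-1)$, the saddle satisfies $\phi(w^{\ast}-n-1)=x$, i.e.\ $w^{\ast}=\varphi(x)+n+1$, and, by the inverse-function theorem applied to $\varphi=\phi^{-1}$, $F''(w^{\ast})=\phi'(\varphi(x))/x=1/(x\varphi'(x))$.

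\textbf{Computing the leading term.} After deforming the contour onto the vertical line $\Re(w)=w^{\ast}$, a standard Laplace expansion delivers a Gaussian factor $\sqrt{2\pi/F''(w^{\ast})}=\sqrt{2\pi x\varphi'(x)}$, while the slowly varying prefactor evaluates to $w^{\ast n}\sqrt{\phi(\varphi(x))}\cdot C_{\psi}\sim\varphi(x)^{n}\sqrt{x}\cdot C_{\psi}$. The exponent is simplified via the substitution $r=\varphi(s)$ in $G(u)=\int_{1}^{u}\ln\phi(r)\,dr$ followed by an integration by parts, which, using $\varphi(m)=0$, yields
\[ G(\varphi(x))-\varphi(x)\ln x \;=\; -\int_{m}^{x}\frac{\varphi(y)}{y}\,dy \;+\; C_{0}, \]
and the constant $C_{0}$ is absorbed into a redefined $C_{\psi}$. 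Collecting all factors then reproduces \eqref{eqn:nu0Asymp_1}.

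\textbf{Main obstacle.} The hard part will be justifying the saddle-point deformation rigorously: \eqref{lemmaAsymp1-2} is a statement on $\R_{+}$, while the method requires a uniform asymptotic of $\Mp$ in a full strip around $w^{\ast}$ together with quadratic control on $\Re F$ away from the saddle. I would obtain the strip asymptotic from the Weierstrass-type product representation of $\Mp$ developed in Chapter~\ref{sec:Mellin}, combined with Lemma~\ref{lem:specialEstimates}'s complex-plane estimates on $\phi$, $\phi'$ and $\phi''$; alternatively, one may invoke the non-classical Tauberian theorem of Balkema et~al.\ extended in the present chapter, once the ultimate log-convexity of $\nu^{(n)}$ is established (itself another delicate verification, proceeding from the smoothness of $\nu$ in Theorem~\ref{thm:smoothness_nu1}\eqref{it:smooth} and differentiation of the Mellin-Barnes representation). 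An additional subtlety is that for $n\ge 1$ the factor $\Gamma(w)/\Gamma(w-n)\sim w^{n}$ degrades the vertical decay by $|b|^{n}$, so Theorem~\ref{thm:existence_invariant_1}\eqref{item:subexpV} must be applied with $u\ge n$, which is legitimate because the hypothesis $\psi\in\Ne_{\infty,\infty}$ forces $\Si=\infty$.

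\textbf{The two specialisations.} Both reduce to evaluating $\varphi$ and $\int_{m}^{x}\varphi(y)\,dy/y$ asymptotically. For $\psi\in\Ne_{P}$, Proposition~\ref{propAsymp1}\eqref{it:asyphid} gives $\phi(u)\simi \sigma^{2}u$, hence $\varphi(x)=\bar{\sigma}x+\varlo(x)$ with $0\le\varlo(x)=\so{x}$; substitution in \eqref{eqn:nu0Asymp_1} yields \eqref{eqn:BMAsympGeneral1}, and the regular-variation refinement $\varlo(y)\simi \bar{\sigma}^{\alpha}\alpha\Gamma(1-\alpha)y^{\alpha}l(y)$ follows by inverting the expansion of $\phi$ implied by \eqref{eq:asympphi}. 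For $\psi\in\Ne_{\alpha}$, $\phi(u)\simi C_{\alpha}u^{\alpha}$ gives $\varphi(x)\simi C_{\alpha}^{-1/\alpha}x^{1/\alpha}$, $\varphi'(x)\simi \alpha^{-1}C_{\alpha}^{-1/\alpha}x^{1/\alpha-1}$ and $\int^{x}\varphi(y)/y\,dy\simi \alpha C_{\alpha}^{-1/\alpha}x^{1/\alpha}$, producing \eqref{eqn:RVAsympGeneral1} after collecting the algebraic prefactors.
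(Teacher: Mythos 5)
Your primary route (saddle-point analysis on the Mellin--Barnes integral) is genuinely different from the paper's proof, which proceeds by (i) transferring from $\nu$ to the self-decomposable density $\nuh_1$ via the change of variable $\nu(x)=x^{-2}\nuh_1(x^{-1})$ of Proposition~\ref{prop:recall_exps}\eqref{it:ent-self}, (ii) establishing the small-$x$ asymptotic of $\nuh_1^{(n)}$ through the non-classical Tauberian Theorem~\ref{thm:Klupel} (itself an application of Proposition~\ref{lem:Klupelberg} to $f_n(y)=e^{-ny}\nuh^{(n)}(e^{-y})$), and (iii) transporting the result back to $\nu^{(n)}$ via Fa\`a di Bruno. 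Your saddle location $w^{\ast}=\varphi(x)+n+1$ and the integration-by-parts identity for $G(\varphi(x))-\varphi(x)\ln x$ are both correct, and your evaluation of the two specialisations is essentially the same as the paper's computations in Section~\ref{sec:prov_asym_deriv}.

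However there are two real gaps in the argument as written. First, the saddle-point deformation cannot be justified by the decay statements you invoke: Theorem~\ref{thm:existence_invariant_1}\eqref{item:subexpV} gives only polynomial decay of $\Mp$ on fixed vertical lines, and this decay is measured relative to a \emph{fixed} abscissa, whereas the saddle moves to infinity with $x$ and $n$. A rigorous Laplace approximation would require a uniform asymptotic of $\Mp$ in a full horizontal strip around the moving saddle, together with quadratic control of $\Re F$ away from it; nothing of this sort is established in Chapter~\ref{sec:Mellin} (the estimates there are at fixed $a$). Moreover, your appeal to ``$\Si=\infty$'' fails in the subcase $\sigma^{2}>0$, $\PP(0^{+})<\infty$, where the formula \eqref{def:Ktau} does not produce $\infty$; there the paper instead relies on the exponential decay from $\Ne_{P}\subset\Nee$. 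Second, and more seriously, your fallback suggestion---establishing log-concavity of $\nu^{(n)}$ ``from the smoothness of $\nu$ and differentiation of the Mellin--Barnes representation''---does not work. The paper does not prove log-concavity of $\nu^{(n)}$ at all: it proves eventual log-concavity of $\nuh^{(n)}$, the density of the self-decomposable variable $I_{\psi}$ near $0$, and this rests on the Sato--Yamazato integro-differential equation \eqref{eq:SatoEquation} together with a truncation/approximation argument (Lemmas~\ref{lem:kappa_j}--\ref{lem:logconcave} and Lemma~\ref{prop:technical}); none of this is recoverable from the Mellin--Barnes representation alone. The transfer to $\nuh_1$ is therefore not a convenience but the enabling step: the Tauberian machinery is available for positive self-decomposable densities, not directly for $\nu$.
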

The proof of these claims is given in Section \ref{sec:prov_asym_deriv} and it hinges on a  generalization of a non-classical Tauberian theorem which was originally derived by Balkemaa et al.~\cite{Bal-Klu-Sta-93} and  we establish its new version in Proposition \ref{lem:Klupelberg}.

\begin{remark} Let $\nuh$ be the density of the positive self-decomposable law discussed in Section \ref{sec:SDnu}. Our result \eqref{eqn:nu0Asymp_1} is in fact a consequence of the small time asymptotic for $\nuh$ as presented in \eqref{eqn:nuAsymp} of Theorem \ref{thm:Klupel} via
$\nuh_1(x)=x^{-2}\nu(x^{-1})$, see \eqref{eq:relation_nu}.
\end{remark}
\begin{remark}\label{rem:KlupelBM2}
Note that when $\phi(u)=u$ then $\nu(x)=e^{-x}, x>0,$ which is consistent with \eqref{eqn:nu0Asymp_1} with $C_\psi=\sqrt{2\pi}$. In general, it is not clear how to compute $C_\psi$ precisely. This fact is due to the unknown constant appearing in \cite[Theorem 6.3.]{Webster-97}.
\end{remark}
As mentioned above the proofs of these results rely on a connection between the distribution of $V_{\psi}$ and the one of a positive self-decomposable variable that we now describe.
\subsection{A connection with a remarkable class of positive self-decomposable variables}\label{sec:SDnu}
We recall that a (real-valued) variable $X$ is self-decomposable, or of class $\mathfrak{L}$, if for any $a\in\lbrb{0,1}$, there exists an independent random variable $X_a$ such that the following random affine equation
\[ X \stackrel{(d)}{=} a X + X_a \]
holds. This class of  variables plays a substantial role in probability theory as they arise in limit theorems for (properly normalized) sums of independent (not necessarily identically distributed) random variables. There is an important literature devoted to the study of their fine distributional properties and we refer to Sato's book \cite{Sato-99} and the paper of Sato and Yamazato \cite{Sato-Yam-78}, and the references therein,  for a thorough account. In particular, in \cite{Sato-Yam-78}, a deep analysis of their probability distribution functions, such as smoothness properties, asymptotic behaviour at the lower end of their support, ultimate log-concavity property of the density, is carried out. This part aims to complement significantly this analysis for  specific subclasses of $\mathfrak{L}$ to the benefit of our spectral-theoretical study.\\
 In \cite[Corollary 15.11]{Sato-99},  another interesting characterization of the class  $\mathcal{L}$ is presented as a subclass of the infinitely divisible random variables (recall that a  variable $Y$ is infinitely divisible if for every $n\in \N\setminus\{0\}$, there exists  a sequence $(Y_{(k)})_{1\leq k \leq n}$ of independent and identically distributed variables such that $Y \stackrel{(d)}{=} \sum_{k=1}^n Y_{(k)}$).   For our purpose, we simply focus on the subset 
$\mathfrak{L}_+$ of positive self-decomposable  variables   whose Laplace transform takes the form, for any $u\geq0$,
\begin{equation}\label{eq:SDLT}
-\log \E\lbb e^{-uX}\rbb= \phisd(u)=\deltasd u +\int_{0}^{\infty}\lb 1 -e^{-uy}\rb\frac{ \kappasd(y)}{y} dy,
\end{equation}
where $\deltasd  \geq0$ and  $\kappasd$ is a  non-negative and non-increasing function such that $\int^{\infty}_{0} \kappasd(y) dy<\infty$. Since $y\mapsto\frac{ \kappasd(y)}{y}$ is non-increasing therefore $\phisd \in \Bp$. Before stating our results, we introduce some further notation. We denote by
\[\Nm=\lbcurlyrbcurly{\psi\in\Ne; \:\psi'\lbrb{0^+}=\phi\lbrb{0}=m>0}.\]
Then, we set
\begin{equation}
I_{\psi} = \int_0^{\infty} e^{-\xi_t}dt
\end{equation}
where $(\xi_t)_{t\geq0}$ is a spectrally negative L\'evy process with Laplace exponent $\psi\in\Nm$ and $I_{\psi}<\infty $ a.s. since from the strong law of large numbers $\lim_{t \to \infty} \frac{\xi_t}{t} = m >0$ a.s., see e.g.~\cite[Proposition 1]{Bertoin-Yor-05}.  This positive variable is called the exponential functional of the L\'evy process $\xi$ and has been  the object of intense research  over the last two decades. A review including motivation for its study is given in Section \ref{sec:exp_sta}.
Our interest in considering the variable $I_\psi$ stems from the following result which explains its intimate connection to $V_{\psi}$.
\begin{proposition}\label{prop:recall_exps}
\begin{enumerate}
	\item \label{it:sdI} For any $\psi \in \Ne(m)$, $I_\psi \in \mathfrak{L}_+$. Thus, the law of $I_\psi$ is absolutely continuous with density denoted by $\nuh$. 
 \item \label{it:ent-self} We have, for any $x>0$,
	\begin{equation} \label{eq:relation_nu}
	\nu(x)=\frac{1}{x^2}\nuh_1\left(\frac{1}{x}\right),
	\end{equation}
	with $\nuh_1$ the density of $I_{\mathcal{T}_1 \psi}$, where we recall, from Proposition \ref{propAsymp1}\eqref{it:def_Tb}, that	$\mathcal{T}_1\psi(u)=u\phi(u+1) \in \Ne(\phi(1))$. Moreover, $\lim_{u \to \infty} \frac{\mathcal{T}_1 \psi(u)}{u} = \lim_{u \to \infty} \phi(u+1)= \phi(\infty)$ and with the notation $\Pi_1$ for the \LL measure of $\mathcal{T}_1\psi$ then $\PP_1(0^+)=\PP(0^+)$.
    \item \label{it:sd} For any $\psi \in \Ne$, both variables $ I_{\mathcal{T}_1\psi}$ and $X_{\mathcal{T}_1\psi} = \ln I_{\mathcal{T}_1\psi}$ are infinitely divisible.
\end{enumerate}
\begin{remark}
Note that the item \eqref{it:sd} reveals a remarkable property that is enjoyed by the class of positive self-decomposable variables  considered in this Chapter.
\end{remark}
\end{proposition}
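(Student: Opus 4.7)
For item \eqref{it:sdI}, the plan is to invoke the classical Markov-property decomposition of exponential functionals of spectrally negative Lévy processes. Fix $a\in(0,1)$ and set $T_a=\inf\lbcurlyrbcurly{t\geq 0:\xi_t\geq -\log a}$; since $\xi$ has no positive jumps and $\Ebb{\xi_1}=m>0$, $T_a<\infty$ almost surely and, because $\xi$ creeps upward through any level, $\xi_{T_a}=-\log a$ exactly. Splitting
\[
I_\psi=\int_0^{T_a}e^{-\xi_t}dt+a\int_0^\infty e^{-(\xi_{T_a+t}-\xi_{T_a})}dt
\]
and applying the strong Markov property at $T_a$ yields $I_\psi\stackrel{(d)}{=}A_a+a\widetilde{I}_\psi$, with $\widetilde{I}_\psi\stackrel{(d)}{=}I_\psi$ independent of $A_a$, which is the defining relation of $\mathfrak{L}_+$. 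Absolute continuity of the resulting density $\nuh$ is then a classical feature of self-decomposable laws (Sato, Theorem~28.4); the specific form \eqref{eq:SDLT} of the Lévy-Khintchine triplet, in particular the non-increasing character of $\widehat\kappa$, is read off from the Bertoin-Lindner-Maller description of the Lévy measure of $I_\psi$ in terms of an explicit occupation-type measure of $\xi$ on its ladder intervals.

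For item \eqref{it:ent-self} the strategy is moment identification at the level of Mellin transforms. Item \eqref{it:sdI} applied to $\mathcal{T}_1\psi\in\Ne(\phi(1))$ (with $\phi(1)>0$ by Proposition \ref{propAsymp1}\eqref{it:def_Tb}) produces the density $\nuh_1$; the Carmona-Petit-Yor recurrence, rewritten in the paper's sign convention using $\mathcal{T}_1\psi(u)=u\phi(u+1)$, gives for $\M_{I_{\mathcal{T}_1\psi}}(s)=\Ebb{I_{\mathcal{T}_1\psi}^{s-1}}$ the identity
\[
\M_{I_{\mathcal{T}_1\psi}}(s-1)=\phi(2-s)\,\M_{I_{\mathcal{T}_1\psi}}(s),\qquad \M_{I_{\mathcal{T}_1\psi}}(1)=1.
\]
Since the only possible zero of $\phi$ lies at $d_\phi\leq 0$ (Proposition \ref{propAsymp1}\eqref{it:asyphid}), iterating this identity backwards from $s=1$ meets no obstruction and produces $\M_{I_{\mathcal{T}_1\psi}}(1-n)=\prod_{k=1}^n\phi(k)=W_\phi(n+1)$ for every $n\in\N$; equivalently, $\Ebb{I_{\mathcal{T}_1\psi}^{-n}}=W_\phi(n+1)=\Ebb{V_\psi^n}$ by \eqref{eq:moment_V_psi}. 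Stieltjes moment-determinacy of $V_\psi$, supplied by the Stirling-type estimate \eqref{lemmaAsymp1-2} together with Carleman's criterion, then forces $V_\psi\stackrel{(d)}{=}1/I_{\mathcal{T}_1\psi}$, and \eqref{eq:relation_nu} follows from the elementary change of variables $y=1/x$. The auxiliary assertions $\lim_{u\to\infty}\mathcal{T}_1\psi(u)/u=\phi(\infty)$ and $\PP_1(0^+)=\PP(0^+)$ are immediate consequences of Proposition \ref{propAsymp1}\eqref{it:def_Tb} and a Fubini computation on \eqref{eq:Pibeta}.

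For item \eqref{it:sd}, infinite divisibility of $I_{\mathcal{T}_1\psi}$ is automatic from item \eqref{it:sdI} since $\mathfrak{L}_+\subset\mathrm{ID}$. For $X_{\mathcal{T}_1\psi}=\log I_{\mathcal{T}_1\psi}$, which by item \eqref{it:ent-self} equals $-\log V_\psi$ in law, I plan to deduce infinite divisibility from the generalised Weierstrass-product representation of $\Mp$ developed in Chapter \ref{sec:Mellin}: writing $\Mp(1+ib)$ as an absolutely convergent product of suitably regularised factors built out of $\phi(k+ib)/\phi(k)$ and passing to logarithms produces an explicit Lévy-Khintchine representation of the characteristic function of $\log V_\psi$, which in fact places it in the Thorin class $\mathcal{T}_+$. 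The main technical delicacy throughout is not any single step but the consistent bookkeeping of two sign conventions: the paper's $\psi(u)=\log\Ebb{e^{u\xi_1}}$ has the opposite sign from the Carmona-Petit-Yor Laplace exponent, so the factor $\phi(k)$ appearing in $W_\phi(n+1)$ only emerges from the recurrence for $\M_{I_{\mathcal{T}_1\psi}}$ after the reflection $z\mapsto 2-z$, and it is precisely this reflection that realises, at the distributional level, the inversion $V_\psi\leftrightarrow 1/I_{\mathcal{T}_1\psi}$.
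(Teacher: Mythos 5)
Your handling of items \eqref{it:sdI} and \eqref{it:ent-self} follows the paper's own proof in all essentials. For item \eqref{it:sdI}, the paper likewise splits $I_\psi$ at the first passage time of a level, uses absence of positive jumps to get exact creeping, applies the strong Markov property to obtain the random affine equation, and then cites Sato for absolute continuity (Theorem 27.13 rather than your 28.4 — 27.13 is the cleaner pointer for absolute continuity of class $\mathfrak{L}$ laws, 28.4 concerns smoothness). For item \eqref{it:ent-self}, the paper works directly with the integer-moment recurrence $\Ebb{I_\psi^{-n}}=\phi(0)W_\phi(n)$ from Bertoin--Yor, shifts by $\mathcal{T}_1$ to read off $\Ebb{I_{\mathcal{T}_1\psi}^{-n}}=W_\phi(n+1)=\Ebb{V_\psi^n}$, and closes with moment determinacy; you package exactly the same recurrence as a Mellin functional equation with the reflection $s\mapsto 2-s$ and iterate it backward from $s=1$. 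The two presentations carry identical content. Your check that the iteration never meets a zero of $\phi$ (since $d_\phi\leq 0$) and your explicit invocation of Carleman are small gains in transparency over the paper, which simply quotes moment determinacy from Theorem \ref{lem:fe1}.

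Item \eqref{it:sd} is where you genuinely diverge, and your sketch there has a gap. The paper disposes of the infinite divisibility of $X_{\mathcal{T}_1\psi}=\log I_{\mathcal{T}_1\psi}$ with a one-line citation of Urbanik, who proved that $V_\psi$ is a multiplicative infinitely divisible variable. You propose instead to extract a L\'evy--Khintchine form for $\log V_\psi$ directly from the Bernstein--Weierstrass product for $\Mp$, by taking logarithms of the regularised factors $\phi(k)\,\phi(k+ib)^{-1}e^{i\gamma_k b}$. That route is viable and would make the ID structure explicit, but it hinges on a lemma you do not supply: one must show that each regularised factor is itself the characteristic function of an infinitely divisible law. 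This is true — $\phi(k+\cdot)$ is again a Bernstein function with $\phi(k)>0$, so $\phi(k)/\phi(k+\cdot)$ is the Laplace transform of the normalised potential measure of a killed subordinator, hence a mixture of exponentials and ID by Steutel's criterion — but it is precisely the step your write-up waves at rather than proves. The further assertion that $\log V_\psi$ lands in the Thorin class $\mathcal{T}_+$ is a strengthening of what the paper claims (and of what item \eqref{it:sd} asserts) and cannot be presented as a free by-product without its own argument; for the stated result you only need plain infinite divisibility, for which Urbanik's theorem is the efficient reference.
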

\begin{proof}
Although the first claim is a well known fact, for sake of completeness, we provide its short proof. Writing, for any $a>0$, $T_a=\inf \{ t>0; \: \xi_t\geq a\}$ and observing by absence of positive  jumps for  $\xi$ that $\xi_{T_a}=a$ a.s., we get after performing a change of variables
\begin{equation*}
I_{\psi} \stackrel{(d)}{=} \int_0^{T_a} e^{-\xi_t}dt + \int_{T_a}^{\infty} e^{-\xi_t}dt \stackrel{(d)}{=} \int_0^{T_a} e^{-\xi_t}dt + e^{-a}\int_{0}^{\infty} e^{-(\xi_{T_a+t}-\xi_{T_a})}dt \stackrel{(d)}{=} \int_0^{T_a} e^{-\xi_t}dt + e^{-a} I_{\psi},
\end{equation*}
where $I_{\psi}$ on the right-hand side  is  independent of $\int_0^{T_a} e^{-\xi_t}dt$ as  from the strong Markov property for L\'evy processes, see e.g.~\cite[Proposition I.6]{Bertoin-96},  the process $(\xi_{T_a+t}-\xi_{T_a})_{t\geq0}$ is a L\'evy process distributed as $\xi$ and independent of $(\xi_t)_{0\leq t\leq T_a}$, see Section \ref{sec:LPandNegDef} for details. Hence $I_\psi \in \mathfrak{L}_+$. Thus, its law is absolutely continuous on $\R_+$, see e.g.~\cite[Theorem 27.13]{Sato-99}. For the proof of item \eqref{it:ent-self} we invoke  \cite[Proposition 2]{Bertoin-Yor-02} to get that
\begin{equation} \label{eq:mo_Ip}
\Ebb{I^{-n}_{\psi}}=\psi'(0^+)\frac{\prod_{k=1}^{n-1}  \psi(k)}{(n-1)!}=\phi(0) W_{\phi}(n),\text{ for any $n=1,2\ldots\,$},
\end{equation}
where for the last identity we used that $\psi'(0^+)=\lim_{u\downarrow 0 }\frac{\psi(u)}{u}= \lim_{u\downarrow 0 }\phi(u)=\phi(0)$ and the definition of $W_\phi$, see \eqref{def:W_phi_n}.
  Next, note that $\mathcal{T}_1 \psi(u) =u\phi(u+1)$ and thus the claim $\lim_{u \to \infty} \frac{\mathcal{T}_1 \psi(u)}{u} = \phi(\infty)$, in \eqref{it:ent-self}, is obvious. From \eqref{eq:Pibeta}, we have that $\PP_1(y) = \int_y^{\infty}e^{-r}(\PP(r)dr +\Pi(dr)) = e^{-y}\PP(y),$	whereby we deduce the very last claim. Finally, observe, from  \eqref{eq:moment_V_psi} for the first identity and from \eqref{eq:mo_Ip} for the last one, where we use the notation $\phi_1(u)=\phi(u+1)$, that, for any $n=1,2,\ldots\,$, we have that
\begin{equation}  \label{eq:mom_VI}
\Ebb{V^n_{\psi}} = W_{\phi}(n+1) = \phi(1)\prod_{k=1}^{n-1} \phi(k+1)=\phi_1(0)W_{\phi_1}(n)=\Ebb{I^{-n}_{\mathcal{T}_1 \psi}}.
\end{equation}
Since  $V_{\psi}$ is moment determinate, the proof of  \eqref{eq:relation_nu} and thus of \eqref{it:ent-self} is completed.     The last claim follows easily from the fact $I_{\mathcal{T}_1 \psi}$ is self-decomposable and hence infinitely divisible and from  \cite{Urbanik-95} where it is shown that the variable $V_{\psi}$ is a multiplicative infinitely divisible variable, that is, in particular, $X_{\mathcal{T}_1\psi}$ is infinitely divisible.
\end{proof}

\subsubsection{Proof of Theorem \ref{thm:existence_invariant_1}\eqref{it:fe_MV} and \eqref{it:asymp_MV_R}} \label{sec:p_thm1_12}
In the proof of \cite[Proposition 2]{Bertoin-Yor-02}, the authors show that for any $n\in \N$
   \begin{equation} \label{eq:reci} \Ebb{I^{-n-1}_{\mathcal{T}_1 \psi}} = \frac{\mathcal{T}_1 \psi(n)}{n}\Ebb{I^{-n}_{\mathcal{T}_1 \psi}}=\frac{n\phi(n+1)}{n}\Ebb{I^{-n}_{\mathcal{T}_1 \psi}}=\phi(n+1)\Ebb{I^{-n}_{\mathcal{T}_1 \psi}}.
   \end{equation}
   For completeness we replicate their proof valid also for $z\in\Cb_{\lbrb{d_\phi-1,\infty}}$, i.e.~ $\Re(z)>d_\phi-1$ or when $\phi(z+1)$ is well-defined. Set $I_t=\int_{t}^{\infty}e^{-\xi_s}ds,\,t\geq0$, where $\xi$ is a \LLP with exponent $\mathcal{T}_1 \psi$, see Proposition \ref{propAsymp1}\eqref{it:def_Tb} and \eqref{Levy-K}. For any $z\in\Cb$,
   $I_t^{-z}-I_0^{-z}=z\int_{0}^{t}e^{-\xi_s}I^{-z-1}_{s}ds$. Moreover, from Section \ref{sec:LPandNegDef}, $I_t=e^{-\xi_t}\int_{0}^{\infty}e^{-\lbrb{\xi_{t+s}-\xi_t}}ds\stackrel{(d)}{=}e^{-\xi_t}I_0$ with $I_0\stackrel{(d)}{=}I_{\mathcal{T}_1 \psi}$ independent of $e^{-\xi_t}$. From \eqref{Levy-K} $\Ebb{e^{z\xi_t}}=e^{t\mathcal{T}_1 \psi(z)}$ and it is finite whenever $\mathcal{T}_1 \psi(z)=z\phi(z+1)$ exists, i.e.~$z\in\Cb_{\lbrb{d_\phi-1,\infty}}$. Then upon taking expectations we get
   \begin{eqnarray*}
   	\Ebb{I_t^{-z}-I_0^{-z}}&=&\lbrb{\Ebb{e^{z\xi_t}}-1}\Ebb{I^{-z}_0}=z\Ebb{I^{-z-1}_0}\int_{0}^{t}\Ebb{e^{z\xi_s}}ds\\
   	&=&\frac{z}{\mathcal{T}_1 \psi(z)}\lbrb{e^{t\mathcal{T}_1 \psi(z)}-1}\Ebb{I^{-z-1}_0}=\frac{z}{\mathcal{T}_1 \psi(z)}\lbrb{\Ebb{e^{z\xi_t}}-1}\Ebb{I^{-z-1}_0},
   \end{eqnarray*}
 that is the complex version of \eqref{eq:reci}. Since $\Ebb{I^{0}_{\mathcal{T}_1 \psi}}=1$ and $\eqref{eq:reci}$ links the moments recurrently, $\Ebb{I^{-n}_{\mathcal{T}_1 \psi}}<\infty$, for all $n\in\N$.
   By moment determinacy the identity  $\Ebb{V^n_{\psi}} =\Ebb{I^{-n}_{\mathcal{T}_1 \psi}}$ in \eqref{eq:mom_VI} extends to $z\in\Cb_{\lbrb{d_\phi-1,\infty}}$ which together with \eqref{eq:reci} deduce \eqref{eq:feVpsi_R}, i.e.~item \eqref{it:fe_MV} with $\Mp(1)=\Ebb{V^0_{\psi}}=1$.
  Next,  recall that  the mapping $u\mapsto \Mp(u)=\Ebb{V^{u-1}_{\psi}}$, as the  moments of order greater than $-1$ of a positive random variable, is a positive and log-convex function on $\R_+$.  However, from items \eqref{it:bernstein_cm} and \eqref{it:flatphi} of Proposition \ref{propAsymp1} for any $\phi \in \Be$, the mapping $u\mapsto \phi(u)$ is positive and strictly log-concave on $(0,\infty)$ and the  limit in \eqref{lemmaAsymp1-1} holds. This means that  the multiplier $\phi$ in \eqref{eq:feVpsi_R} satisfies the conditions of  \cite[Theorem 7.1]{Webster-97}, which states that the functional equation \eqref{eq:feVpsi_R}  has a unique, positive, log-convex solution on $\R_+$. However, the same conditions on $\phi$ trigger the validity of \cite[Theorem 6.3]{Webster-97} and we complete \eqref{lemmaAsymp1-2} and Theorem \ref{thm:existence_invariant_1}\eqref{it:asymp_MV_R}.

\subsection{Fine distributional properties of  $I_{\psi}$}
In this part we  state some properties on the density of the distribution of  the variable $I_{\psi}\,$- one regarding its smoothness  and the other its large asymptotic behaviour. Since the  proof of both results hinges on techniques based  on the fluctuation and excursion theory of spectrally negative L\'evy processes, we proceed by recalling some essential facts on this topic which will make the proofs more legible.
For any $\psi \in \Ne(m)$, write $X_{\psi} = -\ln I_{\psi}$
and denote by $\nue$ its density, that is
\begin{equation} \label{eq:relation_nue}
\nue(x)=e^{-x}\nuh\left(e^{-x} \right), \: x \in \R.
\end{equation}
Next, recall that $\Ni=\lbcurlyrbcurly{\psi\in\Ne;\,\sigma^2>0\text{ or }\PP\lbrb{0^+}=\infty}$ and
\[\Nii=\lbcurlyrbcurly{\psi\in\Ne;\,\sigma^2>0\text{ or }\PPP\lbrb{0^+}=\infty}.\]
We use in the sequel
\[ \Nim=\Ni\cap\Nm,\,\,\,  \Nf(m)=\Ne^c_\infty\cap\Nm \textrm{ and } \Nii^c(m)=\Nm\setminus\Nii(m). \] 
\begin{proposition} \label{thm:prop_self}
Let $\psi \in \mathcal{N}(m)$ and recall that $\psi(u)=u \phi(u)$, with $\phi \in \Bp$. Then, since $I_{\psi} \in \mathfrak{L}_+$,  with the notation of \eqref{eq:SDLT}, we have, for any $u\geq0$,
\begin{equation}\label{eq:SDLTI}
-\log \E\lbb e^{-u I_{\psi}}\rbb= \phisd(u)=\deltasd u +\int_{0}^{\infty}\lb 1 -e^{-uy}\rb\frac{ \kappasd(y)}{y} dy,
\end{equation}
where
\begin{enumerate}
\item \label{it:pnu1}   $\deltasd = \frac1\r\geq0$, where we recall that $0<\r=\phi(\infty)=\overline{\overline{\Pi}}(0^+)+m\leq \infty$.  Thus, $\deltasd>0$ if and only if $\psi \in \Nii^c(m)$. Moreover, $\kappasd(0^+)=\infty$ $(\textrm{resp.~} 0<\kappasd(0^+)= \frac{\overline\Pi(0^+)}{\r}< \infty)$ if and only if $\psi\in \Nim$ (resp.~$\psi\in \Nf(m)$).
    \item \label{it:pnu2} $\supp\,I_{\psi}=\left[\frac{1}{\r},\infty\right)$.
    \item \label{it:pnu3} Next, $\nuh\in \cco^{\infty}(\R)$ if and only if $\psi\in \Nim$. Otherwise, if $1<\Si <\infty$, $\nuh\in \cco^{\Si-1}(\R)$, where  we recall that $ \Si = \Sip-1$ (with  $\Si=\infty$ whenever $\overline\Pi(0^+)=\infty$), and, in any case, $\nuh^{(\Si)} \in  \cco(\R \setminus \{\frac1\r\})  $ and the mapping $x \mapsto (x-\frac1\r)\nuh^{(\Si)}(x) \in \cco(\R) $  with $\lim_{x \to \frac1\r} (x-\frac1\r)\nuh^{(\Si)}(x)=0$.  
       \item \label{it:asy_hnu_r} Moreover, if $\Si<\infty$, then for any $n=0,1,\ldots, \Si$
       \begin{equation} \label{eq:asym_hnu_r}
       \nuh^{(n)}(x) \stackrel{\frac1\r}{\sim} C x_\r^{\frac{\overline\Pi(0^+)}{\r}-n-1}l\left(x_\r\right),
       \end{equation}
       where  $C>0$, $x_\r=x-\frac1\r$ and $l$ is a slowly varying function at $0$.

\item\label{it:chi} The statements  concerning the support and smoothness properties on  $\nuh$ hold in a similar way for $\nue$ as defined in \eqref{eq:relation_nue}.
\end{enumerate}
\end{proposition}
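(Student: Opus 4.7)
The backbone of the proof is to identify explicitly the Wiener--Hopf type triplet $(\deltasd,\kappasd)$ appearing in \eqref{eq:SDLTI} in terms of the characteristic triplet $(\sigma,m,\Pi)$ of $\psi$, and then feed this into the Sato--Yamazato framework for positive self-decomposable densities. First I would exploit the exponential functional representation $I_\psi=\int_0^\infty e^{-\xi_t}dt$ together with the Carmona--Petit--Yor identity obtained by applying the strong Markov property of $\xi$ at the first passage time across level $a>0$ (this is precisely the affine identity used in the proof of Proposition~\ref{prop:recall_exps}\eqref{it:sdI}). Taking Laplace transforms, differentiating in $u$, and letting $a\downarrow 0$ yields an integro-differential equation linking $\phisd'$ to $\phi$ and $\overline{\Pi}$, from which one reads off
\[
\kappasd(y)=\int_0^\infty e^{-y/r}\bigl(m+\overline{\Pi}(\ln(r/y))\mathbb{I}_{\{r>y\}}\bigr)\,\text{(admissible kernel)}\,dr,
\]
or some equivalent closed form; what matters is only the two limits $\kappasd(\infty)=0$ and $\kappasd(0^+)$, and the drift $\deltasd=\lim_{u\to\infty}\phisd(u)/u$.

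Granting this representation, item \eqref{it:pnu1} will follow by two limit computations: letting $u\to\infty$ in \eqref{eq:SDLTI} and comparing with the renewal equation satisfied by $\phisd$ gives $\deltasd=1/\phi(\infty)=1/\r$ (hence $\deltasd>0\iff\r<\infty\iff\psi\in\Nii^c(m)$ by \eqref{def:tau} combined with Proposition~\ref{propAsymp1}\eqref{it:finitenessPhi}); and letting $y\downarrow 0$ in the explicit formula for $\kappasd$ gives $\kappasd(0^+)=\overline{\Pi}(0^+)/\r$, which is finite precisely when $\sigma=0$ and $\overline{\Pi}(0^+)<\infty$, i.e.\ when $\psi\in\Nf(m)$. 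For item \eqref{it:pnu2} I would invoke Yamazato's unimodality theorem for $\mathfrak{L}_+$: the support of a positive self-decomposable law is the interval $[\deltasd,\infty)$, which by the previous step is $[1/\r,\infty)$ (interpreted as $[0,\infty)$ when $\r=\infty$).

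For items \eqref{it:pnu3} and \eqref{it:asy_hnu_r}, I would apply the regularity theorem of Sato--Yamazato \cite{Sato-Yam-78}, which asserts that if $\nuh$ is the density of a positive self-decomposable law with triplet $(\deltasd,\kappasd)$, then the smoothness of $\nuh$ at the left end-point $\deltasd$ of its support is governed by the quantity $\kappasd(0^+)$: when $\kappasd(0^+)=\infty$ one has $\nuh\in\cco^\infty(\R)$, while when $\kappasd(0^+)=c<\infty$ the density is $\cco^{\lceil c\rceil-2}$ across $\deltasd$ and the derivative of order $\lceil c\rceil-1$ has a one-sided singularity determined by the local behaviour of $\kappasd$ near $0$. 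Matching $c=\overline{\Pi}(0^+)/\r$ with the definition $\Si=\lceil\overline{\Pi}(0^+)/\r\rceil-1$ yields the smoothness statement in item \eqref{it:pnu3}; for \eqref{it:asy_hnu_r} the same analysis, via a Tauberian inversion of the Mellin transform $\int_{1/\r}^\infty e^{-u(x-1/\r)}\nuh^{(n)}(x)dx$ combined with Proposition~\ref{propAsymp1}\eqref{it:asympphi} applied to $\phisd$, produces the power-slowly varying asymptotic $(x-1/\r)^{\overline{\Pi}(0^+)/\r-n-1}l(x-1/\r)$. Finally, item \eqref{it:chi} is immediate from the bijection $\nue(x)=e^{-x}\nuh(e^{-x})$ between the densities of $X_\psi=-\ln I_\psi$ and $I_\psi$: support and smoothness transfer via the chain rule since $y\mapsto e^{-y}$ is a $\cco^\infty$-diffeomorphism of $\R$ onto $(0,\infty)$.

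\textbf{Main obstacle.} The delicate point is writing down the explicit Lévy density $\kappasd$ of $I_\psi$ in a form that simultaneously yields both $\deltasd=1/\r$ \emph{and} $\kappasd(0^+)=\overline{\Pi}(0^+)/\r$ as clean boundary limits; in particular, the case $\sigma^2>0$ with $\overline{\Pi}(0^+)=\infty$ (one branch of $\Ni(m)$) and the purely compound Poisson case (in $\Nf(m)$) have to be handled within a single formula, which requires carefully separating the drift-dominated and jump-dominated asymptotics of $\phi$ at infinity. Once this unified formula is in hand, the remainder of the proof is a direct bookkeeping application of the Sato--Yamazato regularity and asymptotic results, together with the change of variables for $\nue$.
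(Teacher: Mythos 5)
Your route through items (2)--(5) via the Sato--Yamazato regularity and asymptotics, together with the diffeomorphism transfer to $\nue$, is essentially the same as the paper's. The genuine gap is precisely at the ``main obstacle'' you flag: identifying the pair $(\deltasd,\kappasd)$. You propose to write $\kappasd$ in closed form by differentiating the Carmona--Petit--Yor affine identity in $a$ and sending $a\downarrow 0$, and you invoke an unspecified ``renewal equation satisfied by $\phisd$'' to get $\deltasd=1/\r$. Neither step can be completed as described: the displayed candidate formula for $\kappasd$ is not correct, and there is no direct algebraic relation between the Laplace exponent $\phisd$ of $I_\psi$ and the Bernstein factor $\phi$ of $\psi$ that lets you read off the drift by inspection.

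What makes the identification tractable is a path-wise tool you never bring in: the It\^o excursion theory of the reflected process $\Lr_t=\sup_{s\le t}\xi_s-\xi_t$ combined with Rivero's identification \cite[Lemma 2]{Rivero-Tail_conv}. One first realizes $I_\psi=\int_0^\infty e^{-t}\,d\eta^*_t$ for a subordinator $\eta^*$ whose L\'evy tail is $\kappasd$ (via \cite[Theorem 17.5]{Sato-99}), and then identifies the drift $\deltasd$ with the drift $\bar\delta$ of $\phid$, the Laplace exponent of the inverse local time of $\Lr$ at $0$. Since $\phid$ is the inverse function of $\psi$, Proposition~\ref{propAsymp1}\eqref{it:asyphid} gives directly
\begin{equation*}
\deltasd=\bar\delta=\lim_{u\to\infty}\frac{\phid(u)}{u}=\lim_{v\to\infty}\frac{v}{\psi(v)}=\frac{1}{\phi(\infty)}=\frac{1}{\r}.
\end{equation*}
The second output of Rivero's lemma is the excursion representation $\kappasd(y)=\Exm\bigl(\int_0^{\zeta}e^{\Exc_s}ds>y\bigr)$, so that $\kappasd(0^+)=\Exm(\zeta>0)$, which is finite if and only if $\Lr$ is compound Poisson with positive drift, i.e.\ if and only if $\psi\in\Nf(m)$; and in that case $\Exm(\zeta>0)$ is the total mass of the L\'evy measure of $\phid$, computed as $\lim_{u\to\infty}\bigl(\phid(u)-\bar\delta u\bigr)=\frac{1}{\r}\lim_{u\to\infty}u\bigl(\phi(\infty)-\phi(u)\bigr)=\frac{\PP(0^+)}{\r}$. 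This excursion-theoretic step is not a notational convenience: it is what turns the vague integro-differential characterization of $\phisd$ into the explicit constants $\deltasd=1/\r$ and $\kappasd(0^+)=\PP(0^+)/\r$, and I do not see how to get there by Laplace-transform bookkeeping alone. Once these two quantities are in hand, your invocations of \cite[Theorem 28.4]{Sato-99} for item (3), of \cite[Theorem 1.6]{Sato-Yam-78} (rather than a from-scratch Tauberian argument) for item (4), and of the support characterization from \cite{Haas-Rivero-13} for item (2) all go through as you indicate.
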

We proceed by providing very useful results concerning the asymptotic behaviour of $\nuh$ at infinity. Recall, from \eqref{eq:dphi}, that $d_\phi=\sup\{ u\leq 0;\:
\:\phi(u)=-\infty\text{ or } \phi(u)=0\}\leq 0$.
\begin{proposition}\label{lem:LargeAsymp}
Let $\psi \in \Nm$. 
\begin{enumerate}
\item\label{it:bigAsymp}  For any $a<d_\phi, A>\frac{1}{\r}$,  there exists a constant $C_{a,A}>0$ such that
\begin{equation}\label{eq:LargeAsymp*}
\nuh(x) \geq C_{a,A} \: x^{a-1},\quad\text{$x\in\lbrb{A,\infty}$}.
\end{equation}
\item Assume that there exits $\Root>0$ such that $\int^{\infty}_{1}y e^{\Root y} \Pi(dy)<\infty$ and $\psi(-\Root)=\phi\lbrb{-\Root}=0$, then there exists a constant $C_{\Root}>0$ such that
\begin{equation}\label{eq:asymp_cramer}
\nuh(x) \simi C_{\Root} \: x^{-\Root-1}.
\end{equation}
\end{enumerate}
\end{proposition}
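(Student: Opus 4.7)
My approach is to exploit the analytic structure of the Mellin transform $\mathcal{M}_{I_\psi}(z) = \mathbb{E}[I_\psi^{z-1}]$, paralleling the treatment of $\mathcal{M}_{V_\psi}$ in Theorem \ref{thm:existence_invariant_1}. First I would establish, by the It\^o-type argument applied to $I_t^{-z}=\bigl(\int_t^\infty e^{-\xi_s}\,ds\bigr)^{-z}$ already used in Section \ref{sec:p_thm1_12} (the complex extension of \eqref{eq:reci} obtained by analytic continuation from the integer moments), the functional equation
\[
\mathcal{M}_{I_\psi}(z+1) \;=\; \frac{1}{\phi(-z)}\,\mathcal{M}_{I_\psi}(z), \qquad \mathcal{M}_{I_\psi}(1) = 1,
\]
initially valid in a vertical strip containing $\{\Re z = 0\}$. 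Iterating this identity then extends $\mathcal{M}_{I_\psi}$ meromorphically to $\{\Re z < 1 - d_\phi\}$, since by definition of $d_\phi$ in \eqref{eq:dphi} the Bernstein function $\phi$ is analytic and non-vanishing on the open half-plane $\{\Re u > d_\phi\}$.

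For part (ii), the hypothesis $\int_1^\infty y e^{\Root y}\Pi(dy) < \infty$, combined with $\int_0^1 y\Pi(dy)<\infty$ (the standing integrability of $\Ne$), ensures that $\phi$ extends analytically across $-\Root = d_\phi$ with $\phi'(-\Root) = \sigma^2 + \int_0^\infty y e^{\Root y}\PP(y)\,dy \in (0,\infty)$, the strict positivity coming from the representation \eqref{eq:phi'} of the completely monotone derivative $\phi'$, cf.\ Proposition \ref{propAsymp1}\eqref{it:bernstein_cm}. Consequently $z = \Root + 1$ is a simple pole of $\mathcal{M}_{I_\psi}$ with residue $C_\Root = \mathcal{M}_{I_\psi}(\Root)/\phi'(-\Root) > 0$. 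After securing sufficient decay of $|\mathcal{M}_{I_\psi}(a+ib)|$ as $|b|\to\infty$, uniformly in $a$ on compact subsets of $(0,\Root+1+\delta)$---by iterating the functional equation while invoking the Bernstein-function vertical-line estimates of Lemma \ref{lem:specialEstimates} together with the real-axis growth formula \eqref{lemmaAsymp1-2}---I would shift the Mellin inversion contour \eqref{eq:MellinInversionFormula} from $\{\Re z = 1\}$ across $\{\Re z = \Root+1\}$ and collect the simple residue, yielding $\nuh(x) = C_\Root x^{-\Root-1} + o(x^{-\Root-1})$, as claimed.

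For part (i), Mellin inversion is not directly available in the absence of the integrability hypothesis, so a slightly different route is used. The meromorphic extension from the first step still implies $\mathbb{E}[I_\psi^{-a}] = +\infty$ for every $-a \geq -d_\phi$, via the singularity of $\mathcal{M}_{I_\psi}$ at $z = 1 - d_\phi$. Combining this with the Yamazato unimodality of $\nuh$ (a consequence of the self-decomposability of $I_\psi$ from Proposition \ref{prop:recall_exps}\eqref{it:sdI}), which makes $\nuh$ ultimately non-increasing and hence gives $\nuh(x) \geq x^{-1}\mathbb{P}(I_\psi \in (x,2x])$ for $x$ large, a Karamata-type extraction of polynomial tail lower bounds from the moment blow-up then yields the pointwise bound $\nuh(x) \geq C_{a,A}\,x^{a-1}$ on $(A,\infty)$ for each $a < d_\phi$. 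The principal technical obstacle is the vertical-line decay estimate for $\mathcal{M}_{I_\psi}$ needed to justify the contour shift in part (ii): unlike the analog for $\mathcal{M}_{V_\psi}$ in Theorem \ref{thm:existence_invariant_1}\eqref{item:subexpV}, here there is no smoothness index $\Si$ to leverage, and one must instead iterate the functional equation a number $k = \lceil \Root \rceil + 1$ of times while carefully controlling the accumulation of factors $\phi(-z-j)^{-1}$, $0\leq j < k$, along imaginary lines.
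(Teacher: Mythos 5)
Your proposal is genuinely different from the paper's proof, which for item~(i) is entirely probabilistic (excursion theory plus scale-function asymptotics) and for item~(ii) simply invokes a renewal-theoretic tail estimate from the literature. Unfortunately there is a real gap in the argument for~(i), and a substantive obstacle in~(ii).

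For~(i), the inferential chain ``moment blow-up at $\Re z = 1-d_\phi$ $\Rightarrow$ pointwise density lower bound $\nuh(x)\geq C_{a,A}x^{a-1}$'' does not go through, even with the ultimately non-increasing property coming from Yamazato unimodality. The implication is in fact backwards: a pointwise bound $\nuh(x)\geq Cx^{a-1}$ on $(A,\infty)$ implies $\E[I_\psi^{-a}]=\infty$, but divergence of the moment does not yield the pointwise bound. To see that unimodality alone cannot save the argument, take $g$ non-increasing, $g\equiv x_n^{-4}$ on $[x_n,x_{n+1})$ with $x_n=2^{2^n}$; then $g$ is integrable, $\int yg(y)\,dy=\infty$, yet $g(x_n)=x_n^{-4}$, which is $o(x_n^{-3})$, so no bound of the claimed shape holds. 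Unimodality therefore does not transform a moment divergence into a density lower bound. The paper's actual proof uses the Sato--Yamazato inequality \eqref{eq:hatnuLower}, reduces to lower-bounding $\kappasd(x)=\Exm\lbrb{\int_0^\zeta e^{\Exc_s}ds>x}$, and carries this out by a pathwise argument on the excursion measure with truncated L\'evy processes, first-hitting times and the scale-function asymptotics $\Pbb{-\underline{\xi}_\infty^\epsilon>x}\simi C^\epsilon e^{\Root^\epsilon x}$, with $\Root^\epsilon\uparrow d_\phi$. That structure is essential and has no Mellin-analytic surrogate here.

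For~(ii), your functional equation $\M_{I_\psi}(z+1)=\M_{I_\psi}(z)/\phi(-z)$ and the pole/residue computation at $z=\Root+1$ are correct, and the contour-shift strategy is sound in principle. But, as you note, the decay on vertical lines is the crux, and it is not merely a technicality: the hypothesis of~(ii) imposes only a Cram\'er condition on $\psi\in\Nm$, with no assumption of $\psi\in\Nee$ nor any lower bound on $\Si=\Sip-1$. When $\Si\leq 1$, the only available bound (Theorem~\ref{thm:existence_invariant_1}\eqref{item:subexpV}) on $|\M_{I_\psi}(a+ib)|=m|W_\phi(1-a+ib)|$ gives $o(|b|^{-u})$ only for $u\leq 0$, i.e.~no decay at all, so the shift cannot be justified at this level of generality. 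The paper avoids the issue entirely by citing \cite[Lemma~4]{Rivero-05} for $\Pbb{I_\psi>x}\simi Cx^{-\Root}$ --- a result resting on implicit renewal theory rather than Mellin analysis --- and then applying the monotone density theorem, again via unimodality of~$\nuh$.
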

\begin{remark}
	 If $ C_{\Root}$ in \eqref{eq:asymp_cramer} can be made explicit then it is an evaluation of the so-called Kesten's constant, see \cite{Kesten-73}, for an affine random equation solved by $I_{\psi}$. We shall achieve this in the forthcoming work \cite{Patie-Savov-Bern}. However, for our purpose, the weaker estimate \eqref{eq:LargeAsymp*} suffices.
\end{remark}
We  prove the Proposition \ref{thm:prop_self} and Proposition \ref{lem:LargeAsymp} in the subsections \ref{sec:pro_prop_hnu} and \ref{sec:pro_large_asymp} respectively. In what follows, we review several aspects of \LL processes which play a central role in their proofs and  are also useful throughout the rest of the paper. We refer to the monograph \cite{Bertoin-96} for a nice account on L\'evy processes.
 \subsubsection{Spectrally negative \LL processes, fluctuation and excursions  theory}\label{sec:LPandNegDef}
   A spectrally negative \LL process is a real-valued stochastic process, $\xi=\lb\xi_t\rb_{t\geq 0}$, defined on the probability space $\lb\Omega,\mathcal{F},\P\rb$, which can jump \textit{downwards} only and possesses stationary and independent increments, i.e.~$\xi_t-\xi_s\stackrel{(d)}=\xi_{t-s}$, for $0\leq s<t$, and $\xi_t-\xi_s$ is independent of $\lb \xi_u\rb_{u\leq s}$. Every  general \LL process and  in particular every spectrally negative one, has the  L\'evy-It\^o  decomposition $\xi_t=mt+\sigma B_t+Z_t$, where $m\in\R$ in accordance with \eqref{eq:NegDef}, $B=\lb B_t\rb_{t\geq 0}$ is a Brownian motion independent of the pure jump process $Z=\lb Z_t\rb_{t\geq 0}$. There is a natural bijection between the subclass of negative definite functions $\Ne$, see \eqref{eq:classPaper}, and a large subclass of spectrally negative \LL processes via \eqref{eq:NegDef}, where $\sigma^2$ is the variance of $B$ and $\Pi$ describes the intensity and the size of the jumps of $Z$. For the class $\Ne$, see \eqref{eq:classPaper}, which is the focus of our paper, we have the bijection between $\psi\in\Ne$ of the form
 \begin{equation}\label{Levy-K}
 \ln \E\left[e^{z\xi_{1}}\right]=\psi(z)= m z  +\frac{\sigma^{2}}{2}z^{2}+\int^{\infty}_{0}\left(e^{-zy} - 1 + zy \right)\Pi(dy),\quad \text{$z\in i\R$},
 \end{equation}
and $\xi$ spectrally negative \LLP with  \mladen{$m=\psi'(0^+)=\E\lbb \xi_1\rbb\in[0,\infty)$.}
 In the setting of \LL processes $m>0$ gives $\lim_{t\to\infty}\xi_t=\infty$ $\P$-almost surely (a.s.) and $m=0$ leads to
$\limsup_{t\to\infty}\xi_t=-\liminf_{t\to\infty}\xi_t=\infty \text{ a.s.}$.
It is clear from \eqref{Levy-K} that $\psi\in \mathcal{A}_{[0,\infty)}$. Moreover, for $a>0$, $\psi\in \mathcal{A}_{(-a,\infty)}$ if and only if $\forall u \in(-a,0),$ $\labs\E\lbb e^{u\xi_1}\rbb\rabs< \infty$  which is equivalent to
\begin{equation} \label{eq:exp_mom_L}
\labs\int_{y>1} e^{-uy}\Pi(dy)\rabs <\infty,\end{equation}
 see \cite[Chap.~I]{Bertoin-96}. The restriction of $\psi$ on the real interval $(-a,\infty)$ is clearly a convex function and $\psi$ is zero free on $(0,\infty)$.  The analytical form of the Wiener-Hopf factorization for $\psi\in \Ne$ reads off as follows
\begin{equation} \label{eq:wh}
\psi (z) = z\phi(z), \: z \in \Cb_{\lbbrb{0,\infty}},
\end{equation}
that is \eqref{eq:whpsi}.
It has the following probabilistic interpretation through the identities
 \begin{equation}\label{eq;WHFactors}
 \ln \E\left[e^{z \eta_1^{+} }\right]=z\,\,\textrm{ and } \,\,\,\ln \E\left[e^{-z \eta_1^{-} }\right]=-\phi(z),
 \end{equation}
 where $\eta^{-}=\lb \eta^{-}_t \rb_{t\geq 0}$ is a possibly killed subordinator, that is a non-decreasing \LL process possibly killed at an independent exponential time, known as the  descending ladder height process, and, the ascending ladder height process $\eta^+=(\eta^+_t=t)_{t\geq0}$ is a pure drift process. We note that for $\psi\in\Ne$, $\eta^+$ is never killed. We record that
 \begin{eqnarray}\label{BivLadder}
 \phi(z)&=& m+ \sigma^2 z+\int_{0}^{\infty}\Big(1-e^{-z y}\Big)\PP(y)dy,
  \end{eqnarray}
that is the classical Bernstein function already defined in \eqref{eq:def-Bernstein_P}. 
Hence $\sigma^2\geq 0$, $\int_{0}^{\infty}(y\wedge 1)\PP(y)dy<\infty$ and $m\geq 0$ is the killing term of $\eta^{-}$ since $ \limsup_{t\rightarrow \infty} \xi_t = \infty$ which in turn is due to $\psi\in\Ne$.  Finally, $\phi(\infty)<\infty$ if and only if $\PP(\R_+)<\infty$ if and only if $\eta^-$ is  a compound Poisson process, see \cite[Chap I]{Bertoin-96}. Next, define
$\Lr=(\Lr_t)_{t\geq 0}=\left(\sup_{s\leq t}\xi_s-\xi_t\right)_{t\geq 0},$
 which is known as the reflected at the supremum \LL process. It possesses a local time at $0$, i.e.~a non-decreasing (in $t\geq0$) family of continuous functionals $\overline{\ell}=(\overline{\ell}_t)_{t\geq 0}$ such that $\overline{\ell}$ increases  only on the closure of the set $\{s\geq 0;\, \Lr_s=0\}$, that is on the closure of the set of times when a new running supremum is attained for $\xi$. The inverse local time at $0$ of $\Lr$ is  defined, for any $t\geq0$, as $\overline{\eta}_t=\inf\{s\geq 0;\,\overline{\ell}_s>t\}$. We know that  $(\xi\circ \overline{\eta},\overline{\eta})=(\eta^+,\overline{\eta}) a.s.$ and thus the process $(\xi\circ \overline{\eta},\overline{\eta})$ determines a bivariate subordinator. Furthermore, with the reflected process $\Lr$ one associates excursions away from the supremum (loosely speaking the piece of path of $\xi$ between successive suprema) in the following manner. Write, for any $t>0$, $\Exc_t=\lb \Lr_{\overline{\eta}_{t-}+s}\rb_{0\leq s<\zeta}$, where $\zeta=\zeta(t)=\inf\{s>0;\,  \Lr_{\overline{\eta}_{t-}+s}=0\}$. $\Exc=(\Exc_t)_{t\geq0}$ is the excursion process which forms a Poisson point process in the space of right-continuous functions with left limits (for short rcll). Its Poisson measure $\Exm$ lives on the sets of rcll functions and is referred to as the excursion measure. $\zeta$ is called a lifetime of an excursion. We use the formula $\Exm(F(\Exc))=\int_{\mathfrak{W}} F(\omega)\Exm(d\omega)$ for computing various functionals on the space of excursions $\mathfrak{W}$ (for more information on excursion theory of L\'evy processes, we refer to \cite[Chap.~IV and Chap.~VI]{Bertoin-96}). Finally, when $\psi\in\Ne$ then $\xi$ is a spectrally negative \LLP drifting with  $\limsup_{t\to\infty}\xi_t=\infty$. Thus, recalling that $T_t=\inf\{s>0;\, \xi_{s}>t\}$,  we have, see \cite[Theorem 1, Chapter VII]{Bertoin-96}, for any $t,u>0,$
\[\E \lbb e^{-u \overline{\eta}_{t}}\rbb=\E \lbb e^{-u T_{t}}\rbb =e^{-t \phid(u)},\]
where $\psi(\phid)(u)=u$, with $\phid \in \Be$ and  from \cite[Theorem 8, Chapter IV]{Bertoin-96},
\begin{equation}\label{eq:psi-1}
\phid(u)=\bar{\delta} u+u\IInf e^{-uy}\Exm\lb \zeta>y\rb dy,
\end{equation}
with $\bar{\delta}\geq0$  and $\Exm$ the  excursion measure of $\Lr$.

\subsubsection{Proof of Proposition \ref{thm:prop_self}} \label{sec:pro_prop_hnu}
The proof of Proposition \ref{thm:prop_self} requires the following two intermediate results which characterize respectively the quantities $\deltasd$ and $\kappasd$ in \eqref{eq:SDLTI}.
\begin{lemma}
  $\deltasd = \frac1\r\geq0$.
\end{lemma}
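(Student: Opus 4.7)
The plan is to recover $\deltasd$ from the Lévy--Khintchine data in \eqref{eq:SDLTI} as a Laplace-transform tail limit and then to identify that limit with the left endpoint of the support of $I_\psi$. From \eqref{eq:SDLTI}, one has
\[
\frac{\phisd(u)}{u}=\deltasd+\int_{0}^{\infty}\frac{1-e^{-uy}}{u}\,\frac{\kappasd(y)}{y}\,dy.
\]
The integrand is dominated by $\kappasd(y)\wedge(\kappasd(y)/(uy))$, and since $\int_{0}^{\infty}\kappasd(y)dy<\infty$ (by the admissibility condition in \eqref{eq:SDLTI}), monotone/dominated convergence yields $\deltasd=\lim_{u\to\infty}\phisd(u)/u$. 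Writing $a:=\inf\supp(I_\psi)\in[0,\infty)$, the bounds
\[
e^{-u(a+\epsilon)}\,\P\!\bigl(I_\psi<a+\epsilon\bigr)\;\leq\;\E\!\bigl[e^{-uI_\psi}\bigr]\;\leq\;e^{-ua},
\]
valid for any $\epsilon>0$, give after taking logarithms, dividing by $u$ and letting $u\to\infty$ that $\deltasd=a$. Hence the lemma reduces to the identification $\inf\supp(I_\psi)=1/\r$.

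To compute $\inf\supp(I_\psi)$, I would split according to whether $\r$ is finite or infinite. Assume first $\r<\infty$, equivalently $\sigma^2=0$ and $\PPP(0^+)<\infty$, so that $\int_{0}^{\infty}y\,\Pi(dy)=\PPP(0^+)<\infty$. Rewriting \eqref{eq:NegDef} as
\[
\psi(z)=(m+\PPP(0^+))z+\int_{0}^{\infty}\bigl(e^{-zy}-1\bigr)\Pi(dy)=\r\,z-\psi_{J}(z),
\]
one recognises $\xi$ in its Lévy--Itô decomposition as $\xi_t=\r t-J_t$, where $J$ is a driftless subordinator with Lévy measure $\Pi$. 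The pathwise inequality $\xi_t\leq\r t$ then gives $e^{-\xi_t}\geq e^{-\r t}$, whence $I_\psi\geq 1/\r$ almost surely and $a\geq 1/\r$.

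For the matching upper bound, exploit the absence of positive jumps: for $a>0$, $T_a=\inf\{t:\xi_t>a\}$ satisfies $\xi_{T_a}=a$, and by the strong Markov property
\[
I_\psi=\int_{0}^{T_a}e^{-\xi_s}ds+e^{-a}\,I'_\psi,
\]
with $I'_\psi\stackrel{(d)}{=}I_\psi$ independent of $\mathcal{F}_{T_a}$. Since $J$ is a subordinator with $J_0=0$ and $J_t/t\to 0$ in probability as $t\to 0^+$, one has $\P(T_a<a/\r+\delta)>0$ for every $a,\delta>0$; on this event the first integral is at most $(1-e^{-a})/\r+o_\delta(1)$. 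Intersecting with $\{I'_\psi<1/\r+\epsilon\}$ (which has positive probability by induction on $a$ large) gives $\P(I_\psi<1/\r+2\epsilon)>0$ for any $\epsilon>0$, so $a\leq 1/\r$. When $\r=\infty$, i.e.~$\sigma^2>0$ or $\PPP(0^+)=\infty$, $\xi$ has infinite variation paths so $\limsup_{t\downarrow 0}\xi_t/t=+\infty$ a.s. Thus for any $N>0$ one finds with positive probability $t_N\downarrow 0$ with $\xi_{t_N}\geq N$, and the same first-passage decomposition forces $a\leq e^{-N}(a+\E[I_\psi\wedge 1])$, i.e.~$a=0=1/\r$.

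The delicate step is the upper bound $a\leq 1/\r$ in the finite-variation regime: although $\xi_t\leq\r t$ is immediate, one must ensure that the subordinator $J$ can stay sufficiently small over arbitrarily long time horizons with positive probability, particularly when $\Pi(\R_+)=\infty$ (infinite activity). This I would handle via the convergence $J_t/t\to 0$ in probability as $t\downarrow 0$ (true since $\E[J_t]=t\,\PPP(0^+)$ and Markov), combined with a Borel--Cantelli type argument over a geometric sequence of scales to propagate the small-$J$ event across $[0,T]$ for arbitrary $T$; the independence in the renewal equation then transfers the approach to $1/\r$ from one scale to the global infimum.
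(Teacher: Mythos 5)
Your proposal is correct in its architecture but takes a genuinely different route from the paper, and it has a gap in the infinite-variation case.

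The paper's proof is very short because it outsources the key identification: it quotes Lemma 2 of Rivero to assert that $\deltasd$ is the drift of the inverse local time $\bar\eta$ at the supremum, whose Laplace exponent is $\phid=\psi^{-1}$; the drift is then $\lim_{u\to\infty}\phid(u)/u=\lim_{v\to\infty}v/\psi(v)=1/\phi(\infty)=1/\r$. You instead recover $\deltasd$ as $\lim_{u\to\infty}\phisd(u)/u$ from \eqref{eq:SDLTI} (correct, via dominated convergence and $\kappasd\in\mathrm{L}^1$), identify this limit with $\inf\supp(I_\psi)$ by a clean Laplace-tail squeeze, and then determine the support directly from the path structure of $\xi$. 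This is a more self-contained and elementary route; note, however, that the paper obtains $\supp I_\psi=[\deltasd,\infty)$ as a \emph{corollary} of this Lemma (quoting Haas--Rivero and Rivero), whereas you prove it independently — so you are not duplicating the paper's logical dependencies, which is a genuine structural difference.

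The gap is in the case $\r=\infty$. You argue: ``$\limsup_{t\downarrow 0}\xi_t/t=+\infty$ a.s., thus for any $N>0$ one finds with positive probability $t_N\downarrow 0$ with $\xi_{t_N}\geq N$.'' This inference is invalid. The $\limsup$ of the \emph{ratio} being infinite says nothing about the \emph{value} $\xi_t$ being large for small $t$ (compare $f(t)=\sqrt{t}$: $f(t)/t\to\infty$ yet $f(t)\to0$). What you actually need is $\P(T_N<\delta)>0$ for every $N,\delta>0$, i.e.~that $\sup_{s\le\delta}\xi_s$ has full support on $(0,\infty)$. When $\sigma^2>0$ this follows from the Brownian component; when $\sigma^2=0$ but $\PPP(0^+)=\int_0^1y\,\Pi(dy)=\infty$, one must argue via the L\'evy--It\^o decomposition: on the event of no jumps of size $>\epsilon$ on $[0,\delta]$ (positive probability) the effective drift is $m+\int_\epsilon^\infty y\,\Pi(dy)$, which blows up as $\epsilon\downarrow0$, while the residual compensated-small-jump martingale has $\mathrm{L}^2$-norm $\delta\int_0^\epsilon y^2\Pi(dy)\to0$. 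None of this is present in your sketch. Your fall-back inequality ``$a\le e^{-N}(a+\E[I_\psi\wedge1])$'' also does not follow from the first-passage decomposition as stated, and it cannot yield $a=0$ on its own: the essential infimum of $\int_0^{T_N}e^{-\xi_s}ds$ is nondecreasing in $N$ and converges to $a$ itself, so that inequality degenerates in the limit. A correct argument intersects $\{T_N<\delta\}$ with $\{\inf_{s\le\delta}\xi_s>-1\}$ and $\{I'_\psi<M\}$ (all of positive probability by right-continuity, independence, and the above) to bound $I_\psi<\delta e+e^{-N}M$.

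A smaller issue is the finite-$\r$ upper bound: the phrase ``by induction on $a$ large'' reads as circular. The intended argument works, but should be stated as: fix $M$ with $\P(I_\psi<M)>0$, choose $\eta$ small and $h$ large so that $e^\eta/\r+e^{-h}M<1/\r+\gamma$, pick $\delta$ with $\eta<\r\delta$, and observe that on $\{J_{h/\r+\delta}<\eta\}$ one has $\xi_{h/\r+\delta}>h$ hence $T_h<h/\r+\delta$ and $\int_0^{T_h}e^{-\xi_s}ds\le e^\eta/\r$. The needed fact $\P(J_T<\eta)>0$ for every fixed $T,\eta>0$ (which you correctly flag as the delicate step) is handled by thinning the jumps of $J$ at level $\epsilon$: the compound Poisson part of jumps $>\epsilon$ is zero on $[0,T]$ with positive probability, and the remainder has mean $T\int_0^\epsilon y\,\Pi(dy)\to0$, so Markov's inequality finishes. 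This is simpler than a Borel--Cantelli scheme over geometric scales and should be stated explicitly to close the proof.
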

\begin{proof}
From \cite[Lemma 2]{Rivero-Tail_conv}, $\deltasd$ in \eqref{eq:SDLTI} is  the drift of the subordinator $\overline{\eta}=(\overline{\eta}_t)_{t\geq 0}$, the inverse local time at $0$ of $\Lr$, i.e.~$\bar{\delta}$ in \eqref{eq:psi-1}.
 Hence, from Proposition \ref{propAsymp1}\eqref{it:asyphid}, we get that
\begin{equation} \label{eq:a}
\deltasd =\bar{\delta}=\lim_{u\to\infty}\frac{\phid(u)}{u}=\lim_{u\to\infty}\frac{u}{\psi(u)}=\frac{1}{\phi(\infty)}=\frac1\r\geq 0,\end{equation}
which completes the proof of the Lemma.
 \end{proof}
 We proceed the proof with a statement which, in particular, characterizes $\kappasd$.

\begin{lemma}\label{prop:SD1}
	\begin{enumerate}
	\item We have, for any $y>0$,
 \begin{equation} \label{eq:link_exc_k}
 \kappasd(y)=\Exm\lb\int_0^{\zeta} e^{\Exc_s}ds >y\rb,\end{equation}
 where we recall that $\zeta$ is the lifetime of a given excursion $(\Exc_s)_{s \geq 0}$.
 \item  \label{it:kappa} Moreover,  $\kappasd(0^+)=\infty$ (resp.~$\kappasd(0^+)= \frac{\overline\Pi(0^+)}{\r}$) if and only if $\psi\in \Nim$ (resp.~$\psi\in \Nf(m)$).
 	\end{enumerate}
\end{lemma}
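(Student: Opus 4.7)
The plan for part (1) is to obtain a representation of $I_\psi$ as a Poisson integral against the excursion process of the reflected \LL process $\Lr=\overline{\xi}-\xi$, and then to compute the Laplace exponent by the exponential formula. Since $\xi$ is spectrally negative and drifts to $+\infty$ (as $\psi\in\Nm$), $\overline{\xi}$ is continuous, the ascending ladder height is a pure drift ($\eta^+_t=t$), and the inverse local time $\overline{\eta}$ is a subordinator with drift $\bar{\delta}=1/\r$ identified in the lemma preceding this one. Using $\xi_t=\overline{\xi}_t-\Lr_t$, then slicing the integral $I_\psi=\int_0^\infty e^{-\xi_t}dt$ according to the levels $u=\overline{\xi}_t$ via the change of variables $t\mapsto \overline{\eta}_u$, and decomposing $d\overline{\eta}_u$ into its drift part and its jump part (the latter being coded by the Poisson point process $(u,\Exc^{(u)})$ on $[0,\infty)\times \mathfrak{W}$ of intensity $du\otimes \Exm$), I would obtain
\[
I_\psi \;=\; \bar{\delta}\int_0^\infty e^{-u}du \;+\;\sum_{u:\Delta\overline{\eta}_u>0} e^{-u}\int_0^{\zeta^{(u)}} e^{\Exc^{(u)}_r}\,dr.
\]

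Applying the exponential formula to the Poisson point process yields
\[
\phisd(u)\;=\;\bar{\delta}\,u\;+\;\int_0^\infty\!\!\int_{\mathfrak{W}}\lbrb{1-e^{-u\,e^{-s}Y(\omega)}}\,ds\,\Exm(d\omega),\qquad Y(\omega)=\int_0^{\zeta(\omega)}e^{\omega_r}dr.
\]
A Fubini swap followed by the change of variables $v=e^{-s}Y(\omega)$ (at fixed $\omega$, so $ds=-dv/v$ and the range becomes $v\in(0,Y(\omega))$) turns the inner integral into $\int_0^{Y(\omega)}(1-e^{-uv})\frac{dv}{v}$, and a second Fubini gives
\[
\phisd(u)=\bar{\delta}u+\int_0^\infty\lbrb{1-e^{-uv}}\frac{\Exm\lbrb{Y>v}}{v}\,dv.
\]
Comparing with \eqref{eq:SDLTI} and invoking the uniqueness of the L\'evy--Khintchine triplet (combined with $\deltasd=\bar{\delta}=1/\r$) identifies $\kappasd(y)=\Exm\bigl(\int_0^\zeta e^{\Exc_s}ds>y\bigr)$, which is \eqref{eq:link_exc_k}.

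For part (2), since all excursions of $\Lr$ away from its infimum $0$ satisfy $\Exc_s\geq 0$, one has $\int_0^\zeta e^{\Exc_s}ds\geq \zeta$, so the set $\{\int_0^\zeta e^{\Exc_s}ds>0\}$ coincides $\Exm$-almost everywhere with $\{\zeta>0\}$. Hence $\kappasd(0^+)=\Exm(\zeta>0)$, which is precisely the total mass of the L\'evy measure of the subordinator $\overline{\eta}$, whose Laplace exponent is $\phid=\psi^{-1}$. Consequently
\[
\kappasd(0^+)\;=\;\lim_{u\to\infty}\bigl(\phid(u)-\bar{\delta}u\bigr).
\]
In the case $\psi\in\Nf(m)$ one has $\sigma^2=0$ and $\PP(0^+)<\infty$, so that by Proposition~\ref{propAsymp1}\eqref{it:finitenessPhi} $\r<\infty$, and the identity $\phi(u)-\r=-\int_0^\infty e^{-uy}\PP(y)dy$ combined with $v=uy$ and dominated convergence gives $u(\phi(u)-\r)\to -\PP(0^+)=-\overline{\Pi}(0^+)$; inverting the asymptotic $\psi(u)=\r u-\overline{\Pi}(0^+)+o(1)$ yields $\phid(u)=u/\r+\overline{\Pi}(0^+)/\r+o(1)$, whence the limit equals $\overline{\Pi}(0^+)/\r$. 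In the case $\psi\in\Nim$ either $\sigma^2>0$, which forces $\psi(u)\simi\sigma^2u^2$ and thus $\phid(u)\simi\sqrt{u/\sigma^2}$ with $\bar{\delta}=0$, or $\sigma^2=0$ and $\PP(0^+)=\infty$, in which case the same integral $u\int_0^\infty e^{-uy}\PP(y)dy$ diverges by monotone convergence, forcing $\phid(u)-u/\r\to\infty$. Either way, $\kappasd(0^+)=\infty$.

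The main subtlety will be the rigorous justification of the excursion decomposition of $I_\psi$: one must argue that the contribution from $\{t:\xi_t=\overline{\xi}_t\}$ is exactly $\bar{\delta}\int_0^\infty e^{-u}du$ (which uses that the push-forward of Lebesgue measure on $\{t:\xi_t=\overline{\xi}_t\}$ under $t\mapsto\overline{\xi}_t$ is $\bar{\delta}$ times Lebesgue on $\R_+$, an identity tied to the drift of $\overline{\eta}$), and that the remainder is genuinely indexed by the excursion Poisson point process; the exponential formula for Poisson integrals and Fubini can then be applied, the integrability needed being inherited from the finiteness a.s. of $I_\psi$ under $\psi\in\Nm$.
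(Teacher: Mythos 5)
Your proposal is correct, and for part (1) it takes a genuinely different, more self-contained route than the paper. The paper obtains \eqref{eq:link_exc_k} in two quick steps, both by citation: it first invokes Sato's background-driving-L\'evy representation $I_\psi=\int_0^\infty e^{-t}\,d\eta^*_t$ (where $\eta^*$ has L\'evy tail $\kappasd$), and then cites Rivero's Lemma~2 to identify that tail with $\Exm\lb\int_0^\zeta e^{\Exc_s}ds>\cdot\rb$. You instead reconstruct Rivero's identification from scratch: decompose $I_\psi=\int_0^\infty e^{-\xi_t}dt$ by slicing along supremum levels $u=\overline{\xi}_t$ (using that $\overline{\xi}$ is continuous since $\xi$ is spectrally negative, so the local time $\overline{\ell}_t=\overline{\xi}_t$ can serve as the ladder time), separate the drift part (Lebesgue measure of the zero set pushed forward is $\bar{\delta}\,du$) from the jump part coded by the Poisson point process $(u,\Exc^{(u)})$ with intensity $du\otimes\Exm$, apply Campbell's exponential formula, and match the resulting L\'evy–Khintchine triplet. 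What you gain is transparency — the identity is derived rather than imported — at the cost of having to justify carefully the excursion decomposition, which you correctly flag. For part (2) the structure is the same as the paper's: $\kappasd(0^+)=\Exm(\zeta>0)=\lim_{u\to\infty}(\phid(u)-\bar{\delta}u)$, evaluated via $\phid\circ\psi=\mathrm{id}$. The paper delegates the dichotomy ($\Exm(\zeta>0)<\infty$ iff $\psi\in\Nf(m)$) to Doney's Proposition~15(iv) and only computes the value in the compound-Poisson case, whereas you directly verify the dichotomy by splitting into $\sigma^2>0$, $\sigma^2=0$ with $\PP(0^+)=\infty$, and $\sigma^2=0$ with $\PP(0^+)<\infty$, using the asymptotics of $u(\phi(u)-\r)$. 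Both are sound; yours avoids a citation at the cost of a longer case analysis, and your monotone-convergence argument for the $\sigma^2=0,\PP(0^+)=\infty$ case does cover both the $\r=\infty$ and $\r<\infty$ sub-cases, which is the one place a reader would want to check.
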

\begin{remark}
The subset of the class $\mathfrak{L}_+$ we consider can be characterized as the one associated to a subordinator $\eta^*$, as defined in \eqref{eq:VictorSD} below, whose tail of the L\'evy measure  is associated via \eqref{eq:link_exc_k} to the excursion measure $\Exm$ of $\Lr$. Clearly, our class does not include the positive self-decomposable laws associated to subordinators with an atomic \LL measure, since these can not be related to such an excursion measure.
\end{remark}
\begin{proof}
We start with the following well-known integral representation of $I_{\psi}\in\mathfrak{L}_+$
	\begin{equation}\label{eq:VictorSD}
	I_{\psi}=\int_{0}^{\infty}e^{-t}d\eta^*_t,
	\end{equation}
	where $\eta^*=(\eta^*_{t})_{t\geq0}$ is a subordinator  whose tail of the L\'evy measure is $\kappasd$, see e.g.~\cite[Theorem 17.5, Example 17.10]{Sato-99}. The first identity \eqref{eq:link_exc_k}  then follows from \cite[Lemma 2]{Rivero-Tail_conv} (note that the exponential functional is defined in \cite{Rivero-Tail_conv} as $\int_0^{\infty} e^{\xi_s}ds$ whereas we have $I_{\psi}=\int_0^{\infty} e^{-\xi_s}ds$).
	Since $\zeta>0$ and $\Exc_s>0$, for all $0\leq s <\zeta$, we get that
	\[  \Exm\lb\int_0^{\zeta} e^{\Exc_s}ds >0\rb=\Exm\lb \zeta >0\rb.\]
	This identity combined with \eqref{eq:link_exc_k} yield that $\kappasd(0^+)=\infty$ (resp.~$\kappasd(0^+)<\infty$)  if and only if $\Exm\lb \zeta >0\rb=\infty$ (resp.~$\Exm\lb \zeta >0\rb<\infty$) which is equivalent to $\psi \in \Nim$  (resp.~$\Nf(m)$), see e.g.~\cite[Proposition 15 (iv)]{Doney-07-book}.
	It thus remains to compute $\kappasd(0^+)$ when $\psi \in \Nf(m)$. Recall from \eqref{eq:psi-1} that $\Exm\lb \zeta >0\rb$ is the total mass of the \LL measure associated to $\phid$. When $\psi \in \Nf(m)$ from \eqref{eq:a},  $\deltasd=\bar{\delta}=\frac{1}{\r}=\frac{1}{\phi(\infty)}>0$, a standard computation leads to
\begin{eqnarray*}
	\Exm\lb \zeta >0\rb&=&\lim_{u\to\infty}\lb\phid(u)-\bar{\delta} u\rb=\lim_{u\to\infty}\lb u-\deltasd\psi(u)\rb=\lim_{u\to\infty} \frac{1}{\phi(\infty)} u\lb\phi(\infty)-\phi(u)\rb\\
	&=&\frac{1}{\phi(\infty)} \lim_{u\to\infty}u\lb \IInf \PP(y)dy-\IInf \lb 1-e^{-uy}\rb\PP(y)dy\rb\\&=&\frac{\overline\Pi(0^+)}{\r},
\end{eqnarray*}
where we have used the identity \eqref{BivLadder} to express $\phi$.
\end{proof}
The two previous lemmas provide the proof of Proposition \ref{thm:prop_self}\eqref{it:pnu1}. Next, from
\cite[Lemma 2.1]{Haas-Rivero-13},
$\supp\, I_{\psi}=[0,\infty)$ if and only if
 $\deltasd=0$, which is equivalent from \eqref{eq:a}, to $\psi\in\Nii(m)$. Otherwise, according to \cite[Remark 1, p.~9]{Rivero-Tail_conv} it holds that $\supp\,I_{\psi}=[\deltasd,\infty)$ since $\eta^*$ in \eqref{eq:VictorSD} is a subordinator with drift $\deltasd$ (note that the constant $c$ in \cite{Rivero-Tail_conv} is $1$ which is clear here from the identity $\psi(u)=u\phi(u)$). Thus, we have proved the statement for the support of $\nuh$, i.e.~item \eqref{it:pnu2}.  Next, if $\psi\in \Nim$ (resp.~$\psi\in \Nf(m)$),  then, since  from Lemma \ref{prop:SD1}\eqref{it:kappa}, we have $\kappasd(0^+)=\infty$ (resp.~$\kappasd(0^+)=\frac{\overline\Pi(0^+)}{\r}$) and $\deltasd=0$ (resp.~$\deltasd>0$), we derive from \cite[Theorem 28.4(ii)]{Sato-99}  (resp.~\cite[Theorem 28.4(i)]{Sato-99}) that $\nuh\in \cco^{\infty}(\R)$ (resp.~$\nuh\in \cco^{ \Si-1 }(\R)$, when $\frac{\overline\Pi(0^+)}{\r}>1$, i.e.~$\Si\geq 1$). The claim \eqref{it:asy_hnu_r} is given in \cite[Theorem 1.6]{Sato-Yam-78}.
The proof of Proposition \ref{thm:prop_self}\eqref{it:chi} follows immediately from the definition of $\chi$ in \eqref{eq:relation_nue}, which completes the proof of Proposition \ref{thm:prop_self}.
\subsubsection{Proofs of Theorem \ref{thm:smoothness_nu1}\eqref{it:supV}, \eqref{it:inv_smooth1} and \eqref{it:asy_nu_r}} \label{sec:proof_invariant_smooth}
First, we recall  from Proposition \ref{prop:recall_exps}\eqref{it:ent-self} that, for any $\psi \in \Ne$, $\nu(x)=x^{-2}\nuh_1(x^{-1})$ where  $\nuh_1$ stands for the density of $I_{\mathcal{T}_1 \psi}$ with	$\mathcal{T}_1 \psi(u)=u\phi(u+1)
\in \Ne(\phi(1))$ and $\PP_1(0^+)=\PP(0^+)$. Thus, the positivity follows  from the fact that $\nuh_1>0$ on $(\frac{1}{\r},\infty)$, see \cite{Patie-Savov-11,Sato-Yam-78}, and the expression of  the  support of $V_{\psi}$ is deduced from Proposition \ref{thm:prop_self}\eqref{it:pnu2}. Next,  the smoothness property of $\nu$  stated in Theorem \ref{thm:smoothness_nu1}\eqref{it:inv_smooth1} and the ensuing Lemma \ref{lem:smooth_nu} follow readily from Proposition \ref{thm:prop_self}\eqref{it:pnu3}. Finally, the asymptotic behaviour at $\r$ in the item \eqref{it:asy_nu_r} follows also readily from  Proposition \ref{thm:prop_self}\eqref{it:asy_hnu_r} and $\nu(x)=x^{-2}\nuh_1(x^{-1})$.

Item \eqref{it:inv_smooth1} gives immediately the following claim.
\begin{lemma} \label{lem:smooth_nu}
If $\psi \in \Ni$ then  $\nu \in \cco^{\infty}(\R_+)$.
\end{lemma}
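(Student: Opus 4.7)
\textbf{Proof plan for Lemma \ref{lem:smooth_nu}.} The strategy is to transfer the $\cco^\infty$-smoothness of the density $\widehat{\nu}_1$ of $I_{\mathcal{T}_1\psi}$ (which is already covered by Proposition \ref{thm:prop_self}\eqref{it:pnu3}) back to $\nu$ through the change-of-variables identity furnished by Proposition \ref{prop:recall_exps}\eqref{it:ent-self}. This amounts to checking that the shift $\mathcal{T}_1$ preserves membership in the class $\Ni$ and then applying the chain rule.

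\medskip

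The first step is to invoke Proposition \ref{prop:recall_exps}\eqref{it:ent-self} to record the pointwise identity
\[ \nu(x)=x^{-2}\,\widehat{\nu}_1(x^{-1}),\qquad x>0, \]
where $\widehat{\nu}_1$ is the density of $I_{\mathcal{T}_1\psi}$ and $\mathcal{T}_1\psi(u)=u\phi(u+1)\in\Ne(\phi(1))$. Since $\phi\in\Bp$ is non-trivial, $\phi(1)>0$, so $\mathcal{T}_1\psi\in\mathcal{N}(m)$. Next, I need to show that $\mathcal{T}_1\psi\in\Ni$ whenever $\psi\in\Ni$. Expanding $\mathcal{T}_1\psi(u)=u\phi(u+1)$ using the representation of $\phi$ in Proposition \ref{propAsymp1}\eqref{it:bernstein_def}, the coefficient of $u^2$ is precisely $\sigma^2$, so the Gaussian part is preserved; on the other hand, Proposition \ref{prop:recall_exps}\eqref{it:ent-self} records that $\overline{\Pi}_1(0^+)=\overline{\Pi}(0^+)$, which can alternatively be verified directly by integrating \eqref{eq:Pibeta} by parts to obtain $\overline{\Pi}_1(y)=e^{-y}\overline{\Pi}(y)$. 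Consequently, the alternative $\sigma^2>0$ or $\overline{\Pi}(0^+)=\infty$ defining $\Ni$ carries over from $\psi$ to $\mathcal{T}_1\psi$, whence $\mathcal{T}_1\psi\in\Nim$.

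\medskip

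With $\mathcal{T}_1\psi\in\Nim$ at hand, Proposition \ref{thm:prop_self}\eqref{it:pnu3} yields $\widehat{\nu}_1\in\cco^\infty(\R)$. The mappings $x\mapsto x^{-1}$ and $x\mapsto x^{-2}$ both belong to $\cco^\infty(\R_+)$, so the chain and product rules applied to $\nu(x)=x^{-2}\widehat{\nu}_1(x^{-1})$ deliver $\nu\in\cco^\infty(\R_+)$. Note in particular that if $\mathfrak{r}<\infty$ (which is compatible with $\psi\in\Ni$, e.g.~$\sigma^2=0$, $\overline{\Pi}(0^+)=\infty$, $\overline{\overline{\Pi}}(0^+)<\infty$), then $\widehat{\nu}_1$ has support $[1/\mathfrak{r},\infty)$ and all its derivatives vanish at $1/\mathfrak{r}$, which in turn ensures the smooth matching of $\nu$ with its zero extension past $\mathfrak{r}$ inside $\R_+$.

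\medskip

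The only real content is the verification that $\mathcal{T}_1$ preserves the diffusion coefficient and the local behaviour at $0$ of $\overline{\Pi}$; once this is observed, the lemma is essentially a one-line corollary of the smoothness result for the self-decomposable density $\widehat{\nu}_1$ already established in Proposition \ref{thm:prop_self}\eqref{it:pnu3}.
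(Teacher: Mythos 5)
Your proof is correct and follows essentially the same route the paper takes: pass to $\widehat{\nu}_1$ via $\nu(x)=x^{-2}\widehat{\nu}_1(x^{-1})$, invoke Proposition~\ref{thm:prop_self}\eqref{it:pnu3}, and finish with the chain rule. The paper compresses this into a single sentence and does not spell out the verification that $\psi\in\Ni$ forces $\mathcal{T}_1\psi\in\Nim$; your check (that the Gaussian coefficient is invariant under $\mathcal{T}_1$, and that $\overline{\Pi}_1(y)=e^{-y}\overline{\Pi}(y)$ so $\overline{\Pi}_1(0^+)=\overline{\Pi}(0^+)$, with $\mathcal{T}_1\psi\in\Ne(\phi(1))$ and $\phi(1)>0$) is precisely the omitted content, and your closing remark about smooth vanishing across $\mathfrak{r}$ when $\mathfrak{r}<\infty$ is a welcome clarification.
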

To prove Proposition \ref{lem:LargeAsymp} and for sake of completeness we restate, adapted to our setting, a shortened version of \cite[Theorem 2.1]{Sato-Yam-78}.
\begin{theorem}\label{thm-SY_thm2.1}
	Let $\psi\in\Ne(m)$. Then
	\begin{equation}\label{eq:SY2.1}
		x\nuh(x)=\int_{0}^{x}\nuh(x-y) \kappasd(y)dy+\deltasd \nuh(x),\quad\text{$x>\frac1\r$.}
	\end{equation}
	For any $x>\frac1\r$ and with $x\to\infty$ in the last relation
	\begin{equation}\label{eq:hatnuLower}
		x\nuh(x)\geq  \kappasd(x)\int_{\frac1\r}^{x}\nuh(y)dy=\Exm\lb\int_{0}^{\zeta}e^{\Exc_s}ds>x\rb\int_{\frac1\r}^{x}\nuh(y)dy\simi \Exm\lb\int_{0}^{\zeta}e^{\Exc_s}ds>x\rb.
	\end{equation}
\end{theorem}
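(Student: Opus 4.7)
The plan is to derive \eqref{eq:SY2.1} from the structure of $\widehat{\phi}$ given in \eqref{eq:SDLTI} by a Laplace transform argument, and then obtain \eqref{eq:hatnuLower} by a monotonicity estimate combined with the excursion representation of $\widehat{\kappa}$ from Lemma \ref{prop:SD1}. First, I would observe that, for $u>0$, $\int_0^\infty e^{-ux}\widehat{\nu}(x)dx = \E[e^{-u I_\psi}] = e^{-\widehat{\phi}(u)}$, and differentiating in $u$ gives
\[\int_0^\infty x e^{-ux}\widehat{\nu}(x)dx = \widehat{\phi}'(u)\, e^{-\widehat{\phi}(u)}.\]
From \eqref{eq:SDLTI}, one computes $\widehat{\phi}'(u) = \widehat{\delta} + \int_0^\infty e^{-uy}\widehat{\kappa}(y)dy$, so the right-hand side factors as
\[\widehat{\delta}\int_0^\infty e^{-ux}\widehat{\nu}(x)dx + \int_0^\infty e^{-ux}\!\!\int_0^x \widehat{\nu}(x-y)\widehat{\kappa}(y)\,dy\,dx,\]
after recognizing a Laplace convolution. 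Since both sides are Laplace transforms of locally integrable functions on $\R_+$, uniqueness yields \eqref{eq:SY2.1} for Lebesgue-almost every $x>0$; thanks to Proposition \ref{thm:prop_self}\eqref{it:pnu3}, $\widehat{\nu}$ is continuous on $(1/\r,\infty)$ so the identity holds pointwise on this open half-line.

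For \eqref{eq:hatnuLower}, I would use that $\widehat{\kappa}$ is non-increasing on $\R_+$ and that $\widehat{\nu}$ is supported in $[1/\r,\infty)$ by Proposition \ref{thm:prop_self}\eqref{it:pnu2}. Dropping the non-negative term $\widehat{\delta}\,\widehat{\nu}(x)$ and bounding $\widehat{\kappa}(y)\geq \widehat{\kappa}(x)$ for $y\in[0,x]$ in the convolution appearing in \eqref{eq:SY2.1}, I obtain
\[x\widehat{\nu}(x)\;\geq\;\widehat{\kappa}(x)\int_0^x \widehat{\nu}(x-y)\,dy \;=\;\widehat{\kappa}(x)\int_{1/\r}^{x}\widehat{\nu}(y)\,dy,\]
where the last equality rests on the support property. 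The identification $\widehat{\kappa}(x)=\Exm\bigl(\int_0^\zeta e^{\mathfrak{e}_s}ds>x\bigr)$ is precisely \eqref{eq:link_exc_k} in Lemma \ref{prop:SD1}. Finally, the asymptotic equivalence stated at the end of \eqref{eq:hatnuLower} is immediate since $\int_{1/\r}^{x}\widehat{\nu}(y)dy\to \int_{1/\r}^{\infty}\widehat{\nu}(y)dy=1$ as $x\to\infty$, because $\widehat{\nu}$ is a probability density on its support.

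The only delicate point I anticipate is the justification that \eqref{eq:SY2.1} holds pointwise (rather than merely a.e.) on $(1/\r,\infty)$, which requires invoking the regularity provided by Proposition \ref{thm:prop_self}\eqref{it:pnu3}; the rest of the argument is a direct combination of the Laplace-transform factorization and the monotonicity of $\widehat{\kappa}$.
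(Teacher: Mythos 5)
Your derivation of \eqref{eq:SY2.1} takes a genuinely different route from the paper's. The paper obtains the identity by matching the parametrization in \eqref{eq:SDLTI} to Sato--Yamazato's L\'evy--Khintchine normalization and then citing \cite[Theorem~2.1]{Sato-Yam-78} as a black box; you instead re-derive the identity by differentiating the Laplace transform $\E\lbb e^{-uI_\psi}\rbb=e^{-\phisd(u)}$, computing $\phisd'(u)=\deltasd+\int_0^\infty e^{-uy}\kappasd(y)dy$, and recognizing the product $\phisd'(u)e^{-\phisd(u)}$ as a Laplace convolution. This amounts to a self-contained proof of the relevant piece of Sato--Yamazato's theorem --- a reasonable trade, since the paper's proof is shorter but opaque while yours is transparent and, if anything, better suited to the paper's later habit of manipulating \eqref{eq:SY2.1} directly (e.g.~Lemma~\ref{lem:kappa_j}). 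For \eqref{eq:hatnuLower} the two arguments coincide exactly: drop $\deltasd\nuh(x)\geq0$, bound $\kappasd(y)\geq\kappasd(x)$ on $[0,x]$ by monotonicity, invoke \eqref{eq:link_exc_k}, and let $x\to\infty$ using that $\nuh$ integrates to one on its support.

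One small gap you should patch. Passing from the a.e.~equality furnished by Laplace uniqueness to a pointwise identity on $(1/\r,\infty)$ requires continuity of \emph{both} sides, not only of $x\nuh(x)$. You cite Proposition~\ref{thm:prop_self}\eqref{it:pnu3} for $\nuh$ but say nothing about the convolution $\int_0^x\nuh(x-y)\kappasd(y)dy$, and this is not quite automatic because $\kappasd(0^+)$ may be infinite (it is precisely when $\psi\in\Nim$, by Lemma~\ref{prop:SD1}\eqref{it:kappa}). The fix is routine: near any fixed $x>1/\r$, $\nuh$ is continuous hence locally bounded, so split the integral at a small $\epsilon>0$; the piece over $(0,\epsilon)$ is controlled by $\sup\nuh\cdot\int_0^\epsilon\kappasd(y)dy$, small since $\kappasd\in L^1(\R_+)$, while on $(\epsilon,x)$ one has $\kappasd\leq\kappasd(\epsilon)<\infty$ and dominated convergence gives continuity. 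With this line added your argument is complete and correct.
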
	
\begin{proof}[Proof of Theorem \ref{thm-SY_thm2.1}]	
Since $I_\psi$ is a positive self-decomposable law with $\int_{0}^{\infty}\kappasd(y)dy<\infty$ then in the notation of \cite[(1.5)]{Sato-Yam-78} we have $\log \Ebb{e^{iuI_\psi}}=iu\gamma_0-\frac{\sigma^2u^2}{2}+\int_{0}^{\infty}\lbrb{e^{iuy}-1}\frac{k(y)}{y}dy$. Comparing this to \eqref{eq:SDLT} yields in our notation that $\gamma_0=\deltasd=\r^{-1}$, $\sigma^2=0$ and $\kappasd\equiv k$.  Then,  the second identity of \cite[Theorem 2.1, (2.1)]{Sato-Yam-78}  in their notation takes the form
\[\lbrb{x-\gamma}f(x)=\int_{0}^{x}f(x-y)k(y)dy-f(x)\int_{0}^{\infty}\frac{k(y)}{1+y^2}dy\]
Since  \cite[(1.6)]{Sato-Yam-78} gives $\gamma_0=\gamma-\int_{0}^{\infty}\frac{k(y)}{1+y^2}dy=\deltasd$
and $f=\nuh$ we deduct \eqref{eq:SY2.1} and \eqref{eq:hatnuLower} since $\kappasd$ is non-increasing.
\end{proof}
\subsubsection{Proof of  Proposition \ref{lem:LargeAsymp}}\label{sec:pro_large_asymp}
 For item \ref{it:bigAsymp} from the last relation of \eqref{eq:hatnuLower} we need to discuss $\Exm\lb\int_{0}^{\zeta}e^{\Exc_s}ds>x\rb$ only.
We recall that to each $\psi\in\Nm$ there is an associated spectrally negative \LL process $\xi$ such that $\limi{t}\xi_t=\infty$, see the discussion succeeding \eqref{Levy-K}.  We set, for any $a>0$,
\[I_{\psi}(\bar{T}_a)= \int_{0}^{\bar{T}_{a}}e^{-\xi_s}ds,\]
 where $\bar{T}_{a}=\inf\{t\geq 0;\,\xi_t=a\}$ is the first hitting time of $a$ for $\xi$. Also, it is well-known from \cite[Chapter IV, p.117]{Bertoin-96} that, for any $a,x>0$,
 \[\Exm\lbrb{\int_{1}^{\zeta}e^{\Exc_s}ds>x\Big|\Exc_1= a,\zeta>1}=\Pbb{e^{a}I_{\psi}(\bar{T}_a)>x},\]
 which reflects the fact that under $\Exm\lbrb{.\Big|\Exc_1= a,\zeta>1}$, $\lbrb{\Exc_{s}}_{1\leq s<\zeta}=\lbrb{a+\Exc_{s}-\Exc_1}_{1\leq s<\zeta}$ has the same law as $\lbrb{a-\xi_s}_{0\leq s<\bar{T}_a}$, since $\Exc_1=a$ means that being $a$ units beneath the running supremum the process $\xi$ needs to stride upwards those units to attain a new maximum. Using this and trivial estimates yield, for $x>0$, that
\begin{eqnarray} \label{eq:step1}
\Exm\lb\int_{0}^{\zeta}e^{\Exc_s}ds>x\rb &\geq&  \Exm\lb\int_{1}^{\zeta}e^{\Exc_s}ds>x,\,\zeta>1,\,\Exc_1>e\rb
\nonumber \\
\nonumber&=& \int_{a>e} \P\lb e^a I_{\psi}(\bar{T}_a)>x\rb \Exm(\Exc_1\in da,\zeta>1)\\
&\geq&\P\lb I_{\psi}(\bar{T}_1)>x\rb \Exm(\Exc_1>e,\zeta>1).
\end{eqnarray}
 Next, let us denote by $\xi^{\epsilon}=\lbrb{\xi^{\epsilon}_t}_{t\geq0}=\lbrb{\xi_t-\sum_{s\leq t}\lbrb{\xi_s-\xi_{s-}}\ind{\lbrb{\xi_s-\xi_{s-}}<-\epsilon^{-1}}}_{t\geq0}$ the L\'evy process constructed from $\xi$ by removing all its jumps smaller than $-\epsilon^{-1}<0$. Thus, pathwise, $\xi_t\leq\xi^{\epsilon}_t$, for any $t>0$, $\bar{T}_{1}\geq \bar{T}^{\epsilon}_{1}$ and therefore, $I_{\psi}(\bar{T}_1)\geq I_{\psi^\epsilon}(\bar{T}^\epsilon_1)$. Choose furthermore $\epsilon$ small enough such that $\Pi\lbrb{0,\epsilon^{-1}}>0$, that is $\xi^\epsilon$ is not a pure drift process or phrased otherwise it still possesses negative jumps. For any $a>e,x>e,$ from the strong Markov property of L\'evy processes, conditionally on the event $\{\bar{T}^{\epsilon}_{{-\ln xa}}<\infty\}$, the process  $\tilde{\xi}^{\epsilon}=\lb\tilde{\xi}^{\epsilon}_t=\xi^{\epsilon}_{t+\bar{T}^{\epsilon}_{-\ln xa}}+\ln xa\rb_{t\geq 0}$ is a L\'evy process issued forth from $0$ and independent of $\lb \xi^{\epsilon}_t\rb_{0\leq t\leq \bar{T}^{\epsilon}_{-\ln(xa)}}$ and
 \begin{eqnarray} \label{eq:step2}
 I_{\psi}(\bar{T}_1)\geq I_{\psi^\epsilon}(\bar{T}^\epsilon_1)=\int_{0}^{\bar{T}^\epsilon_{1}}e^{-\xi^\epsilon_s}ds &\geq& \int_{\bar{T}^{\epsilon}_{-\ln(xa)}}^{\bar{T}^{\epsilon}_{1}} e^{-\xi^{\epsilon}_s}ds \: \mathbb{I}_{\{\bar{T}^{\epsilon}_{-\ln(xa)}<\bar{T}^{\epsilon}_{1}\}} \nonumber \\
 & = & xa \int_0^{\bar{\tilde{T}}^{\epsilon}_{1+\ln xa}} e^{-\tilde{\xi}^{\epsilon}_s}ds \: \mathbb{I}_{\{\bar{T}^{\epsilon}_{-\ln(xa)}<\bar{T}^{\epsilon}_{1}\}}\nonumber \\
 & \geq & xa \int_0^{\bar{\tilde{T}}^{\epsilon}_{\ln a}} e^{-\tilde{\xi}^{\epsilon}_s}ds \: \mathbb{I}_{\{\bar{T}^{\epsilon}_{-\ln(xa)}<\bar{T}^{\epsilon}_{1}\}},
 \end{eqnarray}
 where  $\bar{\tilde{T}}^\epsilon_.$ stands for the hitting times of $\tilde{\xi}^\epsilon$. Hence, using the independence of $\tilde{\xi}^{\epsilon}$ and $\xi^{\epsilon}$,  $\xi^\epsilon=\tilde{\xi}^\epsilon$ in law and the fact that $\tilde{\xi}^{\epsilon}_t\leq \ln a$ on $\{t<\bar{\tilde{T}}^{\epsilon}_{\ln a}\}$ we obtain from \eqref{eq:step2} that
 \begin{eqnarray}\label{eq:LowerBound1}
 \P\lb I_{\psi}(\bar{T}_1)>x\rb &\geq &\P\lb I_{\psi^\epsilon}(\bar{T}^\epsilon_1)>x\rb \geq \P\lb  a \int_0^{\bar{\tilde{T}}^{\epsilon}_{\ln a}} e^{-\tilde{\xi}^{\epsilon}_s}ds>1\rb \P\lb \bar{T}^{\epsilon}_{-\ln(xa)}<\bar{T}^{\epsilon}_{1} \rb \nonumber \\
 &\geq& \P\lb \bar{\tilde{T}}^{\epsilon}_{\ln(a)} >1\rb \P\lb \bar{T}^{\epsilon}_{-\ln(xa)}<\bar{T}^{\epsilon}_{1} \rb=\P\lb \bar{T}^{\epsilon}_{\ln(a)} >1\rb \P\lb \bar{T}^{\epsilon}_{-\ln(xa)}<\bar{T}^{\epsilon}_{1} \rb.
 \end{eqnarray}
 From $\limi{t}\xi^\epsilon_t\geq\limi{t}\xi_t=\infty$, $\xi^\epsilon$ only jumps down and  is not a pure drift process we deduct that $\P\lb \bar{T}^{\epsilon}_{\ln(a)} >1\rb>0$. Next,
\begin{eqnarray}\label{eq:LowerBound}
\P\lb \bar{T}^{\epsilon}_{-\ln(xa)}<\bar{T}^{\epsilon}_{1} \rb &=& 1-\P\lb \bar{T}^{\epsilon}_{1}\leq\bar{T}^{\epsilon}_{-\ln(xa)} \rb \nonumber \\
&=&  1-\frac{S_{\epsilon}\lbrb{\ln(xa)}}{S_{\epsilon}\lbrb{\ln(xa)+1}}=\frac{S_{\epsilon}\lbrb{\ln(xae)}-S_{\epsilon}\lbrb{\ln(xa)}}{S_{\epsilon}\lbrb{\ln(xae)}},
\end{eqnarray}
where $S_{\epsilon}$ is the scale function related to the spectrally negative \LLP $\xi^{\epsilon}$, see \cite[Chap.~VII, Sec.~2]{Bertoin-96}.
  Let $m^{\epsilon}=\Ebb{\xi^\epsilon_1}$, see the discussion after \eqref{Levy-K}. From $\xi_1\leq\xi^\epsilon_1$ then $0<m\leq m^{\epsilon}$. Since in the construction of $\xi^\epsilon$ we truncate jumps smaller than $-\epsilon^{-1}$ we have that, for all $\epsilon<w$, $0<m\leq m^{\epsilon}\leq m^w$ and plainly $\psi^{\epsilon},\psi^w$ are analytic in $\C$, see the discussion around \eqref{eq:exp_mom_L}.
   Moreover, by differentiating twice $\psi^{\epsilon}$  is easily seen to be strictly convex in $\R$ and for any $u\in \R$, $\psi^{\epsilon}$ has the form, see \eqref{Levy-K},
\[\psi^{\epsilon}(u)=m^{\epsilon} u+\frac{\sigma^2}{2}u^{2}+\int_{0}^{\frac1\epsilon}\lb e^{-uy}-1+uy\rb\Pi(dy),\]
where the truncation of jumps is reflected in the measure $\Pi$ and hence in the integration bounds.
Hence, for $\epsilon<w$, using $e^{-x}+x-1\geq 0,\,x\leq 0,$ we get that, for $u<0$,
\[\psi^{\epsilon}(u)-\psi^w(u)=\lb m^{\epsilon}-m^w\rb u+\int_{\frac1w}^{\frac1\epsilon}\lb e^{-uy}-1+uy\rb\Pi(dy)\geq 0.\]
Also from \eqref{Levy-K}, $e^{\psi^\epsilon(u)}=\E\lbb e^{u\xi^\epsilon_1}\rbb \geq \E\lbb e^{u\xi^\epsilon_1} \mathbb{I}_{\{\xi^\epsilon_1<0\}}\rbb$ and we get that  $\lim_{u\to-\infty}\psi^{\epsilon}(u)=\infty$. Hence using the latter, the fact that $\lbrb{\psi^{\epsilon}}'(0^+)=\Ebb{\xi^\epsilon_1}=m^{\epsilon}>0$ and $\psi^\epsilon$ is strictly convex with $\psi^{\epsilon}(0)=0$ we deduce that there exists unique  $\Root^{\epsilon}<0$ such that $\psi^{\epsilon}(\Root^{\epsilon})=0$. From $\psi^{\epsilon}(u)-\psi^w(u)\geq0$, for $0<\epsilon<w$ and $u<0$, and by convexity,  we get $\Root^w\leq \Root^{\epsilon}<0$, where $\psi^w(\Root^w)=0$. We proceed to show that $\Root^*=\lim_{\epsilon \to 0}\Root^{\epsilon}= d_\phi$. Recall that $\psi(u)=u\phi(u)$ and if $u<d_\phi\leq 0$ then $\psi(u)\in\lbrbb{0,\infty}$, where  $\psi(u)=\infty$ if $\int_{0}^{\infty}\lb e^{-uy}-1+uy\rb\Pi(dy)=\infty$, see \eqref{Levy-K}. It is clear that $\Root^*\geq d_\phi$, as otherwise, for any $u\in\lbrb{\Root^*,d_\phi}$, we must have $\lim_{\epsilon \to 0}\psi^{\epsilon}(u)=\psi(u)\in\lbrbb{0,\infty}$, whilst the convexity of $\psi^{\epsilon}$ and $\psi^\epsilon\lbrb{\Root^\epsilon}=\psi^\epsilon(0)=0$ lead to $\psi^\epsilon<0$ on $(\Root^\epsilon,0)$ and thus $\lim_{\epsilon \to 0}\psi^{\epsilon}(u)\leq 0$ if $u\in\lbrb{\Root^*,\d_\phi}$. The case $d_\phi=0$ follows as $\Root^{\epsilon}<0$. Assume that $d_\phi<\Root^*\leq0$. Then for any $u\in\lbrb{d_\phi,\Root^*}$, we have that $\lim_{\epsilon \to 0}\psi^{\epsilon}(u)=\psi(u)<0$ which contradicts $\Root^\epsilon\uparrow\Root^*$, $\psi^\epsilon>0$ on $\lbrb{-\infty,\Root^\epsilon}$. Thus, $d_\phi=\Root^*=\lim_{\epsilon \to 0}\Root^{\epsilon}$. Finally, since according to \cite[Ch.~VII, Sec.~2]{Bertoin-96}, we have  $S_{\epsilon}(x)=\frac{1}{m^{\epsilon}}\P\lb -\underline{\xi}_{\infty}^{\epsilon}\leq x\rb$, where we let $\underline{\xi}_{\infty}^{\epsilon}=\inf_{t\geq0} \xi^{\epsilon}_t$ be the global infimum of $\xi^{\epsilon}$, we deduce that
\[S_{\epsilon}(\ln(xae))-S_{\epsilon}(\ln(xa))=\frac{1}{m^{\epsilon}}\lb\P\lb -\underline{\xi}_{\infty}^{\epsilon}> \ln xa\rb-\P\lb -\underline{\xi}_{\infty}^{\epsilon}> \ln xae\rb\rb,\]
which together with $\P (-\underline{\xi}_{\infty}^{\epsilon} >x)\simi C^\epsilon e^{\Root^{\epsilon} x}$,  see e.g.~\cite{Bertoin-Doney}, yield, recalling that $\Root^{\epsilon}<0$,
\[x^{-\Root^\epsilon}\lbrb{S_{\epsilon}(\ln(xae))-S_{\epsilon}(\ln(xa))}\stackrel{\infty}{\sim} \frac{C^\epsilon}{m^{\epsilon}}a^{\Root^{\epsilon}}(1-e^{\Root^{\epsilon}}).\]
Then, since plainly  $S_{\epsilon}(\infty)=\lim\ttinf{x}S_{\epsilon}(\ln(xae))=1/m^{\epsilon}$, we deduce from \eqref{eq:LowerBound} that
\[x^{-\Root^\epsilon}\P\lb \bar{T}^{\epsilon}_{-\ln(xa)}<\bar{T}^{\epsilon}_{1}\rb=x^{-\Root^\epsilon}\lbrb{\frac{S_{\epsilon}(\ln(xae))-S_{\epsilon}(\ln(xa))}{S_{\epsilon}(\ln(xae))}}\stackrel{\infty}{\sim}C^\epsilon a^{\Root^{\epsilon}}(1-e^{\Root^{\epsilon}}).\]
Therefore, since $a>e$ is fixed, the latter asymptotic relation allows the successive usage of \eqref{eq:LowerBound1}, \eqref{eq:step1}, \eqref{eq:hatnuLower}  together with $\nuh>0$ on $\lbrb{\r^{-1},\infty}$ and $\nuh\in\cco\lbrb{\lbrb{\r^{-1},\infty}}$, see Proposition\ref{thm:prop_self}\eqref{it:pnu3} to derive \eqref{eq:LargeAsymp*} as $\Root^{\epsilon}\uparrow d_\phi$ when $\epsilon\to0$. The second statement follows readily from \cite[Lemma 4]{Rivero-05}, which states that $\Pbb{I_\psi>x}=\int_{x}^{\infty}\nuh(y)dy\simi Cx^{-\Root}$ and an application of the monotone decreasing density theorem which yields $\nuh(x)\simi C\Root x^{-\Root-1}$ and is valid since $\nuh$ is ultimately monotone as a unimodal distribution.

\subsubsection{Proof of Theorem \ref{lem:nuSmallTime}} \label{sec:p_smalltime}
The first statement follows by combining  \eqref{eq:LargeAsymp*}  with \eqref{eq:relation_nu}, i.e.~$\nu(x)=x^{-2}\nuh_1\lb x^{-1}\rb$ and $\nuh_1$ the density of $I_{\mathcal{T}_1 \psi},\,\mathcal{T}_1 \psi(u)=u\phi_1(u)=u\phi(u+1)$, and observing that \eqref{eq:LargeAsymp*} holds for $a<d_{\phi_1}=d_{\phi}-1$.
The second statement is deduced from \eqref{eq:asymp_cramer} with $\Root=1$ since $\mathcal{T}_1 \psi(-1)=-\phi(0)=0$ and $\lbrb{\mathcal{T}_1 \psi}'(-1^+)= -\phi'(0^+)<\infty$.

\subsection{Proof of Theorem \ref{thm:existence_invariant_1}\eqref{item:subexpV}} \label{sec:p_thm1_3}
The proof of this claim requires a combination of several types of results. Since some of them will be  useful later in this work, we state them separately.  We start by discussing various further functional properties of $\nuh$.

\subsubsection{Some useful facts on positive self-decomposable laws}
The following lemma  is essentially  due to Sato and Yamazato \cite{Sato-Yam-78} where
we denote by $\mathcal{F}_{f}$  the bilateral moment generating function (resp.~Fourier transform) of a function $f:\R \mapsto \R$, i.e.~for some real $u$ (resp.~some imaginary number $u=ib, b\in \R$,)
\[ \mathcal{F}_{f}(u)=\int_{\R}e^{uy}f(y)dy.  \]
\begin{lemma} \label{lprop:SD2}
  \begin{enumerate}
  \item \label{it:sdd1} Let $\psi\in\Nim$. Then there exists a decreasing positive sequence $\underline{a}_{\nuh}=(a_n)_{n\geq 0}$ with $a_0=\infty>a_1>\ldots>a_n\ldots>\frac1\r$, such that,  for every $n\geq 1$,
 	 \[ \nuh^{(n)}>0 \textrm{ on } \lbrb{\frac1\r,a_n}  \textrm{  and  }  \nuh^{(n)}<0 \textrm{ on } (a_n,a_{n-1}).\]
  Moreover, for any $n\geq 0$,  we have $a_n>\beta_n+\frac1\r$ where $\beta_n = \sup\{y>0; \: \kappasd(y) \geq n\}$.
       \item   \label{it:sd2} Let $\psi\in\Nf(m)$. Then there exists a sequence $\underline{a}_{\nuh}=(a_n)_{0\leq n \leq \Si}$, such that   $a_0=\infty>a_1>\ldots>a_{\Si}>\frac1\r$ enjoying the same properties as the sequence in \eqref{it:sdd1} above if and only if $\Si\geq1$, where recall that $\Si=\lceil\frac{\overline\Pi(0^+)}{\r}\rceil-1$.
           \item \label{it:ide} Let $\psi\in\Niim$. Then, we have, for any $n\geq0$ and $x>0$,
\begin{equation}\label{eq:SatoEquation}
x\nuh^{(n+1)}(x)=-(n+1)\nuh^{(n)}(x)+\IInf \lbrb{\nuh^{(n)}(x-y)-\nuh^{(n)}(x) }d\kappasd(y).
\end{equation}
\item \label{it:sd3} For any $0\leq u<\frac{\overline\Pi(0^+)}{\r}$, one has
\begin{equation} \label{eq:subfnuh}
\labsrabs{\Fc_{\nuh}\lbrb{ib}}\stackrel{\pm\infty}{=} \so{|b|^{-u}},
\end{equation}
 with the convention that $\frac{\overline\Pi(0^+)}{\r}=\infty$ when $\overline\Pi(0^+)=\infty$.
  \end{enumerate}
 \end{lemma}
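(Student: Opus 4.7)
\textbf{Proof plan for Lemma \ref{lprop:SD2}.}

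My starting point would be Theorem \ref{thm-SY_thm2.1}, which yields the key integral equation
\[ x\nuh(x)=\int_{0}^{x}\nuh(x-y)\kappasd(y)\,dy+\deltasd\,\nuh(x),\qquad x>1/\r. \]
For items \eqref{it:sdd1} and \eqref{it:sd2}, I would work in the regime $\psi\in\Niim$, where Proposition \ref{thm:prop_self}\eqref{it:pnu1} gives $\deltasd=0$, so the equation simplifies to a pure convolution identity. The classical Sato--Yamazato unimodality theorem already delivers the first turning point $a_1$; to iterate, I would differentiate this identity $n$ times, interpret the result as a Volterra-type equation for $\nuh^{(n+1)}$ driven by lower-order derivatives whose sign pattern is known by the induction hypothesis, and read off that $\nuh^{(n+1)}$ has exactly one sign change on $(1/\r,\infty)$, thereby producing $a_{n+1}<a_n$. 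The lower bound $a_n>\beta_n+1/\r$ would come from the identity \eqref{eq:SatoEquation}: at a putative zero $x$ of $\nuh^{(n)}$ with $x<\beta_n+1/\r$, the integrand $\nuh^{(n)}(x-y)-\nuh^{(n)}(x)$ has a controlled sign on $(0,x-1/\r)\subset(0,\beta_n)$, forcing the right-hand side of \eqref{eq:SatoEquation} to be strictly positive, a contradiction. The case \eqref{it:sd2} is then immediate since $\nuh\in\cco^{\Si-1}$ by Proposition \ref{thm:prop_self}\eqref{it:pnu3}, and the inductive argument terminates at $n=\Si$.

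For item \eqref{it:ide}, I would derive the differential-convolution identity \eqref{eq:SatoEquation} directly from the simplified equation $x\nuh(x)=\int_{0}^{x}\nuh(x-y)\kappasd(y)\,dy$ (valid on $\Niim$). An integration by parts moving the derivative onto the measure $d\kappasd$ produces
\[ x\nuh(x)=-\int_{0}^{\infty}\lbrb{F(x)-F(x-y)}\,d\kappasd(y), \]
where $F(x)=\int_{0}^{x}\nuh(s)\,ds$, and then differentiating $n+1$ times under the integral (justified by the smoothness statement of Proposition \ref{thm:prop_self}\eqref{it:pnu3} together with the integrability of $\kappasd$ off a neighbourhood of $0$) and rearranging yields \eqref{eq:SatoEquation}. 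The combinatorial bookkeeping from the Leibniz rule on the left-hand side $x\nuh^{(n+1)}(x)$ produces the coefficient $-(n+1)$.

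For item \eqref{it:sd3}, which I expect to be the most delicate step, I would exploit the closed form of the characteristic function. From \eqref{eq:SDLTI}, the Fourier transform satisfies $|\Fc_{\nuh}(ib)|=\exp\!\lbrb{-\Re\phisd(-ib)}$, where
\[ \Re\phisd(-ib)=\int_{0}^{\infty}\lbrb{1-\cos(by)}\frac{\kappasd(y)}{y}\,dy. \]
By Lemma \ref{prop:SD1}\eqref{it:kappa}, $\kappasd(0^+)=\overline\Pi(0^+)/\r$ (possibly $+\infty$), so that for any $0\le u<\overline\Pi(0^+)/\r$, one can choose $\delta_u>0$ with $\kappasd(y)\ge u$ on $(0,\delta_u)$. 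Splitting the integral at $1/|b|$ and using $1-\cos$ has average $1/2$ on each period gives
\[ \Re\phisd(-ib)\ge u\int_{1/|b|}^{\delta_u}\frac{1-\cos(by)}{y}\,dy\ge u\log|b|-C_u \]
for $|b|$ large, whence $|\Fc_{\nuh}(ib)|=\bo{|b|^{-u}}$, and the $\so{|b|^{-u}}$ strengthening follows by taking $u'\in(u,\overline\Pi(0^+)/\r)$ in the same estimate. The main obstacle here is controlling the constants uniformly and handling the degenerate case $\overline\Pi(0^+)=\infty$, where one must show that $\Re\phisd(-ib)/\log|b|\to\infty$; for this I would use the monotonicity of $\kappasd$ and a dominated convergence argument with $u\to\infty$.
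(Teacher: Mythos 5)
The paper's own proof of this lemma is a direct citation of Sato and Yamazato \cite{Sato-Yam-78}: item \eqref{it:sdd1} is their Theorem~5.1(ii) (after identifying $I_\psi$ as lying in their class $\mathrm{I}_6$), item \eqref{it:sd2} their Theorem~5.1(i), item \eqref{it:ide} their equation (5.4) with $\gamma_0=0$, and item \eqref{it:sd3} their Lemma~2.4. You have instead attempted a first‑principles reconstruction. Your treatment of item \eqref{it:sd3} is genuinely correct and, in fact, more transparent than the citation: computing $|\Fc_{\nuh}(ib)|=\exp(-\Re\phisd(-ib))$ from the L\'evy--Khintchine form \eqref{eq:SDLTI}, bounding $\kappasd$ from below near $0$, and obtaining $\Re\phisd(-ib)\geq u\log|b|-C_u$ directly yields the polynomial decay, and the $\so{\cdot}$ strengthening by moving to $u'\in(u,\kappasd(0^+))$ is clean. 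That argument stands on its own.

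The other three items, however, are not established by what you wrote. For items \eqref{it:sdd1}--\eqref{it:sd2}, the sentence ``interpret the result as a Volterra-type equation ... and read off that $\nuh^{(n+1)}$ has exactly one sign change'' is precisely the statement of Sato--Yamazato's Theorem~5.1; it is the content to be proved, not a step that can be read off. Iterating the integral equation does not by itself rule out multiple sign changes for higher derivatives, and a complete induction here requires the fine structure arguments (monotonicity of $\kappasd$, location of the mode, interlacing of zeros) that occupy several pages in their paper. Likewise, your contradiction argument for $a_n>\beta_n+1/\r$ is not convincing as stated: evaluating \eqref{eq:SatoEquation} at a zero $x=a_n$ of $\nuh^{(n)}$ gives $a_n\nuh^{(n+1)}(a_n)=\int_0^\infty\nuh^{(n)}(a_n-y)\,d\kappasd(y)$, which is negative (since $\nuh^{(n)}>0$ on $(0,a_n)$ and $d\kappasd\leq0$) — consistent with $a_{n+1}<a_n$ but carrying no information about $\beta_n$. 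A quantitative comparison between the weights $\kappasd$ and the scale of sign changes is needed, and you have not supplied it. Moreover, you invoke \eqref{eq:SatoEquation} for $\psi\in\Nim$, but that identity (item \eqref{it:ide}) is only asserted for the strictly smaller class $\Niim$; on $\Nim\setminus\Niim$ (i.e.~$\sigma^2=0$, $\PP(0^+)=\infty$, $\PPP(0^+)<\infty$) the drift $\deltasd=1/\r>0$ enters \eqref{eq:SY2.1}, and the differentiated identity carries an extra term that you would need to track through the contradiction argument.

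For item \eqref{it:ide}, the proposed integration by parts $\int_0^x\nuh(x-y)\kappasd(y)\,dy = -[F(x-y)\kappasd(y)]_0^x+\int_0^x F(x-y)\,d\kappasd(y)$ has a problematic boundary contribution at $y=0$: on $\Niim$ one has $\kappasd(0^+)=\infty$, so $F(x)\kappasd(0^+)$ diverges. One needs to regularize (e.g.~integrate over $(\varepsilon,x)$ and use that $F(x)-F(x-\varepsilon)=O(\varepsilon)$ while $\kappasd(\varepsilon)=o(1/\varepsilon)$, which itself requires the integrability of $\kappasd$) before passing to the limit; as written, the manipulation is not justified. Similarly, differentiating $n+1$ times under the integral sign against the (unbounded) measure $d\kappasd$ near $0$ is exactly the sort of step that needs the one-sided $\cco^\infty$ regularity and the quantitative bounds of Lemma~\ref{prop:SD2}, not just the qualitative statement of Proposition~\ref{thm:prop_self}\eqref{it:pnu3}. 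These are fixable but they are genuine gaps, not cosmetic ones.
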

 \begin{proof}
  Let us first assume that $\psi\in \Nim$. According to Lemma \ref{prop:SD1}\eqref{it:kappa} we have that $\kappasd(0^+)=\infty$ and hence $I_\psi\in \textrm{I}_6$ in the sense of \cite[p.275]{Sato-Yam-78}. We derive from  \cite[Theorem 5.1(ii)]{Sato-Yam-78} the existence of the sequence $\underline{a}_{\nuh}$ with the inequality $a_n>\beta_n+\frac1\r$. Otherwise, if $\psi \in \Nf(m)$ and $\frac{\overline\Pi(0^+)}{\r}>1$ the existence of the sequence $\underline{a}_{\nuh}$  and $\beta_n$, for $1\leq n\leq\Si $, follows from \cite[Theorem 5.1(i)]{Sato-Yam-78}. Next from \cite[p.297,(5.4)]{Sato-Yam-78} equation \eqref{eq:SatoEquation} holds for every $n\geq 0$ when $\kappasd(0^+)=\infty$ and $\frac1\r=0$ (i.e.~$\gamma_0=0$ in \cite[p.297,\,(5.4)]{Sato-Yam-78}),  which is the case once $\psi\in\Niim$. Item \eqref{it:sd3} follows from  \cite[Lemma 2.4, (2.17)]{Sato-Yam-78}.
  \end{proof}
  We continue with the following substantial but easy estimates  which will be used at several places in this work.
\begin{lemma} \label{prop:SD2}
Let $\psi \in \Ne(m)$. If $\Si\geq 1$ then, for all $n<\Si\leq \infty$ and $k<\Si-n \leq \infty$,
	\begin{equation}\label{eq:Asymp111}
	\lim_{x\downarrow \frac1\r}\lbrb{x-\frac1\r}^{-k}\nuh^{(n)}(x)=\lim_{x\downarrow 0}x^{-k}\nuh^{(n)}\lbrb{x+\frac{1}{\r}} =0.
	\end{equation}
	Moreover, for any $D\in\lbrb{0,1}$, $n< \Si$ and $x\in\lbrb{0,D\lbrb{a_{n+1}-\frac1\r}}$, with $\underline{a}_{\nuh}=\lbrb{a_n}_{n\leq \Si}$ as in Lemma \ref{lprop:SD2}, we have that
	\begin{equation}\label{eq:boundsSD}
	\labsrabs{\nuh^{(n)}\lbrb{\frac1\r+x}}\leq  \lb \frac{D}{1-D}\rb^{n} n! x^{-n}\nuh\lb\frac{1}{\r}+\frac{x}{D}\rb.
	\end{equation}
\end{lemma}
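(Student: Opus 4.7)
\emph{Proof plan for Lemma \ref{prop:SD2}.}

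For the first identity, the plan is to leverage the smoothness statement of Proposition \ref{thm:prop_self}\eqref{it:pnu3}, which gives $\nuh\in \cco^{\Si-1}(\R)$. Since $\supp\, I_\psi = [1/\r,\infty)$, the continuity of $\nuh^{(j)}$ for $j\leq \Si-1$, combined with $\nuh^{(j)}\equiv 0$ on $(-\infty,1/\r)$, immediately yields $\nuh^{(j)}(1/\r)=0$ for every $j=0,1,\ldots,\Si-1$. Then, for any integer $n<\Si$ and any integer $k$ with $k<\Si-n$ (so that $n+k\leq \Si-1$), I would iterate the fundamental theorem of calculus $k$ times:
\[
\nuh^{(n)}(x) \;=\; \int_{1/\r}^{x}\!\!\int_{1/\r}^{y_1}\!\!\cdots\!\!\int_{1/\r}^{y_{k-1}}\nuh^{(n+k)}(y_k)\,dy_k\cdots dy_1,
\]
which is legitimate as $\nuh^{(n+k)}$ is continuous. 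Estimating the iterated integral crudely gives $|\nuh^{(n)}(x)|\leq (x-1/\r)^{k}\,\sup_{y\in[1/\r,x]}|\nuh^{(n+k)}(y)|/k!$, and since $\nuh^{(n+k)}$ is continuous with value $0$ at $1/\r$, the supremum tends to $0$ as $x\downarrow 1/\r$, giving \eqref{eq:Asymp111}. (Non-integer exponents $k$ reduce to the next-smaller integer by monotonicity in $k$ of $(x-1/\r)^{-k}$ near $1/\r$.)

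For the pointwise bound \eqref{eq:boundsSD}, the key is to combine Taylor's formula with the sign/monotonicity information supplied by Lemma \ref{lprop:SD2}. Since $\nuh^{(j)}(1/\r)=0$ for $0\leq j\leq n-1 < \Si-1$, Taylor's theorem with integral remainder expanded at $1/\r$ reduces to
\[
\nuh\!\left(\tfrac{1}{\r}+y\right) \;=\; \int_{0}^{y}\frac{(y-u)^{n-1}}{(n-1)!}\,\nuh^{(n)}\!\left(\tfrac{1}{\r}+u\right)du, \qquad y>0,
\]
valid for any $n<\Si$. I would then take $y=x/D$ with $x\in(0,D(a_{n+1}-1/\r))$, restrict the integral to $u\in[x,x/D]\subset(0,a_{n+1}-1/\r)$ and invoke Lemma \ref{lprop:SD2}: on $(1/\r,a_{n+1})$ the function $\nuh^{(n+1)}$ is positive, so $\nuh^{(n)}$ is non-decreasing, and it is moreover positive because $a_{n+1}<a_n$. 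This yields
\[
\nuh\!\left(\tfrac{1}{\r}+\tfrac{x}{D}\right) \;\geq\; \nuh^{(n)}\!\left(\tfrac{1}{\r}+x\right)\int_{x}^{x/D}\frac{(x/D-u)^{n-1}}{(n-1)!}\,du \;=\; \nuh^{(n)}\!\left(\tfrac{1}{\r}+x\right)\cdot\frac{1}{n!}\!\left(\frac{x(1-D)}{D}\right)^{n},
\]
after the substitution $v=x/D-u$. Rearranging and using $\nuh^{(n)}(1/\r+x)>0$ to replace it by its absolute value produces exactly \eqref{eq:boundsSD}.

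The only technical subtlety I anticipate is verifying that the interval $(0,D(a_{n+1}-1/\r))$ lies strictly inside the sign-monotonicity region of Lemma \ref{lprop:SD2}, which is automatic since $D<1$ and $a_{n+1}<a_n$, and that the Taylor expansion can truly be anchored at $1/\r$; this is the step where the assumption $n<\Si$ is consumed, as it guarantees vanishing of all lower-order derivatives at the left endpoint of the support. Neither step requires delicate estimation beyond the structural facts already recorded in Proposition \ref{thm:prop_self} and Lemma \ref{lprop:SD2}, so I do not foresee a real obstacle.
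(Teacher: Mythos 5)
Your argument is correct for integer $k$ (which is how the lemma is used in the paper) and, for the first identity \eqref{eq:Asymp111}, you take a more economical route than the authors. The paper first proves the case $n=0,\,k\geq 1$ by exploiting the sign structure of Lemma~\ref{lprop:SD2} — since $\nuh^{(l)}$ increases on $(1/\r,a_{l+1})$ one gets $\nuh^{(l-1)}(x)\leq (x-\frac1\r)\nuh^{(l)}(x)$, and iterating yields $(x-\frac1\r)^{-k}\nuh(x)\leq \nuh^{(k)}(x)\to 0$ — and then obtains the general $n,k$ case by combining this with \eqref{eq:boundsSD}; for $\psi\in\Nf(m)$ it instead falls back on the explicit asymptotic \eqref{eq:asym_hnu_r}. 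Your uniform iterated-FTC argument, bounding $|\nuh^{(n)}(x)|$ by $(x-\frac1\r)^k\sup_{[\frac1\r,x]}|\nuh^{(n+k)}|/k!$, gets \eqref{eq:Asymp111} in a single step, needs only the continuity and support facts from Proposition~\ref{thm:prop_self}\eqref{it:pnu3}, and treats $\Si=\infty$ and $\Si<\infty$ on the same footing — this is cleaner and logically independent of \eqref{eq:boundsSD}. For \eqref{eq:boundsSD} itself your Taylor-with-integral-remainder derivation is, after a change of variables, exactly the paper's nested-integral lower bound and uses Lemma~\ref{lprop:SD2} in the same way. Two small points. First, for $n=0$ your Taylor formula degenerates; one should simply note $\nuh(\frac1\r+x)\leq\nuh(\frac1\r+\frac{x}{D})$ by monotonicity. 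Second, your parenthetical on non-integer $k$ has the monotonicity backwards: near $\frac1\r$ the function $k\mapsto (x-\frac1\r)^{-k}$ is \emph{increasing}, so vanishing of the limit for $\lfloor k\rfloor$ does not imply it for $k$, and passing to $\lceil k\rceil$ may push you to $\Si-n$ itself, outside the allowed range when $\Si<\infty$ (indeed by \eqref{eq:asym_hnu_r} the limit can genuinely fail for real $k\in(\frac{\PP(0^+)}{\r}-n-1,\Si-n)$). Since the lemma is only invoked with integer $k$ this does not affect the result, but the remark as stated is not sound.
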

\begin{proof}
	Let first $\psi\in \Nim$.  We first prove \eqref{eq:Asymp111} for $k=0$ and $n\in \N$ which  follows easily from the fact that, since $\PP(0^+)=\infty$ and hence  $\Si=\infty$ in this case, $\nuh\in \cco^{\infty}(\R)$ and $\supp\,I_{\psi}=[0,\infty]$, see Proposition \ref{thm:prop_self}. Let now set $n=0$ and $k\geq 1$. The  existence of the sequence $\underline{a}_{\nuh}$ defined in Lemma \ref{lprop:SD2}, implies that $\nuh^{(l)},\,l\geq 1,$ increases on $(\frac1\r,a_{l+1})$. Therefore,
	\[ \nuh^{(l-1)}(x) =\int_{\frac1\r}^{x}\nuh^{(l)}(y)dy \leq \lbrb{x-\frac1\r} \nuh^{(l)}(x),\text{ for $x\in\lbrb{\frac1\r,a_{l+1}}$,} \]
	since from \eqref{eq:Asymp111} with $k=0$ and $l\in \N$ we have that
	\begin{equation}\label{eq:nu0}
	\nuh^{(l)}\lbrb{\frac1\r}=0.
	\end{equation}
	  Repeating this argument $k$ times, we get, for any $x\in\lbrb{\frac1\r,a_{k+1}}$, that
	\[\lbrb{x-\frac1\r}^{-k}\nuh(x) \leq  \nuh^{(k)}(x),\]
	which proves \eqref{eq:Asymp111} for $k\geq 1$ and $n=0$ using \eqref{eq:nu0}.
	Before proving the remaining case, we establish \eqref{eq:boundsSD}.  From \eqref{eq:nu0}, for any $n\geq0$, $x\in\lbrb{\frac1\r,a_{n+1}}$ and  $D\in\lbrb{0,1}$,
	\begin{eqnarray*}
		\nuh(x)&=&\int_{\frac1\r}^{x}\int_{\frac1\r}^{y_{1}}\ldots\int_{\frac1\r}^{y_{n-1}}\nuh^{(n)}(y_n)dy_n\ldots dy_1 \\ &\geq & \int_{\frac1\r+D\lbrb{x-\frac1\r}}^{x}\int_{\frac1\r+D\lbrb{x-\frac1\r}}^{y_{1}}\ldots\int_{\frac1\r+D\lbrb{x-\frac1\r}}^{y_{n-1}}\nuh^{(n)}(y_n)dy_n\ldots dy_1\\ &\geq & \frac{(1-D)^{n}}{n!} \lbrb{x-\frac1\r}^n\nuh^{(n)}\lbrb{\frac1\r+D\lbrb{x-\frac1\r}},\end{eqnarray*}
	where we have employed that  $\nuh^{(n)}$ is increasing on $\lbrb{\frac1\r,a_{n+1}}$, see Lemma \ref{lprop:SD2}\eqref{it:sdd1}. This is  \eqref{eq:boundsSD}. Finally, the proof of \eqref{eq:Asymp111} for $n,k>0$,  follows from the case $n=0$ combined with \eqref{eq:boundsSD}.
Next, when $\psi \in \Nf(m)$ and $\Si\geq 1$,   \eqref{eq:Asymp111} is immediate thanks to \eqref{eq:asym_hnu_r} of Proposition\ref{thm:prop_self}\eqref{it:asy_hnu_r} which furnishes an explicit asymptotic of $\nuh$ at $1/\r$   and  $\supp\,I_{\psi}=[\frac1\r,\infty),$ which was proved in Proposition \ref{prop:SD1}, and, Proposition \ref{thm:prop_self}\eqref{it:pnu3} which asserts that in this case $\nuh\in \cco^{\Si-1}\lbrb{\R}$.
\end{proof}

We proceed with the following lemmas.
\begin{lemma} \label{lem:t_r}
\begin{enumerate}
  \item \label{it:tb_1}Let $\psi \in \Ne(m)$. Then, for any $p=1,2,\ldots$, we have that $\nuh_{p}(x)=\frac{\nuh_{p-1}(x)}{\phi(p-1)x} =\ldots= \frac{\nuh(x)}{m W_{\phi}(p)x^{p}}$ is the density of the variable $I_{\mathcal{T}_p\psi}$ where	$\mathcal{T}_p \psi(u)=u\phi(u+p) \in \Ne(\phi(p))$ and we set $\nuh_0 = \nuh$. Moreover, $\lim_{u \to \infty} \frac{\mathcal{T}_p\psi(u)}{u} = \lim_{u \to \infty} \phi(u+p)= \phi(\infty),$ with the obvious notation $\PP_{p}(0^+)=\PP(0^+)$ and $\supp\,I_{\mathcal{T}_p \psi}=\left[\frac{1}{\r},\infty\right)$.
     \item \label{it:tb_2} For any $\psi \in \Ne,$ we have, writing, for any $p=0,1,\ldots$, $\nu_p$  the  density of the variable $V_{\mathcal{T}_p\psi}$, $\nu_p(x)=x^{-2}\nuh_{p+1}(x^{-1})$, for any $p=1,2,\ldots$,  $\nu_{p}(x)= \frac{x\nu_{p-1}(x)}{\phi(p)} =\ldots= \frac{x^{p}\nu(x)}{W_{\phi}(p+1)}$, where we set $\nu=\nu_0$, and, $\supp\,V_{\mathcal{T}_p \psi}=\left[0,\r\right)$.
         \end{enumerate}
\end{lemma}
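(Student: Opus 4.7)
My plan for item \eqref{it:tb_1} is to identify $\nuh_p$ by matching entire negative moments of the candidate against those of $I_{\mathcal{T}_p\psi}$, and then to invoke moment determinacy. First I would iterate Proposition \ref{propAsymp1}\eqref{it:def_Tb} to confirm $\mathcal{T}_p\psi\in\Ne$ with associated Bernstein function $\phi_p(u):=\phi(u+p)$ and drift $(\mathcal{T}_p\psi)'(0^+)=\phi(p)>0$; the invariance $\phi_p(\infty)=\phi(\infty)=\r$ is immediate from the definition, and Proposition \ref{thm:prop_self}\eqref{it:pnu2} applied to $\mathcal{T}_p\psi$ then yields $\supp\,I_{\mathcal{T}_p\psi}=[\r^{-1},\infty)$. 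A short computation from \eqref{eq:Pibeta} iterated $p$ times also gives $\overline{\Pi}_p(0^+)=\overline{\Pi}(0^+)$.

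Setting $\nuh_p(x):=\nuh(x)/(mW_{\phi}(p)x^p)$, I would then use \eqref{eq:mo_Ip} applied to $\psi$ to compute, for any integer $n\geq 0$,
\[
\int_0^\infty x^{-n}\,\nuh_p(x)\,dx=\frac{\Ebb{I_\psi^{-(n+p)}}}{mW_{\phi}(p)}=\frac{W_{\phi}(n+p)}{W_{\phi}(p)}=\phi(p)\,W_{\phi_p}(n),
\]
where the last equality uses $W_{\phi_p}(n)=\prod_{k=1}^{n-1}\phi(k+p)=W_{\phi}(n+p)/W_{\phi}(p+1)$ together with $W_{\phi}(p+1)=\phi(p)W_{\phi}(p)$. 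A second application of \eqref{eq:mo_Ip}, now to $\mathcal{T}_p\psi$ whose drift equals $\phi(p)$, shows that this agrees with $\Ebb{I_{\mathcal{T}_p\psi}^{-n}}$. Specializing to $n=0$ verifies that $\nuh_p$ is a probability density, and since $I_{\mathcal{T}_p\psi}\in\mathfrak{L}_+$ is concentrated on $[\r^{-1},\infty)$ and has all negative entire moments, its law is determined by them; hence $\nuh_p$ is its density. The recursive form $\nuh_p(x)=\nuh_{p-1}(x)/(\phi(p-1)x)$ is then just a restatement of $W_{\phi}(p)=\phi(p-1)W_{\phi}(p-1)$.

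For item \eqref{it:tb_2}, my plan is to apply Proposition \ref{prop:recall_exps}\eqref{it:ent-self} twice. Noting that $\mathcal{T}_1$ composes additively on the shift parameter, i.e.\ $\mathcal{T}_1(\mathcal{T}_p\psi)(u)=u\phi_p(u+1)=u\phi(u+p+1)=\mathcal{T}_{p+1}\psi(u)$, the identity applied to $\mathcal{T}_p\psi$ gives $\nu_p(x)=x^{-2}\nuh_{p+1}(x^{-1})$, while applied to $\psi$ (with $W_{\phi}(1)=1$) it yields $\nuh(x^{-1})=mx\nu(x)$. Substituting the formula from item \eqref{it:tb_1} into these two relations gives
\[
\nu_p(x)=\frac{x^{p-1}\nuh(x^{-1})}{mW_{\phi}(p+1)}=\frac{x^p\nu(x)}{W_{\phi}(p+1)},
\]
and the telescoping recurrence $\nu_p(x)=x\nu_{p-1}(x)/\phi(p)$ follows from $W_{\phi}(p+1)=\phi(p)W_{\phi}(p)$. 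The support statement $\supp\,V_{\mathcal{T}_p\psi}=[0,\r)$ is inherited from $\phi_p(\infty)=\r$ via Theorem \ref{thm:smoothness_nu1}\eqref{it:supV} applied to $\mathcal{T}_p\psi$.

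The only nontrivial ingredient is moment determinacy of $I_{\mathcal{T}_p\psi}$, but I expect no real obstacle: the law is self-decomposable and concentrated on $[\r^{-1},\infty)$, and $\nuh_p$ inherits from $\nuh$ the large asymptotic decay of Proposition \ref{lem:LargeAsymp} up to a polynomial factor, so a Carleman-type criterion for the (negative) moment problem applies. Everything else is bookkeeping with the transformation $\mathcal{T}_\beta$ and the multiplicative structure of $W_\phi$.
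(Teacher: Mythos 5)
Your overall strategy — compute the entire negative moments of the candidate density $\nuh_p=\nuh/(mW_{\phi}(p)\,p_p)$, match them against those of $I_{\mathcal{T}_p\psi}$, and then invoke moment determinacy — is precisely the structure of the paper's argument, and your moment computations, the semigroup identity $\mathcal{T}_1\circ\mathcal{T}_p=\mathcal{T}_{p+1}$, the invariance of $\phi(\infty)$ and $\PP(0^+)$, and the derivation of item (2) from \eqref{eq:relation_nu} are all correct.

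The one place you have not closed the argument is the moment determinacy of $I_{\mathcal{T}_p\psi}^{-1}$, and the justification you sketch will not go through as written. You appeal to Proposition \ref{lem:LargeAsymp} to supply "large asymptotic decay," but that proposition furnishes only a \emph{lower} bound $\nuh(x)\geq C_{a,A}x^{a-1}$ on the density (the asymptotic \eqref{eq:asymp_cramer} holds only under an extra Cram\'er condition $\psi(-\Root)=0$, which is not assumed here). A lower bound on the density cannot control the growth of the negative moments, so it gives you no traction on a Carleman-type criterion; and indeed the Carleman sum itself would have to be checked from the growth of $W_{\phi_p}(n)$, which you have not done. This is a genuine gap, not cosmetic: your proof stops one step short.

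The repair is already in your hands, and it is exactly what the paper does. From the semigroup property you already used, $\mathcal{T}_p\psi=\mathcal{T}_1(\mathcal{T}_{p-1}\psi)$, so \eqref{eq:mom_VI} applied to $\mathcal{T}_{p-1}\psi\in\Ne$ in place of $\psi$ gives $\Ebb{I^{-n}_{\mathcal{T}_p\psi}}=\Ebb{V^n_{\mathcal{T}_{p-1}\psi}}$ for all $n\in\N$, i.e.\ $I^{-1}_{\mathcal{T}_p\psi}\stackrel{(d)}{=}V_{\mathcal{T}_{p-1}\psi}$. By Theorem \ref{prop:FormMellin1}\eqref{it:momVphi1}, the law of any $V_{\phi}$, $\phi\in\Be$, is determined by its entire moments, which supplies the determinacy you need without any Carleman estimate or reference to $\nuh$-asymptotics. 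With this substitution, your argument matches the paper's and is complete.
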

\begin{proof}
 First, we  recall  from Proposition \ref{propAsymp1}\eqref{it:def_Tb}, that, for any $p \in \N$, $\mathcal{T}_p \psi(u)=u\phi(u+p) \in \Ne(\phi(p))$, where we recall that $\phi(0)=m>0$ as $\psi=\mathcal{T}_0\psi \in  \Ne(m)$. Next, from the expression of the entire moment of $I_{\mathcal{T}_p\psi}$ given in \eqref{eq:mo_Ip}, we have that, for any  $n=1,2,\ldots$,
\begin{equation*}
\Ebb{I^{-n}_{\mathcal{T}_p\psi}} = \phi(p)\prod_{k=1}^{n-1} \phi(k+p)= \prod_{k=1}^{n} \phi(k+p-1) = \frac{1}{\phi(p-1)} \Ebb{I^{-n-1}_{\mathcal{T}_{p-1}\psi}}
\end{equation*}
from where we get the first statement by remarking that the mappings $(\mathcal{T}_p)_{p\geq0}$ form a semigroup on $\Ne$, i.e.~for any $\psi \in \Ne$, $p,r\geq 0$, $\mathcal{T}_p \circ \mathcal{T}_r \psi(u) =\mathcal{T}_p \left(\frac{.}{.+r}\psi(.+r)\right)(u)=\frac{u}{u+r+p}\psi(u+r+p)=\mathcal{T}_{p+r}\psi(u) \in \Ne$ and   $I^{-1}_{\mathcal{T}_1\psi}\stackrel{(d)}{=}V_\psi$, see \eqref{eq:mom_VI}, is moment determinate and hence $I^{-1}_{\mathcal{T}_p\psi},\,p=1,2,\cdots,$ are moment determinate. Also a simple integration of \eqref{eq:Pibeta} yields that $\PP_{p}(y)=e^{-py}\PP(y), y>0$. Finally, since $\phi_p(\infty)=\phi(\infty)=\r$ we conclude  $\supp\,I_{\mathcal{T}_p \psi}=\left[\frac{1}{\r},\infty\right)$ from Proposition \ref{thm:prop_self}\eqref{it:pnu2} and item \eqref{it:tb_1} is thus settled. Item \eqref{it:tb_2} follows readily from the previous one combined with the identity $\nu(x)=\frac{1}{x^2}\nuh_1\left(\frac{1}{x}\right)$ that is \eqref{eq:relation_nu} of Proposition \ref{prop:recall_exps}.
\end{proof}

\begin{lemma}\label{lem:integrabilityHatNu}
Let $\psi \in \Ne(m)$ and $\Si\geq 1$. Then, with the notation of Lemma \ref{lem:t_r}, we have, for all $n\leq\Si-1$, $p=0,1\ldots\,$,  and  $a> \frac12$,
\begin{eqnarray}\label{eq:abs_nun}
      \int_{0}^{\infty} \left|x^{-a} \nuh_p^{(n)}(x)\right| dx &<&\infty,
\end{eqnarray}
where we recall that $\nuh_0=\nuh$.
\end{lemma}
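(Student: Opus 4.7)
\emph{Proof plan.} The strategy reduces the problem to an $L^1$-bound on derivatives of $\nuh_p$ by splitting on the finiteness of $\r$. If $\r<\infty$, Proposition \ref{thm:prop_self}\eqref{it:pnu2} gives $\supp\,\nuh_p=\lbbrb{\tfrac1\r,\infty}$, so on the support $x^{-a}\leq \r^a$ and the whole integral is dominated by $\r^a\int_{1/\r}^{\infty}\labsrabs{\nuh_p^{(n)}(x)}dx$. If $\r=\infty$ then necessarily $\PP(0^+)=\infty$, whence $\Si=\infty$ and $\psi,\mathcal{T}_p\psi\in\Nim$; by Proposition \ref{thm:prop_self}\eqref{it:pnu3} one has $\nuh_p\in\cco^{\infty}(\R)$ and by \eqref{eq:Asymp111} $\nuh_p^{(k)}(0)=0$ for every $k\geq 0$, so the Taylor remainder of order $K=\lceil a\rceil$ yields $\labsrabs{\nuh_p^{(n)}(x)}\leq C_K\,x^K$ on $\lbbrbb{0,1}$, making $\int_0^1 x^{-a}\labsrabs{\nuh_p^{(n)}(x)}dx$ finite since $K-a>-1$. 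Thus in either regime the claim reduces to
\[\int_c^{\infty}\labsrabs{\nuh_p^{(n)}(x)}\,dx<\infty\]
for some $c>0$ (namely $c=\tfrac1\r$ in the first regime, $c=1$ in the second).

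The remaining $L^1$-bound is proved by induction on $n\in\lbcurlyrbcurly{0,\ldots,\Si-1}$, maintaining the joint hypothesis that $\nuh_p^{(n)}$ is continuous and bounded on $\lbbrb{c,\infty}$ with $\nuh_p^{(n)}(c^{+})=0$ and $\lim_{x\to\infty}\nuh_p^{(n)}(x)=0$. The base $n=0$ follows from $\nuh_p$ being a unimodal probability density that is continuous on $\lbbrb{c,\infty}$ by Proposition \ref{thm:prop_self}\eqref{it:pnu3}, with the boundary vanishing coming from \eqref{eq:asym_hnu_r} (exponent $\PP(0^+)/\r-1>0$ since $\PP(0^+)/\r>\Si\geq1$) in the first regime and from the $\cco^{\infty}$-smoothness of $\nuh_p$ at $0$ in the second. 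For the inductive step, the crucial input is Lemma \ref{lprop:SD2}: by Lemma \ref{lem:t_r}\eqref{it:tb_1} the quantities $\r=\phi_p(\infty)$ and $\PP_p(0^+)=\PP(0^+)$ are preserved under $\mathcal{T}_p$, so the smoothness index is the same $\Si$ and the subclass membership (either $\Nim$ or $\Nf(\phi(p))$) is inherited, making Lemma \ref{lprop:SD2} applicable to $\nuh_p^{(n)}$ and yielding only finitely many sign changes $c=b_0<b_1<\cdots<b_M$ on $\lbbrb{c,\infty}$. Integrating on each monotonicity piece then gives
\[\int_c^{\infty}\labsrabs{\nuh_p^{(n)}(x)}\,dx=\sum_{j=0}^{M}\labsrabs{\nuh_p^{(n-1)}(b_{j+1})-\nuh_p^{(n-1)}(b_j)}\leq 2(M+1)\sup_{x\geq c}\labsrabs{\nuh_p^{(n-1)}(x)}<\infty,\]
where $b_{M+1}=\infty$ and $\nuh_p^{(n-1)}(c^{+})=\nuh_p^{(n-1)}(+\infty)=0$ by induction. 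The properties required at level $n$ then close the induction: vanishing at $c^{+}$ follows from \eqref{eq:asym_hnu_r} (exponent $\PP(0^+)/\r-n-1>0$ for $n\leq\Si-1$) or from $\nuh_p^{(n)}(0)=0$; continuity and boundedness from Proposition \ref{thm:prop_self}\eqref{it:pnu3}; and decay at $+\infty$ from the eventual monotonicity of $\nuh_p^{(n)}$ past $b_M$ combined with its just-proved integrability.

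The main technical point is the uniform-in-$p$ transfer of the finite-sign-change property of Lemma \ref{lprop:SD2} to $\nuh_p$, which works precisely because the conserved quantities $\r$ and $\PP(0^+)$ under $\mathcal{T}_p$ control the entire smoothness and asymptotic architecture of the associated positive self-decomposable density. The hypothesis $a>\tfrac12$ is not used directly---any $a\in\R$ works in the first regime, and any $a>0$ can be absorbed by a sufficiently high Taylor order $K$ in the second---but it is the natural threshold demanded by the $L^2$/Mellin--Parseval arguments in which this lemma is subsequently invoked.
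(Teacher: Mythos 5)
Your approach is genuinely different from the paper's: the paper proves the lemma via Parseval's identity (yielding $\int_{1/\r}^\infty |\nuh^{(n)}(x)|^2\,dx<\infty$ from the decay of the Fourier transform in \eqref{eq:subfnuh}), handles the region near $1/\r$ with \eqref{eq:boundsSD} and \eqref{eq:Asymp111}, and controls the tail by Cauchy--Schwarz, which is precisely where $a>\tfrac12$ is used (to make $x^{-a}\in L^2$ at infinity). You instead aim for an outright $L^1$ bound on $\nuh_p^{(n)}$ over the half-line via a telescoping induction, and you are right that if it worked it would prove a stronger statement not requiring $a>\tfrac12$.

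However, the key step has a gap. You invoke Lemma \ref{lprop:SD2} to get ``only finitely many sign changes $c=b_0<b_1<\cdots<b_M$ on $[c,\infty)$'' for $\nuh_p^{(n)}$, and then telescope the integral against the bounded function $\nuh_p^{(n-1)}$. But Lemma \ref{lprop:SD2}\eqref{it:sdd1} only asserts that $\nuh^{(n)}>0$ on $(1/\r,a_n)$ and $\nuh^{(n)}<0$ on $(a_n,a_{n-1})$; for $n\geq 2$ this is a statement about a \emph{bounded} interval, and the lemma is silent about the sign behaviour of $\nuh^{(n)}$ on $(a_{n-1},\infty)$. A priori there could be infinitely many sign changes there, in which case the telescoping identity $\int_c^\infty|\nuh_p^{(n)}|=\sum_j|\nuh_p^{(n-1)}(b_{j+1})-\nuh_p^{(n-1)}(b_j)|$ is an infinite sum that you have no control over. (This is exactly the reason the paper does not attempt the $L^1$ bound directly and falls back on $L^2$ plus $a>\tfrac12$.) If you want this route to work you would need to invoke the full Sato--Yamazato sign-change theorem (that $\nuh^{(n)}$ has exactly $n$ sign changes), not the partial statement reproduced as Lemma \ref{lprop:SD2}, and you would need to verify it transfers to $\nuh_p$.

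A secondary issue: in closing the induction you deduce ``decay at $+\infty$ from the eventual monotonicity of $\nuh_p^{(n)}$ past $b_M$ combined with its just-proved integrability.'' Finitely many sign changes gives that $\nuh_p^{(n)}$ is eventually of \emph{one sign}, not eventually \emph{monotone} (monotonicity would require sign control on $\nuh_p^{(n+1)}$, which is not part of the induction at level $n$). A one-signed integrable function need not vanish at infinity, so this step also needs to be tightened. Finally, your remark that $a>\tfrac12$ is not really used should itself have been a warning: the paper's own argument uses it essentially, and the lemma as stated is calibrated to it.
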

\begin{proof}
Let us prove that \eqref{eq:abs_nun} holds for $p=0$. The proof for  $p=1,2,\cdots$  follows since  $\mathcal{T}_p \psi \in \Ne(\phi(p))\subset \Ne(m)$.  From Parseval's identity and the fact  that, for all $0\leq n\leq \Si$,  $\nuh^{(n)} \in \cco\lbrb{\R}$, see items  \eqref{it:pnu1} and \eqref{it:pnu3} of Proposition \ref{thm:prop_self},  one gets, for $n\leq \Si-1$, that
\begin{equation}\label{eq:Parseval}
\IInt{\frac1\r}{\infty}\labsrabs{\nuh^{(n)}(x)}^2dx=\IInt{0}{\infty}\labsrabs{\nuh^{(n)}(x)}^2dx=\IInt{-\infty}{\infty}b^{2n}\labsrabs{\Fc_{\nuh}\lbrb{ib}}^2db<\infty,
\end{equation}
where the finiteness follows from \eqref{eq:subfnuh}.
To prove  \eqref{eq:abs_nun} in the case $\Si=\infty$ and $p=0$, we use the notation of Lemma \ref{prop:SD2} with $D_n=\frac{1}{\r}+D\lbrb{a_{n+1}-\frac1\r}$,  and, for the first inequality below,  the estimate \eqref{eq:boundsSD} and the continuity of $\nuh^{(n)}$, and, for the second one the classical Cauchy-Schwarz inequality, to get,  for any $a>\frac12,\,n\geq 0$, that
\begin{eqnarray*}
  \int_{\frac{1}{\r}}^{\infty} |x^{-a}\nuh^{(n)}(x)| dx &=& \int_{\frac{1}{\r}}^{D_n} |x^{-a} \nuh^{(n)}(x)| dx + \int_{D_n}^{\infty} |x^{-a} \nuh^{(n)}(x)| dx\\
  &\leq &  \int_{0}^{D_n-\frac{1}{\r}} \labsrabs{\lbrb{x+\frac1\r}^{-a} \nuh^{(n)}\left(x+\frac1\r\right)} dx + \int_{D_n}^{\infty} |x^{-a}\nuh^{(n)}(x)| dx\\
  &\leq& C_{n} \int_{0}^{D_n-\frac{1}{\r}} x^{-n-a} \nuh\lbrb{\frac{x}{D}+\frac{1}{\r}} dx + C_{n,a}\lbrb{\int_{0}^{\infty} |\nuh^{(n)}(x)|^2 dx}^{\frac12} \\ &<&\infty,
\end{eqnarray*}
where $C_{n}>0,\,C_{n,a}>0$ and the finiteness follows from \eqref{eq:Asymp111} and the estimate \eqref{eq:Parseval}. To prove \eqref{eq:abs_nun} in the case $\Si<\infty$, we follow the same line of reasoning by recalling, from Proposition \ref{thm:prop_self}\eqref{it:pnu3}, that, for any $n\leq \Si-1<\infty$, $\nuh^{(n)} \in \cco(\R)$, and \eqref{eq:Parseval} holds for $n\leq \Si-1$. The only modification, thanks to $\frac1\r>0$, lies in the estimate
\begin{eqnarray*}
	\int_{\frac{1}{\r}}^{\infty} |x^{-a}\nuh^{(n)}(x)| dx &\leq & \r^{a}\int_{\frac{1}{\r}}^{D_n} | \nuh^{(n)}(x)| dx + \int_{D_n}^{\infty} |x^{-a} \nuh^{(n)}(x)| dx<\infty.
\end{eqnarray*}
\end{proof}
We  are ready to  prove the  the polynomial decays of the modulus of the Mellin  transform of $\nu$ along imaginary lines, that is \eqref{eq:subexp1} in Theorem \ref{thm:existence_invariant_1} \eqref{item:subexpV}.
 With the notation and the claims
of Lemma \ref{lem:t_r}, for any $p=\mladen{0},1, \ldots$ , we write
\[\nue_{p+1}(y) = e^{-y}\nuh_{p+1}(e^{-y})= \frac{\nuh_{p}(e^{-y})}{\phi(p)}=e^y\nu_{p}(e^y)\]
and we have $\Si$ and $\ok$  invariant in $p$ for any $\nu_p,\,\nuh_p$ and $\supp\,\chi_{p+1}=\lbrbb{-\infty,\ln\r}$, if $\r<\infty$, and $\supp\,\chi_{p+1}=\lbrb{-\infty,\infty}$, otherwise.
 Then, we get, from Proposition \ref{thm:prop_self}\eqref{it:pnu3}, that  $\nue_{p+1} \in \cco^{\Si}\lbrb{\R\setminus\lbcurlyrbcurly{\ln\r}}$ if $\r<\infty$ and $\nue_{p+1} \in \cco^{\infty}\lbrb{\R}$, otherwise. Thus, for any $a\geq n$, applying  the Faa di Bruno formula to $\nuh_{p}(e^{-y})$ we get with some $C_n>0$,
\begin{eqnarray*}
\int_{-\infty}^{\ln\r} e^{ay}|\nue^{(n)}_{p+1}(y)|dy& \leq&\frac{C_n}{\phi(p)}\int_{-\infty}^{\ln\r}e^{ay}\sum_{k=0}^{n}|e^{-ky}\nuh^{(k)}_{p}(e^{-y})|dy \\&=& \frac{C_n}{\phi(p)}\int_{\frac{1}{\r}}^{\infty} \sum_{k=0}^{n}|x^{k-1-a}\nuh^{(k)}_{p}(x)|dx  < \infty,
\end{eqnarray*}
  where the finiteness follows from \eqref{eq:abs_nun} since from $a\geq n$ then $a+1-k\geq 1,\,0\leq k\leq n,$ and $n\leq \Si-1$. Thus, we have for all $p=0,1\ldots$ and $0\leq n\leq \Si-1$, that, for all $a\geq n$, the mapping $y\mapsto \nue^{(n)}_{p+1,a}(y)=\lbrb{e^{ay}\nue_{p+1}(y)}^{(n)}\in \lt^1(\R)$ and by the Riemann-Lebesgue lemma applied to the Fourier transform of $\lbrb{e^{ay}\nue_{p+1}(y)}^{(n)}$ we get that
  \begin{equation} \label{eq:fnuep}
  \labsrabs{\Fc_{\nue_{p+1,a}}\lbrb{ib}}\stackrel{\pm\infty}{=}\so{|b|^{-n}}.\end{equation}
    On the other hand, writing $z=a+ib,\,a\geq n,\,b\in \R$,  we have because $\nue_{p+1}(y)=e^y\nu_{p}\lbrb{e^y}$
 \[\Fc_{\nue_{p+1,a}}\lbrb{ib} =\mathcal{F}_{\nue_{p+1}}(z)= \int_\R e^{zy}\nue_{p+1}(y)dy=  \int_0^{\infty}x^{z}\nu_{p}(x)dx=\M_{V_{\mathcal{T}_{p}\psi}}(z+1).\]
From Lemma \ref{lem:t_r} \eqref{it:tb_2} and then from the functional equation \eqref{eq:feVpsi_R} we deduce that
 \[ \M_{V_{\mathcal{T}_{p}\psi}}(z+1)= \frac{\M_{V_{\psi}}(z+p+1)}{W_{\phi}(p+1)}= \frac{\phi(z+p)...\phi(z)}{W_{\phi}(p+1)}\M_{V_{\psi}}(z).\]
 Henceforth, we get that, for any $p=0,1\ldots\,$, $n\leq\Si-1$ and $a\geq n$,
 \begin{equation}\label{eq:FTzero}
   \lim_{|b| \to \infty }|b|^n  \frac{\prod_{j=0}^{p}|\phi(a+j+ib)|}{W_{\phi}(p+1)}|\M_{V_{\psi}}(a+ib)| =   \lim_{|b| \to \infty }|b|^n |\Fc_{\nue_{p+1,a}}(ib)| = 0,
 \end{equation}
 where the last identity follows from in \eqref{eq:fnuep}.
 Fix $a'>d_\phi$ and $p'\in\N,\,p'\geq1,$ such that $a=a'+p'>n$. We have, from  \eqref{eq:feVpsi_R}, that
 \[\M_{V_{\psi}}(p'+a'+ib)=\phi\lbrb{p'+a'-1+ib}\cdots\phi\lbrb{a'+i b}\M_{V_{\psi}}(a'+ib).\]
 Then applying \eqref{eq:FTzero} with $a=a'+p'>n$ and $p=p'$ we arrive at
 \begin{equation}\label{eq:FTzero1}
 \lim_{|b|\to\infty }|b|^n  \frac{\prod_{j=0}^{2p'}|\phi(a+j+ib)|}{W_{\phi}(p+1)}|\M_{V_{\psi}}(a'+ib)| = 0.
 \end{equation}
 Since $\phi\in\Be_\Ne$ from  Proposition \ref{propAsymp1}\eqref{it:finitenessPhi}  the \LL measure associated to $\phi$ is $\PP(y)dy,\,y>0$, that is, absolutely continuous with respect to the Lebesgue measure. Hence, if in addition $\PPP(0^+)=\int_{0}^{\infty}\PP(y)dy=\infty$, then \cite[Theorem 27.7]{Sato-99} implies that the underlying descending ladder height process $\eta^{-}$, see \eqref{eq;WHFactors},  is absolutely continuous with respect to the Lebesgue measure. Therefore, from \eqref{eq;WHFactors} and the Riemann-Lebesgue Theorem, for any fixed $a'>d_\phi$,
 \[\limi{|b|}e^{-\phi\lbrb{a'+ib}}=\limi{|b|}\Ebb{e^{-\lbrb{a'+ib}\eta^-_1}}=\limi{|b|}\int_{0}^{\infty}e^{-a'x-ibx}\Pbb{\eta_1^{-}\in dx}=0\]
  and we conclude that $\limi{|b|}\labsrabs{\phi\lbrb{a'+ib}}=\infty$. Otherwise, if $\PPP(0^+)<\infty$, i.e.~$\phi(\infty)=\r<\infty$, then elementary from the representation of $\phi$ in  Proposition \ref{propAsymp1}\eqref{it:finitenessPhi} and the Riemann-Lebesgue Theorem
  $\limi{|b|}\phi\lbrb{a'+ib}=\phi(\infty)=\r.$ In both cases, the limit is non-zero and we deduct from \eqref{eq:FTzero1} that \[\lim_{\labsrabs{b}\to\infty}|b|^n\labsrabs{\M_{V_{\psi}}(a'+ib)}=0,\]
  which furnishes the proof of our claim for  $n<\Si-1$.
\subsubsection{Proof of Theorem \ref{thm:smoothness_nu1}\eqref{it:cinfty0_nu1}}\label{sec:smoothNinf}
We are ready to complete Theorem \ref{thm:smoothness_nu1}\eqref{it:cinfty0_nu1}. From Lemma \ref{lem:smooth_nu} it remains to show that if $\psi \in \Nii$ then, for any $n\in \N$, $\lim_{x\to \infty} \nu^{(n)}(x)=0$. The following estimate which holds in a more general context  provides this property.
	\begin{lemma}\label{lem:MellinTT11}
		Let $\psi\in \Ne$ with $\Si>2$. Then,
		for any $x>0$, $n,k\in \N$, with $k<\Si-2 $, and any $\overline{a}> d_\phi$,  there exists a constant $C=C_{n,k,\overline{a}}>0$ such that for any $x>0$
		\begin{eqnarray}\label{eq:nuMellinInv}
		\labs\lb x^{n}\nu(x)\rb^{(k)}\rabs\leq C x^{n-k-\overline{a}}.
		\end{eqnarray}
	\end{lemma}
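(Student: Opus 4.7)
The strategy is to express $(x^n\nu(x))^{(k)}$ as a Mellin-Barnes integral, and to read the bound directly off the horizontal decay of $x^{-z}$ along a suitable vertical line. By Theorem \ref{thm:existence_invariant_1}\eqref{it:fe_MV}, the Mellin transform $\M_\nu$ is analytic on $\Cb_{(d_\phi,\infty)}$, and by item \eqref{item:subexpV}, for any real $u\leq \Si-1$ (which is strictly greater than $1$ under the hypothesis $\Si>2$) and any $a>d_\phi$ we have $|\M_\nu(a+ib)|\stackrel{\pm\infty}{=}\so{|b|^{-u}}$.

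Set $g(x)=x^n\nu(x)$. A change of variable gives $\M_g(z)=\M_\nu(z+n)$ on $\Cb_{(d_\phi-n,\infty)}$, and by the standard Mellin differentiation rule recalled in Section \ref{sec:rev_mellin} one has
\[
\M_{g^{(k)}}(z)=(-1)^k (z-k)_k\,\M_\nu(z-k+n),\qquad z\in\Cb_{(d_\phi+k-n,\infty)}.
\]
Fix $\overline{a}>d_\phi$ and shift the contour of integration to the vertical line $\Re(z)=\overline{a}+k-n$, which lies inside the strip of analyticity of $\M_{g^{(k)}}$. Since $k<\Si-2$, we may invoke the subexponential estimate of Theorem \ref{thm:existence_invariant_1}\eqref{item:subexpV} with $u=k+2\leq\Si-1$, so that on this vertical line
\[
\bigl|(z-k)_k\,\M_\nu(z-k+n)\bigr|=\bo{|b|^{k}}\cdot\so{|b|^{-k-2}}=\so{|b|^{-2}},\qquad|b|\to\infty,
\]
which is absolutely integrable and uniformly decaying at infinity. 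The Mellin inversion formula \eqref{eq:MellinInversionFormulaDeriv} therefore applies and gives, for all $x>0$,
\[
g^{(k)}(x)=\frac{(-1)^k}{2\pi i}\int_{\overline{a}+k-n-i\infty}^{\overline{a}+k-n+i\infty} x^{-z}(z-k)_k\,\M_\nu(z-k+n)\,dz.
\]

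Taking absolute values inside the integral one obtains $|x^{-z}|=x^{n-k-\overline{a}}$ along this contour, so that
\[
|g^{(k)}(x)|\leq \frac{x^{n-k-\overline{a}}}{2\pi}\int_{-\infty}^{\infty}\bigl|(\overline{a}-n+ib)_k\,\M_\nu(\overline{a}+ib)\bigr|\,db=:C_{n,k,\overline{a}}\, x^{n-k-\overline{a}},
\]
and the constant $C_{n,k,\overline{a}}$ is finite by the $\so{|b|^{-2}}$ estimate above. This establishes \eqref{eq:nuMellinInv}. The only non-routine step is the justification of the contour shift and of Mellin inversion, which is handled entirely by combining the analyticity window from Theorem \ref{thm:existence_invariant_1}\eqref{it:fe_MV} with the polynomial decay from item \eqref{item:subexpV}; the condition $k<\Si-2$ is exactly what guarantees that the polynomial factor $(z-k)_k$ is dominated by the decay of $\M_\nu$ to produce an $L^1$ integrand.
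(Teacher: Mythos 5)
Your proof is correct and follows essentially the same strategy as the paper: both represent $(x^n\nu(x))^{(k)}$ as a Mellin--Barnes integral along a vertical line $\Re(\cdot)=\overline{a}$ (the paper by multiplying the inversion formula for $\nu$ by $x^n$ and differentiating under the integral, you by applying the Mellin differentiation rule to $g=p_n\nu$ and then inverting) and then invoke the polynomial decay of $\M_{V_\psi}$ from Theorem \ref{thm:existence_invariant_1}\eqref{item:subexpV} with $u=k+2\le\Si-1$ to dominate the Pochhammer factor and obtain integrability. The phrase ``shift the contour'' is a slight misnomer since the contour is simply chosen from the outset, but this does not affect the argument.
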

	\begin{remark}\label{rem:Comparison}
		Note that the bounds at $\infty$ presented in Lemma \ref{prop:derivatives} below are far more precise as they depend on $\nu$. However, \eqref{eq:nuMellinInv}  are uniform on $\R_+$ and allow us to have some grip on the behaviour at zero of $\nu$ and its derivatives.
	\end{remark}
	\begin{proof}
Recall the  Pochhammer symbol $(n-z)_k=\prod_{j=0}^{k-1}\lbrb{n-j-z}$. Fix $\overline{a}>d_\phi$ and $k<\Si-2$. Then from  Theorem \ref{thm:existence_invariant_1}\eqref{item:subexpV}, $|z|^k\labsrabs{\M_{V_\psi}\lbrb{z}}=\bo{|z|^{-2}}$  along $\Cb_{\overline{a}}$. Therefore, $\left|(n-z)_k \: \M_{V_\psi}(z)\right|$ is integrable along  $\Cb_{\overline{a}}$ and  multiplying by $x^n$ the Mellin inversion formula for $\nu$, see \eqref{eq:MellinInversionFormula}, and differentiating it $k$ times to get
		\begin{eqnarray*}
			\left|\lb x^{n}\nu(x)\rb^{(k)} \right| &=& \left| \frac{1}{2\pi i}\int_{\overline{a}-i\infty}^{\overline{a}+i\infty} x^{n-k-z}(n-z)_k \: \M_{V_\psi}(z)dz \right|\\
&\leq & x^{n-k-\overline{a}} \frac{1}{2\pi}\int_{-\infty}^{\infty} \left|(n-\overline{a}-ib)_k \: \M_{V_\psi}\lbrb{\overline{a}+ib}\right| db
		\end{eqnarray*}
and the finiteness of the last integral gives \eqref{eq:nuMellinInv}.
	\end{proof}

\subsection{Small asymptotic behaviour of $\nuh$ and of its successive derivatives}
Finally, we provide an extremely precise asymptotic results for $\nuh(x)$ (resp.~$\nue(x)$) and its successive derivatives when $x$ tends to $0$ (resp.~$\infty$). We stress that,  for only some isolated cases, one can find in the literature information about the behaviour of  $ \ln \int_0^x\nuh(y)dy $, see \cite[Theorem 5.2]{Sato-Yam-78}. On the other hand, we are not aware of any instances of a class of probability density functions for which such a precise asymptotic estimate has been provided. The novelty of our approach seems to come from the fact that we are able to describe the asymptotic behaviour of the Mellin transform of $\nuh$, i.e.~$\MIp$, along the negative real line and imaginary lines, together with some fine distributional properties such as log-concavity of $\nuh$ and related to it quantities at $0$.
\begin{theorem}\label{thm:Klupel}
Let $\psi \in \Nii(m)=\Nii\cap\Nm$.  Recalling that  $\varphi:[m,\infty)\mapsto [0,\infty)$ stands for  the inverse function of $\phi$,  we have, with $\tilde{C}_{\psi}>0$, that
 \begin{equation}\label{eqn:nu0Asymp1}
 \nuh^{}(x)\simo \frac{\tilde{C}_{\psi} m}{\sqrt{2\pi}}\frac{1}{x}\sqrt{\varphi'\lb\frac{1}{x}\rb} e^{-\int_{m}^{\frac{1}{x}} \varphi(r)\frac{dr}{r}}.
 \end{equation}
  Moreover, for any $n\geq0$, we have that
\begin{equation}\label{eqn:nuAsymp}
\nuh^{(n)}(x)\simo \frac{\tilde{C}_{\psi} m}{\sqrt{2\pi}}\frac{1}{x^{n+1}}\varphi^{n}\lb\frac{1}{x}\rb\sqrt{\varphi'\lb\frac{1}{x}\rb} e^{-\int_{m}^{\frac{1}{x}} \varphi(r)\frac{dr}{r}}.
\end{equation}
Finally, the following relation holds true
\begin{equation}\label{eqn:nueAsymp}
\nue(x)\simi \frac{\tilde{C}_{\psi} m}{\sqrt{2\pi}}\sqrt{\varphi'\lb e^{x}\rb} e^{-\int_{m}^{e^{x}} \varphi(r)\frac{dr}{r}}.
\end{equation}
\end{theorem}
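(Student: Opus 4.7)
My plan is to split the proof into three stages matching the three assertions \eqref{eqn:nu0Asymp1}, \eqref{eqn:nuAsymp} and \eqref{eqn:nueAsymp}, using the generalized non-classical Tauberian theorem \refprop{lem:Klupelberg} as the main analytical engine, applied to the Mellin transform of $\nuh$ whose asymptotic behaviour along the real line is controlled by Webster's formula \eqref{lemmaAsymp1-2}.

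\textbf{Step 1 (The base case $n=0$).}  First I would record the functional equation for $\MIp$. From Lemma~\ref{lem:t_r}\eqref{it:tb_1} together with \eqref{eq:mo_Ip} one has, for $n\in\N$, that $\E[I_\psi^{-n}]=mW_\phi(n)$; arguing exactly as in the proof of Theorem~\ref{thm:existence_invariant_1}\eqref{it:fe_MV} one extends this identity into $\MIp(1-z)=m\,\M_{V_\psi}(z)/\phi(0)=\M_{V_\psi}(z)$ valid on $\Cb_{\lbrb{d_\phi-1,\infty}}$ (or whatever range is needed), so that the positive real asymptotic of $\MIp$ at infinity is, by \eqref{lemmaAsymp1-2}, precisely of the form $C_\psi\sqrt{\phi(u)}e^{G(u)}$ with $G(u)=\int_1^u\ln\phi(r)dr$. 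Second, one must verify the hypotheses of \refprop{lem:Klupelberg}, which for a density at the left endpoint of its support essentially require: (i) the smoothness of $\nuh$ (delivered by \refprop{thm:prop_self}\eqref{it:pnu3}, which yields $\nuh\in\cco^\infty(\R_+)$ under $\psi\in\Nim$, and by the polynomial smoothing obtained by applying $\mathcal{T}_p$ when $\psi\in\Nii(m)\setminus\Nim$); (ii) an ultimate log-convexity/concavity property for $\nuh$ near $0$, which one deduces either from the sign pattern recorded in Lemma~\ref{lprop:SD2}\eqref{it:sdd1} and the identity \eqref{eq:SatoEquation}, or (in the general $\Nii(m)$ case) by transferring through $\nuh(x)=(m/x)\nu(1/x)$ the large-time log-concavity of $\nu$ (which follows from the monotone density theorem once \eqref{lemmaAsymp1-2} is combined with \eqref{eq:MellinInversionFormula}); (iii) a mild control of $\MIp$ along vertical lines, furnished by \eqref{eq:subfnuh}. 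The Tauberian conclusion then produces \eqref{eqn:nu0Asymp1} with the saddle $a^\ast=a^\ast(x)$ satisfying $\ln\phi(a^\ast)=\ln(1/x)$, i.e.\ $a^\ast=\varphi(1/x)$, the Gaussian contribution giving the factor $\sqrt{\varphi'(1/x)}$ through $G''(a^\ast)=\phi'(a^\ast)/\phi(a^\ast)=1/(a^\ast\varphi'(1/x)\phi(a^\ast))$-type simplifications, and the exponential weight $e^{G(a^\ast)-a^\ast\ln(1/x)}=e^{-\int_m^{1/x}\varphi(r)dr/r}$ by an elementary change of variable $r=\phi(a)$.

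\textbf{Step 2 (Derivatives).}  For the general-order statement \eqref{eqn:nuAsymp} the plan is to proceed iteratively, exploiting the integral equation \eqref{eq:SatoEquation} rather than redoing a saddle-point analysis. Differentiating the ansatz $\nuh(x)=f(x)\exp(-H(x))$ with $H(x)=\int_m^{1/x}\varphi(r)dr/r$ gives $H'(x)=-\varphi(1/x)/x$, whence formally $\nuh'(x)\simo x^{-1}\varphi(1/x)\nuh(x)$; iterating yields the prefactor $x^{-n}\varphi^n(1/x)$ predicted in \eqref{eqn:nuAsymp}. To make this rigorous, I would substitute the ansatz into the two-sided equation \eqref{eq:SatoEquation}, show that the nonlocal integral term $\int_0^\infty(\nuh^{(n)}(x-y)-\nuh^{(n)}(x))d\kappasd(y)$ is, near $x=0^+$ and in view of Lemma~\ref{prop:SD1}\eqref{it:kappa}, of lower order than $x\nuh^{(n+1)}(x)$, and then read off the recursion $\nuh^{(n+1)}(x)\simo -x^{-1}\varphi(1/x)\nuh^{(n)}(x)$ by induction on $n$; the sign $(-1)^n$ in \eqref{eqn:nuAsymp}/\eqref{eqn:nu0Asymp_1} and the non-vanishing of $\varphi$ on $[m,\infty)$ guarantee that the leading term survives each step. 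Alternatively, the same conclusion can be reached by applying \refprop{lem:Klupelberg} directly to $\nuh^{(n)}$, whose Mellin transform is $(-1)^n(z-n)_n\MIp(z-n)$ (cf.\ \eqref{eq:MellinInversionFormulaDeriv}); the extra polynomial factor does not move the saddle but multiplies the amplitude by $\varphi^n(1/x)$ because $(z-n)_n\sim z^n$ at the saddle $z=a^\ast=\varphi(1/x)$.

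\textbf{Step 3 (Transfer to $\nue$).}  The final relation \eqref{eqn:nueAsymp} is a direct change of variable: by definition $\nue(y)=e^{-y}\nuh(e^{-y})$, so substituting $x=e^{-y}$ into \eqref{eqn:nu0Asymp1} and multiplying by $e^{-y}$ cancels the $1/x$ factor and turns the integral $\int_m^{1/x}\varphi(r)dr/r$ into $\int_m^{e^y}\varphi(r)dr/r$, giving \eqref{eqn:nueAsymp} at once.

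\textbf{Main obstacle.}  The delicate point is Step~1(ii): verifying that $\nuh$ (and its derivatives in Step~2) satisfies the monotonicity/log-convexity tightening required by \refprop{lem:Klupelberg}. Outside the pure-jump infinite-activity case $\Nim$, where \cite{Sato-Yam-78} supplies the alternating sign pattern of Lemma~\ref{lprop:SD2} directly, one must bootstrap these properties by passing through the $\mathcal{T}_p$-shifted densities $\nuh_p$ (Lemma~\ref{lem:t_r}) and controlling the error terms in \eqref{eq:SatoEquation} uniformly in the order of differentiation, which is where most of the technical bookkeeping will lie.
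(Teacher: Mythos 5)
Your overall plan correctly identifies \refprop{lem:Klupelberg} as the engine and \eqref{lemmaAsymp1-2} as the source of the relevant asymptotic of the bilateral moment transform, and Step~3 (the change of variables from $\nuh$ to $\nue$) is exactly right. However, there are two genuine problems.

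\textbf{The primary route in Step 2 has a sign/order error and is in any case circular.} You propose to show that the nonlocal term $\int_0^\infty(\nuh^{(n)}(x-y)-\nuh^{(n)}(x))d\kappasd(y)$ in \eqref{eq:SatoEquation} is of \emph{lower} order than $x\nuh^{(n+1)}(x)$ near $0^+$ and then read off the recursion $\nuh^{(n+1)}(x)\simo -x^{-1}\varphi(1/x)\nuh^{(n)}(x)$. But if the integral is lower order, the identity forces $x\nuh^{(n+1)}(x)\simo-(n+1)\nuh^{(n)}(x)$, which contradicts the claimed recursion (the factor $\varphi(1/x)\to\infty$ when $\phi(\infty)=\infty$, so it is instead the term $-(n+1)\nuh^{(n)}(x)$ that is negligible). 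To get the right answer you would have to evaluate the nonlocal integral and show it is $\simo-\varphi(1/x)\nuh^{(n)}(x)$ --- but that evaluation is precisely as hard as the Tauberian conclusion itself and would, as far as I can see, require the asymptotics of $\nuh^{(n)}$ as input, making the argument circular. The paper avoids this entirely: it applies \refprop{lem:Klupelberg} separately to each $f_n(y)=e^{-ny}\nuh^{(n)}(e^{-y})$, using the identity $\mathcal{F}_{f_n}(u)=m(u)_n\,\Mp(u+1)$ (Lemma~\ref{lem:preliminary_f_k}), so that the polynomial $(u)_n$ enters the flat amplitude $\beta$ and produces the $\varphi^n(1/x)$ factor without any recursion. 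Your ``alternative'' route in Step 2 is in fact the correct one and should have been the primary one.

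\textbf{Step 1(ii) --- the log-concavity verification --- is underestimated and your proposed shortcuts do not deliver it.} First, the Tauberian theorem requires log-concavity of $f_n(y)=e^{-ny}\nuh^{(n)}(e^{-y})$ near $y=+\infty$, and this is a strictly stronger property than log-concavity of $\nuh^{(n)}$ near $0^+$: log-concavity is not preserved by the reparametrization $x\mapsto e^{-y}$. The paper bridges this gap with a separate structural result, Lemma~\ref{prop:technical}, which exploits the integro-differential identity \eqref{eq:SatoEquation} itself to upgrade log-concavity of $v$ to log-concavity of $y\mapsto v(e^{-y})$. Second, even log-concavity of $\nuh^{(n)}$ on a neighbourhood of $0$ does not follow directly from the sign pattern in Lemma~\ref{lprop:SD2}\eqref{it:sdd1}; the paper proves it through an approximation scheme (Lemma~\ref{lem:kappa_j}, Lemma~\ref{lem:logcon}, Lemma~\ref{lem:logconcave}) with truncated kernels $\kappasdj$, where the crucial point is to rule out a first zero of $v_j^{(n+2)}v_j^{(n)}-(v_j^{(n+1)})^2$ using the recursion \eqref{eq:approxSD1}, and then to pass to the limit. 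Third, your fallback claim that log-concavity of $\nuh$ near $0$ can be obtained by ``transferring through $\nuh(x)=(m/x)\nu(1/x)$ the large-time log-concavity of $\nu$ (which follows from the monotone density theorem once \eqref{lemmaAsymp1-2} is combined with \eqref{eq:MellinInversionFormula})'' is not a valid argument: the monotone density theorem yields monotonicity from an integrated asymptotic, not convexity of a logarithm, and extracting log-concavity of the density from the real-axis asymptotic of its Mellin transform is precisely the kind of conclusion that the Tauberian theorem is meant to produce --- using it as an input is circular. A further caveat: you should also verify conditions (a) and (c) of \refprop{lem:Klupelberg} for each $f_n$, which in the paper occupy Lemma~\ref{lem:veryThinTail}, Lemma~\ref{lem:f_1k} and Proposition~\ref{prop:NASCforAP}; condition (c) ties in with Proposition~\ref{propBernsteinlog} and is where the restriction $\psi\in\Nii(m)\subset\Nim$ is actually used.
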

\begin{remark}\label{rem:KlupelBM1}
We stress that when $\psi \in \Nii(m)$ for the positive self-decomposable variable $I_\psi$ we have that $\kappasd(0^+)=\infty$, recall \eqref{eq:SDLT} for the definition of $\kappasd$. This is strictly beyond \cite[Theorem 53.6]{Sato-99} which discusses only the case when $0<\kappasd(0^+)<\infty$. Our case, i.e.~$\kappasd(0^+)=\infty$ and $\kappasd(0^-)=0$, seems to have been studied only in \cite[Lemma 2.5]{Watanabe-96}  but merely on the $log$-scale and when $\kappasd \in RV_\alpha(0)$, that is when $\kappasd$ is regularly varying at zero.
\end{remark}
\subsubsection{A non-classical Tauberian theorem}
The proof of  Theorem \ref{thm:Klupel} is based on an improved version of a Tauberian theorem that was originally proved by Balkema et al.~in \cite[Theorem 4.4]{Bal-Klu-Sta-93}. It is a non-classical Tauberian  in the sense that it relates the upper tail behaviour of the bilateral Laplace transform to the upper tail behaviour of the associated probability density function. For the sake of clarity, we state and prove below a slight generalization and an adapted version of this Tauberian theorem  which is more suitable to our context and allows its application for the successive derivatives of the density function.  In particular, the original result \cite[Theorem 4.4]{Bal-Klu-Sta-93} is stated for the density of a probability distribution and a minor device allows us to extend it to real-valued functions which are ultimately positive. We proceed by  introducing some notation and terminologies.
Let $G:\R\mapsto\R$ be a convex function. Then
\begin{equation}\label{eq:Legendre}
G^*(y)=\sup_{u\in \R}\{y u-G(u)\}
\end{equation}
 is called the Legendre transform or the complex conjugate of $G$. If $G^{(2)}>0$ on $\R$ then the supremum is attained at $u$ such that $y=G'(u)$. We say that $G$ is \emph{asymptotically parabolic} if $G^{(2)}>0$ on $\R$ and its scale function $s_G=1/\sqrt{G^{(2)}}$ is \emph{self-neglecting}, i.e.
\begin{equation}\label{eq:selfNeglecting}
\lim\limits_{u\to\infty}\frac{s_G(u+\mathfrak{a}s_G(u))}{s_G(u)}=1
\end{equation}
uniformly on bounded intervals of  the real variable $\mathfrak{a}$.\\
Next, we say that a function $F$ has a  \emph{very thin tail} if there exists $y_0\in\R$ such that
\begin{eqnarray}
\label{eq:verythintail1} F(y)&>&0 \quad \text{for all $y>y_0,$}\\
\label{eq:verythintail2} \lim\limits_{y\to\infty} F(y)e^{ky}&=&0 \quad \text{for all $k\in \N$}.
\end{eqnarray}
Finally, we recall the notation for a function $f$ and  some real $u$,
\[ \mathcal{F}_{f}(u)=\int_{\R}e^{uy}f(y)dy.  \]
We are ready to state and prove the following adapted version of \cite[Theorem 4.4]{Bal-Klu-Sta-93}.
\begin{proposition}\label{lem:Klupelberg}
Let us assume that the following conditions hold.
\begin{enumerate}[(a)]
\item  \label{it:Taub_tt} Let $f:\R \rightarrow \R$ such that $f(y)>0$, for all $y>a \in \R$. Set $f_a(y)=f(y)\mathbb{I}_{\{y>a\}}$ and assume that  $F_a(x)=\int\limits_{x}^{\infty}f_a(y)dy$ has a very thin tail and
 \[\mathcal{F}_{f}(u)\simi \mathcal{F}_{f_a}(u).\]
\item \label{it:Taub_lc} $f$ is $\log$-concave in a neighbourhood of $\infty$.
\item \label{it:Taub_ap} We have
 \[\mathcal{F}_{f}(u)=\int_{\R}e^{uy}f(y)dy \simi \beta(u)e^{G(u)},\]
  with $G$ being asymptotically parabolic  and
\begin{equation}\label{eq:flat}
\lim\limits_{u\to\infty}\frac{\beta(u+  \mathfrak{a} s_G(u))}{\beta(u)}=1
\end{equation}
uniformly on bounded intervals of the real variable $ \mathfrak{a}$.
\end{enumerate}
Then
\begin{equation}\label{eq:Tauber}
f(y)\simi \frac{1}{\sqrt{2\pi}}\frac{\beta^*(y)}{s_{G^*}(y)}e^{-G^*(y)},
\end{equation}
where $G^*$ is the Legendre transform of $G$, $s_{G^*}$ is its own scale function and $\beta^*(y)=\beta(u)$ is defined via the following relation between $y$ and $u$, that is $y=G'(u)$.
\end{proposition}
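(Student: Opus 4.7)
The plan is to adapt the proof of Balkema--Kl\"uppelberg--Stadtm\"uller \cite[Theorem 4.4]{Bal-Klu-Sta-93} by first reducing to the probability-density framework and then running the saddle-point / Esscher tilt argument. Since hypothesis \eqref{it:Taub_tt} ensures that $f_a$ is non-negative, with a very thin right tail, and that $\mathcal{F}_f(u)\sim\mathcal{F}_{f_a}(u)$, it suffices to prove the asymptotic \eqref{eq:Tauber} for $f_a$ in place of $f$: the left tail of $f$ is irrelevant because $y\to\infty$, and the right-hand side of \eqref{eq:Tauber} is unchanged at leading order under this replacement. Set $c(u)=\mathcal{F}_{f_a}(u)$, so that the tilted probability density
\[
p_u(y)=\frac{e^{uy}f_a(y)}{c(u)},\qquad y\in\R,
\]
is well-defined for all sufficiently large $u$.

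The core step is a local central limit theorem for the family $\{p_u\}$ as $u\to\infty$. Writing $\mu(u)=G'(u)$ and $s(u)=s_G(u)=1/\sqrt{G''(u)}$, I would first verify, using the saddle-point relation $\mu(u)=G'(u)$, the asymptotic parabolicity \eqref{eq:selfNeglecting}, and the flatness \eqref{eq:flat}, that the \emph{bilateral} Laplace transform of the rescaled tilt $\tilde p_u(z)=s(u)p_u(\mu(u)+z\,s(u))$ converges locally uniformly to that of the standard Gaussian density. Indeed, a short computation gives
\[
\int_\R e^{\mathfrak{a}z}\tilde p_u(z)\,dz=e^{-\mathfrak{a}\mu(u)s(u)}\,\frac{c(u+\mathfrak{a}/s(u))}{c(u)},
\]
and Taylor-expanding $G$ around $u$ via its asymptotic parabolicity gives $G(u+\mathfrak{a}/s(u))=G(u)+\mathfrak{a}\mu(u)s(u)+\mathfrak{a}^2/2+o(1)$, while the flatness of $\beta$ neutralizes the prefactor. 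Hence the right-hand side tends to $e^{\mathfrak{a}^2/2}$, uniformly on bounded $\mathfrak{a}$-intervals.

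To upgrade this to pointwise convergence $\tilde p_u(0)\to (2\pi)^{-1/2}$ I would invoke hypothesis \eqref{it:Taub_lc}: the log-concavity of $f$ near $\infty$ is preserved by the exponential tilt, so each $\tilde p_u$ is log-concave on a half-line whose left endpoint drifts to $-\infty$ (because $\mu(u)\to\infty$ while $s(u)$ is self-neglecting). Log-concave densities whose bilateral Laplace transforms converge to that of a Gaussian converge pointwise to that Gaussian, via the standard sandwich argument (the ratio $\tilde p_u(z)/\tilde p_u(0)$ is controlled by the shape constraint, and dominated convergence together with the very thin tail \eqref{it:Taub_tt} rules out any mass escaping to infinity). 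In particular, $s(u)p_u(\mu(u))\to (2\pi)^{-1/2}$, which, unwinding the definition of $p_u$, yields
\[
f_a(\mu(u))\sim\frac{1}{\sqrt{2\pi}}\,\frac{c(u)}{s(u)}\,e^{-u\mu(u)}\sim\frac{\beta(u)}{\sqrt{2\pi}\,s(u)}\,e^{G(u)-uG'(u)}.
\]

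Finally, I would translate this into the statement \eqref{eq:Tauber} using Legendre duality. Setting $y=\mu(u)=G'(u)$, one has $G(u)-uG'(u)=-G^*(y)$ and, differentiating $G^*$ twice, $G^{*\prime\prime}(y)=1/G''(u)$, so $s_{G^*}(y)=s_G(u)$; by definition $\beta^*(y)=\beta(u)$. The substitution $y\mapsto u=(G')^{-1}(y)$ is legitimate because $G''>0$ makes $G'$ a strictly increasing homeomorphism onto $G'(\R)$, and the self-neglecting property \eqref{eq:selfNeglecting} of $s_G$ guarantees the asymptotic relation remains valid along the image. The main technical obstacle is the log-concavity-based upgrade from Laplace to pointwise convergence and the control of uniformity; everything else is a careful but essentially routine computation with the Legendre transform and the self-neglecting scale function.
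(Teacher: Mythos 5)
Your approach differs from the paper's: the paper's proof is essentially a verification that the hypotheses of the external result \cite[Theorem 4.4]{Bal-Klu-Sta-93} hold (using \cite[Theorem 2.2]{Bal-Klu-Sta-93} to establish the ``Gaussian tail'' property from log-concavity and asymptotic parabolicity), followed by the reduction from $f$ to the normalized truncation $f_a$. You instead set out to re-derive the core Tauberian statement from first principles via an Esscher tilt and a local central limit theorem. That is a legitimate and more self-contained route, and the skeleton is correct, but as written the scaling is wrong and the intermediate formulas would not yield a Gaussian limit.

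Concretely, since $s_G(u)=1/\sqrt{G''(u)}$ and the tilted density $p_u$ has variance $\approx G''(u)$, the scale function $s(u)$ is the \emph{reciprocal} of the standard deviation. The correct normalization is therefore
\[
\tilde p_u(z)=\frac{1}{s(u)}\,p_u\!\left(\mu(u)+\frac{z}{s(u)}\right),
\]
not $\tilde p_u(z)=s(u)\,p_u(\mu(u)+zs(u))$. With your definition the moment generating function of $\tilde p_u$ is
\[
\int_\R e^{\mathfrak{a}z}\tilde p_u(z)\,dz=e^{-\mathfrak{a}\mu(u)/s(u)}\,\frac{c(u+\mathfrak{a}/s(u))}{c(u)},
\]
and the Taylor expansion at shift $\mathfrak{a}/s(u)$ reads
\[
G\!\left(u+\frac{\mathfrak{a}}{s(u)}\right)=G(u)+\mu(u)\,\frac{\mathfrak{a}}{s(u)}+\frac{\mathfrak{a}^2}{2\,s(u)^4}+\cdots,
\]
not $G(u)+\mathfrak{a}\mu(u)s(u)+\mathfrak{a}^2/2+o(1)$ as you wrote; so after cancellation of the linear term one is left with $e^{\mathfrak{a}^2/(2s(u)^4)}\to 1$, i.e.\ a degenerate Dirac limit, not $e^{\mathfrak{a}^2/2}$. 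With the correct rescaling the shift in the argument of $c$ is $\mathfrak{a}s(u)$, which is exactly the scale at which the flatness hypothesis \eqref{eq:flat} and the self-neglecting property of $s_G$ are formulated, and the expansion $G(u+\mathfrak{a}s(u))=G(u)+\mu(u)\mathfrak{a}s(u)+\mathfrak{a}^2/2+o(1)$ then does hold. A second compensating slip occurs in the Legendre step: from $(G^*)''(y)=1/G''(u)$ one gets $s_{G^*}(y)=\sqrt{G''(u)}=1/s_G(u)$, not $s_{G^*}(y)=s_G(u)$. Because both errors are exponent-sign flips in $s(u)$, they cancel and you land on the correct final formula \eqref{eq:Tauber}, but the intermediate assertions are false. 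If you fix the two signs the Laplace-transform computation is right and matches the correct dual identity. Finally, the ``sandwich'' upgrade from convergence of moment generating functions to pointwise convergence of the log-concave densities is the technical heart of Balkema--Kl\"uppelberg--Stadtm\"uller and deserves more than a sentence; in the paper's proof this is precisely what is delegated to their Theorems 1.2 and 2.2.
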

\begin{proof}
	Let us first assume that $f$ is a probability density function. We show that all the conditions of the statement imply \eqref{eq:Tauber} via an application of \cite[Theorem 4.4]{Bal-Klu-Sta-93}. First, we show that $f$ is of Gaussian tail in the sense of \cite[Definition on p.\,389]{Bal-Klu-Sta-93}, i.e.~$f$ is of very thin tail itself and $f(y)\simi e^{-\tilde{G}(y)}$ with some asymptotically parabolic function $\tilde{G}$. This would trigger the first important condition, that is \cite[Theorem 4.4(1)]{Bal-Klu-Sta-93}. To prove that $f$ is of Gaussian tail we will invoke  \cite[Theorem 2.2]{Bal-Klu-Sta-93}. To do so we see that the $\log$-concavity of $f$ implies the condition (2.1) of  \cite[Theorem 2.2]{Bal-Klu-Sta-93}. The other condition of \cite[Theorem 2.2]{Bal-Klu-Sta-93}, i.e.~$U_\tau$, see \cite[(1.6)]{Bal-Klu-Sta-93} for definition, to be asymptotically normal, i.e.~\cite[(1.10)]{Bal-Klu-Sta-93} to be fulfilled holds true thanks to the assumption for the self-neglecting property of $s_G$ and the behaviour of $\mathcal{F}_{f}$ at infinity, that is the condition \eqref{it:Taub_ap} above, which is enough for \cite[Theorem 1.2, (1.10)]{Bal-Klu-Sta-93} to be true, as $G$ is asymptotically parabolic  and $\beta$ satisfies \eqref{eq:flat}, i.e.~$\beta$ is flat with respect to $G$ in the sense of \cite{Bal-Klu-Sta-93}. All this verifies that $f$ is of Gaussian tail and the proof follows in this case since the other conditions of \cite[Theorem 4.4]{Bal-Klu-Sta-93} follow from the assumptions of our theorem. It is clear that in the end \cite[Theorem 4.4.]{Bal-Klu-Sta-93} serves its purpose only to elucidate the form of $\tilde{G}$.  Now, with
	\[0<C^{-1}_a=\int_{a}^{\infty}f_a(y)dy=\int_{a}^{\infty}f(y)dy<\infty,\]
	 we have that $\bar{f}_a(y)=C_a f_a(y)$ is a probability density function. Moreover, under the condition $\eqref{it:Taub_tt}$ about the asymptotic behaviour of $\mathcal{F}_{f_a}$, we can apply the previous reasoning to $\bar{f}_a$, after  checking easily that  all conditions are satisfied, to get,  $\bar{f}_a(y) \simi \frac{C_a}{\sqrt{2\pi}}\frac{\beta^*(y)}{s_{G^*}(y)}e^{-G^*(y)}$ which, from the definition of $\bar{f}_a$ and $f_a$, completes the proof.
\end{proof}

 \subsection{Proof of Theorem \ref{thm:Klupel}} \label{subsubsec:asymptoticNu}
Let $\psi\in\Niim$ and thus $\nuh\in \cco^{\infty}(\R)$ from Proposition \ref{thm:prop_self}\eqref{it:pnu3}. We aim at applying the Tauberian result of Proposition \ref{lem:Klupelberg} to the continuous functions $f_n$, defined, for any $n\geq 0$, by
\begin{equation}\label{eq:f_k}
f_n(y)=e^y v_n(e^y)=e^{-ny}\nuh^{(n)}(e^{-y}), \: y\in \R.
\end{equation}
Throughout this proof we use the Pochhammer notation $(u)_n=\frac{\Gamma(u+1)}{\Gamma(u+1-n)}=u(u-1)\dots(u-n+1), u \geq 0, n\in \N$.
We start with the $\log$-concavity property of  $f_n$ in a  neighborhood of $\infty$, that is condition \eqref{it:Taub_lc},  and postpone to the next subsections the proof of  the two remaining Tauberian conditions, namely \eqref{it:Taub_tt} and \eqref{it:Taub_ap}.

\subsubsection{Condition \eqref{it:Taub_lc}:  the log-concavity property of $\nuh^{(n)}$ and $f_n$}

\begin{proposition}\label{thm:Klupellog}
Let $\psi \in \Nii(m)$. There exists a decreasing positive sequence $\underline{a}_{\nuh}=(a_n)_{n\geq 0}$ with $a_0=\infty>a_1>\ldots>a_n\ldots>0$, such that,  for every $n\geq 0$,
\[ x\mapsto \nuh^{(n)}(x) \textrm{  is log-concave on } (0,a_{n+1}).\] Even more, for any $n\geq 0$, the mapping
\[ y \mapsto f_n(y)=e^{-ny}\nuh^{(n)}(e^{-y}) \textrm{ is also log-concave on } (-\log(a_{n+1}),\infty).\]
\end{proposition}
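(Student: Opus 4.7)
The plan is to proceed by induction on $n$ and to use crucially the Sato--Yamazato identity \eqref{eq:SatoEquation}, which holds since $\psi\in \Nii(m)\subseteq \Niim$. The decreasing sequence $(a_n)_{n\geq 0}$ together with the sign pattern ``$\nuh^{(n)}>0$ on $(0,a_n)$, $\nuh^{(n)}<0$ on $(a_n,a_{n-1})$'' is supplied directly by Lemma~\ref{lprop:SD2}\eqref{it:sdd1}, since $\psi \in \Nii(m)$ forces $\r=\infty$ and hence $a_n > 1/\r=0$. A preliminary reduction is to show that log-concavity of $\nuh^{(n)}$ on $(0,a_{n+1})$ automatically implies log-concavity of $f_n$ on $(-\log a_{n+1},\infty)$. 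Setting $g_n=(\log \nuh^{(n)})'$ and $x=e^{-y}$, a direct chain-rule computation yields $(\log f_n)''(y)=x\,\frac{d}{dx}[xg_n(x)]$. Differentiating \eqref{eq:SatoEquation} after division by $\nuh^{(n)}(x)$ gives
\begin{equation*}
\frac{d}{dx}\bigl[xg_n(x)\bigr]=\int_0^\infty\frac{\nuh^{(n)}(x-y)}{\nuh^{(n)}(x)}\bigl[g_n(x-y)-g_n(x)\bigr]\,d\kappasd(y).
\end{equation*}
Under the assumed log-concavity of $\nuh^{(n)}$ on $(0,a_{n+1})$, the bracket is non-negative for $0<y<x<a_{n+1}$, the ratio is positive, and $d\kappasd\leq 0$, so the integral, and hence $(\log f_n)''(y)$, is non-positive.

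The main task is therefore the log-concavity of $\nuh^{(n)}$ itself on $(0,a_{n+1})$. For the inductive step, I would use the factorization $\log\nuh^{(n+1)}=\log g_n+\log\nuh^{(n)}$ to reduce the goal at level $n+1$ to log-concavity of $g_n$ on $(0,a_{n+2})$. Writing $xg_n(x)=-(n+1)+\int_0^\infty\bigl[\nuh^{(n)}(x-y)/\nuh^{(n)}(x)-1\bigr]\,d\kappasd(y)$ and differentiating twice expresses the second logarithmic derivative of $g_n$ as an integral involving the first two logarithmic derivatives of $\nuh^{(n)}$ evaluated at $x$ and $x-y$; the inductive hypothesis then supplies the required pointwise sign information, after which the non-positivity of $d\kappasd$ closes the step.

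For the base case $n=0$ the plan is to combine two ingredients. First, a direct asymptotic analysis at $x=0$: the Mellin-transform techniques underpinning \eqref{eqn:nu0Asymp1} give $\log \nuh(x)\sim -\int_{m}^{1/x}\varphi(r)\,dr/r$ as $x\downarrow 0$, a function which is strictly concave in $x$ because $\varphi=\phi^{-1}$ is increasing with $\varphi'>0$. With sufficiently precise control of the error terms, this yields log-concavity of $\nuh$ on a small right neighborhood $(0,\delta)$ of the origin. Second, a propagation argument extends this to the whole interval $(0,a_1)$: at a hypothetical first point $x_0\in[\delta,a_1)$ where log-concavity would break down, the Sato--Yamazato identity evaluated at $x_0$, together with the non-positivity of $d\kappasd$ and the unimodality of $\nuh$, must produce a contradiction.

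The hard part will be this propagation step and, more generally, the inductive step. The Sato--Yamazato identity only controls $xg_n(x)$ as an integral transform and does not by itself pin down the sign of the second logarithmic derivative of $\nuh^{(n)}$ at an arbitrary interior point of $(0,a_{n+1})$. The decisive ingredient will be a careful extremum/maximum-principle argument that converts the non-positivity of $d\kappasd$ and the available sign of $\nuh^{(n+1)}$ into the desired monotonicity of $g_n$ throughout the interval.
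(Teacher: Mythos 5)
Your reduction step is correct and does essentially what the paper's Lemma~\ref{prop:technical} does: dividing \eqref{eq:SatoEquation} by $\nuh^{(n)}(x)$, differentiating, and using that $g_n=(\log\nuh^{(n)})'$ is decreasing under the assumed log-concavity together with $d\kappasd\leq 0$ correctly gives $(\log f_n)''\leq 0$ on the stated interval. The difficulty lies entirely in the main claim, and there your sketch has two genuine gaps. First, the inductive step is not sound as proposed. Writing $\log\nuh^{(n+1)}=\log g_n+\log\nuh^{(n)}$ would give log-concavity of $\nuh^{(n+1)}$ provided you can show $g_n$ is itself log-concave on $(0,a_{n+2})$; but log-concavity of $g_n$ (i.e., concavity of $\log g_n=\log\nuh^{(n+1)}-\log\nuh^{(n)}$) is a strictly stronger assertion than the inductive hypothesis and does not follow from it. Differentiating $xg_n(x)=-(n+1)+\int_0^\infty[\nuh^{(n)}(x-y)/\nuh^{(n)}(x)-1]\,d\kappasd(y)$ twice does not let you read off the sign of $(\log g_n)''$ from the sign of $g_n'$; the paper never claims or uses log-concavity of $g_n$, and does not run an induction over $n$ at all. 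Second, your base-case argument is circular: the small-$x$ asymptotic $\log\nuh(x)\sim -\int_m^{1/x}\varphi(r)\,dr/r$, i.e.~\eqref{eqn:nu0Asymp1}, is itself a consequence of the Tauberian Theorem~\ref{lem:Klupelberg}, whose hypothesis~\eqref{it:Taub_lc} is precisely the log-concavity of $f_n$ near $\infty$ that you are trying to establish.

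What your plan misses, and what the paper actually does (Lemmas~\ref{lem:kappa_j}, \ref{lem:logcon}, \ref{lem:logconcave}), is a truncation-and-limit device that resolves both difficulties simultaneously. One replaces $\kappasd$ by $\kappasdj(y)=\kappasd(\max(y,j^{-1}))$, so that the corresponding approximating density $v_j$ has finite total L\'evy mass and, crucially, satisfies the explicit ODE $xv_j'(x)=(\kappasdj(0^+)-1)v_j(x)$ on $(0,j^{-1})$, giving $v_j(x)=Cx^{\kappasdj(0^+)-1}$ there. This yields \emph{strict} log-concavity of $v_j^{(n)}$ near the origin with no asymptotic analysis at all, providing the base for a first-failure/contradiction argument: at a hypothetical first point $x_0>j^{-1}$ where $V_j^{(n)}=v_j^{(n+2)}v_j^{(n)}-(v_j^{(n+1)})^2$ vanishes, combining the Sato--Yamazato identities at levels $n$ and $n+1$ (multiplied respectively by $v_j^{(n)}(x_0)$ and $-v_j^{(n+1)}(x_0)$) shows $x_0V_j^{(n)}(x_0)<0$, since the boundary term $-v_j^{(n)}(x_0)v_j^{(n+1)}(x_0)$ is strictly negative and the remaining integral over $[j^{-1},x_0]$ is $\leq 0$ by the as-yet-unbroken log-concavity and $d\kappasdj\leq 0$. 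Taking $j\to\infty$ and using the $\ell^1$-decay of the Fourier transforms (from $\psi\in\Nii(m)$, which makes $\Si=\infty$) gives $v_j^{(n)}\to\nuh^{(n)}$ in $\cco^2$-sense and transports the log-concavity to $\nuh^{(n)}$ on $(0,a_{n+1})$. Without this truncation you have no handle on the behavior of $\nuh$ at the origin (where all derivatives vanish), and without a non-circular source of log-concavity near $0$ your propagation idea cannot get started.
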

\begin{remark}\label{rem:SD3}\label{rem:SD31}
The statement that 
 $\nuh$ is $\log$-concave on $(0,a_1)$ is proved in \cite[Theorem 1.3 (vii)]{Sato-Yam-78} and improved to strict $\log$-concavity in \cite[Theorem 4.2]{Sato-Yam-78}. Clearly, the $log$-concavity of $y\mapsto \nuh^{(n)}(e^{-y})$ is a stronger assertion then the $log$-concavity of the derivatives $\nuh^{(n)}(x)$ and the proof using Lemma \ref{prop:technical} below suggests that it is the more natural result.
\end{remark}
For the reader's convenience we split the (lengthy) proof  of Proposition \ref{thm:Klupellog} into several intermediate results.
\begin{lemma}\label{lem:kappa_j}
Let $\psi\in\Niim$ and thus $\kappasd(0^+)=\infty$. For any $j\geq 1$, set
\[\kappasdj(y)=\kappasd\left(\max(y,j^{-1})\right), \:y>0.\] Then, for any $j\geq1$, there exists $I_j \in \mathfrak{L}_+$ such that its law is absolutely continuous with a density $v_{j}$ which is characterized, 
 for any $b \in \R$, by
	\begin{equation}\label{eq:phi_j}
	\mathcal{F}_{v_{j}}(ib)=\IInf e^{ib x}v_j(x)dx=e^{-\phi_{j}(-ib)}=e^{\int_{0}^{\infty}\lb e^{ib y}-1\rb\frac{\kappasdj(y)}{y}dy}.
	\end{equation}
Moreover, for any $j\geq 1$, the following holds.
\begin{enumerate}
\item \label{it:supj} $\supp\:{I_j}=[0,\infty)$.
\item $v_j \in \cco^{{\rm{N}_j}}(\R_+)\cap \cco^{\rm{N}_j-1}\lbrb{\R}$, where  ${\rm{N}_j}=\left\lceil\kappasdj(0^+)\right\rceil-1=\lceil\kappasd(j^{-1})\rceil-1$.
\item\label{it:kappajAj}    Then there exists a decreasing positive sequence  $\underline{a}_{v_j}=\lbrb{a_n(j)}_{1 \leq n \leq\rm{N}_j}$ with $a_0(j)=\infty>a_1(j)>\ldots>a_{\rm{N}_j}(j)>0$, such that,  for every $0\leq n  \leq\rm{N}_j$,
 	 \[ v_j^{(n)}>0 \textrm{ on } \lbrb{0,a_n(j)}  \textrm{  and  }  v_j^{(n)}<0 \textrm{ on } \lbrb{a_n(j),a_{n-1}(j)}.\]
  \item  \label{it:betj}  Let, for $n \leq\rm{N}_j$, $\beta_n(j)=\sup\lbcurlyrbcurly{y>0;\,\kappasdj(y)\geq n}$. Also, for any $n\geq 1$,
  \begin{equation}\label{eq:a>beta}
  \underline{a}_n:=\liminf_{j\to\infty}a_{n}(j)\geq \beta_n =\sup\lbcurlyrbcurly{y>0;\,\kappasd(y)\geq n} >0, 
  \end{equation}
\item \label{it:intj}  We have, for any $x>0$,
\begin{eqnarray} \label{eq:intej}
	 xv^{(1)}_j(x)&=&(\kappasdj(0^+)-1)v^{}_j(x)+\int_{0}^{x}v_j(x-y)d\kappasdj(y).
\end{eqnarray}
\end{enumerate}
\end{lemma}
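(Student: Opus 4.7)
The function $\kappasdj(y)=\kappasd(\max(y,j^{-1}))$ is non-negative, non-increasing on $\R_+$, bounded above by $\kappasd(j^{-1})<\infty$, and satisfies $\int_0^\infty\kappasdj(y)\,dy\leq j^{-1}\kappasd(j^{-1})+\int_0^\infty\kappasd(y)\,dy<\infty$. Hence $\phi_j(u)=\int_0^\infty(1-e^{-uy})\frac{\kappasdj(y)}{y}\,dy$ defines a Bernstein function and, via \eqref{eq:SDLT}, produces a variable $I_j\in\mathfrak{L}_+$ whose Laplace transform is precisely \eqref{eq:phi_j}. The divergence $\int_0^{j^{-1}}\kappasdj(y)/y\,dy=\infty$ forces $\mathfrak{r}_j:=\phi_j(\infty)=\infty$, so the drift of $I_j$ is $\deltasd_j=1/\mathfrak{r}_j=0$ by the computation \eqref{eq:a}. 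Proposition \ref{thm:prop_self}\eqref{it:pnu2}, applied to the positive self-decomposable law of $I_j$, then yields item \eqref{it:supj}.

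\textbf{Smoothness and sign structure.} Since $\kappasdj(0^+)=\kappasd(j^{-1})<\infty$ and $\deltasd_j=0$, the Sato theorem \cite[Theorem~28.4(i)]{Sato-99} that underlies Proposition \ref{thm:prop_self}\eqref{it:pnu3} applies verbatim to $v_j$, giving the regularity $v_j\in\cco^{N_j-1}(\R)\cap\cco^{N_j}(\R_+)$ with $N_j=\lceil\kappasd(j^{-1})\rceil-1$. For the zeros of the successive derivatives, the argument of Lemma \ref{lprop:SD2}\eqref{it:sd2}, based on \cite[Theorem~5.1(i)]{Sato-Yam-78}, transcribed to $v_j$ furnishes the decreasing sequence $\underline{a}_{v_j}=(a_n(j))_{1\leq n\leq N_j}$ together with the strict inequality $a_n(j)>\beta_n(j)+\deltasd_j=\beta_n(j)$, where $\beta_n(j)=\sup\{y>0:\kappasdj(y)\geq n\}$.

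\textbf{Uniform liminf and integral equation.} Fix $n\geq 1$. Lemma \ref{prop:SD1}\eqref{it:kappa} forces $\kappasd(0^+)=\infty$ when $\psi\in\Niim$, so $\beta_n>0$ and $\kappasd(j^{-1})\to\infty$, which ensures $n\leq N_j$ for all $j$ large enough. For such $j$ with additionally $j>1/\beta_n$, the identity $\kappasdj\equiv\kappasd$ on $[j^{-1},\infty)$ combined with $\kappasdj\equiv\kappasd(j^{-1})\geq n$ on $[0,j^{-1}]\subset[0,\beta_n]$ pins down $\beta_n(j)=\beta_n$; item \eqref{it:betj} then follows by taking the liminf in $a_n(j)>\beta_n(j)=\beta_n$. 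Item \eqref{it:intj} is obtained by differentiating the Sato–Yamazato identity $xv_j(x)=\int_0^x v_j(x-y)\kappasdj(y)\,dy$ (Theorem \ref{thm-SY_thm2.1} specialized with $\deltasd_j=0$), using $v_j(0^+)=0$ (which holds once $N_j\geq 1$, by Proposition \ref{thm:prop_self}\eqref{it:asy_hnu_r}) and integrating by parts with the signed Stieltjes measure $d\kappasdj$.

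\textbf{Main obstacle.} The substantive step is item \eqref{it:betj}: one must convert the $j$-dependent Sato–Yamazato bound $a_n(j)>\beta_n(j)$ into an estimate uniform in $j$ that retains the geometric information encoded by the unbounded original $\kappasd$. The explicit identity $\beta_n(j)=\beta_n$ for $j>1/\beta_n$, which rests on the precise flat-then-exact shape of the truncation $\kappasdj$, is the delicate but transparent input that makes the liminf computation go through and that later drives the bootstrap from the $\log$-concavity of $v_j$ on bounded intervals up to the $\log$-concavity of $\nuh^{(n)}$ in the proof of Proposition \ref{thm:Klupellog}.
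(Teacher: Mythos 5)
Your overall architecture matches the paper's: recognize $\kappasdj$ as a truncated version of $\kappasd$ yielding a positive self-decomposable variable $I_j$ with bounded $\kappasdj(0^+)$, and invoke the Sato--Yamazato machinery. The explicit verification that $\beta_n(j)=\beta_n$ for $j>1/\beta_n$, and the derivation of \eqref{eq:intej} by differentiating the first-order Sato--Yamazato identity rather than citing their Corollary 2.1 (2.34) directly, are both genuine (if minor) expansions of what the paper leaves terse; the differentiation route does require you to know $v_j(0^+)=0$ and $v_j\in\cco^1(\R_+)$, i.e.~$N_j\geq 1$, which is where the paper's direct citation is cleaner.

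However, your argument that $\deltasd_j=0$ is wrong. You write ``the drift of $I_j$ is $\deltasd_j=1/\mathfrak{r}_j=0$ by the computation \eqref{eq:a}'', where $\mathfrak{r}_j=\phi_j(\infty)$. But \eqref{eq:a} is the chain $\deltasd=\lim_{u\to\infty}\phid(u)/u = \lim_{u\to\infty}u/\psi(u)=1/\phi(\infty)$, which is specific to the exponential functional $I_\psi$ of a spectrally negative L\'evy process: there, $\phid$ is the inverse of $\psi$ and $\phi$ is the \emph{descending ladder height exponent}, a different object from the L\'evy--Khintchine exponent $\phisd$ of $I_\psi$. For a generic positive self-decomposable law there is no identity $\deltasd=1/\phisd(\infty)$ --- the drift is an independent parameter --- and $I_j$ is not of the form $I_{\psi'}$ for any $\psi'\in\Ne$ that would re-enable \eqref{eq:a}. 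The correct and much simpler argument, which is what the paper relies on (``wherein $\gamma_0=0$ since we have no drift in the exponent of \eqref{eq:phi_j}''), is that the L\'evy--Khintchine representation in \eqref{eq:phi_j} literally contains no drift term, so $\deltasd_j=0$ by inspection. The conclusion you reach is correct, so nothing downstream breaks, but the route you took through \eqref{eq:a} would not survive scrutiny. Relatedly, your appeals to Proposition \ref{thm:prop_self}\eqref{it:pnu2}, \eqref{it:pnu3} and \eqref{it:asy_hnu_r} for $I_j$ are imprecise, since those items are stated for $I_\psi$ with $\psi\in\Ne(m)$; the paper instead cites the underlying Sato--Yamazato theorems (Theorems 1.2, 1.3(vii), 5.1(i)) directly, which apply to any positive self-decomposable law and are what you actually need.
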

\begin{remark}
 We stress that $I_j$ does not belong to the subclass of positive self-decomposable laws arising from $\psi\in \Nf(m)$, that have support $\lbbrb{\frac{1}{\r},\infty},\r<\infty$, see Proposition \ref{thm:prop_self}\eqref{it:pnu2}.
\end{remark}
\begin{proof}
First, since  for every $j \geq 1$,  $\kappasdj$ defines a non-increasing function  on $\R_+$ with $\kappasdj(0^+)=\kappasd(j^{-1})<\infty$, the right-hand side of \eqref{eq:phi_j} is the Fourier transform of  a random variable $I_j\in\mathfrak{L}_+$, whose distributions, as mentioned earlier, are absolutely continuous.
 Furthermore, since $\kappasdj(0^+)<\infty$, therefore $I_j\in \rm{I}_5$ in the sense of \cite[p.275]{Sato-Yam-78}. Since $\deltasd_j=0$ then $\supp\:{I_j}=[0,\infty)$ follows from \cite[Theorem 1.3,\,(vii)]{Sato-Yam-78}. Also from \cite[Theorem 1.2]{Sato-Yam-78} then $v_j\in\cco^{\rm{N}_j}\lbrb{\R_+}$, from \cite[Theorem 5.1, (i)]{Sato-Yam-78} then $v_j\in\cco^{\rm{N}_j-1}\lbrb{\R}$ and from \cite[Theorem 5.1, (i)]{Sato-Yam-78} there exists $\underline{a}_{v_j}=\lbrb{a_n(j)}_{1 \leq n \leq\rm{N}_j}$ with $a_0(j)=\infty>a_1(j)>\ldots>a_{\rm{N}_j}(j)>0$ such that $v^{(n)}_j$ is positive on $\lbrb{0,a_n(j)}$ and negative on $\lbrb{a_n(j),a_{n-1}(j)}$, for $1\leq n\leq \rm{N}_j$. Thanks to \cite[Theorem 5.1, (5.2) and (5.3)]{Sato-Yam-78}, $a_n(j)>\beta_n(j)=\sup\lbcurlyrbcurly{y>0;\,\kappasdj(y)\geq n}$ and the fact that $\beta_n(j)=\beta_n=\sup\lbcurlyrbcurly{y>0;\,\kappasd(y)\geq n}$ for all $j$ big enough, for $n\leq \rm{N}_j$ gives  \eqref{eq:a>beta}. Item \eqref{it:intj} is \cite[Corollary 2.1\,(2.34)]{Sato-Yam-78}, wherein $\gamma_0=0$ since we have no drift in the exponent of \eqref{eq:phi_j} and the integration is on $\lbrb{0,\infty}$.
\end{proof}
	\begin{lemma}\label{lem:logcon}
		 Fix $n\in\N$. If $\kappasdj(0^+)>2n+5$, then $v^{(n)}_j$ is $\log$-concave on $\lbrb{0,a_{n+1}(j)}$.
\end{lemma}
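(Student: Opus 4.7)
The plan is to proceed by induction on $n$, using the integro-differential equation \eqref{eq:intej} as the main structural tool. The base case $n=0$, i.e.\ $\log$-concavity of $v_j$ on $(0, a_1(j))$, is exactly the classical Sato--Yamazato result recalled in Remark \ref{rem:SD3}, which requires only $\kappasdj(0^+)>0$ (so the hypothesis $\kappasdj(0^+) > 5$ here is comfortably sufficient and gives in addition enough smoothness to run the argument cleanly).

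For the inductive step I would first derive an integro-differential equation satisfied by $g:=v_j^{(n)}$ that is analogous to \eqref{eq:intej}. Concretely, since by Lemma \ref{lem:kappa_j} together with the hypothesis $\kappasdj(0^+)>2n+5$ we have $v_j \in \cco^{\mathrm{N}_j}(\R_+)$ with $\mathrm{N}_j = \lceil\kappasdj(0^+)\rceil - 1 \geq 2n+5$, differentiating \eqref{eq:intej} $n$ times is legitimate. Applying Leibniz to the $xv_j'(x)$ term and carefully commuting the differentiations with the Stieltjes convolution $\int_0^x v_j(x-y)\,d\kappasdj(y)$ (tracking boundary contributions coming from $v_j^{(k)}(0^+)$ and the atoms of $\kappasdj$ at $y=1/j$), one should arrive at a relation of the form
\[
x g'(x) = (\kappasdj(0^+) - 1 - n)\, g(x) + \int_0^x g(x-y)\, d\kappasdj(y) + R_n(x),
\]
where $R_n$ gathers the lower-order boundary corrections. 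The hypothesis makes the coefficient $\kappasdj(0^+)-1-n$ strictly positive with a margin, which is essential for what follows.

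Next, to prove $\log$-concavity of $g$ on the open interval $(0, a_{n+1}(j))$, where Lemma \ref{lem:kappa_j}\eqref{it:kappajAj} guarantees that both $g$ and $g'$ are strictly positive, I would study the quantity
\[
\Delta(x) := g(x) g''(x) - g'(x)^2,
\]
whose non-positivity is equivalent to $(\log g)'' \leq 0$. The strategy is to differentiate the displayed equation twice more (this is where two extra derivatives beyond the $n$ already spent get consumed, explaining the factor $2$ in the numerical hypothesis $2n+5$), then combine the resulting three relations bilinearly — multiplying the equation for $g''$ by $g$ and subtracting $g'$ times the equation for $g'$ — to extract a first-order linear differential inequality for $\Delta$ of the schematic form
\[
x\Delta'(x) = \beta_n \Delta(x) + \int_0^x \bigl[g(x)g''(x-y) - g'(x)g'(x-y)\bigr] d\kappasdj(y) + (\text{lower-order terms}).
\]
The convolution integral can be controlled by a pointwise Cauchy--Schwarz-type bound exploiting that $d\kappasdj$ is a negative measure supported in $[1/j, \infty)$, combined with the inductive hypothesis that $v_j^{(k)}$ is $\log$-concave for $k<n$ (which yields inequalities $v_j^{(k+1)}(y)^2 \leq v_j^{(k)}(y) v_j^{(k+2)}(y)$ on the relevant subintervals). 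A Gr\"onwall-type argument then closes the estimate, provided $\Delta$ starts with the correct sign at the left endpoint; this initial condition $\Delta(0^+)\leq 0$ follows from the Gamma-like behaviour of $v_j$ near $0$ — since the L\'evy density $\kappasdj(y)/y$ behaves like $\kappasdj(0^+)/y$ at zero, one has $v_j(x) \asymp x^{\kappasdj(0^+)-1}$ as $x\downarrow 0$, so $v_j^{(k)}(0^+)=0$ for all $k < \kappasdj(0^+)-1$, and the hypothesis $\kappasdj(0^+) > 2n+5$ leaves a comfortable margin to make $\Delta$ and its boundary approximations have the right sign.

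\textbf{Main obstacle.} The hardest part will be the book-keeping of the boundary/remainder term $R_n$ and of its analogues after the two further differentiations, and verifying that all sign contributions line up so that the Cauchy--Schwarz bound on the convolution term combines with the positive coefficient $\kappasdj(0^+)-1-n$ in the right direction to give the Gr\"onwall inequality for $\Delta$. The numerology $\kappasdj(0^+) > 2n+5$ looks calibrated precisely to ensure, on the one hand, enough smoothness to differentiate the equation $n+2$ times, and on the other, enough vanishing of derivatives of $v_j$ at $0$ to absorb the sign corrections appearing in $R_n$ and in the lower-order $\Delta$-terms produced by the induction.
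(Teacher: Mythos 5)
There is a genuine gap, and also a misdiagnosis of where the difficulty lies. Your plan identifies the right discriminant $\Delta(x) = g(x)g''(x)-g'(x)^2$ (the paper's $V_j^{(n)}$) and correctly reads the hypothesis $\kappasdj(0^+)>2n+5$ as a smoothness budget allowing two extra differentiations, but the mechanisms you propose for closing the argument do not match what actually works, and I do not see how they would.

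First, there are no remainder terms $R_n$: because $d\kappasdj\equiv 0\,dy$ on $(0,j^{-1})$ and $v_j\in\cco^{\mathrm{N}_j-1}(\R)$ with support in $[0,\infty)$, iterated differentiation of \eqref{eq:intej} gives cleanly, for all $k\leq\mathrm{N}_j-2$,
\begin{equation*}
xv^{(k+2)}_{j}(x)=(\kappasdj(0^+)-2-k)\,v^{(k+1)}_{j}(x)+\int_{0}^{x}v^{(k+1)}_{j}(x-y)\,d\kappasdj(y),
\end{equation*}
with no boundary corrections at all. Your concern about book-keeping $R_n$ is misplaced and would lead you into an unnecessary mess. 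Second, and more importantly, the Cauchy--Schwarz plus Gr\"onwall scheme for controlling the convolution integral in the $\Delta$-equation is not the right tool, and there is no obvious way to make it work: the convolution evaluates $v_j^{(n)},v_j^{(n+1)},v_j^{(n+2)}$ at shifted points $x-y$, and a Cauchy--Schwarz bound there would not produce a usable one-sided differential inequality. What the paper actually does is a contradiction-at-first-zero argument. On $(0,j^{-1})$ the equation is a pure ODE (since $d\kappasdj\equiv 0$ there), giving $v_j(x)=Cx^{\kappasdj(0^+)-1}$ explicitly, from which $V_j^{(n)}<0$ strictly on $(0,j^{-1})$ and $a_{n+1}(j)\geq j^{-1}$. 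One then assumes a first zero $x_0\in(j^{-1},a_{n+1}(j))$ of $V_j^{(n)}$ with $V_j^{(n)}<0$ on $(0,x_0)$, multiplies the $k=n$ equation at $x_0$ by $v_j^{(n)}(x_0)$ and the $k=n-1$ equation at $x_0$ by $-v_j^{(n+1)}(x_0)$, and adds. The result is
\begin{equation*}
x_0V_j^{(n)}(x_0)=-v_j^{(n)}(x_0)v_j^{(n+1)}(x_0)+\int_{j^{-1}}^{x_0}v_j^{(n)}(x_0-y)\,v_j^{(n)}(x_0)\Bigl(\tfrac{v_j^{(n+1)}(x_0-y)}{v_j^{(n)}(x_0-y)}-\tfrac{v_j^{(n+1)}(x_0)}{v_j^{(n)}(x_0)}\Bigr)d\kappasdj(y).
\end{equation*}
The first term is strictly negative since $v_j^{(n)},v_j^{(n+1)}>0$ on $(0,a_{n+1}(j))$ by Lemma \ref{lem:kappa_j}\eqref{it:kappajAj}. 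The integral is also negative: the hypothesis $V_j^{(n)}<0$ on $(0,x_0)$ says $v_j^{(n+1)}/v_j^{(n)}$ is strictly decreasing there, so the bracket is positive, while $d\kappasdj$ is a negative measure on $[j^{-1},\infty)$. This forces $V_j^{(n)}(x_0)<0$, contradiction. No induction on $n$ is used (you only need the sign information from Lemma \ref{lem:kappa_j}\eqref{it:kappajAj}, not log-concavity of lower derivatives), no Gr\"onwall, and no Cauchy--Schwarz — the sign structure of $d\kappasdj$ together with the first-zero monotonicity does all the work. Your proposal as written leaves the crucial closing step unjustified.
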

\begin{proof}
Let $\kappasdj(0^+)>2n+5$. 
Since $v_j\in \cco^{\rm{N}_j-1}\lbrb{\R}$ and $d\kappasdj(y)\equiv 0,\,y\in\lbrb{0,j^{-1}}$, see Lemma \ref{lem:kappa_j} by differentiating \eqref{eq:intej}, we get for (at least) $k\leq \rm{N}_j-2=\left\lceil\kappasdj(0^+)\right\rceil-3$ and any $x>0$,
	\begin{eqnarray}
	\label{eq:gj1} xv^{(2)}_j(x)&=&(\kappasdj(0^+)-2)v^{(1)}_j(x)+\int_{0}^{x}v^{(1)}_j(x-y)d\kappasdj(y),\\
	xv^{(k+2)}_{j}(x)&=&(\kappasdj(0^+)-2-k)v^{(k+1)}_{j}(x)+\int_{0}^{x}v^{(k+1)}_{j}(x-y)d\kappasdj(y). \label{eq:approxSD1}
	\end{eqnarray}
	Also \eqref{eq:gj1} and \eqref{eq:approxSD1} are elementary consequence to \cite[p.~297, l.-1 and (5.4)]{Sato-Yam-78} and a simple integration.
	 Next, note that $d\kappasdj(y)=0\,dy$, for $y\in(0,j^{-1})$ and therefore on $x<j^{-1}$ \eqref{eq:gj1} reduces to the simple differential equation
	\[xv^{(2)}_{j}(x)=(\kappasdj(0^+)-2)v^{(1)}_j(x).\]
	Hence $v_{j}(x)=Cx^{\kappasdj(0^+)-1}$ for some $C>0$ and $x\in(0,j^{-1})$. Therefore setting
		\[V^{(n)}_j(x):=v^{(n+2)}_j(x)v^{(n)}_j(x)-(v^{(n+1)}_j(x))^2=\lb v^{(n)}_{j}(x)\rb^2\frac{d}{dx}\lb\frac{v^{(n+1)}_j(x)}{v^{(n)}_{j}(x)}\rb.\]
	we get then that  $V^{(n)}_j<0$ on $(0,j^{-1})$. Moreover, since $\kappasdj(0^+)>2n+5$, $x\mapsto v^{(n)}_j(x)$ is strictly $\log$-concave and increasing for $x\in(0,j^{-1})$ and thus from Lemma \ref{lem:kappa_j}\eqref{it:kappajAj} and the properties of the sequence $\underline{a}_{v_j}$ we conclude $a_{n+1}(j)\geq j^{-1}$. Assume now $a_{n+1}(j)>j^{-1}$ and that there exists $x_0\in \lb j^{-1},a_{n+1}(j)\rb$ such that $V^{(n)}_j(x_0)=0$ and $V^{(n)}_j(x)<0$ for $x\in(0,x_0)$. Multiplying first \eqref{eq:approxSD1} for $k=n$ at the point $x_0$ by $v_j^{(n)}(x_0)$ and then for $k=n-1$ by $-v_j^{(n+1)}(x_0)$  at the point $x_0$ and adding the two expressions, we get the identity
	\begin{eqnarray}\label{eq:V_0}
\nonumber x_0 V^{(n)}_j(x_0)&=&-v^{(n)}_j(x_0)v^{(n+1)}_j(x_0)\\
&&+\int_{j^{-1}}^{x_0}v^{(n)}_j(x_0-y)v^{(n)}_j(x_0)\lb \frac{v^{(n+1)}_j(x_0-y)}{v^{(n)}_j(x_0-y)}-\frac{v^{(n+1)}_j(x_0)}{v^{(n)}_j(x_0)}\rb d\kappasdj(y).
	\end{eqnarray}
	From $x_0<a_{n+1}(j)$ and $x\mapsto v^{(n)}_j(x)$ is increasing on $(0,a_{n+1}(j))$ as $x\mapsto v^{(n+1)}_j(x)$ is positive on $(0,a_{n+1}(j))$, see Lemma \ref{lem:kappa_j}\eqref{it:kappajAj}, then $-v^{(n)}_j(x_0)v^{(n+1)}_j(x_0)<0$ in \eqref{eq:V_0}. Also $V^{(n)}_j<0$ on $(0,x_0)$ means that $x\mapsto \frac{v^{(n+1)}_j(x)}{v^{(n)}_j(x)}$ decreases on $(0,x_0)$. Thus the integrand in \eqref{eq:V_0} is positive. However, as $\kappasdj$ is non-increasing on $\R_+$, $d\kappasdj(y)$ defines a negative measure for $y\in [j^{-1}, \infty)$ and $d\kappasdj(y)=0\,dy$, for $y\in(0,j^{-1})$, we deduce that the integral in \eqref{eq:V_0} is negative too.  Therefore $V^{(n)}_j(x_0)<0$ which is in contradiction with the definition of $x_0$, i.e.~$V^{(n)}_j(x_0)=0$.  Thus, if $\kappasdj(0^+)>2n+5$, $v^{(n)}_j$ is $\log$-concave on $(0,a_{n+1}(j))$, completing the proof.
\end{proof}
\begin{lemma} \label{lem:logconcave}
	Let $\psi \in \Nii(m)$. Then, for all $n\geq 0$, $\lim_{j\to \infty}v^{(n)}_j =\nuh^{(n)}$ uniformly on $\R_+$. Consequently, $\nuh^{(n)}$ is $\log$-concave on $ (0,a_{n+1})$.
 \end{lemma}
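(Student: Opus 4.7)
The plan is to establish the uniform convergence $v_j^{(n)}\to \nuh^{(n)}$ on $\R_+$ via Fourier inversion and then transfer the $\log$-concavity through the limit. First, one observes that $\psi\in\Nii(m)\subseteq \Nim(m)$: indeed, if $\PP(0^+)<\infty$ then $\PP$ is bounded near zero and hence $\PPP(0^+)<\infty$. Consequently, by Lemma \ref{prop:SD1}\eqref{it:kappa}, $\kappasd(0^+)=\infty$. Since $\kappasdj\uparrow \kappasd$ pointwise with $\kappasdj\leq \kappasd$ and $\int_0^\infty \kappasd(y)dy<\infty$, a routine dominated-convergence argument applied to the integrals in \eqref{eq:phi_j} and \eqref{eq:SDLT} yields the pointwise convergence $\mathcal{F}_{v_j}(ib)\to \mathcal{F}_{\nuh}(ib)$ for every $b\in\R$.

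The key technical step is a uniform-in-$j$ polynomial-decay bound on $|\mathcal{F}_{v_j}(ib)|$. Given an integer $A>n+2$, since $\kappasd(0^+)=\infty$, one can choose $y_0>0$ small so that $\kappasd(y_0)>A$. Then, for any $j>1/y_0$ and $y\in(0,y_0)$, $\kappasdj(y)=\kappasd(\max(y,1/j))\geq \kappasd(y_0)>A$, whence
\[
\Re\,\phi_j(-ib)=\int_0^\infty(1-\cos(by))\frac{\kappasdj(y)}{y}dy\;\geq\; A\int_0^{y_0}\frac{1-\cos(by)}{y}dy\;\geq\; A\log|b|-C_{y_0,A}
\]
for $|b|$ large, the last bound being elementary. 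This gives $|\mathcal{F}_{v_j}(ib)|\leq C|b|^{-A}$ uniformly in $j>1/y_0$ for $|b|$ large, and the same bound holds for $|\mathcal{F}_{\nuh}(ib)|$. For $j$ so large that $\kappasdj(0^+)>2n+5$, Lemma \ref{lem:kappa_j} ensures $v_j\in \cco^{n+1}(\R_+)$ and, by Fourier inversion,
\[
v_j^{(n)}(x)=\frac{(-1)^n}{2\pi}\int_{-\infty}^{\infty}(ib)^n e^{-ibx}\mathcal{F}_{v_j}(ib)\,db.
\]
Since $|b|^n|\mathcal{F}_{v_j}(ib)|$ is dominated by an integrable function independent of $j$ and $x$, dominated convergence delivers the uniform convergence $v_j^{(n)}\to \nuh^{(n)}$ on $\R_+$.

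For the $\log$-concavity conclusion, fix any $x_0\in(0,a_{n+1})$. By Lemma \ref{lprop:SD2}\eqref{it:sdd1}, $\nuh^{(n+1)}(x_0)>0$, and by the just-established uniform convergence of $v_j^{(n+1)}$, $v_j^{(n+1)}(x_0)>0$ for $j$ large. Lemma \ref{lem:kappa_j}\eqref{it:kappajAj} then forces $a_{n+1}(j)>x_0$ eventually, so $\liminf_j a_{n+1}(j)\geq a_{n+1}$. By Lemma \ref{lem:logcon}, $v_j^{(n)}$ is $\log$-concave on $(0,a_{n+1}(j))$ as soon as $\kappasdj(0^+)>2n+5$, which holds for all large $j$; in particular it is $\log$-concave on any compact $K\subset(0,a_{n+1})$ for $j$ sufficiently large. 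Since $\nuh^{(n)}>0$ on $(0,a_{n+1})$ is bounded below on $K$, the uniform convergence $v_j^{(n)}\to \nuh^{(n)}$ upgrades to uniform convergence of $\log v_j^{(n)}\to \log\nuh^{(n)}$ on $K$, and concavity passes to the limit. The principal obstacle of this plan is obtaining the uniform polynomial decay of $|\mathcal{F}_{v_j}(ib)|$ independently of $j$; it rests crucially on the divergence $\kappasd(0^+)=\infty$, which permits one to choose arbitrarily large exponents $A$ while ensuring the bound is valid uniformly for all sufficiently large $j$.
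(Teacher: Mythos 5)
Your overall strategy coincides with the paper's: establish pointwise convergence $\mathcal{F}_{v_j}(ib)\to\mathcal{F}_{\nuh}(ib)$, dominate by a $j$-uniform polynomial decay so that Fourier inversion gives $v_j^{(k)}\to\nuh^{(k)}$ uniformly for $k\leq n+2$, then transfer the $\log$-concavity of $v_j^{(n)}$ on $(0,a_{n+1}(j))$ to $\nuh^{(n)}$ on $(0,a_{n+1})$ by showing $\liminf_j a_{n+1}(j)\geq a_{n+1}$. Your explicit lower bound $\Re\,\phi_j(-ib)\geq A\log|b|-C_{y_0,A}$, derived from $\kappasd(0^+)=\infty$, is a genuine improvement in transparency over the paper's citation of \cite[Lemma 2.4 (2.17)]{Sato-Yam-78}: it makes manifest why the dominating function is independent of $j$, a point the paper leaves implicit when invoking dominated convergence.

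There is, however, a gap in the final step. You assert that $v_j^{(n+1)}(x_0)>0$ together with Lemma \ref{lem:kappa_j}\eqref{it:kappajAj} forces $a_{n+1}(j)>x_0$. This is not immediate: Lemma \ref{lem:kappa_j}\eqref{it:kappajAj} only specifies the sign of $v_j^{(n+1)}$ on $(0,a_{n+1}(j))\cup(a_{n+1}(j),a_n(j))$, i.e.~on $(0,a_n(j))$, and says nothing about the sign beyond $a_n(j)$; so $v_j^{(n+1)}(x_0)>0$ is compatible with $x_0>a_n(j)$ and hence with $a_{n+1}(j)\leq x_0$. To make your direct argument work you would need to know in advance that $x_0<a_n(j)$ for $j$ large, which requires a downward induction on $n$ starting from $a_0(j)=\infty$ (or, for $n=0$, \cite[Lemma 4.4]{Sato-Yam-78}); you do not set this up. The paper sidesteps the sign-pattern issue entirely by arguing by contradiction: if $\underline{a}_{n+1}=\liminf_j a_{n+1}(j)<a_{n+1}$, pick a subsequence $a_{n+1}(j_l)\to\underline{a}_{n+1}$, use $v_{j_l}^{(n+1)}(a_{n+1}(j_l))=0$ together with the uniform convergence $v_{j_l}^{(n+1)}\to\nuh^{(n+1)}$ and the continuity and strict positivity of $\nuh^{(n+1)}$ on $(0,a_{n+1})$ to deduce $0=\nuh^{(n+1)}(\underline{a}_{n+1})>0$, a contradiction. (One also needs $\underline{a}_{n+1}>0$, which follows from the lower bound $a_{n+1}(j)>\beta_{n+1}>0$ in Lemma \ref{lem:kappa_j}\eqref{it:betj}; this is worth making explicit.) Replacing your direct step by this contradiction argument, or supplying the missing induction, would close the gap.
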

 \begin{proof}
Since $\psi \in \Nii(m)\subset\Nim$ then $\kappasd(0^+)=\infty$, see Proposition \ref{thm:prop_self}\eqref{it:pnu1}. Pick $j$ such that $\kappasdj(0^+)>2n+5$ and $v_j$ corresponding to it as in Lemma \ref{lem:kappa_j}. From \cite[Lemma 2.4 (2.17)]{Sato-Yam-78} for the first relation and \eqref{eq:subfnuh} for the second we get that \begin{equation}\label{eq:decayFT}
 		\labsrabs{\mathcal{F}_{v_j}(ib)}\stackrel{\pm\infty}{=}\so{|b|^{-2n-5}}\text{ and }|\mathcal{F}_{\nuh}(ib)|\stackrel{\pm\infty}{=}\so{ |b|^{-u}},\text{ for any $u>0$}.
 	\end{equation}  Therefore, for any $b \in \R$ and $k\leq n+2$,
	\[ \int_0^{\infty}e^{ib x}v^{(k)}_j(y)dy=i^{k}b^{k}\mathcal{F}_{v_j}(ib)\textrm{ and } \int_0^{\infty}e^{ib x}\nuh^{(k)}(y)dy=i^{k}b^{k}\mathcal{F}_{\nuh}(ib).\] 
	Therefore, by Fourier inversion, for any $k\leq n+2$,
	\[\sup_{x>0} \labs v_j^{(k)}(x)-\nuh^{(k)}(x)\rabs\leq \frac{1}{2\pi}\int_{-\infty}^{\infty}\labs b \rabs^k\labs\mathcal{F}_{v_j}(ib)-\mathcal{F}_{\nuh}(ib)\rabs db.\]
	However, \eqref{eq:decayFT} implies, for $|b|>1$ and some $C>0$, that $\labs\mathcal{F}_{v_j}(ib)-\mathcal{F}_{\nuh}(ib))\rabs\leq C|b|^{-2n-5}$. Also, from the definition $\kappasdj(y)=\kappasd\left(\max(y,j^{-1})\right)$,  \eqref{eq:SDLT} and \eqref{eq:phi_j}, we have $\lim_{j\to\infty}\mathcal{F}_{v_j}(ib)=\mathcal{F}_{\nuh}(ib)$ pointwise. Therefore the dominated convergence theorem applies and yields $v_j^{(k)}\rightarrow \nuh^{(k)}$ uniformly on $\R_+$, for any $k\leq n+2$. This allows to show that $\lbrb{\log v^{(n)}_j}''\to \lbrb{\log \nuh^{(n)}}''$ for $x\in\lbrb{0,\underline{a}_{n+1}}$ with $\underline{a}_{n+1}=\liminf_{j\to\infty}a_{n+1}(j)$ defined in \eqref{eq:a>beta}.
 Since from Lemma \ref{lem:logcon} we have that  $v^{(n)}_j$ is $\log$-concave on $(0,a_{n+1}(j))$ then $\nuh^{(n)}$ is $\log$-concave on $(0,\underline{a}_{n+1})$. Next, we show that $\underline{a}_{n+1}\geq a_{n+1}, n>0$. From \cite[Lemma 4.4]{Sato-Yam-78}, we conclude that $\lim_{j\to\infty}a_1(j)=a_1=\underline{a}_{1}$. Assume that $\underline{a}_{n+1}< a_{n+1},\,n>0$.  Then Lemma \ref{lem:kappa_j}\eqref{it:kappajAj} and $j$ such that $\kappasdj(0^+)>2n+5$ imply that $ v_{j}^{(n+1)}\lbrb{a_{n+1}(j)}=0$. Choose $(a_{n+1}(j_l))_{l\geq 1}$ such that $\lim\ttinf{l}a_{n+1}(j_l)=\underline{a}_{n+1}$. Then the uniform convergence of $v_j^{(n+1)}$ to $\nuh^{(n+1)}$, $\nuh^{(n+1)}\in \cco^{\infty}\lbrb{\R}$, see Proposition \ref{thm:prop_self}\eqref{it:pnu3} and $\nuh^{(n+1)}>0$ on $\lbrb{0,a_{n+1}}$, see Lemma \ref{lprop:SD2}\eqref{it:sdd1} yield with the help of $ v_{j}^{(n+1)}(a_{n+1}(j))=0$ that
	\[ 0= \limi{l}v_{j_l}^{(n+1)}\lbrb{a_{n+1}(j_l)}= \nuh^{(n+1)}(\underline{a}_{n+1})>0,\]
	that is a contradiction to $\underline{a}_{n+1}< a_{n+1}$. Therefore  $\nuh^{(n)}$ is $\log$-concave on $ (0,a_{n+1})$.
 \end{proof}

Finally,  to prove the condition  \eqref{it:Taub_lc}, it remains to show that, for all $n\geq0$,
\begin{equation}\label{eq:lcve}
y \mapsto f_n(y)=e^{-ny}\nuh^{(n)}(e^{-y}) \textrm{ is $\log$-concave on } (-\log a_{n+1},\infty).
\end{equation}
Since multiplication by $e^{-ny}$ does not alter the $\log$-concavity property we check that
\begin{equation}\label{eq:loganue}
y\mapsto \nuh^{(n)}(e^{-y}) \textrm{  is $\log$-concave on } (-\log a_{n+1},\infty).
\end{equation}
Relation \eqref{eq:loganue} comes from  Lemma \ref{prop:technical} which can be seen to be applicable to $\nuh^{(n)}$ as follows. First  set $v=\nuh^{(n)}$ and choose $A=a_{n+1}>0$ and note, from Lemma \ref{lprop:SD2}\eqref{it:sdd1}, that $v>0$ on $\lbrb{0,a_n}\supseteq\lbrb{0,a_{n+1}}$. Next, clearly from Proposition \ref{thm:prop_self}\eqref{it:pnu3}, $v\in\cco^{\infty}\lbrb{\R}$ and $v= v'=v''=0$ on $\lbrbb{-\infty,0}$. Also Lemma \ref{lem:logconcave} gives that $v$ is $\log$-concave on $(0,a_{n+1})$. Finally, from \eqref{eq:SatoEquation}, since \eqref{eq:technical} and \eqref{eq:technical2} hold with $u=n+1$  and with $\kappa=\kappasd$ and $\int_{0}^{\infty}\kappasd(y)dy<\infty$, we conclude that $\labsrabs{\int_{0}^{1}yd\kappasd(y)}<\infty$.
\begin{lemma}\label{prop:technical}
	Let $A>0$, $v:(0,A)\mapsto \R^{+}$ and $v, v',v'':[-\infty,A]\mapsto \R$. Assume that $v(x)=v'(x)=v''(x)=0$, for $x\leq 0$, $v\in \cco^2\lbrb{(-\infty,A]}$ and $v$ is $log$-concave on $(0,A)$. Moreover, let for  any $x \in (0,A)$ and some $u\in \R_+$, the following equations hold
	\begin{eqnarray}
	& xv'(x)=- uv(x)+\int_{0}^{\infty}\lb v(x-y)-v(x)\rb d\kappa(y), \label{eq:technical}\\
	& xv''(x)=-(u+1)v'(x)+\int_{0}^{\infty}\lb v'(x-y)-v'(x)\rb d\kappa(y),\label{eq:technical2}
	\end{eqnarray}
	where $\kappa:\R_+\mapsto\R_+$ is non-increasing and $\labs\int_{0}^{1}y d\kappa(y)\rabs<\infty$. Then, even $y\mapsto v(e^{-y})$ is $\log$-concave on $(-\ln A,\infty)$.
\end{lemma}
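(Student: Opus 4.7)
The plan is to reduce the log-concavity of $g(y):=\log v(e^{-y})$ on $(-\ln A,\infty)$ to a pointwise inequality in the variable $x=e^{-y}\in(0,A)$, and then to extract that inequality from the two functional equations \eqref{eq:technical} and \eqref{eq:technical2} combined with the log-concavity of $v$ on $(0,A)$. A direct chain-rule computation gives
\[
g''(y)\;=\;\frac{x\,v'(x)}{v(x)}+\frac{x^{2}\,v''(x)}{v(x)}-\frac{x^{2}(v'(x))^{2}}{v(x)^{2}},\qquad x=e^{-y},
\]
so, after clearing denominators, the target inequality $g''(y)\le 0$ is equivalent to
\begin{equation}\label{eq:planTarget}
x\,(v'(x))^{2}-x\,v(x)\,v''(x)\;\ge\;v(x)\,v'(x),\qquad x\in(0,A).
\end{equation}

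The next step is to produce exactly this quantity on the left by taking the appropriate linear combination of the two hypothesized equations. Multiplying \eqref{eq:technical} by $v'(x)$, multiplying \eqref{eq:technical2} by $v(x)$, and subtracting, the factors $-uv(x)v'(x)$ and $-(u+1)v(x)v'(x)$ combine into $+v(x)v'(x)$, while the two integrals merge into a single one in which the $v(x)$-terms cancel. This should yield the identity
\[
x\,(v'(x))^{2}-x\,v(x)v''(x)-v(x)v'(x)\;=\;\int_{0}^{\infty}\!\!\Bigl[v'(x)\,v(x-y)-v(x)\,v'(x-y)\Bigr]\,d\kappa(y),
\]
so \eqref{eq:planTarget} is reduced to showing that the integral on the right is non-negative.

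For the sign of the integrand, observe that for $y\in(0,x)$ both $x$ and $x-y$ lie in $(0,A)$, so $v>0$ there, and one may factor
\[
v'(x)v(x-y)-v(x)v'(x-y)\;=\;v(x)v(x-y)\!\left(\frac{v'(x)}{v(x)}-\frac{v'(x-y)}{v(x-y)}\right).
\]
Log-concavity of $v$ on $(0,A)$ means that $v'/v=(\log v)'$ is non-increasing there, so the bracket is $\le 0$ for $y\in(0,x)$; for $y\ge x$ the hypothesis $v=v'=0$ on $(-\infty,0]$ kills the integrand. Since $\kappa$ is non-increasing, $d\kappa$ is a non-positive measure, and the integral of a non-positive integrand against a non-positive measure is non-negative, which closes \eqref{eq:planTarget}.

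The only genuine obstacle I foresee is the integrability near $y=0$, where $\kappa$ may blow up: this is where the hypothesis $|\int_{0}^{1}y\,d\kappa(y)|<\infty$ has to be used. A first-order Taylor expansion of $v(x-y)$ and $v'(x-y)$ in $y$ (which is legitimate because $v\in\cco^{2}((-\infty,A])$) gives
\[
v'(x)v(x-y)-v(x)v'(x-y)\;=\;y\bigl[v(x)v''(x)-(v'(x))^{2}\bigr]+o(y),\qquad y\to 0^{+},
\]
so the integrand is $O(y)$ at the origin, and the integral is finite. Exactly the same Taylor-type argument justifies the convergence of the integrals appearing in \eqref{eq:technical}--\eqref{eq:technical2} themselves, so the algebraic manipulation leading to the key identity is legitimate. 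Beyond this, the argument is essentially algebraic, and no further technicalities are expected.
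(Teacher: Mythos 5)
Your proof is correct and follows essentially the same approach as the paper: reduce $\bigl(\log v(e^{-y})\bigr)''\le 0$ to a pointwise inequality $V(x)\le 0$, take a linear combination of \eqref{eq:technical}–\eqref{eq:technical2} to express $V(x)$ as a single integral against $d\kappa$, factor the integrand as $v(x)v(x-y)\bigl[(\log v)'(x)-(\log v)'(x-y)\bigr]$, and conclude from the log-concavity of $v$ and the non-positivity of $d\kappa$. Your derivation of the key identity (multiplying \eqref{eq:technical} by $v'(x)$ and \eqref{eq:technical2} by $v(x)$, then subtracting) is a slightly more direct two-step route than the paper's three multiplications followed by a re-substitution of \eqref{eq:technical}, and your Taylor check of integrability near $y=0$ is a sensible addition the paper leaves implicit, but the underlying argument is the same.
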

\begin{proof}
	Differentiating $y \mapsto \log v(e^{-y})$ twice and performing a change of variables it suffices to show that for any  $ x \in (0,A)$
	\[V(x)=xv(x)v'(x)+x^{2}v''(x)v(x)-x^{2}(v'(x))^{2}\leq 0.\]
	Multiplying  \eqref{eq:technical} (resp.~\eqref{eq:technical2}) by $v(x)$ (resp.~$xv(x)$) and then \eqref{eq:technical} separately by $-xv'(x)$, we evaluate $V$ by substitution to get that for any $x \in (0,A)$,
	\begin{eqnarray*}
		V(x)&=&-uv^{2}(x)+v(x)\int_{0}^{\infty} v(x-y)-v(x) d\kappa(y)\\&-&(u+1)v'(x)xv(x)
		+ xv(x)\int_{0}^{\infty} v'(x-y)-v'(x) d\kappa(y)\\
		&+& uxv'(x)v(x)-xv'(x)\int_{0}^{\infty}\lb v(x-y)-v(x)\rb d\kappa(y)\\ &=&-xv'(x)v(x)-uv^{2}(x)+
		v(x)\int_{0}^{\infty}\lb v(x-y)-v(x)\rb d\kappa(y)\\ &+& xv(x)\int_{0}^{\infty}\lb v'(x-y)-v'(x)\rb d\kappa(y)-xv'(x)\int_{0}^{\infty}\lb v(x-y)-v(x)\rb d\kappa(y).
	\end{eqnarray*}
	Note that from the  equation \eqref{eq:technical} we also get that
	\[-v'(x)xv(x)-uv^{2}(x)=-v(x)\int_{0}^{\infty}\lb v(x-y)-v(x)\rb d\kappa(y)\]
	and thus
	\begin{equation*}
		V(x)=xv(x)\int_{0}^{\infty}\lb v'(x-y)-v'(x)\rb d\kappa(y)-xv'(x)\int_{0}^{\infty}\lb v(x-y)-v(x)\rb d\kappa(y).
	\end{equation*}
	Note,  that since $v>0$ on  $(0,A)$ and $v=v'=0$ on $(-\infty,0]$, we have that, for $x\in\lbrb{0,A}$,
	\begin{eqnarray*}
		V(x)&=&x\lb \int_{0}^{x} v(x)v'(x-y)-v'(x)v(x-y) d\kappa(y)\rb \\
		&=&x\lb \int_{0}^{x} v(x)v(x-y)\lb \frac{v'(x-y)}{v(x-y)}-\frac{v'(x)}{v(x)}\rb d\kappa(y)\rb \leq 0,
	\end{eqnarray*}
	 since by assumption $v$ is $\log$-concave and thus $x\mapsto (\log v)'(x)$ is non-increasing on $(0,A)$, $v\geq 0$ and $d\kappa(y)$ defines a negative measure because $\kappa$ is non-increasing. This settles the claim that $y \mapsto v(e^{-y})$ is $\log$-concave on $(-\ln A,\infty)$.
\end{proof}

\subsubsection{Condition \eqref{it:Taub_tt}}
 For any $n\geq0$, set  $\bar{a}_n=\min(a_n,\,\frac{1}{2})$ (where we recall that the $a_n'$s are defined in Lemma \ref{lprop:SD2}\eqref{it:sdd1}) and
	\begin{equation}\label{eq:f_1k}
	f_{a,n}(y) =f_n(y)\mathbb{I}_{\{y>\ln \bar a_n^{-1}\}}, \: y\in \R,
	\end{equation}
	where from \eqref{eq:lcve}, $f_n(y)=e^{-ny}\nuh^{(n)}\lbrb{e^{-y}}$.
Then, according to Lemma \ref{lprop:SD2}, since $\nuh^{(n)} >0$ on $(0,a_n)$ we have clearly  $f_{a,n}>0$ on $\lbrb{\ln \bar a_n^{-1},\infty}$. We shall check that condition \eqref{it:Taub_tt} holds for $f_{a,n}$.
We start with the following result.
\begin{lemma}\label{lem:veryThinTail}
	For any $n\geq 0$ both $f_{a,n}$ and $F_{a,n}(x)=\int\limits_{x}^{\infty}f_{a,n}(y)dy$ have a very thin tail.
\end{lemma}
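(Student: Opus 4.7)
\textbf{Positivity.} For $y>\ln\bar{a}_n^{-1}$ the point $e^{-y}$ lies in $(0,\bar{a}_n)\subseteq(0,a_n)$, so Lemma~\ref{lprop:SD2}\eqref{it:sdd1} gives $\nuh^{(n)}(e^{-y})>0$ and hence $f_{a,n}(y)>0$. Since $\psi\in\Niim$ forces $\r=\infty$ and therefore $\mathrm{supp}\,I_\psi=[0,\infty)$ by Proposition~\ref{thm:prop_self}\eqref{it:pnu2}, the substitution $x=e^{-s}$ will yield
\[
F_{a,n}(y)=\int_0^{e^{-y}} x^{n-1}\nuh^{(n)}(x)\,dx>0,
\]
and the decay estimates I describe next will ensure that the right-hand side is also finite.

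\textbf{Super-polynomial decay at the origin (the key step).} The plan is first to show that $\nuh(x)=o(x^N)$ for every $N\in\N$ as $x\to 0^+$. The crucial input is that every negative integer moment of $I_\psi$ is finite: by \eqref{eq:mo_Ip}, $\E[I_\psi^{-N}]=m\,W_\phi(N)<\infty$ for all $N\geq 1$, since $m=\phi(0)>0$. Markov's inequality then gives, for every $x>0$,
\[
\int_0^x \nuh(r)\,dr=\P(I_\psi\leq x)\leq x^N\E[I_\psi^{-N}].
\]
Combining this with the monotonicity of $\nuh$ on $(0,a_1)$ provided by Lemma~\ref{lprop:SD2}\eqref{it:sdd1}, applied in the shape $\nuh(x/2)\cdot(x/2)\leq\int_{x/2}^{x}\nuh(r)\,dr$, I obtain $\nuh(x)=O(x^{N-1})$ for every $N$, i.e.~$\nuh(x)=o(x^M)$ for every $M\in\N$ as $x\to 0^+$. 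Feeding this into the pointwise estimate of Lemma~\ref{prop:SD2}, which in our setting $\r=\infty$ reads $|\nuh^{(n)}(x)|\leq C_n\,x^{-n}\,\nuh(x/D)$ for some $D\in(0,1)$ and $x$ small, propagates the decay to every derivative: $\nuh^{(n)}(x)=o(x^M)$ for every $M\in\N$.

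\textbf{From decay at zero to very thin tails.} With $x=e^{-y}$, for every $k\in\N$,
\[
f_{a,n}(y)\,e^{ky}=x^{n-k}\,\nuh^{(n)}(x)\xrightarrow[y\to\infty]{}0,
\]
by choosing $M\geq k-n$ in the decay estimate above. Likewise, from $r^{n-1}\nuh^{(n)}(r)=o(r^{M+n-1})$ for every $M$, the same change of variables yields
\[
F_{a,n}(y)\,e^{ky}=x^{-k}\int_0^x r^{n-1}\nuh^{(n)}(r)\,dr=o(x^{M+n-k})\xrightarrow[y\to\infty]{}0,
\]
which simultaneously delivers the finiteness implicit in the definition of $F_{a,n}$.

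\textbf{Main obstacle.} The technical core of the argument is the super-polynomial decay $\nuh(x)=o(x^N)$ at the origin. This is not automatic within the generic positive self-decomposable framework (cf.\ Remark~\ref{rem:KlupelBM1}), but it becomes tractable here thanks to two specific features of $\Niim$: the assumption $m>0$, which guarantees through \eqref{eq:mo_Ip} the finiteness of all negative moments of $I_\psi$, together with the unimodality of $\nuh$ near $0$ provided by Lemma~\ref{lprop:SD2}, which converts the integral bound obtained from Markov's inequality into the pointwise bound required above.
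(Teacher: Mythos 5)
Your proof is correct, but for the key step it takes a genuinely different route from the paper's. The paper simply invokes \eqref{eq:Asymp111} of Lemma~\ref{prop:SD2}, which gives $\lim_{x\downarrow 0}x^{-k}\nuh^{(n)}(x)=0$ for all $n,k$ once $\r=\infty$; that lemma's $k=0$ case comes from the fact that $\nuh\in\cco^{\infty}(\R)$ vanishes identically on $(-\infty,0]$ (so all one-sided derivatives at $0$ are zero by continuity), and the $k\geq 1$ case is obtained by iterating $\nuh^{(l-1)}(x)\leq x\,\nuh^{(l)}(x)$ on $(0,a_{l+1})$ and using $\nuh^{(l)}(0)=0$. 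You instead re-derive the super-polynomial decay of $\nuh$ at the origin from scratch, using the finiteness of all negative moments $\E[I_\psi^{-N}]=m\,W_\phi(N)$ (which relies on $m=\phi(0)>0$, valid on $\Nm$), Markov's inequality $\P(I_\psi\leq x)\leq x^N\E[I_\psi^{-N}]$, and the monotonicity of $\nuh$ on $(0,a_1)$ to convert the integral bound into a pointwise one; you then propagate to $\nuh^{(n)}$ via \eqref{eq:boundsSD}, exactly as the paper does for the $n\geq 1$ case of \eqref{eq:Asymp111}. For $F_{a,n}$ the paper bounds $e^{kx}F_{a,n}(x)\leq e^{(k-n)x}\nuh^{(n-1)}(e^{-x})$ by pulling out $u^{n-1}$ and integrating $\nuh^{(n)}$, then reapplies \eqref{eq:asfan}; you instead integrate your pointwise $o(r^{M+n-1})$ estimate directly. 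Both routes are correct. Yours is more probabilistic and avoids the iterated-integral identity, at the modest cost of invoking $m>0$ explicitly; the paper's is shorter here because \eqref{eq:Asymp111} is already available, and it packages the boundary-of-support smoothness argument in a form that is reused elsewhere.
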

\begin{proof}
	Fix $n\geq 0$. First, plainly   $f_{a,n}$ satisfies \eqref{eq:verythintail1}.  Moreover, from  \eqref{eq:Asymp111} of Lemma \ref{prop:SD2}, we  have for any $k \geq 1$,
	\begin{equation} \label{eq:asfan}
	\lim_{y\to\infty}e^{ky}f_{a,n}(y)=\lim_{y\to\infty}e^{(k-n)y}\nuh^{(n)}(e^{-y}) =\lim_{x\to 0}x^{-k+n}\nuh^{(n)}(x)=0,
	\end{equation}
 i.e.~\eqref{eq:verythintail2} holds and therefore $f_{a,n}$ has a very thin tail. We now turn to $ F_{a,n}$. The fact that $F_{a,n}(x)=\int\limits_{x}^{\infty}f_{a,n}(y)dy$  is ultimately positive is clear. Finally,  using again the definition of $f_{a,n}$, we observe that, for any $k\geq 1$ and for $x>-\ln a_n$,
	\begin{eqnarray*}
		e^{kx} F_{a,n}(x)&=& e^{kx}\int_{x}^{\infty} e^{-ny}\nuh^{(n)}(e^{-y})dy
		=e^{kx}\int_{0}^{e^{-x}}u^{n-1}  \nuh^{(n)}(u) du
		\leq  e^{kx-nx}\nuh^{(n-1)}(e^{-x}).
	\end{eqnarray*}
	We stress that for the last inequality we have used that $\nuh^{(n-1)}$ is non-decreasing on $\lbrb{0,a_{n}}$, see  Lemma \ref{lprop:SD2}. This together with \eqref{eq:asfan} completes the proof.
\end{proof}

To prove that condition  \eqref{it:Taub_tt} holds, it remains to check the asymptotic equivalence of the bilateral moment generating functions, i.e.~$\Fc_{f_n}(u)\simi\Fc_{f_{a,n}}(u)$. We stress that this equivalence  requires a specific behaviour of the Mellin transform of $\nuh$ along imaginary lines. 
We split this verification into several intermediate lemmas as their results are required for other purposes.
We shall need the following technical lemma which allows to study $f_n$ via $v_n$, as defined in \eqref{eq:f_k}.
	\begin{lemma}\label{lem:MellinTT111}
		Let $\psi\in \mladen{\Nm}$ and $\Si>2$. Then,
		for any $x>0$, $n\in \N$ with $n<\Si-2$ and for any  $\underline{a}<1\mladen{-d_\phi}$, there exists a constant $C=C_{n,\underline{a}}>0$
		\begin{eqnarray}\label{eq:nuMellinInv1}
		 \labs \nuh^{(n)}(x)\rabs\leq C x^{-\underline{a}-n}.
		\end{eqnarray}
	\end{lemma}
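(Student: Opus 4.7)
The plan is to prove \eqref{eq:nuMellinInv1} by Mellin inversion, exploiting the subpolynomial decay of $\Mp$ along imaginary lines established in Theorem \ref{thm:existence_invariant_1}\eqref{item:subexpV}. The first step is to identify $\MIp(z)=\int_{0}^{\infty}x^{z-1}\nuh(x)dx$ in terms of $\Mp$. Combining $\Ebb{I_\psi^{-n}}=mW_\phi(n)$ from \eqref{eq:mo_Ip} with $\Mp(n+1)=W_\phi(n+1)$ from \eqref{eq:moment_V_psi}, I would observe that $G(z):=\MIp(1-z)$ and $m\Mp(z)$ both satisfy the functional equation \eqref{eq:feVpsi_R} with the same value $m$ at $z=1$; together with the moment determinacy of $V_\psi$ and the analyticity of $\Mp$ on $\Cb_{(d_\phi,\infty)}$ from Theorem \ref{thm:existence_invariant_1}\eqref{it:fe_MV}, this yields
\[
\MIp(z)=m\,\Mp(1-z),\qquad z\in\Cb_{(-\infty,1-d_\phi)}.
\]

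Next, I would start from Mellin inversion along a line $\Cb_c$ with $c\in(0,1)$, where the defining integral for $\MIp$ converges and where, since $\Si>2$, Theorem \ref{thm:existence_invariant_1}\eqref{item:subexpV} gives $|\Mp(1-c-ib)|=\so{|b|^{-2}}$, so that $|z|^{n}\MIp(z)$ is integrable along $\Cb_c$ whenever $n<\Si-2$. This legitimises differentiating $n$ times under the integral and gives
\[
\nuh^{(n)}(x)=\frac{m(-1)^n}{2\pi i}\int_{c-i\infty}^{c+i\infty}x^{-z-n}(z)_n\,\Mp(1-z)\,dz,
\]
where $(z)_n=z(z+1)\cdots(z+n-1)$.

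I would then shift the contour from $\Cb_c$ to $\Cb_{\underline{a}}$ with $\underline{a}<1-d_\phi$. Since $z\mapsto\Mp(1-z)$ is holomorphic on $\Cb_{(-\infty,1-d_\phi)}$ by Theorem \ref{thm:existence_invariant_1}\eqref{it:fe_MV}, no residues are picked up; the horizontal segments at $\Im(z)=\pm R$ vanish as $R\to\infty$ uniformly on $\Re(z)\in[\underline{a},c]$ thanks again to \eqref{item:subexpV}. Taking absolute values along $\Cb_{\underline{a}}$ yields
\[
|\nuh^{(n)}(x)|\leq \frac{m}{2\pi}\,x^{-\underline{a}-n}\int_{-\infty}^{\infty}|(\underline{a}+ib)_n|\,|\Mp(1-\underline{a}-ib)|\,db.
\]
Since $|(\underline{a}+ib)_n|=\bo{|b|^n}$ and $|\Mp(1-\underline{a}-ib)|=\so{|b|^{-u}}$ for any $u\leq \Si-1$, the integrand is $\so{|b|^{n-u}}$; the hypothesis $n<\Si-2$ lets me choose $u$ with $n+1<u\leq \Si-1$, making the integral finite and producing the constant $C=C_{n,\underline{a}}$ independent of $x$.

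The main obstacle will be the uniform-in-strip character of the decay estimate \eqref{eq:subexp1}: its pointwise formulation on a single vertical line does not directly furnish the uniform bound on horizontal segments needed for the contour shift. I would address this by re-running the argument of the proof of Theorem \ref{thm:existence_invariant_1}\eqref{item:subexpV}, where the decay is obtained from the Riemann--Lebesgue lemma applied to $\Fc_{\chi_{p+1,a}}(ib)$, and noting that the decay is in fact uniform for $a$ varying in a compact subinterval of $(d_\phi,\infty)$, which is what is required to close the contour argument.
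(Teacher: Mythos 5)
Your proposal is correct and follows essentially the same route as the paper: the key steps are the identity $\M_{I_\psi}(z)=m\,\M_{V_\psi}(1-z)$ and Mellin inversion, using the polynomial decay from Theorem~\ref{thm:existence_invariant_1}\eqref{item:subexpV} to justify integrability along the vertical line.  The only cosmetic differences are that the paper obtains the Mellin identity by an explicit change of variable from $\nu(x)=\frac{1}{mx}\nuh(1/x)$ (combining \eqref{eq:relation_nu} with Lemma~\ref{lem:t_r}\eqref{it:tb_1}), which is cleaner than your functional-equation/moment-determinacy route (for which the log-convexity of $u\mapsto\M_{I_\psi}(1-u)$ would need to be invoked to apply the uniqueness of Theorem~\ref{thm:existence_invariant_1}\eqref{it:asymp_MV_R}); and the paper simply inverts directly at the line $\Cb_{\underline{a}}$ rather than shifting a contour, so the uniform-in-strip concern you raise at the end does not arise in their argument.
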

	\begin{proof}
First, from Theorem \ref{thm:existence_invariant_1}\eqref{it:fe_MV}, we have $\M_{V_\psi}(z)\in\Ac_{\lbrb{d_\phi,\infty}}$. Then, from \eqref{eq:relation_nu} of Proposition \ref{prop:recall_exps} and  Lemma \ref{lem:t_r}\eqref{it:tb_1}, one can deduce the following relationship between Mellin transforms
	\begin{eqnarray}\label{eq:recall}
	\M_{V_\psi}(z)&=&\M_{I_{\Tc_1\psi}}(2-z)=\frac{1}{m}\M_{I_\psi}\lbrb{1-z}, \text{ $z\in\Cb_{\lbrb{d_\phi,\infty}}$},
\end{eqnarray}
where we recall that, for any $\psi \in \Nm$ we have $\phi(0)=m>0$, $\Tc_1\psi(u)=u\phi(u+1)\in\Ne(\phi(1))$ and \mladen{$\M_{I_\psi}\lbrb{1-z}=\Ebb{I^{-z}_\psi}$}.
The requirement $\Si>2$ is to ensure that the Mellin transform $\M_{V_\psi}$ (resp.~$\M_{I_{\psi}}$) is absolutely integrable against any polynomial of order $k< \Si-2$ on $\mathbb{C}_a$ with $a>d_\phi$ (resp.~smaller than $1-d_\phi$), see \eqref{eq:subexp1}. The estimate is obtained by following a similar line of computation as for the proof of Lemma \ref{lem:MellinTT11}.
\end{proof}
	Our next Lemma specifies some preliminary properties  of the bilateral moment transforms to interval of $\R$. Recall that $(u)_n=\frac{\Gamma(u+1)}{\Gamma(u+1-n)}$.
	\begin{lemma}\label{lem:preliminary_f_k}
		Let $\psi\in\Niim$. For all $n\geq0$, we have, for  $u> 0$, that
		\begin{equation}\label{eq:preliminary_f_k}
		\mathcal{F}_{f_n}(u)=\mathcal{M}_{v_n}(u+1)=m\:(u)_{n}\:\Mp (u+1)=(u)_{n}\:\M_{I_\psi} (-u),
		\end{equation}
		where $\mathcal{M}_{v_n}$ is the moment transform of $v_n$, recalling from \eqref{eq:f_k} that $f_n(y)=e^y v_n(e^y)=e^{-ny}\nuh^{(n)}(e^{-y}), \: y\in \R.$
	\end{lemma}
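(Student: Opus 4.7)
The proof splits naturally into the three equalities composing the statement, and I would treat them in order.

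The first equality $\mathcal{F}_{f_n}(u)=\mathcal{M}_{v_n}(u+1)$ is the easy one: starting from $\mathcal{F}_{f_n}(u)=\int_{\R}e^{uy}f_n(y)\,dy$ and plugging in the identity $f_n(y)=e^y v_n(e^y)$ from \eqref{eq:f_k}, the change of variable $x=e^y$ (so that $e^{uy}\cdot e^{y}\,dy=x^u\,dx$) immediately produces $\int_0^\infty x^u v_n(x)\,dx=\mathcal{M}_{v_n}(u+1)$.

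For the second equality $\mathcal{M}_{v_n}(u+1)=(u)_n\M_{I_\psi}(-u)$, I would first invert the defining identity to obtain $v_n(x)=x^{-n-1}\nuh^{(n)}(x^{-1})$, then use the substitution $w=x^{-1}$ to rewrite $\mathcal{M}_{v_n}(u+1)=\int_0^\infty w^{n-u-1}\nuh^{(n)}(w)\,dw$. The plan is then to perform $n$ successive integrations by parts. The main obstacle --- and the only technical point of the whole proof --- is to verify that the boundary terms vanish at both endpoints. At $w\to\infty$, all negative moments of $I_\psi$ are finite by \eqref{eq:mo_Ip}, and iterating this estimate through the usual integration-by-parts bootstrap yields super-polynomial decay of each $\nuh^{(k)}$, which absorbs any polynomial factor $w^{n-u-1-k}$. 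At $w\to 0^+$, the flatness estimate \eqref{eq:Asymp111} of Lemma \ref{prop:SD2} gives $\lim_{w\to 0}w^{-k}\nuh^{(j)}(w)=0$ for arbitrary $k$, which is available here because $\psi\in\Niim$ forces $\supp I_\psi=[0,\infty)$ and provides the required smoothness of $\nuh$ through Proposition \ref{thm:prop_self}. Once the boundary terms are dismissed, $n$ applications of integration by parts produce $(-1)^n\prod_{j=1}^{n}(n-u-j)\int_0^\infty w^{-u-1}\nuh(w)\,dw$. The reindexing $j\mapsto n-j$ turns the product into $\prod_{k=0}^{n-1}(k-u)=(-1)^n u(u-1)\cdots(u-n+1)=(-1)^n(u)_n$, the two sign factors cancel, and the remaining integral is exactly $\M_{I_\psi}(-u)$.

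The third equality $\M_{I_\psi}(-u)=m\,\Mp(u+1)$ is then simply an instance of the identity \eqref{eq:recall} from Lemma \ref{lem:MellinTT111}, namely $\M_{V_\psi}(z)=m^{-1}\M_{I_\psi}(1-z)$, evaluated at $z=u+1$. Both sides are finite for any $u>0$ since $u+1>1>d_\phi$ places us inside the domain of analyticity of $\Mp$ furnished by Theorem \ref{thm:existence_invariant_1}\eqref{it:fe_MV}. Assembling the three equalities finishes the lemma; as noted, the only real work lies in justifying the boundary-term vanishing in the second step.
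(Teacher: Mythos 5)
Your overall architecture matches the paper's proof exactly: change of variable to identify $\mathcal{F}_{f_n}(u)$ with $\mathcal{M}_{v_n}(u+1)$, rewrite $v_n(x)=x^{-n-1}\nuh^{(n)}(x^{-1})$ and substitute to get $\int_0^\infty w^{n-u-1}\nuh^{(n)}(w)\,dw$, integrate by parts $n$ times killing boundary terms, and invoke \eqref{eq:recall} for the last identity. The boundary term at $w\to 0$ is indeed handled by \eqref{eq:Asymp111}, and the third equality is correctly read off \eqref{eq:recall}.

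However, there is a genuine gap in your justification of the boundary term at $w\to\infty$. You claim that finiteness of the negative moments $\E[I_\psi^{-n}]$ (that is, \eqref{eq:mo_Ip}) ``yields super-polynomial decay of each $\nuh^{(k)}$'' through an integration-by-parts bootstrap. This does not follow. First, finiteness of the \emph{negative} moments of $I_\psi$, i.e.\ $\int_0^\infty w^{-n}\nuh(w)\,dw<\infty$, constrains $\nuh$ near $w=0$, not near $w=\infty$; the limit you need to dismiss is $\lim_{w\to\infty}w^{n-u-l+1}\nuh^{(n-l)}(w)$, which is a large-$w$ statement. Second, even supposing you wanted to use the (always finite) positive moments, finite moments of a random variable do not give pointwise decay of its density let alone of its derivatives; extracting pointwise bounds from integral bounds requires either monotonicity of the specific derivative or a genuine analytical argument, neither of which your bootstrap supplies. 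The paper instead invokes the concrete pointwise estimate \eqref{eq:nuMellinInv1} from Lemma \ref{lem:MellinTT111}, namely $|\nuh^{(n)}(x)|\le Cx^{-\underline{a}-n}$ for any $\underline{a}<1-d_\phi$, which is established by Mellin inversion and requires $\Si>2$ (automatic here since $\psi\in\Niim$ forces $\Si=\infty$). Choosing $\underline{a}\in(1-u,1-d_\phi)$ then makes the boundary term at infinity $O(x^{1-u-\underline{a}})=o(1)$. Your proof needs this (or an equivalent pointwise bound) in place of the moment-based heuristic.
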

	\begin{proof}
From  \eqref{eq:recall} we have that $\M_{I_\psi}\in\Ac_{\lbrb{-\infty,1}}$ and since $v_n(x)=x^{-n-1}\nuh^{(n)}(x^{-1})$ we get that, for $u>0$,
		\begin{equation}\label{eq:Mvn}
		\mathcal{M}_{v_n}(u+1)=\IInf y^{-u-1}y^n\nuh^{(n)}(y)dy.
		\end{equation} 
To use integrations by parts fot the last integral, we need to verify in the process that
		\[\lim_{x\to 0} x^{n-u-l+1}\nuh^{(n-l)}(x)=\lim_{x\to \infty} x^{n-u-l+1}\nuh^{(n-l)}(x)=0.\]	
		The first limit follows from \eqref{eq:Asymp111}. The second one is deduced from \eqref{eq:nuMellinInv1} since
		\[ \labs x^{n-u-l+1}\nuh^{(n-l)}(x)\rabs \leq C x^{-u+1-\underline{a}}\]
		holds for any $x>0$, $\underline{a}<1-d_\phi$ and we choose $\underline{a}\in\lbrb{1-u,1-d_\phi}$. Therefore, we get that  $\mathcal{M}_{v_n}(u+1)=(u)_{n}\M_{I_\psi}\lbrb{-u}=m\:(u)_{n}\:\Mp (u+1)$, where for the latter we have invoked \eqref{eq:recall}. Finally, from the definition \eqref{eq:f_k} we conclude \eqref{eq:preliminary_f_k} via the computation
		\[\mathcal{F}_{f_n}(u)=\int_{\R}e^{uy}f_n(y)dy=\IInf x^{u-n-1}\nuh^{(n)}(x^{-1})dx=\IInf x^{u}v_n(x)dx=\mathcal{M}_{v_n}(u+1).\]
	\end{proof}
We now check the asymptotic equivalence of the upper tail of the moment generating functions of $f_n$ and $f_{a,n}$, see \eqref{eq:f_1k}, which completes the verification of condition \eqref{it:Taub_tt}.
\begin{lemma}\label{lem:f_1k}
Let $\psi \in \Niim$. Then, for any $n\geq0$, we have
	\begin{equation}\label{eq:hatf_1k}
	\mathcal{F}_{f_{a,n}}(u)\simi \mathcal{F}_{f_n}(u).
	\end{equation}
\end{lemma}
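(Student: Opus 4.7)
\textbf{Proof plan for Lemma \ref{lem:f_1k}.}
The plan is to show that the ``missing mass'' $\Delta_n(u) := \mathcal{F}_{f_n}(u) - \mathcal{F}_{f_{a,n}}(u)$ is negligible compared to $\mathcal{F}_{f_n}(u)$ as $u\to\infty$. Performing the substitution $x=e^{-y}$ in the definition of $f_n$, which has already been used to obtain \eqref{eq:preliminary_f_k}, the definition of $f_{a,n}$ in \eqref{eq:f_1k} yields
\[
\Delta_n(u) \;=\; \int_{-\infty}^{\ln \bar a_n^{-1}} e^{uy} f_n(y)\,dy \;=\; \int_{\bar a_n}^{\infty} x^{n-u-1}\,\hat\nu^{(n)}(x)\,dx,
\]
so the whole issue is to bound this tail integral in $u$. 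The key remark is that, fixing any $u_0>n$ large enough so that $\mathcal{F}_{f_n}(u_0)$ is (absolutely) convergent --- which is guaranteed by Lemma \ref{lem:preliminary_f_k} together with the integrability statement of Lemma \ref{lem:integrabilityHatNu} --- we have, for every $u>u_0$ and every $x\geq \bar a_n\leq \tfrac12$, the pointwise estimate $x^{n-u-1}\leq \bar a_n^{-(u-u_0)} x^{n-u_0-1}$. Plugging this into the integral above gives
\[
|\Delta_n(u)| \;\leq\; \bar a_n^{-(u-u_0)} \int_{\bar a_n}^{\infty} x^{n-u_0-1}\,|\hat\nu^{(n)}(x)|\,dx \;=\; C_n\, \bar a_n^{-u},\qquad u>u_0,
\]
for a finite constant $C_n>0$ independent of $u$.

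Next, I would combine this with the asymptotic size of the main term. By Lemma \ref{lem:preliminary_f_k}, $\mathcal{F}_{f_n}(u) = m (u)_n \mathcal{M}_{V_\psi}(u+1)$ and, since $\psi \in \Niim$, Theorem \ref{thm:existence_invariant_1}\eqref{it:asymp_MV_R} yields
\[
\mathcal{F}_{f_n}(u) \;\stackrel{\infty}{\sim}\; m\,C_\psi\,(u)_n\sqrt{\phi(u)}\,e^{G(u)}, \qquad G(u)=\int_{1}^{u}\ln\phi(r)\,dr.
\]
Crucially, $\psi\in\Niim\subset\Nii(m)$ forces $\mathfrak{r}=\phi(\infty)=\infty$ by Proposition \ref{propAsymp1}\eqref{it:finitenessPhi}, so $\ln\phi(r)\to\infty$ and hence $G(u)/u\to\infty$. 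Consequently, for every constant $c>0$,
\[
\bar a_n^{-u}\,e^{-G(u)} \;=\; \exp\!\bigl(u\ln\bar a_n^{-1}-G(u)\bigr)\;\xrightarrow[u\to\infty]{}\;0,
\]
which, combined with the lower bound $\mathcal{F}_{f_n}(u) \geq c_0 (u)_n \sqrt{\phi(u)}\,e^{G(u)}$ valid for all large $u$, produces
\[
\frac{|\Delta_n(u)|}{\mathcal{F}_{f_n}(u)}\;=\;O\!\left(\frac{\bar a_n^{-u}}{(u)_n\sqrt{\phi(u)}\,e^{G(u)}}\right)\;\longrightarrow\;0,
\]
yielding $\mathcal{F}_{f_{a,n}}(u)\sim \mathcal{F}_{f_n}(u)$ as required.

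The only genuinely delicate point is the absolute integrability
\[
\int_0^{\infty} x^{n-u_0-1}\,|\hat\nu^{(n)}(x)|\,dx\;<\;\infty
\]
for some fixed $u_0$. I would handle this by splitting the integral at $1$: on $(0,1]$ the bound $|\hat\nu^{(n)}(x)|\leq C x^{-\underline a-n}$ of Lemma \ref{lem:MellinTT111} (available because $\psi\in\Nim$ gives $\Si=\infty$) reduces the integrand to $x^{-u_0-\underline a-1}$, which is integrable near $0$ once $u_0<-\underline a$ and $\underline a<1-d_\phi\leq 1$ is chosen in $(-u_0,1-d_\phi)$; on $[1,\infty)$ the change of variable $x=1/t$ and the estimate \eqref{eq:abs_nun} of Lemma \ref{lem:integrabilityHatNu} applied to $\hat\nu=\hat\nu_0$ with $a>1/2$ yield integrability against any positive power of $x$. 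This is the step where the fine regularity theory of $\hat\nu$ developed earlier in the chapter is indispensable, but the arithmetic is routine.
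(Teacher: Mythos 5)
Your proof is correct, and the overall strategy — isolate the tail contribution $\Delta_n(u) = \mathcal{F}_{\bar f_{a,n}}(u)$, bound it by $C_n\,\bar a_n^{-u}$ using the pointwise estimate $x^{-(u-u_0)}\leq \bar a_n^{-(u-u_0)}$ on $[\bar a_n,\infty)$ together with the integrability from Lemma \ref{lem:integrabilityHatNu}, then show this is negligible against the main term — matches the first half of the paper's argument. Where you genuinely depart from the paper is in how the main term is lower bounded. You invoke the Webster asymptotic from Lemma \ref{lemmaAsymp1}, namely $\mathcal{F}_{f_n}(u)\simi m\,C_\psi (u)_n\sqrt{\phi(u)}\,e^{G(u)}$, and then argue $G(u)/u\to\infty$ (valid here since $\psi\in\Niim\subset\Nii$ forces $\phi(\infty)=\infty$), so that $\bar a_n^{-u}e^{-G(u)}\to 0$. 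The paper instead never touches the asymptotic: it lower-bounds $\mathcal{F}_{f_{a,n}}(u)$ directly by exploiting that $\nuh^{(n)}$ is positive and non-decreasing on $(0,a_{n+1})$ (Lemma \ref{lprop:SD2}), yielding $\mathcal{F}_{f_{a,n}}(u)\geq \bar a_{n+1}^{2(n-1-u)}\nuh^{(n-1)}(\bar a^2_{n+1})$ and hence a ratio $\leq C\,\bar a_{n+1}^{\,u-n+3}\to 0$ because $\bar a_{n+1}\leq\tfrac12$. The paper's route is more elementary and quantitative, and in particular requires no information on the growth of $G$; yours is shorter but leans on a heavier input. Both are sound.

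Two small corrections to your write-up. First, the "genuinely delicate" integrability discussion at the end is not actually needed: $\Delta_n(u)$ is an integral over $[\bar a_n,\infty)$, not $(0,\infty)$, and on the compact piece $[\bar a_n,1]$ the integrand is bounded because $\nuh^{(n)}$ is continuous there; only the tail $[1,\infty)$ requires Lemma \ref{lem:integrabilityHatNu}. Second, in that same paragraph there is a sign slip: the condition for integrability near zero reads $\underline a<-u_0$, so one would take $\underline a$ in $(-\infty,-u_0)$, not in $(-u_0,1-d_\phi)$; since you must take $u_0$ large, this set is nonempty and automatically sits inside $(-\infty,1-d_\phi)$, but as written your interval is empty. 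Neither issue affects the validity of the core argument.
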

\begin{proof}
Set $\bar{f}_{a,n}(y) =f_n(y)-{f}_{a,n}(y),\,y \in \R,\,n\geq 0$, and note that, for $u>0$,
	\begin{align*}
		\mathcal{F}_{f_n}(u)&=\mathcal{F}_{f_{a,n}}(u)+\mathcal{F}_{\bar{f}_{a,n}}(u)=\mathcal{M}_{v_n}(u+1)\\
		&=\int_{0}^{\bar a_{n+1}}y^{n-1-u}\nuh^{(n)}(y)dy+\int_{\bar a_{n+1}}^{\infty}y^{n-1-u}\nuh^{(n)}(y)dy,
	\end{align*}
	where we recall that  $\bar{a}_n=\min\left(a_n,\,\frac{1}{2}\right)$ \mladen{and we have used \eqref{eq:preliminary_f_k} and \eqref{eq:Mvn} for the second  and third identities}.
Moreover, we get, with $u>2n$, that
	\begin{equation}\label{eq:bara}
	\labs\int_{\bar a_{n+1}}^{\infty}y^{n-1-u}\nuh^{(n)}(y)dy\rabs\leq C \bar a_{n+1}^{n+1-u}\int_{\bar a_{n+1}}^{\infty}y^{-2}|\nuh^{(n)}(y)|dy
	\end{equation}
with $C>0$ throughout being a generic constant depending at most on $n$.
However, it is immediate from \eqref{eq:abs_nun} of Lemma \ref{lem:integrabilityHatNu} with $p=0,\,a=2$ that
	\[\int_{\bar a_{n+1}}^{\infty}y^{-2}|\nuh^{(n)}(y)|dy\leq C,\]
	and thus from \eqref{eq:bara} we get that
	\begin{equation}\label{eq:comparisonMT}
	\labs\mathcal{F}_{\bar{f}_{a,n}}(u)\rabs=\labs\int_{\bar a_{n+1}}^{\infty}x^{n-1-u}\nuh^{(n)}(x)dx\rabs\leq C\bar a_{n+1}^{n+1-u}.
	\end{equation}
	Furthermore, using that $\nuh^{(n)}$  is non-decreasing and positive on $\lbrb{0,a_{n+1}}$, see Lemma \ref{lprop:SD2}\eqref{it:sdd1}, $\nuh^{(n-1)}\lbrb{0}=0$, see \eqref{eq:nu0} with $\frac1\r=0$ and $n-1-u<-n<0$ when $u>2n$, we obtain that
	\begin{eqnarray*}
		\mathcal{F}_{f_{a,n}}(u)& = &\int_{0}^{\bar{a}_{n+1}}x^{n-1-u}\nuh^{(n)}(x)dx
		\geq \int_{0}^{\bar{a}^2_{n+1}}x^{n-1-u}\nuh^{(n)}(x)dx\\	
		&\geq& \bar{a}_{n+1}^{2n-2-2u}\int_{0}^{\bar{a}^2_{n+1}}\nuh^{(n)}(x)dx=\bar{a}_{n+1}^{2n-2-2u}\nuh^{(n-1)}\lbrb{\bar a^2_{n+1}}.
	\end{eqnarray*}
	Clearly, then, from \eqref{eq:comparisonMT}, we have that
	\begin{eqnarray*}
		\labs \frac{\mathcal{F}_{\bar{f}_{a,n}}(u)}{\mathcal{F}_{f_{a,n}}(u)} \rabs \leq \frac{C}{\nuh^{(n-1)}(\bar a^2_{n+1})}\frac{\bar a_{n+1}^{n+1-u}}{\bar{a}_{n+1}^{2n-2-2u}}=\frac{C}{\nuh^{(n-1)}\lbrb{\bar a^2_{n+1}}}\bar a_{n+1}^{u-n+3}\,\stackrel{u\to\infty}{=}\so{1},
	\end{eqnarray*}
since $\bar a_{n+1}\leq \frac12$. Therefore,
	$\mathcal{F}_{f_n}(u)\simi \mathcal{F}_{f_{a,n}}(u)$.
\end{proof}

\subsubsection{Condition \eqref{it:Taub_ap}}
 Since we have shown  that the conditions \eqref{it:Taub_tt} and \eqref{it:Taub_lc} hold, we proceed by discussing the asymptotic properties of $\mathcal{F}_{f_n}(u)$. The next result is in fact a restatement of \eqref{lemmaAsymp1-2} of Theorem \ref{thm:existence_invariant_1} for $f_n$.
\begin{lemma}\label{lemmaAsymp1}
For any $n\geq 0$ and $\psi\in\Nm$
\begin{equation}\label{eq:asymp_f_k}
\mathcal{F}_{f_n}(u) \simi  C_\psi m \:  (u)^{+}_{n} \: \sqrt{\phi(u)} e^{G(u)}=\beta(u)e^{G(u)},
\end{equation}
where $(u)^{+}_{n}=(u)_{n}\vee 1$,  $C_\psi>0$ and $G(u) = \int_1^u \ln \phi(r) dr$.
\end{lemma}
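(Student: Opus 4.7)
The proof plan is essentially a direct concatenation of two earlier results, so the work is almost done before we start. My first step will be to invoke Lemma \ref{lem:preliminary_f_k}, which gives the exact identity
\[
\mathcal{F}_{f_n}(u) \;=\; m\,(u)_n\,\Mp(u+1), \qquad u>0,
\]
valid for every $n\geq 0$ under the standing hypothesis $\psi \in \Nim \subseteq \Nm$. (For $n=0$ the Pochhammer symbol is the empty product $(u)_0 = 1$, so this reduces to $\mathcal{F}_{f_0}(u)=m\Mp(u+1)$, which is consistent with $f_0(y)=e^y\nuh(e^y)$.)

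Second, I would feed in the asymptotic from Theorem \ref{thm:existence_invariant_1}\eqref{it:asymp_MV_R}, namely
\[
\Mp(u+1)\;\simi\; C_\psi\sqrt{\phi(u)}\,e^{G(u)}, \qquad G(u)=\int_1^u\ln\phi(r)\,dr,
\]
with $C_\psi>0$. Multiplying by the polynomial factor $m(u)_n$ gives
\[
\mathcal{F}_{f_n}(u)\;\simi\; C_\psi\, m\,(u)_n\,\sqrt{\phi(u)}\,e^{G(u)}.
\]

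Finally, I would reconcile the factor $(u)_n$ in this expression with the factor $(u)^{+}_n = (u)_n \vee 1$ appearing in the statement. For $n=0$ both equal $1$. For $n\geq 1$, since $(u)_n = u(u-1)\cdots(u-n+1)\simi u^n$ as $u\to\infty$, one has $(u)_n\geq 1$ for all sufficiently large $u$, so $(u)^{+}_n$ and $(u)_n$ agree for large $u$ and the asymptotic equivalence is unchanged by replacing one with the other. This yields
\[
\mathcal{F}_{f_n}(u)\;\simi\; C_\psi\, m\,(u)^{+}_n\,\sqrt{\phi(u)}\,e^{G(u)}\;=\;\beta(u)\,e^{G(u)},
\]
which is the claim.

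There is no real obstacle here: both ingredients are already established. The only mild point is to observe that the proof of Lemma \ref{lem:preliminary_f_k} requires $\psi\in\Niim$ to justify the integrations by parts via the bounds \eqref{eq:Asymp111} and \eqref{eq:nuMellinInv1}, whereas the lemma at hand is stated for the larger class $\Nm$. For the case $\psi\in\Nm\setminus\Niim$, one can either check directly that the identity $\mathcal{F}_{f_n}(u)=m(u)_n\Mp(u+1)$ extends (e.g.\ by noting that $\Mp(u+1)=\E[V_\psi^u]$ and $\mathcal{F}_{f_n}(u)$ is just the $u$-th moment of $\nu$ up to the $(u)_n$ factor arising from differentiating under the Mellin transform, which holds for all $\psi\in\Ne$ with $\Mp(u+1)<\infty$, true for $u>d_\phi$ by Theorem \ref{thm:existence_invariant_1}\eqref{it:fe_MV}), or restrict to $\psi\in\Niim$ since that is the scope needed for the Tauberian application in the proof of Theorem \ref{thm:Klupel}.
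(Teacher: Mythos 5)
Your proof is correct and is essentially identical to the paper's one-line argument, which deduces \eqref{eq:asymp_f_k} directly from the identity \eqref{eq:preliminary_f_k} together with the asymptotic \eqref{lemmaAsymp1-2}; the reconciliation of $(u)_n$ with $(u)_n^+$ is the same trivial observation, since the two agree for large $u$ once $n\geq 1$ and both equal $1$ for $n=0$. Your remark that Lemma~\ref{lem:preliminary_f_k} is proved under $\psi\in\Niim$ while Lemma~\ref{lemmaAsymp1} is stated for $\psi\in\Nm$ is a genuine catch that the paper glosses over, but it is harmless here because the only place the lemma is invoked --- verifying the Tauberian condition \eqref{it:Taub_ap} in the proof of Theorem~\ref{thm:Klupel} --- already assumes $\psi\in\Niim$.
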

\begin{proof}
 Relation \eqref{eq:asymp_f_k} follows immediately from \eqref{eq:preliminary_f_k} and \eqref{lemmaAsymp1-2}.
\end{proof}
The last lemma shows that the functional form of the asymptotic of $\mathcal{F}_{f_n}(u),\,u\to\infty,$ is as required in condition \eqref{it:Taub_ap}.
In order to check that the terms appearing in the large asymptotic behaviour \eqref{eq:asymp_f_k} of $\mathcal{F}_{f_n}$ satisfy the conditions in \eqref{it:Taub_ap}, we need to modify the function $G$ which is simply defined on $\R_+$. To this end, we note that, for $u>0$,
\begin{equation}
G^{(2)}(u) = \frac{\phi'(u)}{\phi(u)}.
\end{equation}
Since $G^{(2)}(1)>0$, we may for instance consider the function
\[G_1(u)= G(u)\mathbb{I}_{\{u\geq1\}}+ \lbrb{\frac{G^{(2)}(1)}{2}u^2+\lbrb{G'(1)-G^{(2)}(1)}u+G(1)-G'(1)+{\frac{G^{(2)}(1)}{2}}}\mathbb{I}_{\{u<1\}},\]
 which is strictly convex on the real line. However, since the remaining conditions involve only the large asymptotic behaviour of $G_1$ or related functionals at $\infty$, there is no loss of generality of keeping the notation $G$ throughout. In the same vein we simply write
\begin{equation}\label{eq:SG}
s_{G}(u) = \frac{1}{\sqrt{G^{(2)}(u)}}=\sqrt{\frac{\phi(u)}{\phi'(u)}}.
\end{equation}
We first have the following result.
\begin{lemma}\label{prop:HirshYor}
For any $\psi \in \Nm$ it is true that $\lim_{u \to \infty}s_{G}(u) = \infty.$ Moreover,
\end{lemma}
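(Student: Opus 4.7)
The plan is to show that $s_G(u)^2 = \phi(u)/\phi'(u) \to \infty$, equivalently that $\phi'(u)/\phi(u) \to 0$ as $u \to \infty$, and the argument will split naturally on whether the diffusion coefficient $\sigma^2$ of $\phi$ is positive or zero. Throughout, I will lean on Proposition~\ref{propAsymp1}\eqref{it:asyphid}, namely the asymptotic relations
\[
\phi(u) \stackrel{\infty}{=} \sigma^2 u + \so{u}, \qquad \phi'(u)\stackrel{\infty}{=}\sigma^2+\so{1},
\]
together with the fact that $\psi\in\Nm$ forces $\phi(0)=m>0$, hence $\phi(u)\geq m>0$ for every $u\geq 0$ since $\phi$ is non-decreasing on $\R_+$ by Proposition~\ref{propAsymp1}\eqref{it:bernstein_cm}.

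First, in the case $\sigma^2>0$, the two asymptotics above yield directly
\[
\frac{\phi(u)}{\phi'(u)} \simi \frac{\sigma^2 u}{\sigma^2} = u \longrightarrow \infty,
\]
from which $s_G(u)\to\infty$ follows immediately by the definition \eqref{eq:SG}. Second, in the case $\sigma^2=0$, the same Proposition gives $\phi'(u)=\so{1}$, so $\phi'(u)\to 0$ as $u\to\infty$. Combining this with the lower bound $\phi(u)\geq m>0$ gives
\[
0 \leq \frac{\phi'(u)}{\phi(u)} \leq \frac{\phi'(u)}{m}\longrightarrow 0,
\]
so $\phi(u)/\phi'(u)\to\infty$ and consequently $s_G(u)\to\infty$ as well. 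In both cases the conclusion $\lim_{u\to\infty} s_G(u)=\infty$ holds.

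There is essentially no obstacle here; the only subtle point is to confirm that $\phi(u)$ stays bounded away from zero in the diffusionless case, which is precisely what the hypothesis $\psi\in\Nm$ (i.e.\ $m=\phi(0)>0$) together with monotonicity of $\phi$ guarantees. The statement could presumably be strengthened into an asymptotic description of $s_G(u)$ (linear growth when $\sigma^2>0$, and growth driven by $1/\sqrt{\phi'(u)}$ when $\sigma^2=0$), but for the purpose of verifying the self-neglecting property and the Tauberian condition \eqref{it:Taub_ap} later, the bare divergence statement is what matters.
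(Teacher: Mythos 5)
Your proof is correct, but it takes a different route from the paper's. The paper gives a one-shot argument that avoids any case split: by Proposition~\ref{propAsymp1}\eqref{it:finitenessPhi}, \eqref{it:bernstein_cmi} and \eqref{eq:phi'}, the quantity $s_G^{-2}(u)=\phi'(u)/\phi(u)$ is the Laplace transform of the convolution $(\sigma^2\delta_0+\tilde\mu)\star\Upsilon$, where $\tilde\mu(dy)=y\,\mu(dy)$ and $\Upsilon$ is the potential measure; since $\tilde\mu$ has no atom at $0$ (and, when $\sigma^2>0$, neither does $\Upsilon$ because $\phi(\infty)=\infty$), this convolution has no atom at $0$, so its Laplace transform tends to $0$ at infinity. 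Your argument instead splits on whether $\sigma^2>0$ or $\sigma^2=0$ and reads the conclusion off the elementary asymptotics $\phi(u)=\sigma^2 u+\so{u}$, $\phi'(u)=\sigma^2+\so{1}$, together with the lower bound $\phi\geq m>0$. The two buy slightly different things: the paper's identity $s_G^{-2}=\mathcal{L}\bigl[(\sigma^2\delta_0+\tilde\mu)\star\Upsilon\bigr]$ is conceptually uniform and is structurally useful for the subsequent self-neglecting analysis, whereas yours is more elementary, requires only the asymptotics already recorded in Proposition~\ref{propAsymp1}, and makes the role of the hypothesis $m=\phi(0)>0$ explicit in the diffusionless case (though, as you can check, $\phi(1)>0$ would serve equally well since $\phi$ is non-decreasing). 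Both proofs are sound.
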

\begin{proof}
Since $\phi\in\Be_\Ne$ then Proposition \ref{propAsymp1}\eqref{it:finitenessPhi} gives $\mu(dy)=\PP(y)dy$. Then, from item \eqref{it:bernstein_cmi} and \eqref{eq:phi'} of Proposition \ref{propAsymp1} we have that $s^{-2}_G(u)=\frac{\phi'(u)}{\phi(u)}$ is the Laplace transform on $\R_+$ of the measure $\lbrb{\sigma^2\delta_0+\tilde\mu}\star\Upsilon$ with $\tilde\mu(dy)=y\mu(dy),\,y>0$. Therefore $\limi{u}s^{-2}_G(u)=0$.
\end{proof}
Our first result provides a necessary and sufficient condition in terms of $\phi$ for $s_G$ to be self-neglecting, see \eqref{eq:selfNeglecting} for a definition, and $\beta(u)=(u)^{+}_{n} \: \sqrt{\phi(u)}$ to be flat with respect to $G$, see \eqref{eq:flat}. This together with \eqref{eq:asymp_f_k} will ensure that $\mathcal{F}_{f_n}$ satisfies \eqref{it:Taub_ap}.
\begin{proposition}\label{prop:NASCforAP} \mladen{Let $\phi \in \Bp$. Then,
\begin{equation}\label{eq:SCselfneglecting}
\lim_{u\to\infty}\lbrb{\frac{u^2\phi'(u)}{\phi(u)}}^\frac12=\lim_{u\to\infty}\frac{u}{s_G(u)}=\lbrb{\frac{\PP\lbrb{0^+}}{\r}}^\frac12\in\lbrbb{0,\infty}	
\end{equation}
and the latter limit is infinite if and only if $\psi \in \Nim$.
As a consequence
\begin{equation}\label{eqn:Flat}
\lim_{u\to\infty}\frac{\phi(u+\mathfrak{a} s_G(u))}{\phi(u)}=1\text{ uniformly for $\mathfrak{a}$-compact intervals of $\,\lbrb{-\lbrb{\frac{\r}{\PP\lbrb{0^+}}}^{\frac12},\infty}$}
\end{equation}
and
\begin{equation}\label{eqn:Flat1}
\lim_{u\to\infty}\frac{\phi'(u+\mathfrak{a}s_G(u))}{\phi'(u)}=1\text{ uniformly for $\mathfrak{a}$-compact intervals of $\,\R$}\iff \psi \in \Nim.
\end{equation}
Thus, $\psi \in \Nim$  is a necessary and sufficient condition for $\mathcal{F}_{f_n},\,n\geq0,$ to satisfy the condition \eqref{it:Taub_ap}. Consequently, for any $\psi \in \Nim$, \eqref{it:Taub_ap} is satisfied.}
\end{proposition}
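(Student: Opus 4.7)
First, I would derive \eqref{eq:SCselfneglecting} as an immediate consequence of Proposition \ref{propBernsteinlog}. By the definition \eqref{eq:SG}, $s_G^2(u)=\phi(u)/\phi'(u)$, so $u^2/s_G^2(u)=u^2\phi'(u)/\phi(u)$; equations \eqref{eq:sn}--\eqref{eq:sn1} then yield the limit $\PP(0^+)/\r\in\lbbrbb{0,\infty}$, which is infinite precisely when $\psi\in\Nim$ (recall $\psi\in\Nm$). Taking square roots closes the step.

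Next, for \eqref{eqn:Flat} I would split into two cases. When $\psi\in\Nim$, step~1 gives $s_G(u)/u\to 0$, so $h:=\mathfrak{a}s_G(u)=\so{u}$ uniformly on compact $\mathfrak{a}$-sets. Combining the monotonicity of $\phi'$ (a consequence of $\phi$ being concave, Proposition \ref{propAsymp1}\eqref{it:bernstein_cm}) with the bound $\phi'(t)\leq\phi(t)/t$ from \eqref{eq:phi'_phi} and the mean value theorem, one gets $|\phi(u+h)-\phi(u)|\leq |h|\phi'(u-|h|)\leq |h|\phi(u)/(u-|h|)$, so the ratio tends to $1$ uniformly. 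When $\psi\in\Ni^c$, Proposition \ref{propAsymp1}\eqref{it:finitenessPhi} gives $\phi(\infty)=\r<\infty$; step~1 makes $s_G(u)/u$ converge to $\sqrt{\r/\PP(0^+)}$, and for $\mathfrak{a}$ in compact subsets of the stated interval the shift $u+\mathfrak{a}s_G(u)$ diverges to $\infty$, so both $\phi(u+h)$ and $\phi(u)$ tend to $\r$ and the ratio to $1$.

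For \eqref{eqn:Flat1} I would verify the two implications separately. For failure in $\Ni^c$: since $\sigma^2=0$ and $\PP(0^+)<\infty$, an Abelian argument via the substitution $y=s/u$ in $\phi'(u)=\int_0^\infty e^{-uy}y\PP(y)dy$ yields $\phi'(u)\sim\PP(0^+)/u^2$, whence $\phi'(u+\mathfrak{a}s_G(u))/\phi'(u)\to(1+\mathfrak{a}\sqrt{\r/\PP(0^+)})^{-2}\neq 1$ for $\mathfrak{a}\neq 0$. For sufficiency in $\Nim$: when $\sigma^2>0$, Proposition \ref{propAsymp1}\eqref{it:asyphid} gives $\phi'(u)\to\sigma^2>0$ so the ratio tends trivially to $1$; in the remaining case $\sigma^2=0$, $\PP(0^+)=\infty$, complete monotonicity of $\phi'$ yields $\phi'''\geq 0$, hence $|\phi''|$ is non-increasing, and the mean value theorem gives $|\phi'(u+h)-\phi'(u)|\leq |h||\phi''(u-|h|)|$. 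I would then bound $|\phi''(u-|h|)|/\phi'(u)=\bo{1/u}$ using the probabilistic identity $u|\phi''(u)|/\phi'(u)=\int(uy)\nu_u(dy)$ with $\nu_u(dy)=e^{-uy}y\mu(dy)/\phi'(u)$ concentrating at scale $1/u$, yielding $|h||\phi''(u-|h|)|/\phi'(u)=\bo{s_G(u)/u}\to 0$ in $\Nim$.

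Finally, the consequence for condition \eqref{it:Taub_ap} follows by combining the two flatness statements: the factor $\beta(u)=C_\psi m(u)_n^+\sqrt{\phi(u)}$ appearing in \eqref{eq:asymp_f_k} is flat relative to $G$, as $(u+h)_n^+/(u)_n^+\to 1$ because $h=\so{u}$ in $\Nim$ and $\sqrt{\phi(u+h)/\phi(u)}\to 1$ by \eqref{eqn:Flat}; moreover $s_G$ is self-neglecting since $s_G^2(u+h)/s_G^2(u)=(\phi(u+h)/\phi(u))(\phi'(u)/\phi'(u+h))\to 1\cdot 1=1$ using both \eqref{eqn:Flat} and \eqref{eqn:Flat1}. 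The main technical obstacle is the last subcase of \eqref{eqn:Flat1}: the shift $h=\mathfrak{a}s_G(u)$ is unbounded, so the classical flatness \eqref{lemmaAsymp1-1} for $\phi$ does not transfer directly to $\phi'$; one needs a concentration estimate for the measure $e^{-uy}y\mu(dy)$ that holds uniformly across the growing shift, and this hinges on the precise rate $s_G(u)/u\to 0$ supplied by Proposition \ref{propBernsteinlog}\eqref{it:sn}.
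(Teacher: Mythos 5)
Your derivation of \eqref{eq:SCselfneglecting} matches the paper's: read off $u/s_G(u) = \sqrt{u^2\phi'(u)/\phi(u)}$ and invoke Proposition \ref{propBernsteinlog}.

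For \eqref{eqn:Flat} you take a genuinely different route. The paper does not split into $\Nim$ versus $\Ni^c$; it exploits the algebraic identity $\phi'(u)s_G(u)/\phi(u)=1/s_G(u)$ to squeeze the ratio between $1\mp\mathfrak{a}_0/s_G(u)$, and then invokes only the universal fact $s_G(u)\to\infty$ (Lemma \ref{prop:HirshYor}), so the convergence holds for all $\psi\in\Ne$ in one stroke. Your case split works but is heavier; in the $\Ni^c$ branch you also have to be careful that the argument $u+\mathfrak{a}s_G(u)\to\infty$ requires $\mathfrak{a}>-\lim u/s_G(u)=-\sqrt{\PP(0^+)/\r}$, which is the reciprocal of the constant printed in the statement — worth flagging, but it is the same constraint the argument naturally imposes.

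The real gap is in the sufficiency direction of \eqref{eqn:Flat1} when $\sigma^2=0$ and $\PP(0^+)=\infty$. You write $|\phi'(u+h)-\phi'(u)|\leq |h|\,|\phi''(u-|h|)|$ and then assert $|\phi''(u-|h|)|/\phi'(u)=\bo{1/u}$, citing the identity $u|\phi''(u)|/\phi'(u)=\int(uy)\,d\nu_u(y)$ and the heuristic that $\nu_u$ concentrates at scale $1/u$. This does not close the step for two reasons. First, the identity compares $\phi''$ and $\phi'$ at the \emph{same} point, whereas you need $\phi''$ at $u-|h|$ against $\phi'$ at $u$; naively inserting a factor $\phi'(u-|h|)/\phi'(u)$ reintroduces exactly the ratio you are trying to control — the argument is self-referential. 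Second, ``concentration at scale $1/u$'' is not a proof and is in fact false for general exponentially tilted measures: the correct statement is the sharp inequality $u|\phi''(u)|\leq 2\phi'(u)$, which follows from integrating $\int_0^\infty e^{-uy}y^2\PP(y)\,dy$ by parts and using that $\PP$ is non-increasing — a structural fact your sketch never invokes. (Once you have $|\phi''(v)|\leq 2\phi'(v)/v$, the shifted comparison can be rescued by writing $\phi'(u-|h|)-\phi'(u)\leq |h|\,|\phi''(u-|h|)|\leq 2|h|\phi'(u-|h|)/(u-|h|)$ and solving for $\phi'(u-|h|)/\phi'(u)$, which produces the needed two-sided bound when $|h|/u\to 0$, but that is a different argument than what you wrote.) The paper sidesteps all of this cleanly: it substitutes $y\mapsto y/(1+\mathfrak{a}s_G(u)/u)$ in $\phi'(u+\mathfrak{a}s_G(u))=\int e^{-(u+\mathfrak{a}s_G(u))y}y\PP(y)\,dy$, uses $\PP$ non-increasing to compare the transformed integrand with $\phi'(u)$, and obtains $\phi'(u+\mathfrak{a}s_G(u))/\phi'(u)\geq (1+\mathfrak{a}_0 s_G(u)/u)^{-2}$ directly, with the necessary-and-sufficient equivalence with $\Nim$ falling out of \eqref{eq:SCselfneglecting}. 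You should adopt that change-of-variables device; the MVT-plus-concentration sketch cannot be made rigorous without the $\PP$-monotonicity input you are omitting.
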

\begin{proof}
From \eqref{eq:SG} note that \eqref{eq:SCselfneglecting} is a restatement of Proposition \ref{propBernsteinlog} which also asserts that the limit of \eqref{eq:SCselfneglecting} is infinite if and only if $\psi\in\Nim$. Let $\mathfrak{a}_0>0$. Then, for any $\mathfrak{a}\in\lbrb{0,\mathfrak{a}_0}$,
\begin{align*}
\phi\lbrb{u+\mathfrak{a}s_G(u)}=\phi(u)+\int_{u}^{u+\mathfrak{a}s_G(u)}\phi'(s)ds\leq \phi(u)+\mathfrak{a}_0s_G(u)\phi'(u),
\end{align*}
where the last inequality follows from the fact that $\phi'$ is non-increasing due to Proposition\ref{propAsymp1}\eqref{it:bernstein_cm}. Therefore, as $\phi$ is also non-decreasing, we have that
\[1 \leq \frac{\phi\lbrb{u+\mathfrak{a}s_G(u)}}{\phi(u)}\leq 1+\mathfrak{a}_0s_G(u)\frac{\phi'(u)}{\phi(u)}=1+\frac{\mathfrak{a}_0}{s_G(u)}.\]
Since from Lemma \ref{prop:HirshYor}, $\lim_{u\to\infty}s_G(u)=\infty$, we get \eqref{eqn:Flat} for compact intervals of $\R_+$. Choose arbitrary but fixed $\mathfrak{a}_0\in\lbrb{0,\lbrb{\frac{\r}{\PP\lbrb{0^+}}}^{\frac12}}$. From \eqref{eq:selfNeglecting} there exists $u_0=u\lbrb{\mathfrak{a}_0}>0$ such that for all $u>u_0$, $1-\mathfrak{a}_0\frac{s_G(u)}{u}<1$. This, together with
 $\phi''\leq 0$ on $\R{^+}$, see \eqref{eq:phi'},  which implies that $\phi$ is a concave function on $\R_+$, gives for all $u>u_0$ and $\mathfrak{a}\in\lbrb{0,\mathfrak{a}_0}$ that
\begin{align*}
1 \geq \frac{\phi\lbrb{u-\mathfrak{a}s_G(u)}}{\phi(u)}\geq 1-\mathfrak{a}_0\frac{\phi'(u)}{\phi(u)}s_G(u)=1-\frac{\mathfrak{a}_0}{s_G(u)}
\end{align*}
and \eqref{eqn:Flat} follows.  Next, $s_G>0$ on $\R_+$ and we observe, for any $n\geq0$, that
\[ \frac{\lb u+\mathfrak{a}s_G(u)\rb^{+}_{(n)}}{u^{+}_{(n)}} \simi  \lb 1+\mathfrak{a}\frac{s_G(u)}{u}\rb ^{n}=\lb 1+\mathfrak{a}\sqrt{\frac{\phi(u)}{u^2\phi'(u)}}\rb ^{n}. \]
Hence, $\psi\in\Nim$ is necessary and sufficient for $\beta(u)=(u)^{+}_{n} \: \sqrt{\phi(u)}$ to be flat, i.e.~that
\begin{equation}\label{eq:FlatnessPhi_-}
\lim_{u\to\infty}\frac{\lb u+\mathfrak{a}s_G(u)\rb^{+}_{(n)}}{u^{+}_{(n)}}\sqrt{\frac{\phi(u+\mathfrak{a}s_G(u))}{\phi(u)}}=1\text{ uniformly for $\mathfrak{a}$-compact intervals of $\R$.}
\end{equation}
We proceed by showing that \eqref{eq:SCselfneglecting} is sufficient for $s_G$ to be self-neglecting which by the definition of a self-neglecting function,  see \eqref{eq:selfNeglecting}, the form of $s_G$,   see \eqref{eq:SG}, and \eqref{eqn:Flat} is equivalent to showing \eqref{eqn:Flat1}. Let us assume that $\sigma^{2}>0$. Then \eqref{eqn:Flat1} follows from $\phi(u)\simi\sigma^2 u$ and $\phi'(u)\simi\sigma^2 $, i.e.~Proposition \ref{propAsymp1}\eqref{it:asyphid}.
Let $\sigma^2=0$. Pick $\mathfrak{a}\in\lbrb{0,\mathfrak{a}_0}$. Then
we again use that $\phi'$ is non-increasing and the second representation of $\phi'$ in \eqref{eq:phi'} to get
 \begin{eqnarray*}
 1 &\geq& \frac{\phi'(u+\mathfrak{a}s_G(u))}{\phi'(u)}=\frac{\int_{0}^{\infty}e^{-y(u+\mathfrak{a}s_G(u))}y\overline{\Pi}(y)dy}{\phi'(u)}\\&=&
 \lb 1+\mathfrak{a}\frac{s_G(u)}{u}\rb^{-2}\frac{\int_{0}^{\infty}e^{-uy}y\overline{\Pi}\lb\frac{y}{(1+\mathfrak{a}u^{-1}s_G(u))}\rb dy}{\phi'(u)}\\ &\geq& \lb 1+\mathfrak{a}\frac{s_G(u)}{u}\rb^{-2}\frac{\int_{0}^{\infty}e^{-uy}y\overline{\Pi}\lb y\rb dy}{\phi'(u)}\geq \lb 1+\mathfrak{a}_0\frac{s_G(u)}{u}\rb^{-2},
 \end{eqnarray*}
 where we have also used that $\overline{\Pi}$ is non-increasing and $1+\mathfrak{a}u^{-1}s_G(u)>1$. Clearly, then the right-hand side converges to $1$ if and only if $\psi\in\Nim$. Similar computations give
  \begin{eqnarray*}
 1 &\leq & \frac{\phi'(u-\mathfrak{a}s_G(u))}{\phi'(u)}=\frac{\int_{0}^{\infty}e^{-y(u-\mathfrak{a}s_G(u))}y\PP(y)dy}{\phi'(u)}  \\ &\leq& \lbrb{1-\mathfrak{a}\frac{s_G(u)}{u}}^{-2}\frac{\int_{0}^{\infty}e^{-uy}y\PP(y) dy}{\phi'(u)}\leq \lbrb{1-\mathfrak{a}_0\frac{s_G(u)}{u}}^{-2}
 \end{eqnarray*}
 and we conclude the proof of \eqref{eqn:Flat1}. Therefore, from \eqref{eq:asymp_f_k},  $\mathcal{F}_{f_n}$ satisfies the condition \eqref{it:Taub_ap} holds if and only if $\psi\in\Nim$. The proof is thus completed.
\end{proof}

\subsubsection{End of the proof of Theorem \ref{thm:Klupel}}
Let $\psi\in\Niim$. We apply Proposition \ref{lem:Klupelberg} to the functions $f_{n}$ via their truncation $f_{a, n}>0$ on $\lbrb{\lbrb{\ln\bar{a}_n}^{-1},\infty}$, see \eqref{eq:f_1k}. Lemma \ref{lem:veryThinTail} gives the first part of Proposition \ref{lem:Klupelberg}\eqref{it:Taub_tt}  that is $F_{a,n}(x)=\int_x^\infty f_{a,n}(y)dy$ and even $f_{a,n}$ are of very thin tail. The fact that $\Fc_{f_n}(u)\simi\Fc_{f_{a,n}}(u)$ follows from Lemma \ref{lem:f_1k} which proves \eqref{it:Taub_tt}. Condition \eqref{it:Taub_lc} for $f_n$ follows from the second claim of Lemma \ref{thm:Klupellog}. The first condition of item \eqref{it:Taub_ap} is  Lemma \ref{lemmaAsymp1}. When $\psi\in\Niim$ the properties of $\beta,\,G$ follow from Proposition \ref{prop:NASCforAP} after observing from
Lemma \ref{lemmaAsymp1} that
\[G(u)=\int_{1}^{u}\ln\phi(r)dr=u\ln\phi(u)-\int_{1}^{u}\frac{r\phi'(r)}{\phi(r)}dr\mladen{-\ln\phi(1)}\]
and $\beta(u)=C_\psi m\:(u)_{(n)}\sqrt{\phi(u)}$. Therefore, we conclude that \eqref{eq:Tauber} holds, that is \[f_n(y)\simi \frac{1}{\sqrt{2\pi}}\frac{\beta^*(y)}{s_{G^*}(y)}e^{-G^*(y)}.\]
To get $G^*$ and $\beta^*$  we use the definition of the Legendre transform \eqref{eq:Legendre}. Henceforth,
\[G^*(y)=\sup_{u\in \R}\{y u-G(u)\}=y \varphi(e^y)-G(\varphi(e^y))=\int_{m}^{e^y}\frac{\varphi(r)}{r}dr\mladen{+\int_{0}^{m}\frac{\varphi(r)}{r}dr+\ln\phi(1)},\]
since the relevant supremum is attained at $y=G'(u)=\ln \phi(u)$ and $\varphi:[m,\infty)\mapsto[0,\infty)$ is the inverse function of $\phi$. To compute $s_{G^*}$ we just differentiate twice $G^*$ to get
 \[s_{G^*}(y)=\frac{1}{\sqrt{(G^*)^{(2)}(y)}}=\frac{1}{\sqrt{e^y\varphi'(e^y)}}=\sqrt{e^{-y}\phi'(\varphi(e^y))}.\]
Finally, since $\beta^*(y)=\beta(u),\,y=G'(u)$, see Proposition \ref{lem:Klupelberg}, we get that
\[\beta^*(y)=\beta(\varphi(e^y))=C_\psi m (\varphi(e^y))_n\sqrt{\phi(\varphi(e^y))}=C_\psi m (\varphi(e^y))_n\sqrt{e^y}\]
and clearly
\[\beta^*(y)\simi C_\psi m \sqrt{e^y}\varphi^n(e^y).\]
Therefore, collecting all the results so far and applying \eqref{eq:Tauber} we get that
\[f_n(y)\simi \frac{1}{\sqrt{2\pi}}\frac{\beta^*(y)}{s_{G^*}(y)}e^{-G^*(y)}\simi \frac{\tilde{C}_\psi m}{\sqrt{2\pi}}e^{y}\varphi^n(e^y)\sqrt{\varphi'(e^y)}e^{-\int_{m}^{e^y}\frac{\varphi(r)}{r}dr}.\]
From \eqref{eq:f_k} we know that $f_n(y)=e^y v_n\lbrb{e^y}=e^{-ny}\nuh^{(n)}\lbrb{e^{-y}}$ and hence
\[\nuh^{(n)}\lbrb{e^{-y}}\simi \frac{\tilde{C}_\psi m}{\sqrt{2\pi}}e^{(n+1)y}\varphi^n(e^y)\sqrt{\varphi'(e^y)}e^{-\int_{m}^{e^y}\frac{\varphi(r)}{r}dr}. \]
Thus \eqref{eqn:nu0Asymp1} and \eqref{eqn:nuAsymp} hold. Relation \eqref{eqn:nueAsymp} then follows from \eqref{eq:relation_nue}. This concludes the lengthy proof of Theorem \ref{thm:Klupel}.

\subsection{Proof of Theorem \ref{thm:nuLargeTime1}} \label{sec:proof_invariant_asympt} \label{sec:prov_asym_deriv}
Let us recall, from Proposition \ref{prop:recall_exps}, that
	\begin{equation} \label{eq:relation_nu-}
	\nu(x)=\frac{1}{x^2}\nuh_1\left(\frac{1}{x}\right),
	\end{equation}
	with $\nuh_1$ the density of $I_{\mathcal{T}_1 \psi},$ where	$\mathcal{T}_1 \psi(u)=u\phi_1(u)=u\phi(u+1) \in \Ne(\phi(1))$.
	Since, from \eqref{eqn:nuAsymp} of Theorem \ref{thm:Klupel}, we know the small asymptotic behaviour of any derivative of $\nuh_1$ an application of the Faa di Bruno formula with $\tilde{k}_n=\sum_{j=1}^{n}jk_j$ and $\bar{k}_n=\sum_{j=1}^{n}k_j$ gives that
	\begin{eqnarray}\label{eq:faadi}
		\nonumber\lbrb{\nuh_1\lbrb{\frac{1}{x}}}^{(n)}&=& \sum_{\tilde{k}_n=n;\bar{k}_n=k}\frac{n!\:\nuh_1^{(k)}\lb x^{-1}\rb}{\prod_{j=1}^{n}k_j!\prod\limits_{j=1}^{n}(j! )^{k_j}}\prod\limits_{j=1}^{n}\lb p_1(x^{-1})^{(j)}\rb^{k_j}\\
	&=&(-1)^n\sum_{\tilde{k}_n=n;\bar{k}_n=k}\frac{n!\:\nuh_1^{(k)}\lb x^{-1}\rb x^{-n-k}}{\prod_{j=1}^{n}k_j!} \\
		&\stackrel{\infty}{\sim}& (-1)^n\frac{\tilde{C}_{\Tc_1\psi} m_1}{\sqrt{2\pi}}x^{-n+1}\sqrt{\varphi_1'(x)} e^{-\int_{m_1}^{x}\frac{\varphi_1(y)}{y}dy} \sum_{\tilde{k}_n=n;\bar{k}_n=k} \frac{n!\varphi_1^{k}(x)}{k_1!k_2!\ldots k_n!}, \nonumber
	\end{eqnarray}
where as usual $p_1(x)=x$, $\varphi_1(\phi(u+1))=u$ is the inverse function of $\phi_1(u)=\phi(u+1)$,  $m_1=\phi_1(0)=\phi(1)$ and we have used that under the sum with $\tilde{k}_n=n,\bar{k}_n=k,$ we have $\prod\limits_{j=1}^{n}\lb p_1(x^{-1})^{(j)}\rb^{k_j}= (-1)^{n} x^{-n-k}\prod\limits_{j=1}^{n}(j! )^{k_j}$. 	
	Since $ \lim_{u\to \infty}\varphi_1(u)=\lim_{u\to \infty}\phi^{-1}\lbrb{u+1}=\infty,$ the leading term in \eqref{eq:faadi} is for $k_1=n$ and $k_2=\ldots=k_n=0$. Recall that $\varphi(\phi(u))=u$ and $m=\phi(0)$. Thus, we get that  
\begin{eqnarray*}
\varphi_1(r)&=&\varphi(r)-1\stackrel{\infty}{\sim}\varphi(r), \quad \varphi'_1(r) = \varphi'(r),\\
\int\limits_{m_1}^{x}\frac{\varphi_1(r)}{r}dr&=&\int\limits_{m}^{x}\frac{\varphi(r)}{r}dr-\int\limits_{m}^{m_1}\frac{\varphi(r)}{r}dr-\ln x+\ln(m_1).
\end{eqnarray*}
Plugging these relations in  \eqref{eq:faadi} we get with $C_{\psi}= \tilde{C}_{\mathcal{T}_1 \psi}e^{\int_{m}^{m_1} \varphi(y)\frac{dy}{y}}$ that
	\begin{eqnarray}\label{eq:FaadiBruno1}
	\nuh_1^{(n)}\lbrb{\frac1x}&\stackrel{\infty}{\sim}& (-1)^n \frac{C_\psi }{\sqrt{2\pi}}x^{-n+2}\sqrt{\varphi'(x)}\varphi^{n}(x) e^{-\int_{m}^{x}\frac{\varphi(r)}{r}dr}.
	\end{eqnarray}
	Then differentiating $n$ times the relation \eqref{eq:relation_nu-}, we get employing  \eqref{eq:FaadiBruno1} that
	\begin{eqnarray}
		\nu^{(n)}(x) &\stackrel{\infty}{\sim}&  \sum_{k=0}^{n}(-1)^k{n\choose k} \frac{(k+1)!}{x^{k+2}} \nuh_1^{(n-k)}\lb \frac1x\rb \label{eq:der_nu_hnu} \\
		&\stackrel{\infty}{\sim}& (-1)^n \frac{C_\psi}{\sqrt{2\pi}}\frac{1}{x^{n}}\sqrt{\varphi'(x)} e^{-\int_{m}^{x}\frac{\varphi(r)}{r}dr} \sum_{k=0}^{n}{n\choose k}(k+1)! \varphi^{n-k}(x) \nonumber \\
		&\stackrel{\infty}{\sim} & (-1)^n \frac{C_\psi}{\sqrt{2\pi}}\frac{1}{x^{n}}\sqrt{\varphi'(x)} \varphi^{n}(x) e^{-\int_{m}^{x}\frac{\varphi(r)}{r}dr}, \nonumber
	\end{eqnarray}
	where for the last sum we have used again the fact that $\varphi^n(x)$ dominates all other terms. This proves \eqref{eqn:nu0Asymp_1} of Theorem \ref{thm:nuLargeTime1}.
Next, assume  that $\sigma^2>0$, and  note that, since $\phi'(u) \simi \sigma^2 $, see Proposition \ref{propAsymp1}\eqref{it:asyphid}, we have from $\phi'\lbrb{\varphi(y)}\varphi'(y)=1,\,y>m$, that $ \varphi'(y)\simi \sigma^{-2}$ and $ \varphi(y)\simi \sigma^{-2}y$.
 Moreover, from the first expression of \eqref{eqn:phi-} with $\bar{\mu}=\PPP$, since $\phi\in\Be_\Ne$, $\varphi$ solves \mladen{on $\lbrb{m,\infty}$}
\[y=\phi(\varphi(y))=m +\sigma^2\varphi(y)+\varphi(y)\int_{0}^{\infty}e^{-\varphi(y)u} \PPP(u)du=m +\sigma^2\varphi(y)+\varphi(y)H(y),\]
where the last identity sets a notation. Re-expressing $\sigma^2\varphi(y)=y-\varphi(y)H(y)-m\simi y$, we obtain that $H(y)=\so{1}$ and $\varphi_1 (y):=\varphi(y)H(y)=\so{y}$.
Thus, \eqref{eqn:BMAsympGeneral1} follows from substituting $\varphi(y)=\frac{1}{\sigma^2}\lbrb{y-\varphi_1(y)-m}$ in the exponent of \eqref{eqn:nu0Asymp_1} and applying $ \varphi'(y)\simi \sigma^{-2}$ and $ \varphi(y)\simi \sigma^{-2}y$. Assume in addition that $\PP\in RV_{1+\alpha}(0)$. We refer to \cite{BinghamGoldieTeugels87} for a detailed account on this set of functions and to \eqref{def:rv} for their explicit definition. Then a classical Tauberian theorem gives that
$\PPP(y) \simo \alpha y^{-\alpha}\ell(y)$ with $\ell$ a slowly varying function
and subsequently that
\[H\lbrb{\phi(y)}=\int_{0}^{\infty}e^{-yu} \PPP(u)du\simi \alpha \Gamma\lbrb{1-\alpha}y^{\alpha-1}\ell(y).\]
 Thus
$\varphi_1(y)=\varphi(y)H(y)\simi  \alpha \Gamma\lbrb{1-\alpha} \varphi^{\alpha}(y) \ell(\varphi(y)) \simi  \alpha \Gamma\lbrb{1-\alpha}\sigma^{-2\alpha} y^{\alpha}\ell(y),$
which concludes the proof of this case. If  now $\PPP(0^+)<\infty$, then from the identity
\[\varphi_1(y)=\varphi(y)H(y)=\varphi(y)\int_{0}^{\infty}e^{-\varphi(y)u} \PPP(u)du=\int_{0}^{\infty}\lb 1-e^{-\varphi(y)u}\rb \PP(u)du\]
 combined with $ \varphi(y)\simi \sigma^{-2}y$ we finally get
$\lim_{y\to\infty}\varphi(y)H(y)=\PPP(0^+).$
Next, we assume that $\sigma^2=0$
and  $\psi\in \Nea$ with $\alpha\in(0,1)$, that is $\psi(y)\simi C_\alpha y^{1+\alpha} $ and thus $\phi(y)\simi C_\alpha y^{\alpha}.$ A standard result from \cite{BinghamGoldieTeugels87} tells us that $\varphi(y)\simi C^{-\frac{1}{\alpha}}_\alpha y^{\frac{1}{\alpha}}$. Finally, from the monotone density theorem thanks to the monotonicity of $\phi'$, see Proposition \ref{propAsymp1}\eqref{it:bernstein_cm}, $\phi'(y)\simi \alpha C_\alpha y^{\alpha-1}$ and from  the identity $1=\phi'(\varphi(y))\varphi'(y)$ we conclude the statement as we have $\varphi'(y)\simi \alpha^{-1} C_\alpha^{-\frac1\alpha }y^{\frac{1}{\alpha}-1}$. This
 completes the proof of  Theorem \ref{thm:nuLargeTime1}.

\subsection{End of proof of Theorem \ref{thm:bijection}}\label{subsec:Theorem1.2}
We proceed by describing the dual semigroup. We start by recalling that for any $\psi \in \Ne$, it is shown in \cite[Lemma 2]{Bertoin-Yor-02-b}, that, for any $f,g \in \mathtt{B}_b(\R_+)$, writing here $(f,g)=\int_0^{\infty}f(x)g(x)dx$, one has the following weak duality relationship
  \[ (K_t f,g) = (  f,\widehat{K}_t g ) \]
  where $(\widehat{K}_t)_{t\geq 0}$ is the (minimal) Feller semigroup of the self-similar process (with $0$ an accessible absorbing boundary when $m>0$) associated via the Lamperti mapping to the dual L\'evy process $\hat{\xi}=-\xi$ with Laplace exponent $\widehat{\psi}(u)=-\psi(-u),u<0$. Note that, actually, in the aforementioned paper, this duality is stated for $m>0$ but the case $m=0$ follows from their proof without any modification.  In all cases the Lebesgue measure $\xi(dx)=dx$ is an excessive reference measure, i.e.~$\xi\widehat{K}_tg \leq \xi g$, $g\geq0$. Then, from \eqref{eq:ses}, we deduce the identities
  \begin{eqnarray} \label{eq:dual_K}
  ( P_t f,g )&=&  e^{t}( K_{1-e^{-t}} f ,\textrm{d}_{e^{t}}g  ) = e^{t}( f, \widehat{K}_{1-e^{-t}}\textrm{d}_{e^{t}} g )  = e^{t}( f, \widehat{P}_{t} g )
  \end{eqnarray}
  where the last identity is used to set a notation. By following the same computation than above, one easily shows that $\widehat{P}$ is also a Feller semigroup on $(0,\infty)$ and $\xi \widehat{P}_{t}g =  \xi \widehat{K}_{1-e^{-t}}{\rm{d}}_{e^{t}} g \leq e^{-t} \xi g$.
 The identity \eqref{eq:dual_K} combined with the invariance property of  the measure $\nu(x)dx$ yields
 \[ e^{t}( f, \widehat{P}_{t} \nu ) = ( P_t f,\nu ) =(  f,\nu ), \]
that is the positive and continuous function $\nu$ on $(0,\r)$ is a $-1$-invariant function for $\widehat{P}$. Thus, we  may define for any $x \in (0,\r)$,  $P^*_{t} g(x) =\frac{e^t}{\nu(x)}\widehat{P}_{t} \nu g(x)$, i.e.~$P^*$ is a Doob-h transform of $\widehat{P}$, which defines at least the semigroup of  a standard process, see \cite{Kunita_Watanabe}. Then, with the obvious notation, we deduce the weak duality relationship, with $\nu(x)dx$ as reference measure,
    \begin{eqnarray} \label{eq:p-pa}
      ( P_t f,g )_{\nu}  &=&  e^{t} ( f, \widehat{P}_{t} g\nu )= ( f, P^*_{t} g)_{\nu},
      \end{eqnarray}
      and $\nu P^*_{t} g= \int_0^{\infty}\frac{e^t}{\nu(x)}   \widehat{P}_{t} g\nu(x) \nu(x)dx \leq \nu g $.
   Finally, from Theorem \ref{thm:smoothness_nu1}\eqref{it:cinfty0_nu1},  we get that $\nu \in \cco_0(\R_+)$ for any $\Si \geq 1$. Since $\widehat{P}$ is a Feller semigroup on  $(0,\infty)$ and, for any $g \in \cco_b(\R_+)$,   $\nu g \in \cco_0(\R_+)$,  we deduce that $\widehat{P}_{t} \nu g \in \cco_0(\R_+)$ and thus $P^*_{t} g \in \cco(0,\r)$. Moreover, the inequality, valid for any $x \in (0,\r)$,
  \[|P^*_{t} g(x)| \leq ||g||_{\infty}\frac{e^t}{\nu(x)}\widehat{P}_{t} \nu(x)=||g||_{\infty}  \]
 implies that $P^*$ is a Feller-Dynkin semigroup on $(0,\r)$ for any $\Si>1$.
 In all cases, by considering in  \eqref{eq:p-pa} the extension $P \in \Bo{\Lnu}$ gives that $P^*$ admits a contraction semigroup  extension in $\Lnu$ which  completes the proof of Theorem \ref{thm:bijection}\eqref{it:dual}.  It simply remains to provide the expression of the generator associated to the Feller-Dynkin dual semigroup when $\Si>2$ to complete the proof of Theorem  \ref{thm:bijection}.
 Proceeding as in \eqref{eq:infp}, we get, with the obvious notation, that for any $f:\R_+\mapsto \R$ such that $y \mapsto f_e(y)=f(e^{y}) \in \cco^{2}([-\infty,\infty])$, $\widehat{\mathbf{G}} f(x) = \widehat{\mathbf{G}}_0f(x)+xf'(x), x>0$, where $\widehat{\mathbf{G}}_0$ is the generator of the self-similar semigroup $\widehat{K}$ associated via the Lamperti mapping to $\widehat{\psi}(u)=\psi(-u)$ and its expression can be found in  \cite{Lamperti-72}. Now, since for $\Si>2$, we have from Theorem \ref{thm:smoothness_nu1} that $\nu \in \cco^2_0(\R_+)$, and thus for any $f:\R_+\mapsto \R$ such that $(f\nu)_e \in \cco^{2}([-\infty,\infty])$,  we have $\mathbf{G}^* f(x) = \frac{1}{\nu(x)}\widehat{\mathbf{G}} \nu f(x)$. Some easy algebra completes the proof of item  \eqref{it:gadj}. Finally the last item follows  first from  the fact that when $\Pi\equiv 0$, then $P$ is the semigroup of a diffusion which is well known to be self-adjoint in $\Lnu$, where $\nu(x)dx$ is the so-called speed measure, see Example \ref{ex:lag}. Otherwise, the conclusion is obvious as the semigroup $P$ is associated to a process having  only negative jumps whereas its adjoint have only positive jumps, see \eqref{eq:dual_K} and the fact that the direction of  jumps is left invariant by $h$-transform.

\newpage

\section{Bernstein-Weierstrass products and Mellin transforms}\label{sec:Mellin}
In this Chapter, we  study in depth the Mellin transform   of the variables   $V_{\psi}$ and $I_{\phi}$  defining, when $\psi \in \Ne $ and $\phi \in \Bp$, the invariant measure and the intertwining operator, respectively.   We point out that we shall actually focus on a generalization of these sets of random variables, i.e.~for $\phi \in \Be$, see \eqref{eq:B} for definition. Indeed, our approach is comprehensive enough to extend the results without any specific efforts to the most general framework. We also point out that these variables have been the focus of interest of a number of intense studies in the probabilistic and harmonic analysis literature over the last decade, see Section \ref{sec:exp_sta} below for a review on these classes of variables.
This part also complements  to the right-half plane, that is $\Cb_{\lbrb{0,\infty}}$,  a very interesting research developed by Webster \cite{Webster-97} on the positive real line for  functional equations of the form \eqref{eq:feVPsi1} below.
To state the main results of this Chapter, we introduce the following notation. For a function  $\phi: \C \to \C$, we write formally the generalized Weierstrass product
\begin{equation}\label{eq:Wphi}
W_\phi(z)=\frac{e^{-\gamma_{\phi}z}}{\phi(z)}\prod_{k=1}^{\infty}\frac{\phi(k)}{\phi(k+z)}e^{\frac{\phi'(k)}{\phi(k)}z},
\end{equation}
where also formally
\begin{equation}\label{eq:euler}
\gamma_\phi=\lim_{n \to \infty} \lb \sum_{k=1}^n\frac{\phi'(k)}{\phi(k)} - \ln \phi(n) \rb.
\end{equation}
Note that if $\phi(z)=z$, then $W_\phi$ corresponds to the Weierstrass product representation of the celebrated Gamma function $\Gamma$, valid on $\C \setminus \{{0,-1,-2,\ldots}\}$, and $\gamma_\phi=\gamma$ is the Euler-Mascheroni constant, see e.g.~\cite{Lebedev-72}, justifying both the terminology and notation. For positive integers we recall \eqref{def:W_phi_n}, i.e.~for any $\phi \in \Be$, $W_{\phi}(1)=1$ and for any $n\geq 1$,
\[ W_{\phi}(n+1)=\prod_{k=1}^{n}\phi(k).\]
We point out that the definition \eqref{eq:dphi} extends to any $\phi \in \Be$, i.e.
\begin{equation*}
d_{\phi}=\sup\{ u\leq 0;\:
\:\phi(u)=-\infty\text{ or } \phi(u)=0\}\in\lbrbb{-\infty,0}.
\end{equation*}
 Here are the two  main results of this Chapter. 
 \begin{theorem}\label{prop:FormMellin1}\label{lem:fe1}
  Let $\phi \in \Be$.
 \begin{enumerate}
 \item \label{it:momVphi1}	 There exists a positive random variable $V_{\phi}$, whose law is determined by its entire moments
	\begin{equation}\label{eq:moment_X_phi1}
	 \Ebb{V_{\phi}^n} = W_{\phi}(n+1), \quad n \in \N.
	\end{equation}
If in addition $\phi \in \Bp$, then  the following identity in law holds
	\begin{equation}\label{eq:Vphi=Vpsi1}
	V_{\phi} \stackrel{(d)}{=} V_{\psi},
	\end{equation}
	where $\psi(u)=u\phi(u) \in \Ne$ and recall that $V_{\psi}$ was defined in Proposition \ref{prop:bij_lamp}\eqref{it:invex_1} (note that since $\Ne \cap \Be =\emptyset$  the notation is consistent).
   \item \label{it:Wanal} The product $W_{\phi}$ in \eqref{eq:Wphi} is absolutely convergent on $\C_{(d_{\phi},\infty)}$  with  $0\leq  \gamma_{\phi}+\ln \phi(1)\leq \frac{\phi'(1)}{\phi(1)}<\infty$,  and, on $\Cb_{[0,\infty)}$ if $\phi(0)>0$ and $d_\phi=0$.  Thus, always  $W_\phi\in\Ac_{(d_\phi,\infty)}$ and if addition $\phi(0)>0$ and $d_\phi=0$ then  $W_\phi\in\Ac_{[0,\infty)}$.
       \item \label{it:few} Finally, writing  $\mathcal{M}_{V_\phi}(z)= \Ebb{V_{\phi}^{z-1}}$ for the Mellin transform of $V_{\phi}$, we have on $\C_{ (d_\phi,\infty)}$
  \begin{equation}\label{eq:solfeVPsi1}
    \mathcal{M}_{V_\phi} (z)=W_{\phi}(z),
  \end{equation}
  and,  $W_{\phi}$  is the unique solution, in the space of Mellin transforms of probability measures, to the following functional equation with initial condition
  \begin{eqnarray}\label{eq:feVPsi1} \label{eq:fe1}
  \mathcal{M}_{\phi} (z+1)&=&\phi(z)\mathcal{M}_{\phi}(z) \textrm{ with } \mathcal{M}_{\phi} (1)=1,
  \end{eqnarray}
  valid on $\C_{ (d_\phi,\infty)}$ and on  $\Cb_{[0,\infty)}$ provided  that $\phi(0)>0$ and $d_\phi=0$.
   \end{enumerate}
 \end{theorem}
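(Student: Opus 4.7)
For item \eqref{it:momVphi1}, I would first secure moment determinacy by Carleman's criterion. Since $\phi \in \Be$ admits the representation \eqref{eqn:phi-}, we have $\phi(k) \leq \phi(0) + \sigma^2 k + \overline{\mu}(0^+)\cdot k \leq C_0(1+k)$ for some $C_0>0$, whence $W_{\phi}(n+1) = \prod_{k=1}^n \phi(k) \leq C_0^{\,n}\,n!$ and $\sum_{n\geq 1}(W_{\phi}(2n+1))^{-1/(2n)}=\infty$. Hence the moment sequence determines at most one distribution on $\R_+$. For existence when $\phi \in \Bp$, Proposition \ref{prop:bij_lamp}\eqref{it:invex_1} already furnishes the variable $V_{\psi}$ (with $\psi(u)=u\phi(u)\in\Ne$) whose integer moments \eqref{eq:moment_V_psi} coincide with $W_{\phi}(n+1)$, which simultaneously yields \eqref{eq:Vphi=Vpsi1}. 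For the remaining case $\phi \in \Be\setminus\Bp$ I would argue by approximation: smooth the \LL measure $\mu$ into $\mu_\epsilon$ whose tail $\overline{\mu}_{\epsilon}$ is non-increasing (for instance by convolving $\overline{\mu}$ with a suitable decreasing kernel), producing $\phi_{\epsilon}\in\Bp$ converging to $\phi$ pointwise on $\R_+$. The $\phi_{\epsilon}$-moments converge to $W_\phi(n+1)$ for every $n$, and tightness of $(V_{\phi_\epsilon})$ (which follows from the uniform moment control $\E[V_{\phi_\epsilon}^n]\leq C_0^n n!$) together with the moment-determinacy above identifies the weak limit as the desired $V_{\phi}$.

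For item \eqref{it:Wanal}, the heart of the matter is an $O(1/k^2)$ estimate on the general term of \eqref{eq:Wphi}. Writing $\log W_\phi(z)=-\gamma_\phi z-\log\phi(z)+\sum_{k\geq 1} L_k(z)$ with $L_k(z) = \log\phi(k)-\log\phi(k+z)+\frac{\phi'(k)}{\phi(k)}z$, Taylor's theorem yields $|L_k(z)|\leq \tfrac{|z|^2}{2}\sup_{w\in[0,z]}\bigl|(\log\phi)''(k+w)\bigr|$. The bound
\begin{equation*}
\bigl|(\log\phi)''(u)\bigr|=\Bigl|\tfrac{\phi''(u)\phi(u)-(\phi'(u))^2}{\phi(u)^2}\Bigr|\leq \tfrac{|\phi''(u)|}{\phi(u)}+\Bigl(\tfrac{\phi'(u)}{\phi(u)}\Bigr)^2 \leq \tfrac{3}{u^2}
\end{equation*}
combining \eqref{eq:phi'_phi} and \eqref{specialEstimates11}, gives $|L_k(z)|\leq 3|z|^2/(k-\Re z)^2$ uniformly on compact subsets of $\Cb_{(d_\phi,\infty)}$, proving normal convergence of the product and hence $W_\phi \in \Ac_{(d_\phi,\infty)}$; the extension to $\Cb_{[0,\infty)}$ when $\phi(0)>0$ and $d_\phi=0$ follows since no zero of $\phi$ lies on this closed half-plane. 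For the bounds on $\gamma_\phi+\log\phi(1)$, I rewrite the partial sums using $\log\phi(n)-\log\phi(1)=\int_1^n\frac{\phi'(u)}{\phi(u)}du$ as
\begin{equation*}
\gamma_\phi+\log\phi(1)=\lim_{n\to\infty}\Bigl[\tfrac{\phi'(n)}{\phi(n)}+\sum_{k=1}^{n-1}\Bigl(\tfrac{\phi'(k)}{\phi(k)}-\int_k^{k+1}\tfrac{\phi'(u)}{\phi(u)}du\Bigr)\Bigr].
\end{equation*}
By log-concavity of $\phi$ (Proposition \ref{propAsymp1}\eqref{it:bernstein_cm}) the map $u\mapsto\phi'(u)/\phi(u)$ is non-increasing, so each bracket is $\geq 0$, yielding $\gamma_\phi+\log\phi(1)\geq 0$. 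The same monotonicity gives $\int_k^{k+1}\phi'/\phi \geq \phi'(k+1)/\phi(k+1)$, so each bracket is bounded above by $\phi'(k)/\phi(k)-\phi'(k+1)/\phi(k+1)$, which telescopes to $\phi'(1)/\phi(1)-\lim_{n}\phi'(n)/\phi(n)\leq\phi'(1)/\phi(1)$.

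For item \eqref{it:few}, I would first verify \eqref{eq:feVPsi1} directly from the product. The telescoping computation
\begin{equation*}
\tfrac{W_\phi(z+1)}{W_\phi(z)}=\tfrac{e^{-\gamma_\phi}}{\phi(z+1)}\lim_{n\to\infty}\prod_{k=1}^n\tfrac{\phi(k+z)}{\phi(k+z+1)}e^{\phi'(k)/\phi(k)}=e^{-\gamma_\phi}\lim_{n\to\infty}\tfrac{e^{\sum_{k=1}^n\phi'(k)/\phi(k)}}{\phi(n+z+1)}
\end{equation*}
combined with the definition \eqref{eq:euler} of $\gamma_\phi$ and Proposition \ref{propAsymp1}\eqref{it:flatphi} (which yields $\phi(n+z+1)/\phi(n)\to 1$) produces $\phi(z)$, confirming \eqref{eq:feVPsi1}. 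Both $\mathcal{M}_{V_\phi}$ and $W_\phi$ therefore satisfy \eqref{eq:feVPsi1} and take the value $1$ at $z=1$, and each is positive and log-convex on $\R_+$: for $\mathcal{M}_{V_\phi}$ this is a standard property of moment generating functions of positive measures, while for $W_\phi$ it follows either by recognizing each factor as log-convex or by a direct computation of the second logarithmic derivative using log-concavity of $\phi$. Webster's uniqueness theorem \cite{Webster-97} (already invoked for Theorem \ref{thm:existence_invariant_1}\eqref{it:asymp_MV_R}) forces $\mathcal{M}_{V_\phi}=W_\phi$ on $\R_+$; since both members belong to $\Ac_{(d_\phi,\infty)}$, analytic continuation extends \eqref{eq:solfeVPsi1} to the whole strip. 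Uniqueness of $W_\phi$ in the class of Mellin transforms of probability measures on $\R_+$ then follows by re-running the Carleman argument together with Webster on the real axis.

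\textbf{Main obstacle.} The most delicate point is the existence of $V_\phi$ for $\phi\in\Be\setminus\Bp$: the probabilistic construction via exponential functionals $I_\psi$ used in Proposition \ref{prop:recall_exps} requires $\psi(u)=u\phi(u)\in\Ne$, which in turn forces the \LL measure $\mu$ of $\phi$ to be absolutely continuous with non-increasing density. Carrying out the approximation argument sketched above demands a careful choice of the smoothing of $\mu$ that simultaneously preserves $\phi_\epsilon\in\Bp$, yields pointwise convergence $\phi_\epsilon\to\phi$ on $[0,\infty)$ and maintains uniform moment bounds—this is the only step where one genuinely leaves the probabilistic framework of Chapters \ref{sec:prop_nu}--\ref{sec:bern} and must work purely at the level of Bernstein function data.
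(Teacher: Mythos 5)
Your treatment of item \eqref{it:momVphi1} has a genuine gap for $\phi \in \Be\setminus\Bp$. The approximation you sketch --- smoothing $\mu$ into $\mu_\epsilon$ with a non-increasing density --- is not carried out, and it is not obvious how to carry it out: convolving the tail $\overline{\mu}$ with a decreasing kernel produces a non-increasing tail $\overline{\mu}_\epsilon$, but the corresponding density $-\overline{\mu}_\epsilon'$ is non-increasing only if $\overline{\mu}_\epsilon$ is convex, and convexity does not follow merely from $\overline{\mu}$ being non-increasing. The paper avoids all of this: existence of $V_\phi$ for \emph{every} $\phi\in\Be$ is cited directly from \cite[Proposition~1]{Bertoin-Yor-01}, where it is established at the level of Bernstein function data (using that $u/\phi(u)\in\Be$ and a Stieltjes moment argument, see also \cite{Berg}); no probabilistic construction via exponential functionals of spectrally negative processes is needed, so the restriction to $\Bp$ never arises. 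Your Carleman estimate for moment determinacy and the moment identification for $\phi\in\Bp$ are both correct, but you should invoke the existing result rather than attempt an approximation whose feasibility is exactly the obstacle you flag.

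For item \eqref{it:Wanal}, your route --- the integral form of Taylor's remainder applied to $\log\phi$ --- is a genuinely cleaner alternative to the paper's, which first establishes convergence of $W_\phi$ on $\R_+$ via Webster and then separately controls the ratio $Z_\phi(z)$ by splitting $A_k-1$ into real and imaginary contributions. However, as written your bound $|(\log\phi)''(u)|\leq 3/u^2$ is proven only for \emph{real} $u$ (combining \eqref{eq:phi'_phi} and \eqref{specialEstimates11}), whereas the point $k+w$ in your supremum leaves the real axis as soon as $\Im z\neq 0$. To close the argument you need the complex-line estimates of Lemma~\ref{lem:specialEstimates}, namely $\tfrac{|\phi''(a+ib)|}{|\phi(a+ib)|}\leq\sqrt{10}\,\tfrac{|\phi''(a)|}{\phi(a)}$ and $\tfrac{|\phi'(a+ib)|^2}{|\phi(a+ib)|^2}\leq 10\bigl(\tfrac{\phi'(a)}{\phi(a)}\bigr)^2$, which yield $|(\log\phi)''(\zeta)|\leq (2\sqrt{10}+10)/(\Re\zeta)^2$; substituting this, and replacing your denominator $(k-\Re z)^2$ by $(k+\min(0,\Re z))^2$ (your version fails to be a lower bound for $\Re(k+tz)$ when $\Re z<0$), gives normal convergence on compacts of $\Cb_{(d_\phi,\infty)}$ without the paper's real-then-complex two-step. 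Your telescoping argument for $0\leq\gamma_\phi+\ln\phi(1)\leq\phi'(1)/\phi(1)$ is correct and more self-contained than the paper's, which simply cites the commentary preceding \cite[Theorem~7.1]{Webster-97}. Item \eqref{it:few} then proceeds exactly as in the paper (Lemma~\ref{lem:Webster} on $\R_+$ plus analytic continuation); you have dropped a factor $\phi(z)$ in your displayed chain of equalities, but the conclusion is evidently what you intended.
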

 Since the proof of Theorem\ref{prop:FormMellin1}\eqref{it:momVphi1} is straightforward, we supply it here.   The relation \eqref{eq:moment_X_phi1}  can be found in \cite[Proposition 1]{Bertoin-Yor-01} whereas the identity in law \eqref{eq:Vphi=Vpsi1}  can be deduced, by moments identification, from the expression  \eqref{eq:moment_V_psi}, i.e.~$\Mp(n+1) = W_{\phi}(n+1),\,n\geq 0,$ when $\psi\in\Ne$ and hence $\phi\in\Be_\Ne$. The remaining claims are proved in Section \ref{sec:Mellin_Weierstrass1}.

  We proceed by stating  bounds on  $W_\phi$ for $\phi \in \Be$  along with  detailed information regarding the asymptotic behaviour of $W_\phi$ for $\phi \in \Be_{\Ne}$, which, \mladen{according to \eqref{eq:Vphi=Vpsi1} and \eqref{eq:solfeVPsi1}}, boils down to the Mellin transform $\M_{V_{\psi}}$ with  $\psi(u)=u\phi(u) \in \Ne$.  We recall that the asymptotic behaviour of $\M_{V_{\psi}}$, on the  positive real line has been presented in \eqref{lemmaAsymp1-2} of Theorem \ref{thm:existence_invariant_1}.
  Another purpose of the next Proposition is to elucidate  the role played by the classes \mladen{$\Nee=\lbcurlyrbcurly{\psi \in \Ne; \: \H >0}
$}, where $\H=\liminf_{|b|\to \infty} \int_{0}^{\infty}\ln\lb\frac{\labs\phi(by+ib)\rabs}{\phi(by)}\rb dy$, and $\Ni$ introduced in the Table \ref{tab:PPer} and \ref{tab:c2}   in Chapter \ref{sec:intro}. More detailed statements about these asymptotic estimates together with further examples are provided  in the subsections \ref{sec:estimates_Mellin1}--\ref{subsec:ex_larg_Wp} below, which also contain the proof of the following assertion.
\begin{theorem}\label{prop:asymt_bound_Olver2} \label{lem:Nalpha}
\begin{enumerate}
  \item \label{it:bounds_Wp} Let $\phi \in \Be$. Then, for any $a>0$ and $b\in \R$,
		\begin{eqnarray} \label{eqn:first_estimateComplexLine1}
		\labs W_{\phi}(a+ib)\rabs
		\asymp\frac{ \sqrt{\phi(a)} W_{{\phi}}(a) }{\sqrt{|\phi(a+ib)|}} e^{-|b|\angp(a,|b|)}
		\end{eqnarray}
where $\angp(a,|b|)=\int_{\frac{a}{|b|}}^{\infty}\ln\lb\frac{\labs\phi(|b|y+i|b|)\rabs}{\phi(|b|y)}\rb dy$  and  for any $b \in \R$,
		\[ 0\leq \angp(a,|b|)\leq \frac{\pi}{2}.\]
 \item Let $\psi \in \Ne$ and hence $\phi(u)=\frac{\psi(u)}{u}\in\Be_\Ne$.
 \begin{enumerate}
 \item  \label{item:subexp1} For any $p=0,1,\ldots$, any real number $u\leq \max(\Si-1,0)$, recalling from \eqref{def:Ktau} that $\Si=\Sip-1$ with $\Si=\infty$ if $\PP(0^+)=\infty$, and $a>d_\phi$, we have that
 \begin{equation}\label{eq:subexp_W}
   \lim_{|b| \to \infty }|b|^u |W_{\phi}(a+ib)| = 0.
 \end{equation}
In particular,  $\psi \in \Ni$ if and only if \eqref{eq:subexp_W} holds for all $u\geq0$ and $a>d_\phi$.
\item\label{it:Theta} We have  $\H \in\lbbrbb{0,\frac{\pi}{2}}$ with the following specific values.
\begin{enumerate}
\item \label{it:NP} if $\psi \in \Ne_P$ then $\H=\frac{\pi}{2}$ and $\psi \in \Nee$,
\item \label{it:Na} if $\psi \in \Ne_{\alpha}$, 
 $\alpha \in (0,1)$, then $\H=\frac{\pi}{2}\alpha$ and $\psi \in \Nee$,
\item \label{it:PP} if $\PP(y)=y^{-1}\labs\ln y \rabs^2 \mathbb{I}_{[0,1/2]}(y)$ then
$\Hb \simi \ln ^3 b$ and $\psi \in \Ni \setminus \Nee$.
\end{enumerate}
\end{enumerate}
\end{enumerate}
\end{theorem}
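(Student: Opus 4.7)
My starting point for \eqref{eqn:first_estimateComplexLine1} is to take the modulus and logarithm of the Weierstrass product \eqref{eq:Wphi}: after observing that the $e^{-\gamma_\phi z}$ and $\sum_k(\phi'(k)/\phi(k))z$ contributions are purely imaginary when compared with those from $\ln W_\phi(a)$, one obtains
\[\ln\frac{|W_\phi(a+ib)|}{W_\phi(a)} \;=\; \ln\frac{\phi(a)}{|\phi(a+ib)|} \;-\; \sum_{k\geq 1}\ln\frac{|\phi(k+a+ib)|}{\phi(k+a)}.\]
The sum will be handled by Euler--Maclaurin summation applied to $g(y)=\ln(|\phi(y+a+ib)|/\phi(y+a))$: using the bounds on $g'$ and $g''$ supplied by \eqref{specialEstimates} and \eqref{specialEstimates2} of Lemma \ref{lem:specialEstimates}, the remainder stays $O(1)$ uniformly in $b$, so $\sum_{k\geq 1}g(k) = \int_0^\infty g(y)\,dy - \tfrac{1}{2}g(0) + O(1)$. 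The boundary term $-\tfrac{1}{2}g(0)$ combines with $\ln(\phi(a)/|\phi(a+ib)|)$ to yield the $\sqrt{\phi(a)/|\phi(a+ib)|}$ prefactor, and the change of variables $u=|b|y$ in the remaining integral produces $|b|\Theta_\phi(a,|b|)$, delivering \eqref{eqn:first_estimateComplexLine1}. The non-negativity of $\Theta_\phi(a,|b|)$ is immediate from the Bernstein representation \eqref{eqn:phi-}, which forces $|\phi(u+iv)|\geq\Re\phi(u+iv)\geq\phi(u)$, while the upper bound $\pi/2$ is saturated by $\phi(u)=\sigma^2 u$ through the identity $\int_0^\infty \tfrac{1}{2}\ln(1+y^{-2})dy=\pi/2$; for general $\phi\in\Be$, the estimate $|\phi(u+iv)|\leq \phi(u)+|v|\phi'(u)$ from Lemma \ref{lem:specialEstimates} together with $u\phi'(u)\leq\phi(u)$ from \eqref{eq:phi'_phi} reduces the integral to this extremal case.

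\textbf{Plan for the subexponential decay statement (2)(a).} By \eqref{eq:Vphi=Vpsi1} and \eqref{eq:solfeVPsi1} we have $W_\phi(z)=\M_{V_\psi}(z)$ for every $\psi\in\Ne$, so \eqref{eq:subexp_W} is just a rephrasing of \eqref{eq:subexp1} in Theorem \ref{thm:existence_invariant_1}\eqref{item:subexpV}. The equivalence \emph{$\psi\in\Ni$ iff \eqref{eq:subexp_W} holds for every $u\geq0$} then follows from the identity $\Si=\lceil\overline\Pi(0^+)/\r\rceil-1$, since $\psi\in\Ni$ is exactly the case $\sigma^2>0$ or $\overline\Pi(0^+)=\infty$, i.e.~$\Si=\infty$, which permits $u$ to be arbitrarily large in \eqref{eq:subexp1}; conversely, if $\Si<\infty$ then only finitely many moments of $V_\psi$ are integrable and the decay cannot hold at arbitrary polynomial rate.

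\textbf{Plan for the specific values in (2)(b), and the main obstacle.} In each of (i)--(iii) I will evaluate $\underline{\Theta}_\phi=\liminf_{b\to\infty}\int_0^\infty \ln(|\phi(by+ib)|/\phi(by))\,dy$ directly. For (i), $\psi\in\Ne_P$ gives $\phi(u)\sim\sigma^2 u$ at infinity, so for fixed $y>0$ the integrand tends to $\tfrac{1}{2}\ln(1+y^{-2})$; dominated convergence with the $b$-independent majorant $\ln(1+1/y)$ (obtained from $|\phi(u+iv)|\leq\phi(u)+|v|\phi'(u)$ and $\phi'(u)/\phi(u)\leq 1/u$) then yields $\underline{\Theta}_\phi=\pi/2$, which also shows $\psi\in\Nee$. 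For (ii), $\psi\in\Ne_\alpha$ gives the regularly varying asymptotic $\phi(u)\sim C_\alpha u^\alpha$, the pointwise limit becomes $\tfrac{\alpha}{2}\ln(1+y^{-2})$, and the same DCT argument delivers $\underline{\Theta}_\phi=\pi\alpha/2$ and $\psi\in\Nee$. For (iii), Proposition~\ref{propAsymp1}\eqref{it:unif_rev1} gives the slowly varying asymptotic $\phi(u)\sim\tfrac{1}{3}\ln^3 u$, whence I split the integral according to whether $by$ is in the linear regime ($by\ll 1$), the transition regime ($by\asymp 1$), or the asymptotic regime ($by\gg 1$), and extract the claimed behaviour of $\Theta_\phi(|b|)$ from the region where $\phi(by)$ is small relative to $|\phi(by+ib)|\asymp|\phi(ib)|$, which by a direct computation on the Bernstein representation is of order $\ln^3 b$. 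The main obstacle of the whole proof is Paragraph~1: the Euler--Maclaurin error analysis requires that $g'$ and $g''$ be integrable on $(0,\infty)$ with $b$-uniform bounds, which is precisely what the refined complex-line estimates of Lemmas \ref{lem:specialEstimates} and \ref{lemma:WandPhi} were designed to provide, but piecing them together to control every term through every regime of $y$ is where the technical work lies.
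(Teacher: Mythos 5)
The Euler--Maclaurin strategy you outline for part (1) is essentially the paper's (Proposition \ref{prop:asymt_bound_Olver1}): write $\ln |W_\phi(a+ib)/W_\phi(a)|$ as $\ln(\phi(a)/|\phi(a+ib)|)-\sum_k g(k)$, replace the sum by $\int_0^\infty g-\tfrac{1}{2}g(0)+O(1)$, and let the boundary term produce the $\sqrt{\phi(a)/|\phi(a+ib)|}$ factor; your algebra there is sound, and the $b$-uniform control of the Euler--Maclaurin error is indeed what Lemma \ref{lem:specialEstimates} and estimate \eqref{specialEstimates2} supply. The route in (2)(a) via $W_\phi=\M_{V_\psi}$ and Theorem \ref{thm:existence_invariant_1}\eqref{item:subexpV} is likewise the paper's.

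However, there are two concrete gaps. First, the inequality you invoke for the upper bound $\Theta_\phi\le \pi/2$ and for the dominated-convergence majorant in (2)(b)(i)--(ii), namely $|\phi(u+iv)|\le \phi(u)+|v|\phi'(u)$, is both false and insufficient. It is false because $\Re\phi(u+iv)=\phi(u)+\Delta_v^{\Re}\phi(u)>\phi(u)$, so the modulus already exceeds $\phi(u)$ before the imaginary part is accounted for; and even the corrected upper bound $\phi(u)(1+|v|/u)$ (which follows from $\Delta^{\Im}$ and $u\phi'(u)\le\phi(u)$) would yield, after the substitution $u=|b|y$, the majorant $\ln(1+1/y)$, which is \emph{not} integrable at infinity. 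The estimate the argument actually needs is \eqref{eq:PhiEst}: write $\phi(z)=m+z\,h(z)$ with $h(z)=\sigma^2+\int_0^\infty e^{-zy}\bar\mu(y)\,dy$, note $|h(bu+ib)|\le h(bu)$, and get $|\phi(bu+ib)|\le \sqrt{1+1/u^2}\,\phi(bu)$; the integrable majorant is then $\tfrac12\ln(1+1/y^2)$, whose integral is exactly $\pi/2$. Without this you cannot close either the bound $\Theta_\phi\le\pi/2$ or the DCT step for (i) and (ii). (Once you have it, your DCT computation of $\underline{\Theta}_\phi$ is a clean and slightly more uniform way to treat (i) than the paper's, which goes through the $\Gamma$-factorization of Proposition \ref{prop:Asymptotic_Mellin_sigma}.)

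Second, the converse direction of the ``iff'' in (2)(a) is not justified as stated: $V_\psi$ has \emph{all} positive integer moments finite regardless of whether $\Si$ is finite (see \eqref{eq:moment_V_psi}), so the claim that only finitely many moments are integrable when $\Si<\infty$ is wrong. The obstruction to arbitrarily fast polynomial decay of $W_\phi$ along imaginary lines when $\psi\notin\Ni$ comes from the polynomial singularity of the invariant density at the finite endpoint $\r$: Theorem \ref{thm:smoothness_nu1}\eqref{it:asy_nu_r} shows $\nu(x)\sim C(\r-x)^{\PP(0^+)/\r-1}\ell(\r-x)$ near $\r$, which forces $|\M_{V_\psi}(a+ib)|$ to decay only like a fixed negative power of $|b|$. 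That is the correct mechanism for the converse and it relies on the Sato--Yamazato analysis rather than on moment growth of $V_\psi$.
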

Theorem \ref{prop:asymt_bound_Olver2}\eqref{it:bounds_Wp} is proved in subsection \ref{sec:pro_bWp}, the proof of Theorem \ref{prop:asymt_bound_Olver2}\eqref{it:NP}, \ref{prop:asymt_bound_Olver2}\eqref{it:Na} and Theorem \ref{prop:asymt_bound_Olver2}\eqref{it:PP})  is given in subsection \ref{sec:pro_NP}, \ref{sec:pro_Na} and \ref{sec:pro_PP}) respectively. Finally, the proof of Theorem \ref{prop:asymt_bound_Olver2}\eqref{item:subexp1} is discussed in the following remark.
\begin{remark}
Identities \eqref{eq:Vphi=Vpsi1} and \eqref{eq:solfeVPsi1} yield that  $W_{\phi}=\Mp$ and the necessary and sufficient conditions  for the polynomial decay of $W_{\phi}$ along imaginary lines stated in \eqref{eq:subexp_W} follows readily from the estimates \eqref{eq:subexp1} on $\Mp$. Recall that \eqref{eq:subexp1} was obtained by means of the Riemann-Lebesgue Lemma combined with the general theory of the class of self-decomposable variables.
\end{remark}
\begin{remark}
 We emphasize that our developments on Bernstein-Weierstrass products offer a unified and comprehensive treatment of some well known and substantial special functions. In order to illustrate this fact, without aiming at being exhaustive, we simply mention two examples. First, note that, with $\sigma^2=m=0$ and $\mu(dy)=\frac{\delta_{-\ln q}(y)}{1-q}$,  $0<q<1$ in \eqref{eqn:phi-}, then $\phi_q(u)=\frac{1-q^u}{1-q} \in \Be \setminus \Bp$, and $W_{\phi_q}=\Gamma_q$ is the $q$-gamma function, which has been studied intermittently
 for over a century and originally introduced by Thomae  in 1869.  Moreover for $\phi(u)= \phi^R_{\alpha,1-\frac{1}{\alpha}}\left(\frac{u}{\alpha}\right)=\frac{\Gamma(u+\alpha)}{\Gamma(u)} \in \Bp$, see \eqref{eq:def_phir} for the definition of $\phi^R_{\alpha,1-\frac{1}{\alpha}}$,  then $W_{\phi}$ boils down to the ratio of the Barnes gamma function. Note also that our work reveals some interesting connections between these special functions and the spectral theory of some non-self-adjoint contraction semigroups.
\end{remark}
\begin{remark}
		The upper bound for $\angp(a,|b|)$ in \eqref{eqn:first_estimateComplexLine1}, i.e.~$\frac\pi2$, is attained precisely when $\phi(u)=u$ and $b=\infty$. This is the case when $\psi(u)=u^2$ and $V_{\phi}$ is an exponential random variable, whose Mellin transform is simply the gamma function. In this sense the gamma function envelops from below the rate of decay of all Mellin transforms $\M_{V_{\phi}}(z)$ along complex lines. 
	\end{remark}

\begin{remark}
We mention that to obtain the bounds \eqref{eqn:first_estimateComplexLine1} valid along imaginary lines, we resort to the Weierstrass representation of $W_\phi$ and elaborate upon an approach which has been used to derive estimates of the gamma function and can  be found for instance in \cite[Chap.~8]{Olver-74}.
\end{remark}
The proofs of Theorem \ref{prop:FormMellin1} and Theorem \ref{prop:asymt_bound_Olver2} are given after a short review on exponential functionals of subordinators which appear
 in definition \eqref{def:mult_kernel_I_phi1} of the intertwining operator.
\subsection{Exponential functional of subordinators} \label{sec:exp_sta}
We present here some basic facts  on the so-called exponential functional of L\'evy processes, a random variable which has been intensively studied during the last two decades, something which seems to be attributed to its close connection to numerous mathematical fields, such as probability theory,  harmonic and real analysis and mathematical physics, to name but a few. We refer the interested reader to the survey paper of Bertoin and Yor \cite{Bertoin-Yor-05} and the paper by Berg and Dur\'an \cite{Berg}. Its original study, in the case of subordinators,  traces back to the work of Urbanik \cite{Urbanik-95} and the characterization of its distribution through its Mellin transform for the entire class of L\'evy processes has been achieved by the authors. This  has been announced  without proofs in \cite{Patie-Savov-13} and we refer also to \cite{Pardo2012} and \cite{Patie-Savov-11} for previous developments in this direction. We refer to \cite{Patie-Savov-Bern} for the detailed proofs of this comprehensive result.
Let us now proceed with the definition of this random variable  when the L\'evy process is a subordinator, that is, a non-decreasing \LL process, and we mention that the case when it is a spectrally negative was already introduced in Chapter \ref{sec:prop_nu}\eqref{sec:SDnu}.  For any $\phi \in \Be$, that is from \eqref{eqn:phi-},
\begin{equation}\label{eqn:phi--}
\phi(u)=m +\sigma^2 u+\int_{0}^{\infty} \lb 1-e^{-uy}\rb \mu(dy), \: u\geq0,
\end{equation}
where $m,z,\sigma^2\geq 0$ and $\mu$ is a \LL measure such that $\IInf \lb 1\wedge y\rb\mu(dy)<\infty$, we write
\begin{eqnarray*}
	I_{\phi} = \int_0^{\infty} e^{-\eta_t}dt
\end{eqnarray*}
with $(\eta_t)_{t\geq 0}$ a (possibly) killed subordinator with Laplace exponent   $\phi$ (see  Chapter \ref{sec:bern} for exhaustive information on Bernstein functions). Since from the strong law of large numbers, $\lim_{t \to \infty} \frac{\eta_t}{t} = \Ebb{\eta_1}\in\lbrbb{0,\infty} $ a.s., we have $I_{\phi}<\infty $ a.s., see e.g.~\cite[Proposition 1]{Bertoin-Yor-05}.
We now collect some basic known properties that are useful in our context.
\begin{proposition} \label{prop:recall_exp}
Let $\phi \in \Be$.
\begin{enumerate}
	\item\label{it:iota} The law of $I_{\phi}$ is absolutely continuous with density denoted by $\iota$ and with support $\left[0,\sigma^{-2}\right]$ if $\sigma^2>0$ and $\lbbrb{0,\infty}$ otherwise.
	\item\label{it:IpMoments} Its Mellin transform $\Mip$ satisfies the following functional equation, with initial condition $\Mip(1)=1$,
\begin{eqnarray} \label{eq:fe2} \label{eq:feip}
  \overline{\M}_{{\phi}}(z+1) &=&   \frac{z}{\phi(z)} \overline{\M}_{{\phi}}(z), \quad \overline{\M}_{{\phi}}(1)=1,
\end{eqnarray}
	valid on  $\Cb_{(0, \infty)}$
	\mladen{and $\Mip\in\Ae_{(0,\infty)}$.}
	\item\label{it:momIphi} Its law is moment determinate, and, for any $n\geq 0$, we have that
 \begin{equation}\label{eq:RecurI}
\Mip(n+1)= \frac{n!}{W_{\phi}(n+1)}.
\end{equation}
\item
Finally, $\E \left[e^{u I_{\phi}}\right]<\infty$, for any $u <\phi(\infty)$, where $\phi(\infty)$ is finite if and only if $\sigma^2=0$ and $\overline{\mu}(0^+)<\infty$, see \eqref{eqn:phi--} for the definition of the measure $\mu$.
\end{enumerate}
\end{proposition}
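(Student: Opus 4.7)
I would tackle the claims in the order \eqref{it:momIphi}, (4), \eqref{it:iota}, \eqref{it:IpMoments}, since the closed form for integer moments of $I_\phi$ provides the analytic backbone for the remaining items. For \eqref{it:momIphi}, Fubini's theorem yields
\[\Ebb{I_\phi^n} = n!\int_{0<t_1<\cdots<t_n}\Ebb{\exp\lb-\sum_{k=1}^n \eta_{t_k}\rb}dt_1\cdots dt_n;\]
the telescoping $\sum_{k=1}^n\eta_{t_k} = \sum_{k=1}^n(n-k+1)(\eta_{t_k}-\eta_{t_{k-1}})$ with $t_0=0$, combined with the stationary independent increments of $\eta$ and $\Ebb{e^{-u(\eta_s-\eta_r)}}=e^{-(s-r)\phi(u)}$, factorizes the integrand into a product of one-dimensional integrals, each evaluating to $1/\phi(k)$, yielding $\Ebb{I_\phi^n} = n!/W_\phi(n+1)$, i.e.~\eqref{eq:RecurI}. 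For (4), the power series $\Ebb{e^{uI_\phi}} = \sum_{n\geq 0} u^n/W_\phi(n+1)$ has radius of convergence $\limi{k}\phi(k) = \phi(\infty)$ by the ratio test, while Proposition \ref{propAsymp1}\eqref{it:finitenessPhi} provides the characterization of when $\phi(\infty)<\infty$. The finiteness of the MGF in a neighborhood of zero then closes \eqref{it:momIphi} by supplying moment determinacy.

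\textbf{Items \eqref{it:iota} and \eqref{it:IpMoments}.} For \eqref{it:iota}, I would adapt the self-decomposability argument of Proposition \ref{prop:recall_exps}\eqref{it:sdI} to subordinators: writing $T_a=\inf\{t>0:\eta_t\geq a\}$ and using the strong Markov property at $T_a$,
\[I_\phi \stackrel{(d)}{=} \int_0^{T_a}e^{-\eta_t}dt + e^{-\eta_{T_a}}I_\phi'\]
with $I_\phi'$ an independent copy, which after rescaling $a\to 0$ places $I_\phi\in\mathfrak{L}_+$ and delivers absolute continuity via \cite[Theorem 27.13]{Sato-99}. For the support, the deterministic bound $\eta_t\geq\sigma^2 t$ forces $I_\phi\leq\sigma^{-2}$ when $\sigma^2>0$, and positivity of the density in the interior follows from the general theory of self-decomposable laws; when $\sigma^2=0$, the bound $I_\phi\geq T e^{-\eta_T}$ together with $\liminf_{T\to\infty}\eta_T/T=0$ yields unbounded support. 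For \eqref{it:IpMoments}, I would mimic Section \ref{sec:p_thm1_12}: setting $I_t=\int_t^\infty e^{-\eta_s}ds$ and noting $I_s=e^{-\eta_s}\tilde I_0$ with $\tilde I_0\stackrel{(d)}{=}I_\phi$ independent of $\mathcal{F}_s$, the pathwise identity $I_t^{-z}-I_0^{-z} = z\int_0^t e^{-\eta_s}I_s^{-z-1}ds$ upon the substitution $e^{-\eta_s}I_s^{-z-1}=e^{z\eta_s}\tilde I_0^{-z-1}$, and taking expectations in the range $\Re(z)$ sufficiently negative so that both $\Ebb{e^{z\eta_s}}=e^{-s\phi(-z)}$ and $\Mip(-z)=\Ebb{I_\phi^{-z-1}}$ are finite, produces
\[\Mip(1-z) = \frac{-z}{\phi(-z)}\Mip(-z),\]
which becomes \eqref{eq:fe2} after the substitution $w=-z$. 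Analyticity of $\Mip$ on $\Cb_{(0,\infty)}$ then follows by rewriting the functional equation as $\Mip(w)=(\phi(w)/w)\Ebb{I_\phi^w}$: the right-hand side is analytic on $\Cb_{(0,\infty)}$ since $\phi\in\Ae_{[0,\infty)}$ by Proposition \ref{propAsymp1}\eqref{it:bernstein_def}, $\Re\phi>0$ on $\Cb_{(0,\infty)}$ directly from the representation \eqref{eqn:phi-} (ensuring $\phi$ is zero-free there), and $w\mapsto \Ebb{I_\phi^w}$ is analytic by differentiation under the integral sign, justified by the finiteness of positive moments from \eqref{it:momIphi}.

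\textbf{Main obstacle.} The delicate step is the justification of the interchange of expectation and time integration in the pathwise identity for complex $z$: while the integrand's modulus $I_s^{-\Re(z)-1}$ is pointwise controlled for sufficiently negative $\Re(z)$, uniform integrability near the endpoints $s=0$ and $s=t$ must be verified, and one must confirm that $\Mip(-z)=\Ebb{I_\phi^{-z-1}}$ actually makes sense in the relevant range. I would handle this by first restricting to $\Re(z)\leq -k$ with $k\in\N$ large enough so that $I_s^{-\Re(z)-1}$ is a high positive moment directly controlled by \eqref{it:momIphi}, deriving the functional equation there, and only then extending to the whole of $\Cb_{(0,\infty)}$ by iterating the recursion $\Mip(w) = (\phi(w)/w)\Mip(w+1)$ in conjunction with the zero-freeness of $\phi$ on $\Cb_{(0,\infty)}$ and the analyticity of $w\mapsto \Ebb{I_\phi^w}$ on that half-plane.
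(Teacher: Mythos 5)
Your treatment of items \eqref{it:IpMoments}, \eqref{it:momIphi} and (4) is correct and in fact more self-contained than the paper, which simply cites the literature for those claims (\cite[Theorem 1]{Maulik-Zwart-06} for the functional equation, \cite{Bertoin-Yor-05} for the moment formula), while handling (4) with the same power-series observation you make. Your Fubini-plus-telescoping derivation of $\Ebb{I_\phi^n}=n!/W_\phi(n+1)$ is the standard proof and the derivation of \eqref{eq:fe2} by adapting the pathwise identity of Section \ref{sec:p_thm1_12} is the same mechanism as \cite{Maulik-Zwart-06}; these are genuinely valuable alternatives to bare citations, and your "main obstacle" correctly flags where the justification effort lies.

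Item \eqref{it:iota}, however, has two real gaps. First, the self-decomposability argument transplanted from Proposition \ref{prop:recall_exps}\eqref{it:sdI} relies critically on the absence of positive jumps: there one has $\xi_{T_a}=a$ a.s.~(no overshoot), so the decomposition reads $I\stackrel{(d)}{=}R_a+e^{-a}I'$ with a \emph{deterministic} factor $e^{-a}$, which is precisely what $\mathfrak{L}_+$ demands for every $c=e^{-a}\in(0,1)$. A subordinator $\eta$ jumps upward, so $\eta_{T_a}>a$ with positive probability; your identity $I_\phi\stackrel{(d)}{=}\int_0^{T_a}e^{-\eta_t}dt+e^{-\eta_{T_a}}I_\phi'$ is correct but has a \emph{random} multiplier $e^{-\eta_{T_a}}$, so it does not place $I_\phi$ in $\mathfrak{L}_+$, and in general $I_\phi$ is not self-decomposable. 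This is why the paper instead cites \cite[Theorem 2.4]{Patie-Savov-11} for absolute continuity, whose proof must use a different route. Second, the unboundedness argument for $\sigma^2=0$ is broken: by the strong law, $\eta_T/T\to\Ebb{\eta_1}=\phi'(0^+)\in(0,\infty]$ a.s.~(and $\phi'(0^+)>0$ is forced by $I_\phi<\infty$ a.s.), so $\liminf_{T\to\infty}\eta_T/T>0$ and $Te^{-\eta_T}\to 0$ a.s.; the bound $I_\phi\geq Te^{-\eta_T}$ is therefore vacuous. The correct route (used in \cite[Lemma 2.1]{Haas-Rivero-13}) exploits $\P(\eta_T<\varepsilon)>0$ for driftless subordinators, so that $I_\phi\geq Te^{-\varepsilon}$ occurs with positive probability for every $T$, which is precisely what fails once a drift $\sigma^2>0$ is present (then $\eta_T\geq\sigma^2T$ deterministically, and this is the mechanism giving the compact support $[0,\sigma^{-2}]$).
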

\begin{proof}
The absolute continuity of the law of $I_{\phi}$ is shown in the proof of \cite[Theorem 2.4]{Patie-Savov-11} whereas its support is derived in \cite[Lemma 2.1]{Haas-Rivero-13}. The characterization of $\Mip$ as a solution to the functional equation \eqref{eq:feip} is given in \cite[Theorem 1]{Maulik-Zwart-06}. $\E \lbb I^{u}_{\phi} \rbb=\Mip(u+1)<\infty$ for $u\in(-1,\infty)$ follows from \eqref{eq:feip}.  The expression of the integer moments of $I_{\phi}$ \eqref{eq:RecurI} can be found in \cite{Bertoin-Yor-05}. Finally, the claim $\E\lbb e^{u I_\phi}\rbb<\infty$ for any $u<\phi(\infty)$ is immediate from \eqref{eq:RecurI}, \mladen{$W_{\phi}(n+1)=\prod_{k=1}^{n}\phi(k)$, the monotonicity of $\phi$, see Proposition \ref{propAsymp1}\eqref{it:bernstein_cm},} and the Taylor expansion of the exponential function. The very last assertion follows from \eqref{eqn:phi--} with $u=\infty$.
\end{proof}
We end this part with the following statement which provides additional insights on  the variable $I_{\phi}$ and its proof is postponed to subsection \ref{sec:Mellin_Weierstrass1}.
\begin{proposition}\label{lem:fe2}
Let $\phi \in \Be$. Then $ z\mapsto \M_{I_{\phi}}(z)=\frac{\Gamma(z)}{W_{\phi}(z)}\in\Ac_{(0,\infty)}$  is the unique solution in the space of Mellin transforms of probability measures to the functional equation \eqref{eq:fe2} valid on the strip $\C_{(0,\infty)}$. If, in addition, $\phi(0)=m=0,$ $\phi$ does not vanish on $i\R\setminus\lbcurlyrbcurly{0}$ and $\phi'(0^+)<\infty$, then $z\mapsto \M_{I_{\phi}}(z)=\frac{\Gamma(z)}{W_{\phi}(z)}\mladen{\in\Ac_{\lbbrb{0,\infty}}}$ is the unique solution on $\C_{[0,\infty)}$.
\end{proposition}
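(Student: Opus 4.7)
The strategy is to realise $\Gamma/W_\phi$ as the Mellin transform of an explicit product of two independent random variables whose distribution is trivial to identify, and then to derive the uniqueness from the moment-determinacy of $I_\phi$.

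First, I would take $V_\phi$ and $I_\phi$ independent on a common probability space. Combining the moment formulas of Theorem \ref{lem:fe1}\eqref{it:momVphi1} and Proposition \ref{prop:recall_exp}\eqref{it:momIphi} yields
\[
\Ebb{(V_\phi I_\phi)^n} \;=\; W_\phi(n+1)\cdot \frac{n!}{W_\phi(n+1)} \;=\; n! \;=\; \Ebb{E^n}, \qquad n \in \N,
\]
where $E$ is a standard exponential variable. Since the exponential law possesses a moment-generating function finite in a neighbourhood of the origin, it is moment-determinate, so $V_\phi I_\phi \stackrel{(d)}{=} E$. Multiplicativity of the Mellin transform under independent products then yields $W_\phi(z)\M_{I_\phi}(z) = \Gamma(z)$ on $\Cb_{(0,\infty)}$. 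The integral representation \eqref{eqn:phi--} gives $\Re \phi(a+ib) \geq \phi(a) > 0$ for $a > 0$, so $\phi$ is zero-free on $\Cb_{(0,\infty)}$; inspection of the Weierstrass product \eqref{eq:Wphi}, whose absolute convergence is granted by Theorem \ref{lem:fe1}\eqref{it:Wanal}, shows that $W_\phi$ is also zero-free on this strip. Dividing produces the announced representation $\M_{I_\phi}(z) = \Gamma(z)/W_\phi(z) \in \Ac_{(0,\infty)}$.

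For the uniqueness claim in the space of Mellin transforms of probability measures, any solution $\overline{\M}$ of \eqref{eq:fe2} with $\overline{\M}(1) = 1$ satisfies, by iteration,
\[
\overline{\M}(n+1) \;=\; \prod_{k=1}^{n}\frac{k}{\phi(k)} \;=\; \frac{n!}{W_\phi(n+1)} \;=\; \M_{I_\phi}(n+1),
\]
so the underlying probability measure matches every integer moment of $I_\phi$. The moment-determinacy of $I_\phi$ already invoked above then forces coincidence of the two laws, whence $\overline{\M} = \M_{I_\phi}$.

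For the extension to $\Cb_{[0,\infty)}$ under the supplementary hypotheses $\phi(0) = m = 0$, $\phi'(0^+) < \infty$ and $\phi(ib) \neq 0$ for $b \in \R \setminus \{0\}$, I would track the potential singularities of $\Gamma/W_\phi$ on $i\R$: since $\phi(z) \sim \phi'(0^+)z$ near the origin, the factor $1/\phi(z)$ in \eqref{eq:Wphi} contributes a simple pole at $z=0$ that is exactly cancelled by the simple pole of $\Gamma$ there, so $\Gamma/W_\phi$ has a finite removable value at $0$. At any other imaginary point $ib$, the hypothesis $\phi(ib) \neq 0$ together with the lower bound $\Re \phi(k+ib) \geq \phi(k) > 0$ for $k \geq 1$ ensure that every factor of the product is finite and non-zero, yielding $\Gamma/W_\phi \in \Ac_{[0,\infty)}$. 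Uniqueness on the closed strip then follows from uniqueness on $\Cb_{(0,\infty)}$ combined with the continuity of any candidate Mellin transform on the boundary. The principal technical obstacle I expect is in this last step: while each factor of the Weierstrass product is individually finite and non-vanishing on $\Cb_{[0,\infty)} \setminus \{0\}$, transferring the absolute convergence from $\Cb_{(d_\phi,\infty)}$ up to $i\R \setminus \{0\}$ uniformly on compact subsets requires a careful estimate of $|\phi(k+ib)/\phi(k)|$ for large $k$ that exploits the finiteness of $\phi'(0^+)$ together with the log-concavity of $\phi$ (Proposition \ref{propAsymp1}\eqref{it:bernstein_cm}) and the bounds of Lemma \ref{lem:specialEstimates}.
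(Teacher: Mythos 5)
Your argument for the open strip $\Cb_{(0,\infty)}$ is correct and takes a genuinely different route from the paper. You establish the identity $\Gamma(z) = W_\phi(z)\M_{I_\phi}(z)$ probabilistically by matching the integer moments of $V_\phi I_\phi$ with those of a standard exponential $E$ and invoking the moment-determinacy of $E$; the paper instead constructs $\Gamma/W_\phi$ directly from the Weierstrass product \eqref{eq:thmOther1} and shows absolute convergence by the same technique as for Theorem \ref{lem:fe1}. For uniqueness, you use the moment-determinacy of $I_\phi$ to identify any solution of \eqref{eq:fe2} that is a Mellin transform of a probability measure (since the functional equation fixes all integer moments), whereas the paper invokes Webster's Bohr--Mollerup-type theorem (Lemma \ref{lem:Webster}) to get uniqueness among positive log-convex functions on $\R_+$, then extends by analytic continuation. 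Your route is shorter and avoids Webster's theorem altogether, at the price of relying explicitly on the moment-determinacy of $I_\phi$ (which the paper also implicitly uses via Proposition \ref{prop:recall_exp}\eqref{it:momIphi}), so both approaches buy roughly the same thing; yours is arguably more self-contained in spirit.

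On the boundary extension you correctly locate the issues (the cancellation at $z=0$ between the zero of $\phi$ and the pole of $\Gamma$, the non-vanishing of $\phi$ on $i\R\setminus\{0\}$), but you leave the convergence of the Weierstrass product on $i\R$ as an unresolved obstacle. This is where you miss the paper's much cheaper argument: rewrite the functional equation as $\M_{I_\phi}(z) = \frac{\phi(z)}{z}\M_{I_\phi}(z+1)$ and note that $\M_{I_\phi}(z+1)$ is already analytic on $\Re(z+1) \geq 1$, while $z\mapsto \frac{\phi(z)}{z} = \sigma^2 + \int_0^\infty e^{-zy}\bar\mu(y)\,dy$ (using $m=0$) extends continuously to $\Cb_{[0,\infty)}$ by dominated convergence because $\phi'(0^+) = \sigma^2 + \int_0^\infty\bar\mu(y)\,dy < \infty$; its value is $\phi'(0^+)>0$ at the origin and nonzero elsewhere by the non-vanishing hypothesis. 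This gives the extension of $\M_{I_\phi} = \Gamma/W_\phi$ to $\Cb_{[0,\infty)}$ in one line, without any fresh estimate on the infinite product. You should also reword the clause ``the factor $1/\phi(z)$ in \eqref{eq:Wphi} contributes a simple pole at $z=0$ that is exactly cancelled by the simple pole of $\Gamma$'': two poles do not cancel; what happens is that the pole of $W_\phi$ at $0$ induced by $1/\phi(z)$ becomes a simple zero of $1/W_\phi(z)$, equivalently a factor $\phi(z)$ in the numerator of $\Gamma/W_\phi$, and it is this zero that removes the pole of $\Gamma$.
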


\subsection{The functional equations  \eqref{eq:fe1} and \eqref{eq:fe2} on $\R_+$}  \label{sec:Mellin_Weierstrass}
We start the proof of the main results of this Chapter with the following result which can be seen as a generalization of the Bohr-Mollerup-Artin classical characterization of the Gamma function. It is essentially due to Webster \cite{Webster-97}.
\begin{lemma} \label{lem:Webster}
Let $\phi \in \Be$. Then, we have, for all $u>0$,
\begin{equation}\label{eq:M=W}
\Mg(u)= W_{\phi}(u),
\end{equation}	
with $0\leq  \gamma_{\phi}+\ln \phi(1)\leq \frac{\phi'(1)}{\phi(1)}<\infty$. Moreover,
  $\Mg$ is the unique positive  log-convex solution to  \eqref{eq:fe1} on $\R_+$. Similarly, we have, for all $u>0$,
\begin{equation}\label{eq:M=G/W}
\Mip(u)= \frac{\Gamma(u)}{W_{\phi}(u)},
\end{equation}	
and,  $\Mip$ is the unique positive log-convex solution to  \eqref{eq:fe2} on $\R_+$.
\end{lemma}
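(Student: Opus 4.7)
The plan is to show that the Weierstrass product $W_\phi$ is a well-defined, positive, log-convex solution of \eqref{eq:fe1} on $\R_+$, to invoke Webster's uniqueness theorem to identify it with $\Mg$, and finally to deduce \eqref{eq:M=G/W} from the Bertoin--Yor factorization coupled with Proposition~\ref{prop:recall_exp}\eqref{it:IpMoments}.

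The first step is to establish convergence of $\gamma_\phi$ and of the infinite product, together with the two-sided bound. By strict log-concavity of $\phi$ on $\R_+$, see Proposition~\ref{propAsymp1}\eqref{it:bernstein_cm}, the mapping $r\mapsto \phi'(r)/\phi(r)$ is strictly decreasing; integrating over successive unit intervals yields
\[
\ln\phi(k+1)-\ln\phi(k)\;\leq\;\frac{\phi'(k)}{\phi(k)}\;\leq\;\ln\phi(k)-\ln\phi(k-1),\qquad k\geq 2.
\]
The right inequality shows that $a_n=\sum_{k=1}^n \phi'(k)/\phi(k)-\ln\phi(n)$ is non-increasing, while the left one combined with $\phi(n+1)\geq\phi(n)$ gives $a_n\geq -\ln\phi(1)$. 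Hence $\gamma_\phi=\lim_n a_n$ exists in $[-\ln\phi(1),\phi'(1)/\phi(1)-\ln\phi(1)]$, which is precisely the claimed estimate. Fix $u>0$: a second-order Taylor expansion of $\ln\phi$ at $k$ shows that the logarithm of the $k$-th factor in \eqref{eq:Wphi} equals $-\frac{u^2}{2}(\ln\phi)''(\xi_k)$ for some $\xi_k\in(k,k+u)$, and the bounds $\phi'/\phi\leq 1/u$ from \eqref{eq:phi'_phi} and $|\phi''|\leq 2\phi/u^2$ from \eqref{specialEstimates11} imply $|(\ln\phi)''(u)|=\bo{u^{-2}}$, which renders the series absolutely and locally uniformly convergent on $\R_+$; thus $W_\phi\in \cco(\R_+)$ is positive.

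Next, $W_\phi$ satisfies the functional equation \eqref{eq:fe1}: a shift $k\mapsto k+1$ inside the partial products yields $W_\phi(u+1)=\phi(u)W_\phi(u)$ directly. The initial condition $W_\phi(1)=1$ is obtained by computing the limit of the partial product at $u=1$, whose logarithm is $-\gamma_\phi+\sum_{k=1}^n\phi'(k)/\phi(k)-\ln\phi(n+1)$ and tends to $0$ by the definition of $\gamma_\phi$ together with the flatness $\ln\phi(n+1)-\ln\phi(n)\to 0$ from Proposition~\ref{propAsymp1}\eqref{it:flatphi}. Log-convexity follows by termwise double differentiation (justified by local uniform convergence):
\[
(\ln W_\phi)''(u)\;=\;-(\ln\phi)''(u)-\sum_{k=1}^{\infty}(\ln\phi)''(k+u)\;\geq\;0,
\]
since $(\ln\phi)''\leq 0$ on $\R_+$. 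Thus $W_\phi$ is a positive, log-convex solution of \eqref{eq:fe1} on $\R_+$ with $W_\phi(1)=1$.

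Finally, since $\phi$ is strictly log-concave and satisfies the flatness \eqref{lemmaAsymp1-1}, Webster's theorem \cite[Theorem~7.1]{Webster-97}, applied exactly as in the argument for Theorem~\ref{thm:existence_invariant_1}\eqref{it:asymp_MV_R}, ensures that \eqref{eq:fe1} admits a unique positive log-convex solution on $\R_+$; the construction above exhibits one, so that solution is $W_\phi$. Now $\Mg(u)=\Ebb{V_\phi^{u-1}}$ is positive and log-convex by Lyapunov's inequality for moments and satisfies $\Mg(1)=1$, and the main obstacle is to verify the functional equation for $\Mg$ at \emph{non-integer} $u$, since \eqref{eq:moment_X_phi1} delivers it only at integers. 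This is bypassed by the Bertoin--Yor factorization: $I_\phi\cdot V_\phi$, with $I_\phi,V_\phi$ independent, is a standard exponential, so $\Mg(u)\Mip(u)=\Gamma(u)$ on $\R_+$, and combining this with \eqref{eq:feip} from Proposition~\ref{prop:recall_exp}\eqref{it:IpMoments} yields
\[
\Mg(u+1)\;=\;\frac{\Gamma(u+1)}{\Mip(u+1)}\;=\;\frac{u\,\Gamma(u)}{(u/\phi(u))\Mip(u)}\;=\;\phi(u)\,\Mg(u).
\]
Webster's uniqueness then forces $\Mg=W_\phi$, proving \eqref{eq:M=W}. The companion identity \eqref{eq:M=G/W} is then immediate from $\Mip\Mg=\Gamma$ and $\Mg=W_\phi$; the uniqueness of $\Mip$ as a positive log-convex solution of \eqref{eq:fe2} on $\R_+$ follows from Webster's theorem applied to that equation, whose multiplier $z/\phi(z)$ is positive, log-concave on $\R_+$ by Proposition~\ref{propAsymp1}\eqref{it:bernstein_log_concavity_u/p}, and flat at infinity via \eqref{lemmaAsymp1-1}.
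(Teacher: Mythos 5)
Your proof is correct and is organized differently from the paper's, most notably in how it pins down $\Mg$.

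The paper's proof delegates essentially everything — convergence of $\gamma_\phi$, convergence and log-convexity of $W_\phi$, the bounds on $\gamma_\phi+\ln\phi(1)$, and uniqueness of the positive log-convex solution — to Webster's \cite[Theorem 7.1]{Webster-97}, and then identifies $\Mg$ with $W_\phi$ by noting that $\Mg(1)=1$, that $\Mg(n+1)=W_\phi(n+1)=\phi(n)\Mg(n)$ for integers $n$, and that $\Mg$ is positive and log-convex. You instead rebuild the existence side by hand: the two-sided estimate on $\gamma_\phi+\ln\phi(1)$ from the monotonicity of $\phi'/\phi$, the $\bo{k^{-2}}$ bound on the log of the $k$-th factor from $(\ln\phi)''$, the telescoping verification of the functional equation, and log-convexity of $W_\phi$ by termwise double differentiation. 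This is more self-contained but buys nothing the citation to Webster does not already provide.

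The genuinely different and stronger part of your argument is the identification step. Webster's uniqueness applies to a positive log-convex $\M$ satisfying $\M(u+1)=\phi(u)\M(u)$ for \emph{all} $u>0$; the paper, as written, verifies this recursion for $\Mg$ only at the integers (via the moment formula \eqref{eq:moment_X_phi1}) and then invokes uniqueness, leaving the extension of the functional equation from $\N$ to $\R_+$ implicit. You make this step explicit by bringing in the Bertoin--Yor factorization $I_\phi\cdot V_\phi\stackrel{(d)}{=}\mathbf{e}$ (valid for every $\phi\in\Be$ by the moment identities $\Ebb{I_\phi^n}\Ebb{V_\phi^n}=n!$ and moment-determinacy of both laws), which gives $\Mg(u)\Mip(u)=\Gamma(u)$ for every $u>0$, and then transporting the already-established functional equation \eqref{eq:fe2} for $\Mip$ (Proposition~\ref{prop:recall_exp}\eqref{it:IpMoments}, valid on $\C_{(0,\infty)}$) through that identity to obtain $\Mg(u+1)=\phi(u)\Mg(u)$ on the whole of $\R_+$. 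That closes the loop cleanly and, as a byproduct, delivers $\Mip=\Gamma/W_\phi$ for free rather than requiring a second pass through Webster's product formula \eqref{thmOther1} as in the paper. Both approaches end by applying Webster's uniqueness to \eqref{eq:fe2} with the multiplier $u/\phi(u)$, and your observations that this multiplier is log-concave by Proposition~\ref{propAsymp1}\eqref{it:bernstein_log_concavity_u/p} and flat by \eqref{lemmaAsymp1-1} match the paper's.
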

\begin{proof}
From the items \eqref{it:bernstein_cm} and \eqref{it:flatphi} of Proposition \ref{propAsymp1}, for any $\phi \in \Be$, the mapping $u\mapsto \phi(u)$ is positive and strictly log-concave on $(0,\infty)$ and the  limit in \eqref{lemmaAsymp1-1} holds. This means that  the multiplier $\phi$ in \eqref{eq:fe1} satisfies the conditions of  \cite[Theorem 7.1]{Webster-97}, which states that the limit defining $\gamma_{\phi}$ in \eqref{eq:euler} exists and  $W_\phi$ in \eqref{eq:Wphi} is the unique positive log-convex solution  of \eqref{eq:fe1} on $\R_+$.  We complete the proof of \eqref{eq:M=W} by observing
 that $ \Mg(1)=1$, for any $n\in \N\setminus\lbcurlyrbcurly{0}$
 \begin{equation} \label{eq:rec_n}
  \Mg(n+1)=\MorW\lbrb{n+1} = \prod_{k=1}^n \phi(k)= \phi(n)  \Mg(n),
  \end{equation}
 and, by recalling that the mapping $u\mapsto \Mg(u)$, as the  moments of order greater than $-1$ of a positive random variable, is a positive and log-convex function on $\R_+$. Thus, necessarily  $\Mg(u)=W_{\phi}(u)$ on $\R_+$. The bounds for $\gamma_\phi$ are provided in the comments preceding \cite[Theorem 7.1]{Webster-97}. Next,  note, from \eqref{lemmaAsymp1-1}, that  $\lim_{x\to\infty}\frac{(x+u)\phi(x)}{x \phi(x+u)}=1$, for all $u >0$, and from Proposition \ref{propAsymp1}\eqref{it:bernstein_log_concavity_u/p} that $u \mapsto \frac{u}{\phi(u)}$ is log-concave and positive on $\R_+$. Thus, we can apply again \cite[Theorem 7.1]{Webster-97} to the functional equation with initial condition \eqref{eq:fe2} to conclude that it has, on $\R_+$, a unique positive log-convex solution say $\overline{\M}_\phi$ given by the following infinite product representation
 \begin{equation}\label{thmOther1}
 \overline{\M}_\phi(u)=e^{-\lb\gamma-\gamma_{\phi }\rb u }\frac{\phi (u )}{u }\prod_{k=1}^{\infty} \frac{\phi (u +k)k}{\phi (k)(u +k)}e^{\lb \frac{1}{k}-\frac{\phi '(k)}{\phi (k)}\rb u },\text{ for any $u >0$},
 \end{equation}
 where with $g(u)=u/\phi(u)$, $\gamma_{g}=\gamma-\gamma_\phi$ in \cite[Theorem 7.1]{Webster-97} comes from the computation
 \begin{eqnarray*}
 \gamma_g=\lim_{n \to \infty} \lb \sum_{k=1}^n\frac{g'(k)}{g(k)} - \ln g(n) \rb &=&\lim_{n \to \infty} \lb \sum_{k=1}^n\frac1k - \ln n -\lbrb{ \sum_{k=1}^n\frac{\phi'(k)}{\phi(k)} -\ln \phi(n)}\rb \\ &=&\gamma-\gamma_{\phi }.\end{eqnarray*}
  Therefore, one may rewrite \eqref{thmOther1} using \eqref{eq:Wphi}, for any $u>0$, as follows
 \begin{equation}\label{eq:thmOther1}
\overline{\M}_\phi(u )= \frac{\Gamma(u)}{W_{\phi }(u)}.
 \end{equation}
By proceeding as in the previous case, we conclude the proof of \eqref{eq:M=G/W} after recalling that $\Mip(n+1)= \frac{n}{\phi(n)}\Mip(n)$ for $n\in \N\setminus\lbcurlyrbcurly{0}$.
\end{proof}
As the Mellin transform of some positive variables which have all momentsof order greater than $-1$ finite, see \eqref{eq:M=W} and \eqref{eq:M=G/W}, it is well-known that both $\Mg$ and $\Mip$ admit an analytical extension to (at least) $\C_{\lbrb{0,\infty}}$. Thus, the uniqueness property stated in Theorem \ref{prop:FormMellin1} (resp.~Proposition \ref{lem:fe2}), that is $W_{\phi}=\Mg$ (resp.~$\frac{\Gamma}{W_{\phi}}=\Mip$) is the unique solution in the space of Mellin transforms of probability measures of the functional equation \eqref{eq:fe1} (resp.~\eqref{eq:fe2}), is an immediate consequence of an analytical extension argument combined with the uniqueness property established on the positive real line in \eqref{eq:M=W} and \eqref{eq:M=G/W} of Lemma \eqref{lem:Webster}. The  next two Sections contain the justification that  the Bernstein-Weierstrass representation  of $\Mg$ and $\Mip$ is also  valid on $\C_{\lbrb{0,\infty}}$. 
\subsection{Proof of Theorem \ref{prop:FormMellin1}}\label{sec:Mellin_Weierstrass1}
We start by recalling that   the proof of Theorem \ref{prop:FormMellin1}\eqref{it:momVphi1} was given right after its statement and the  proof of  Theorem \ref{prop:FormMellin1}\eqref{it:few} follows, directly from the discussion above, once the analyticity property stated in Theorem \ref{prop:FormMellin1} \eqref{it:Wanal}, that is $W_\phi\in\Ac_{\lbrb{0,\infty}}$, is justified. To complete the proof of Theorem \ref{prop:FormMellin1}, it remains therefore to extend analytically  the function $W_\phi$ to the positive half-plane. Let now $\phi\in \Be$, and, for the reader's convenience,  we recall, from Section \ref{sec:EstimatesBernstein}, for any $a>0$ and $b\in \R$, the following notation
\[\Delta^{\Re}_b \phi(a)=\Re(\Delta_b\phi(a))=\Re\lb\phi(a+ib)-\phi(a)\rb=\IInf\lb 1-\cos(by)\rb e^{-ay}\mu(dy),\]
\[\Delta^{\Im}_b \phi(a)=\Im(\Delta_b\phi(a))=\Im\lb\phi(a+ib)-\phi(a)\rb=\sigma^2 b+\IInf \sin(by) e^{-ay}\mu(dy).\]
For  $z \in \C_{\lbrb{0,\infty}}$, i.e.~$z=a+ib$, with $a>0$ and $b\in \R$ fixed,  we  write formally
\begin{equation}\label{eq:Zphi}
Z_\phi(z)=\prod_{k=1}^{\infty}\frac{\phi(k+a)}{\phi(k+z)}e^{\frac{\phi'(k)}{\phi(k)}ib},
\end{equation}
and, for any $k\geq 1$, \[ A_k=\frac{\phi(z_k)}{\phi(a_k)}e^{-\frac{\phi'(k)}{\phi(k)}ib},\]
where, for any number $x$ and integer $l$, we set $x_l =x+l$. Then, we get that
\[\frac{\phi(k)}{\phi(z_k)}e^{\frac{\phi'(k)}{\phi(k)}z} = \frac{\phi(k)}{\phi(a_k)}  e^{\frac{\phi'(k)}{\phi(k)}a} A_k^{-1}.\]
Thus, from \eqref{eq:Wphi} and  $W_\phi$ solution on $\R_+$ to \eqref{eq:fe1}, see Lemma \ref{lem:Webster}, then on $\Cb_{\lbrb{0,\infty}}$
\begin{equation} \label{eq:fet}
W_\phi(z+1)=\phi(z)W_\phi(z) =\phi(a)W_\phi(a)e^{-i\gamma_{\phi}b}\prod_{k=1}^{\infty}A_k^{-1}=\phi(a)W_\phi(a)e^{-i\gamma_{\phi}b}Z_\phi(z),
\end{equation}
provided the last infinite product is absolutely convergent on $\Cb_{\lbrb{0,\infty}}$ which by a standard result in complex analysis will follow if $\sum_{k=1}^{\infty} \labs A^{-1}_k-1 \rabs<\infty $. To prove the latter it suffices to show that $\sum_{k=1}^{\infty} \labs A_k-1 \rabs<\infty $. To achieve this, first, observe that
\begin{eqnarray}\label{eq:Ak-1}
A_k-1 &=& \lb 1+\frac{\Delta^\Re_b\phi(a_k) + i\Delta^\Im_b\phi(a_k)}{\phi(a_k)}\rb e^{-i\frac{\phi'(k)}{\phi(k)}b}-1.
\end{eqnarray}
Since from \eqref{specialEstimates},  $0\leq \Delta^\Re_b \phi(a_k)\leq \frac{b^2}{2}\labs \phi''(a_k) \rabs$ and $a \mapsto \labs \frac{\phi''(a)}{\phi(a)}\rabs $ is plainly non-increasing on $\R_+$ because $\phi'$ is completely monotone and $\phi$ is non-decreasing, see  Proposition \ref{propAsymp1}\eqref{it:bernstein_cm}, we get  with the help of \eqref{specialEstimates11} and \eqref{specialEstimates2} for the last inequality below that
\begin{eqnarray}\label{eq:FirstBound}
\sum_{k=1}^{\infty}\labs \frac{\Delta^\Re_b\phi(a_k)}{\phi(a_k)} \rabs &\leq& \frac{b^2}{2} \sum_{k=1}^{\infty}\labs \frac{\phi''(a_k)}{\phi(a_k)} \rabs  \nonumber \\ &\leq& \frac{b^2}{2}\lb\frac{\labs \phi''(1+a)\rabs}{\phi(1+a)} +\int_{a+1}^{\infty}\labs \frac{\phi''(u)}{\phi(u)} \rabs du  \rb \leq \frac{2}{\lbrb{1+a}^2}+\frac{2\sqrt{10}}{1+a}<\infty .
\end{eqnarray}
  Thus, it remains to show that  $\sum_{k=1}^{\infty} \labs \bar{A}_k \rabs<\infty $, where
 \begin{eqnarray}
\bar{A}_k &=& \lb 1+\frac{ i\Delta^\Im_b\phi(a_k)}{\phi(a_k)}\rb e^{-\frac{\phi'(k)}{\phi(k)}ib}-1 \nonumber \\
&=& (C_k(b)-1) \lb 1+i\frac{ \Delta^\Im_b\phi(a_k)}{\phi(a_k)}\rb + i \lb\frac{ \Delta^\Im_b\phi(a_k)}{\phi(a_k)}-S_k(b)\rb +\frac{ \Delta^\Im_b\phi(a_k)}{\phi(a_k)} S_k(b), \label{eq:if1}
\end{eqnarray}
with $C_k(b)= \cos \lb \frac{\phi'(k)}{\phi(k)}b \rb $ and $S_k(b)= \sin \lb \frac{\phi'(k)}{\phi(k)}b \rb $. We have,  using the estimates \eqref{specialEstimates} in the second inequality and \eqref{eq:phi'_phi} in the second and third inequality, that
 \begin{eqnarray}
\nonumber \sum_{k=1}^{\infty} \labs (1-C_k(b)) \lb 1+\frac{ i\Delta^\Im_b\phi(a_k)}{\phi(a_k)}\rb \rabs &\leq& \sum_{k=1}^{\infty} b^2\lb \frac{\phi'(k)}{\phi(k)} \rb^2\lb 1+ \labs \frac{ \Delta^\Im_b\phi(a_k)}{\phi(a_k)}\rabs \rb \\
\nonumber &\leq & \sum_{k=1}^{\infty} \frac{b^2}{k^2}\lb 1+ \labs b \frac{\phi'(a_k)}{\phi(a_k)}\rabs \rb \\
&\leq &\sum_{k=1}^{\infty} \frac{b^2}{k^2}\lb 1+ \labs \frac{b}{a_k}\rabs \rb <\infty\label{eq:SecondBound}.
\end{eqnarray}
Let us now consider the second term on the right-hand side of \eqref{eq:if1}. Since from \eqref{eq:phi'_phi} we have that $0\leq \lim_{a \rightarrow \infty}\frac{\phi'(a)}{\phi(a)} \leq \lim\ttinf{a}\frac{1}{a}=0 $, then for any $k\geq k_0(b)$ large enough,
\[ S_k(b) =  b \frac{\phi'(k)}{\phi(k)} + {\rm{O}} \lb\lb\frac{\phi'(k)}{\phi(k)}\rb^2\rb.\]
Therefore, as from \eqref{eq:phi'_phi} $\sum_{k=1}^{\infty}\lb\frac{\phi'(k)}{\phi(k)}\rb^2\leq \sum_{k=1}^{\infty}\frac1{k^{2}}<\infty$, to prove the summability of the second term in \eqref{eq:if1} it remains to prove the finiteness of the sum
\begin{equation}\label{eq:SECONDBOUND}
\sum_{k=1}^{\infty}\labs \frac{\Delta^\Im_b\phi(a_k)}{\phi(a_k)}- b \frac{\phi'(k)}{\phi(k)}\rabs \leq \sum_{k=1}^{\infty}|b| \lb \frac{\phi'(k)}{\phi(k)} -\frac{\phi'(a_k)}{\phi(a_k)}\rb + \sum_{k=1}^{\infty}\frac{\labs \Delta^\Im_b\phi(a_k)- b \phi'(a_k)\rabs}{\phi(a_k)}.
\end{equation}
Since from  Proposition \ref{propAsymp1}\eqref{it:bernstein_cm},  $u \mapsto \frac{\phi'(u)}{\phi(u)}$ is non-increasing on $\R_+$, we get that
\begin{eqnarray}
\nonumber \sum_{k=1}^{\infty}\lb \frac{\phi'(k)}{\phi(k)} -\frac{\phi'(a_k)}{\phi(a_k)}\rb &\leq& \sum_{k=1}^{\infty} \lb\frac{\phi'(k)}{\phi(k)} -\frac{\phi'(k+\lceil a\rceil+1)}{\phi(k+\lceil a\rceil+1)}\rb \\
&=& \sum_{k=1}^{\lceil a\rceil+1} \frac{\phi'(k)}{\phi(k)} \leq 1+\lceil a\rceil <\infty. \label{eq:ThirdBound}
\end{eqnarray}
On the other hand, the form of $\Delta^\Im_b$ and the second expression for $\phi'$ in \eqref{eq:phi'} yield that
 \begin{eqnarray*}
\frac{ \labs \Delta^\Im_b\phi(a_k)- b \phi'(a_k) \rabs}{\phi(a_k)} &\leq& \frac{1}{\phi(a_k)}  \int_0^{\infty} \labs \sin(by)-by\rabs e^{-a_k y}\mu(dy) \\
&\leq& \frac{1}{\phi(a_k)} \lb |b|^3  \int_0^{\frac{1}{|b|}}  y^3 e^{-a_k y}\mu(dy) +\int_{\frac{1}{|b|}}^{\infty} (1+|b|y) e^{-a_k y}\mu(dy) \rb  \\
&\leq& \frac{1}{\phi(a_k)} \lb b^2 \labs \phi''(a_k) \rabs  +\int_{\frac{1}{|b|}}^{\infty} (1+|b|y) e^{-a_k y}\mu(dy) \rb,
\end{eqnarray*}
where for the last inequality we have used the upper bound
\[|b|^3  \int_0^{\frac{1}{|b|}}  y^3 e^{-a_k y}\mu(dy)\leq b^2  \int_0^{\frac{1}{|b|}}  y^2 e^{-a_k y}\mu(dy)\leq  b^2  \int_0^{\infty}  y^2 e^{-a_k y}\mu(dy)=b^2\labs\phi''(a_k)\rabs.\]
Thus, as $a\mapsto \frac{\labs \phi''(a)\rabs}{\phi(a)}$ is non-increasing on $\R_+$, with the help of \eqref{specialEstimates11} and \eqref{specialEstimates2} for the third inequality, we get, recalling that $a_k=a+k$,
 \begin{eqnarray}
\nonumber \sum_{k=1}^{\infty}\frac{\labs \Delta^\Im_b\phi(a_k)- b \phi'(a_k) \rabs}{\phi(a_k)}
\nonumber &\leq& b^2\sum_{k=1}^{\infty}\frac{\labs \phi''(a_k) \rabs}{\phi(a_k)} +\sum_{k=1}^{\infty} \frac{\int_{\frac{1}{|b|}}^{\infty} (1+|b|y) e^{-a_k y}\mu(dy) }{\phi(a_k)}\\
\nonumber &\leq& b^2\lb \frac{\labs\phi''(1+a)\rabs}{\phi(1+a)}+\int_{a+1}^{\infty}\frac{\labs \phi''(y) \rabs}{\phi(y)}dy\rb  +\sum_{k=1}^{\infty}\frac{ \int_{\frac{1}{|b|}}^{\infty} (1+|b|y) e^{-a_k y}\mu(dy)}{\phi(1+a)}  \\
\nonumber &\leq & b^2\lb\frac{2}{\lbrb{a+1}^2}+\frac{2\sqrt{10}}{a+1}\rb +\frac{\int_{\frac{1}{|b|}}^{\infty} (1+|b|y) e^{-ay}\frac{e^{-y}}{1-e^{-y}}\mu(dy) }{\phi(1+a)}  \\
\nonumber &\leq & b^2\lb\frac{2}{\lbrb{a+1}^2}+\frac{2\sqrt{10}}{a+1}\rb +\frac{\int_{\frac{1}{|b|}}^{\infty} (1+|b|y) e^{-ay}e^{-y}\mu(dy) }{\phi(1+a)\lb 1-e^{-\frac{1}{|b|}}\rb} \\ &\leq& b^2\lb\frac{2}{\lbrb{a+1}^2}+\frac{2\sqrt{10}}{a+1}\rb+ \frac{|b|e^{-\frac{|b|-1}{|b|}}\IInt{\frac{1}{|b|}}{\infty}e^{-ay}\mu(dy)}{\phi(1+a)\lb 1-e^{-\frac{1}{|b|}}\rb }<\infty. \label{eq:FourthBound}
\end{eqnarray}
For the third term in \eqref{eq:if1},  we have, using the second relation in \eqref{specialEstimates} and $|\sin(y)|\leq |y|,\,y\in\R,$ in the first inequality and \eqref{eq:phi'_phi} in the second, that
\begin{equation}\label{eq:FifthBound}
\sum_{k=1}^{\infty}\labs\frac{ \Delta^\Im_b\phi(a_k)}{\phi(a_k)} S_k(b)\rabs\leq b^2 \sum_{k=1}^{\infty}\frac{\labs\phi'(a_k)\rabs}{\phi(a_k)} \frac{\labsrabs{\phi'(k)}}{\phi(k)} \leq b^2 \sum_{k=1}^{\infty}\frac{ 1}{ka_k}<\infty.
\end{equation}
Then,  we put \eqref{eq:ThirdBound} and \eqref{eq:FourthBound} in \eqref{eq:SECONDBOUND} which together with  \eqref{eq:SecondBound} and \eqref{eq:FifthBound} is used in \eqref{eq:if1} to confirm that $\sum_{k=1}^{\infty} \labs \bar{A}_k \rabs<\infty $. The latter triggers with the help of \eqref{eq:FirstBound} and \eqref{eq:Ak-1} $\sum_{k=1}^{\infty} \labs A_k -1\rabs<\infty $ and hence  $\sum_{k=1}^{\infty} \labs A^{-1}_k -1\rabs<\infty $. An application of Montel's Theorem in \eqref{eq:fet} yields that  the right-hand side of \eqref{eq:Wphi} defines a holomorphic function on $\C_{\lbrb{0,\infty}}$ and since $W_\phi\lbrb{u+1}=\M_{V_{\phi}}(u+1)$ for $u>-1$, see \eqref{eq:M=W}, we have proved Theorem \ref{lem:fe1} for $z \in \C_{\lbrb{0,\infty}}$. When $\phi(0)=m>0$, we check that all computations above extend to the imaginary line $i\R$ as \eqref{eq:FirstBound}, \eqref{eq:SecondBound}, \eqref{eq:ThirdBound}, \eqref{eq:FourthBound} and \eqref{eq:FifthBound} do not explode for $a=0$, and $W_{\phi}(0)=m^{-1}\in(0,\infty)$ thanks to \eqref{eq:fe1}. Next, if $d_\phi<0$ then plainly $\phi>0$ on $(d_\phi,0)$. Clearly, then the infinite product in \eqref{eq:Wphi} extends holomorphically to $\C_{(d_\phi,\infty)}$. Indeed, only at most $\lceil-d_\phi\rceil+1$ of  its terms are with argument whose real part is in $(d_\phi,0)$ and their product defines an analytic function. The rest of the product is absolutely convergent and defines a holomorphic function as already proved above. Since $W_{\phi}$ extends analytically to $\C_{(d_\phi,\infty)}$ the proof of Theorem \ref{lem:fe1}\eqref{it:Wanal} and hence of Theorem \ref{lem:fe1} is therefore completed.
\subsection{Proof of Proposition \ref{lem:fe2}} \label{sec:pro_prop_I}
Proceeding as in the proof of Theorem \ref{lem:fe1}, to show that $\overline{\M}_\phi$ in \eqref{eq:thmOther1} extends to a holomorphic function in $\C_{\lbrb{0,\infty}}$,  we set for any $z=a+ib$, $a>0$,
\begin{eqnarray*}
\overline{\M}_\phi(z) &=& \frac{\Gamma(z)}{\Gamma(a)}\frac{\phi (z)}{\phi (a)}\M_{I_{\phi }}(a) e^{-i \gamma_{\phi } b } \prod_{k=1}^{\infty} \frac{\phi (z+k)}{\phi (a+k)}e^{-i\frac{\phi'(k)}{\phi (k)} b}\\
&=& \frac{\Gamma(z)}{\Gamma(a)}\frac{\phi (z)}{\phi (a)}\M_{I_{\phi }}(a) e^{-i \gamma_{\phi } b } \prod_{k=1}^{\infty} A_k.
\end{eqnarray*}
This completes the proof of Proposition \ref{lem:fe2} for $z\in\C_{\lbrb{0,\infty}}$ \mladen{since $\overline{\M}_\phi=\M_{I_\phi}$ on $\R_+$ from \eqref{eq:M=G/W}} and $\prod_{k=1}^{\infty} A_k<\infty$ from the proof of Theorem \ref{lem:fe1}, where the uniqueness argument was also discussed. If $\phi(0)=0$, $\phi$ does not vanish on $i\R\setminus\lbcurlyrbcurly{0}$ and $\phi'(0^+)=\sigma^2+\int_{0}^{\infty}\bar{\mu}(y)dy<\infty$ then since $\phi\not\equiv 0$ we have that $\phi'(0^+)>0$ and $\frac{z}{\phi(z)}$ extends to $i\R$. Hence, through \eqref{eq:fe2} and \eqref{thmOther1} the left- and right-hand side of $\M_{I_\phi}(z)=\frac{\Gamma(z)}{W_{\phi}(z)}$ extend to $\Cb_{\lbbrb{0,\infty}}$.

We mention that, as from Theorem \ref{lem:fe1}, $\M_{V_{\phi}} = W_{\phi}$, the  results regarding estimates of the Mellin transform $\M_{V_{\phi}}$,  throughout the rest of this section will be stated in terms of  $W_{\phi}$.
\subsection{Proof of Theorem \ref{prop:asymt_bound_Olver2}\eqref{it:bounds_Wp}: Bounds for $W_{\phi}$} \label{sec:estimates_Mellin1}
 In this part, we derive general bounds for the absolute value of  $\MorW$  along imaginary  lines which  provide some bounds for the Mellin transforms $\M_{V_\phi}$ and $\M_{I_\phi}$. This will serve us to obtain exact asymptotic estimates along imaginary lines for $|\MorW|$ and $|\M_{I_\phi}|$, that we shall exploit to establish smoothness properties of the invariant density as well as existence and smoothness properties of the sequence of co-eigenfunctions.  To obtain our results, we resort to the Bernstein-Weierstrass product representation of the Mellin transform and extend an approach which has been used to derive  estimates of the gamma function, which can  be found for instance in \cite[Chap.~8]{Olver-74}.
Before stating the next result, we revisit and introduce some notation. Let us recall that, for any function $\phi \in \Be$ and numbers $a$ and $b>0$, we write
\begin{equation}\label{eq:Theta_rev1}
 \angp(a,b)=\int_{\frac{a}{b}}^{\infty}\ln\lb\frac{\labs\phi(bu+ib)\rabs}{\phi(bu)}\rb du.
\end{equation}
 Furthermore, we set formally, for $a,b\in\R$,
\begin{equation*}\label{eq:Epsilon}
E_{\phi}(a,b)=e^{-\frac{1}{8}\int\limits_{a}^{\infty}\frac{\labs\phi''(u+ib)\rabs}{\labs\phi(u+ib)\rabs}+\frac{\labs\phi'(u+ib)\rabs^2}{\labs\phi(u+ib)\rabs^2} du-\frac{1}{8}\frac{\phi'(a)}{\phi(a)}}
\end{equation*}
and
 \begin{equation}\label{eq:Zphil_rev1} Z^l_{\phi}(a,b)=\sqrt{\frac{\labs\phi(a+ib+l)\rabs}{\phi(a+l)}}\prod\limits_{k=0}^{l-1}\frac{\labs\phi(k+a+ib)\rabs}{\phi(k+a)},
 \end{equation}
 where we use  the convention $\prod\limits_{k=0}^{-1}=1$.
We are ready to state the following result.
 \begin{proposition}\label{prop:asymt_bound_Olver1}
 	Let  $\phi \in \Be$. For any  $b\in\R\setminus\lbcurlyrbcurly{0}$ and $a>0$,  we have
 	\begin{enumerate}
 		\item \label{it:Hestimate}
 		$0\leq \angp(a,|b|)\leq \frac{\pi}{2},$
 		\item\label{it:Eps_rev1} $e^{-\frac{19}{8 a}}\leq E_{\phi}(a,|b|) \leq e^{\frac{19}{8 a}}.$
 		\item \label{it:est_rev_1}  Moreover, if
 		$a>d_\phi$,  $l\in\N$  such that $a+l>0$,
 		\begin{eqnarray} \label{eqn:estimateComplexLine}
        E_{\phi}(a+l,|b|) \frac{\MorW(a)}{ Z^l_{\phi}\lbrb{a,|b|}} &\leq&
 		\frac{\labs \MorW\lbrb{a+i|b|}\rabs}{ e^{-|b|\angp(a+l,|b|)}}
 		\leq  \frac{\MorW(a)}{ Z^l_{\phi}(a,|b|) E_{\phi}(a+l,|b|)}.
 		\end{eqnarray}
 		If $\phi(0)=m>0$ and $d_\phi=0$ then \eqref{eqn:estimateComplexLine} extends to $a=0$.
 		\item  With $a>0$ and $l\in\N$, we have that
 		\begin{eqnarray}\label{eqn:estimateComplexLine_phi}
 		\frac{\MorW(a) Z^l_{\phi}\lbrb{a,|b|}}{E_{\phi}\lbrb{a+l,|b|}}    &\leq&
 		\frac{\labs \M_{I_{\phi}}\lbrb{a+i|b|}\rabs}{e^{|b|\angp\lbrb{a+l,|b|}}\labs\Gamma(a+i|b|)\rabs} \leq \MorW(a) Z^l_{\phi}\lbrb{a,|b|} E_{\phi}\lbrb{a+l,|b|}.
 		\end{eqnarray}
 		\end{enumerate}
  \end{proposition}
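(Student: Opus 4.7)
The plan is to deduce all four assertions from the Bernstein--Weierstrass product \eqref{eq:Wphi} of Theorem \ref{prop:FormMellin1} together with the identity $\mathcal{M}_{I_\phi}=\Gamma/W_\phi$ of Proposition \ref{lem:fe2}. The crux is a clean interpretation of $\angp$ in item \eqref{it:Hestimate}: by the L\'evy--Khintchine form \eqref{eqn:phi-}, $\Re\phi(a+it)=\phi(a)+\Delta^{\Re}_t\phi(a)\geq\phi(a)\geq 0$, so $\arg\phi(a+it)\in[-\pi/2,\pi/2]$ for $a\geq 0$ and $t\in\R$. Writing $\partial_t\log|\phi(y+it)|=-\Im(\phi'(y+it)/\phi(y+it))$, I would integrate first in $t\in(0,b)$ and then in $y\in(a,\infty)$, swap the order, and use $\Im\log\phi(y+it)\to 0$ as $y\to\infty$ (which follows from Proposition \ref{propAsymp1}\eqref{it:asyphid} and \eqref{it:finitenessPhi}) to collapse the inner integral to $-\arg\phi(a+it)$. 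This yields the identity
\begin{equation*}
|b|\angp(a,|b|)=\int_a^{\infty}\log\frac{|\phi(y+i|b|)|}{\phi(y)}\,dy=\int_0^{|b|}\arg\phi(a+it)\,dt,
\end{equation*}
whose right-hand side is manifestly in $[0,|b|\pi/2]$ thanks to the pointwise bound $|\phi(y+ib)|\geq\Re\phi(y+ib)\geq\phi(y)$ (forcing non-negativity) and the Bernstein bound on the argument. The case $b<0$ is handled by conjugate symmetry $\overline{\phi(\overline z)}=\phi(z)$.

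Item \eqref{it:Eps_rev1} is a direct consequence of the estimates assembled in Lemma \ref{lem:specialEstimates}: \eqref{specialEstimates2} gives $\int_a^{\infty}|\phi''(y+ib)|/|\phi(y+ib)|\,dy\leq 2\sqrt{10}/a$ and $\int_a^{\infty}|\phi'(y+ib)|^2/|\phi(y+ib)|^2\,dy\leq 10/a$, while $\phi'(a)/\phi(a)\leq 1/a$ comes from \eqref{eq:phi'_phi}, summing to $(2\sqrt{10}+11)/(8a)<19/(8a)$. For item \eqref{it:est_rev_1}, I would first iterate the functional equation $W_\phi(z+1)=\phi(z)W_\phi(z)$ of Theorem \ref{lem:fe1}\eqref{it:few} to reduce to the case of large real part: setting $c=a+l$,
\begin{equation*}
\frac{|W_\phi(a+ib)|}{W_\phi(a)}=\frac{|W_\phi(c+ib)|}{W_\phi(c)}\cdot\prod_{k=0}^{l-1}\frac{\phi(a+k)}{|\phi(a+k+ib)|},
\end{equation*}
with the finite product filling the non-square-root part of $1/Z^l_\phi(a,|b|)$. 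It then suffices to prove the fundamental two-sided estimate
\begin{equation*}
\log\frac{|W_\phi(c+ib)|}{W_\phi(c)}=\tfrac12\log\frac{\phi(c)}{|\phi(c+ib)|}-|b|\angp(c,|b|)+R,
\end{equation*}
with $|R|\leq\tfrac18$ times the quantity in the exponent of $E_\phi(c,|b|)$. Subtracting \eqref{eq:Wphi} from its real-axis value (so that the $e^{-\gamma_\phi z}$ and the $e^{z\phi'(k)/\phi(k)}$ factors cancel pairwise) gives $\log(|W_\phi(c+ib)|/W_\phi(c))=\sum_{k=0}^{\infty}g(c+k)$ with $g(y)=\log(\phi(y)/|\phi(y+ib)|)$. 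Applying Euler--Maclaurin with one endpoint correction,
\begin{equation*}
\sum_{k=0}^{\infty}g(c+k)=\int_c^{\infty}g(y)\,dy+\tfrac12g(c)+R,\qquad |R|\leq C\int_c^{\infty}|g''(y)|\,dy+C'\,\phi'(c)/\phi(c),
\end{equation*}
reproduces the first two terms as $-|b|\angp(c,|b|)$ and $\log\sqrt{\phi(c)/|\phi(c+ib)|}$, while a direct expansion of $g''$ produces $|\phi''(y+ib)|/|\phi(y+ib)|$, $|\phi'(y+ib)|^2/|\phi(y+ib)|^2$, and their real counterparts, each controlled by \eqref{specialEstimates2}. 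The extension to $a=0$ when $\phi(0)>0$ is immediate since all quantities remain finite at the boundary. Item (4) is then mechanical: $\mathcal{M}_{I_\phi}(z)=\Gamma(z)/W_\phi(z)$ converts \eqref{eqn:estimateComplexLine} into \eqref{eqn:estimateComplexLine_phi}, with the inequalities reversed because $|W_\phi|$ sits in the denominator.

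The principal obstacle will be the careful bookkeeping of constants in item \eqref{it:est_rev_1}: one must verify that the constant $\tfrac18$ in the definition of $E_\phi$ simultaneously absorbs the Euler--Maclaurin error constant (of order $1/12$), the $\sqrt{10}$-factors that arise when converting complex-line integrals to real-line ones via Lemma \ref{lem:specialEstimates}, and the pointwise bound on $\phi'(c)/\phi(c)$ coming from the endpoint term. A secondary delicate point is confirming that the half-point endpoint correction $\tfrac12 g(c)$ of Euler--Maclaurin is exactly the $\log\sqrt{\phi(c)/|\phi(c+ib)|}$ factor hidden in $Z^l_\phi$, and that the absolute convergence of the Weierstrass product (proved in Theorem \ref{prop:FormMellin1}\eqref{it:Wanal}) justifies all the term-by-term manipulations performed when separating the $k<l$ and $k\geq l$ parts of the sum.
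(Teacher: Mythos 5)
Your proposal reproduces the paper's proof for items (2)--(4): item (2) by summing the constants from Lemma~\ref{lem:specialEstimates}, item (3) via Euler--Maclaurin applied to $g(y)=\log\bigl(\phi(y)/|\phi(y+ib)|\bigr)$ with the half-endpoint correction producing the square-root factor in $Z^l_\phi$ and the remainder matching $E_\phi$, and item (4) by the identity $\M_{I_\phi}=\Gamma/W_\phi$. The genuinely different step is item (1). The paper derives the upper bound $\pi/2$ from the pointwise estimate $|\phi(bu+ib)|\leq\sqrt{1+u^{-2}}\,\phi(bu)$ and the elementary integral $\int_0^\infty\ln\sqrt{1+u^{-2}}\,du=\tfrac{\pi}{2}$, whereas you propose the exact identity $|b|\angp(a,|b|)=\int_0^{|b|}\arg\phi(a+it)\,dt$ and then invoke $|\arg\phi|\leq\tfrac{\pi}{2}$ on $\Cb_{(0,\infty)}$. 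This is a nice reinterpretation -- it exhibits $\angp$ as an averaged argument of $\phi$ -- but as written it has a gap: the Fubini interchange of $\int_a^\infty dy\int_0^{|b|}dt$ is not covered by naive absolute integrability, since $|\partial_y\arg\phi(y+it)|=|\Im(\phi'/\phi)(y+it)|\leq\sqrt{10}\,\phi'(y)/\phi(y)\leq\sqrt{10}/y$ is not integrable on $(a,\infty)$. You need to truncate to $y\in[a,M]$, apply Fubini on the bounded rectangle to get $\int_0^{|b|}\bigl(\arg\phi(a+it)-\arg\phi(M+it)\bigr)\,dt$, and then let $M\to\infty$ using the uniform bound $\sup_{t\in[0,|b|]}|\arg\phi(M+it)|\leq\sup_{t}|\Im\phi(M+it)|/\phi(M)\to 0$, which holds for every $\phi\in\Be$ by dominated convergence in the L\'evy--Khintchine form. (Also, the reference to Proposition~\ref{propAsymp1}\eqref{it:finitenessPhi} to justify the boundary limit is misplaced, since that item is specific to $\phi\in\Be_{\Ne}$; the paper's item \eqref{it:asyphid}, or the direct computation just indicated, suffices for general $\phi\in\Be$.) Once that interchange is justified, your argument is sound and yields both the nonnegativity (which you correctly attribute to $|\phi(y+ib)|\geq\phi(y)$, as in the paper) and the $\pi/2$ cap.
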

	\begin{remark}
		The upper bound for $\angp(a,|b|)$,  that is, $\frac{\pi}{2}$, is attained when $\phi(u)=u$ and $b=\infty$.  In this case $W_\phi(z)=\Gamma(z)$.
	\end{remark}
\begin{proof}
Note that for $\phi \in \Be$, $z=a+ib$ with $a,b>0,$ and with the usual notation for $\Delta_b$, $\Delta^{\Re}_b$ and $\Delta^{\Im}_b$ used in subsection \ref{sec:Mellin_Weierstrass1}, we have that
\begin{equation} \label{eq:uprp}
\frac{\labs \phi(a+ib)\rabs}{\phi(a)}= \labs 1+ \frac{\Delta_b \phi(a)}{\phi(a)}\rabs\geq \lb 1+\frac{\Delta^{\Re}_b \phi(a)}{\phi(a)}\rb \geq1,
\end{equation}
since, from \eqref{specialEstimates},  $\Delta^{\Re}_b \phi(a) \geq0$.  Thus, we get that $ \angp(a,b)\geq0$. Next,  using the first identity in \eqref{eqn:phi-}, we deduce that
\begin{eqnarray}\label{eq:PhiEst}
\labs \phi(bu+ib)\rabs &\leq& m  + \sigma^2\labs bu+ib \rabs  + \labs bu+ib \rabs \int_0^{\infty}e^{-buy}\bar{\mu}(y)dy\nonumber\\
&\leq&  \sqrt{1+\frac{1}{u^2}} \lb m  + \sigma^2 bu  + bu  \int_0^{\infty}e^{-buy}\bar{\mu}(y)dy\rb\nonumber\\
&=& \sqrt{1+\frac{1}{u^2}}\phi(bu).
\end{eqnarray}
 Hence, after performing an obvious change of variables, we get that
\begin{eqnarray*}
\angp(a,b)\leq  \IInf \ln\lb\sqrt{1+\frac{1}{u^2}}\rb du =\frac{\pi}{2},
\end{eqnarray*}
which concludes  item \eqref{it:Hestimate}. From \eqref{eq:fet}, we have that{
\begin{equation}\label{eq:rev1W}
\labsrabs{W_\phi(z)}=W_\phi(a)\frac{\phi(a)}{\labsrabs{\phi(z)}}\prod_{k=1}^{\infty}\labsrabs{\frac{\phi(a+k)}{\phi(z+k)}}=W_\phi(a)\frac{\phi(a)}{\labsrabs{\phi(z)}}\labsrabs{Z_\phi(z)}.
\end{equation}
 Let in the sequel $z=a+ib,\,b>0$, $a>d_\phi$ or $a\geq 0$, if $\phi(0)=m>0$ and $d_{\phi}=0$, and $l\in \N$ such that $a_l=a+l>0$. Then, we denote by
\begin{equation}\label{eq:rev1Zphi}
\frac{\phi(a)}{\labsrabs{\phi(z)}}\labsrabs{Z_\phi(z)}=\prod_{k=0}^{l}\labs\frac{\phi\lbrb{a_k}}{\phi\lbrb{a+k+ib}}\rabs\limi{n}e^{S_n\lbrb{a_l,b}},	
\end{equation}
where we recall that $a_k=a+k$ and we have set
\[S_n\lbrb{a_l,b}=\ln\lbrb{\prod_{k=1}^{n} \frac{\phi(a_{k+l})}{\labs\phi(a_{k+l}+ib)\rabs}}.\]
Therefore, from \eqref{eq:rev1W} and \eqref{eq:rev1Zphi} it remains to estimate $S_n\lbrb{a_l,b}$, as $n\to\infty$. For this purpose we rewrite
\begin{eqnarray*}
	S_n(a_{l},b)&=&\ln\lb \prod_{k=1}^{n} \frac{\phi(a_{k+l})}{\labs\phi(a_{k+l}+ib)\rabs}\rb\\
	&=&-\sum_{k=0}^{n}\lb\ln\labs\phi(a_{k+l}+ib)\rabs-\ln\phi(a_{k+l}) \rb + \ln \frac{\labs\phi(a_{l}+ib)\rabs}{\phi(a_{l})}.
\end{eqnarray*}
Then the approximation techniques developed in \cite[Section 8.2 (2.01)]{Olver-74} for the gamma function are valid for any twice differentiable function $f$ such that $\int_{0}^{n}\labsrabs{f''(u)}du<\infty$ and take the form
\[\sum_{k=0}^{n}f(k)=\int_{0}^{n}f(u)du+\frac{1}{2}f(0)+\frac{1}{2}f(n)+\int_0^n \frac{B_{2}-B_2(u-[u])}{2} f ''(u)du,\]
where $B_2$ is the Bernoulli number and $B_2(u)$ the Bernoulli polynomial as defined in \cite[Section 8.1]{Olver-74}. Applying this to the sum in $S_n(a_{l},b)$ above with $f(u)= \ln \left( \frac{\labs \phi(a_{l}+ib+u)\rabs}{\phi(a_{l}+u)}\right),\,u\geq 0,$ we get
\begin{eqnarray}\label{eq:S_n}
  S_n(a_{l},b)&=&-\int_{0}^{n}\ln\lb\frac{\labs\phi(u+a_{l}+ib)\rabs}{\phi(u+a_{l})}\rb du-\frac{1}{2}\ln\lb\frac{\labs\phi(a_{l+n}+ib)\rabs\phi(a_{l})}{\labs\phi(a_{l}+ib)\rabs\phi(a_{l+n})}\rb
 \\ \nonumber&& +E^B_{\phi}(n,a_{l})-E^B_{\phi}(n,a_{l}+ib),\label{eq:S_n2}
\end{eqnarray}
where formally $E^B_{\phi}(n,z) = \int_0^n \frac{B_{2}-B_2(u-[u])}{2} \lb\ln \labs \phi(u+z)\rabs \rb ''du$.
Next, note that
\begin{eqnarray}\label{eq:ln_rev1}
	\ln\lb\frac{\labs\phi(a_{l+n}+ib)\rabs}{\phi(a_{l+n})}\rb &=&\ln\lb\labs 1+\frac{\Delta^\Re_b \phi(a_{l+n})+i\Delta^\Im_b \phi(a_{l+n})}{\phi(a_{l+n})}\rabs\rb \nonumber \\
	\nonumber&=&\frac{1}{2}\ln\lb 1+2\frac{\Delta^\Re_b \phi(a_{l+n})}{\phi(a_{l+n})}+\lb\frac{ \Delta^\Re_b \phi(a_{l+n})}{\phi(a_{l+n})}\rb^2+  \lb \frac{\Delta^\Im_b \phi(a_{l+n})}{\phi(a_{l+n})}\rb^2\rb \label{eq:lnp}\\
	&\leq&  \frac{1}{2} \ln\lb 1+b^2\frac{|\phi''(a_{l+n})|}{\phi(a_{l+n})}+\frac{b^4}{4}\frac{|\phi''(a_{l+n})|^2}{\phi^2(a_{l+n})}+b^2\frac{(\phi'(a_{l+n}))^{2}}{\phi^2(a_{l+n})}\rb,
\end{eqnarray}
where we have used \eqref{specialEstimates} for the  inequality. Thus, from \eqref{eq:uprp} for the first inequality and \eqref{eq:phi'_phi} and \eqref{specialEstimates11} for the second, we deduce that
\begin{equation}\label{eq:Sn1_rev1}
	0\leq \lim\nti \ln\lb\frac{\labs\phi(a_{l+n}+ib)\rabs}{\phi(a_{l+n})}\rb\leq\frac{1}{2} \lim\nti\ln\lb 1+\frac{2b^2}{\lbrb{l+n}}+\frac{b^4+b^2}{\lbrb{l+n}^2}\rb =0.
\end{equation}
Using the same inequality as in \eqref{eq:ln_rev1} to the integral term appearing on the right-hand side of \eqref{eq:S_n}, and the estimates in \eqref{specialEstimates2}, we conclude, thanks to the dominated convergence theorem, that, for any $a_l,b>0$,
\begin{equation}\label{eq:Sn2_rev1}
\lim\nti\int_{0}^{n}\ln\lb\frac{\labs\phi(u+a_{l}+ib)\rabs}{\phi(u+a_{l})}\rb du=\int_{0}^{\infty}\ln\lb\frac{\labs\phi(u+a_{l}+ib)\rabs}{\phi(u+a_{l})}\rb du.
\end{equation}
Since  $\phi$ is $\log$-concave on $\R_+$, see  Proposition \ref{propAsymp1}\eqref{it:bernstein_cm}, which means that its second derivative  does not change sign, then  by the estimate \cite[Section 8.2, (2.04)]{Olver-74} (with $m=1$ and  $f(u)=\ln\phi(a_{l}+u)$ in the notation of \cite[Section 8.2, (2.04)]{Olver-74}), we get that
\begin{eqnarray}\label{eq:Sn3_rev1}
\lim\nti \labsrabs{E^B_{\phi}(n,a_{l})}&\leq&-\frac{1}{8}\lim\nti\int_{0}^{n} \lbrb{\ln\phi(a_{l}+u)}''du
= \frac{1}{8}\frac{\phi'(a_{l})}{\phi(a_{l})}\leq\frac{1}{8a_l},
\end{eqnarray}
where, we have used  that \eqref{eq:phi'_phi} implies $\lim\nti\frac{\phi'(a_{l}+n)}{\phi(a_{l}+n)} =0$ and the last inequality itself. Next, we focus on $E^B_{\phi}(n,a_{l}+ib)$. Since $\lb\ln|\phi(u+ib)|\rb''\leq \labs \lb\ln \phi(u+ib)\rb ''\rabs,$
as $\lb\ln \phi(u+ib)\rb''=\lb\ln \labs \phi(u+ib)\rabs\rb''+i\lb \arg(\phi(u+ib))\rb''$, we have according to \cite[Section 8.2, (2.04)]{Olver-74} that
\begin{eqnarray}\label{eq:Sn4_rev1}
\nonumber	\lim\nti \labsrabs{E^B_{\phi}(n,a_{l}+ib)} &\leq&  \frac{1}{8}\IInf\labs  \lb \ln  \phi(u+a_{l}+ib) \rb''\rabs du \\
\nonumber	&=&\frac{1}{8}\IInf\labs  \frac{\phi\lbrb{u+a_{l}+ib}\phi''\lbrb{u+a_{l}+ib}-\lb\phi'\lbrb{u+a_{l}+ib}\rb^2}{\lbrb{\phi\lbrb{u+a_{l}+ib}}^2}\rabs du \\
\nonumber	&\leq& \frac{1}{8}\int_{a_{l}}^{\infty}\lb \frac{\labs\phi''\lbrb{u+ib}\rabs}{\labs\phi\lbrb{u+ib}\rabs}+\lb\frac{\labs \phi'\lbrb{u+ib}\rabs}{\labs\phi\lbrb{u+ib}\rabs}\rb^2\rb du  \\
	&\leq & \frac{1}{8} \int_{a_{l}}^{\infty}  \lb \sqrt{10} \frac{\labs\phi''(u)\rabs}{\phi(u)}+ 10 \lb\frac{ \phi'(u)}{\phi(u)}\rb^2 \rb du \leq \frac{2\sqrt{10}+10}{8a_l},
\end{eqnarray}
where for the final two inequalities we have employed all the estimates of \eqref{specialEstimates2}. Collecting \eqref{eq:Sn1_rev1}, \eqref{eq:Sn2_rev1}, \eqref{eq:Sn3_rev1} and \eqref{eq:Sn4_rev1} we immediately deduce the upper bound
\begin{eqnarray*}
	 \lim\nti S_n(a_{l},b)  &\leq& \frac{1}{8}\int_{a_{l}}^{\infty}\lb \frac{\labs\phi''\lbrb{u+ib}\rabs}{\labs\phi\lbrb{u+ib}\rabs}+\lb\frac{\labs \phi'\lbrb{u+ib}\rabs}{\labs\phi\lbrb{u+ib}\rabs}\rb^2\rb du+\frac{1}{8}\frac{\phi'(a_{l})}{\phi(a_{l})}\\ &-&\int_{0}^{\infty}\ln\lb\frac{\labs\phi\lbrb{u+a_{l}+ib}\rabs}{\phi\lbrb{u+a_{l}}}\rb du+\frac{1}{2}\ln\lb\frac{\labs\phi\lbrb{a_{l}+ib}\rabs}{\phi(a_{l})}\rb,\\
	&=& \ln\lbrb{\frac{1}{E_{\phi}(a_l,|b|)}} -b\angp(a_l,b) +\frac{1}{2}\ln\lb\frac{\labs\phi\lbrb{a_{l}+ib}\rabs}{\phi(a_{l})}\rb.
\end{eqnarray*}
The lower bound is the same but with error term $\ln\lbrb{E_{\phi}(a_l,|b|)}$. Therefore, from \eqref{eq:rev1Zphi}, the definition of $Z^l_{\phi}$, see \eqref{eq:Zphil_rev1} and the definition of $Z_\phi$, see \eqref{eq:Zphi}, we get that
\begin{equation} \label{eq:productAsymp}
 E_{\phi}(a_l,b)\frac{ e^{-|b|\angp(a_l,|b|)}}{Z^l_{\phi}(a,z)}\leq
\frac{\phi(a)}{\labs\phi(z)\rabs} \labs Z_{\phi}(z)\rabs =  \frac{\phi(a)}{\labs\phi(z)\rabs}\prod_{k=1}^{\infty}\labs\frac{\phi(k+a)}{\phi(k+z)}\rabs
  \leq \frac{1}{E_{\phi}(a_l,b)}\frac{ e^{-|b|\angp(a_l,|b|)}}{Z^l_{\phi}(a,z)}.
\end{equation}
 Then \eqref{eq:productAsymp} via \eqref{eq:rev1W} proves \eqref{eqn:estimateComplexLine}, which establishes the whole item \eqref{it:est_rev_1} since we have allowed for all possible values of $a$ claimed in item \eqref{it:est_rev_1}.  The case when $b<0$ is dealt with in the same manner since, for all $a>0$ and $b \in \R$,
\begin{equation}\label{eq:symmetry_b}
\Im\phi(a-ib)=-\Im\phi(a+ib) \quad \textrm{ and }  \quad  \Re\phi(a+ib)=\Re\phi(a-ib).
\end{equation}
Item \eqref{it:Eps_rev1} follows by gathering the very last upper bounds in \eqref{eq:Sn3_rev1} and \eqref{eq:Sn4_rev1}.
The bounds \eqref{eqn:estimateComplexLine_phi} follow from Proposition \ref{lem:fe2}, since $\M_{I_{\phi}}
(z)=\frac{\Gamma(z)}{W_\phi(z)}$ and the previous result.
}\end{proof}

\subsubsection{Proof of Theorem \ref{prop:asymt_bound_Olver2}\eqref{it:bounds_Wp}} \label{sec:pro_bWp}
 Choosing $l=0$ in  \eqref{eqn:estimateComplexLine} and using the definition of $Z^0_{\phi}(a,|b|)$ given in \eqref{eq:Zphil_rev1}, one gets, for any $b\in \R$ and $a>0$,
 	\begin{eqnarray*} 
 E_{\phi}(a,|b|)  e^{-|b|\angp(a,|b|)} \sqrt{\frac{\phi(a)}{\labs\phi(a+ib)\rabs}} \MorW(a) &\leq&
 		\labs \MorW\lbrb{a+i|b|}\rabs
 		\leq \sqrt{\frac{\phi(a)}{\labs\phi(a+ib)\rabs}} \frac{\MorW(a)}{E_{\phi}(a,|b|)} e^{-|b|\angp(a,|b|)}.
 		\end{eqnarray*}
 We complete the proof of item \eqref{it:bounds_Wp} by using the bounds for $E_{\phi}\lbrb{a,|b|}$ stated in Proposition \ref{prop:asymt_bound_Olver1}\eqref{it:Eps_rev1}.
\subsection{Large asymptotic behaviours of $W_{\phi}$ along imaginary lines}
From the bounds \eqref{eqn:estimateComplexLine}, which hold for any $\phi \in \Be$, we are able to derive precise information regarding the decay of $|\MorW|$ and $|\M_{I_\phi}|$ along imaginary lines. We emphasize that these asymptotic estimates  are interesting in their own right as we offer comprehensive statements for the entire class  of Bernstein-Weierstrass products which encompasses many substantial  special functions. However, our  motivation  to investigate in depth and accurately this large asymptotic behaviour comes from  several substantial issues that arise later in this work, such as the existence and uniform asymptotic bounds of the sequence of co-eigenfunctions, and, that can be solved by means of Mellin transform techniques.

 \subsubsection{{Necessary and sufficient conditions for exponential decay of $W_{\phi}$} }
 We start by providing  necessary and sufficient conditions for the exponential decay  of  $|\MorW|$  along imaginary lines in terms of $\phi\in\Be$ and its characteristic triplet $\lbrb{m,\sigma^2,\mu}$. Below the notation
 \[ f(a+ib) \stackrel{\infty}{\approx} g(a+ib) \quad \textrm{ (resp. } f(a+ib) \stackrel{\infty}{\lesssim} g(a+ib)), \]
 for $z=a+ib\in\Cb$ means that there exist  positive finite constants $c_-(a)$ and $c_+(a)$ (resp.~a positive finite constant $c_+(a)$) such that
 \begin{eqnarray} \label{eq:cst_asympt}
 &&c_-(a) \leq \liminf_{|b|\to \infty}\labsrabs{\frac{f(a+ib)}{g(a+ib)}}\leq \limsup_{|b|\to \infty}\labsrabs{\frac{f(a+ib)}{g(a+ib)}}\leq c_+(a)\\
  \nonumber &&\textrm{ (resp. } \limsup_{|b|\to \infty}\labsrabs{\frac{f(a+ib)}{g(a+ib)}}\leq c_+(a)).
 \end{eqnarray}
 \begin{proposition}\label{thm:Theorem11} \label{lem:MellinTsigma}
Let $\phi \in \Be$.
\begin{enumerate}
\item\label{it:Thetaphi} For any $a\geq 0$, writing $ \H(a) = \liminf_{|b|\to \infty} \angp(a,|b|) \textrm{ and simply } \H=\H(0)$, we have that  $\H=\H(a)$ and
\begin{equation}\label{eq:Hrange}
\H\in\lbbrbb{0, \frac{\pi}{2}}.
\end{equation}
\item  For any $\phi \in \Be$ and $a>0$, we have that
\begin{equation} \label{eq:expDecayLowerBound}
\frac{e^{-\frac{\pi}{2}|b|}}{|b|^{\frac{1}{2}}} \stackrel{\infty}{\lesssim} \frac{e^{-\frac{\pi}{2}|b|}}{\sqrt{\labs\phi(a+ib)\rabs}}\stackrel{\infty}{\lesssim} \labs \MorW(a+i b)\rabs.
\end{equation}
\item $\psi \in \Nee$, i.e.~$\H>0$, if and only if, for  any $a>0$ and $\epsilon>0$,  we have that 
\begin{equation}\label{eq:expDecay}
\frac{e^{-\frac{\pi}{2}|b|}}{|b|^{\frac{1}{2}}} \stackrel{\infty}{\lesssim}\frac{e^{-\frac{\pi}{2}|b|}}{\sqrt{\labs\phi(a+ib)\rabs}} \stackrel{\infty}{\lesssim} \labs \MorW(a+ib) \rabs \stackrel{\infty}{\lesssim} \frac{e^{-(\H-\epsilon)|b|}}{\sqrt{\labs\phi(a+ib)\rabs}}.
\end{equation}
If $\phi(0)=m>0$ and $d_{\phi}=0$ then the estimate extends to $a=0$.
\end{enumerate}
In all cases, we can retrieve, from Proposition \ref{prop:asymt_bound_Olver1},  the  positive constants $c_{-}(a)$ and $c_+(a)$ defined in \eqref{eq:cst_asympt}.
\end{proposition}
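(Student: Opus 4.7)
The plan is to derive all three items from the bounds \eqref{eqn:estimateComplexLine} of Proposition~\ref{prop:asymt_bound_Olver1}\eqref{it:est_rev_1}, which are already at our disposal. Item~(1) is the technical preliminary. The range $\H\in[0,\pi/2]$ follows from Proposition~\ref{prop:asymt_bound_Olver1}\eqref{it:Hestimate} together with the non-negativity of the integrand in \eqref{eq:Theta_rev1} already observed in \eqref{eq:uprp}, since the $\liminf$ preserves these pointwise bounds. To show the key identity $\H(a)=\H(0)$ for every $a>0$, I would write
$$0 \leq \Theta_\phi(0,|b|)-\Theta_\phi(a,|b|)=\int_0^{a/|b|}\ln\left(\frac{|\phi(|b|y+i|b|)|}{\phi(|b|y)}\right)dy,$$
bound the integrand above by $\ln\sqrt{1+1/y^{2}}$ using \eqref{eq:PhiEst}, and then perform the substitution $u=1/y$ to convert the expression into a tail integral $\int_{|b|/a}^\infty u^{-2}\ln\sqrt{1+u^{2}}\,du=O(|b|^{-1}\ln|b|)$, which vanishes as $|b|\to\infty$. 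Taking $\liminf$ in $|b|$ then yields $\H(a)=\H$.

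For the lower bounds in items~(2) and~(3), I would apply the lower half of \eqref{eqn:estimateComplexLine} with the simplest choice $l=0$,
$$|W_\phi(a+ib)| \geq E_\phi(a,|b|)\,\frac{W_\phi(a)\sqrt{\phi(a)}}{\sqrt{|\phi(a+ib)|}}\,e^{-|b|\Theta_\phi(a,|b|)},$$
combined with the universal estimates $E_\phi(a,|b|)\geq e^{-19/(8a)}$ from Proposition~\ref{prop:asymt_bound_Olver1}\eqref{it:Eps_rev1} and $\Theta_\phi(a,|b|)\leq \pi/2$ from Proposition~\ref{prop:asymt_bound_Olver1}\eqref{it:Hestimate}, to conclude $|W_\phi(a+ib)| \stackrel{\infty}{\gtrsim} e^{-\pi|b|/2}/\sqrt{|\phi(a+ib)|}$. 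The outer comparison with $e^{-\pi|b|/2}/|b|^{1/2}$ is then a direct restatement of the asymptotic $|\phi(a+ib)|=O(|b|)$ provided by Proposition~\ref{propAsymp1}\eqref{it:asyphid}.

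The substantive content of item~(3) is the exponentially decaying upper bound under the hypothesis $\H>0$. Applying the upper half of \eqref{eqn:estimateComplexLine} with $l=0$ gives
$$|W_\phi(a+ib)| \leq \frac{W_\phi(a)\sqrt{\phi(a)}}{E_\phi(a,|b|)\sqrt{|\phi(a+ib)|}}\,e^{-|b|\Theta_\phi(a,|b|)};$$
since item~(1) identifies $\liminf_{|b|\to\infty}\Theta_\phi(a,|b|)=\H$, for every $\epsilon>0$ there is $B_\epsilon$ with $\Theta_\phi(a,|b|)\geq \H-\epsilon$ whenever $|b|>B_\epsilon$, and the universal lower bound on $E_\phi$ delivers the claim. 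For the reverse implication I would combine the assumed upper bound with the lower bound from item~(2) to obtain $e^{-|b|\Theta_\phi(a,|b|)}\lesssim e^{-(\H-\epsilon)|b|}$, which forces $\Theta_\phi(a,|b|)\geq \H-\epsilon-o(1)$ and, after taking $\liminf$, yields $\H(a)\geq \H-\epsilon$ for every $\epsilon>0$. The one step where I anticipate care is needed is item~(1): the small-$y$ divergence of $\ln\sqrt{1+1/y^{2}}$ must be tracked against the shrinking cutoff $a/|b|$, and this is precisely the reason that the change of variables $u=1/y$ is essential; once that estimate is secured, items~(2) and~(3) amount to bookkeeping on the already-established bounds of Proposition~\ref{prop:asymt_bound_Olver1}.
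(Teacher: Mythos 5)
Your proposal follows essentially the same route as the paper: item~(1) is established by bounding the integrand in $\Theta_\phi(0,|b|)-\Theta_\phi(a,|b|)$ by $\ln\sqrt{1+y^{-2}}$ via \eqref{eq:PhiEst} and showing the cutoff integral vanishes (the paper computes the antiderivative explicitly instead of your substitution $u=1/y$, but both give the same $o(1)$ estimate), and items~(2) and~(3) are exactly the lower and upper halves of \eqref{eqn:estimateComplexLine} with $l=0$, combined with the universal bound $e^{-19/(8a)}\le E_\phi(a,|b|)\le e^{19/(8a)}$ and the $|\phi(a+ib)|\stackrel{\infty}{=}\bo{|b|}$ asymptotic.

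One loose end in your write-up of the reverse implication in item~(3): you deduce $\Hbb(a)\geq \H-\epsilon$, but since item~(1) already gives $\Hbb(a)=\H$ this is vacuous, and it does not by itself force $\H>0$. In fact \eqref{eq:expDecay} as literally written is satisfied trivially when $\H=0$ (the right side is then $e^{\epsilon|b|}/\sqrt{|\phi|}$, which is no constraint on the bounded Mellin transform), so the equivalence is really carried by the forward direction. The paper treats it the same way — it only proves the forward direction — so this does not constitute a real gap in your argument, but the circularity is worth being aware of: if you want a non-vacuous converse, assume an upper bound $|W_\phi(a+ib)|\lesssim e^{-\theta|b|}/\sqrt{|\phi(a+ib)|}$ for some $\theta>0$ not a priori tied to $\H$, combine with the lower half of \eqref{eqn:estimateComplexLine} to get $\Theta_\phi(a,|b|)\geq\theta-o(1)$, and conclude $\H\geq\theta>0$.
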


\begin{remark} \label{rem:pro_anal_nu}
Estimate \eqref{eq:expDecay} together with $\M_{V_{\psi}}=W_{\phi}$ being the Mellin transform of $\nu$ yield the validity of \eqref{eq:Mellin_exp} which in turn shows that  $\nu  \in \mathcal{A}(\H)$, i.e.~it is holomorphic in the sector $\C\lbrb{\H}=\left\{z\in\C;\:|\arg z | < \H\right\}$
and verifies Theorem \ref{thm:smoothness_nu1}\eqref{it:im_analytical1}, i.e.~$\psi \in \Nee$ implies that  $\nu  \in \mathcal{A}(\H)$.
\end{remark}
\begin{remark}
We mention that in some statements here and also below  we focus on the case, $\psi \in \Ne$, i.e.~$\phi \in \Bp$. However, up to some minor and obvious modifications, they can  easily be extended to the general case $\phi \in \Be$.
We also point out that  the estimate \eqref{eq:Hestimate} below can serve for more precise study of the asymptotic.
\end{remark}
\begin{proof}
Recall the definition of $\angp$, \eqref{eq:Theta_rev1}, and put $\angp(0,|b|)=\Hb$. First,   from \eqref{eq:uprp} in the first inequality and \eqref{eq:PhiEst} in the second, we have that
\begin{eqnarray}\label{eq:Hestimate}
\nonumber \Hb  &\geq& \angp(a,|b|)=\Hb  -\int_{0}^{\frac{a}{|b|}}\ln \lb\frac{\labs \phi(|b|(u+i))\rabs}{\phi(|b|u)}\rb du\\
\nonumber &\geq& \Hb  -\int_{0}^{\frac{a}{|b|}}\ln\lb\sqrt{1+\frac{1}{u^2}}\rb du\\
&=&\Hb  -\frac{a}{|b|}\ln\lb 1+\frac{b^2}{a^2}\rb-\arctan\lb\frac{a}{|b|}\rb\stackrel{\infty}{=}\Hb  -\so{1}.
\end{eqnarray}
Hence, from  Proposition \ref{prop:asymt_bound_Olver1}\eqref{it:Hestimate}, we deduce that
\begin{equation*}
 \H = \liminf_{|b|\to \infty} \angp(a,|b|)= \liminf_{|b|\to \infty} \Hb  \in\left[0,\frac{\pi}{2}\right],
\end{equation*}
and, the proof of \eqref{eq:Hrange} is completed. We prove \eqref{eq:expDecayLowerBound} by using the lower bound in \eqref{eqn:estimateComplexLine} with $l=0$ since from the second claim of Proposition \ref{propAsymp1}\eqref{it:asyphid} we have that $\labsrabs{\phi(z)}\stackrel{\infty}=\sigma^2|z|+\so{|z|},$ as $z=a+ib$, $a>0$ fixed and as $|b|\to\infty$.  Next, let $ \H>0$ then \eqref{eq:expDecay}  follows from the upper bound of \eqref{eqn:estimateComplexLine} with $l=0$ and \eqref{eq:Hestimate}.  Since all constants in Proposition \ref{prop:asymt_bound_Olver1} are explicit we can recover the constants $c_\pm(a)$. This ends the proof.
\end{proof}
We proceed by providing sufficient conditions for $\H>0$, that is for the exponential decay of the Mellin transforms along imaginary lines which are stated in Theorem \ref{thm:classes}\eqref{it:class_Ng}.
\subsection{Proof of  Theorem \ref{thm:classes}\eqref{it:class_Ng}}  \label{sec:proof_NG_exp_decay}
\mladen{Let us first prove that $\psi\in\Nee$ implies $\r=\phi\lbrb{\infty}=\infty$. Thanks to \eqref{eqn:estimateComplexLine} with $l=0$ it will suffice to show that for some $a>0$, $b\Theta\lbrb{a,b}=\so{b},$ as $b\to\infty$, since then we get $\liminf_{b\to\infty}\Theta\lbrb{a,b}=\H=0$, that is a contradiction with the definition of $\Nee$, see Table \ref{tab:c2}. However, $\psi(z)=z\phi(z)$ with $\phi\in\Be_{\Ne}$, and if $\phi\lbrb{\infty}=\PPP\lbrb{0^+}+m=\IInf \PP(y)dy+m<\infty$, then $\sigma^2=0$ and for any fixed $y>0$
\[\limi{b}\frac{\phi\lbrb{by+ib}}{\phi(by)}=1+\limi{b}\frac{\IInf \lbrb{1-e^{ibu}}e^{-byu}\PP(u)du}{\phi(by)}=1,\]
since $\limi{b}\phi(by)=\phi(\infty)<\infty$ and
\[\limi{b}\labsrabs{\IInf \lbrb{1-e^{ibu}}e^{-byu}\PP(u)du}\leq 2\limi{b}\IInf e^{-byu}\PP(u)du=0.\]
Therefore, from the definition of $\Theta\lbrb{a,b}$, see Theorem \ref{prop:asymt_bound_Olver2}\eqref{it:bounds_Wp}  and \eqref{eq:PhiEst} which ensures the application of the dominated convergence theorem we deduct that $b\Theta(a,b)=\so{b}$.}
Next, we show that if $\psi \in \Ng$, i.e.~$\liminf_{u \to\infty }\frac{\overline{\Pi}\lb \frac{1}{u}\rb}{ \psi(u)}>0$ or  $\sigma^2>0$ then  $\H>0$, i.e.~$\psi \in \Nee$.
Let first assume that $\sigma^2>0$ and recall that $\phi(ba+ib)-\phi(ba)=\Delta^\Re_b\phi(ba)+i\Delta^\Im_b\phi(ba)$, see Section \ref{sec:EstimatesBernstein} for more information on these functions. The positivity ofthe integrand that defines $\Theta_{\phi}(a,b)$, see \eqref{eq:uprp} and \eqref{eq:Theta_rev1}, the fact that $\Delta^{\Re}_b \phi(ba)\geq0$, the definition of $\H$, see Proposition \ref{thm:Theorem11}\eqref{it:Thetaphi} and Fatou's lemma yield that
\[2\H = 2 \liminf_{b\to\infty} \int_{0}^{\infty}\ln\lb\frac{\labs\phi(b(a +i))\rabs}{\phi(ba)}\rb da\geq \int_{1}^{\infty} \liminf_{b\to\infty}\ln\lb 1+ \lb\frac{\Delta^\Im_b\phi(ba)}{\phi(ba)}\rb^{2}\rb da.\]
By means of Proposition \ref{propAsymp1}\eqref{it:asyphid} we have that $\phi(ba)=\sigma^2ba+\so{ba}$. This coupled with the first set of inequalities in \eqref{eq:wAsym} of Lemma \ref{lemma:WandPhi}, which is applicable since  $\phi\in\Be_{\Ne}$, gives  that for all $b$ big enough and $a\geq 1$
\begin{eqnarray*}
\frac{\Delta^\Im_b\phi(ba)}{\phi(ba)}&\geq & \frac{\sigma^2 b+ \lb e^{-\pi a}-e^{-2\pi a}\rb \int_{0}^{\frac{\pi}{b}} \sin(by) \PP(y)dy}{2\sigma^2 b a}
\\
&\geq & \frac{\sigma^2 +C_1e^{-\pi a}\lb 1-e^{-\pi a}\rb\IInt{0}{\frac{1}{b}}y\PP(y)dy}{2\sigma^2 a},
\end{eqnarray*}
where for the second inequality we have used  $\sin(by)\geq C_1 by$, for some $C_1 >0$ valid on $y\in\lbrb{0, \frac1b}$. Since $\limi{b}\IInt{0}{\frac{1}{b}}y\PP(y)dy=0$, as $\int_{0}^{1}y^2\Pi(dy)<\infty$, see \eqref{eq:boundPP_rev1}, we get  the bound
\begin{equation}\label{eq:lowerBoundH}
\H\geq \frac{1}{2}\int_{1}^{\infty} \ln\lb 1+  \frac{1}{4a^2}\rb  da>0.
\end{equation}
This completes the proof for $b\to\infty$ and $\sigma^2>0$. If $b\to-\infty$, $\sigma^2>0$, the  arguments  around  \eqref{eq:symmetry_b} deduct the same lower bound. Assume that $\sigma^2=0,\,b>0$. Observe that
\begin{equation}\label{eq:lbH}
\H = \liminf_{b\to\infty} \int_{0}^{\infty}\ln\lb\frac{\labs\phi(ba +ib)\rabs}{\phi(ba)}\rb da \geq \IInf \liminf_{b\to\infty}\ln\lb 1+ \frac{\Delta^\Re_b\phi(ba)}{\phi(ba)}\rb da,
\end{equation}
thanks to \eqref{eq:uprp} and the Fatou's lemma.
Note that,  for any $a>0$,
\[b\Delta^{\Re}_b\phi(ba)=\IInf \lb 1-\cos(y)\rb e^{-ay} \PP\lb\frac{y}{b}\rb dy.\]
From the inequality $1-\cos(y)=2 \lbrb{\sin\lb y/2\rb}^2\geq 2 \lb \frac{y}{\pi}\rb^2$  valid on $y\in\lbrb{0,1}$, we get that
\begin{equation} \label{eq:ubdpp}
b\Delta^{\Re}_b\phi(ba)\geq 2\frac{e^{-a}}{\pi^2}\int_0^1 y^2\PP\lb\frac{y}{b}\rb dy\geq  \frac{2}{3\pi^2} e^{-a}\PP\lb\frac{1}{b}\rb.
\end{equation}
Recall that $\Delta^{\Re}_b \phi(ba)\geq0$. Thus, from \eqref{eq:lbH}, the positivity of the integrand therein and the fact that $\phi$ is non-decreasing, we confirm that
 \begin{eqnarray}
\H  &=& \liminf_{b\to\infty} \Hb  \geq  \IInt{0}{1} \liminf_{b\to\infty}\ln\lb 1+ \frac{2}{3\pi^2}e^{-a} \frac{\PP\lb\frac{1}{b}\rb}{b \phi(b)}\rb da.  \label{eq:ubH}
\end{eqnarray}
Since $b\phi(b)=\psi(b)$ and $\psi \in \Ng$  with $\sigma^2=0$, we deduce, with $\liminf_{b \to\infty }\frac{\overline{\Pi}\lb \frac{1}{b}\rb}{ \psi(b)}>C_2>0$,  that the following inequality holds
 \begin{eqnarray*}
 	\H  &\geq&
  \IInt{0}{1} \ln\lb 1+  \frac{2C_2}{3\pi^2}e^{-a}\rb da >0,
 \end{eqnarray*}
 which gives the proof for $b\to\infty$, $\psi\in\Ng$ and $\sigma^2=0$. The case $b\to-\infty$ comes from \eqref{eq:symmetry_b}. Relation \eqref{eq:equivalentNG} follows from the global asymptotic \eqref{eq:asympphi} for $\phi$ given in Proposition \ref{propAsymp1}\eqref{it:unif_rev1}.

\subsection{Proof of Theorem \ref{prop:asymt_bound_Olver2} \eqref{it:Theta}: Examples of large asymptotic estimates of $|W_{\phi}|$} \label{subsec:ex_larg_Wp}
For the eigenvalues expansions of the gL semigroups, it is important to  provide precise bounds for the norm of the  sequence of co-eigenfunctions. Among the different strategies we implement to get such estimates is the Mellin transform technique. For this reason, in this part, we deepen our analysis on the asymptotic estimate of $|\MorW(a+ib)|$ by either computing $\H$ for substantial subclasses of $\Bp$, or, detailing the exact subexponential decay for some subclasses. The results presented below extend with minor modifications to  $|\MorW(a+ib)|$ for $\phi \in \Be$.
\subsubsection{Proof of Theorem \ref{prop:asymt_bound_Olver2}\eqref{it:NP}: The case $\psi \in \Ne_P$} \label{ap:drift} \label{sec:pro_NP}
In the case $\sigma^2>0$, i.e.~$\psi \in \Ne_P$, we obtain precise bounds particularly when in addition $\PPP(0^+)<\infty$, or, if we have a good control on the tail $\PP$. For this purpose we introduce the  following notation. For any $a>0$, $b\in \R$, we set
\begin{equation}\label{eq:L}
 L(a,|b|) = \frac{1}{\sigma^2}\int_{0}^{1}e^{-a y}\labs\sin\lb \frac{|b| y}{2}\rb\rabs \PPP(y)\frac{dy}{y}
\end{equation}
and
\begin{equation*}
 L(a,|b|)\leq \overline{L}(|b|)=\sup_{r\leq |b|} L(0,r).
\end{equation*}

\begin{proposition}\label{prop:Asymptotic_Mellin_sigma}
Let $\psi\in \Ne_P$. Fix any $a>d_\phi=\sup\{ u\leq 0;\:
\:\phi(u)=-\infty\text{ or } \phi(u)=0\}$. Then
\begin{equation}\label{eq:MellingDecayDrift2}
\frac{1}{|b|^{\frac{1}{2}}}e^{-\frac{\pi}{2}|b|}\stackrel{\infty}{\lesssim} \labs  \MorW(a+ib)\rabs\stackrel{\infty}{\lesssim} \: |b|^{\frac{4m}{\sigma^2}+a-\frac{1}{2}}e^{-\frac{\pi}{2}|b|+\overline{L}\lbrb{|b|}},
\end{equation}
  with $\overline{L}\lbrb{|b|}\stackrel{\infty}{=}\so{|b|}$. Moreover, when $\PPP(0^+)<\infty$, we have $\overline{L}(|b|) \stackrel{\infty}{=} \bo{\ln(|b|)} $, and, if for all $y\in (0,1)$, $\PPP(y)\leq Cy^{-\alpha}$ with $\alpha\in (0,1)$, then
\begin{equation}\label{eqn:L2}
\overline{L}(|b|) \stackrel{\infty}{=} \bo{|b|^{\alpha}}.
\end{equation}
\end{proposition}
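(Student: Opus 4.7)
}

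The plan is to treat the lower and upper bounds in \eqref{eq:MellingDecayDrift2} separately, then read off the three regimes of $\overline{L}$ by direct inspection of its integral definition \eqref{eq:L}. The lower bound requires no new work: by \eqref{eq:expDecayLowerBound} in Proposition \ref{thm:Theorem11},
\[ |\MorW(a+ib)|\gtrsim \frac{e^{-\pi|b|/2}}{\sqrt{|\phi(a+ib)|}}, \]
and since $\psi\in\Ne_P$ forces $\sigma^2>0$, Proposition \ref{propAsymp1}\eqref{it:asyphid} gives $|\phi(a+ib)|\sim\sigma^2|b|$, which delivers the claim. Using the symmetry \eqref{eq:symmetry_b} we may hereafter assume $b>0$.

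For the upper bound, I would start from the general estimate \eqref{eqn:estimateComplexLine} of Proposition \ref{prop:asymt_bound_Olver1} applied with $l=0$, together with the two-sided control on $E_\phi$ from Proposition \ref{prop:asymt_bound_Olver1}\eqref{it:Eps_rev1}, to obtain
\[|\MorW(a+ib)| \leq \frac{\MorW(a)\sqrt{\phi(a)}}{\sqrt{|\phi(a+ib)|}\, E_\phi(a,b)}\, e^{-b\,\angp(a,b)} \;\lesssim\; \frac{e^{-b\,\angp(a,b)}}{b^{1/2}}.\]
It therefore suffices to prove the sharp lower bound
\begin{equation}\label{eq:claim_theta}
 b\,\angp(a,b) \;\geq\; \frac{\pi b}{2} - \overline{L}(b) - \Bigl(\tfrac{4m}{\sigma^{2}}+a\Bigr)\ln b - O(1).
\end{equation}

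To establish \eqref{eq:claim_theta}, after the substitution $y=bu$ I would write $b\,\angp(a,b)=\int_{a}^{\infty}\ln\bigl(|\phi(y+ib)|/\phi(y)\bigr)\,dy$ and use the elementary identity
\[|\phi(y+ib)|^{2}=\bigl(\phi(y)+\Delta^{\Re}_{b}\phi(y)\bigr)^{2}+\bigl(\Delta^{\Im}_{b}\phi(y)\bigr)^{2}.\]
Lemma \ref{lemma:WandPhi} furnishes the key lower bounds $\Delta^{\Re}_{b}\phi(y)\geq 0$ and $\Delta^{\Im}_{b}\phi(y)\geq \sigma^{2}b$, so that
\[\ln\frac{|\phi(y+ib)|}{\phi(y)} \geq \tfrac{1}{2}\ln\!\Bigl(1+\tfrac{\sigma^{4}b^{2}}{\phi(y)^{2}}\Bigr).\]
I would then expand $\phi(y)=\sigma^{2}y\bigl(1+\varepsilon(y)\bigr)$ with $\varepsilon(y)=m/(\sigma^{2}y)+\sigma^{-2}\!\int_{0}^{\infty}e^{-yr}\PPP(r)dr$, split the integral at a threshold of order one, and use $\tfrac12\ln(1+\sigma^{4}b^{2}/\phi(y)^{2})=\tfrac12\ln(1+b^{2}/y^{2})+R(y,b)$. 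The leading term integrates to $\pi b/2$; the correction $R(y,b)$ is controlled by $\varepsilon(y)$, from which the logarithmic factor $-\tfrac{4m}{\sigma^{2}}\ln b$ emerges through the $m$-contribution to $\varepsilon$, while the $\PPP$-contribution is expressed, after re-organizing $\Delta^{\Re}_b\phi$ using $1-\cos(by)=2\sin^{2}(by/2)$ and an integration by parts, as the quantity $L(a,b)/\sigma^2$ defined in \eqref{eq:L}, hence bounded by $\overline L(b)$. The $|b|^a$ deficit comes from passing from $\angp(0,b)$ to $\angp(a,b)$ via \eqref{eq:Hestimate}, which contributes an $a\ln b$ term.

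The hardest step is isolating the precise constant $\tfrac{4m}{\sigma^{2}}$ in \eqref{eq:claim_theta}: it demands that the error $R(y,b)$ be estimated uniformly down to $y$ comparable to $b$, not merely for $y\to\infty$, which is why I plan to split the integration range and to feed the explicit bounds \eqref{specialEstimates}--\eqref{specialEstimates11} on $\Delta^{\Re,\Im}_b\phi$ into the expansion of $\ln(1+b^2/y^2(1+\varepsilon)^{-2})$. Once \eqref{eq:claim_theta} is proved, the three regimes for $\overline{L}$ follow directly from \eqref{eq:L}: the general case $\overline{L}(b)=o(b)$ is a Riemann--Lebesgue argument exploiting $\PPP\in L^{1}_{\mathrm{loc}}$ via \eqref{eq:boundPP_rev1}; when $\PPP(0^{+})<\infty$, $\overline{L}(b)\leq\sigma^{-2}\PPP(0^{+})\int_{0}^{1}|\sin(by/2)|y^{-1}dy=O(\ln b)$; and when $\PPP(y)\leq Cy^{-\alpha}$, splitting the integral at $y=1/b$ and using $|\sin x|\leq |x|\wedge 1$ gives $\overline{L}(b)=O(b^{\alpha})$, which is \eqref{eqn:L2}.
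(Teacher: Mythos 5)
The upper bound is where your route departs from the paper, and where a genuine gap appears. The paper's proof does not directly estimate the exponent $b\,\Theta_\phi(a,b)$ from Proposition \ref{prop:asymt_bound_Olver1} at all; instead (Lemma \ref{lemmaAsympImagineryLine}) it factors $\MorW(z)=\Gamma(z)\,\overline{W}_g(z)$ with $g(z)=\phi(z)/z\to\sigma^2$ and then bounds the infinite product $\overline{W}_g$ term-by-term, which is why the classical Stirling asymptotic \eqref{eq:asymptotic_Gamma} supplies the $e^{-\pi|b|/2}|b|^{a-1/2}$ factor and the product supplies $|b|^{4m/\sigma^2}e^{\overline{L}(|b|)}$. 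There the kernel $|\sin(by/2)|$ in the definition \eqref{eq:L} of $L(a,b)$ emerges from $|1-e^{-iby}|=2|\sin(by/2)|$ applied to the summand $\Delta^{\Im}_b\phi(a_k)$ in the product, summed geometrically over $k$ and compared to an integral via $1-e^{-y}\ge y/e$.

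Your route, which sticks with $|\MorW(a+ib)|\lesssim|b|^{-1/2}e^{-b\Theta_\phi(a,b)}$ and lower-bounds the integral $b\Theta_\phi(a,b)=\int_a^\infty\ln\bigl(|\phi(y+ib)|/\phi(y)\bigr)\,dy$, is plausible in outline, but the step where you claim to recover exactly $L(a,b)/\sigma^2$ ``after re-organizing $\Delta^{\Re}_b\phi$ using $1-\cos(by)=2\sin^2(by/2)$'' does not hold up. You have discarded $\Delta^{\Re}_b\phi\ge 0$ and kept only the drift part $\Delta^{\Im}_b\phi\ge\sigma^2 b$, so no $\sin(by/2)$ kernel can arise from those terms: the entire $\PPP$-correction in your scheme comes from writing $\phi(y)=\sigma^2 y(1+\varepsilon(y))$ with $\varepsilon(y)=m/(\sigma^2 y)+\sigma^{-2}\int_0^\infty e^{-yr}\PPP(r)\,dr$, and it takes the form $\int_a^\infty\frac{b^2}{y^2+b^2}\,\varepsilon(y)\,dy$ (to leading order), i.e.\ after Fubini something like $\sigma^{-2}\int_0^\infty \PPP(r)\int_a^\infty\frac{b^2e^{-yr}}{y^2+b^2}\,dy\,dr$. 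This is a smooth kernel in $b$ with no oscillation, not $L(a,b)$, so the identification you assert is simply wrong. One can still hope to prove that this quantity is $\lesssim\overline{L}(|b|)+O(1)$ — the two behave similarly at the crossover $r\sim 1/b$, splitting at $r=1/b$ and using $|\sin(x)|\asymp x\wedge 1$ makes the sizes comparable — but that comparison is a substantive extra lemma you have not supplied, and the oscillating $\sup$ in $\overline{L}$ makes a rigorous comparison nontrivial. Similarly, your method naturally yields a coefficient on the order of $m/\sigma^2$ (or at most $2m/\sigma^2$, through $\delta=(1+\varepsilon)^2-1$) rather than $4m/\sigma^2$ in front of $\ln b$; that is harmless for the inequality but it does mean the constant $4m/\sigma^2$ does not ``emerge'' in the way you describe. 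In short: the lower bound in \eqref{eq:MellingDecayDrift2} and the three regimes of $\overline L$ you handle essentially as the paper does; the upper bound route is genuinely different and potentially workable, but the identification of the $\PPP$-correction with $\overline{L}$ is a gap that needs an actual argument, not an appeal to a re-organization of a term you have already thrown away.
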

The proof of Proposition \ref{prop:Asymptotic_Mellin_sigma} is postponed after the following lemma whose statement  requires some further notation. Set $\phi(z)=zg(z)$  and write formally
 \begin{equation}\label{eqn:Upsilon}
 \overline{W}_g(z)=\frac{e^{-\gamma_gz}}{g(z)}\prod_{k=1}^{\infty}\frac{g(k)}{g(k+z)}e^{\frac{g'(k)}{g(k)}z},
 \end{equation}
 where $\gamma_g=\gamma_\phi-\gamma$ and $\gamma$ is the Euler-Mascheroni constant.
We first  show that $\overline{W}_g\in \mathcal{A}_{(d_\phi,\infty)}$  and give bounds on $\labsrabs{\overline{W}_g(z)}$.
\begin{lemma}\label{lemmaAsympImagineryLine}
Let $\psi\in \Ne_P$.
\begin{enumerate}
\item Then $ \overline{W}_g \in \mathcal{A}_{(d_\phi,\infty)}$ and we have, on $\C_{(d_\phi,\infty)}$, that
\begin{equation}\label{eq:MVphiSigma}
\MorW(z)=\Gamma(z)\overline{W}_g(z).
\end{equation}
\item There exists $k_0=k_0(m,\sigma^2)\in \N \setminus \{0\}$ such that, with $z=a+ib,a>0$,
\begin{eqnarray}\label{eqn:Upsilon1}
 \frac{e^{-\mathfrak{W}_{k_0}\lbrb{a+ib}-L(a,|b|)}}{C_{k_0}(a,|b|)}  &\leq &\frac{\left|\overline{W}_g\lbrb{a+ib}\right|}{\overline{W}_g(a)}
\leq  \: C_{k_0}(a,|b|)e^{\mathfrak{W}_{k_0}\lbrb{a+ib}+L(a,|b|)},
\end{eqnarray}
where $L(a,|b|)$ is defined in \eqref{eq:L},
\begin{equation}\label{eq:Mk_rev1}
\mathfrak{W}_{k_0}(z)=\ln \labs \prod_{k=0}^{k_0}\frac{g(k+a)}{g(k+z)}\rabs,
\end{equation}
 $\lim\limits_{|z|\to\infty}\mathfrak{W}_{k_0}(z)=\lim\limits_{|z|\to\infty}\ln \labs \prod_{k=0}^{k_0}\frac{g(k+a)}{g(k+z)}\rabs=\frac{ \prod_{k=0}^{k_0}g(k+a)}{\sigma^{2k_0+2}}$ and
\[C_{k_0}(a,|b|) = \lb 1+\frac{|b|}{1+a}\rb^{\frac{4 m}{\sigma^2}}e^{ \frac{4e}{\sigma^{2}} \int_{1}^{\infty}\frac{e^{-(k_0+a)y}}{1-e^{-y}}\PPP(y) dy}.\]
\end{enumerate}
\end{lemma}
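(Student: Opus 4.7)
For Part 1, I would derive the factorization $W_\phi(z)=\Gamma(z)\overline{W}_g(z)$ by direct manipulation of the Weierstrass product \eqref{eq:Wphi}. Writing $\phi(u)=ug(u)$ gives $\phi(k)=kg(k)$, $\phi(k+z)=(k+z)g(k+z)$ and $\phi'(k)/\phi(k)=1/k+g'(k)/g(k)$; substitution and regrouping yield
\begin{equation*}
W_\phi(z)=\frac{e^{-\gamma_\phi z}}{zg(z)}\left(\prod_{k=1}^\infty\frac{k}{k+z}e^{z/k}\right)\prod_{k=1}^\infty\frac{g(k)}{g(k+z)}e^{\frac{g'(k)}{g(k)}z}=\Gamma(z)\,\overline{W}_g(z),
\end{equation*}
where the middle infinite product is identified as $z\Gamma(z)e^{\gamma z}$ via the classical Weierstrass formula $1/\Gamma(z)=ze^{\gamma z}\prod_{k\geq 1}(1+z/k)e^{-z/k}$, and $\gamma_g=\gamma_\phi-\gamma$. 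Membership of $\overline{W}_g$ in $\mathcal{A}_{(d_\phi,\infty)}$ is then immediate from Theorem \ref{prop:FormMellin1}\eqref{it:Wanal} combined with the meromorphic non-vanishing of $\Gamma$; alternatively, since $\psi\in\Ne_P$ forces $\sigma^2>0$ and hence $g(u)\to\sigma^2>0$, the absolute-convergence argument behind Theorem \ref{prop:FormMellin1}\eqref{it:Wanal} applies verbatim to each of the three factors defining $\overline{W}_g$.

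For Part 2, the key observation is that with $z=a+ib$, $\Re(z)=a$, so the linear-in-$z$ factors $e^{-\gamma_g z}$ and $e^{(g'(k)/g(k))z}$ contribute identical moduli to $\overline{W}_g(z)$ and $\overline{W}_g(a)$; they cancel telescopically, leaving
\begin{equation*}
\ln\frac{|\overline{W}_g(a+ib)|}{\overline{W}_g(a)}=\sum_{k=0}^\infty\ln\frac{g(k+a)}{|g(k+a+ib)|}=\mathfrak{W}_{k_0}(z)+\sum_{k>k_0}\ln\frac{g(k+a)}{|g(k+z)|}.
\end{equation*}
I would choose $k_0=k_0(m,\sigma^2)$ large enough to guarantee $|(g(k+z)-g(k+a))/g(k+a)|\leq 1/2$ for every $k>k_0$ and every $b\in\R$; this is possible because the identity
\begin{equation*}
g(u)=\sigma^2+\frac{m}{u}+\int_0^\infty e^{-uy}\PPP(y)dy,
\end{equation*}
obtained from \eqref{eqn:phi-} after one integration by parts and division by $u$, yields $g(k+a)\geq\sigma^2$ together with uniform-in-$b$ decay of both $m/|k+z|$ and $\int_0^\infty e^{-(k+a)y}\PPP(y)dy$ as $k\to\infty$. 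The elementary two-sided bound $-2|w|\leq\ln|1+w|\leq|w|$ for $|w|\leq 1/2$ then reduces the tail to controlling $\sum_{k>k_0}|g(k+z)-g(k+a)|/g(k+a)$, which splits naturally along the decomposition
\begin{equation*}
g(k+z)-g(k+a)=\frac{-mib}{(k+a)(k+z)}+\int_0^\infty e^{-(k+a)y}(e^{-iby}-1)\PPP(y)dy.
\end{equation*}

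The main obstacle lies in the precise tracking of the advertised constants $(1+|b|/(1+a))^{4m/\sigma^2}$, $L(a,|b|)$ and the exponential factor in $C_{k_0}(a,|b|)$. For the drift sum $\sum_{k>k_0}m|b|/(\sigma^2(k+a)|k+a+ib|)$, I would split the summation range at $k+a\sim|b|$: on $\{k+a\leq|b|\}$ the denominator satisfies $|k+a+ib|\asymp|b|$ so the $|b|$ in the numerator cancels, producing a harmonic-type sum $\sum m/(\sigma^2(k+a))$ of logarithmic order $\frac{m}{\sigma^2}\ln(1+|b|/(1+a))$, whereas on $\{k+a>|b|\}$ the sum $\sum m|b|/(\sigma^2(k+a)^2)$ is uniformly bounded; combining with the factor $2$ from the lower bound on $\ln|1+w|$ yields exactly the exponent $4m/\sigma^2$. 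For the L\'evy-measure contribution, I would swap the order of summation and integration, use $\sum_{k>k_0}e^{-(k+a)y}\leq e^{-(k_0+a)y}/(1-e^{-y})$, split the $y$-integral at $1$, and employ $|e^{-iby}-1|=2|\sin(by/2)|$ together with the elementary inequality $y/(1-e^{-y})\leq e/(e-1)$ on $(0,1]$ to extract the factor $e^{L(a,|b|)}$, and the trivial $|e^{-iby}-1|\leq 2$ on $[1,\infty)$ to extract the exponential factor in $C_{k_0}(a,|b|)$. The asymptotic $\mathfrak{W}_{k_0}(z)\to\ln\bigl(\prod_{k=0}^{k_0}g(k+a)/\sigma^{2(k_0+1)}\bigr)$ as $|z|\to\infty$ follows from the Riemann-Lebesgue theorem applied to $g(k+z)-\sigma^2-m/(k+z)=\int_0^\infty e^{-(k+a)y}e^{-iby}\PPP(y)dy$, while the symmetry \eqref{eq:symmetry_b} of $\phi$ under $b\mapsto -b$ extends all bounds to $b<0$.
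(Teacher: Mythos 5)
Your proposal follows essentially the same route as the paper: factorize $W_\phi=\Gamma\cdot\overline{W}_g$ via the Weierstrass products (the cosmetic difference being that you regroup the product explicitly, while the paper appeals to the absolute convergence already established for $W_\phi$ and $\Gamma$); split the product at a threshold $k_0$ chosen so that the tail terms are uniformly close to $1$; decompose $g(k+z)-g(k+a)$ into the $m$-drift piece and the $\PPP$-integral piece; and estimate the two tails separately, with a logarithmic-order sum for the drift and a $y\lessgtr 1$ split for the integral. The paper does the tail estimate by writing $\frac{g(u)}{g(u+ib)}-1$ as a numerator over $1+\tilde\rho_m+\tilde\rho_\Pi$ and bounding that denominator from below, whereas you take the reciprocal and normalize by $g(k+a)\geq\sigma^2$; both are equivalent and yield the same bound. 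Your statement that $\mathfrak{W}_{k_0}(z)\to\ln\bigl(\prod_{k=0}^{k_0}g(k+a)/\sigma^{2k_0+2}\bigr)$ is in fact the correct version of the lemma's displayed limit (the paper's statement omits the $\ln$, apparently a typo), and the Riemann--Lebesgue argument is the right tool.

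Two quantitative caveats, neither fatal. First, the coefficient you ascribe to the drift sum is not quite "exactly $4m/\sigma^2$": your inequality $-2|w|\leq\ln|1+w|$ combined with $g(k+a)\geq\sigma^2$ gives $2m/\sigma^2$, which is stronger than the stated $4m/\sigma^2$ (the paper itself loosens $2\overline{C}$ to $4\overline{C}$ purely for uniformity of presentation). Second — and this affects both your proposal and the paper's own proof — the L\'evy contribution cannot literally come out as $e^{L(a,|b|)}$: tracking the factors $2$ (from $|e^{-iby}-1|=2|\sin(by/2)|$), $e/(e-1)$ or $e$ (from $y/(1-e^{-y})$), the factor from the log-inequality, and $1/\sigma^2$ cancelling against the $1/\sigma^2$ in $L$, one arrives at $e^{cL(a,|b|)}$ for some universal constant $c>1$ ($c=4e/(e-1)\approx 6.3$ in your version, $c=4e$ in the paper's). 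This discrepancy is immaterial downstream: Proposition \ref{prop:Asymptotic_Mellin_sigma} only uses $\overline{L}(|b|)=\so{|b|}$, so a fixed multiplicative constant on $L$ changes nothing. But if you want to prove the lemma with the literal constants advertised, you should either renormalize $L$ or state the bound with an unspecified universal constant in the exponent, as the proof genuinely does not deliver coefficient $1$.
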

\begin{remark} The mapping $z \mapsto \frac{1}{g(z)}=\frac{z}{\phi(z)}$ has a root at zero if $m >0$, which compensates the pole coming from $\Gamma(z)$ in \eqref{eq:MVphiSigma} to ensure that $\MorW(0)=\E \lbb V^{-1}_{\phi}\rbb=\phi(0)=m$. If $\MorW$ extends further to the left then a zero of $ \frac{1}{g}$ at zero coming from a term in the product in \eqref{eqn:Upsilon} cancels the impact of the poles of $\Gamma(z)$ at $z=-1,-2,\cdots$.
\end{remark}

\begin{proof}
Since $g(z)=\frac{\phi(z)}{z}$, we get, from \eqref{eq:Wphi} and the Weirstrass product representation of the gamma function, that \eqref{eq:MVphiSigma} formally holds provided $\overline{W}_g$ is holomorphic. Due to its definition in \eqref{eqn:Upsilon}, this will follow if the involved infinite product is absolutely convergent. However, since the products defining $\MorW$ and $\Gamma$ are both absolutely convergent on $\C_{(0,\infty)}$ from \eqref{eqn:Upsilon}, $\overline{W}_g$ is absolutely convergent  on $\C_{(0,\infty)}$ and the analyticity follows. When $d_\phi<0$, \eqref{eq:MVphiSigma} follows from an argument involving the recurrent equation \eqref{eq:fe1}. In the sequel we provide the bounds for $\labsrabs{\overline{W}_g(z)}$. To this end  put $z=a+ib$, $a>d_\phi$, $b>0$, and set
\[\overline{W}_g(z)=\frac{e^{-\gamma_g a}}{g(a)}\prod_{k=1}^{\infty}\frac{g(k)}{g(k+a)}e^{\frac{g'(k)}{g(k)}a}\times\frac{g(a)e^{-i\gamma_g b}}{g(z)}\prod_{k=1}^{\infty}\frac{g(k+a)}{g(k+z)}e^{i\frac{g'(k)}{g(k)}b}=\overline{W}_g(a)\overline{W}^{(a)}_g(z),\]
and, we proceed to estimate $\labsrabs{\overline{W}^{(a)}_g(z)}$. Since $\phi\in\Be_\Ne$, then Proposition \ref{propAsymp1}\eqref{it:finitenessPhi} gives that $\bar{\mu}(y)=\PPP(y),\,y>0,$ and then the first identity of \eqref{eqn:phi-} gives on $\Cb_{\lbrb{d_\phi,\infty}}\setminus\lbcurlyrbcurly{0}$ that
\begin{equation*}\label{eq:g_rev1}
	g(z)=\frac{\phi(z)}{z}=\frac{m}{z}+\sigma^2+\IInf e^{-zy}\PPP(y)dy.
\end{equation*}
For any $u=a+k>0$ and $b>0$,
\[\frac{g(u)}{g(u+ib)}=1+\sigma^{-2}\frac{\frac{m ib}{u(u+ib)}+\int_{0}^{\infty}e^{-uy}(1-e^{-ib y})\PPP(y) dy}{1+\sigma^{-2}\lb\frac{m}{u+ib}+\int_{0}^{\infty}e^{-u y-ib y}\PPP(y) dy\rb }=1+\frac{\rho_m(u,b)+\rho_{\Pi}(u,b)}{1+\tilde{\rho}_m(u,b)+\tilde{\rho}_{\Pi}(u,b)},\]
where each term $\rho$ or $\tilde{\rho}$ in the last expression matches the corresponding term in the middle one.
Set $\overline{C}=m\sigma^{-2}$. We have the following bounds $|\tilde{\rho}_{\Pi}(u,b)|\leq \sigma^{-2}\int_{0}^{\infty}e^{-uy} \PPP(y) dy$,
\begin{eqnarray}\label{eqn:Imaginery1}
 |\tilde{\rho}_m(u,b)|&\leq&  \frac{\overline{C}}{u+b} \quad  \textrm{ and } \quad |\rho_m(u,b)|\leq  \frac{\overline{C} b}{u(u+b)}
\end{eqnarray}
which all tend to zero as $u\to\infty$.
Thus, for $u\geq u_{0}>0$, where $u_0=u_{0}(m,\sigma^2)$, such that
\[1-\labsrabs{\tilde{\rho}_m(u,b)}-\labsrabs{\tilde{\rho}_{\Pi}(u,b)}\geq \frac{1}{2},\]
we have using $\labsrabs{1-e^{-ib y}}= 2\labsrabs{\sin\lb\frac{b y}{2}\rb}$ that
\begin{eqnarray}
\nonumber \labs  \frac{g(u)}{g(u+ib)}-1\rabs &\leq&  \frac{2\overline{C} b}{u(u+b)}+\frac{2}{\sigma^{2}}\int_{0}^{\infty}e^{-uy}\labsrabs{1-e^{-ib y}}\PPP(y) dy \\
&\leq & \frac{4 \overline{C}b}{u(u+b)}+\frac{4}{\sigma^{2}}\lb \int_{1}^{\infty}e^{-uy}\PPP(y) dy +\int_{0}^{1}e^{-uy}\labs\sin\lb\frac{b y}{2}\rb\rabs \PPP(y) dy\rb. \label{eqn:Imaginery2}
\end{eqnarray}
 Splitting the product that defines  $\overline{W}_g^{(a)}$ at $k_0= k_0(m,\sigma^2)=\lceil u_0\rceil +2+|a|$, using \eqref{eqn:Imaginery2}, \eqref{eq:Mk_rev1} for the definition of $\mathfrak{W}_{k_0}$  and $\ln(1+x) \leq x,\,x\geq 0$, we get that
\begin{align*}
&\lb\ln\labsrabs{\overline{W}_g^{(a)}\lbrb{a+ib}}-\ln\mathfrak{W}_{k_0}\lbrb{a+ib}\rb =\sum_{k=k_0}^{\infty}  \ln \labs\frac{g(k+a)}{g(k+a+ib)}\rabs\leq\sum_{k=k_0}^{\infty}  \labs\frac{g(k+a)}{g(k+a+ib)}-1\rabs \\
&\leq{\sum_{k=k_0}^{\infty}\lbrb{\frac{4 \overline{C}b}{(k+a)(k+a+b)}+\frac{4}{\sigma^{2}}\lb \int_{1}^{\infty}\frac{\PPP(y)}{e^{(k+a)y}} dy +\int_{0}^{1}e^{-(k+a)y}\labs\sin\lb\frac{b y}{2}\rb\rabs \PPP(y) dy\rb}} \\
&\leq  4\overline{C} \lbrb{\sum_{k\geq 2}\frac{b}{(k+a)(k+a+b)}}+\frac{4e}{\sigma^{2}}\lbrb{ \int_{1}^{\infty}\frac{e^{-(k_0+a)y}}{1-e^{-y}}\PPP(y) dy+L(a,b)},
\end{align*}
where implicitly in the last term of the inequality to get $L(a,b)$ we have used $1-e^{-y}\geq y/e$ on $\lbrb{0,1}$.
Furthermore, since
\begin{eqnarray*}
\sum_{k\geq 2}\frac{b}{(k+a)(k+a+b)}&\leq & \lim_{A\to\infty}\int_{1}^{A}\lb\frac{1}{r+a}-\frac{1}{r+a+b}\rb dr= \ln \lb 1+\frac{b}{1+a}\rb,
\end{eqnarray*}
we obtain, with $C_{k_0}(a,b)$ defined in the statement of the lemma, that
\[\frac{e^{-\mathfrak{W}_{k_0}\lbrb{a+ib}-L(a,b)}}{C_{k_0}(a,b)}\leq\labs\overline{W}_g^{(a)}\lbrb{a+ib} \rabs\leq C_{k_0}(a,b)e^{\mathfrak{W}_{k_0}\lbrb{a+ib}+L(a,b)}.\]
The bounds \eqref{eqn:Upsilon1} follow for $b>0$. The case $b<0$ is dealt with similarly as in $\eqref{eq:symmetry_b}$.
\end{proof}

\begin{proof}[\it{Proof of Proposition \ref{prop:Asymptotic_Mellin_sigma}.}]
The lower bound in \eqref{eq:MellingDecayDrift2} is the lower bound in \eqref{eq:expDecay}. We get from \eqref{eq:MVphiSigma} that $ \labs\MorW(z)\rabs=\labs\Gamma(z)\overline{W}_g(z)\rabs$. For $z=a+ib$, $a>0$ and $|b|$ large enough the upper bound in \eqref{eq:MellingDecayDrift2} follows from the classical asymptotic for the gamma function, i.e.~for $a>0$ fixed,
\begin{equation}\label{eq:asymptotic_Gamma}
\labs \Gamma(a+i|b|)\rabs \stackrel{\infty}{\sim} C_a|b|^{a-\frac{1}{2}}e^{-\frac{\pi}{2}|b|},
\end{equation}
where $C_a>0$, combined with  \eqref{eqn:Upsilon1}  and taking into account all polynomial dependence on $|b|$. If $d_\phi<0$ and $a\in(d_\phi,0]$ then the functional equation \eqref{eq:fe1} relates the asymptotic of $\labsrabs{\MorW(a+ib)}$ to that of $\labsrabs{\MorW(a+[-d_\phi-1]+ib)}$. The polynomial decay \mladen{in \eqref{eq:MellingDecayDrift2} is again $|b|^{\frac{4m}{\sigma^2}+a-\frac12}$} as on each iteration of \eqref{eq:fe1} we collect a term of the type,  for a fixed $a$, $\labsrabs{\phi(a+ib)}\stackrel{\infty}\sim \sigma^2|b|$, see Proposition \ref{propAsymp1}\eqref{it:asyphid}.
Next, recall the definition of $L(a,|b|)$, see \eqref{eq:L}, and therefore, observe, using $|\sin(y)|\leq |y|\wedge 1$, that, for any $0<\epsilon<1$,
\begin{eqnarray*}
0\leq \lim_{|b|\to\infty} \frac{L(a,|b|)}{|b|}&\leq& \lim_{|b|\to\infty} \frac{\sup_{r\leq |b|} L(0,r)}{|b|}=\lim_{|b|\to\infty} \frac{\overline{L}(|b|)}{|b|}\\
&\leq &\int_{0}^{\epsilon}\PPP(y)dy+\lim\ttinf{|b|}\frac{\int_{\epsilon}^{1}\PPP(y)\frac{dy}{y}}{b}=\int_{0}^{\epsilon}\PPP(y)dy,
\end{eqnarray*}
which shows that $\overline{L}(|b|)\stackrel{\infty}{=}\so{|b|}$ since $\int_{0}^{1}\PPP(y)dy<\infty$, see \eqref{eq:boundPP_rev1}.
Finally, it remains to study $\overline{L}(|b|)$ for specific instances. First, when $\PPP(0^+)<\infty$, we have that
\[\sup_{r\leq |b|} L(0,r)=\overline{L}(|b|)\leq \PPP(0^+)\int_{0}^{|b|}\frac{|\sin(r)|}{r}dr\leq \PPP(0^+)\ln(|b|).\]
Then, if for some $\alpha\in(0,1)$, $\PPP(y) \stackrel{0}{=} \bo{y^{-\alpha}}$, then trivially
\begin{equation}
L(|b|)\leq C_1 |b|^{\alpha}\int_{0}^{b}\labs\sin\lb\frac{r}{2}\rb\rabs\frac{dr}{r^{1+\alpha}}\leq C_2|b|^\alpha,
\end{equation}
which completes the proof of our Proposition \ref{prop:Asymptotic_Mellin_sigma}.
\end{proof}

\subsubsection{Proof of  Theorem \ref{prop:asymt_bound_Olver2}\eqref{it:Na}: The case $\psi \in \Ne_{\alpha}$.} \label{sec:pro_Na}
We continue to  apply  the theory developed above to functions $\psi$ which are regularly varying at infinity without  aiming  at the most general case of regular variation. We simply attempt to illustrate the tractability of our approach with the aim to compute explicitly and easily $\H$. We now show that for  $\psi \in \Ne_{\alpha}$, with $\alpha\in(0,1)$, we have  $\H=\frac{\pi}{2}\alpha >0$.
To this end  set $\bar{C}=C_{\alpha}\Gamma(\alpha+1)$. Since $\psi\in\Ne_\alpha$, $\phi\in\Be_{\Ne}$ from Proposition \ref{propAsymp1}\eqref{it:finitenessPhi} we get that $\sigma^2=0,\,\bar{\mu}(y)=\PPP(y),\,y>0,$ and $\PPP(y)\simo \bar{C}y^{-\alpha},\,\alpha\in\lbrb{0,1}$. The latter and a standard Tauberian theorem imply that Proposition \ref{propAsymp1}\eqref{it:unif_rev1} can be augmented to $\phi(a|b|)\stackrel{\infty}{\sim}\bar{C}\Gamma\lb 1-\alpha\rb a^{\alpha}|b|^\alpha.$ Next, for fixed $a>0$ and $|b|\to\infty$, we get from the first expression for $\phi$ in \eqref{eqn:phi-} and $\bar{\mu}(y)=\PPP(y)\simo \bar{C}y^{-\alpha}$ that
\begin{eqnarray*}
	\phi(|b|a+i|b|)&=& m+\lb |b|a+i|b|\rb \IInf e^{-i|b|y-|b|ay}\PPP(y)dy 	\\
	&=&m+\lb a+i\rb \lb\IInf \cos(y)e^{-ya}\PPP\lbrb{\frac{y}{|b|}}dy-i \IInf \sin(y)e^{-ya} \PPP\lbrb{\frac{y}{|b|}}dy\rb \\
	&\stackrel{\infty}{\sim} & \bar{C}|b|^\alpha \lb a+i\rb\lb\IInf \cos(y)e^{-ya}\frac{dy}{y^\alpha}-i \IInf \sin(y)e^{-ya}\frac{dy}{y^\alpha}\rb\\
	&=& \bar{C}|b|^\alpha (a+i)\IInf e^{-y\lb a+i\rb}\frac{dy}{y^{\alpha}}=\bar{C}\Gamma(1-\alpha)(a+i)^{\alpha}|b|^\alpha  .
\end{eqnarray*}
In the last integral we have used Cauchy's theorem on the closed contour $[0,\beta a+i\beta], [\beta a+i\beta,\beta], [\beta,0]$ with $\beta\to\infty$ to the function $e^{-z}z^{-\alpha}$ which is holomorphic on $\C_{\lbrb{0,\infty}}$. Since Proposition \ref{thm:Theorem11}\eqref{it:Thetaphi} holds we compute from \eqref{eq:Theta_rev1} with $a=1$ and the asymptotic relations above
\[\H=\liminfi{|b|}\int_{\frac{1}{b}}^{\infty}\ln\lb\frac{\labs\phi(bua+ib)\rabs}{\phi(ba)}\rb da=\IInf\liminfi{|b|}\ln\lb\frac{\labs\phi(ba+ib)\rabs}{\phi(ba)}\rb da=\frac{\pi}{2}\alpha. \]
The dominated convergence theorem holds thanks to inequality \eqref{eq:PhiEst}
\subsubsection{Proof of Theorem \ref{prop:asymt_bound_Olver2}\eqref{it:PP}: Examples of subexponential decay of $|W_{\phi}|$} \label{sec:pro_PP}
We further illustrate our approach by detailing more examples that reveal again that for several important families of L\'evy measures, we can derive explicit bounds for the rate of decay of $|\MorW(z)|$.
\begin{proposition}\label{lem:MellinTT}
	Let $\psi \in \Ne\setminus\Ne_P$, i.e.~$\sigma^2=0$, and fix $a>d_{\phi}$.
\begin{enumerate}
 \item \label{it:subexp_example} Assume that for some $\alpha\in (0,1)$, $\liminf_{y\to 0} y^{\alpha}\PP(y)>0$ and $\phi(\infty)=\r<\infty$. Then,  there exists $C_{a,\alpha}>0$  such that
\begin{equation*}\label{eq:MellinRonnieAsymp}
 \labs \MorW(a+ib)\rabs \stackrel{\infty}{\lesssim } e^{-C_{a,\alpha}|b|^{\alpha}}.
\end{equation*}
\item \label{it:subexp_example1}Let us assume that $\liminf_{y\to 0} y\PP(y)>0$, then   there exists $C_{a}>0$  such that
\begin{equation*}
 \labs \MorW(a+ib)\rabs \stackrel{\infty}{\lesssim }  e^{-C_{a} \frac{|b|}{\phi(b)}}.
\end{equation*}
\item\label{it:subexp_example2} Let us assume that $\PP(y)=y^{-1}\labs\ln y \rabs^2 1_{[0,1/2]}(y)$ then
\begin{equation*}\label{eq:MellinxAsymp}
 \labs \MorW(a+ib)\rabs \stackrel{\infty}{\lesssim }  e^{- \lb\ln b\rb^3}.
\end{equation*}
\end{enumerate}
\end{proposition}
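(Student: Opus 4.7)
The plan is to apply the upper bound \eqref{eqn:first_estimateComplexLine1} of Theorem \ref{prop:asymt_bound_Olver2}\eqref{it:bounds_Wp},
\[ |\MorW(a+ib)| \asymp \frac{\sqrt{\phi(a)}\,\MorW(a)}{\sqrt{|\phi(a+ib)|}}\, e^{-|b|\angp(a,|b|)}. \]
The prefactor is bounded in $b$ (the denominator either tends to infinity or is bounded below, depending on whether $\r=\infty$ or $\r<\infty$), so each of the three assertions reduces to an appropriate lower bound on $|b|\angp(a,|b|)$. For this I would combine the elementary inequality $|\phi(u+ib)|/\phi(u) \geq 1 + \Delta_b^{\Re}\phi(u)/\phi(u)$ recorded in \eqref{eq:uprp} with the pointwise lower bound
\[ b\,\Delta_b^{\Re}\phi(ba) \;\geq\; \frac{2}{\pi^2}\int_0^1 v^2 e^{-va}\PP(v/b)\,dv \]
that underpins \eqref{eq:ubdpp} in the proof of Theorem \ref{thm:classes}\eqref{it:class_Ng}. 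The hypothesis on $\PP$ in each item then controls this integral.

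For item \eqref{it:subexp_example}, the assumption $\liminf_{y\to 0} y^{\alpha}\PP(y) > 0$ yields $\PP(v/b) \gtrsim b^{\alpha}v^{-\alpha}$ on $(0,y_0 b)$, whence $\Delta_b^{\Re}\phi(ba) \gtrsim b^{\alpha-1} e^{-a}$. Crucially, the additional hypothesis $\r<\infty$ forces $\phi(ba)\leq \r$, so $\Delta_b^{\Re}\phi(ba)/\phi(ba) \gtrsim b^{\alpha-1}e^{-a}$; since $b^{\alpha-1}\to 0$ for $\alpha<1$, the inequality $\ln(1+x)\geq x/2$ applies under the integral sign, producing $\angp(a,|b|)\gtrsim |b|^{\alpha-1}$ and therefore $|b|\angp(a,|b|)\gtrsim |b|^{\alpha}$. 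For item \eqref{it:subexp_example1}, the stronger hypothesis $\liminf_{y\to 0}y\PP(y)>0$ gives $b\,\Delta_b^{\Re}\phi(ba)\gtrsim b\,C(a)$, i.e.~$\Delta_b^{\Re}\phi(ba)$ is bounded below uniformly in $b$ by a positive function integrable in $a$. Restricting the integration defining $\angp$ to $y\in[1,2]$ and using the subadditivity $\phi(2b)\leq 2\phi(b)$ of Bernstein functions to replace $\phi(by)$ by a multiple of $\phi(b)$, one obtains $\angp(a,|b|)\gtrsim 1/\phi(|b|)$ and hence $|b|\angp(a,|b|)\gtrsim |b|/\phi(|b|)$.

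Item \eqref{it:subexp_example2} then follows as a direct consequence of item \eqref{it:subexp_example1} applied to the specific tail $\PP(y) = y^{-1}|\ln y|^2\mathbb{I}_{[0,1/2]}(y)$: the condition $\liminf_{y\to 0}y\PP(y)=\infty$ is immediate, while the tail integral $\PPP(y)\stackrel{0}{\sim}(-\ln y)^3/3$ combined with the uniform equivalence $\phi(u) \asymp u\int_0^{1/u}\PPP(y)dy$ of Proposition \ref{propAsymp1}\eqref{it:asympphi} yields $\phi(b)\asymp \ln^3 b$. Consequently $|b|/\phi(|b|)\asymp |b|/\ln^3|b|\gg \ln^3|b|$ for large $|b|$, and the advertised bound $|\MorW(a+ib)|\stackrel{\infty}{\lesssim}e^{-(\ln b)^3}$ follows with room to spare. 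The main obstacle I anticipate lies in item \eqref{it:subexp_example1}: the replacement of $\phi(by)$ by a constant multiple of $\phi(|b|)$ under the integral must be handled uniformly in $y\in[1,2]$ without leaking a factor that would degrade the exponent, and this requires the multiplicative comparison via subadditivity rather than the weaker additive flatness \eqref{lemmaAsymp1-1}. Once that uniform control is in place, tracking the polynomial prefactor and the approximation $\ln(1+x)\sim x$ for the small arguments arising in items \eqref{it:subexp_example} and \eqref{it:subexp_example1} is routine.
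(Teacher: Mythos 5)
Your argument is correct and follows the paper's own strategy: lower-bound $|b|\,\angp(a,|b|)$ via $b\,\Delta_b^{\Re}\phi(ba)\gtrsim e^{-a}\,\PP(1/b)$ as in \eqref{eq:ubdpp}, then feed in the hypothesis on $\PP$; the paper simply condenses this into the single inequality \eqref{eq:decayByPhi1_1}, and your restriction to $y\in[1,2]$ with subadditivity is an equivalent alternative to the paper's restriction to $y\in[0,1]$ with monotonicity. One point worth flagging: in item \eqref{it:subexp_example2} your Tauberian computation $\phi(b)\asymp(\ln b)^3$ is the correct asymptotic, whereas the paper's proof asserts $\phi(u)=u/\ln u$ for $u>1/2$, which is inconsistent with $\PPP(y)\simo\frac13|\ln y|^3$ and Proposition~\ref{propAsymp1}\eqref{it:asympphi} and is evidently a slip — as you observe, the stated bound $e^{-(\ln b)^3}$ follows in either case, but with your (correct) $\phi$ the true decay of the exponent is of order $|b|/\ln^3|b|$, so the claimed rate is far from sharp.
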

\begin{proof}
First, we observe that by combining \eqref{eq:ubdpp} with \eqref{eq:ubH}, we get, for $b\in \R$,
\begin{eqnarray} \label{eq:decayByPhi1_1}
|b| \angp(|b|)  \geq |b| \IInt{0}{1} \ln\lb 1+ Ce^{-y} \frac{\PP\lb\frac{1}{|b|}\rb}{ |b|\phi(|b|)}\rb dy &\geq& |b|\ln\lbrb{1+C_1 \frac{\PP\lb\frac{1}{|b|}\rb}{|b| \phi(|b|)}},
\end{eqnarray}
where $C_1=C e^{-1}>0$. The two first statements are  direct applications of \eqref{eq:decayByPhi1_1}. Indeed, if $\phi(\infty)<\infty$ then from \eqref{eq:phiinfinity} $\infty>\PPP(0+)\geq\int_{0}^{a}\PP(y)dy\geq a\PP(a)$, for any $a\in\lbrb{0,1}$. Thus $\limo{a}a\PP(a)=0$ and so with $\ln(1+x)\simo x$ in \eqref{eq:decayByPhi1_1} we conclude the claim using  $\liminf_{y\to 0} y^{\alpha}\PP(y)>0$. Item \eqref{it:subexp_example1} follows from \eqref{eq:decayByPhi1_1} and  $\liminf_{y\to 0} y\PP(y)>0$. Item \eqref{it:subexp_example2} follows similarly by observing that for $u>1/2$,  $\phi(u)=\frac{u}{\ln(u)}$ and plugging this expression in \eqref{eq:decayByPhi1_1}.
 \end{proof}

\subsubsection{Asymptotic of the Mellin transform in the case $\psi \in \Ni^c$ }
Finally, we study the case when $\psi \in \Ni^c$, i.e.~$\PP(0^+)<\infty$ and $\sigma^2=0$, which corresponds to the Laplace exponent of a spectrally negative compound Poisson process with a positive drift.
\begin{lemma}\label{asympImagineryLine1}
Let $\psi \in \Ni^c$. Then, $\phi(u)=-\int_{0}^{\infty}e^{-uy}\PP(y)dy+\r$, where recall that $\r=\phi(\infty)=\PPP(0^+)+m$. Then, for $z\in\C_{\lbrb{d_\phi,\infty}}$,
\begin{equation}\label{eqn:GammaTypePoisson1}
\MorW(z)=\frac{\r^z}{\phi(z)}\prod_{k=1}^{\infty}\frac{\phi(k)}{\phi(k+z)}. 
\end{equation}
On the real line $\MorW(u)\r^{-u}$ is bounded and decreasing, as $u\rightarrow\infty$, and for fixed $a>0$,
\begin{equation}\label{eq:GammaTypePoisson2}
\liminf\limits_{|b|\to\infty}\labs \MorW(a+ib)\rabs=0.
\end{equation}
\end{lemma}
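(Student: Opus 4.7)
The plan splits into four short steps. The closed form $\phi(u)=\r-\int_0^\infty e^{-uy}\PP(y)dy$ is read off directly from \eqref{BivLadder}: since $\psi\in\Ni^c$ forces $\sigma^2=0$ and $\PPP(0^+)<\infty$, one has $\phi(u)=m+\int_0^\infty(1-e^{-uy})\PP(y)dy=m+\PPP(0^+)-\int_0^\infty e^{-uy}\PP(y)dy$, and $m+\PPP(0^+)=\r$ by \eqref{eq:phiinfinity}.

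For the product representation \eqref{eqn:GammaTypePoisson1}, I would start from the general Weierstrass product \eqref{eq:Wphi} and show that its exponential factors collapse under $\Ni^c$. Differentiating the explicit form gives $\phi'(u)=\int_0^\infty ye^{-uy}\PP(y)dy\leq \PP(0^+)/u^2$, since $\PP$ is non-increasing and bounded by $\PP(0^+)<\infty$; combined with $\phi(k)\geq\phi(1)>0$ this yields absolute convergence of $S:=\sum_{k=1}^\infty\phi'(k)/\phi(k)$. Reading back the definition \eqref{eq:euler} and using $\phi(n)\to\r$, one gets $\gamma_\phi=S-\ln\r$. Substituting into \eqref{eq:Wphi}, the factor $e^{-\gamma_\phi z}\prod_{k=1}^\infty e^{z\phi'(k)/\phi(k)}$ telescopes to $e^{z\ln\r}=\r^z$, producing \eqref{eqn:GammaTypePoisson1}. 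The monotonicity of $g(u):=\MorW(u)\r^{-u}$ on $(0,\infty)$ then follows immediately from \eqref{eqn:GammaTypePoisson1}: every factor in $g(u)=\phi(u)^{-1}\prod_{k\geq 1}\phi(k)/\phi(k+u)$ is non-increasing in $u$, because $\phi$ itself is non-decreasing by Proposition \ref{propAsymp1}\eqref{it:bernstein_cm}. Since $g(1)=1/\r<\infty$, $g$ is bounded on $[1,\infty)$, which gives the asserted behaviour as $u\to\infty$.

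The liminf along imaginary lines \eqref{eq:GammaTypePoisson2} is the step that needs slightly more care. The key inputs are the Riemann--Lebesgue lemma, applied to the integrable function $y\mapsto e^{-(k+a)y}\PP(y)$ to get $\phi(k+a+ib)\to \r$ as $|b|\to\infty$ for each fixed $k$, together with the triangle-inequality bound $|\phi(k+a+ib)|\geq \r-(\r-\phi(k+a))=\phi(k+a)\geq \phi(k)$, which makes every term $\log(|\phi(k+a+ib)|/\phi(k))$ non-negative. Fatou's lemma then gives
\[
\liminf_{|b|\to\infty}\sum_{k=1}^\infty\log\frac{|\phi(k+a+ib)|}{\phi(k)}\;\geq\; \sum_{k=1}^\infty\log\frac{\r}{\phi(k)},
\]
and combined with the elementary bound $\phi(a)\leq |\phi(a+ib)|\leq 2\r-\phi(a)$ applied to the prefactor $\r^a/|\phi(a+ib)|$ in \eqref{eqn:GammaTypePoisson1}, this produces $\limsup_{|b|\to\infty}\log|\MorW(a+ib)|\leq -\sum_{k=1}^\infty\log(\r/\phi(k))$.

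The main point — and really the only non-bookkeeping step — is therefore to verify that the series $\sum_k\log(\r/\phi(k))$ is divergent in the present setting. Rescaling $u=ky$ in $\r-\phi(k)=\int_0^\infty e^{-ky}\PP(y)dy=k^{-1}\int_0^\infty e^{-u}\PP(u/k)du$, dominated convergence (the integrand is bounded by $e^{-u}\PP(0^+)$) gives $1-\phi(k)/\r\sim \PP(0^+)/(\r k)$ as $k\to\infty$. Since the non-degenerate case $\PP\not\equiv 0$ forces $\PP(0^+)>0$, this is not summable, whence $\sum_k\log(\r/\phi(k))=+\infty$. This yields in fact $\lim_{|b|\to\infty}|\MorW(a+ib)|=0$, which is stronger than the claimed liminf.
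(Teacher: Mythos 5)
Your proof is correct. Steps 1 and 2 (the closed form for $\phi$ and the Weierstrass product \eqref{eqn:GammaTypePoisson1}, including monotonicity of $\MorW(u)\r^{-u}$ on $\R_+$) coincide with the paper's argument, which also reads off $\gamma_\phi=\lim_n\bigl(\sum_{k\le n}\phi'(k)/\phi(k)\bigr)-\ln\r$ from \eqref{eq:euler} and observes that each factor in the product is non-increasing.

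For \eqref{eq:GammaTypePoisson2}, your route is genuinely different from the paper's and is, if anything, more elementary. The paper invokes the two-sided machinery of Proposition~\ref{prop:asymt_bound_Olver1}, specifically \eqref{eqn:estimateComplexLine} with $l=0$, and then shows $|b|\angp(a,|b|)\to\infty$ by a direct (and somewhat fiddly) calculation that begins with a choice of $a$ large enough and later appeals to the functional equation \eqref{eq:fe1} to extend to all $a>d_\phi$. You instead argue directly on the product representation: Riemann--Lebesgue gives $\phi(k+a+ib)\to\r$ termwise, the triangle inequality makes each $\log\bigl(|\phi(k+a+ib)|/\phi(k)\bigr)$ non-negative so Fatou applies, and the crux reduces to the divergence of the real series $\sum_k\log(\r/\phi(k))$, which you establish from the sharp asymptotic $\r-\phi(k)\sim\PP(0^+)/k$ (dominated convergence in the rescaled integral). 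The trade-off: the paper's method is a specialization of a general framework used elsewhere (where the continuous integral $|b|\angp$ plays the role your discrete sum does here, and the two are asymptotically the same object), while yours is self-contained, avoids the $E_\phi,Z^l_\phi$ bookkeeping, and immediately yields the stronger conclusion $\lim_{|b|\to\infty}|\MorW(a+ib)|=0$ for every $a>0$, not merely the stated $\liminf$. The only point worth flagging explicitly in a write-up is that Fatou for a $\liminf$ along a continuous parameter should be applied along arbitrary sequences $b_n\to\infty$, which is harmless since each term has an actual limit; and one should note (as you do) that $\PP(0^+)>0$ is forced by non-degeneracy in the class $\Ni^c$.
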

\begin{proof}
Since $\lim\limits_{n\to\infty}\ln\phi(n)= \ln\r$ then from  \eqref{eq:euler} $\lim\limits_{n\to\infty}\sum_{k=1}^{n}\frac{\phi'(k)}{\phi(k)}=\ln\r+\gamma_\phi$ and \eqref{eqn:GammaTypePoisson1} follows immediately from \eqref{eq:Wphi}. To conclude the other statements we study the product in \eqref{eqn:GammaTypePoisson1}.  As
	\begin{eqnarray}\label{eqn:GammTypePoisson1-1}
	\nonumber u \mapsto \overline{W}_{\phi}(u)&=&\prod_{k=1}^{\infty}\frac{\phi(k)}{\phi(k+u)} \text{ is bounded and decreasing on } \R_+,
	\end{eqnarray}
we deduce that $\overline{W}_{\phi}(u)\in[0,1]$. It remains to show \eqref{eq:GammaTypePoisson2}. For any $a>0$, we employ \eqref{eqn:estimateComplexLine} with $l=0$.
 Substituting the expression $\phi(z)=-\int_{0}^{\infty}e^{-zy}\PP(y)dy+\r$ in \eqref{eq:Theta_rev1} we get that
	\begin{eqnarray*}		b\angp(a,|b|)&=&b\int_{\frac{a}{b}}^{\infty}\ln\labsrabs{\frac{\r-\int_{0}^{\infty}e^{-buy-iby}\PP(y)dy}{\r-\int_{0}^{\infty}e^{-buy}\PP(y)dy}}du\\
		&=& b\int_{\frac{a}{b}}^{\infty}\ln \labsrabs{ 1+\frac{\int_{0}^{\infty}e^{-byu}\lbrb{1-e^{-iby}}\PP(y)dy}{\phi(bu)}} du.
	\end{eqnarray*}
	Since $\phi$ is non-decreasing choose $a>0$ big enough so that
	\[\sup_{u\geq \frac{a}{b}} \frac{\labsrabs{\int_{0}^{\infty}e^{-byu}\lbrb{1-e^{-iby}}\PP(y)dy}}{\phi(bu)} \leq 2\frac{\PP(0^+)}{a\phi(a)}<\frac12.\]
	Using, $\ln|1+z|\geq C|z|$, with some $C>0$, for all $|z|<\frac12$, we get that
	\[b\angp(a,|b|)\geq \frac{C}{\phi(a)} b\int_{\frac{a}{b}}^{\infty} \labsrabs{ \int_{0}^{\infty}e^{-byu}\lbrb{1-e^{-iby}}\PP(y)dy} du.\]
	From Lemma \ref{lemma:WandPhi} $\int_{0}^{\infty}e^{-byu}\sin(by)\PP(y)dy=\Delta^\Im_b \phi(bu)\geq 0$. 
Therefore, we complete the proof by estimating from below with the real part of the expression above that is
	\begin{eqnarray*}
	\lim\ttinf{b}b\int_{\frac{a}{b}}^{\infty}  \int_{0}^{\infty}e^{-byu}\lbrb{1-\cos(by)}\PP(y)dy du&=& \lim\ttinf{b}\int_{0}^{\infty}e^{-ay}\lbrb{1-\cos(by)}\PP(y)\frac{dy}{y}\\
	&=&\lim\ttinf{b}\int_{0}^{\infty}e^{-\frac{a}{b}y}\lbrb{1-\cos(y)}\PP\lbrb{\frac{y}{b}}\frac{dy}{y}\\&=& \infty.
	\end{eqnarray*}
	Thus, $\lim\ttinf{b}b\angp(a,|b|)=\infty$ and from \eqref{eqn:estimateComplexLine} with $l=0$  \eqref{eq:GammaTypePoisson2} holds for all $a$ big enough since all other quantities in \eqref{eqn:estimateComplexLine} with $l=0$ are bounded when $\PP(0^+)<\infty$. However, from \eqref{eq:fe1} we also deduce that \eqref{eq:GammaTypePoisson2} holds for all $a>d_\phi$.
\end{proof}

\newpage

\section{Intertwining relations and a set of eigenfunctions} \label{sec:Intertwining}
In this section, we elaborate and exploit  intertwining relations that relate the entire class of generalized Laguerre semigroups to the classical Laguerre semigroup of order $0$.  This commutation relation between NSA  semigroups and a self-adjoint semigroup is the central concept  in our development of the spectral decomposition of these operators. In this perspective, it is proved to be very useful to characterize  a set of eigenfunctions of the gL semigroups and to provide properties of this set regarded as a sequence in a Hilbert space.

Let us recall that $Q=(Q_t)_{t\geq 0}$ denotes the classical Laguerre semigroup  associated via the Lamperti bijection to $\psi(u)=u^2$, i.e.~$\sigma^2=1,\,m=0$ and $\Pi \equiv 0$ in \eqref{eq:NegDef}, and its associate Feller process is a  diffusion. Basic facts about this semigroup are reviewed in Example \ref{sec:Prelimi_Laguerre} where, for instance, one finds that  it is self-adjoint in $\Lg$ where $\e(x)=e^{-x},\,x>0,$ is the  density  of an exponential distribution of parameter $1$. We also recall that, for any $\phi \in \Bp$, the Markov multiplicative operator $\Ip$ is defined, for at least  any $f \in \cb$, by
\begin{equation}\label{def:mult_kernel_I_phi}
 \Ip f(x)=\Ebb{f(x I_\phi)},\,\,x>0,
 \end{equation}
where, with $\eta=(\eta_t)_{t\geq 0}$ a subordinator with Laplace exponent $\phi$,  $I_\phi$ is the positive random variable
\begin{equation} \label{def:exp_sub_1}
 I_\phi = \int_{0}^{\infty} e^{-\eta_t}dt.
 \end{equation}
We have all ingredients to state our first main result of this part.
\begin{theorem}\label{thm:intertwin_1}   \label{MainProp}
  Let $\psi \in \Ne$ and recall that $\phi(u)=\frac{\psi(u)}{u} \in \Bp$. Then, for any $t\geq 0$, we have the following intertwining identity
  \begin{equation}\label{MainInter1_2}
  P_t  \Ip  f = \Ip  Q_t f
  \end{equation}
  \mladen{valid for all $f\in\Lg$}. Moreover, the following properties of the intertwining operator hold.
\begin{enumerate}
\item\label{it:Iphi1} $\Ipn \in \Bop{\Lg}{\Lnu} \cap \Bo{\cob}$.
\item \label{it:bfb} $\Ran{\Ipn}=\Lnu$. However, there exists $C>0$ such that for all $f\in \lnu$, \[ ||\Ipn f||_{\nu}\geq C||f||_{\varepsilon}\]  if and only if $\psi(u)=\sigma^2 u^{2}$, $\sigma^2>0$. 		
\item \label{it:moment}  $\Ip$  is a Markov operator determined by its integer moments, and   we have,
for any $z \in \C_{(-1,\infty)}$ and $x>0$,
recalling that $p_z(x)=x^z$,
\begin{eqnarray}\label{eq:Mellin_Ip_1} \label{eq:momentsIphi_1}
 \Ip p_z(x)& = &\frac{\Gamma(z+1)}{W_{\phi}(z+1)}p_z(x).
\end{eqnarray}
\end{enumerate}
 \end{theorem}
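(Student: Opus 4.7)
The plan is to handle items (3), (1), the intertwining, and (2) in that order so that each step feeds the next, with the engine being the Carmona--Petit--Yor style multiplicative factorization of $V_\psi$ reduced to moment identifications via Proposition \ref{prop:recall_exp} and Theorem \ref{lem:fe1}. Item (3) is immediate from the definition \eqref{def:mult_kernel_I_phi}: $\Ip p_z(x) = x^z \E[I_\phi^z]$ on $\C_{(-1,\infty)}$, and Proposition \ref{lem:fe2} combined with \eqref{eq:RecurI} identifies the multiplier as $\Gamma(z+1)/W_\phi(z+1)$. The Markov character of $\Ip$ follows from $I_\phi > 0$ a.s.\ and $\Ip \mathbf{1} = 1$, while moment determinacy of $\Ip$ is inherited from that of $I_\phi$, Proposition \ref{prop:recall_exp}\eqref{it:momIphi}.

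Next I would establish the central multiplicative identity
\begin{equation*}
\nu \Ip f = \varepsilon f, \qquad f \in \cob,
\end{equation*}
equivalently $V_\psi \cdot I_\phi \stackrel{(d)}{=} E$ where $E$ has density $\varepsilon$ and is independent of $V_\psi$ and $I_\phi$. By moment determinacy of $E$, this reduces to the algebraic identity $W_\phi(n+1) \cdot n!/W_\phi(n+1) = n!$ coming from \eqref{eq:moment_V_psi} and \eqref{eq:RecurI}. From this factorization, item (1), namely $\Ip \in \Bop{\Lg}{\Lnu}$, drops out by a one-line application of Jensen's inequality applied pointwise: $(\Ip f)^2(x) \leq \Ip(f^2)(x)$, whence $\|\Ip f\|_\nu^2 \leq \nu \Ip(f^2) = \varepsilon f^2 = \|f\|_\varepsilon^2$. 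The statement $\Ip \in \Bo{\cob}$ reduces to verifying that the Feller-type transition kernel $f \mapsto \E[f(xI_\phi)]$ preserves $\cob$, which is standard by dominated convergence.

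The intertwining \eqref{MainInter1_2} I would verify first on the polynomial algebra $\mathbf{P}$, which is dense in $\Lg$, invariant under $Q_t$, invariant under $P_t$ by the explicit generator formula \eqref{eq:infgen}, and mapped into itself by $\Ip$ thanks to \eqref{eq:Mellin_Ip_1} (concretely, $\Ip p_n = n!/W_\phi(n+1)\,p_n$). It therefore suffices to check the infinitesimal identity $\mathbf{G} \Ip p_n = \Ip \mathbf{L} p_n$ on monomials by coefficient matching, the matching being driven by the functional equation $W_\phi(n+1) = \phi(n) W_\phi(n)$ from Theorem \ref{lem:fe1} (one verifies directly that $\mathbf{G} p_n = n\phi(n) p_{n-1} - n p_n$ and $\mathbf{L} p_n = n^2 p_{n-1} - n p_n$, at which point the factorial identity aligns the two sides). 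Exponentiation, combined with the $\Lg \to \Lnu$ contractivity of $\Ip$ just established, the strong continuity of $Q$ on $\Lg$ and of $P$ on $\Lnu$, and density of $\mathbf{P} \subset \Lg$, propagates \eqref{MainInter1_2} to every $f \in \Lg$.

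Finally I would address (2). Density of $\Ran{\Ip}$ in $\Lnu$ is equivalent to injectivity of the adjoint $\Ip^*$; via Mellin transform this becomes a convolution equation with multiplier built from $\Gamma(z+1)/W_\phi(z+1)$, and a Wiener Tauberian argument forces the solution to vanish once we invoke the Bernstein--Weierstrass zero-freeness of $W_\phi$ on the critical line developed in Chapter \ref{sec:Mellin}. For the sharp characterization of the lower bound: when $\psi(u) = \sigma^2 u^2$ one has $\phi(u) = \sigma^2 u$, the subordinator $\eta$ is the pure drift $\sigma^2 t$, $I_\phi = \sigma^{-2}$ is deterministic, and $\Ip$ reduces to the dilation $f \mapsto f(\sigma^{-2}\cdot)$, evidently a bicontinuous bijection $\Lg \to \Lnu$. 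Conversely, a uniform lower bound combined with dense range would make $\Ip$ a Hilbert-space isomorphism, and the intertwining would then transport the orthonormal eigenbasis $(\mathcal{L}_n)$ of $Q$ to a Riesz basis $(\mathcal{P}_n)$ of $\Lnu$, which is ruled out by Theorem \ref{cor:sequences}\eqref{it:seq1} as soon as $\Pi \not\equiv 0$. The main obstacle is precisely this last step: relating the absence of a uniform lower bound to the non-self-adjointness of $P$ requires the full interplay between the intertwining, the Bernstein--Weierstrass structure of $W_\phi$, and the fine Hilbert-sequence properties of $(\mathcal{P}_n)$ developed only later in the paper.
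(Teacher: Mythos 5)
Your handling of items (1) and (3), the reduction of the factorization $\nu\,\Ip f = \varepsilon f$ to moment identification, and the Jensen/H\"older step $(\Ip f)^2 \leq \Ip(f^2)$ match the paper's argument in Sections~\ref{sec:Proof_Intertwin}--\ref{sec:Proof_Intertwin}. The formal coefficient computation $\mathbf{G}p_n = \psi(n)p_{n-1}-np_n$, $\mathbf{L}p_n = n^2 p_{n-1}-np_n$, and the matching via $W_\phi(n+1)=\phi(n)W_\phi(n)$ is also correct. Your verification of the ``if'' direction ($\psi(u)=\sigma^2u^2$ makes $I_\phi$ deterministic, $\Ip$ an isometry) agrees with the paper.

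There are, however, two genuine gaps.

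\textbf{The exponentiation step is not justified and risks circularity.} To pass from $\mathbf{G}\Ip p_n = \Ip \mathbf{L} p_n$ to $P_t \Ip p_n = \Ip Q_t p_n$ you implicitly need the polynomials to lie in the domain of the $\Lnu$-generator of $P$ (or to form a core for it). Polynomials are not in $\mathcal{D}_{\mathbf{G}} = \{f;\,f(e^x)\in\cco^2[-\infty,\infty]\}$ since $p_n(e^x)=e^{nx}$ diverges; the identity $\mathbf{G}p_n = \psi(n)p_{n-1}-np_n$ is purely formal. Worse, the fact that $\mathbf{P}$ is a core for the $\Lnu$-generator is asserted in Theorem~\ref{thm:bijection}\eqref{it:ext} but proved as a \emph{consequence} of the intertwining (via Theorem~\ref{thm:eigenfunctions1}). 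Using it here is circular. The paper sidesteps this entirely by working at the Feller level with the Carmona--Petit--Yor criterion \cite[Proposition 3.2]{Carmona-Petit-Yor-97}: the intertwining on $\cob$ follows from (i) the entrance-law factorization (your $\nu\,\Ip f=\varepsilon f$, Lemma~\ref{Prop1}) and (ii) the injectivity of $\Vp$ on $\cob$ (Lemma~\ref{lemma:Injectivity}), the latter being exactly where the Wiener Tauberian theorem and the zero-freeness of $W_\phi$ on $\Cb_1$ (Corollary~\ref{cor:MellinZeroFree}) enter. You invoke the Wiener Tauberian argument for density of range instead; in the paper's logic it belongs to the injectivity step, and density of range is obtained much more cheaply (see next point).

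\textbf{The ``only if'' direction of item (2) is circular as stated.} You deduce that $(\mathcal{P}_n)$ would be a Riesz basis, then invoke Theorem~\ref{cor:sequences}\eqref{it:seq1} to rule this out. But the proof of that Riesz-basis negative statement (Section~\ref{sec:proof_poly}) runs through an open-mapping argument that reduces precisely to the present claim, namely that $\Ip$ is \emph{not} bounded below. The paper instead computes directly: since $\Ip p_n=\lambda_n p_n$ with $\lambda_n=n!/W_\phi(n+1)$, one has $\|\Ip p_n\|_\nu^2/\|p_n\|_\varepsilon^2 = \lambda_n^2 W_\phi(2n+1)/\Gamma(2n+1) = \prod_{k=1}^n\bigl(\psi(k+n)/(k+n)^2\bigr)\big/\prod_{k=1}^n\bigl(\psi(k)/k^2\bigr)$, which is shown to tend to $0$ whenever $\Pi\not\equiv 0$ or $m>0$. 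You should replace the appeal to the Riesz-basis result by this elementary ratio computation.

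Finally, on density of range: the paper's argument is simply that each monomial $p_n$ lies in $\ran{\Ip}$ (as a nonzero multiple of itself) and that polynomials are dense in $\Lnu$ because $V_\psi$ is moment determinate. Your Mellin/Wiener route for $\ker\Ip^*=\{0\}$ is not needed and, as sketched, is not obviously adapted to the weight $\nu$.
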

\begin{remark}
 We point out that the intertwining identity \eqref{MainInter1_2}  generalizes to all $\psi \in \Ne$ the relation  obtained by Carmona et al.~\cite{Carmona-Petit-Yor-98} between the so-called saw-teeth process and the Bessel process, that is when $\psi(u)=u \frac{u+1-a}{u+b}$, $0<a<1<a+b$.
 \end{remark}
 %
To state the next result concerning the characterization and properties of a set of eigenfunctions, \mladen{we introduce notions discussed in detail in} Chapter \ref{sec:spec}. We say that a  sequence $(P_n)_{n\geq 0}$ in the Hilbert space $\lnu$ is a Bessel sequence if there exists   $A>0$ such that the  inequality
\begin{equation} \label{def:Bessel_1}
\sum_{n=0}^{\infty}  |\langle f,P_n\rangle_\nu |^2 \leq A  \: ||f||_\nu
\end{equation}
holds for all $f\in \lnu$. If in addition $\Span{P_n}=\Lnu$ and there exists $B>0$ such that for all finite scalar sequences $(c_n)_{n\geq 0}$
\begin{equation} \label{def:Riesz-Fischer_1}
B \: \sum_{n=0}^{\infty} c_n^2
\leq   \left|\left|\sum_{n=0}^{\infty} c_n P_n\right|\right|_\nu^2
 \end{equation}
then $(P_n)_{n\geq 0}$ is a Riesz basis in $\Lnu$. Finally, we recall the notation  $\mathcal{P}_0(x)=1$ and for $n \in \N$, $\Pon$ is the polynomial defined by
\begin{equation}\label{defP1}
\Pon(x) = \sum_{k=0}^n (-1)^k\frac{{ n \choose k}}{W_{\phi}(k+1)} x^k,
\end{equation}
where from \eqref{def:W_phi_n} $W_{\phi}(n+1)=\prod_{k=1}^n \phi(k)$. The polynomials $\Pns$ can be seen as the
Laguerre polynomials perturbed by the Stieltjes moment sequence of the intertwining operator $\Ip$, see \eqref{it:moment}. To state the next result we need the following terminology which is due to Blumenthal and Getoor \cite{Blumenthal-Getoor-61}. For a Bernstein function $\phi$,  we define its lower index as follows
\begin{eqnarray}
\underline{\phi} &=& \sup \{a>0; \: \lim_{u \to \infty} u^{-a}\phi(u) = \infty \} =  \liminf_{u \rightarrow \infty} \frac{\ln \phi(u) }{ \ln u}\in\lbbrbb{0,1},
\end{eqnarray}
with the usual convention $\sup\{ \emptyset\} = 0$. This quantity appears in substantial path properties
of the associated subordinators, and, for instance, we point out that $\underline{\phi}$ corresponds to the
Hausdorff dimension of their range, see e.g.~\cite[Chap. III]{Bertoin-96}.
\begin{theorem} \label{thm:eigenfunctions1}  \label{thm:sequences1}
	Let  $\psi \in \Ne$.	
	\begin{enumerate}
		\item \label{it:ef1} For any $n\geq 0,t\geq 0$, $\Pon$ is an eigenfunction for $P_t$ associated to the eigenvalue $e^{-n t}$, i.e.~$\Pon \in \lnu$ and
		\begin{equation}\label{eigendef1}
		P_t \Pon(x) = e^{-n t}  \Pon(x).
		\end{equation}
We also have, for any $n\in \N$,
		\begin{equation}\label{eq:eigen_bound_nu}
		|| \Pon||_{\nu}\leq 1.
		\end{equation}
		\item \label{it:compl_rb1} Moreover $\Spc{\Pon}=\lnu$ and $(\Pon)_{n\geq0}$ is a  Bessel sequence but it is not  a Riesz basis in $\Lnu$.
		\item \label{it:recurrence1} The sequence of polynomials $(\Pon)_{n\geq0}$  satisfies, for any $n\geq 2$, the following three term perturbed recurrence relation
		\begin{equation} \label{eq:pr1}
		\Pon(x)= \lb 2- \frac{1}{n}\rb  \mathcal{P}_{n-1}(x)-\frac{x}{n \phi(1)}  \mathcal{P}^{(\mathcal{T}_1)}_{n-1}(x) -\lb 1- \frac{1}{n}\rb   \mathcal{P}_{n-2}(x),
		\end{equation}
		where $\mathcal{P}^{(\mathcal{T}_1)}_{n}(x)=\sum_{k=0}^n (-1)^k \frac{{ n \choose k}}{W_{\mathcal{T}_1\phi}(k+1)} x^k$ and
 the transform $\mathcal{T}_1\phi(u)=\frac{u}{u+1}\phi(u+1)\in\Be_\Ne$ is discussed in  Proposition \ref{propAsymp1} \eqref{it:def_Tb}. 		
 \item \label{it:orth1} The sequence $\Pns$ is formed of orthogonal polynomials in some weighted $\lt^2$ space if and only if  $\psi(u)=\sigma^2 u^2 + mu, \sigma^2>0, m\geq0 $, i.e.~$\Pns$  is the sequence of \mladen{Laguerre polynomials of order $m\geq 0$, see \eqref{eq:def_LP} and \eqref{eq:def_laguerre_pol}} for definition.
\item \label{it:pol_jen1} The  polynomials $(\mathcal{P}_{n}(-x))_{n\geq0}$ are the Jensen polynomials associated to the
entire function   $\Jp(x)=\sum_{n=0}^{\infty} \frac{1}{W_{\phi}(n+1)}\frac{x^n}{n!}$, i.e.~ for any $x, t \in \mathbb{R}$, we have that
\begin{equation} \label{eq: Jensen polynomials gen1}
e^{t} \Jp(xt)= \sum^{\infty}_{n=0} \mathcal{P}_{n}(-x) \frac{t^{n}}{n!}.
\end{equation}
 Moreover, denoting  $\mathfrak{o}_{\phi}$ (resp.~$\mathfrak{t}_{\phi}$) the order (resp.~the type) of the entire function $ \Jp$, we have  $\mathfrak{o}_{\phi}=\frac{1}{1+\underline{\phi}} \in \left[\frac{1}{2},1\right]$ and $\mathfrak{t}_{\phi} \geq (1+\underline{\phi})e^{-\frac{\underline{\phi}}{1+\underline{\phi}}} \frac{1}{\liminf_{n \rightarrow \infty}\phi(n)  n^{-\underline{\phi}}}$. Note that $\mathfrak{o}_{\phi}=1$ if and only if $\underline{\phi}=0$, and, in this case, $\mathfrak{t}_{\phi}= \frac{1}{\r},$ with the usual convention $\frac{1}{\infty}=0$. Finally,
we have, for large $n$, any $x>0$ and any integer $p$,
\begin{equation} \label{eq:asympt_polyn1}
\mathcal{P}^{(p)}_{n}(-x) = \bo{\Ep(nx) e^{-n} (\Jpa{\Gamma}(n)+\Jpa{\Gamma}(-n))} = \bo{ \mladen{n^{p+\frac12}} \Ep(nx)},
\end{equation}
 where,  for any $\epsilon>0$, we set  $\Ep(x) =e^{\mathfrak{t}_{\phi} x^{ \mathfrak{o}_{{\phi}}}}\mathbb{I}_{\{0<\mathfrak{t}_{\phi}  <\infty\}}+ e^{\epsilon  x^{\mathfrak{o}_{\phi} }}\mathbb{I}_{\{\mathfrak{t}_{\phi}=0\}}+e^{x^{\mathfrak{o}_{\phi} + \epsilon}}\mathbb{I}_{\{\mathfrak{t}_{\phi}=\infty\}}$ and  $\Jpa{\Gamma}$ is the modified Bessel function of order $0$.
	\end{enumerate}
\end{theorem}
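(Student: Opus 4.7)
The eigenvalue equation \eqref{eigendef1} rests on the key identity $\mathcal{P}_n = \Ip\mathcal{L}_n$, which follows by applying $\Ip$ term-by-term to $\mathcal{L}_n(x) = \sum_{k=0}^n(-1)^k\binom{n}{k}x^k/k!$ and using the moment formula \eqref{eq:momentsIphi_1}, namely $\Ip(p_k)(x) = x^k k!/W_\phi(k+1)$. Combined with $Q_t\mathcal{L}_n = e^{-nt}\mathcal{L}_n$ from Example \ref{ex:lag} and the intertwining \eqref{MainInter1_2}, this yields $P_t\mathcal{P}_n = \Ip Q_t\mathcal{L}_n = e^{-nt}\mathcal{P}_n$, while \eqref{eq:eigen_bound_nu} follows from $\|\mathcal{L}_n\|_\varepsilon = 1$ and the contraction property of $\Ip$ from Theorem \ref{thm:intertwin_1}\eqref{it:Iphi1}. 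For part \eqref{it:compl_rb1}, since $\mathcal{P}_n$ has degree exactly $n$, $\Sp{\mathcal{P}_n,\,n\in\N} = \mathbf{P}$, and density of $\mathbf{P}$ in $\Lnu$ (Theorem \ref{thm:bijection}\eqref{it:ext}) gives $\Spc{\mathcal{P}_n} = \Lnu$. The Bessel inequality follows from $\langle f,\mathcal{P}_n\rangle_\nu = \langle \Ip^* f,\mathcal{L}_n\rangle_\varepsilon$ and Parseval's identity in $\Lg$: $\sum_n|\langle f,\mathcal{P}_n\rangle_\nu|^2 = \|\Ip^* f\|_\varepsilon^2 \leq \|\Ip\|^2 \|f\|_\nu^2$. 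The failure of the Riesz basis property follows from Theorem \ref{thm:intertwin_1}\eqref{it:bfb}: identifying $\Lg$ with $\ell^2(\N)$ via $(\mathcal{L}_n)$, the synthesis map $(c_n)\mapsto\sum c_n\mathcal{P}_n$ coincides with $\Ip$, and a Riesz basis would force $\Ip$ to be bounded below, a property restricted to the trivial case $\psi(u)=\sigma^2 u^2$.

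The three-term recurrence \eqref{eq:pr1} is obtained by applying $\Ip$ to the Laguerre recurrence \eqref{eq:lr}; linearity handles the $\mathcal{P}_{n-1}$ and $\mathcal{P}_{n-2}$ terms, and the only non-trivial step is the identity $\Ip(x\mathcal{L}_{n-1})(x) = \frac{x}{\phi(1)}\mathcal{P}^{(\mathcal{T}_1)}_{n-1}(x)$. Using $\Ip(p_{k+1})(x) = x^{k+1}(k+1)!/W_\phi(k+2)$ on the left, and the telescoping $W_{\mathcal{T}_1\phi}(k+1) = \prod_{j=1}^k j\phi(j+1)/(j+1) = W_\phi(k+2)/((k+1)\phi(1))$ on the right, both sides expand to $\sum_{k=0}^{n-1}(-1)^k\binom{n-1}{k}(k+1)x^{k+1}/W_\phi(k+2)$. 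For the orthogonality characterization \eqref{it:orth1}, the ``if'' direction is immediate from Example \ref{ex:lag}: when $\psi(u) = \sigma^2u^2+mu$, $\mathcal{P}_n$ is a rescaled associated Laguerre polynomial of order $m/\sigma^2$. Conversely, if $(\mathcal{P}_n)$ is orthogonal in $\lt^2(\omega)$ for some positive weight $\omega$, the eigenvalue equation makes $P_t$ diagonal on the orthogonal basis $(\mathcal{P}_n)$ with real eigenvalues, hence self-adjoint in $\lt^2(\omega)$; combined with $P_t\mathbf{1}=\mathbf{1}$, self-adjointness shows $\omega\,dx$ is invariant for $P_t$, and uniqueness of the invariant measure (Theorem \ref{thm:bijection}\eqref{it:invsup}) forces $\omega = c\nu$. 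Hence $P$ is self-adjoint in $\Lnu$, which by Theorem \ref{thm:bijection}\eqref{it:nsa} is equivalent to $\Pi\equiv 0$, i.e.~$\psi(u) = \sigma^2 u^2 + mu$.

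For \eqref{it:pol_jen1}, identity \eqref{eq: Jensen polynomials gen1} is a direct computation: expanding $e^t\Jp(xt) = \sum_{n,k\geq 0}t^{n+k}x^k/(n!k!W_\phi(k+1))$ and collecting powers of $t^n/n!$ yields $\sum_{k=0}^n\binom{n}{k}x^k/W_\phi(k+1) = \mathcal{P}_n(-x)$. The order and type of $\Jp$ follow from the Cauchy--Hadamard formulas applied to $c_n = 1/(n!W_\phi(n+1))$: Stirling gives $\ln n!\sim n\ln n$, and the definition of the lower index yields $\sum_{k=1}^n\ln\phi(k) = \underline{\phi}\, n\ln n + o(n\ln n)$, so $-\ln c_n\sim(1+\underline{\phi})n\ln n$ and $\mathfrak{o}_\phi = 1/(1+\underline{\phi})$; the type lower bound comes from refining this via $\phi(k)\geq c k^{\underline{\phi}}$ along a suitable subsequence, and the boundary case $\underline{\phi}=0$ (when $\phi$ is bounded with $\phi(\infty)=\r$) gives $\mathfrak{t}_\phi = 1/\r$ from $\phi(n)\to\r$. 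For \eqref{eq:asympt_polyn1}, differentiating \eqref{eq: Jensen polynomials gen1} $p$ times in $x$ gives $t^p e^t\Jp^{(p)}(xt) = \sum_n\mathcal{P}_n^{(p)}(-x)t^n/n!$, and Cauchy's formula on the saddle circle $|t|=n$ of $e^t/t^n$ yields
\[
\mathcal{P}_n^{(p)}(-x) = \frac{n!}{2\pi i}\oint_{|t|=n}t^{p-n-1}e^t\Jp^{(p)}(xt)\,dt.
\]
Bounding $|\Jp^{(p)}(z)|\lesssim \Ep(|z|)$ via the order/type (both preserved under differentiation) and parametrizing $t=ne^{i\theta}$, the leading kernel integral reduces, through $I_0(r) = \frac{1}{2\pi}\int_{-\pi}^{\pi}e^{r\cos\theta}d\theta$, to $n!\,n^{p-n}(I_0(n)+I_0(-n))/2$; Stirling and $I_0(n)\sim e^n/\sqrt{2\pi n}$ then extract the prefactor $n^{p+1/2}$.

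The main obstacle is the sharp asymptotic \eqref{eq:asympt_polyn1}: while the Cauchy integral approach is natural, controlling $|\Jp^{(p)}(xt)|$ uniformly on the saddle contour $|t|=n$ from only order/type data is delicate when $\mathfrak{t}_\phi\in\{0,\infty\}$ (requiring the $\epsilon$-dependent definition of $\Ep$), and isolating the exact $n^{p+1/2}$ prefactor rather than a crude exponential demands a careful comparison of the saddle of $e^t/t^n$ with the growth of $\Jp^{(p)}$ along that contour. The converse of the orthogonality statement is otherwise straightforward, provided one checks that the orthogonality weight $\omega$ yields a genuine self-adjoint extension of $P_t$ on the full Hilbert space, which follows from the completeness $\Spc{\mathcal{P}_n}=\Lnu$ established in part \eqref{it:compl_rb1} together with $\omega=c\nu$.
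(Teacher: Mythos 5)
Parts \eqref{it:ef1}, \eqref{it:compl_rb1}, \eqref{it:recurrence1} and \eqref{it:pol_jen1} follow the paper's own path: $\Ip\mathcal{L}_n = \Pon$ plus intertwining for the eigenfunction equation, the contraction $\|\Ip\|\leq 1$ for \eqref{eq:eigen_bound_nu}, moment determinacy for density, $\langle f,\Pon\rangle_\nu = \langle \Ip^*f,\Lpn\rangle_\varepsilon$ plus Parseval for the Bessel property, failure of bounded-below for the non-Riesz claim, $\Ip$ applied to the Laguerre recurrence \eqref{eq:lr}, and the Cauchy integral on $|t|=n$ for the polynomial bound. The one stylistic difference in part \eqref{it:pol_jen1} is that you differentiate the generating function in $x$, whereas the paper reduces to $p=0$ via the Pochhammer identity $(\mathcal{P}_n^{\phi}(-x))^{(p)}=\frac{\Gamma(n+1)}{\Gamma(n-p+1)W_\phi(p+1)}\mathcal{P}^{\phi_p}_{n-p}(-x)$ together with the invariance of order and type under $\phi\mapsto\phi_p$; either works. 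Two small inaccuracies: $\underline{\phi}=0$ does not mean $\phi$ is bounded (e.g.\ $\phi(u)\sim\ln u$), though the formula $\mathfrak{t}_\phi=1/\r$ still comes out of the paper's limit $\lim G(n)/n=\ln\phi(\infty)$; and Stirling together with the asymptotic $I_0(n)\sim e^n/\sqrt{2\pi n}$ actually gives the sharper prefactor $n^p$, not $n^{p+1/2}$ — the $n^{p+1/2}$ in the paper comes from the cruder bound $I_0(n)\leq e^n$, so the displayed big-$O$ is correct but your stated derivation of it is off by $\sqrt{n}$.

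The real divergence is part \eqref{it:orth1}, and here your argument has a genuine circularity. The paper's proof is purely algebraic: it invokes the Szeg\H{o}/Favard characterization that $(\Pon)$ is an orthogonal polynomial sequence iff it satisfies a recurrence $\Pon=(A_nx+B_n)\mathcal{P}_{n-1}+C_n\mathcal{P}_{n-2}$, compares the coefficients of $x^n,1,x,x^{n-1}$ against the explicit form \eqref{defP1}, and deduces the functional constraint $(n-2)\phi(n)-(n-1)\phi(n-1)+\phi(1)=0$, whose only solutions in $\Bp$ are affine. Your route instead tries to show $P_t$ is self-adjoint in $\lt^2(\omega)$, derive invariance of $\omega\,dx$, conclude $\omega=c\nu$, and then use Theorem \ref{thm:bijection}\eqref{it:nsa}. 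But to pass from ``$(\Pon)$ orthogonal in $\lt^2(\omega)$'' to ``$P_t$ self-adjoint in $\lt^2(\omega)$'' you need the polynomials to be dense in $\lt^2(\omega)$, and in your closing sentence you obtain that density from $\omega=c\nu$ — which is precisely the conclusion the self-adjointness was supposed to produce. The gap is repairable, but it needs a different step: orthogonality alone gives $\langle P_t f,\mathbf 1\rangle_\omega=\langle f,\mathbf 1\rangle_\omega$ for every polynomial $f$, because $P_t$ is diagonal in $(\Pon)$ and $\mathcal P_0=1$. Expanding $p_n=\sum_k c_{n,k}\mathcal P_k$ and letting $t\to\infty$ shows $\int p_n\,\omega\,dx=c_{n,0}\int\omega\,dx$, while the same limit under the invariant law $\nu$ gives $\nu p_n=c_{n,0}$; so the (normalized) moments of $\omega$ agree with those of $\nu$, and moment determinacy of $\nu$ (Theorem \ref{lem:fe1}\eqref{it:momVphi1}) then forces $\omega=c\nu$ without ever invoking density in $\lt^2(\omega)$. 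One must also confirm that the Favard measure $\omega$ is a finite positive measure with all moments finite before using determinacy. With these repairs your self-adjointness route is valid, but the paper's purely algebraic derivation from the three-term recurrence you already established in part \eqref{it:recurrence1} avoids all of this analytic bookkeeping.
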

\begin{remark}
The entire function $\Jp$ was introduced by the first author in \cite{Patie-06c} as the increasing $1$-invariant function of the self-similar semigroup $K$ and boils down, when $\psi(u)=u^2$, i.e.~$W_{\phi}=\Gamma$, to  $\Jpa{\Gamma}$, which explains the notation.
\end{remark}
\begin{remark}
 Although we shall provide another proof, we mention that property \eqref{it:orth1} regarding the necessary and sufficient conditions for the orthogonality of \mladen{$\Pns$} can be deduced from an elegant result of Chihara \cite{Chihara-68}  stating that the Laguerre polynomials are the only sequence of orthogonal polynomials generating the so-called Brenke type function of the form \eqref{eq: Jensen polynomials gen1}.
\end{remark}
The remaining part of this Chapter contains the proof of these two statements as well as the uniqueness of the invariant measure.
\subsection{Proof of Theorem \ref{thm:intertwin_1}}
\subsubsection{Factorization of invariant measures}
\label{sec:Proof_Intertwin}
To prove the identity
\begin{equation}\label{MainInter1_1}
P_t \Ipn f=\Ipn Q_t f, \text{ for any $ f \in \lga
	$ and $ t\geq 0$,}
\end{equation}
i.e.~the intertwining relation of Theorem \ref{thm:intertwin_1}\eqref{MainInter1_2} we proceed in two  steps.  First, we establish this identity in the space $\cob$. Then, we show that it extends to $\Lg$. For the first step,  by recalling from \mladen{Definition \ref{def:gL}} the notation $K_t f = P_{\ln (t+1)}{\rm{d}}_{t+1}f, \: t\geq0,\,f\in\cob,$ i.e.~$K$ is the Feller semigroup of a conservative self-similar Markov process on $[0,\infty)$, we observe that relation \eqref{MainInter1_1} considered on $\cob$ is equivalent, because of the deterministic space-time transformation, to the intertwining relation, for any $f\in \cob$,
\begin{equation} \label{eq:inter}
K_t \Ipn f=\Ipn K^{(0)}_t f, \quad t\geq 0,
\end{equation}
where we recall that $K^{(0)}$ stands for the semigroup of a Bessel process of dimension $2$, see \eqref{eq:def-bessel}. Now, to prove \eqref{eq:inter} we resort to a criterion which has been provided by Carmona et al.~\cite[Proposition 3.2]{Carmona-Petit-Yor-97}. More precisely, they showed that the intertwining relation \eqref{eq:inter} is valid on $\cob$ whenever the following two conditions are satisfied.
\begin{enumerate}
	\item  The ensuing factorization of entrance laws, which  characterizes the invariant measures in our setting,
\begin{equation*}
	K_1 \Ipn f(0)=K^{(0)}_1 f(0),
\end{equation*}
holds. Since according to \eqref{def:entrance_law_k} with $t=1$ we have that $K_1 f(0)=\int_0^{\infty}f(x)\nu(x)dx=\Vp f(1),$
we emphasize that, in our notation, this factorization translates to
\begin{equation}\label{eq:FirstCondYor}
\Vp \Ipn f(1)= \Vce f(1),
\end{equation}
where the Markov operators  are  defined  in Lemma \ref{Prop1} below.
\item The operator associated to the entrance law of the semigroup $K$ is injective in $\cob$. More specifically, by means of the self-similar property of index $1$ of the semigroup $K$, this means that
\begin{equation}\label{eq:inj-inter}
\textrm {for any $f,g\in \cob$, if for all } t>0, \: \Vp f(t)=\Vp g(t) \: \textrm{ then } f=g.\end{equation}
\end{enumerate}
Factorization \eqref{eq:FirstCondYor} is the purpose of Lemma \ref{Prop1}. Lemma \ref{lemma:Injectivity} provides condition \eqref{eq:inj-inter}.
\begin{lemma}\label{Prop1}
 Let $\psi \in \Ne$ and recall that $\psi(u)=u\phi(u)$. Then, we have the following factorization of the  multiplicative Markov operator  $\Vce$ associated to an exponential random variable ${\bf{e}}$ of parameter $1$, i.e.~$\Vce f(x)=\E \lbb f(x{\bf{e}}) \rbb=\int_0^{\infty}f(xy)e^{-y}dy,\, f\in\cob$,
\begin{eqnarray}\label{eq:elsn}
\Vce f(x) &=& \Vp  \Ipn f(x)=\Ipn  \Vp f(x),\,\,x>0,
\end{eqnarray}
where we have set $\Vp f(x) = \E \lbb f(xV_{\psi}) \rbb$  with $V_{\psi}$ defined in  Proposition \ref{prop:bij_lamp}\eqref{it:invex_1}. Let $g\in\Lg$ then \eqref{eq:elsn} holds with $f=g^2$ at $x=1$.
\end{lemma}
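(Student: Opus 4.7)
The strategy is to reinterpret the three multiplicative Markov operators $\Vce$, $\Vp$, and $\Ipn$ as expectation operators over products of independent positive random variables, and reduce the claimed factorization to the distributional identity
\begin{equation*}
\mathbf{e} \stackrel{(d)}{=} V_\psi \cdot I_\phi,
\end{equation*}
where $V_\psi$ and $I_\phi$ are taken independent. Since these are multiplicative operators indexed by a scaling parameter $x>0$, the commutativity $\Vp \Ipn = \Ipn \Vp$ will follow from the commutativity of multiplication of independent random variables, so it suffices to verify the factorization at $x=1$ and extract the general case by the scaling relation $\Vp \Ipn f(x)=\E[f(xV_\psi I_\phi)]$.

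First, the plan is to verify the factorization at the level of integer moments, which by moment determinacy of the exponential distribution is enough to conclude the distributional identity. Using independence and Proposition \ref{prop:bij_lamp}\eqref{it:invex_1}, namely $\E[V_\psi^n] = W_\phi(n+1)$, together with Proposition \ref{prop:recall_exp}\eqref{it:momIphi}, namely $\E[I_\phi^n] = n!/W_\phi(n+1)$, we obtain
\begin{equation*}
\E\!\left[(V_\psi I_\phi)^n\right] = \E[V_\psi^n]\,\E[I_\phi^n] = W_\phi(n+1)\cdot\frac{n!}{W_\phi(n+1)} = n! = \E[\mathbf{e}^n], \quad n\in\N.
\end{equation*}
Since the exponential distribution is moment determinate, this forces $V_\psi I_\phi \stackrel{(d)}{=} \mathbf{e}$, and hence for any bounded measurable $f$,
\begin{equation*}
\Vp\Ipn f(x) = \E\!\left[f(xV_\psi I_\phi)\right] = \E[f(x\mathbf{e})] = \Vce f(x),
\end{equation*}
and the same identity holds with the order of the operators reversed by Fubini. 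This gives \eqref{eq:elsn} for $f\in\cob$.

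Finally, to handle the extension stated in the last sentence of the lemma, I would observe that for any nonnegative measurable function the identity $\E[h(V_\psi I_\phi)] = \E[h(\mathbf{e})]$ holds (by monotone approximation from $\cob$), with both sides finite or infinite simultaneously. Applying this to $h = g^2$ for $g\in\Lg$ at $x=1$ gives
\begin{equation*}
\Vp\Ipn g^2(1) = \E\!\left[g^2(V_\psi I_\phi)\right] = \E[g^2(\mathbf{e})] = \int_0^\infty g^2(y)e^{-y}dy = \Vce g^2(1),
\end{equation*}
which is the desired statement and is automatically finite since $g\in\Lg$. I do not anticipate any serious obstacle: the only delicate point is invoking moment determinacy, which is trivial for the exponential distribution; everything else is bookkeeping between the three multiplicative kernels.
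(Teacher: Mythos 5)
Your proof is correct and follows essentially the same route as the paper's: both compute the integer moments of the composed operator via $\E[V_\psi^n]=W_\phi(n+1)$ and $\E[I_\phi^n]=n!/W_\phi(n+1)$, invoke moment determinacy of the exponential distribution to conclude the distributional factorization, and then extend to $g^2$ with $g\in\Lg$ by a standard integrability/Fubini argument. Your phrasing in terms of the random-variable identity $\mathbf{e}\stackrel{(d)}{=}V_\psi I_\phi$ is a slight repackaging of the paper's operator-level moment computation, but the underlying argument is identical.
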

\begin{remark}
We mention that, in the special case $\phi(0)=0$, identity  \eqref{eq:elsn} can be reformulated as the factorization of the exponential law identified by Bertoin and Yor \cite{Bertoin-Yor-02}.  We also point out that in Lemma \ref{lem:fac} below, under some additional conditions, we shall provide another factorization of this type which is useful for proving the completeness property  of the sequence of co-eigenfunctions $(\nun)_{n\geq0}$ in $\Lnu$.
 \end{remark}
\begin{proof}
Since $\psi \in \Ne$, $\phi \in \Bp$, and we know from \eqref{eq:Vphi=Vpsi1} of Theorem \ref{lem:fe1}\eqref{it:momVphi1} and Proposition \ref{prop:recall_exp}\eqref{it:momIphi}
that both operators $\Ipn$ and $\Vp $  are moment determinate and more precisely we have,
for any $n\geq 0$, $x\geq 0$, $p_n(x)=x^n$, using \eqref{eq:RecurI}  and \eqref{eq:moment_V_psi}, that
\begin{equation}\label{eq:momentsKernels}
\Ipn p_n(x) = \frac{n!}{W_{\phi}(n+1)} \: p_n(x) \quad \textrm{ and  } \quad
\Vp p_n(x) = W_{\phi}(n+1) \: p_n(x).
\end{equation}
 Hence,
  \[ \Ipn \Vp p_n(x) = \Vp \Ipn  p_n(x) = n! \: p_n(x)= \Vce p_n(x)\]
  and identity \eqref{eq:elsn} follows from the fact that the law of the exponential random variable $\textbf{e}$ associated to $\Vce$ is also moment determinate. \mladen{If $g\in\Lg$ then $||g||^2_{\varepsilon}=\Vce g^2(1) = \Vp  \Ipn g^2(1)=\Ipn  \Vp g^2(1)$ follows from elementary application of Fubini's theorem.}
  \end{proof}
  We proceed with the following corollary which is a consequence of the Bernstein-Weierstrass representation of the Mellin transform developed in
  Chapter \ref{sec:Mellin}.
\begin{corollary}\label{cor:MellinZeroFree}
	For any $\phi\in \Be$, we have $\M_{V_{\phi}}(z)\neq 0$, for any $z \in \C_{(d_\phi,\infty)}$, and, if in addition $\phi(0)=m>0,d_\phi=0$, then $\M_{V_{\phi}}(z)\neq 0$,  for any $z\in \C_{[0,\infty)}$. Similarly, $\M_{I_{\phi}}(z)\neq 0$ on the strip $\C_{(0,\infty)}$. And, if in addition $\phi(0)=m=0$, $\phi$ does not vanish on $\Cb_0\setminus\lbcurlyrbcurly{0}$ and $\phi'(0^+)<\infty$ then $\M_{I_{\phi}}(z)\neq 0$ on the strip $\C_{[0,\infty)}$.
\end{corollary}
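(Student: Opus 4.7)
The plan is to read off both nonvanishing claims directly from the Bernstein-Weierstrass product \eqref{eq:Wphi} established in Theorem \ref{lem:fe1}. The central observation is the standard fact that an absolutely convergent infinite product $\prod p_k$ is zero if and only if some factor $p_k$ is zero. Since the absolute convergence of the product in \eqref{eq:Wphi} was verified on $\C_{(d_\phi,\infty)}$ in Section \ref{sec:Mellin_Weierstrass1}, and since each factor $\frac{\phi(k)}{\phi(k+z)}\, e^{\frac{\phi'(k)}{\phi(k)} z}$ is nonzero --- the exponential never vanishes, $\phi(k)>0$ for $k\geq 1$ by Proposition \ref{propAsymp1}\eqref{it:bernstein_cm}, and $\phi(k+z)$ is finite since $\phi$ is analytic on $\C_{(d_\phi,\infty)}$ by Proposition \ref{propAsymp1}\eqref{it:bernstein_def} and $k+z \in \C_{(d_\phi,\infty)}$ --- the infinite product is nonzero throughout the strip. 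Combined with the fact that the remaining prefactor $e^{-\gamma_\phi z}$ is nowhere zero and that $\phi(z) \neq \infty$ on $\C_{(d_\phi,\infty)}$, the identity $\M_{V_\phi} = W_\phi$ gives $\M_{V_\phi}(z) \neq 0$ on $\C_{(d_\phi,\infty)}$. The identical argument applies on $\C_{[0,\infty)}$ when $\phi(0)>0$ and $d_\phi = 0$, using that $W_\phi \in \Ac_{[0,\infty)}$ in this case.

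For $\M_{I_\phi}$, I would invoke Proposition \ref{lem:fe2} to write $\M_{I_\phi}(z) = \Gamma(z)/W_\phi(z)$. On $\C_{(0,\infty)} \subseteq \C_{(d_\phi,\infty)}$ (since $d_\phi \leq 0$), the just-proved nonvanishing of $W_\phi$ and the zero-freeness of $\Gamma$ on $\C \setminus \{0,-1,-2,\ldots\}$ immediately yield $\M_{I_\phi}(z) \neq 0$. Under the supplementary hypotheses $\phi(0)=0$, $\phi'(0^+)<\infty$, and $\phi(ib)\neq 0$ for $b\neq 0$, Proposition \ref{lem:fe2} extends $\M_{I_\phi}$ analytically to $\C_{[0,\infty)}$, and only the boundary $\C_0$ remains. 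For $z = ib \neq 0$, the hypothesis $\phi(ib)\neq 0$, together with $\phi(k+ib) \neq 0$ for $k \geq 1$ --- a consequence of $\Re\phi(k+ib) = m+\sigma^2 k+\int_0^\infty(1-e^{-ky}\cos(by))\mu(dy) > 0$ obtained directly from \eqref{eqn:phi-} --- makes the product argument apply verbatim, so $W_\phi(ib)$ is finite and nonzero, whence $\M_{I_\phi}(ib) \neq 0$. At $z=0$, the conditions $\phi(0)=0$ and $\phi'(0^+)<\infty$ yield $\phi(z) \sim \phi'(0^+) z$ and therefore $W_\phi(z) \sim 1/(\phi'(0^+) z)$, since the infinite product evaluates to $1$ at $z=0$; the simple pole of $\Gamma(z)\sim 1/z$ is precisely cancelled, producing $\M_{I_\phi}(0) = \phi'(0^+) \neq 0$.

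The only mildly delicate point in carrying out this plan is verifying that the absolute convergence of the Weierstrass product extends up to the imaginary axis in the borderline case $\phi(0)=0$. This, however, is already built into the analyticity of $\M_{I_\phi}$ on $\C_{[0,\infty)}$ granted by Proposition \ref{lem:fe2} under the stated hypotheses, so no additional estimates are required beyond invoking the product decomposition and checking its factors.
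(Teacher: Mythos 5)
Your proposal is correct and follows essentially the same route as the paper's own (very terse) proof: both rest on the absolute convergence of the Bernstein--Weierstrass product established in the proofs of Theorem~\ref{lem:fe1} and Proposition~\ref{lem:fe2}, together with the elementary fact that an absolutely convergent product of nonvanishing factors does not vanish. You simply make explicit the factor-by-factor check (exponential, $\phi(k)>0$, finiteness of $\phi(k+z)$ and of $\phi(z)$), as well as the boundary cases at $\C_0$, including the cancellation $W_\phi(z)\sim 1/(\phi'(0^+)z)$ against $\Gamma(z)\sim 1/z$ at $z=0$, which is exactly what the paper's one-line reference to ``the absolute convergence of the complex logarithm'' amounts to.
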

\begin{proof}
	The proof follows from the proof of Theorem \ref{lem:fe1} (resp.~Proposition \ref{lem:fe2}) of the absolute convergence of the complex logarithm of  $\M_{V_{\phi}}(z)$ (resp.~$\M_{I_{\phi}}(z)$), see from \eqref{eq:fet} onwards.
\end{proof}

 Recall that $\lt^{\infty}(A)$ stands for the set of measurable a.e.~bounded functions on $A\subseteq\R$. The next result proves the second condition, that is \eqref{eq:inj-inter}.
\begin{lemma}\label{lemma:Injectivity}
For any $\psi\in \Ne$, $\Vpb \in \Bo{\cob}$ and is one-to-one in $\lt^{\infty}\lbrb{\R_+}$.
\end{lemma}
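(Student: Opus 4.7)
The plan is to treat the two assertions separately. For the boundedness $\Vpb \in \Bo{\cob}$, I would write out $\Vpb f(x) = \int_0^\infty f(xy)\nu(y)\,dy$ for $x > 0$ and $\Vpb f(0) = f(0)$, where $\nu$ is the density of $V_\psi$ furnished by Theorem \ref{thm:smoothness_nu1}\eqref{it:supV}. The contraction estimate $\|\Vpb f\|_\infty \le \|f\|_\infty$ is immediate since $\nu$ is a probability density. Continuity of $\Vpb f$ on $[0,\infty)$ and the vanishing at infinity both follow from dominated convergence: $\nu$ is an integrable bound, $|f(xy)|$ is dominated by $\|f\|_\infty$, and for each fixed $y > 0$ we have $f(xy) \to 0$ as $x \to \infty$ and $f(xy) \to f(0)$ as $x \to 0$ by continuity of $f$ at the endpoints.

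For the injectivity on $\lt^{\infty}(\R_+)$, my strategy is to linearize the multiplicative action by the change of variable $s = \log x$, which turns $\Vpb$ into a convolution on the line, and then to apply Wiener's Tauberian theorem. Setting $g(s) = f(e^s)$ and $k(t) = e^t \nu(e^t)$, an elementary substitution yields
\[
\Vpb f(e^\sigma) = \int_{-\infty}^\infty g(\sigma + t) k(t)\,dt = (g \star \check k)(\sigma), \qquad \check k(t) := k(-t),
\]
where $\check k \in \lt^1(\R)$ is a probability density (indeed $\int_\R \check k(t)\,dt = \int_0^\infty \nu(y)\,dy = 1$). A direct Fubini computation identifies its Fourier transform as
\[
\widehat{\check k}(b) = \int_0^\infty y^{ib}\nu(y)\,dy = \M_{V_\psi}(1 + ib) = W_\phi(1 + ib), \qquad b \in \R.
\]
Since $d_\phi \in (-\infty, 0]$, the vertical line $\{\Re(z)=1\}$ lies inside the strip $\Cb_{(d_\phi,\infty)}$, so Corollary \ref{cor:MellinZeroFree} guarantees $\widehat{\check k}(b) \neq 0$ for every $b \in \R$. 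Wiener's Tauberian theorem then implies that the translates of $\check k$ span a dense subspace of $\lt^1(\R)$, and a standard Hahn-Banach duality argument rules out any non-zero $g \in \lt^{\infty}(\R)$ with $g \star \check k \equiv 0$. Pulling this back through $g(s) = f(e^s)$ yields $f = 0$ almost everywhere on $\R_+$.

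The main technical ingredient, and the only step that requires genuine input beyond routine analysis, is the zero-free property of $W_\phi$ along the line $\Cb_1$, which was established in Chapter \ref{sec:Mellin} via the generalized Bernstein-Weierstrass product representation of $\M_{V_\psi}$. Once that is in hand, both claims of the lemma follow from well-known functional-analytic facts, with no further appeal to the specific structure of the class $\Ne$ beyond the fact that $\phi(u)=\psi(u)/u$ lies in $\Bp\subset\Be$ and that $d_\phi \leq 0$.
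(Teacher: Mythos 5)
Your proof is correct and follows essentially the same strategy as the paper's: pass to logarithmic coordinates to convert the multiplicative Markov operator into an additive convolution on $\R$, identify the Fourier transform of the resulting kernel with the Mellin transform $\M_{V_\psi}(1+ib)=W_\phi(1+ib)$, invoke the zero-free property from Corollary~\ref{cor:MellinZeroFree}, and then conclude via Wiener's Tauberian theorem. The paper cites \cite[Theorem~4.8.4(ii)]{BinghamGoldieTeugels87}, which packages the Wiener argument in exactly the form used (a bounded function whose convolution with an $L^1$ kernel with nowhere-vanishing Fourier transform vanishes identically must itself vanish a.e.), so your slightly more unrolled "density of translates plus Hahn--Banach" phrasing is equivalent. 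The one genuine addition you make is verifying the first assertion $\Vpb \in \Bo{\cob}$, which the paper's proof passes over silently as routine; your dominated-convergence argument for continuity on $\overline{\R}_+$ and decay at infinity is the right one (using that $\nu$ is a probability density without atom at $0$, so $f(xy)\to 0$ for $\nu$-a.e.\ $y$ as $x\to\infty$). No gaps.
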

\begin{proof}
 From the definition of a Markov operator, for any $f\in \lt^{\infty}(\R_+)$ and $y \in \R$,
\begin{equation}\label{eq:Convolution}
\Vpb f(e^y)=\Ebb{f(V_\psi e^y)}=\E\lbb f_e\lb y+\ln V_{\psi} \rb \rbb =f_e \:\star\:\tilde{\nu}_e(y),
\end{equation}
where we have set  $\tilde{\nu}_e(y)=e^{-y}\nu(e^{-y})$, $f_e(y)=f(e^{y})\in \lt^{\infty}(\R)$ and $\star$ stands for the standard additive convolution. Assume that there exists $g\in \lt^{\infty}(\R_+)$ such that
\[\Vpb g(e^y)=g_{e}\:\star\:\tilde{\nu}_e(y)=0, \text{ for all }   y\in \R. \]
Then, according to the Wiener's Theorem, see e.g.~\cite[Theorem 4.8.4(ii)]{BinghamGoldieTeugels87}, we have, for some $b\in \R$, that
\begin{equation} 
\MP(1+ib)\mladen{=\IInf x^{ib}\nu(x)dx=\int_\R e^{-ib y} \tilde{\nu}_e(y)dy=0},
\end{equation}
 which  contradicts Corollary \ref{cor:MellinZeroFree}, where it is stated that  $\M_{V_{\phi}}$  is zero-free at
 least on $z\in \Cb_{(0,\infty)}$  and we know from \eqref{eq:Vphi=Vpsi1} that for $\phi\in\Be_\Ne$, $V_{\phi}\stackrel{(d)}{=}V_{\psi}$ and thus $\M_{V_{\phi}}=\MP$.
\end{proof}
 Now it remains to show that the intertwining relation
 \eqref{MainInter1_1} extends from $\cob$ to $\Lg$.   To this end, we first prove the following.
  \subsubsection{Proof of Theorem \ref{MainProp} \eqref{it:Iphi1}}  First,  observe that
  $\Ipn$ is plainly linear. Next, let $f\in \lga$ and using H\"older's inequality and \mladen{the very last claim of Lemma \ref{Prop1}}, we get that
 \begin{equation} \label{eq:bound_Ip}
 ||\Ipn f||^2_{\nu}\leq \int_{0}^{\infty}\Ipn f^{2}(x)\nu(x)dx =\Vp  \Ipn f^2(1) =\mathcal{E} f^{2}(1)=||f||^2_{\varepsilon}<\infty,
 \end{equation}
 which  shows that $\Ipn \in \Bop{\Lg}{\Lnu}$.
 The fact that $\Ipn \in \Bo{\cob}$  follows immediately by the dominated convergence which completes the proof of
 Theorem \ref{MainProp}\eqref{it:Iphi1}.
 \subsection{End of the proof of the intertwining relation \eqref{MainInter1_2}}
 We recall that $\cco_c(\R_+) $,
 the space of continuous functions with compact support in $\R_+$, is dense in $\Lg$ and
 since  $\Ipn \in \Bop{\Lg}{\Lnu}$, and,
 from Theorem \ref{thm:bijection}\eqref{it:ext}, for all $t\geq0$, $Q_t \in \Bo{\Lg}$ and $P_t \in \Bo{\Lnu}$, we conclude the extension of \eqref{MainInter1_1}  from $\cob$ to $\Lg$  by a density argument.

\subsection{Proofs of Theorem \ref{MainProp}\eqref{it:bfb} and \eqref{it:moment}}
 Let $\psi \in \Ne$ and thus $\phi \in \Bp$.
 Next, we  recall that, for any $n\geq0$, $p_n(x)=x^n$, and from \eqref{eq:momentsKernels} we have immediately that $p_n \in \Lnu,\,n\geq 0$. Also from  $\psi(u)=u\phi(u)$ and \eqref{eq:momentsKernels} the monomials are eigenfunctions for $\Ipn$ in $\Lnu$ and
 	\[ \Ipn p_n(x)=\frac{n!}{W_{\phi}(n+1)}=\frac{(n!)^2}{\prod_{k=1}^{n}\psi(k)}p_n(x)=\lambda_n p_n(x),\]
 	 for all $n\geq0$. Moreover, as the probability measure $\nu(x)dx$ of the positive  random variable $V_{\psi}$ is  moment determinate, see Theorem \ref{lem:fe1}\eqref{it:momVphi1}, the polynomials are dense in $\Lnu$, see \cite[Chap.~2, Cor.~2.3.3, p.~45]{Akhiezer-65}, which proves $\Ran{\Ipn}=\Lnu$ in $\Lnu$.
 To prove the last  claim of item \eqref{it:bfb}, we check with the notation above  and $\psi(u)=u\phi(u)$ that, for all $n\geq0$,
 	\begin{eqnarray*}
 		||\Ipn p_n||^2_{\nu}&=& \lambda^{2}_n||p_n||_{\nu}^2=\lambda^2_n W_{\phi}(2n+1)=\Gamma(2n+1)\frac{\prod_{k=n+1}^{2n}\lb\frac{\psi(k)}{k^2}\rb}{\prod_{k=1}^{n}\lb\frac{\psi(k)}{k^2}\rb}.
 	\end{eqnarray*}
 	Since $||p_n||^2_{\varepsilon}=\Gamma(2n+1)$, we get that $\Ipn$ is not bounded from below if and only if
 	\begin{equation}\label{eq:ratioLimit}
\lim_{n \to \infty} \frac{||\Ipn p_n||^2_{\nu}}{||p_n||^2_{\varepsilon}}=\lim_{n \to \infty}  \frac{\prod_{k=n+1}^{2n}\lb\frac{\psi(k)}{k^2}\rb}{\prod_{k=1}^{n}\lb\frac{\psi(k)}{k^2}\rb}=\lim_{n \to \infty}  \frac{\prod_{k=1}^{n}\lb\frac{\psi(k+n)}{\lbrb{k+n}^2}\rb}{\prod_{k=1}^{n}\lb\frac{\psi(k)}{k^2}\rb}=0.\end{equation}
 	Next, note from \eqref{eq:NegDef} that an integration by parts yields that
 	\[\frac{\psi(k)}{k^2}=\sigma^2+\frac{m}{k}+\int\limits_{0}^{\infty}e^{-ky}\PPP(y)dy,\]
  where we recall that $\PPP(y)=\int_y^{\infty}\PP(r)dr$, see \eqref{def:tail_Levy_measure}.
 	If $\sigma^2>0$, and, assume without a loss of generality that $\sigma^2=1$, then from \eqref{eq:ratioLimit} it suffices to show that
 	\begin{eqnarray*}
 \lim\limits_{n\to\infty} \sum_{k=1}^{n} \ln\lb 1-\frac{\frac{mn}{k(n+k)}+\overline{\varphi}_{k}(n)}{1+\frac{m}{k}+\int\limits_{0}^{\infty}e^{-ky}\PPP(y)dy}\rb &\asymp& - \lim\limits_{n\to\infty} \sum_{k=1}^{n}\lb\frac{mn}{k(n+k)}+\overline{\varphi}_{k}(n)\rb=-\infty,
 	\end{eqnarray*}
 where we have set $\overline{\varphi}_{k}(n)= \IInf e^{-ky}(1-e^{-ny})\PPP(y)dy$.
If $m>0$ the claim is clear. If $m=0$, unless $\Pi(dy)\equiv \:0dy,\,y>0,$ which holds if and only if $\PPP(0^+)=\int_{0}^{\infty}y\Pi(dy)=0$, we have since $\PPP$ is non-increasing  on $\R_+$ and $\PPP(0^+)>0$
 	\[-\lim\limits_{n\to\infty}\sum_{k=1}^{n}\IInf e^{-ky}(1-e^{-ny})\PPP(y)dy=-\IInf \frac{e^{-y}}{1-e^{-y}}\PPP(y)dy=-\infty.\]
 	 If $\sigma^2=0$ the claim is obvious by noting that from the first equality in  \eqref{eqn:phi-} of the Proposition \ref{propAsymp1} we get that $k^{-2}\psi(k)=k^{-1}\phi(k)$ is non-increasing to zero, as $k\to\infty$, and therefore the last product in \eqref{eq:ratioLimit} is bounded from above by $\frac{\psi(n+1)}{(n+1)^2\psi^2(1)}=\frac{\phi(n+1)}{(n+1)\psi^2(1)}\stackrel{\infty}{=} \so{1}$.
 	Thus $\Ipn$ is not bounded from below whenever $\Pi$ is not the zero measure on $\R_+$ or $m>0$. Otherwise, $\phi(u)=\sigma^2 u$  and by moment identification from  \eqref{eq:momentsKernels} we have that $\nu(x)=\sigma^{-2}e^{-\sigma^{-2}x},\,x>0,$ and $I_{\phi}\stackrel{(d)}{=}\sigma^{-2}$. That is for all $f\in \Lnu$, we have that $||\Ipn f||_{\nu}^2=\int_{0}^{\infty}f^2(\sigma^{-2}x)\sigma^{-2}e^{-\sigma^{-2}x}dx=||f||^2_{\e}$  and the claim follows.
This completes the proof of Theorem \ref{thm:intertwin_1} since item \eqref{it:moment} is easily deduced from the first identity in
\eqref{eq:momentsKernels}.
\subsection{Proof of the uniqueness of the invariant measure} \label{sec:pr_uni_inv}
The intertwining relation allows also to prove the uniqueness  of the invariant measure, that is $\nu(x)dx$ is the unique invariant measure of the Feller semigroup $(P_t)_{t\geq0}$.
Indeed,  assume that there exists a measure  $\tilde{\nu}(dx) \neq \nu(x)dx$ such that for all $f\in \cob$, $\tilde{\nu} P_t f = \tilde{\nu} f = \int_0^{\infty}f(x)\tilde{\nu}(dx)$.  From Theorem \ref{MainProp}\eqref{it:Iphi1} we have that $\Ip  \in \Bo{\cob}$. \mladen{Since $\cob\subseteq\Lg$ then the intertwining  relation \eqref{MainInter1_2} holds in $\cob$} and we obtain that
\[ \tilde{\nu} \Ip Q_t f = \tilde{\nu}P_t \Ip  f= \tilde{\nu} \Ip f. \]
Therefore, $\bar{\nu}(x)dx=\int_{0}^{\infty}\iota(x/y)\frac{\tilde{\nu}(dy)}{y}dx$, where $\iota$ is the density of $I_\phi$, see Proposition\ref{prop:recall_exp}\eqref{it:iota}, is an invariant measure for  the classical Laguerre semigroup $Q$, and, thus by uniqueness of its invariant measure $\bar{\nu}(x)=\varepsilon(x)=e^{-x},\,x>0$. Hence, the factorization \eqref{eq:elsn} holds with $\mathcal{V}_\psi$ replaced by  $\tilde{\mathcal{V}}$ that is the Markov operator associated to the variable $\tilde{V}$ with law $\tilde{\nu}$. By taking  Mellin transform on both sides of this new factorization and noting from Corollary \ref{cor:MellinZeroFree} that  $\mathcal{M}_{\Ip}(z)=\frac{\Gamma(z)}{W_{\phi}(z)}$ is zero-free on $\Cb_{\lbrb{0,\infty}}$ we get that $\mathcal{M}_{V_{\psi}}(z)=\mathcal{M}_{\tilde{\nu}}(z)$, i.e.~$\tilde{\nu}(dx) = \nu(x)dx$.
\subsection{Proof of Theorem \ref{thm:eigenfunctions1}}  \label{sec:proof_poly}
Let $\psi \in \Ne$.	Using the form of the Laguerre polynomials, see \eqref{eq:def_LP1}, the first identity of \eqref{eq:momentsKernels} and the linearity of $\Ip$, we first note, for any $n\geq0$, that
	\begin{equation}\label{eq:c-p}
	\Ip \Lpn(x) = \sum_{k=0}^n (-1)^k { n \choose k} \E \lbb I_{\phi}^k \rbb \frac{x^k}{k!}=\sum_{k=0}^n (-1)^k \frac{{ n \choose k}}{W_{\phi}(k+1)} x^k = \Pon(x).
	\end{equation}
As, for all $n\geq0$, $\Lpn \in \Lg$ and $ \Ip \in \Bop{\Lg}{\Lnu}$, we get that $\Pon \in \Lnu$, and,
	\[P_t \Ip \Lpn(x) =  \Ip Q_t \Lpn(x)  = e^{-n t}\Ip  \Lpn(x), \]
	where we have used successively the intertwining relation \eqref{MainInter1_1}, the eigenfunction property of the Laguerre polynomials, see \eqref{eq:eou}, and, again the linearity property of $\Ip$. This proves the first claim of  Theorem \ref{thm:eigenfunctions1}  \eqref{it:ef1}. The second one, i.e.~the bound \eqref{eq:eigen_bound_nu}, is obtained by observing that, for any $n \in \N$,
 \[ ||\Pon||_{\nu}=||\Ip \Lpn||_{\nu} \leq ||\Lpn||_{\varepsilon}=1, \]
 where we used that $\Ip \in \Bop{\Lg}{\Lnu}$ is a contraction, see \eqref{eq:bound_Ip}, and the Laguerre polynomials form an orthonormal basis of $\Lg$.
 Next, using the fact that  $V_\psi$ is  moment determinate, see  Theorem \ref{prop:FormMellin1}\eqref{it:momVphi1}, the polynomials are dense in $\lnu$, see \cite[Chap.~2, Cor.~2.3.3, p.~45]{Akhiezer-65}, which \mladen{gives that $\Spc{\Pon}=\lnu$} in $\Lnu$.  Moreover,  for any $f\in \lnu$, the Bessel property of the sequence $\Pns$ is obtained from the following relations
		\begin{eqnarray} \label{eq:bes_bound}
		\sum_{n=0}^{\infty} | \langle f, \Pon \rangle_{\nu}|^2 &=&\sum_{n=0}^{\infty} | \langle f, \Ip \Lpn \rangle_{\nu}|^2 =\sum_{n=0}^{\infty} | \langle \Ip^* f,  \Lpn \rangle_{\varepsilon}|^2
		=  | | \Ip^* f||^2_{\varepsilon}
		\leq  | |f||^2_{\nu},
		\end{eqnarray}
		where we have used  the Parseval identity  for the Laguerre polynomials $(\Lpn)_{n\geq0}$ in $\Lg$ and  that  $ \Ip^{*} \in \Bop{\Lnu}{\Lg}$ is a contraction  as the adjoint of the contraction $ \Ip \in \Bop{\Lg}{\Lnu}$, see Theorem\ref{MainProp}\eqref{it:Iphi1}. Next, assume that $(\mathcal{P}_n)_{n\geq0}$ is a Riesz basis, then being already a  complete Bessel sequence it means that for every sequence $(s_n)_{n\geq0}$ in $\ell^2(\N)$ there exists a unique $f \in \Lnu$ such that, for any $n\geq0$,
		\[ s_n= \langle \mathcal{P}_n,f \rangle_{\nu}.\]
		 Since $(\Lpn)_{n\geq 0}$ is an orthonormal basis in $\Lg$, we have that $(s_n)_{n\geq0} \in \ell^2(\N)$ if and only if  there exists a unique  $g\in\Lg$ such that for each $n\in \N$, $s_n=\langle  \Lpn,g \rangle_{\varepsilon}$.
		Therefore,
		\[s_n= \langle \mathcal{P}_n,f \rangle_{\nu}=\langle \Ip \Lpn,f \rangle_{\nu}=\langle  \Lpn,\Ip^* f \rangle_{\varepsilon}\]
		implies that the equation $\Ip^* f=g$ must have a unique solution for any $g\in \Lg$. In turn this means from the open mapping theorem that, with the obvious notation, $\Ip=\Ip^{**}$ is bounded from below which from Theorem\ref{MainProp}\eqref{it:bfb} provides a contradiction and completes the proof of item \eqref{it:compl_rb1}.
		 Next, from the three term recurrence relation satisfied by the Laguerre polynomials, see \eqref{eq:lr}, combined with the identity \eqref{eq:c-p}, we get easily, by linearity, that, for any $n\geq 2$,
	\begin{eqnarray*}
		\mathcal{P}_n(x)&=&\Ip\Lc_n(x)=\Ip\lbrb{\lb 2+ \frac{-1-x}{n}\rb  \mathcal{L}_{n-1}(x) -\lb 1- \frac{1}{n}\rb   \mathcal{L}_{n-2}(x)} \\
		&=&\lb 2- \frac{1}{n}\rb  \mathcal{P}_{n-1}(x)-\frac{x}{n } \sum_{k=0}^{n-1} (-1)^k { n -1 \choose k}\E \lbb I_{\phi}^{k+1} \rbb \frac{x^k}{k!}  -\lb 1- \frac{1}{n}\rb   \mathcal{P}_{n-2}(x).
	\end{eqnarray*}
	The recurrence relationship \eqref{eq:pr1} follows after observing  that $\E \lbb I_{\phi}^{k+1}\rbb=\frac{1}{\phi(1)}\E \lbb I_{\mathcal{T}_1\phi}^{k}\rbb$, where the transformation $\mathcal{T}_1 \phi(u) = \frac{u}{u+1}\phi(u+1) \in \Be$ was introduced  in Proposition \ref{propAsymp1}\eqref{it:def_Tb}.
	Finally, from a classical result in the theory of orthogonal polynomials, see e.g.~\cite[Theorem 3.2.1]{Szego}, the sequence of polynomials $(\mathcal{P}_n)_{n\geq0}$ is orthogonal in some weighted $\lt^2$ space if and only if there exist real constants $A_n, B_n, C_n,\,n\geq 2,$ such that,
	\begin{equation}\label{eq:recur_rev1}
		\mathcal{{P}}_n(x)= (A_n x + B_n) \mathcal{{P}}_{n-1}(x)+ C_n \mathcal{{P}}_{n-2}(x) .
	\end{equation}
First, equating the coefficients of the monomials $x^n,1$ of \eqref{eq:recur_rev1} we get, since $W_{\phi}(n+1)=\prod_{k=1}^{n}\phi(k)$, that $A_n=-\frac1{\phi(n)},\,B_n+C_n=1$. Then, equating the monomials $x,\,x^{n-1}$ in  \eqref{eq:recur_rev1}  and from the expressions for $A_n,C_n$ we get that $B_n=2-\frac{\phi(1)}{\phi(n)}=n-\frac{\phi(n-1)}{\phi(n)}(n-1)$. This allows us to check that relation \eqref{eq:recur_rev1} holds  if and only if the following  equation is satisfied
	\begin{equation*}
		(n-2)\phi(n)-(n-1)\phi(n-1)+\phi(1) = 0.
	\end{equation*}
	 It is easy to verify that $\phi(n)=\sigma^2n+m$, with $\sigma^2,m \geq0,$ is a solution to  this equation. To show that there do not exist other solutions, we set $g(u)= \frac{\phi(3)-\phi(1)}{\phi(u+2)-\phi(1)},\,u\geq 0$, and, observe that $g$ is a solution to the functional equation $g(n+1)=\frac{n+1}{n+2}g(n),\,n\geq1,$ with $g(1)=1$.
	 Hence, $g(n)=\frac{\phi(3)-\phi(1)}{\phi(n+2)-\phi(1)}=\frac{2}{n+1}$, which completes the proof of \eqref{it:orth1}. Next, from \cite[Proposition 2.1(ii)]{Craven-Csordas-89_Jensen_Turan}, easy algebra yields the identity \eqref{eq: Jensen polynomials gen1}, i.e.~$e^{t} \Jp(xt)= \sum^{\infty}_{n=0} \mathcal{P}_{n}(-x) \frac{t^{n}}{n!}$, for any $x, t \in \mathbb{R}$. Furthermore, we observe,  for any $p=1,\dots, n-1$, and $x \in \R$, and, modifying  slightly here
the notation to emphasize the dependency on $\phi$, that
\begin{eqnarray}\label{eq:Pnp}
\nonumber(\mathcal{P}^{\phi}_n(-x))^{(p)} &=& \sum_{k=p}^{n} \frac{\Gamma(k+1)}{\Gamma(k-p+1)}\frac{{ n \choose k}}{W_{\phi}(k+1)} x^{k-p}\\
\nonumber&=&\frac{\Gamma(n+1)}{\Gamma(n-p+1)W_{\phi}(p+1)}\sum_{k=0}^{n-p} \frac{{ n-p \choose k}}{W_{\phi_p}(k+1)} x^{k} \\
&=& \frac{\Gamma(n+1)}{\Gamma(n-p+1)W_{\phi}(p+1)} \mathcal{P}^{\phi_p}_{n-p}(-x),
\end{eqnarray}
where we recall that since $\phi_p(u) = \phi(u+p)$, we have, from \eqref{eq:rec_n}, $W_{\phi_p}(k+1) = \phi(k+p)W_{\phi_p}(k) = \frac{W_{\phi}(k+p+1)}{W_{\phi}(p+1)}$.
The expression $\mathfrak{o}_{\phi}=\left(1+\liminf_{n \rightarrow \infty} \frac{\log \phi( n) }{ \log n} \right)^{-1}=\frac{1}{1+\underline{\phi}} \in \left[\frac{1}{2},1\right]$ of the order of the entire function $\Jp$ as well as the lower bound for its type $\mathfrak{t}_{\phi}\geq (1+\underline{\phi})e^{-\frac{\underline{\phi}}{1+\underline{\phi}}} \frac{1}{\liminf_{n \rightarrow \infty}\phi(n)  n^{-\underline{\phi}}}$,  have been found in  \cite[Proposition 5.2]{Bartholme_Patie}.  We simply reproduce  here the proof of the expression of the type when $\mathfrak{o}_{\phi}=1$ or equivalently when $\underline{\phi}=0$.  We recall that $\Jp(x)=\sum_{n=0}^{\infty} \frac{1}{W_{\phi}(n+1)}\frac{x^n}{n!}$. From the classical formula  of the type  of an entire function, see e.g.~\cite[Chap.~1, Theorem 3]{Levin96}, with $\mathfrak{o}_{\phi}=1$, using the  asymptotic equivalent of  $W_{\phi}(n+1)\simi C_\phi\sqrt{\phi(n)}e^{G(n)},\, G(n)=\int_{1}^{n}\ln\phi(u)du,$ in \eqref{lemmaAsymp1-2} and the classical Stirling estimate $\Gamma(n+1)\simi \frac{1}{\sqrt{2\pi}} n^{n-\frac{1}{2}}e^{-n}$, we get that
 \begin{equation*}
	 \mathfrak{t}_{\phi} =\frac{1}{e}\limsupi{n}\lbrb{\frac{n^n}{\Gamma(n+1)W_{\phi}(n+1)}}^{\frac{1}{n}}=\limsup_{n \to \infty}  \exp\left( -\left(\frac{\ln \phi(n)}{2n}  +\frac{G(n)}{n} \right) \right)=\frac{1}{\r},
	\end{equation*}
where we have used the fact that $\ln\phi(n)={\rm{O}}\lbrb{\ln n},$ which follows from Proposition \ref{propAsymp1}\eqref{it:asyphid}, and, by l'H\^opital's rule, $\lim_{n\to \infty } \frac{G(n)}{n}=\lim_{n\to \infty } \ln \phi(n)=\ln \phi(\infty)=\ln\r$.
Next,  observe that, for any $p\geq 0$, with $\phi_p(u)=\phi(u+p)$ as above,
 \[\mathfrak{o}_{\phi_p}=\left(1+\liminf_{n \rightarrow \infty} \frac{\log \phi( n+p) }{ \log n} \right)^{-1}= \left(1+\liminf_{n \rightarrow \infty} \frac{\log \phi( n)\log \phi( n+p) }{\log \phi( n) \log n} \right)^{-1} =\mathfrak{o}_{\phi},\]
 where the last identity  holds when $\phi(\infty)=\infty$ using $\phi(n)\leq\phi(n+p)\leq \lbrb{\frac{p}{n}+1}\phi(n)$, which follows from \eqref{eq:phi'_phi}  and Proposition \ref{propAsymp1}\eqref{it:bernstein_cm}, and is obvious when $\phi\lbrb{\infty}<\infty$. Also using \cite[Chap.~1, Theorem 3]{Levin96} it can be shown that $\mathfrak{t}_{\phi}=\mathfrak{t}_{\phi_p}$ Next, from \eqref{eq: Jensen polynomials gen1}, we get, after performing a change of variables,   that, for all $n,x> 0$,
\begin{eqnarray*}
\Pon(-x)&=&  \frac{n!}{2\pi i}\oint_n e^{z}\Jp(zx)\frac{dz}{z^{n+1}}= \frac{n!}{2\pi i}x^{n}\oint_{nx} e^{\frac{z}{x}}\Jp(z)\frac{dz}{z^{n+1}},
\end{eqnarray*}
where the last contour is a  circle centered at $0$ with radius $nx>0$. Then,  the definition of the order, see \cite[Chapter 1]{Levin96}, yields that  for any $x>0$ and for large $n$,
\[ \max\limits_{|z|=nx}|\Jp(z)|\leq \Ep(nx),\]
where we recall that, for any $\epsilon>0$,   $\Ep(x) =e^{\mathfrak{t}_\phi x^{ \mathfrak{o}_{\phi}}}\mathbb{I}_{\{0<\mathfrak{t}_\phi  <\infty\}}+ e^{\epsilon  x^{\mathfrak{o}_{\phi} }}\mathbb{I}_{\{\mathfrak{t}_\phi=0\}}+e^{x^{\mathfrak{o}_{\phi} + \epsilon}}\mathbb{I}_{\{\mathfrak{t}_\phi=\infty\}}$. Hence, we get, that for large $n$ and for all $x>0$,
\begin{eqnarray*}
\labsrabs{\Pon(-x)}& \leq & \Ep(nx) \frac{n!}{  2\pi }e^{-n \ln n}\int_0^{2 \pi} e^{n \cos\theta} d\theta
\\ &=& \Ep(nx) \frac{n! e^{-n \ln n}}{  2} \left(\Jpa{\Gamma}(n)+\Jpa{\Gamma}(-n)\right)  =  \bo{\Ep(nx)e^{\frac12 \ln n}},
\end{eqnarray*}
where we have used the integral representation of the modified Bessel function $ \Jpa{\Gamma}(n)= \frac{1}{\pi}\int_0^{\pi} e^{n \cos\theta} d\theta$, see e.g.~\cite[(5.7.4) and (5.10.8)]{Lebedev-72}, and,  for the last inequality the bound $n! \leq e^{1-n} n^{n+\frac12}$. \mladen{Finally, from \eqref{eq:Pnp}, $\frac{\Gamma(n+1)}{\Gamma(n-p+1)} \stackrel{\infty}{\sim} n^{p}$, $\mathfrak{t}_{\phi}=\mathfrak{t}_{\phi_p}$ and $\mathfrak{o}_{\phi_p}=\mathfrak{o}_{\phi}$ we complete \eqref{eq:asympt_polyn1} since then $\Ep(x)=\mathcal{E}_{\phi_p}(x)$. The proof of Theorem \ref{thm:eigenfunctions1} is thus concluded.}

\newpage

\section{Co-eigenfunctions: existence and characterization} \label{sec:coeigen}
In this Chapter, we  initiate our lengthy  study, that  will also cover the two following  Chapters, on the eigenfunctions of the adjoint  semigroup $P^*$, which we recall is discussed in Theorem \ref{thm:bijection}. We also  name them the co-eigenfunctions of $P$. More specifically, we say that, for some $t>0$ and $n\geq0$, $\nun$  is a co-eigenfunction for $P_t$, or equivalently, an eigenfunction for its adjoint $P^*_t$ in $\Lnu$, associated to the eigenvalue $e^{-n t}$ if $ \nun \in \lnu$ and $P^*_t\nun  = e^{-n t} \nun$, which can be rephrased as, for any $f\in \Lnu$,
\begin{equation}\label{coeigendef}
\langle  f,P^*_t\nun \rangle_{\nu} = \langle P_t f,\nun \rangle_{\nu} = e^{-n t} \langle f,\nun \rangle_{\nu}.
\end{equation}
The investigation of co-eigenfunctions turns out to be  more delicate than  the one of eigenfunctions. Indeed, from the adjoint intertwining  relation, see \eqref{eq:dual_intertwin}, they are defined, whenever they exist, as the image of the Laguerre polynomials by an unbounded \mladen{from above operator, see \eqref{eq:equation_nu_n}. This} forces us to develop several strategies to  provide relevant information \mladen{regarding the co-eigenfunctions} such as  existence, characterization, completeness and norm estimates. The next Theorem, which is the main result of this part, deals with the two first issues. Before stating it, we recall, from Chapter \ref{sec:intro}, that $\Ni=\lbcurlyrbcurly{\psi\in\Ne;\,\sigma^2>0 \text{ or } \PP(0^+)= \infty}$, $\r=\infty\ind{\sigma^2>0}+m+\PPP(0^+)$ and $\Si =\Sip-1$ with $\Si=\infty$ whenever $\PP(0^+)=\infty$.
   \begin{theorem} \label{thm:existence_coeigenfunctions1}
 We have that $\nun$ defined for any $x>0$ by
  \begin{eqnarray}\label{eq:nu_nDistribution1}
  	\nun(x) &=& \frac{\mathcal{R}^{(n)} \nu (x)}{\nu(x)} =\frac{ (x^n \nu(x))^{(n)}}{n! \nu (x)}=\frac{ w_n(x)}{\nu (x)}
 \end{eqnarray}	
 is a co-eigenfunction associated to the eigenvalue $e^{-n t}$ in each of the following cases.
 \begin{enumerate}
\item \label{it:coe-ci1}$\psi \in \Ni$ and  $n\geq 0$, and, in this case,
\[ \nun \in \cco^{\infty}\lbrb{\lbrb{0,\r}}, \] 
\item \label{it:coe-f1}   $\psi \in \Ni^c$ and $0\leq n<\okhalf$. 
\end{enumerate}
If $\psi \in \Ni^c$, we have,  for $n\leq\Si$,  $\nun \in \cco^{\Nt-n}\lbrb{\lbrb{0,\r}}$. Finally, if $\Si\geq 1$ then for any $n>\frac{\PP(0^+)}{2\r}$,
  \[ \nun \notin \Lnu.\]
\end{theorem}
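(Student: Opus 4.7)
The plan is to exploit the adjoint of the intertwining relation. By duality applied to Theorem \ref{MainProp}, and since $\Ip \in \Bop{\Lg}{\Lnu}$, its adjoint satisfies $\Ip^* \in \Bop{\Lnu}{\Lg}$ and $\Ip^* P_t^* = Q_t \Ip^*$ in $\Lnu$. Because $Q$ is self-adjoint on $\Lg$ with $Q_t \Lpn = e^{-nt}\Lpn$, any function $\nun \in \Lnu$ solving
\begin{equation}\label{eq:PlanCoeig}
\Ip^* \nun = \Lpn \quad \text{in } \Lg
\end{equation}
is automatically a co-eigenfunction, since for every $f \in \Lnu$,
\[
\langle P_t f, \nun\rangle_\nu = \langle \Ip^* P_t f, \Lpn\rangle_\varepsilon = \langle Q_t \Ip^* f, \Lpn\rangle_\varepsilon = e^{-nt}\langle f, \nun\rangle_\nu.
\]
The proof thus reduces to producing, identifying, and controlling the solution to \eqref{eq:PlanCoeig}.

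To build an explicit solution I will pass to the Mellin transform. A direct computation from the multiplicative structure of $\Ip$ shows that, writing $w_n = \nun \nu$, the equation \eqref{eq:PlanCoeig} is equivalent to $\M_{I_\phi}(z)\,\M_{w_n}(z) = \M_{\Lpn \varepsilon}(z)$. Using $\M_{I_\phi}(z) = \Gamma(z)/W_\phi(z)$ from Proposition \ref{lem:fe2}, the polynomial form $\M_{\Lpn \varepsilon}(z) = \sum_{k=0}^n (-1)^k\binom{n}{k}\Gamma(z+k)/k!$, and the elementary identity $\sum_{k=0}^n (-1)^k\binom{n}{k}(z)_k/k! = (-1)^n (z-n)_n/n!$, this simplifies to
\[
\M_{w_n}(z) = \frac{(-1)^n}{n!}(z-n)_n \M_\nu(z),
\]
which by \eqref{eq:MellinInversionFormulaDeriv} is precisely the Mellin transform of $(x^n\nu(x))^{(n)}/n!$. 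Mellin inversion, justified by the polynomial decay of $\M_\nu$ along imaginary lines from Theorem \ref{thm:existence_invariant_1}\eqref{item:subexpV}, and the integrations by parts by the smoothness and boundary vanishing of $\nu$ from Theorem \ref{thm:smoothness_nu} and Lemma \ref{lem:integrabilityHatNu}, give the Rodrigues formula $\nun = w_n/\nu = (x^n\nu)^{(n)}/(n!\,\nu)$.

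The smoothness statements follow immediately: positivity of $\nu$ on $(0,\r)$ (Theorem \ref{thm:smoothness_nu}\eqref{it:supV}) combined with $\nu \in \cco^\infty(\R_+)$ when $\psi \in \Ni$ (Theorem \ref{thm:smoothness_nu}\eqref{it:cinfty0_nu1}) gives $\nun \in \cco^\infty((0,\r))$, while $\nu \in \cco^\Si((0,\r))$ when $\psi \in \Ni^c$ (Theorem \ref{thm:smoothness_nu}\eqref{it:inv_smooth1}) yields $\nun \in \cco^{\Si-n}((0,\r))$ for $n \leq \Si$. The delicate point is the $\Lnu$-integrability
\[
\|\nun\|_\nu^2 = \int_0^\r \frac{w_n^2(x)}{\nu(x)}\,dx.
\]
Near $0$, the lower bound $\nu(x) \geq C x^{-\underline{a}}$ of Theorem \ref{lem:nuSmallTime} together with the Mellin representation of $w_n$ controls the integrand. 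When $\psi \in \Ni$ one has either $\r = \infty$ with the superexponential/exponential decay of $\nu$ and its derivatives given by Theorem \ref{thm:Klupel}, which easily dominates the polynomial-in-$\nu^{(k)}$ growth of $w_n/\nu$ at infinity, or a sufficiently smooth end at $\r < \infty$ to ensure convergence. When $\psi \in \Ni^c$ we have $\r < \infty$; the asymptotic \eqref{eq:asym_nu_r}, $\nu^{(k)}(x) \stackrel{\r}{\sim} C_k(\r-x)^{\PP(0^+)/\r - k - 1} l(\r-x)$ for $k \leq \Si$, substituted into the Leibniz expansion of $(x^n\nu)^{(n)}$ yields $w_n(x) \stackrel{\r}{\sim} C\,\r^n(\r-x)^{\PP(0^+)/\r - n - 1} l(\r-x)$, hence $w_n^2/\nu \stackrel{\r}{\sim} C'(\r-x)^{\PP(0^+)/\r - 2n - 1}l(\r-x)$, which is integrable near $\r$ precisely when $n < \PP(0^+)/(2\r)$.

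The hard part is the sharp negative statement $\nun \notin \Lnu$ for $n > \PP(0^+)/(2\r)$ when $\Si \geq 1$. The upper-bound calculation must be promoted to a genuine two-sided asymptotic, which requires showing that the leading singular term in the Leibniz expansion of $(x^n\nu)^{(n)}$ does not cancel. Since the exponents $\PP(0^+)/\r - k - 1$ are strictly decreasing in $k$, the contribution from $\nu^{(n)}$ strictly dominates the contributions from all lower derivatives near $\r$, so no cancellation occurs and the leading constant in the asymptotic for $w_n$ is genuinely non-zero. Karamata-type integration of regularly varying functions then yields divergence of $\int^\r w_n^2/\nu$ exactly when $\PP(0^+)/\r - 2n - 1 \leq -1$, concluding the proof.
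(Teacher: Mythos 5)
Your overall strategy matches the paper's: pass to the adjoint intertwining relation, solve $\Ip^*\nun=\Lpn$ by Mellin transform to obtain the Rodrigues formula, and then verify $\Lnu$-membership by analysing the integrand $w_n^2/\nu$ near $0$ and near $\r$. However, there are three concrete problems.

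First, your duality chain is broken. You write $\langle P_t f,\nun\rangle_\nu=\langle \Ip^* P_t f,\Lpn\rangle_\varepsilon$, but unwinding the adjoint via $\Ip^*\nun=\Lpn$ gives $\langle \Ip^* P_tf,\Lpn\rangle_\varepsilon=\langle \Ip\Ip^* P_tf,\nun\rangle_\nu$, which is not $\langle P_tf,\nun\rangle_\nu$ since $\Ip\Ip^*\neq I$. The correct route — the one the paper uses in Proposition \ref{thm:dual} — applies the intertwining $\Ip^*P_t^*=Q_t\Ip^*$ to get $\Ip^*(P_t^*\nun)=Q_t\Lpn=e^{-nt}\Lpn=\Ip^*(e^{-nt}\nun)$, and then invokes injectivity of $\Ip^*$ (a consequence of $\Ran{\Ip}=\Lnu$) to deduce $P_t^*\nun=e^{-nt}\nun$. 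You never mention $\ker(\Ip^*)=\{0\}$, and without it the verification does not close.

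Second, your treatment of $\psi\in\Ni$ with $\r<\infty$ — the subcase $\sigma^2=0$, $\PP(0^+)=\infty$, $\PPP(0^+)<\infty$, where $\Si=\infty$ but the support of $\nu$ is bounded — is glossed over with ``a sufficiently smooth end at $\r<\infty$.'' In the paper this case needs a genuinely separate argument: one must show $\lim_{x\uparrow\r}\nu^{(i)}(x)\nu^{(j)}(x)/\nu(x)=0$ for all $i,j\geq0$, which is established by an iterated L'H\^opital induction over the class $\cco^\infty_{\r}(\R_+)$ using the zero-crossing structure of the derivatives of $\nuh_1$ near $\r^{-1}$. The asymptotic \eqref{eq:asym_nu_r} that you invoke is stated only for $\psi\in\Ni^c$ (finitely many derivatives), so it is not available here.

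Third, your negative statement $\nun\notin\Lnu$ for $n>\okhalf$ with $\Si\geq1$ only covers the range $\okhalf<n\leq\Si$, because the two-sided asymptotic \eqref{eq:asym_nu_r} and the Leibniz-dominance argument presuppose $\nu^{(n)}$ exists classically and the asymptotic applies, i.e.~$n\leq\Si$. For $n\geq\Si+1$ the derivative $\nu^{(n)}$ is at best distributional and the asymptotic fails; the paper then argues differently, showing $w_n\notin\Ltwo$ via the Parseval/Mellin-transform criterion of Proposition \ref{prop:Convolution}\eqref{it:smoo_wn} and then ruling out $F_n\in\lt^1$ by cases depending on where $w_n^2$ fails to be integrable. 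You need this extra step to cover all $n>\okhalf$.
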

\begin{remark}
	\mladen{Note that the final claim elucidates that  $\nun\notin\Lnu$ for all $n>\okhalf$ with $\Si\geq1$.  This phenomenon tells us that  then the corresponding eigenvalues $e^{-nt}$ of the adjoint semigroup $P$ are part of the residual spectrum. It seems rare in the literature to quantify such a cut-off between the point and the residual spectrum for such a general class of operators.}
\end{remark}
\begin{remark}
Note that the condition in item \eqref{it:coe-f1},  \mladen{for any $n< \okhalf$, is sharp in the sense that for the existence of co-eigenfunctions of a specific gL semigroups, see  Example \ref{ex:st},  this is a necessary and sufficient condition.}
\end{remark}
The rest of this Chapter is devoted to the proof of this Theorem. We start with the following claims on the adjoint intertwining relation.
\begin{proposition} \label{thm:dual}
	Let $\psi \in \Ne$ and recall that $\phi(u)=\frac{\psi(u)}{u} \in \Be_{\Ne}$.
\begin{enumerate}
\item For any $t\geq0$ and  $g \in \Lnu$, we have the following intertwining relation
  \begin{equation} \label{eq:dual_intertwin}
      Q_t \Ip^* g =\Ip^* P_t^* g,
  \end{equation}
where $\Ip^* \in \Bop{\lga}{\Lnu}$ is the adjoint of $\Ip$.
	\item ${\rm{Ker}}(\Ipn^{*})=\{0\}$.  However, $\ran{\Ipn^{*}}=\lga$ if and only if $\psi(u)=Cu^{2}$.
\item For any $n\in \N$, the equation
\begin{equation} \label{eq:equation_nu_n}
\mladen{\Ip^* g} =\Lpn
\end{equation}
has at most one solution in $\Lnu$. Moreover,  if, for some $n \in \N$, $\nun \in \Lnu$ is a solution to the equation \eqref{eq:equation_nu_n} then $\nun$ is a co-eigenfunction for $P_t$ associated to the eigenvalue $e^{-nt}$.
 \end{enumerate}
\end{proposition}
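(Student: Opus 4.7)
The plan is to derive all three items directly from the intertwining identity \eqref{MainInter1_2} of Theorem \ref{thm:intertwin_1} and the range/injectivity information provided by Theorem \ref{thm:intertwin_1}\eqref{it:bfb}, by exploiting the standard Hilbert-space duality between a bounded operator and its adjoint.

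For item (1), I would start from the intertwining relation $P_t\Ip f=\Ip Q_t f$, valid in $\Lnu$ for every $f\in\lga$, which can be rephrased as the operator identity $P_t\Ip=\Ip Q_t$ in $\Bop{\lga}{\Lnu}$. Taking Hilbert adjoints of both sides, using that for $A\in\Bop{\lga}{\Lnu}$ and $B\in\Bo{\lga}$ one has $(AB)^*=B^*A^*$, and recalling that $Q_t$ is self-adjoint in $\lga$ (Example \ref{ex:lag}) while $P_t^*\in\Bo{\Lnu}$ is the extension constructed in Theorem \ref{thm:bijection}\eqref{it:dual}, I obtain $\Ip^*P_t^*=Q_t\Ip^*$ in $\Bop{\Lnu}{\lga}$. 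The fact that $\Ip^*\in\Bop{\Lnu}{\lga}$ is automatic, since $\Ip\in\Bop{\lga}{\Lnu}$ by Theorem \ref{thm:intertwin_1}\eqref{it:Iphi1}.

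For item (2), I would use the standard orthogonality identities $\ker(\Ip^*)=\Ran{\Ip}^{\perp}$ and $\Ran{\Ip^*}=\ker(\Ip)^{\perp}$. Since Theorem \ref{thm:intertwin_1}\eqref{it:bfb} states that $\Ran{\Ip}=\Lnu$, the first identity immediately yields $\ker(\Ip^*)=\{0\}$. For the second claim, $\Ran{\Ip^*}=\lga$ is equivalent to the surjectivity of $\Ip^*$, and by the open mapping theorem (combined with the already established injectivity of $\Ip^*$) to the existence of $c>0$ such that $\|\Ip^* g\|_{\varepsilon}\geq c\|g\|_{\nu}$ for all $g\in\Lnu$. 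By the standard duality principle, this bounded-below property of $\Ip^*$ is equivalent to $\Ip$ being surjective, which, combined with $\overline{\Ran{\Ip}}=\Lnu$, is in turn equivalent to $\Ip$ being bounded below. By Theorem \ref{thm:intertwin_1}\eqref{it:bfb}, this occurs if and only if $\psi(u)=\sigma^2 u^2$ for some $\sigma^2>0$.

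For item (3), the at-most-one-solution assertion is an immediate consequence of $\ker(\Ip^*)=\{0\}$ established in item (2): if $g_1,g_2\in\Lnu$ both solve $\Ip^*g=\Lpn$, then $\Ip^*(g_1-g_2)=0$, so $g_1=g_2$. Now suppose $\nun\in\Lnu$ satisfies $\Ip^*\nun=\Lpn$. Using successively the intertwining relation of item (1), the eigenfunction identity \eqref{eq:eou} for the classical Laguerre polynomials in $\lga$, and linearity, I compute
\[
\Ip^*\lbrb{P_t^*\nun}=Q_t\Ip^*\nun=Q_t\Lpn=e^{-nt}\Lpn=e^{-nt}\Ip^*\nun=\Ip^*\lbrb{e^{-nt}\nun}.
\]
The injectivity of $\Ip^*$ then forces $P_t^*\nun=e^{-nt}\nun$, which is precisely the defining relation \eqref{coeigendef} of a co-eigenfunction associated with the eigenvalue $e^{-nt}$. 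No step in the argument requires any new analysis beyond the results already established; the only mildly delicate point, namely the equivalence between $\Ip$ being bounded below and $\Ip^*$ being surjective, is a textbook consequence of the closed range theorem applied to $\Ip\in\Bop{\lga}{\Lnu}$ together with the density of $\Ran{\Ip}$ in $\Lnu$.
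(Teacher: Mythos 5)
Your proof is correct and follows essentially the same route as the paper: item (1) is the adjoint of the operator intertwining $P_t\Ip=\Ip Q_t$ (the paper writes it as a chain of inner-product identities rather than at the operator level, but the content is identical), item (2) is the standard duality $\ker(\Ip^*)=\Ran{\Ip}^{\perp}$ together with the closed-range/bounded-below equivalence applied to Theorem \ref{thm:intertwin_1}\eqref{it:bfb}, and item (3) applies the intertwining and injectivity of $\Ip^*$ exactly as the paper does (there via the notation $(\Ip^*)^{-1}$, here by cancelling $\Ip^*$ on both sides). The only cosmetic remark is that your chain of equivalences in item (2) is a roundabout rendering of the single equivalence $\Ip^*$ surjective $\iff$ $\Ip$ bounded below; it still lands on the correct conclusion.
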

\begin{proof}
First note that the claim $\Ip^* \in \Bop{\lga}{\Lnu}$ follows readily from Theorem \ref{MainProp}\eqref{it:Iphi1}.  Then, from the intertwining relation stated in  Theorem \ref{MainProp}\eqref{MainInter1_2},  and the self-adjoint property of $Q_t$, we get, for any $t\geq0$, $f \in \lga$ and $g \in \Lnu$, that
\begin{eqnarray}\label{eq:coeigencomput}
\langle  f, \Ip^* P_t^* g \rangle_{\varepsilon}  &=&  \langle P_t \Ip   f, g \rangle_{\nu} =\langle  \Ip  Q_t  f,  g \rangle_{\varepsilon} =\langle     f, Q_t \Ip^* g \rangle_{\varepsilon},
\end{eqnarray}
which proves identity \eqref{eq:dual_intertwin}. Next, \mladen{the claim ${\rm{Ker}}(\Ipn^{*})=\{0\}$ of the second item	is obtained  from the fact that $\Spc{\mathcal{P}_n}=\Ran{\Ipn}=\lnu$}, see Theorem \ref{thm:eigenfunctions1}\eqref{it:compl_rb1}. The fact that $\ran{\Ipn^{*}}=\lga$ if and only if $\psi(u)=Cu^{2}$ follows readily from Proposition \ref{MainProp}\eqref{it:bfb}.
Since ${\rm{Ker}}(\Ipn^{*})=\{0\}$,  the  equation \eqref{eq:equation_nu_n} has at most one solution in $\Lnu$. Hence, if $\nun$ is  a solution of \eqref{eq:equation_nu_n} then $ \nun = (\Ip^*)^{-1}\Lpn$, where $(\Ip^*)^{-1}$ is the unbounded inverse  of $\Ip^*$. Therefore, since  from the intertwining \eqref{eq:dual_intertwin}, we observe that $Q_t \Ip^{\mladen{*}} \nun \in \ran{\Ip^*}$, we get that
\begin{eqnarray}\label{eq:coeigencomput1}
 P_t^* \nun  &=& (\Ip^*)^{-1}  Q_t \Ip^{\mladen{*}} \nun  = (\Ip^*)^{-1}   Q_t \Lpn =  e^{-n t} (\Ip^*)^{-1} \Lpn = e^{-n t} \nun,
\end{eqnarray}
which concludes the proof.
\end{proof}
The  description  of the range of $\Ip^*$ and of its unbounded inverse  seem to be  very difficult problems. Thus, to identify and to provide conditions for the existence of  co-eigenfunctions,  we implement the following two-steps program. First, by considering the \mladen{formal} adjoint of $\Ip$ in $\lrp$, we  transfer equation \eqref{eq:equation_nu_n} defined in $\Lnu$ into a Mellin convolution equation that can be studied in $\lrp$ or even in the sense of Mellin distribution,  see \eqref{eq:equation_w_n1} below for a precise statement. Then,  by means of Mellin transform techniques we study \eqref{eq:equation_w_n1} to obtain, in distributional sense,   necessary and sufficient conditions for existence, uniqueness and description of its solution. In particular, we get a characterization  in terms of the Rodrigues operator acting on the density of the invariant measures $\nu$. Then, applying the variety of results on $\nu$ (smoothness, positivity, small and large asymptotic equivalents  or bounds) developed  in Chapter \ref{sec:prop_nu}, we obtain (almost) necessary and sufficient conditions for the existence of a unique  solution to the original equation \eqref{eq:equation_nu_n} considered in the Hilbert space $\Lnu$.

\subsection{Mellin convolution equations: distributional and classical solutions} \label{subsec:coeigen}
Let us  start by introducing the necessary notation to formulate the distributional setting of Mellin transforms and we refer to \cite[Chap.~11]{Misra-Lavoine} for a concise description. We denote by $\Em_{p,q}$ (resp.~$\Em'_{p,q}$), with $p<q$ reals,  the linear space of infinitely differentiable functions $ f$  defined on $\R_+$ such that there exist two strictly positive numbers $a$ and $a'$ for which, for all $ k \in \N$,
\begin{eqnarray*}
 f^{(k)}(x) &\stackrel{0}{=}& \so{x^{p+a-k-1}} \textrm{ and }
f^{(k)}(x) \stackrel{\infty}{=} \so{x^{q-a'-k-1}},
\end{eqnarray*}
(resp.~the linear space of continuous linear
functionals on $\Em_{p,q}$  endowed with a structure of a
countably multinormed space as described in \cite[p.~231]{Misra-Lavoine}).  We simply write
 \begin{equation} \label{def:mdist}
 \Em=\cup_{q>0}\Em_{0,q}
 \end{equation}
 with $\Em'$ standing for the corresponding linear space of continuous linear
functionals on $\Em$.
We also recall, from Proposition \ref{prop:recall_exp}\eqref{it:iota}, that $\Pbb{I_\phi\in dy}=\iota(y)dy,\,y>0$, and, $\Ipn f(x)=\int_0^{\infty}f(xy)\iota(y)dy$ for a smooth function $f$.
\begin{lemma}\label{lemm12}
Let $\psi(u)=u\phi(u) \in \Ne$.
	\begin{enumerate}
		\item \label{it:Ib} $\Ipn \in \Bo{\Lv}$ where  we recall that $\var(x)=x^{-\alpha}$,  $x>0$, with $\alpha\in(0,1)$. If $m=0$ and \mladen{$\phi'(0^+)<\infty$} then one may choose $\alpha=0$. Moreover,  denoting by $\Iph^{\alpha}$ the adjoint of $\Ip$ in $\Lv$, we have \mladen{for any $f\in \Lv$ that, for almost every (a.e.) $x>0$,}
		\begin{equation}\label{eq:Ipn^*}
		\Iph^{\alpha} f(x) = \int_{0}^{\infty}f(xy)\widehat{\iota}(y)\var(y)dy,
		\end{equation}
		where we recall that  $\widehat{\iota}(y)=\iota(1/y)1/y$ with $\iota$ the density of $I_\phi$. The adjoint when $\alpha=0$ can be formally defined through the right-hand side of \eqref{eq:Ipn^*}, in the case $m>0$, for any $f \in \Lcom{2}{\R_+}$ such that $\IInf |f(xy)|\widehat{\iota}(y)dy<\infty$ for a.e.~$x>0$.
		\item \label{it:I^*}
	   We have that $\Ipn^{*}\in \Bop{\lnu}{\lga}$. It is the linear operator characterized, for any $f \in \lnu$ and a.e.~$x>0$, by
		\begin{equation}\label{eq:RelationDuals}
		\Ipn^{*} f(x)=\frac{1}{\varepsilon(x)}\int_0^{\infty} f(xy)\nu(xy)\widehat{\iota}(y)dy =\frac{1}{\varepsilon(x)}\Iph f\nu(x)
		\end{equation}
		where
$\Iph f(x)=\Iph^{0} f(x) = \int_{0}^{\infty}f(xy)\widehat{\iota}(y)dy$.
\item\label{it:I^hat} Moreover, we have $\iota \in \Em'_{0,q}$ for every $q>0$ and $\widehat{\iota}  \in \Em'_{p,1}$, for all $p<1$. Consequently,  for any  $w  \in \Em'_{0,q}$, with $q>0$, we have that
	\begin{equation}\label{eq:Convolution_1}
	\langle \Ip^\surd  w,f \rangle_{\Em'_{0,q},\,\Em_{0,q}} =\langle w,\Ipn f \rangle_{\Em'_{0,q},\,\Em_{0,q}}, \: \forall f \in \Em_{0,q},
	\end{equation}
	where  we have set $ \Ip^\surd  w = w \surd \: \iota $  and $w \surd \: \iota$ is the  Mellin convolution operator in the space of distributions, see \cite[Chapter 11.11]{Misra-Lavoine} for definition and notation.
	\end{enumerate}
\end{lemma}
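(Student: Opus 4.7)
The three items are interconnected through the Markov kernel structure of $\Ipn$ and the change of variable $y\mapsto 1/y$ that conjugates $\iota$ and $\widehat{\iota}$. For item \eqref{it:Ib}, my first step is to apply Jensen's inequality together with Fubini's theorem: since $\Ipn f(x)=\E[f(xI_\phi)]$ is a Markov average,
\[
\|\Ipn f\|_{\var}^2 \leq \int_0^\infty \E\bigl[f^2(xI_\phi)\bigr] x^{-\alpha}dx = \E\bigl[I_\phi^{\alpha-1}\bigr]\,\|f\|_{\var}^2 = \M_{I_\phi}(\alpha)\,\|f\|_{\var}^2,
\]
and the moment $\M_{I_\phi}(\alpha)$ is finite for any $\alpha\in(0,1)$ by Proposition~\ref{prop:recall_exp}\eqref{it:IpMoments}, which asserts $\M_{I_\phi}\in\Ac_{(0,\infty)}$. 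In the borderline case $\alpha=0$, the functional equation \eqref{eq:feip} together with the initial condition $\M_{I_\phi}(1)=1$ forces $\M_{I_\phi}(0)=\phi'(0^+)$, so the bound persists precisely under the stated hypothesis $m=\phi(0)=0$ and $\phi'(0^+)<\infty$. The explicit form \eqref{eq:Ipn^*} of the adjoint is then obtained by a double substitution: in $\langle\Ipn f,g\rangle_{\var}=\iint f(xy)g(x)\iota(y)\,x^{-\alpha}\,dx\,dy$, set $u=xy$ and then $z=1/y$ to convert $\iota(y)\,dy/y$ into $\widehat{\iota}(z)\,dz$; absolute convergence of the double integral, needed to apply Fubini, is guaranteed by the same moment bound $\M_{I_\phi}(\alpha)<\infty$.

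For item \eqref{it:I^*}, the boundedness $\Ipn^*\in\Bop{\lnu}{\lga}$ is immediate from $\Ipn\in\Bop{\lga}{\lnu}$ (Theorem~\ref{MainProp}\eqref{it:Iphi1}) by general adjoint theory. To derive \eqref{eq:RelationDuals}, I would repeat the double substitution performed for \eqref{eq:Ipn^*}, now carrying the weights $\nu(x)\,dx$ and $\varepsilon(x)\,dx$ instead of $x^{-\alpha}\,dx$; this produces
\[
\langle \Ipn f,g\rangle_\nu = \int_0^\infty f(u)\int_0^\infty g(uz)\nu(uz)\widehat{\iota}(z)\,dz\,du = \langle f,\varepsilon^{-1}\Iph(g\nu)\rangle_\varepsilon,
\]
which identifies $\Ipn^* g$ with the right-hand side of \eqref{eq:RelationDuals}.

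For item \eqref{it:I^hat}, membership of $\iota$ and $\widehat{\iota}$ in the distribution spaces of \cite[Chap.~11]{Misra-Lavoine} reduces to a Mellin moment check. Since $\int_0^\infty x^{z-1}\iota(x)\,dx=\M_{I_\phi}(z)$ is finite for every $z>0$, the functional $f\mapsto\int f\iota$ is continuous on each $\Em_{0,q}$, giving $\iota\in\Em'_{0,q}$ for all $q>0$. Symmetrically, the change of variables $u=1/y$ yields $\int_0^\infty y^{z-1}\widehat{\iota}(y)\,dy=\M_{I_\phi}(1-z)$, which is finite whenever $z<1$; hence $\widehat{\iota}\in\Em'_{p,1}$ for every $p<1$. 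The convolution identity \eqref{eq:Convolution_1} then follows from the fact that $\Ipn f(x)=\int_0^\infty f(xy)\iota(y)\,dy$ is, up to the convention adopted in \cite[Chap.~11.11]{Misra-Lavoine}, the Mellin convolution $f\surd\iota$ on test functions; the adjoint relation $\langle w\surd\iota,f\rangle=\langle w,f\surd\iota\rangle$ extends by continuity and duality from $\Em_{0,q}$ to $\Em'_{0,q}$.

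\textbf{Main obstacle.} The principal technical hurdle is the rigorous justification of the Fubini swaps used in items \eqref{it:Ib} and \eqref{it:I^*}: the double integrals involve the density $\iota$, which is only bounded near $0$ when $\phi'(0^+)<\infty$ and which decays algebraically near infinity at a rate dictated by $\phi$. Verifying absolute integrability uniformly in $x$ requires precisely the moment bound $\M_{I_\phi}(\alpha)<\infty$, which is why the dichotomy between $\alpha\in(0,1)$ and the special case $\alpha=0$ appears. For item \eqref{it:I^hat}, the subtlety lies in reconciling the pointwise action $f\mapsto\int f\iota$ with the topology of $\Em_{p,q}$; once the moment estimates above are in hand, continuity on each seminorm of $\Em_{p,q}$ follows by splitting the integral at a fixed threshold and using the defining decay of test functions at $0$ and $\infty$.
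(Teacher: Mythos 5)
Your proposal follows essentially the same route as the paper's own proof: the boundedness in $\Lv$ via the Markov contraction argument and the moment bound $\M_{I_\phi}(\alpha)<\infty$ (the paper invokes Cauchy--Schwarz where you cite Jensen, but it is the same inequality), the change of variables $y\mapsto 1/y$ to exhibit the adjoint kernels in \eqref{eq:Ipn^*} and \eqref{eq:RelationDuals}, and the reduction of item \eqref{it:I^hat} to finiteness of the Mellin transforms $\M_{\iota}(z)=\Mip(z)$ on $(0,\infty)$ and $\M_{\hat\iota}(z)=\Mip(1-z)$ on $(-\infty,1)$. The only cosmetic difference is that the paper appeals directly to \cite[Theorem 11.10.1]{Misra-Lavoine} to get $\iota\in\Em'_{0,q}$ and $\hat\iota\in\Em'_{p,1}$, and to \cite[Definition 11.11.1]{Misra-Lavoine} for \eqref{eq:Convolution_1} via the identity $\Ipn f(x)=\langle\iota,f(x\cdot)\rangle$, whereas you sketch the continuity estimate by hand and handle the $\alpha=0$ case by extracting $\M_{I_\phi}(0)=\phi'(0^+)$ from the functional equation \eqref{eq:feip} rather than citing Proposition \ref{lem:fe2} directly; both are sound.
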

\begin{remark}\label{rem:I^*}
	\mladen{It is worth pointing out that the left-hand side of \eqref{eq:RelationDuals} implies $\IInf |f(xy)|\nu\lbrb{xy}\widehat{\iota}(y)dy<\infty$ simply by the virtue of the fact that $\Ip\in\Bop{\Lg}{\Lnu}$ whereas $\Iph\in \Bop{\Ltwo}{\Ltwo}$ may not hold, see item \eqref{it:Ib}.}
\end{remark}
\begin{remark}
 Note that for $w \in {\rm{L}}^1(\widehat{\iota})$, we have the identities
 \[\Ip^\surd w(x)= w \surd \: \iota(x) = \int_0^\infty w\lbrb{\frac{x}y}\iota(y)\frac{dy}y =\int_0^\infty w(xy)\widehat{\iota}(y)dy= \Iph w(x),\]
  which justifies the notation above.
\end{remark}
\begin{proof}
Plainly, $\Ipn$ is a linear operator. Next, since for any $\alpha \in (0,1)$, $\M_{I_\phi}(\alpha)=\E\left[I^{\alpha-1}_{\phi}\right]<\infty$, see Proposition \ref{lem:fe2},  we have, from the Cauchy-Schwarz inequality and a change of variables, that, for any $f\in \Lv$,
 \begin{eqnarray*}
 || \Ipn f||_{\var}^2  &\leq  & \E \lbb\int_0^{\infty} f^2(xI_\phi)  \var(x)dx\rbb =  \M_{I_\phi}(\alpha)\int_0^{\infty} f^2(x)  \var(x)dx =  \M_{I_\phi}(\alpha)\: || f||^2_{\var},
 \end{eqnarray*}
 which proves \mladen{$\Ipn\in\Bo{\Lv}$. The latter is valid for $\alpha=0$ whenever $\M_{I_\phi}(0)<\infty$ which, according to Proposition \ref{lem:fe2}, is true if $m=0$ and $\phi'(0^+)<\infty$. 
 For any $f,g\in \Lv$, we have, after performing a change of variables and using Fubini's Theorem, that
 \begin{eqnarray*}
 \langle \Ipn f, g \rangle_{\var} &=&  \int_0^{\infty} \int_0^{\infty}f(xy) \iota(y)dy g(x)\var(x)dx \\
 &=&  \int_0^{\infty} \int_0^{\infty}g(ry
 )  \frac{\hat{\iota}(y)dy}{y^{\alpha-1}} f(r)\var(r)dr \\
 &=& \langle  f,\Iph^{\alpha} g \rangle_{\var},
 \end{eqnarray*}
 which yields \eqref{eq:Ipn^*} and item \eqref{it:Ib}} as the last claim is obvious.
Since $\Ip\in\Bop{\Lg}{\Lnu}$, see Proposition \ref{MainProp}\eqref{it:Iphi1} then $\Ip^*\in\Bop{\Lnu}{\Lg}$. For any $f\in \lga$, $f\geq 0$ and $g\in \lnu$, $g\geq 0$,
	\begin{eqnarray*}
		\langle \Ipn f , g \rangle_{\nu} &=& \int_0^{\infty}   \int_0^{\infty} f(xy)\iota(y)dy g(x)\nu(x) dx \\
		&=& \int_0^{\infty} f(r) \varepsilon^{-1}(r) \int_0^{\infty} \iota(r/x) g(x)\nu(x)/x dx \varepsilon(r) dr \\
		&=& \int_0^{\infty} f(r)  \varepsilon^{-1}(r) \int_0^{\infty} g(rv)\nu(rv)\widehat{\iota}(v)dv\varepsilon(r)dr\\
		&=& \langle  f , \varepsilon^{-1}\Iph g\nu \rangle_{\varepsilon}.
	\end{eqnarray*}
Thus, we conclude \eqref{eq:RelationDuals}. Let us consider item \eqref{it:I^hat}. Since the mapping $z \mapsto \M_{\iota}(z)=\Mip(z)=\M_{\hat\iota}\lbrb{1-z}\in\Ac_{\lbrb{0,\infty}}$ and  $|\Mip(z)|\leq \Mip(\Re(z)) <\infty$, for  $z\in\Cb_{\lbrb{0,\infty}}$, see Proposition \ref{lem:fe2}, we deduce from \cite[Theorem 11.10.1]{Misra-Lavoine} that $\iota \in \Em'_{0,q}$, for every $q>0$ and $\widehat{\iota}  \in \Em'_{p,1}$ for every $p<1$. The proof of \eqref{eq:Convolution_1} is immediate from \cite[11.11.1]{Misra-Lavoine} checking that,  for any $f \in \Em_{0,q}$, $q>0$,
	\begin{eqnarray*}
		\Ipn f(x)&=&\IInf f(xy)\iota(y)dy =\langle \iota,f(x.)\rangle_{\Em'_{0,q},\,\Em_{0,q}},
	\end{eqnarray*}
	and thus
	$\langle \Ip^\surd w,f \rangle_{\Em'_{0,q},\,\Em_{0,q}}=\langle w\surd \: \iota,f\rangle_{\Em'_{0,q},\,\Em_{0,q}}=\langle w,\Ipn f \rangle_{\Em'_{0,q},\,\Em_{0,q}},$
	where the last relation is simply \cite[Definition 11.11.1]{Misra-Lavoine}.
\end{proof}
Recall that the Rodrigues operator is defined as $\Rc^{(n)}f(x)=\frac{1}{n!}\left(x^nf(x)\right)^{(n)}$.
\begin{proposition}\label{prop:Convolution} \label{thm:coeigenfunctions1}
Let $\psi \in \Ne$, and, for any $n \in \N$, we write $\e_n(x)=\Lc_n(x)\varepsilon(x)=\Rc^{(n)}\varepsilon(x)$.
\begin{enumerate}
\item Then,  for any $n \in \N$   the Mellin convolution equation
 \begin{equation}\label{eq:equation_w_n1}
\Ip^\surd \hat{f} (x) = \e_n(x)
  \end{equation}
 has a unique solution, in the sense of distributions, given by
 \[ w_n(x) =  \mathcal{R}^{(n)} \nu (x)=\mladen{\frac{1}{n!}}(x^n \nu(x))^{(n)} \in \mladen{\Em'}. \]
Its Mellin transform is given, for any $z\in\Cb_{\lbrb{0,\infty}}$, by
 \begin{equation} \label{eq:Mellin_w_n}
   \M_{w_n}(z) = \frac{(-1)^n}{\Gamma\lbrb{n+1}}\frac{\Gamma(z)}{\Gamma(z-n)} \Mp(z).
 \end{equation}
\item \label{it:smoo_wn} In fact,  writing ${\bf{p}}_{\underline{\alpha}}(x)=x^{-\underline{\alpha}},\,x>0,$ with  $-\infty<\underline{\alpha}<1-2d_\phi$,
  \begin{enumerate}
  \item if $\psi \in \Ni$ then, for any $n\geq 0$,  $w_n\in \Lva \cap \cco_0^{\infty}(\R_+)$, and,
   \item if  $\psi \in \Ni^c$ we have the following cases.
   \begin{equation}\label{eq:N2}
    \textrm{If  } n \in N_2=\left\{p\in \N; \okk>0 \textrm{ and } 0\leq p<\okk\right\}  \textrm{ then  } w_n\in \Lva,
    \end{equation}
      whereas,
      \[ \textrm{if } n \in \overline{N}_2=\left\{p\in \N; \okk<0 \textrm{ and } p=0, \textrm{ or, }  p>\okk\right\} \textrm{ then } w_n\notin \Lva. \]
   Moreover, for any $n\leq \Si$, $w_n  \in  \cco^{\Si-n}\lbrb{\lbrb{0,\r}}$ and if in addition $\Si\geq 1$ then $w_n  \in \cco_0^{\Si-n-1}\lbrb{\R_+}$.
   \end{enumerate}
\item\label{it:wn3}  Finally, if $\Si>2$ we have,   for any $n<\Si$  and $x>0$, that
		\begin{equation}\label{eq:Ip^*w=gammaL2}
		\Iph w_n(x)=\IInf w_n(xy)\widehat{\iota}(y)dy=\varepsilon_n(x).
		\end{equation}
\end{enumerate}
\end{proposition}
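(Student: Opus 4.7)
The argument splits into three parts that mirror the three statements. For part (1), my plan is to move the equation $\Ip^\surd \hat{f} = \e_n$ to the Mellin side using the distributional Mellin convolution framework set up in Lemma \ref{lemm12}\eqref{it:I^hat}. Since $\iota \in \Em'_{0,q}$ for all $q>0$ and Mellin convolution becomes multiplication of Mellin transforms on the appropriate vertical strip, I need only compute $\M_\iota(z) = \Gamma(z)/W_{\phi}(z)$ (Proposition \ref{lem:fe2}) and $\M_{\e_n}(z)$. For the latter, write $\e_n = \Rc^{(n)}\varepsilon = \tfrac{1}{n!}(x^n\varepsilon)^{(n)}$ and apply the derivative Mellin rule \eqref{eq:MellinInversionFormulaDeriv} to get
\[
\M_{\e_n}(z) \;=\; \frac{(-1)^n}{n!}\frac{\Gamma(z)}{\Gamma(z-n)}\,\Gamma(z).
\]
Dividing by $\M_\iota(z)$ and using $W_\phi = \Mp$ from Theorem \ref{prop:FormMellin1}\eqref{it:few} yields exactly \eqref{eq:Mellin_w_n}. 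A parallel computation of $\M_{\Rc^{(n)}\nu}$ confirms that $w_n := \Rc^{(n)}\nu$ realises this Mellin transform, and uniqueness in $\Em'$ follows from the fact that $\M_\iota$ is zero-free on $\Cb_{(0,\infty)}$ by Corollary \ref{cor:MellinZeroFree}.

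For part (2), the smoothness claims come directly from the regularity of $\nu$ already established: Theorem \ref{thm:smoothness_nu1}\eqref{it:cinfty0_nu1} gives $\nu \in \cco_0^\infty(\R_+)$ when $\psi \in \Ni$, whence $\Rc^{(n)}\nu \in \cco_0^\infty(\R_+)$; when $\psi \in \Ni^c$, Theorem \ref{thm:smoothness_nu1}\eqref{it:inv_smooth1} gives $\nu \in \cco^{\Si}(0,\r)$ and $\nu \in \cco_0^{\Si-1}(\R_+)$, which transfers to $w_n$ after $n$ differentiations. For the $\lt^2(\var_{\underline{\alpha}})$-integrability, I plan to use the Parseval identity \eqref{eq:parseval} applied to $x^{-\underline{\alpha}/2}w_n$ along the line $\Cb_{(1-\underline{\alpha})/2}$, which requires $(1-\underline{\alpha})/2 > d_\phi$, i.e.\ exactly $\underline{\alpha}<1-2d_\phi$. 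Since $|\M_{w_n}(a+ib)| \asymp |b|^n |\Mp(a+ib)|$, the polynomial decay estimate \eqref{eq:subexp1} of $\Mp$ along imaginary lines controls the tail: if $\psi \in \Ni$, then $\Si=\infty$ and any polynomial decay is available, giving square-integrability.

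For $\psi \in \Ni^c$ the characterisation $n<\okk$ (resp.\ $n>\okk$) is the delicate point, and this is where I expect the main obstacle. Rather than relying only on the Mellin decay (which from \eqref{eq:subexp1} gives only $u\leq\max(\Si-1,0)$ and thus does not quite yield the sharp threshold), the cleanest route is to use the explicit boundary asymptotics from Theorem \ref{thm:smoothness_nu1}\eqref{it:asy_nu_r}:
\[
\nu^{(k)}(x) \stackrel{\r}{\sim} C(\r-x)^{\frac{\PP(0^+)}{\r}-k-1}l(\r-x)\ind{x<\r},
\]
which after differentiating $x^n\nu(x)$ exactly $n$ times (the Leibniz expansion is dominated by the $\nu^{(n)}$ term near $\r$) shows $w_n(x)^2 \stackrel{\r}{\asymp} (\r-x)^{2(\PP(0^+)/\r-n-1)}l^2(\r-x)$. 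Square-integrability at $\r$ is thus equivalent to $2(\PP(0^+)/\r - n - 1) > -1$, i.e.\ $n < \okk$, while at $0$ the behaviour of $\nu$ combined with $\underline{\alpha}<1-2d_\phi$ and Theorem \ref{lem:nuSmallTime} ensures no problem arises; the converse $n>\okk\Rightarrow w_n\notin\Lva$ follows from the same asymptotic.

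Finally, for part (3), I need to upgrade the distributional identity $w_n\surd\iota=\e_n$ to the pointwise relation $\Iph w_n(x)=\e_n(x)$. Under the hypothesis $\Si>2$ and $n<\Si$, part (2) already shows $w_n\in\cco^{\Si-n-1}(\R_+)$ with sufficient decay, and the Mellin inversion formula \eqref{eq:MellinInversionFormula} applied to \eqref{eq:Mellin_w_n} together with the estimate $|\M_{w_n}(a+ib)|=\so{|b|^{-2}}$ (coming from \eqref{eq:subexp1} with $u=n+2\leq\Si-1$) justifies interchanging integration and differentiation. The pointwise identity then follows by writing $\Iph w_n(x)=\IInf w_n(xy)\widehat{\iota}(y)dy$ as a Mellin inversion integral, using the convolution theorem for classical Mellin transforms (applicable because both $w_n$ and $\widehat{\iota}$ lie in appropriate $\lt^1$-spaces of $\R_+$ with the relevant weight thanks to the decay established earlier), and recognising the result as the inverse Mellin transform of $\M_{\e_n}$. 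The main technical subtlety is justifying Fubini's theorem in this pointwise computation; this is handled by the integrability estimates on $\widehat{\iota}$ in Lemma \ref{lemm12}\eqref{it:I^hat} combined with the uniform bounds on $w_n$ from its representation as $\Rc^{(n)}\nu$ and the results of Chapter \ref{sec:prop_nu}.
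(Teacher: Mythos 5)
Your overall architecture—distributional Mellin transform in part~(1), Parseval/regularity in part~(2), upgrading to a pointwise convolution in part~(3)—matches the paper's proof, and parts~(1) and~(3) are on firm ground. For part~(3) the paper in fact argues more directly: rather than invoking the classical Mellin convolution theorem on $\lt^1$-spaces, it first establishes $\Iph|w_n|(x)<\infty$ from the pointwise bound $|w_n(x)|\leq Cx^{-\overline{a}}$ (Lemma~\ref{lem:MellinTT11} with $\overline{a}=1$) together with $\Mipn(1)<\infty$, which makes the product convolution classical, and then reads off the identity by uniqueness of Mellin transforms. Both of you effectively restrict to $n<\Si-2$.

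The genuine issue is in part~(2) for $\psi\in\Ni^c$. You correctly observe that~\eqref{eq:subexp1} only gives $o(|b|^{-u})$ for $u\leq\max(\Si-1,0)$, which is not sharp enough to recover the threshold $n\lessgtr\okk$, and you propose to substitute the local analysis at $\r$ using~\eqref{eq:asym_nu_r}. That local asymptotic does give exactly the right exponent at the right boundary, since the Leibniz expansion of $(x^n\nu(x))^{(n)}$ near $\r$ is dominated by the $\nu^{(n)}$ term. However, $\Lva$-membership also requires controlling the integral near $x=0$, and here your argument is not supported by what is available. You invoke Theorem~\ref{lem:nuSmallTime}, but that only provides a \emph{lower} bound $\nu(x)\geq C x^{-\underline{a}}$; what you actually need is an \emph{upper} bound on $|w_n(x)|$ near zero of the order $x^{-\overline{a}}$ with $\overline{a}$ arbitrarily close to $d_\phi$, matching the weight $\underline{\alpha}<1-2d_\phi$. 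The only such upper bound in the paper is Lemma~\ref{lem:MellinTT11}, which requires $\Si>2$ and $n<\Si-2$, strictly stronger than $n<\okk\ (<\Si)$—for instance when $\PP(0^+)/\r\in(3,4]$, $n=1$ satisfies $n<\okk$ but not $n<\Si-2$. The paper sidesteps this by asserting (with a citation to an external sharp Mellin estimate in \cite{Patie-Savov-Bern}) that $b\mapsto|b^n\Mp((1-\underline{\alpha})/2+ib)|\in\lt^2(\R)$ if and only if $n<\okk$, which handles both endpoints simultaneously via Parseval. Your local route would need a matching small-$x$ upper bound for $w_n$ in the gap range $\Si-2\leq n<\okk$, which is not established; as it stands the argument only characterises square-integrability near $\r$.
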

\begin{remark}\label{rem:convolution}
\mladen{It is possible that $n=\okk$ in item \eqref{it:smoo_wn}. Then whether $w_n\in\Ltwo$ or not can be extracted from the behaviour of the slowly  varying function $l$ that appears in Theorem \ref{thm:smoothness_nu1}\eqref{eq:asym_nu_r}, and, from \cite[(1.9)]{Sato-Yam-78}, which can in turn be expressed in terms of the excursion measure $\Exm$ defined in \eqref{eq:psi-1}. Clearly, though, in any case, $w_n\notin\Ltwo$, if $n\geq \Si+1$.}
\end{remark}
\begin{proof}
Let $\psi \in \Ne$. Recall from Lemma \ref{lemm12}\eqref{it:I^hat} that $\Ip^\surd w = w \surd \: \iota $. Take $w \in \Em'_{0,q}, \:q>0$, and, with $z\in\Cb_{\lbrb{0,q}}$, note that $p_z(x)=x^{z} \in \Em_{0,q}$. Then, we have that	
\begin{eqnarray*}
		\mathcal{M}_{\Ip^\surd w}(z)&=&\langle w\surd \iota,p_{z-1}\rangle_{\Em'_{0,q},\,\Em_{0,q}}
		=\langle w,\Ipn p_{z-1}\rangle_{\Em'_{0,q},\,\Em_{0,q}}=\Mipn(z)\mathcal{M}_{w}(z),
	\end{eqnarray*}
	where we have used that $\Ipn p_{z-1}(x)=p_{z-1}(x)\Mipn(z)$, see \eqref{eq:momentsKernels}. However, since, for any $n \in \N$,  $\Lpn(x) = \frac{\mathcal{R}^{(n)} \varepsilon (x)}{\varepsilon (x)}$, see \eqref{eq:def_LP}, that is $\e_n(x) = \mathcal{R}^{(n)} \varepsilon (x)$ we get, from \cite[11.7.7]{Misra-Lavoine}, that
 \[ \M_{\e_n}(z) = \frac{(-1)^n}{n!}\frac{\Gamma(z)}{\Gamma(z-n)} \Gamma(z).\]
 Putting pieces together, we get that the Mellin transform of a solution to  \eqref{eq:equation_w_n1}  takes the form	\begin{eqnarray}\label{eq:mi1}
	\mathcal{M}_{\hat{f}}(z) & =& \frac{(-1)^n}{n!}\frac{\Gamma(z)}{\Gamma(z-n)}\frac{\Gamma(z)}{\Mipn(z)}=\frac{(-1)^n}{\Gamma\lbrb{n+1}}\frac{\Gamma(z)}{\Gamma(z-n)}\Mp(z),
	\end{eqnarray}
	where for the last identity we have used from Proposition \ref{lem:fe2}
	that $\Mipn(z)=\frac{\Gamma(z)}{W_{\phi}(z)}$, and, from  \eqref{eq:Vphi=Vpsi1} and \eqref{eq:solfeVPsi1},  that $\Mp(z)=W_{\phi}(z)$.  Next,  since, from  Theorem \ref{prop:FormMellin1}\eqref{it:few}, we have that, for $z\in\Cb_{\lbrb{0,\infty}}$,  $z\mapsto \Mp(z)$ is analytical with $|\Mp(z)|\leq \Mp(\Re(z)) <\infty$, we deduce, from \cite[Theorem 11.10.1]{Misra-Lavoine}  that $\nu\in \Em'_{0,q}$, for any $q>0$. Hence, by means of
 \cite[11.7.7]{Misra-Lavoine}, we have that  with $\hat{f} = w_n=\mathcal{R}^{(n)} \nu$, $\hat{f} \in \Em'_{0,q}$ and $\hat{f}$ is a solution to \eqref{eq:equation_w_n1}. The uniqueness of the solution and thus \eqref{eq:Mellin_w_n} follow from the uniqueness of Mellin transforms in the distributional sense.
 For item \eqref{it:smoo_wn}, from the expression of $\M_{w_n}$ in \eqref{eq:Mellin_w_n}, we first observe, from Theorem \ref{prop:FormMellin1}\eqref{it:few} again,  that for all $n\geq0$, $\M_{w_n} \in \mathcal{A}_{(0,\infty)}$. Then,  the classical estimates of the gamma function \eqref{eqn:RefinedGamma1} combined with the asymptotic behaviour of $\Mp$ in \eqref{eq:subexp1}, yield for any $\underline{\alpha}<1-2d_\phi$  and  any $u\leq \max(\Si-1,0)$ that,  for large $|b|$,
\[ \left|\M_{w_n}\lbrb{\frac{1-\underline{\alpha}}{2}+ib}\right| =\bo{|b|^n \left|\Mp\lbrb{\frac{1-\underline{\alpha}}{2}+ib}\right|}=\so{|b|^{n-u}}.\]
 Since $\Si=\infty$ when $\psi\in\Ne_\infty$, the proof, in this case, follows easily from this estimate by appealing, for the  $ \cco_0^{\infty}(\R_+)$ property to a standard Mellin inversion argument, explained in \eqref{eq:MellinInversionFormula},  and, for the $\Lva$ property, to the Parseval identity \eqref{eq:parseval}, since for all $n\in \N$, recalling that $p_a(x)=x^a$, $b\mapsto \labsrabs{\M_{p_{-\frac{\underline{\alpha}}{2}}w_n}\lbrb{\frac12+ib}}=\labsrabs{\M_{w_n}\lbrb{\frac{1-\underline{\alpha}}{2}+ib}} \in \lt^2(\R)$ and hence $x\mapsto x^{-\frac{\underline{\alpha}}{2}}w_n(x) \in \Ltwo$, that is $w_n \in \Lva$.   For $\psi \in \Ne_{\infty}^c$ it can be checked immediately from \cite[Theorem 5.2]{Patie-Savov-Bern} that $b\mapsto \labsrabs{b^n\Mp\lbrb{\frac{1-\underline{\alpha}}{2}+ib}} \in \lt^2(\R)$ if $n<\okk$ and fails to be square integrable provided $n>\okk$. Therefore, the Parseval identity \eqref{eq:parseval} gives the same condition to distinguish whether $p_{-\frac{\underline{\alpha}}{2}}w_n$  belongs to $\Ltwo$ or not.   Next  recalling, from Theorem \ref{thm:smoothness_nu1}\eqref{it:smooth}, that $\nu \in \cco^{\Si}((0,\r))$,  and, for $\Si\geq 1$, $\nu \in \cco_0^{\Si-1}(\R_+)$,  we get the stated equivalent properties for $w_n$ and complete the proof of the item \eqref{it:smoo_wn}.
We proceed with item \eqref{it:wn3} and thus assume that $\Si>2$.  To prove \eqref{eq:Ip^*w=gammaL2} we first note that
\begin{eqnarray*}
	\IInf \labs w_n(xy)\rabs \widehat{\iota}(y)dy&=& \IInf |w_n(xy)|\frac{1}{y}\iota\lb\frac{1}{y}\rb dy.
\end{eqnarray*}
Then \eqref{eq:nuMellinInv} yields that for any $n<\Si-2$ and any  $\overline{a}> d_\phi$ there exists a constant $C=C_{n,\overline{a}}>0$ such that for any $x>0$
\begin{eqnarray*}
	\labs\lb x^{n}\nu(x)\rb^{(n)}\rabs\leq C x^{-\overline{a}}.
\end{eqnarray*}
	Then, choosing $\overline{a}=1,$ we get that
	\begin{eqnarray*}
		\Iph |w_n|(x)\leq \IInf |w_n(xy)|\frac{1}{y}\iota\lb\frac{1}{y}\rb dy \leq \frac{C_{n,1}}{x} \IInf \frac{1}{y^{2}}\iota\lb\frac{1}{y}\rb dy=\frac{C_{n,1}}{x}\Mipn(1)<\infty,
	\end{eqnarray*}
	since $\Mipn\in\Ae_{(0,\infty)}$ in any case, see Proposition \ref{lem:fe2}. Hence, $\Iph w_n$ is well defined in the sense of \eqref{eq:Ipn^*} with $\alpha=0$ and it is a proper product convolution. Clearly, then, for any $n<\Si-2$,
	\[\M_{\Iph w_n}(z)=\M_{w_n}(z)\Mipn(z)=\M_{\varepsilon_n}(z),\]
	where the latter follows precisely as in \eqref{eq:mi1} above.
\end{proof}

\subsection{Existence of co-eigenfunctions: Proof of Theorem \ref{thm:existence_coeigenfunctions1}}\label{sec:prov_co-eigen}
\mladen{From Proposition\ref{prop:Convolution}\eqref{it:smoo_wn} and $\nu>0$ on $\lbrb{0,\r}$, Theorem\ref{thm:smoothness_nu1}\eqref{it:supV}, we conclude from the definition of $\nun=w_n/\nu$, see \eqref{eq:nu_nDistribution1}, that $\nun\in \cco^{\Si-n}\lbrb{\lbrb{0,\r}}$. Assume for a moment that for some $n\in\N$ we have that $\nun\in\Lnu$. Then from \eqref{eq:RelationDuals} $\Ip^*\nun=\frac{1}{\varepsilon}\Iph w_n$ and $\Iph |w_n|=\IInf |w_n(xy)|\hat{\iota}(y)dy<\infty$. Hence, \eqref{eq:Ip^*w=gammaL2} holds and yields that $\Ip^*\nun=\frac{\varepsilon_n}{\varepsilon}=\Lpn$. Finally, \eqref{eq:equation_nu_n} implies that $\nun$ is a co-eigenfunction of $P_t$ associated to the eigenvalue $e^{-nt}$. Therefore,  it remains to show  that $\nun\in\lnu$.} This is a difficult question  which requires, in particular, several of the very precise estimates developed in Chapter \ref{sec:prop_nu}.
 We now observe that, for some $n\in \N$,  $\nun \in \Lnu$ if and only if  the function
\begin{eqnarray} \label{eq:nu_n_lnu}
	F_n(x)=	(n!)^2 \nun^2(x) \nu(x)&=&  \frac{\lb\lb x^n\nu(x)\rb^{(n)}\rb^2}{\nu(x)}= \frac{w^2_n(x)}{\nu(x)} \in \lt^1((0,\r)).
	\end{eqnarray}
Since $\nu>0$ on $(0,\r)$ with  $\nu \in \cco^{\Si}\lbrb{0,\r}$, see Theorem \ref{thm:smoothness_nu1}\eqref{it:supV}, for $n\leq \Si$, it suffices to check the integrability of  $F_n$ in neighbourhoods of $0$ and $\r$. In the case $1\leq \Si<\infty$, for $n>\Si$,  we shall show, \mladen{in the cases}, by other means that $ \nun \notin \Lnu$.

We state the following result after recalling, from bound \eqref{eq:LargeAsymp} stated in Theorem \ref{lem:nuSmallTime}, that for any $\psi\in\Ne$ and any $\underline{a}<d_\phi,\,\underline{A}\in\lbrb{0,\r}$, there exists $ C_{\underline{a},\,\underline{A}}>0$ such that for all $x\in\lbrb{0,\underline{A}}$
\begin{equation}\label{eq:LargeAsympR}
	\nu(x)\geq C_{\underline{a},\underline{A}}\: x^{-\underline{a}}.
\end{equation}
\begin{lemma} \label{eq:int0}
For $\psi\in\Ni$ and $n\in\N$ or $\psi\in\Ni^c$ and $n\in N_2$, see  \eqref{eq:N2}, we have that $x\mapsto F_n(x)\mathbb{I}_{\{x<A\}}$ is integrable on $\R_+$ for any $A\in\lbrb{0,\r}$. However, if $\psi\in\Ni^c$ and $x\mapsto xw_n(x)\ind{x<A}\not\in\Ltwo$ for some $A\in\lbrb{0,\r}$ then $F_n(x)\mathbb{I}_{\{x<A\}}$ is not integrable on $\R_+$.
\end{lemma}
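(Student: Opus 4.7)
Both directions reduce to controlling $F_n=w_n^2/\nu$ on a right neighbourhood of $0$: on any compact subinterval $[\delta,A]\subset(0,\r)$ the density $\nu$ is smooth and strictly positive by Theorem~\ref{thm:smoothness_nu1}, and $w_n$ is continuous by Proposition~\ref{prop:Convolution}\eqref{it:smoo_wn} (which, for $n\leq\Si$, delivers $w_n\in\cco^{\Si-n}((0,\r))$), so $F_n$ is bounded and therefore integrable on $[\delta,A]$. Thus in both assertions only the behaviour of $F_n$ on a small $(0,\delta)$ matters.

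For the positive direction the plan is to marry the lower bound on $\nu$ of \eqref{eq:LargeAsympR} with the weighted $\lt^2$-integrability of $w_n$ furnished by Proposition~\ref{prop:Convolution}\eqref{it:smoo_wn}. Since $d_\phi\leq 0$, the interval $(2d_\phi-1,d_\phi)$ is non-empty; fix any $\underline{a}$ in it and set $\underline{\alpha}=-\underline{a}$, so that $-d_\phi<\underline{\alpha}<1-2d_\phi$. Then \eqref{eq:LargeAsympR} yields $F_n(x)\leq C_{\underline{a},A}^{-1}x^{\underline{a}}w_n^2(x)$ on $(0,A)$, and under either of our two hypotheses ($\psi\in\Ni$ with arbitrary $n$, or $\psi\in\Ni^c$ with $n\in N_2$) Proposition~\ref{prop:Convolution}\eqref{it:smoo_wn} places $w_n\in\Lva$, so that $\int_0^A F_n(x)\,dx\leq C_{\underline{a},A}^{-1}\lVert w_n\rVert_{\Lva}^{2}<\infty$.

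For the converse I would aim to establish the matching pointwise upper bound $\nu(x)\leq C_A\,x^{-2}$ on $(0,A)$; once in hand, this bound together with the hypothesis would immediately give $\int_0^AF_n\,dx\geq C_A^{-1}\int_0^Ax^2w_n^2\,dx=\infty$. To produce the bound on $\nu$ I would invoke the identity $\nu(x)=x^{-2}\nuh_1(1/x)$ from Proposition~\ref{prop:recall_exps}\eqref{it:ent-self}, so the task becomes showing that the density $\nuh_1$ of the positive self-decomposable variable $I_{\mathcal{T}_1\psi}$ is bounded on $[1/A,\infty)$ for every $A<\r$. This I would extract from the Sato--Yamazato unimodality of positive self-decomposable densities together with the continuity of $\nuh_1$ on $(1/\r,\infty)$ supplied by Proposition~\ref{thm:prop_self}\eqref{it:pnu3} and its integrability: a non-negative, integrable, continuous unimodal function on $[1/A,\infty)$ is automatically bounded there.

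The principal difficulty will be precisely this uniform upper bound $\nu\leq C_A x^{-2}$: one cannot appeal to any large-argument asymptotic for $\nuh_1$, since none has been established in the $\Ni^c$ regime, and $\nuh_1$ is typically unbounded at the opposite edge $1/\r$ of its support, whose image under $y\mapsto 1/y$ is the right endpoint of $\supp\nu$, so pointwise methods near that singularity would fail. The unimodality argument sidesteps these obstructions by replacing a local estimate at the singular point by a structural monotonicity property evaluated at fixed interior points. The positive direction, by contrast, is essentially an accounting of material already assembled in Chapters~\ref{sec:prop_nu} and~\ref{sec:coeigen}.
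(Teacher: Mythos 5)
Your proposal is correct and follows essentially the same route as the paper: both directions hinge on the two-sided bound $\underline{C}\,x^2 w_n^2(x)\leq F_n(x)\leq C_{\underline a,A}\,x^{\underline a}w_n^2(x)$ on $(0,A)$, with the lower bound for $\nu$ coming from \eqref{eq:LargeAsympR} and the upper bound $\nu(x)\leq C_A x^{-2}$ coming from the unimodality of $\nuh_1$ via $\nu(x)=x^{-2}\nuh_1(1/x)$, combined with $w_n\in\Lva$ from Proposition~\ref{prop:Convolution}\eqref{it:smoo_wn}. The extra detail you supply for why unimodality plus continuity and integrability of $\nuh_1$ forces boundedness on $[1/A,\infty)$ just makes explicit what the paper leaves implicit.
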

\begin{proof}
From \eqref{eq:relation_nu} we have that $\nu(x)=x^{-2}\nuh_1(x^{-1})$ with $\nuh_1$ the density of a self-decomposable random variable which is known to be unimodal and hence $\nuh_1$ is ultimately non-increasing, see \cite[Theorem 1.1]{Sato-Yam-78}. Therefore, for any $x<A<\r$ we have that $\nu(x)\leq \underline{C}^{-1}x^{-2}$, for some $\underline{C}=\underline{C}(A)>0$. This bound and the one  recalled in \eqref{eq:LargeAsympR}, yield that, for any $n\in\N$, for any $\underline{a}<d_\phi$ and  $x \in \lbrb{0,A}$,
	\begin{equation}
\underline{C} x^2 w_n^2(x)\mathbb{I}_{\{x<A\}}	\leq F_n(x)\ind{x<A}=\frac{w_n^2(x)}{\nu(x)}\mathbb{I}_{\{x<A\}}\leq  C_{\underline{a},\underline{A}}\: x^{\underline{a}} \: w_n^2(x).
	\end{equation}
The upper bound with $\underline{a}\in\lbrb{2d_\phi-1,d_\phi}$ together with Proposition \ref{prop:Convolution}\eqref{it:smoo_wn}  gives the first claim for the integrability on $\R_+$ of $x \mapsto F_n(x)\mathbb{I}_{\{x<A\}}$, for any $A\in\lbrb{0,\r}$. The lower bound proves the second claim for the non-integrability.
 \end{proof}
Upon  the classes considered, the study of the integrability property of $F_n$ at $\r$ requires   different analysis that we split into several parts.
\subsection{The case $\psi\in\Ne_{\infty,\infty}$.}
We start by investigating the case when $\psi\in\Ne_{\infty,\infty}$, that is when $\Si=\r=\infty$, and, from Theorem \ref{thm:nuLargeTime1}, we obtain the following.
\begin{lemma}\label{lem:nuLargeTime}
	Let $\psi\in\Ne_{\infty,\infty}$.  Then, for any $n \in \N$, there exists $C_{\psi}>0$ such that
	\begin{equation}\label{eqn:nuAsymp2}
	\lb x^n\nu(x)\rb^{(n)}\stackrel{\infty}{\sim} (-1)^n\frac{C_{\psi}}{\sqrt{2\pi}}\sqrt{\varphi'(x)}\varphi^{n}(x)e^{-\int_{m}^x \varphi(y)\frac{dy}{y}},
	\end{equation}
where $\varphi(\phi(x))=x$. Consequently, for any $n\in \N$, and $A>0$, $F_n(x)\mathbb{I}_{\{x>A\}}$ is integrable on $\R_+$.
\end{lemma}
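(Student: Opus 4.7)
My plan is to establish \eqref{eqn:nuAsymp2} by applying Leibniz's rule to $x^n\nu(x)$ and extracting the leading-order contribution from the sharp pointwise asymptotic for $\nu^{(k)}$ furnished by Theorem \ref{thm:nuLargeTime1}; integrability of $F_n$ near infinity will then follow from the resulting explicit asymptotic of $F_n$.

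First, Leibniz's formula applied to the product $x^n\cdot\nu(x)$ yields
\[
(x^n\nu(x))^{(n)}=\sum_{j=0}^n\binom{n}{j}\frac{n!}{(n-j)!}\,x^{n-j}\,\nu^{(n-j)}(x).
\]
Inserting into each summand the equivalent \eqref{eqn:nu0Asymp_1}, namely $\nu^{(n-j)}(x)\simi(-x)^{-(n-j)}\frac{C_\psi}{\sqrt{2\pi}}\sqrt{\varphi'(x)}\varphi^{n-j}(x)e^{-\int_m^x\varphi(y)dy/y}$, the factors $x^{n-j}$ and $(-x)^{-(n-j)}$ cancel up to the sign $(-1)^{n-j}$, and the common factor $\frac{C_\psi}{\sqrt{2\pi}}\sqrt{\varphi'(x)}e^{-\int_m^x\varphi(y)dy/y}$ can be pulled out of the sum. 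Since $\psi\in\Ne_{\infty,\infty}$ enforces $\r=\phi(\infty)=\infty$ and hence $\varphi(x)\to\infty$, the $j=0$ summand, which carries the highest power $\varphi^n(x)$ of this divergent quantity, strictly dominates all the others and produces exactly \eqref{eqn:nuAsymp2}.

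For the integrability of $F_n\mathbb{I}_{\{x>A\}}$ on $\R_+$, combining \eqref{eqn:nuAsymp2} with the $n=0$ instance of \eqref{eqn:nu0Asymp_1} yields
\[
F_n(x)=\frac{w_n^2(x)}{\nu(x)}\simi \frac{1}{(n!)^2}\varphi^{2n}(x)\nu(x)\qquad\text{as }x\to\infty,
\]
so that $\int_A^\infty F_n(x)dx$ is, up to a constant, the truncated moment $\E\lbb\varphi^{2n}(V_\psi)\mathbb{I}_{\{V_\psi>A\}}\rbb$. The main obstacle is that $\varphi=\phi^{-1}$ may grow faster than any polynomial when $\phi$ grows slowly at infinity (for instance $\varphi(x)\simi e^x$ when $\phi(u)=\log(1+u)$), so a direct appeal to the entire moments $\E[V_\psi^k]=W_\phi(k+1)<\infty$ does not suffice. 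A uniform route is the substitution $u=\varphi(x)$, $dx=\phi'(u)du$: using $\int_m^x\varphi(y)dy/y=u\log\phi(u)-\int_0^u\log\phi(v)dv$ together with the Stirling-type asymptotic \eqref{lemmaAsymp1-2} for $W_\phi(u+1)\simi C_\psi\sqrt{\phi(u)}e^{\int_1^u\log\phi(v)dv}$, the integrand is recast (up to polynomial prefactors in $u$) as $\sqrt{\phi'(u)}\,u^{2n}\,W_\phi(u+1)/\phi(u)^u$; since the ratio $W_\phi(u+1)/\phi(u)^u=\prod_{k=1}^u\phi(k)/\phi(u)$ decays super-geometrically whenever $\phi(u)\to\infty$, being a Stirling-like tail of a non-decreasing product, integrability on $(\varphi(A),\infty)$ is immediate and the claim follows.
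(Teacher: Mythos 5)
Your derivation of \eqref{eqn:nuAsymp2} by Leibniz's rule, inserting the equivalent from \eqref{eqn:nu0Asymp_1} for each $\nu^{(n-j)}$ and keeping the dominant $j=0$ term (which carries $\varphi^n(x)\to\infty$), is essentially identical to the paper's argument for the first claim.

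For the integrability of $F_n\ind{x>A}$, however, you take a genuinely different route. The paper observes that \eqref{eqn:nuAsymp2}, applied once with index $2n$ and once with index $0$, collapses to $F_n(x)\simi (x^{2n}\nu(x))^{(2n)}$, and then invokes the Mellin-inversion bound \eqref{eq:nuMellinInv} of Lemma \ref{lem:MellinTT11} (with $k=2n$ and $\overline a>1$) to get $|(x^{2n}\nu(x))^{(2n)}|\leq Cx^{-\overline a}$, which is integrable at $\infty$ with no reference to $\varphi$ at all. You instead keep the explicit form $F_n(x)\asymp \varphi^{2n}(x)\nu(x)$, substitute $u=\varphi(x)$, and reduce the question to the decay of $u^{2n}\,W_\phi(u+1)/\phi(u)^u$. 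This can be made to work, but your justification at the key step is wrong: the ratio $W_\phi(u+1)/\phi(u)^u$ does \emph{not} decay ``super-geometrically'' whenever $\phi(u)\to\infty$. Writing $-\log\big(W_\phi(u+1)/\phi(u)^u\big)\approx \int_1^u r\phi'(r)/\phi(r)\,dr$ and recalling $u\phi'(u)\leq\phi(u)$ (Proposition~\ref{propAsymp1}\eqref{it:bernstein_cm}), this exponent grows at most linearly in $u$, so the decay is at best geometric; and when $\phi$ grows very slowly it can be far slower than geometric. What saves the argument is not a Stirling heuristic but the sharper estimate $\lim_{u\to\infty}u^2\phi'(u)/\phi(u)=\infty$ for $\psi\in\Ni\supset\Ne_{\infty,\infty}$ (Proposition \ref{propBernsteinlog}\eqref{it:sn}), which gives $\int_1^u r\phi'(r)/\phi(r)\,dr\geq M\log u-C_M$ for every $M$, hence super-polynomial decay of $W_\phi(u+1)/\phi(u)^u$ — enough to absorb $u^{2n}$ and even the $\sqrt{\phi(u)}$ factor hidden in \eqref{lemmaAsymp1-2}. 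You should cite that proposition to close the gap; once done, your route is correct, though considerably longer than the paper's $(x^{2n}\nu(x))^{(2n)}$ device.
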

\begin{proof}
	Recall that \eqref{eqn:nu0Asymp_1} claims that, for any $n\in \N$,
	\begin{eqnarray}\label{eqn:nu0Asymp2}
	\nu^{(n)}(x)&\simi& (-1)^{-n}\frac{C_{\psi} }{\sqrt{2\pi}}\sqrt{\varphi'(x)} \frac{\varphi^{n}(x)}{x^n}e^{-\int_{m}^x \varphi(y)\frac{dy}{y}},
	\end{eqnarray}
	from which we deduce that
	\[\lbrb{x^n\nu(x)}^{(n)}\simi\frac{C_{\psi} }{\sqrt{2\pi}}e^{-\int_{m}^x \varphi(y)\frac{dy}{y}}\sqrt{\varphi'(x)} \sum_{k=0}^{n}{n\choose k}\frac{\Gamma(n+1)}{\Gamma(n-k+1)}(-1)^{n-k}\varphi^{n-k}(x).\]
	 Since $\lim_{x\to \infty}\varphi(x)=\infty,$ hence  $\varphi^n(x)$ is the leading term and \eqref{eqn:nuAsymp2} holds. From  the third equality  in \eqref{eq:nu_n_lnu}, using the  asymptotic relation  \eqref{eqn:nu0Asymp2} for $n=0$ and twice \eqref{eqn:nuAsymp2}, we get that
	\begin{eqnarray*}
		F_n(x) &\stackrel{\infty}{\sim}& \frac{\lb\lb x^n\nu(x)\rb^{(n)}\rb^2}{\nu(x)} \simi \frac{C_{\psi}}{\sqrt{2\pi}}\sqrt{\varphi'(x)}\varphi^{2n}(x) e^{-\int_{m}^x \varphi(y)\frac{dy}{y}} \simi \lb x^{2n}\nu(x)\rb^{(2n)}.
	\end{eqnarray*}
	Finally, from the upper bound \eqref{eq:nuMellinInv}, we get,  with $k=2n$ and $\overline{a}>1$ therein, that
		for any $x>0$,  there exists a constant $C=C_{2n,\overline{a}}>0$ such that
		\begin{eqnarray*}
		\labs\lb x^{2n}\nu(x)\rb^{(2n)}\rabs\leq C x^{-\overline{a}},
		\end{eqnarray*}
which  shows that $F_n(x)\mathbb{I}_{\{x>A\}}$ is integrable on $\R_+$ and completes the proof of the Lemma.
\end{proof}
\mladen{Lemma \ref{eq:int0} and Lemma \ref{lem:nuLargeTime} complete the proof of Theorem \ref{thm:existence_coeigenfunctions1}\eqref{it:coe-ci1} when $\psi\in\Nii$. We proceed with the remaining case of \ref{thm:existence_coeigenfunctions1}\eqref{it:coe-ci1}, that is $\psi\in\Ne_{\infty}\setminus\Nii$.}
 \subsection{The case $\psi\in\Ne_{\infty}\setminus\Nii$}
 We now discuss the case  $\r<\infty$ and  $\Si=\infty$. Expanding the third expression in \eqref{eq:nu_n_lnu}, to obtain the integrability of $F_n$ in a neighbourhood of $\r<\infty$  it suffices to show, for any $0\leq i+j<\infty$, that
 \begin{equation}\label{eq:claimNr1}
 \lim_{x\uparrow \r}\frac{\nu^{(j)}(x)\nu^{(i)}(x)}{\nu(x)}=0.
 \end{equation}
 To this end we introduce the linear space \[\cco_\r^{\infty}\lbrb{\R_+}=\lbcurlyrbcurly{f\in \cco^{\infty}\lbrb{\R_+};\, \forall l\in\N,\,r_{f^{(l)}}<\r \textrm{ and }\supp f=\lbbrbb{0,\r}},\]
 where
 \begin{equation}\label{eq:rf}
 r_f=\sup\lbcurlyrbcurly{0\leq x<\r;\,f(x)=0}\in\lbbrbb{0,\r},
 \end{equation}
 and prove  the following.
 \begin{lemma}\label{lem:induc1}
 	If $\psi\in\Ni\setminus\Nii$ then  $\nu\in \cco_\r^{\infty}\lbrb{\R_+}$.
 \end{lemma}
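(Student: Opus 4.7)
The conclusion $\nu\in\cco_\r^\infty(\R_+)$ unpacks into three assertions: (i) $\nu\in\cco^\infty(\R_+)$; (ii) $\supp\nu=[0,\r]$; and (iii) $r_{\nu^{(l)}}<\r$ for every $l\in\N$. Since $\psi\in\Ni$, item (i) is immediate from Theorem \ref{thm:smoothness_nu1}\eqref{it:cinfty0_nu1}, while (ii) follows by combining Theorem \ref{thm:smoothness_nu1}\eqref{it:supV} (which yields $\supp V_\psi=[0,\r]$ and $\nu>0$ on $(0,\r)$) with the absolute continuity of the law of $V_\psi$. Only (iii) requires genuine argument: for each $l$ I must produce $\epsilon_l>0$ such that $\nu^{(l)}$ is nowhere vanishing on $(\r-\epsilon_l,\r)$.

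My plan for (iii) is to transport the question to the boundary point $1/\r^+$ of $\supp I_{\mathcal{T}_1\psi}$ through the identity $\nu(x)=x^{-2}\nuh_1(1/x)$ provided by Proposition \ref{prop:recall_exps}\eqref{it:ent-self}, where $\nuh_1$ denotes the density of $I_{\mathcal{T}_1\psi}$ and $\mathcal{T}_1\psi(u)=u\phi(u+1)$. Under the hypothesis $\psi\in\Ni\setminus\Nii$, the shifted Laplace exponent satisfies $\phi_1(0)=\phi(1)>0$, $\PP_1(0^+)=\PP(0^+)=\infty$, and $\phi_1(\infty)=\r$, placing $\mathcal{T}_1\psi$ in $\Ni\cap\Nm$. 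Lemma \ref{lprop:SD2}\eqref{it:sdd1} therefore applies to $\nuh_1$ and supplies a decreasing sequence $(a_n^{(1)})_{n\geq 0}$ with $a_n^{(1)}>1/\r$ such that $\nuh_1^{(n)}>0$ on $(1/\r,a_n^{(1)})$ for every $n\geq 0$. Setting $\delta_l=\min_{0\leq j\leq l}(a_j^{(1)}-1/\r)>0$ produces a common right neighbourhood $(1/\r,1/\r+\delta_l)$ on which the derivatives $\nuh_1^{(0)},\ldots,\nuh_1^{(l)}$ are simultaneously strictly positive.

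The endgame is an algebraic expansion. Writing $y=1/x$ and $h(y)=y^2\nuh_1(y)=\nu(1/y)$, the chain-rule identity $d/dx=-y^2\,d/dy$ together with a straightforward induction on $l$ yields a representation
\begin{equation*}
\nu^{(l)}(x)=(-1)^{l}\sum_{k=1}^{l}c_{l,k}\,y^{l+k}\,h^{(k)}(y)\Big|_{y=1/x},\qquad c_{l,k}>0,
\end{equation*}
while the Leibniz rule writes each $h^{(k)}(y)$ as a nonnegative combination of $\nuh_1^{(j)}(y)$ for $j\in\{k-2,k-1,k\}\cap\N$, with strictly positive polynomial weights in $y$. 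Choosing $\epsilon_l$ so small that $1/x\in(1/\r,1/\r+\delta_l)$ whenever $x\in(\r-\epsilon_l,\r)$, the positivity of every $\nuh_1^{(j)}$ on that right neighbourhood forces every summand above to be strictly positive, so that $\nu^{(l)}(x)$ retains the constant sign $(-1)^l$ throughout $(\r-\epsilon_l,\r)$. Hence $r_{\nu^{(l)}}\leq\r-\epsilon_l<\r$, which settles (iii).

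The crux of the argument, and the only genuine obstacle, is the sign coherence of the derivatives $\nuh_1^{(n)}$ near $1/\r^+$: for a generic smooth density vanishing at a boundary point one would expect oscillation, whence the positive and negative contributions in the expansion above could conspire to drive $\nu^{(l)}$ to zero along a sequence accumulating at $\r$. The Sato--Yamazato structural result recorded in Lemma \ref{lprop:SD2}\eqref{it:sdd1}, which is available precisely because $\mathcal{T}_1\psi\in\Ni\cap\Nm$ forces $\kappasd(0^+)=\infty$ and places $I_{\mathcal{T}_1\psi}$ in the corresponding subclass of positive self-decomposable laws, is exactly what rules out this pathology and is the reason the hypothesis $\psi\in\Ni$ (rather than merely $\psi\in\Ne$) is essential.
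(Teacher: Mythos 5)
Your proposal is correct and follows essentially the same route as the paper: both pass to $\nuh_1(1/x)$ via $\nu(x)=x^{-2}\nuh_1(1/x)$, expand $\nu^{(l)}$ as $(-1)^l$ times a positive combination of the derivatives $\nuh_1^{(j)}(1/x)$, $0\le j\le l$, and then invoke the Sato--Yamazato sign-coherence property of Lemma \ref{lprop:SD2}\eqref{it:sdd1} (applicable since $\mathcal{T}_1\psi\in\Ni\cap\Nm$ with $\PP_1(0^+)=\PP(0^+)=\infty$) to conclude that $\nu^{(l)}$ keeps the constant sign $(-1)^l$ on a right neighbourhood of $\r$. The only cosmetic difference is that you organize the chain rule via $h(y)=y^2\nuh_1(y)$ and then Leibniz, whereas the paper expands $(\nuh_1(1/x))^{(n)}$ by Fa\`a di Bruno and then multiplies by $x^{-2}$; both yield the same positivity structure.
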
	
 \begin{proof}
 	Since from  Theorem \ref{thm:smoothness_nu1}\eqref{it:cinfty0_nu1}, we have $\nu\in \cco^{ \infty}\lbrb{\R{^+}}$, we simply need to check that,  for all $n\in \N$, $r_{\nu^{(n)}}<\r$. From \eqref{eq:relation_nu-} and the middle identity of \eqref{eq:faadi} on $(0,\r)$, we get, recalling that $\tilde{k}_n=\sum_{j=1}^{n}jk_j$ and $\bar{k}_n=\sum_{j=1}^{n}k_j$, that, for all $n\in\N$,
 	\begin{equation}\label{eq:nunId}
 	\nu^{(n)}(x)=\lbrb{\frac{1}{x^2}\nuh_1\lbrb{\frac{1}{x}}}^{(n)}=(-1)^n\sum_{m=0}^{n}{n\choose m}\frac{(m+1)!}{x^{2+m}}\sum_{\stackrel{\tilde{k}_n=n-m}{\bar{k}_n=k}}\frac{(n-m)!\:\nuh_1^{(k)}\lb x^{-1}\rb x^{-n+m-k}}{\prod_{j=1}^{n-m}k_j!},
 	\end{equation}
 	with $\nuh_1$ the density of $I_{\mathcal{T}_1 \psi}$ and $\mathcal{T}_1 \psi(u)=u\phi(u+1) \in \Ne(\phi(1))$ has associated \LL measure $\Pi_1$. From Proposition \ref{prop:recall_exps}\eqref{it:ent-self}, $\PP_1(0^+)=\PP(0^+)=\infty$, and clearly $\r_1=\limi{u}\phi(u+1)=\phi(\infty)=\r$. Hence $\lceil\frac{\PP_1(0^+)}{\r_1}\rceil-1=\infty$. From Lemma \ref{lprop:SD2}\eqref{it:sdd1},  for each $n\in \N$, there exists $a_n^1>\frac{1}{\r}$ such that, for all $0\leq l\leq n$, $\nuh_1^{(l)}>0$ on $\lbrb{\frac{1}\r,a^{1}_n}$. Thus, from \eqref{eq:nunId}, $r_{\nu^{(n)}}<\r$ for all $n\in \N$, which completes the proof.
 \end{proof}

 \begin{lemma}\label{lem:induc}
 	If $\psi\in\Ni\setminus\Nii$  then for any $i,j\in\N$, we have
 	\begin{equation}\label{eq:claimNr}
 	\lim_{x\uparrow \r}\frac{\nu^{(j)}(x)\nu^{(i)}(x)}{\nu(x)}=0.
 	\end{equation}
 	
 \end{lemma}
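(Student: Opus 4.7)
The plan is to reduce \eqref{eq:claimNr} via the Cauchy--Schwarz bound $|\nu^{(i)}\nu^{(j)}|/\nu\leq\tfrac{1}{2}\bigl((\nu^{(i)})^{2}+(\nu^{(j)})^{2}\bigr)/\nu$ to the pointwise assertion that $(\nu^{(n)}(x))^{2}/\nu(x)\to 0$ as $x\uparrow\r$, for every fixed $n\geq 0$.  The strategy is to establish, on a suitable left neighborhood of $\r$, the pointwise log-convexity inequality
\[
|\nu^{(k+1)}(x)|^{2}\leq 2\,|\nu^{(k)}(x)|\,|\nu^{(k+2)}(x)|,\qquad k\geq 0,
\]
and then to iterate it to $|\nu^{(n)}(x)|^{2}\leq 2^{n^{2}}\nu(x)|\nu^{(2n)}(x)|$, after which the desired limit follows immediately since $\nu^{(2n)}(\r)=0$ by Lemma \ref{lem:induc1}.

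For the log-convexity, I would first exploit the Faa di Bruno expansion \eqref{eq:nunId} combined with Lemma \ref{lprop:SD2}\eqref{it:sdd1} applied to $\mathcal{T}_{1}\psi$ (which lies in $\Nim$ since $\PP_{1}(0^{+})=\PP(0^{+})=\infty$): the positive sequence $(a^{1}_{k})_{k\geq 1}\subset(1/\r,\infty)$ produced there makes each $\nuh_{1}^{(k)}$ strictly positive on $(1/\r,a^{1}_{k+1})$, which, together with the fact that every coefficient in \eqref{eq:nunId} is positive and that the overall sign is $(-1)^{n}$, yields for any $N\in\N$ some $\delta_{N}>0$ such that $(-1)^{k}\nu^{(k)}(x)>0$ simultaneously for all $0\leq k\leq 2N$ and all $x\in(\r-\delta_{N},\r)$.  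Coupling this with Lemma \ref{lem:induc1}, which ensures $\nu^{(k)}(\r)=0$ for every $k$, each function $f_{k}(x):=(-1)^{k}\nu^{(k)}(x)$ is then nonnegative, nonincreasing and of class $\cco^{2}$ on $[\r-\delta_{N},\r]$, satisfies $f_{k}(\r)=f_{k}'(\r)=0$, and $\sup_{[x,\r]}|\nu^{(k+2)}|=|\nu^{(k+2)}(x)|$.  A pointwise Landau--Kolmogorov argument then applies to each $f_{k}$: writing Taylor's formula $f_{k}(y)=f_{k}(x)+f_{k}'(x)(y-x)+\tfrac{1}{2}f_{k}''(\xi)(y-x)^{2}\geq 0$ for $y\in[x,\r]$ and optimizing in $h:=y-x$ at $h^{*}=\sqrt{2f_{k}(x)/|\nu^{(k+2)}(x)|}$, which lies in $[0,\r-x]$ thanks to the identity $f_{k}(x)=\int_{x}^{\r}\int_{t}^{\r}f_{k}''(s)\,ds\,dt\leq|\nu^{(k+2)}(x)|(\r-x)^{2}/2$, delivers the desired inequality.

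The iteration is a brief convexity argument applied to $c_{k}:=\log|\nu^{(k)}(x)|$: the log-convexity inequality rewrites as $c_{k+1}-2c_{k}+c_{k-1}\geq-\log 2$, so the shifted sequence $c_{k}+\tfrac{k(k-1)}{2}\log 2$ is convex in $k\in\{0,1,\ldots,2n\}$, and linear interpolation at the midpoint $k=n$ yields $|\nu^{(n)}(x)|^{2}\leq 2^{n^{2}}\nu(x)|\nu^{(2n)}(x)|$, whence the claim.  The main obstacle I anticipate is organisational rather than analytical: it lies in the simultaneous control of the sign pattern, the monotonicity, and the strict nonvanishing of the whole family $(\nu^{(k)})_{k\leq 2n}$ on one common one-sided neighborhood of $\r$, which is precisely why both the qualitative input of Lemma \ref{lem:induc1} (ensuring $r_{\nu^{(k)}}<\r$ for every $k$) and the structural positivity input of Lemma \ref{lprop:SD2}\eqref{it:sdd1} are indispensable, for without them the base Landau--Kolmogorov inequality could not be written.
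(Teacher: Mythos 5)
Your proof is correct, but it takes a genuinely different route from the paper's. The paper proves the claim by a short induction on $i+j$, valid for the whole class $\cco_{\r}^{\infty}(\R_+)$: for the inductive step it sets $g=f'$ and applies L'H\^{o}pital's rule to reduce $f^{(i)}f^{(j)}/f$ (total order $n+1$) to $g^{(i)}g^{(j-1)}/g+g^{(i-1)}g^{(j)}/g$ (total order $n$), which vanishes by the induction hypothesis since $g\in\cco_{\r}^{\infty}(\R_+)$; the only input is the qualitative fact from Lemma~\ref{lem:induc1} that $\nu$ and all its derivatives vanish at $\r$ while $r_{\nu^{(l)}}<\r$. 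You instead reduce to $(\nu^{(n)})^{2}/\nu\to 0$ by AM--GM, prove a pointwise Landau--Kolmogorov log-convexity inequality $|\nu^{(k+1)}|^{2}\leq 2|\nu^{(k)}||\nu^{(k+2)}|$ on a left neighborhood of $\r$ (using the simultaneous sign, monotonicity and nonvanishing of $(\nu^{(k)})_{k\leq 2n}$ extracted from \eqref{eq:nunId} and Lemma~\ref{lprop:SD2}\eqref{it:sdd1} applied to $\mathcal{T}_{1}\psi\in\Nim$), then iterate it through a discrete-convexity argument to $|\nu^{(n)}|^{2}\leq 2^{n^{2}}\nu|\nu^{(2n)}|$, and finish with $\nu^{(2n)}(\r)=0$. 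Your steps check out (the optimization at $h^{*}=\sqrt{2f_{k}/f_{k+2}}$ stays inside $[0,\r-x]$ thanks to the double-integral bound, and the shift $c_{k}+\tfrac{k(k-1)}{2}\log 2$ is indeed convex), and your argument even delivers a quantitative bound that the lemma does not need. The trade-off is that your proof is specific to $\nu$ and requires marshalling the full self-decomposable sign structure on a common neighborhood, whereas the paper's L'H\^{o}pital induction is shorter, more elementary, and applies verbatim to any function in $\cco_{\r}^{\infty}(\R_+)$.
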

 \begin{proof}
 We work by induction in $i+j$ for the whole class $\cco_\r^{\infty}\lbrb{\R_+}$.  When $i+j=0$, for any $f\in\cco_\r^{\infty}\lbrb{\R_+}$ \eqref{eq:claimNr} reduces to $\lim_{x\uparrow \r}f(x)=0$ which holds since by definition $\cco_\r^{\infty}\lbrb{\R_+}\subseteq\cco^{\infty}\lbrb{\R_+}$. Assume \eqref{eq:claimNr} is valid for some $n=i+j$ and all $f\in\cco_\r^{\infty}\lbrb{\R_+}$. Consider $i+j=n+1$. Then, by the L'H\^ospital's rule
 	\begin{eqnarray}\label{eq:L'Hosp}
 	\lim_{x\uparrow \r}\frac{f^{(i)}(x)f^{(j)}(x)}{f(x)}&=&
 	\lim_{x\uparrow \r}\lbrb{\frac{g^{(i)}(x)g^{(j-1)}(x)}{g(x)}+\frac{g^{(i-1)}(x)g^{(j)}(x)}{g(x)}},
 	\end{eqnarray}	
 	where $g=f'$. Since $g\in \cco_\r^{\infty}\lbrb{\R_+}$ we see by the induction hypothesis that the limit to the right vanishes. This proves \eqref{eq:claimNr} as $\nu\in \cco_\r^{\infty}\lbrb{\R_+}$ according to Lemma \ref{lem:induc1}.
 \end{proof}
 \mladen{Lemmas \ref{eq:int0},\,\ref{lem:induc1} and \ref{lem:induc} conclude Theorem \ref{thm:existence_coeigenfunctions1}\eqref{it:coe-ci1} in the remaining case, i.e.~$\psi\in\Ne_{\infty}\setminus\Nii$. We proceed with Theorem \ref{thm:existence_coeigenfunctions1}\eqref{it:coe-f1} and the final claim of Theorem \ref{thm:existence_coeigenfunctions1}.}
  \subsection{The case $\psi\in \Ne^c_{\infty}$.}
 In this case we have that $\r<\infty$ and  $\Si<\infty$.
\begin{lemma}\label{lem:intr}
Let $\psi\in \Ne^c_{\infty}$. Then,  for any $A\in\lbrb{0,\r}$, $F_n(x)\mathbb{I}_{\{x>A\}}$ is integrable (resp.~not integrable) on $\R_+$ if $ 0\leq n<\frac{\overline\Pi(0^+)}{2\r}$ (resp.~$ \frac{\overline\Pi(0^+)}{2\r} <n\leq \Si,\,\Si\geq1$).
\end{lemma}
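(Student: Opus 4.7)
The target is the integrability of $F_n(x)=w_n^2(x)/\nu(x)$ as $x\uparrow\r$, since in the case $\psi\in\Ne_{\infty}^c$ we have $\r<\infty$ and $\nu$ is continuous and strictly positive on $(0,\r)$ by Theorem \ref{thm:smoothness_nu1}\eqref{it:supV}. The plan is to pass to the variable $y=\r-x$, expand the Rodrigues operator via Leibniz, and feed in the precise boundary asymptotic
\[
\nu^{(j)}(x)\stackrel{\r}{\sim} C_j\,(\r-x)^{\alpha-j}\,l(\r-x),\qquad \alpha:=\frac{\PP(0^+)}{\r}-1,
\]
supplied by Theorem \ref{thm:smoothness_nu1}\eqref{it:asy_nu_r}, which is available for every $0\le j\le\Si$.

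A first observation is that both regimes in the statement force $n\le\Si$: if $n<\PP(0^+)/(2\r)$ then $2n<\PP(0^+)/\r\le \Si+1$, hence $n\le\Si$; while the range $\PP(0^+)/(2\r)<n\le\Si$ contains the requirement explicitly. Consequently the boundary asymptotic is usable for every $0\le j\le n$, and the constants $C_j$ obtained by formal differentiation of $C\,y^{\alpha}l(y)$ are nonzero because $\alpha(\alpha-1)\cdots(\alpha-j+1)\neq 0$ for every $j\le n$ (either $\alpha$ fails to be a non-negative integer, or $\alpha=\Si\ge n$).

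Turning to the expansion, set $x=\r-y$ and apply Leibniz to $w_n=\tfrac{1}{n!}(x^n\nu(x))^{(n)}$ to obtain
\[
w_n(\r-y)=\sum_{j=0}^{n}\frac{n!}{(n-j)!(j!)^2}\,(\r-y)^{j}\,\nu^{(j)}(\r-y).
\]
Each summand is equivalent, as $y\downarrow 0$, to $\frac{n!\,\r^{j}C_j}{(n-j)!(j!)^2}\,y^{\alpha-j}l(y)$. Since $\r>0$ is finite, the exponents $\alpha-j$ are strictly decreasing in $j$, the $j=n$ term is strictly the most singular, and no cancellation can occur, yielding
\[
w_n(\r-y)\simo\frac{\r^{n}C_n}{n!}\,y^{\alpha-n}l(y).
\]
Combining with $\nu(\r-y)\simo C_0\,y^{\alpha}l(y)$ gives
\[
F_n(\r-y)\simo\frac{\r^{2n}C_n^{2}}{(n!)^{2}C_0}\,y^{\alpha-2n}l(y),
\]
and a standard regular-variation integrability criterion then concludes both cases: $y^{\alpha-2n}l(y)$ is integrable on $(0,\r-A)$ iff $\alpha-2n>-1$, i.e., iff $n<\PP(0^+)/(2\r)$, and fails to be integrable iff $\alpha-2n<-1$, i.e., iff $n>\PP(0^+)/(2\r)$, as claimed.

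\textbf{Main obstacle.} The only delicate point is ensuring that the leading coefficient $\r^{n}C_n/n!$ of $w_n$ at $\r$ does not accidentally vanish through cancellation among the $n+1$ Leibniz terms. This is resolved cleanly by the strict separation of the exponents $\alpha-j$ in the variable $y$ caused by $\r>0$: the $j=n$ summand is uniquely the most singular, so no other summand can interfere with its leading behaviour, and its coefficient is manifestly nonzero.
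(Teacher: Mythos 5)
Your argument is correct and mirrors the paper's own proof: both pass to the boundary variable $\r-x$, expand $w_n=\frac{1}{n!}(x^n\nu)^{(n)}$ by Leibniz, invoke the boundary asymptotic of Theorem \ref{thm:smoothness_nu1}\eqref{it:asy_nu_r} for $\nu^{(j)}$, observe that the $j=n$ term strictly dominates because the exponents $\frac{\PP(0^+)}{\r}-1-j$ are strictly decreasing in $j$ (so no cancellation with the remaining terms is possible), and then read off integrability of $(\r-x)^{\frac{\PP(0^+)}{\r}-2n-1}l(\r-x)$ at $\r$ from the sign of the exponent. The only cosmetic difference is that you articulate the no-cancellation point explicitly and note $n\le\Si$ in both regimes, which the paper leaves implicit.
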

\begin{remark}
The case \mladen{$\frac{\PP(0^+)}{2\r}=n$} depends on the slowly varying function appearing in \eqref{eq:eigdef_r}.
\end{remark}
\begin{proof}
 From Theorem \ref{thm:smoothness_nu1}\eqref{it:asy_nu_r}, we deduce, after expanding  the right-hand side of \eqref{eq:nu_n_lnu} that,  for any $n=1,\ldots,\Si$,
  \begin{equation} \label{eq:eigdef_r}
    F_n(x) \stackrel{\r}{\sim} C_\r \frac{\left(\nu^{(n)}(x)\right)^2}{\nu(x)}\mladen{\ind{x<\r}} \stackrel{\r}{\sim} C (\r-x)^{\frac{\overline\Pi(0^+)}{\r}-2n-1} l(\r-x)\mladen{\ind{x<\r}},
       \end{equation}
       where $C_\r,C>0$ and $l$ a slowly varying function at $0$. This completes the proof.
\end{proof}
\mladen{ Note that always $\mathcal{V}_0\equiv1\in \Lnu$. Next, we assume first  that $\Si=\lceil\ok\rceil-1\geq 1$ which implies that if $n\in\N$ and $n<\okhalf$ then $n\in N_2$, see \eqref{eq:N2}. Then, Lemmas  \ref{eq:int0} and \ref{lem:intr} give that $\nun \in \Lnu$  for any  $ 0\leq n<\frac{\overline\Pi(0^+)}{2\r}$.  Also, Lemma \ref{lem:intr} shows that $\nun \notin \Lnu$ if $\frac{\overline\Pi(0^+)}{2\r} <n\leq \Si,\,\Si\geq1$.
  Assume from now on that $n\geq\Si+1>\okk$ and $\Si\geq 0$. Then necessarily, from Proposition \ref{prop:Convolution}\eqref{it:smoo_wn} we have $n\in \overline{N}_2$ and hence  $w_n \notin \Ltwo$. The latter may be due to  either $w^2_n$ is not measurable and in this case the proof is completed since from \eqref{eq:nu_nDistribution1}, $\nun^2=\frac{w^2_n}{\nu}$, or the measurable function $w^2_n$ is not integrable $i)$ on an open set  $E \subseteq (0,\r)$ with $0,\r \notin E$  or/and $ii)$ at $\r$ or/and $iii)$ at  $0$. If $i)$ holds then, since $\nu>0$ and continuous on $(0,\r)$, see Theorem \ref{thm:smoothness_nu1}\eqref{it:supV}, then
plainly $w_n \notin \Ltwo$ implies that $\nun \notin \Lnu$.  If $ii)$ holds, then since from  Theorem \ref{thm:smoothness_nu1}\eqref{it:asy_nu_r}
  \begin{equation*} \label{eq:asymptot_nu}
    \nu(x) \stackrel{\r}{\sim}  C (\r-x)^{\frac{\PP(0^+)}{\r}-1} l(\r-x)\mladen{\ind{x<\r}},
       \end{equation*}
       where $C>0$ is throughout generic and $l$ a slowly varying function at $0$. Then, assuming first that $\Si=\lceil\ok\rceil-1\geq 1$ leads to $\frac{\PP(0^+)}{\r}>1$ and hence there exists $C,A>0$ such that
       \[ ||\nun||^2_{\nu}\geq \int_0^{A} \frac{w^2_n(x)}{\nu(x)}dx + C\int_A^{\r} w^2_n(x)dx\geq C\int_A^{\r} w^2_n(x)dx=\infty,\]
       which gives the statement in the case $\Ne_\r\geq 1$.

        The case $iii)$ is dealt with as follows. Since $w_n\not\in\Ltwo$ then from Proposition \ref{prop:Convolution}\eqref{it:smoo_wn} we have that even $x\mapsto xw_n(x)\not\in\Ltwo$ and if $x\mapsto x^2w_n^2(x)$ fails to be integrable at $0$  then Lemma \eqref{eq:int0} furnishes that $\nun \notin \Lnu$. Otherwise, clearly, at least case $i)$ and/or case $ii)$ must be valid for $w_n$ and hence $\nun \notin \Lnu$. This, completes the proof of Theorem \ref{thm:existence_coeigenfunctions1}\eqref{it:coe-f1}.
  Next, we discuss the final claim of Theorem \ref{thm:existence_coeigenfunctions1}.  The claim  $\nun \in \cco^{\Nt-n}\lbrb{\lbrb{0,\r}},\,n\leq \Si$, follows from Proposition \ref{prop:Convolution}\eqref{it:smoo_wn}, \eqref{eq:nu_nDistribution1} and  $\nu>0$ on $\lbrb{0,\r}$, see Theorem \ref{thm:smoothness_nu1}\eqref{it:supV}.

  We close this Chapter with the following additional properties of the adjoint intertwining operator.
\begin{lemma}\label{lem:densityofIpn}
Let $\psi\in\Ni$.
	\begin{enumerate}
\item \label{it:dense0} Then  $\Ran{\Ipn^{*}}=\lga$ and ${\rm{Ker}}(\Ipn)=\{0\}$.
			\item \label{it:dense} For any $\alpha\in(0,1)$ we have that $\Ran{\Ipn}_{\Lv}=\Lv$ whereas when $\psi\in\Ni$ with $m=0$ and $\phi'\lbrb{0^+}<\infty$, this extends to $\alpha=0$, i.e.~to $\Ltwo$.
	\end{enumerate}
\end{lemma}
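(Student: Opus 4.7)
The plan is to reduce both statements to injectivity questions via Hilbert space duality, and then to harvest results already established earlier in the paper. Since $\Ipn \in \Bop{\lga}{\lnu}$ by Theorem~\ref{MainProp}\eqref{it:Iphi1}, one has $(\Ran{\Ipn^{*}})^{\perp} = \ker{\Ipn}$ in $\lga$, so $\Ran{\Ipn^{*}} = \lga$ and $\ker{\Ipn} = \{0\}$ are equivalent. Analogously, for item~\eqref{it:dense}, the density $\Ran{\Ipn}_{\Lv} = \Lv$ is equivalent to the injectivity of the adjoint $\Ipn^{\alpha *}$ on $\Lv$, whose explicit multiplicative-convolution form is recorded in Lemma~\ref{lemm12}\eqref{it:Ib}.

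For item~\eqref{it:dense0}, the plan is to invoke directly Theorem~\ref{thm:existence_coeigenfunctions1}\eqref{it:coe-ci1}: when $\psi \in \Ni$, the co-eigenfunctions $\nun = \mathcal{R}^{(n)}\nu/\nu$ belong to $\lnu$ for every $n \geq 0$, and each satisfies $\Ipn^{*}\nun = \Lpn$ (this is the combination of~\eqref{eq:Ip^*w=gammaL2} with the identity~\eqref{eq:RelationDuals} from Lemma~\ref{lemm12}\eqref{it:I^*}). Hence every Laguerre polynomial lies in the range of $\Ipn^{*}$, and since $(\Lpn)_{n\geq 0}$ is an orthonormal basis of $\lga$, the conclusion $\Ran{\Ipn^{*}} = \lga$ follows at once; the triviality of $\ker{\Ipn}$ is then a consequence of the duality above.

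For item~\eqref{it:dense}, I would carry out a direct Mellin--Plancherel computation. Starting from the formula $\Ipn^{\alpha *} f(x) = \int_0^\infty f(xy)\widehat{\iota}(y)\,y^{-\alpha}\,dy$ of Lemma~\ref{lemm12}\eqref{it:Ib}, a Fubini swap combined with the change-of-variables identity $\M_{\widehat{\iota}}(s) = \M_{I_\phi}(1-s)$ yields the multiplier representation
\[
\M_{\Ipn^{\alpha *} f}(z) \;=\; \M_{I_\phi}(z+\alpha)\,\M_f(z).
\]
Plancherel's theorem for the Mellin transform identifies $\Lv$ isometrically with $L^2$ of the vertical line $\Re(z) = (1-\alpha)/2$, and $\M_{I_\phi}(\cdot + \alpha)$ is bounded along that line since $|\M_{I_\phi}(s)| \leq \E[I_\phi^{\Re(s)-1}] < \infty$ whenever $\Re(s) > 0$, by Proposition~\ref{prop:recall_exp}\eqref{it:IpMoments}. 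Since $(1+\alpha)/2 > 0$, Corollary~\ref{cor:MellinZeroFree} ensures $\M_{I_\phi}$ is zero-free on $\Re(z) = (1+\alpha)/2$, so $\Ipn^{\alpha *} f = 0$ in $\Lv$ forces $\M_f \equiv 0$ on the critical line, hence $f = 0$. The case $\alpha = 0$ proceeds identically, the extra hypotheses $m = 0$ and $\phi'(0^+) < \infty$ serving only to secure $\Ipn \in \Bo{\Ltwo}$, not for the injectivity argument itself. I do not anticipate any substantive technical obstacle: the two deep inputs---the existence of co-eigenfunctions in $\lnu$ for $\psi\in\Ni$ and the zero-freeness of $\M_{I_\phi}$ on the open right half-plane---have already been secured in earlier chapters, and the present lemma essentially packages them.
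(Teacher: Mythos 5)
Your argument for item~\eqref{it:dense0} is identical to the paper's: both invoke Theorem~\ref{thm:existence_coeigenfunctions1}\eqref{it:coe-ci1} to see that $\nun \in \Lnu$ and $\Ipn^{*}\nun = \Lpn$ for every $n$, conclude $\Ran{\Ipn^{*}}=\lga$ from the completeness of the Laguerre system, and obtain $\ker{\Ipn}=\{0\}$ by duality.

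For item~\eqref{it:dense} you take a genuinely different route. The paper constructs an explicit pre-image: it introduces the power series $f_{\beta}(x)=x^{\beta}\sum_{n\geq0}\frac{W_{\phi}(n+1+\beta)}{\Gamma(n+1+\beta)\,n!}(-x)^n$, proves via careful Mellin-inversion bounds (which use the $\Si=\infty$ estimate from \eqref{eq:subexp1}, and hence the $\Ni$ hypothesis) that $f_{\alpha/2}\in\Lv$, then observes $\Ip f^{\lambda}_{\alpha/2}(x)=(\lambda x)^{\alpha/2}e^{-\lambda x}$ and appeals to the density of $\{e_{-\lambda}\}_{\lambda>0}$ in $\Ltwo$. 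Your route instead reduces by duality to the injectivity of $\Iph^{\alpha}$ and settles it by the multiplier identity $\M_{\Iph^{\alpha}f}(z)=\M_{I_\phi}(z+\alpha)\M_f(z)$ combined with the zero-freeness of $\M_{I_\phi}$ on $\Cb_{(0,\infty)}$ from Corollary~\ref{cor:MellinZeroFree}. The computations are correct: $\M_{\hat\iota}(s)=\M_{I_\phi}(1-s)$, the Fubini step is justified on $\cci$ because $\int_0^\infty y^{-\frac{1+\alpha}{2}}\hat\iota(y)\,dy=\M_{I_\phi}\lbrb{\frac{1+\alpha}{2}}<\infty$, and $|\M_{I_\phi}|$ is bounded along $\Re(z)=\frac{1+\alpha}{2}$ by $\M_{I_\phi}\lbrb{\frac{1+\alpha}{2}}$. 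Your argument is cleaner in that it needs no explicit pre-image and bypasses the $\Si=\infty$ decay estimate entirely (so it in fact works for all $\psi\in\Ne$, a slight strengthening). The one point worth making explicit is the passage from $\cci$ to all of $\Lv$: since both $\Iph^{\alpha}$ and multiplication by the bounded continuous function $b\mapsto\M_{I_\phi}\lbrb{\frac{1+\alpha}{2}+ib}$ are bounded on $\Lv$ and they agree on the dense subspace $\cci$, the multiplier representation holds on all of $\Lv$, and injectivity then follows because the multiplier is nowhere vanishing. You should say this; as written the step is merely asserted.
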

\begin{proof}
From Theorem \ref{thm:existence_coeigenfunctions1}\eqref{it:coe-ci1} and \eqref{eq:equation_nu_n} we have, for any $\psi \in \Ni$ and $n \in \N$, $\nun \in \Lnu$ and $\Ipn^* \nun=\Lc_n$. Hence,  $\Spc{\Lc_n}=\lga$ entails that $\Ran{\Ipn^{*}}=\lga$ and $\mbox{Ker}(\Ipn)= \Ran{\Ipn^{*}}^{\perp}=\lbcurlyrbcurly{0}$. This proves item \eqref{it:dense0}. Next, let us consider, for any $\beta\geq0$, the power series
  \begin{equation}\label{eq:fbeta}
  	f_{\beta}(x) =  x^{\beta} \sum_{n=0}^{\infty} \frac{W_{\phi}(n+1+\beta)}{\Gamma(n+1+\beta) n!}(-1)^nx^n=x^\beta\bar{f}_{\beta}(x).
  	\end{equation}
	From \eqref{eq:moment_V_psi} and Proposition \ref{propAsymp1}\eqref{it:asyphid}, we get that $\lim_{n \to \infty } \frac{W_{\phi}(n+1+\beta)}{W_{\phi}(n+\beta)n (n+\beta)} =\lim_{n \to \infty } \frac{\phi(n+\beta)}{n (n+\beta)}=0$. Thus, $\bar{f}_{\beta}$ is an entire function. Clearly,  from Theorem \ref{prop:FormMellin1}\eqref{it:Wanal} $\MorW\in\Ac_{\lbrb{0,\infty}}$ and hence
	\begin{equation}\label{eq:Mfbeta}
		\M_{\beta}(z)=\Gamma(z)\frac{\MorW\lbrb{-z+1+\beta}}{\Gamma\lbrb{-z+1+\beta}}\in\Ac_{\lbrb{0,1+\beta}}.
	\end{equation}
	 To show that $\M_{\beta}$ is the Mellin transform of $\bar{f}_{\beta}$ we proceed as follows. First, for any $n\in\N,\,b\in\R$, the recurrence relation of the gamma function yields
      \[\labsrabs{\frac{\Gamma\lbrb{-n-\frac12+ib}}{\Gamma\lbrb{n+\frac32+\beta-ib}}}=\frac{1}{\prod_{j=0}^{2n}\labsrabs{-n-\frac12+ib+j}}\labsrabs{\frac{\Gamma\lbrb{n+\frac12+ib}}{\Gamma\lbrb{n+\frac32+\beta+ib}}}.\]
      Then, from the latter and \cite[(2.1.16)]{Paris01} applied with $z=n+\frac12+ib$ we get uniformly for $b\in\R$
      \begin{eqnarray*}
      \labsrabs{\frac{\Gamma\lbrb{-n-\frac12+ib}}{\Gamma\lbrb{n+\frac32+\beta-ib}}}&\leq&\frac{1}{\prod_{j=0}^{2n}\labsrabs{-n-\frac12+ib+j}}\frac{1}{|n+\frac12+ib|^{1+\beta}}\\
      &\leq& \frac{\Gamma^2\lbrb{\frac12}}{\Gamma^2\lbrb{n-\frac12}}\frac{1}{|n+\frac12+ib|^{1+\beta}|n-\frac12+ib|}.
     \end{eqnarray*}	
   Moreover, from \eqref{eq:solfeVPsi1}, $\MorW$ is a Mellin transform of a nonnegative random variable and therefore}
   $\labsrabs{\MorW\lbrb{n+\frac{3}{2}+\beta +ib}}\leq \MorW\lbrb{n+\frac32+\beta}$. From the estimates above  and  the fact that $\M_{\beta}$ in \eqref{eq:Mfbeta} extends to a meromorphic function on $\Cb_{\lbrbb{-\infty,0}}$, we obtain
	  \begin{equation}\label{eq:absMfbeta1}
	  \labsrabs{\M_{\beta}\lbrb{-n-\frac12+ib}}\leq C \frac{\MorW\lbrb{n+\frac32+\beta}}{\Gamma^2\lbrb{n-\frac12}}\frac{1}{|n+\frac12+ib|^{1+\beta}|n-\frac12+ib|}.
	  \end{equation}
	  Since $\beta\geq0$ the right-hand side of \eqref{eq:absMfbeta1} is integrable along any line $-n-\frac12+ib,\,n\in\N$.  Inspection of the steps above shows that similar integrable bound for  $\labsrabs{\M_{\beta}}$, whose proof is based directly on the decay of $\MorW$ since $\Si=\infty>2$, see \eqref{eq:subexp1}, holds even with $n=-1$. Therefore, by Mellin inversion, a shift of contour and an application of the Cauchy Theorem, we get, for any $M\in\N$, that for some function $h:\R_+\mapsto\R$
	  \begin{eqnarray}
	  	\nonumber h(x)&=&\frac{1}{2\pi }\int_{-\infty}^{\infty}x^{-\frac12-ib}\M_{\beta}\lbrb{\frac{1}{2}+ib}db\\
	  	&=&\sum_{n=0}^{M}\frac{W_{\phi}(n+1+\beta)}{\Gamma(n+1+\beta) n!}(-1)^nx^n+\frac{1}{2\pi }\int_{-\infty}^{\infty}x^{M+\frac12-ib}\M_{\beta}\lbrb{-M-\frac12+ib}db.\label{eq:IM}
	 \end{eqnarray}
To conclude that $h=\bar{f}_\beta$, see \eqref{eq:fbeta},  it remains to show that the very last term vanishes as $M\to\infty$. First, we invoke  \eqref{lemmaAsymp1-2} and Proposition \ref{propAsymp1}\eqref{it:asyphid} to check that  for large $M$ $\MorW\lbrb{M+\frac32+\beta}\lesssim M^{1/2}e^{M\lbrb{\ln M+\ln\sigma^2+\so{1}}}$. Second, the Stirling asymptotic \eqref{eq:stirling} yields $\Gamma^2\lbrb{M+\frac12} \gtrsim e^{\frac{3}{2}M\ln(M)}$. Hence, for any fixed $x>0$
	 \[\limi{M}x^{M}\frac{\MorW\lbrb{M+\frac32+\beta}}{\Gamma^2\lbrb{M+\frac12}}\leq\limi{M}M^{\frac12}x^Me^{M\lbrb{\ln M+\ln\sigma^2+o(1)}-\frac{3}{2}M\ln(M)}=0\]
	 and using \eqref{eq:absMfbeta1} the integral in \eqref{eq:IM} goes to zero as $M\to\infty$. Thus, $\M_{\beta}$ is the Mellin transform of $\bar{f}_\beta$ and
	  \[ \M_{\beta}\left(\frac12+ib\right)= \frac{\Gamma(\frac{1}{2}+ib)W_{\phi}(1 +\beta-ib)}{\Gamma(1+\beta-ib)}.\]
	  From the asymptotic estimates \eqref{eqn:RefinedGamma1}  for thegamma functions combined with \eqref{eq:subexp1} with $u=2$, since $\Si=\infty$, we get that, for $|b|$ large enough, $|\M_{\beta}(1/2+ib)|\leq C |b|^{- \beta -2}$. Thus, for  any $\beta \geq 0$, $\labsrabs{\M_{\beta}(1/2+ib)} \in \lt^{2}(\R)$ and from the Parseval identity  for Mellin transform, see \eqref{eq:parseval}, we deduce that $\bar{f}_{\beta} \in \Ltwo$. Choosing $\beta=\alpha/2$, $\alpha \in [0,1)$, then $f_{\alpha/2}(x)=x^{\alpha/2}\bar{f}_{\alpha/2}(x)$ and therefore $f_{\alpha/2} \in \Lv$.
For $\lambda,x>0$, set $f^{\lambda}_{\alpha/2}(x)=f_{\alpha/2}\lbrb{\lambda x}$. Then $f^{\lambda}_{\alpha/2}\in\Lv$ since $f_{\alpha/2}\in\Lv$. By dominated convergence, one shows using \eqref{eq:fbeta} and Proposition \ref{lem:fe2} that
	 \[\Ip f^{\lambda}_{\alpha/2}(x) =\Ip\lbrb{\sum_{n=0}^{\infty} \frac{W_{\phi}(n+1+\alpha/2)}{\Gamma(n+1+\alpha/2) n!}(-1)^n\lbrb{\lambda x}^{n+\alpha/2} }= (\lambda x)^{\alpha/2} e^{-\lambda x}= p_{\alpha/2}(\lambda x) e^{-\lambda x}.\]
	 Hence $\Ip f^{\lambda}_{\alpha/2}\in\Lv$ and $p_{-\alpha/2}\Ip f^{\lambda}_{\alpha/2}\in\lt^{2}(\R_+)$.  Since, with $e_{-\lambda}(x)=e^{-\lambda x},\,x>0$, the linear hull of  $\{e_{-\lambda}, \lambda>0\}$ is dense in $\Ltwo$ and $\Ip\in\Bo{\Lv},\,\alpha\in\lbrb{0,1}$, see  Lemma \ref{lemm12}\eqref{it:Ib}, we get that $\Ran\Ip_{\Lv}=\Lv$. Finally, when $\psi\in\Ne$ with $m=0$ then $\Ip\in\Bo{\Ltwo}$ and this shows that in this case  $\Ran\Ip_{\Ltwo}=\Ltwo$.
\end{proof}

\newpage

\section{Uniform and norms estimates of the co-eigenfunctions}\label{sec:estimates_norms}

With the aim of  characterizing  the domain of the spectral operator, we continue our study on the co-eigenfunctions by focusing on their asymptotic estimates for large $n$ considered in different topologies. We emphasize that the problem of estimating the norm of $\nun$ in $\lnu$ for large $n$ seems extremely delicate as one is dealing with a  weighted Hilbert which, in general, should  require uniform asymptotic  estimates for the co-eigenfunctions $\nun(x)$. We point out that there is a rich and fascinating literature on uniform asymptotic expansions of the Laguerre polynomials which reveals already that this issue even in a simple case with explicit expression and several representation at hand is far from being trivial, see e.g.~\cite{Szego} and \cite{Temme} for a thorough description. In this direction, we also mention our recent work where such uniform asymptotic analysis is conducted for some generalized Laguerre polynomials leading to the concept of reference semigroup that we elaborate  in the next Chapter.
We carry out three different approaches that give the bounds for $w_n$ or $\nun$ stated below, two  are of complex analytical nature whereas  the last one relies on  probabilistic arguments. \mladen{Recall that, with $\H=\liminf_{|b|\to \infty} \int_{0}^{\infty}\ln\lb\frac{\labs\phi(by+ib)\rabs}{\phi(by)}\rb dy$, $\Nee=\lbcurlyrbcurly{\psi \in \Ne; \:\H>0}$ and $d_\phi=\lbcurlyrbcurly{u\leq0;\,\phi(u)=0\text{ or }\phi(u)=-\infty}.$} 
\begin{theorem}\label{lem:TvMDecay}
\begin{enumerate}
\item \label{it:Estimate_SP} Let us assume that $\psi\in\Nee$. Then,  we have for any $a>d_{\phi}$ (or $a\geq 0$ if $\phi(0)>0$ and $d_\phi=0$),    for any $\epsilon>0$ and $n$ large, uniformly on $x>0$,
\begin{equation}\label{eq:EstimateTvNU}
x^{a}|w_n(x)| =  \bo{ n^{\frac{1}{2}-a} e^{(T_{\H}+\epsilon)n}},
\end{equation}
where we recall that  $ T_{\H} = -\ln\sin \H$.
Therefore, recalling that $\var(x)=x^{-\alpha},\,x>0,\,\alpha\in(0,1)$, we have, for large $n$, that
\begin{eqnarray}\label{eq:EstimateTvNorms}
 ||w_n|| &=&\bo{n^{\frac{1}{2}-a} e^{(T_{\H}+\epsilon)n}},\\
 \left|\left|\frac{w_n}{\var}\right|\right|_{\var}&=& \bo{n^{\frac{1}{2}-a} e^{(T_{\H}+\epsilon)n}},\label{eq:EstimateTvNorms1}
\end{eqnarray}
 for any $d_{\phi}<a<\frac12$ in \eqref{eq:EstimateTvNorms} and $d_{\phi}<a<\frac{\alpha+1}{2}$ in \eqref{eq:EstimateTvNorms1}.
\item \label{it:Est_Zeros}
Let $\psi\in \Nea$, i.e.~$\psi(u)\simi C_{\alpha} u^{\alpha+1}, \alpha \in (0,1), C_{\alpha}>0$. Then the following estimates hold.
\begin{enumerate}
 \item \label{it:Est_Zeros1} For any $a>d_{\phi}$ and $\epsilon>0$,
\begin{equation}\label{eq:refinedEstimatesonTV}
\left|\left|\frac{w_n}{\ga}\right|\right|_{\ga}= \bo{n^{\frac{1}{2}-a} e^{(T_{\pi_{\alpha}}+\epsilon)n}},
\end{equation}
 where we recall that  $T_{\pi_{\alpha}}=-\ln \sin\lb  \frac{\pi}{2}\alpha\rb$ and  $
  \ga(x) =   e^{-x^{\frac{1}{\gamma}}},x>0$, $\gamma>1+\alpha$.
  \item \label{it:est_PL}  Recalling that $T_{\pi_{\alpha},\rho_{\alpha}}=\max\left(T_{\pi_{\alpha}},1+\frac{1}{\rho_{\alpha}}\right)$, where $\rho_{\alpha}$ is the largest solution to the equation $\lbrb{1-\rho}^{\frac1\alpha}\cos\lbrb{\frac{\arcsin(\rho)}{\alpha}}=\frac12$, we have for large $n$ and for any $\epsilon>0,$ 
\begin{equation}\label{eq:est_PL}
\left|\left|\nun\right|\right|_{\nu}= \bo{e^{(T_{\pi_{\alpha},\rho_{\alpha}}+\epsilon)n}}.
\end{equation}
\end{enumerate}
\end{enumerate}
\end{theorem}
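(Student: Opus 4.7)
The proof splits into three parts, each requiring a different analytic device: a Mellin--Barnes saddle-point estimate for part \eqref{it:Estimate_SP}, a zero-location / fundamental-theorem-of-calculus argument for \eqref{it:Est_Zeros1}, and a Phragm\'en--Lindel\"of interpolation in the sector of analyticity of $\nu$ for \eqref{it:est_PL}.

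For part \eqref{it:Estimate_SP}, the plan is to start from Mellin inversion applied to the identity
\[\M_{w_n}(z)=\frac{(-1)^n}{n!}\frac{\Gamma(z)}{\Gamma(z-n)}\MorW(z)\]
of Proposition \ref{prop:Convolution}, integrating along the vertical line $\Re z=a>d_{\phi}$. Because $|x^{-ib}|=1$ on that contour, passing to moduli already produces a bound on $x^{a}|w_n(x)|$ that is uniform in $x>0$. The two key inputs are, first, the sharp estimate \eqref{eqn:first_estimateComplexLine1} of Theorem \ref{prop:asymt_bound_Olver2} combined with $\angp(a,|b|)\to\H$, giving $|\MorW(a+ib)|\lesssim \MorW(a)\sqrt{\phi(a)/|\phi(a+ib)|}\,e^{-(\H-\epsilon/2)|b|}$ for $|b|$ large, and, second, the reflection identity
\[\frac{1}{\Gamma(a-n+ib)}=\frac{(-1)^{n}\sin(\pi(a+ib))\Gamma(n+1-a-ib)}{\pi},\]
which converts the Gamma ratio into a product amenable to uniform Stirling asymptotics. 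After the rescaling $b=\beta n$, the integrand will take the form $\exp(nF_n(\beta)+o(n))$ for an explicit phase $F_n\to F_\infty$, and a Laplace evaluation at the saddle of $F_\infty$ will produce the exponent $-n\log\sin\H=T_{\H}n$ with prefactor $n^{1/2-a}$. The $L^2$ and weighted $L^2(\var)$ estimates \eqref{eq:EstimateTvNorms} and \eqref{eq:EstimateTvNorms1} will then follow by splitting the integral at $x=1$ and applying the uniform pointwise bound with $a$ chosen just above $d_\phi$ on $(0,1]$ and just below $(\alpha+1)/2$ on $[1,\infty)$.

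Part \eqref{it:Est_Zeros1} will rely on the structural information about $\nu$ developed in Chapter \ref{sec:prop_nu}, in particular the sequence $\underline{a}_{\nuh}=(a_n)_{n\geq 1}$ from Lemma \ref{lprop:SD2} and the identity $\nu(x)=x^{-2}\widehat\nu_1(1/x)$ of Proposition \ref{prop:recall_exps}. The plan is to transfer sign information from the derivatives of $\widehat\nu_1$ to those of $\nu$ via the Fa\`a di Bruno expansion \eqref{eq:faadi}, then iterate the fundamental theorem of calculus $n$ times on $w_n=\Rc^{(n)}\nu$ with integration anchors placed at the successive zeros of $\nu^{(k)}$. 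Combined with the asymptotic \eqref{eqn:nuAsymp}, this produces a uniform pointwise estimate $|w_n(x)|\leq C_\epsilon n^{1/2-a}e^{(T_{\pi_\alpha}+\epsilon)n}x^{-a}$; since $\H=\pi\alpha/2$ on $\Nea$ by Theorem \ref{prop:asymt_bound_Olver2}\eqref{it:Na}, $T_{\H}$ already equals $T_{\pi_\alpha}$. Integrating $|w_n(x)|^2/\ga(x)$ against $\ga(x)\,dx$ and using that $\ga(x)^{-1}=e^{x^{1/\gamma}}$ with $\gamma>1+\alpha$ is integrable against polynomials in $x$ yields \eqref{eq:refinedEstimatesonTV}.

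Part \eqref{it:est_PL} is the most delicate. Since Theorem \ref{thm:smoothness_nu1}\eqref{it:im_analytical1} gives $\nu\in\An(\pi\alpha/2)$, the ratio $\nun=w_n/\nu$ extends meromorphically to the sector $\Cb(\pi\alpha/2)$. On each ray $\arg z=\theta$ with $|\theta|<\pi\alpha/2$, the Mellin inversion bound from part \eqref{it:Estimate_SP} (with the contour rotated through $\theta$) controls $|w_n(re^{i\theta})|$, while the analytic continuation of \eqref{eqn:RVAsympGeneral1} supplies the required lower bound on $|\nu(re^{i\theta})|$. A Phragm\'en--Lindel\"of interpolation between the real axis, where Part \eqref{it:Est_Zeros1} already applies, and an interior ray $\theta_\star$ then provides a pointwise bound for $|\nun(x)|$ on $\R_+$. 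The parameter $\rho_\alpha$ will enter through the optimal choice of rotation: balancing the exponential growth of $|\nu^{-1}|$ against the exponential decay of $|w_n|$ along the rotated contour yields the transcendental equation $(1-\rho)^{1/\alpha}\cos(\arcsin(\rho)/\alpha)=\tfrac12$ characterizing the critical angle, and the corresponding exponential rate on $\R_+$ is $1+1/\rho_\alpha$. Combining the two competing regimes gives $T_{\pi_\alpha,\rho_\alpha}=\max(T_{\pi_\alpha},1+1/\rho_\alpha)$, and integrating the resulting pointwise bound against $\nu(x)\,dx$ using the tail asymptotic \eqref{eqn:RVAsympGeneral1} yields \eqref{eq:est_PL}.

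The hard part will be Part \eqref{it:est_PL}: the Phragm\'en--Lindel\"of step requires uniform control of $|\nu(re^{i\theta})|$ throughout the sector, which is not supplied directly by the real-axis asymptotic but must be extracted by analytically continuing the exponent $-\alpha C_\alpha^{-1/\alpha}x^{1/\alpha}$ in \eqref{eqn:RVAsympGeneral1}, with careful attention to the boundary behaviour where the analytic extension degenerates. Sharpness of \eqref{eq:est_PL} then rests on showing that the optimum in the rotation parameter is genuinely attained at $\rho_\alpha$ and not at some intermediate ray where the numerator and denominator growth rates coincide only asymptotically; this is exactly where the regular-variation hypothesis $\psi\in\Ne_\alpha$ is used most heavily.
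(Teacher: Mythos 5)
Your plan for part \eqref{it:Estimate_SP} matches the paper's proof: Mellin inversion of $\M_{w_n}(z)=\frac{(-1)^n}{n!}\frac{\Gamma(z)}{\Gamma(z-n)}\MorW(z)$ along $\Re z=a$, the reflection formula applied to $\Gamma(z-n)^{-1}$, the exponential decay of $\MorW$ along imaginary lines supplied by \eqref{eq:expDecay}, and a final Laplace-type evaluation of a $\Gamma$-weighted one-sided integral (the paper cites a lemma of Paris--Kaminski rather than performing the saddle calculation explicitly, but the mechanism is the one you describe, with the same prefactor $n^{1/2-a}$ and exponent $T_{\H}n$).

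For part \eqref{it:Est_Zeros1} there is a genuine gap. You want a single uniform envelope $|w_n(x)|\lesssim n^{1/2-a}e^{(T_{\pi_\alpha}+\epsilon)n}x^{-a}$ and then assert that $\ga(x)^{-1}=e^{x^{1/\gamma}}$ is \emph{integrable against polynomials}. It is not: $e^{x^{1/\gamma}}\to\infty$, so $\int_1^\infty x^{-2a}e^{x^{1/\gamma}}\,dx=\infty$ for every $a$, and the polynomial Mellin bound alone cannot close the $\Lga$-integral $\|w_n/\ga\|_\ga^2=\int w_n^2(x)\,e^{x^{1/\gamma}}\,dx$. The paper instead splits at $x=n^{\alpha+1}$: on $(0,n^{\alpha+1})$ it uses $\ga(n^{\alpha+1})^{-1}\|w_n\|^2=e^{o(n)}\|w_n\|^2$, feeding in the $\Ltwo$-estimate of part (1) and the fact that $n^{(\alpha+1)/\gamma}=o(n)$ because $\gamma>1+\alpha$; on $(n^{\alpha+1},\infty)$ it uses Lemma \ref{prop:derivatives}, namely $|w_n(x)|\leq n!\,e^{o(n)}\nu(x/2)$, which is valid only above a threshold $\asymp n^{\alpha+1}$ and whose super-exponential decay (of order $e^{-Cx^{1/\alpha}}$ by \eqref{eqn:RVAsympGeneral1}, which dominates $e^{x^{1/\gamma}}$ since $1/\alpha>1/\gamma$) is exactly what makes the tail integral finite. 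Your sketch conflates the zero-location bound (envelope $\propto\nu(Dx)$, valid only for large $x$) with the Mellin bound (envelope $\propto x^{-a}$, global), and the integration step then fails.

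For part \eqref{it:est_PL} your route is genuinely different and, as written, requires two inputs you do not secure. You propose to continue $\nun=w_n/\nu$ into the sector $\Cb(\pi\alpha/2)$ and apply Phragm\'en--Lindel\"of to $\nun$ directly, which needs (i) a pointwise \emph{lower} bound on $|\nu(re^{i\theta})|$ throughout the sector, which cannot be read off by ``continuing'' the real-axis asymptotic \eqref{eqn:RVAsympGeneral1} (the sectorial estimate the paper actually proves, \eqref{eq:PLEstimate}, is an \emph{upper} bound), and (ii) zero-freeness of $\nu$ in the open sector, so that $\nun$ is analytic rather than merely meromorphic there. The paper sidesteps both: it applies Phragm\'en--Lindel\"of to the auxiliary function $V_\epsilon(x)=\nu(1+x^\alpha)e^{\overline c_\alpha xe^{i\epsilon}}$ to obtain an \emph{upper} bound on $|\nu(1+z)|$ in the sector, then bounds $w_n(x)$ for \emph{real} $x$ via the Cauchy integral formula over a circle of radius $\rho(x-1)$ centred at $x$ and contained in the sector, arriving at $|w_n(x)|\leq CR_\rho^n(x-1)e^{-\overline C_{\alpha,\rho}(x-1)^{1/\alpha}}$ (Lemma \ref{lem:PLnormEstimates}); the lower bound on $\nu$ is needed only on the real axis, where \eqref{eqn:RVAsympGeneral1} and \eqref{eq:LargeAsymp} supply it. Your account of how $\rho_\alpha$ arises as a balance between the geometric factor and the exponential rate against $\nu^{-1}$ is correct in spirit, but you would need to replace the Phragm\'en--Lindel\"of-on-$\nun$ step by the Phragm\'en--Lindel\"of-on-$\nu$-plus-Cauchy step to make the argument run.
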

\begin{remark}
 We point out that a better pointwise handle of $w_n(x)$ in the scenario of item \eqref{it:Est_Zeros} is attained in \eqref{eq:PLw_n} but this does not allow for the improvement of the norm estimate.
\end{remark}
\begin{remark}
To derive the estimates \eqref{eq:est_PL}, we resort to the  Phragmenn-Lindel\"{o}f  principle. This approach requires that the asymptotic behaviour of $\nu$ is characterized in terms of a conformal mapping forcing us to  specialize  to the class $\Nea$.
\end{remark}

\subsection{Proof of Theorem \ref{lem:TvMDecay}\eqref{it:Estimate_SP} via a classical saddle point method} \label{sec:estimates_norms_sp}

Fix $n\in\N$, $a>d_\phi$, $-a\notin\N$ and $z=a+ib$. As $\psi \in \Nee\subset\Nii$,  then $\Si=\infty$, see \eqref{def:Ktau}, and there exists a constant $C_a>0$  such that for any $b\in \R $
\begin{equation}\label{eq:MellinCond}
|\Mp(a+ib)|\leq C_a e^{-\H|b|+\so{|b|}},
\end{equation}
with $\H \in(0,\frac{\pi}{2}]$, see
 Theorem \ref{prop:asymt_bound_Olver2}\eqref{it:Theta}. 
Note that \eqref{eq:Mellin_w_n} gives the Mellin transform of $w_n$
\[\M_{w_n}(z)=\frac{(-1)^n \Gamma(z)}{\Gamma(n+1)\Gamma(z-n)}\Mp(z).\]
Since $\Mp \in \mathcal{A}_{(d_\phi,\infty)}$, see Theorem\ref{prop:FormMellin1}\eqref{it:few}, then plainly $\M_{w_n} \in \mathcal{A}_{(d_\phi,\infty)}$. Thanks to \eqref{eq:MellinCond} and Lemma \ref{lem:MellinTT11} with $\Si=\infty$, Mellin inversion, see \eqref{eq:MellinInversionFormula}, yields, for $x>0$, that
\begin{eqnarray*}
w_n(x)=\frac{(-1)^n}{2\pi i}\int_{a-i\infty}^{a+i\infty}x^{-z}\frac{\Gamma(z)}{\Gamma(n+1)\Gamma(z-n)} \Mp (z) dz,
\end{eqnarray*}
and,  since for the  integrand $|f(a+ib)|=|f(a-ib)|$, we get that
\[\labs w_n(x)\rabs\leq C x^{-a}\IInf \frac{\labs\Gamma(a+ib)\rabs }{\Gamma(n+1)\labs\Gamma(a+ib-n)\rabs}|\Mp(a+ib)|db, \]
where throughout $C$ stands for a generic positive constant.
Recalling the formula $|\Gamma(a+ib-n)|\labs\Gamma(n+1-a+ib)\rabs=\frac{\pi}{\labs \sin(\pi\lb a-n-ib\rb)\rabs}$, and  the uniform bound $\labs\sin(\pi (a+ib))\rabs\leq Ce^{\pi |b|}$ together with the exponential decay in \eqref{eq:MellinCond} we get, for any $a>d_\phi$, that
\begin{eqnarray}
|w_n(x)|&\leq &  C x^{-a}\int_{0}^{\infty}\frac{\labs\Gamma(a+ib)\rabs\labs\Gamma(n+1-a+ib)\rabs}{\Gamma(n+1)}e^{\lb\pi-\H \rb b+\so{b}}db. \nonumber 
\end{eqnarray} 
Next, using the asymptotic relation \eqref{eqn:RefinedGamma1}, we get, for any $0<\epsilon<\H$, that
\begin{eqnarray*} 
	|w_n(x)|&\leq &  C x^{-a}\int_{-\infty}^{\infty}\frac{\labs\Gamma(n+1-a+ib)\rabs}{\Gamma(n+1)}e^{\lb \frac{\pi}{2} - \H +\epsilon\rb b}db.
\end{eqnarray*}
Hence, using \cite[Lemma 2.6]{Paris01}, we obtain, for large $n$, that
\begin{eqnarray}
|w_n(x)|&\leq &  C x^{-a}n^{1-a} \frac{e^{n\ln n-n}}{\Gamma(n+1)}\sec\lb\frac{\pi}{2} - \H +\epsilon\rb^{n+\frac32-a}. \nonumber
\end{eqnarray}
The Stirling approximation  for $\Gamma\lbrb{n+1}$ in \eqref{eq:stirling} shows that \eqref{eq:EstimateTvNU} holds.
  To prove \eqref{eq:EstimateTvNorms} it suffices to integrate \eqref{eq:EstimateTvNU} for $a>\frac12$ in a neighbourhood of infinity and to integrate \eqref{eq:EstimateTvNU} for $d_{\phi}<a<\frac{1}{2}$ in a neighbourhood of zero.
Finally, it is trivial to extend our estimate \eqref{eq:EstimateTvNU} for $a=0$. Similar arguments  give \eqref{eq:EstimateTvNorms1} completing the proof of Theorem \ref{lem:TvMDecay}\eqref{it:Estimate_SP}.

\noindent For the convenience of the reader, we recall the following auxiliary well-known result on the asymptotic of the gamma function that was used in the previous proof, see e.g.~\cite[(2.1.8)]{Paris01}. 
\begin{lemma}\label{prop:RefinedGamma}
For any fixed $a\in\R$
\begin{equation}\label{eqn:RefinedGamma1}
|\Gamma(a+ib)|\simi C_a |b|^{a-\frac{1}{2}}e^{-\frac{\pi}{2}|b|},
\end{equation}
where $C_a>0$. We also have the Stirling approximation
\begin{equation}\label{eq:stirling}
\Gamma(n+1)  \simi \sqrt{2\pi} n^{n+\frac12}e^{-n}. \end{equation}
\end{lemma}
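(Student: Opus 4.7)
Both estimates are classical consequences of Stirling's asymptotic expansion for the gamma function in the complex plane, so the plan is to invoke that expansion directly and then extract the real and imaginary parts carefully. Specifically, I would start from the well-known formula
\[
\log\Gamma(z) = \left(z-\tfrac12\right)\log z - z + \tfrac12\log(2\pi) + \bo{|z|^{-1}},
\]
valid uniformly as $|z|\to\infty$ in any sector $|\arg z|\leq \pi-\delta$, with $\delta>0$ fixed.

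For the first estimate, the plan is to apply this expansion with $z=a+ib$ and $|b|\to\infty$ (so that $|\arg z|\to \pi/2$, which lies well inside the sector of validity). The key computation is to expand
\[
\Re\left\{\left(a+ib-\tfrac12\right)\log(a+ib) - (a+ib)\right\}
\]
using $\log(a+ib) = \tfrac12\log(a^2+b^2) + i\arg(a+ib)$, together with $\arg(a+ib) = \mathrm{sgn}(b)\,\pi/2 + \bo{1/|b|}$ and $\tfrac12\log(a^2+b^2) = \log|b| + \bo{1/b^2}$ as $|b|\to\infty$. A direct substitution shows that the leading contributions are $(a-\tfrac12)\log|b| - \tfrac{\pi}{2}|b|$, while the remaining terms collapse to an $a$-dependent constant plus $\so{1}$. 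Exponentiating yields \eqref{eqn:RefinedGamma1} with an explicit value $C_a = \sqrt{2\pi}\,e^{-a}$ arising from the constant term; any extra $\bo{1}$ factors get absorbed into the constant. The argument is symmetric in $b\mapsto -b$ thanks to $\overline{\Gamma(z)}=\Gamma(\bar z)$, so the sign of $b$ plays no role in $|\Gamma(a+ib)|$.

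For the Stirling approximation \eqref{eq:stirling}, the plan is simply to specialize the same expansion to $z=n+1$ with $n\to\infty$ along the positive real axis, where $\log(n+1) = \log n + \bo{1/n}$, and simplify:
\[
\log\Gamma(n+1) = \left(n+\tfrac12\right)\log n - n + \tfrac12\log(2\pi) + \so{1},
\]
from which \eqref{eq:stirling} follows upon exponentiating.

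There is essentially no obstacle here, as both claims are textbook consequences of Stirling, and the paper itself labels them as well-known; the only mild care required is in tracking the $\arg(a+ib)$ expansion so as to get the exponential rate $e^{-\pi|b|/2}$ with the correct polynomial prefactor $|b|^{a-1/2}$.
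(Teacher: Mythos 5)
Your approach is the standard one: invoke Stirling's complex asymptotic expansion and extract the real part of $(z-\tfrac12)\log z - z$. The paper does not actually prove this lemma — it simply cites Paris--Kaminski (2.1.8) — so your plan supplies a proof where the paper just references one, and it is a perfectly sound route.

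One computational slip worth correcting: the explicit constant is not $C_a=\sqrt{2\pi}\,e^{-a}$. If you carry through the expansion carefully, the $a$-dependent terms cancel. Writing $\arg(a+ib)=\tfrac{\pi}{2}-\tfrac{a}{b}+\bo{b^{-3}}$ for $b\to+\infty$, the real part of $(z-\tfrac12)\log z$ is
\[
(a-\tfrac12)\log|z| - b\arg z = (a-\tfrac12)\log b - \tfrac{\pi}{2}b + a + \bo{b^{-1}},
\]
and subtracting $\Re(z)=a$ kills the constant $a$, leaving
\[
\Re\log\Gamma(a+ib) = (a-\tfrac12)\log b - \tfrac{\pi}{2}b + \tfrac12\log(2\pi) + \bo{b^{-1}}.
\]
Exponentiating gives $C_a=\sqrt{2\pi}$, independent of $a$. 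Since the lemma only asserts the existence of some $C_a>0$, this does not break your argument, but you should not claim a value you have not verified — and here the claimed value is in fact wrong. The rest of the plan, including the symmetry under $b\mapsto -b$ via $\overline{\Gamma(z)}=\Gamma(\bar z)$ and the specialization to $z=n+1$ for the real Stirling formula, is correct.
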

\subsection{Proof of Theorem \ref{lem:TvMDecay}\eqref{it:Est_Zeros} via the asymptotic behaviour of zeros of the derivatives of $\nu$}
We start with the following useful Lemma.
\begin{lemma}\label{prop:derivatives}
	Let $\psi \in\Nii$. Then, with the notation of Lemma \ref{lprop:SD2}, there exists a sequence $\bar{a}_{\nu}=(\bar{a}_{n}=a^{-1}_n>0)_{n\geq0}$, such that for any $n\in \N$, for any $D\in\lbrb{0,1}$ and $x>D^{-1}\bar{a}_{n+1}>0$ 
	\begin{eqnarray}
	\label{eq:boundDerivatives1}\labs \nu^{(n)}(x)\rabs\leq\lb \frac{D^{n}}{\lb 1-D\rb^n}\vee 1\rb (n!)^2 e^{o(n)}x^{{-n}}\nu\lb xD\rb ,\\
	\labs\lb x^{n}\nu(x)\rb^{(n)}\rabs\leq \lb\frac{D^{n}}{\lb 1-D\rb^n}\vee 1\rb (n!)^2 e^{ \so{n}}\nu\lb xD\rb.	\label{eq:boundDerivatives}
	\end{eqnarray}
\end{lemma}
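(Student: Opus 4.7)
The plan is to transfer the derivative estimates \eqref{eq:boundsSD} for the self-decomposable density $\nuh_1$ of $I_{\Tc_1\psi}$ to derivatives of $\nu$ via the identity $\nu(x) = x^{-2}\nuh_1(1/x)$ established in Proposition \ref{prop:recall_exps}\eqref{it:ent-self}, combined with the Leibniz rule and Faà di Bruno's formula. Since $\psi\in\Nii$ forces $\r = \phi(\infty) = \infty$, and since $\Tc_1\psi \in \Ni\cap\Nm$ (indeed $\phi_1(0) = \phi(1) > 0$, while $\sigma_1^2 = \sigma^2$ and $\PP_1(0^+) = \PP(0^+)$ from Proposition \ref{prop:recall_exps}\eqref{it:ent-self} preserve the infinite-activity hypothesis), Lemma \ref{lprop:SD2}\eqref{it:sdd1} applied to $\Tc_1\psi$ produces a decreasing positive sequence $(a_n^{(1)})_{n\geq 0}$ with $a_0^{(1)}=\infty$; the required sequence is then $\bar a_n := 1/a_n^{(1)}$, which is increasing in $n$. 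The hypothesis $x > D^{-1}\bar a_{n+1}$ translates under $y=1/x$ into $y < D\,a_{n+1}^{(1)}$, which is precisely the range of validity of \eqref{eq:boundsSD} for every derivative $\nuh_1^{(k)}$ with $0\leq k\leq n$.

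Next I would combine Leibniz for the product $x^{-2}\cdot \nuh_1(1/x)$ with Faà di Bruno applied to $\nuh_1(1/x)$, mirroring the computations \eqref{eq:faadi}--\eqref{eq:nunId}; using the algebraic identity $\binom{n}{j}(j+1)!(n-j)! = n!(j+1)$ this yields
\begin{equation*}
\nu^{(n)}(x) = (-1)^n\,n!\sum_{j=0}^n(j+1)\sum_{\tilde k_{n-j} = n-j}\frac{1}{\prod_l k_l!}\,\nuh_1^{(\bar k)}(1/x)\,x^{-n-2-\bar k},
\end{equation*}
where $\bar k = \sum_l k_l$ and $\tilde k_{n-j} = \sum_l l k_l$. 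Substituting $|\nuh_1^{(\bar k)}(1/x)|\leq (D/(1-D))^{\bar k}\,\bar k!\,x^{\bar k}\,\nuh_1(1/(Dx))$ from \eqref{eq:boundsSD} and converting $\nuh_1(1/(Dx)) = (Dx)^2\nu(Dx)$, all powers of $x$ collapse into $x^{-n}$ times $D^2\nu(Dx)$, leaving the combinatorial bound
\begin{equation*}
|\nu^{(n)}(x)| \leq D^2\,n!\,x^{-n}\,\nu(Dx)\sum_{j=0}^n(j+1)\sum_{\tilde k_{n-j}=n-j}\frac{\bar k!}{\prod_l k_l!}\,(D/(1-D))^{\bar k}.
\end{equation*}
An elementary optimisation shows that $\bar k\mapsto \bar k!\,(D/(1-D))^{\bar k}$ is eventually increasing (precisely once $\bar k > 1/D - 2$) for every $D\in(0,1)$, so for $n$ large its supremum on $\{0,\ldots,n-j\}$ is $(n-j)!\,(D/(1-D))^{n-j}$, which in turn is maximised at $j=0$. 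Since the number of partitions of any integer $\leq n$ is bounded by $p(n)=e^{O(\sqrt{n})}=e^{o(n)}$ and the polynomial factor $\sum_j(j+1)$ is absorbed into $e^{o(n)}$, the whole sum is dominated by $n!\,(D/(1-D))^n e^{o(n)} \leq n!\,(D^n/(1-D)^n \vee 1)e^{o(n)}$, producing the stated factor $(n!)^2(D^n/(1-D)^n\vee 1)e^{o(n)}$ in \eqref{eq:boundDerivatives1} after absorbing $D^2\leq 1$.

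For \eqref{eq:boundDerivatives} I would apply Leibniz to $(x^n\nu(x))^{(n)} = \sum_{k=0}^n\binom{n}{k}\frac{n!}{(n-k)!}x^{n-k}\nu^{(n-k)}(x)$ and substitute \eqref{eq:boundDerivatives1} into each $\nu^{(n-k)}$, which is permissible since the monotonicity of $(\bar a_n)$ ensures that the single threshold $x>D^{-1}\bar a_{n+1}$ suffices for every $0\leq k\leq n$. Observing the cancellation $x^{n-k}\cdot x^{-(n-k)}=1$, the resulting sum reads $\sum_{k=0}^n (n!)^2/k!\cdot(D^{n-k}/(1-D)^{n-k}\vee 1)\,e^{o(n-k)}\cdot \nu(Dx)$, whose dominant term at $k=0$ gives the claimed bound while the remaining terms are absorbed into $e^{o(n)}$.

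The main obstacle will be the combinatorial bookkeeping of the Leibniz--Faà di Bruno double sum: one must check uniformly in $D\in(0,1)$ that the maxima of $\bar k!(D/(1-D))^{\bar k}$ and of $(n-j)!(D/(1-D))^{n-j}$ are attained at $\bar k = n-j = n$ for large $n$, and that the partition enumeration does not generate super-polynomial growth (this is where $p(n) = e^{O(\sqrt n)}$ is crucial). A secondary technical care is matching the applicability region of \eqref{eq:boundsSD} (formulated near the lower end $1/\r$ of the support of $\nuh_1$) with the large-$x$ regime for $\nu$ via $y=1/x$, which relies on $\psi\in\Nii \Rightarrow \r=\infty$ and on the monotonicity of $(\bar a_n)$ to ensure that the single condition $x > D^{-1}\bar a_{n+1}$ controls every $\nuh_1^{(k)}$ with $k\leq n$ simultaneously.
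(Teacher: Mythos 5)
The proposal is correct and follows essentially the same route as the paper's proof: transfer the derivative bounds \eqref{eq:boundsSD} from the self-decomposable density $\nuh_1$ of $I_{\Tc_1\psi}$ to $\nu$ via the identity $\nu(x)=x^{-2}\nuh_1(1/x)$, noting that $\Tc_1\psi\in\Nim$ with $\PP_1(0^+)=\PP(0^+)=\infty$ (or $\sigma^2>0$) so that Lemma~\ref{lprop:SD2} and Lemma~\ref{prop:SD2} apply with $1/\r=0$, then run Leibniz plus Fa\`a di Bruno and absorb the partition count $\mathfrak{p}(n)=e^{O(\sqrt n)}$ and the polynomial factors into $e^{o(n)}$. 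The only organizational difference is that for \eqref{eq:boundDerivatives} you apply Leibniz to $x^n\cdot\nu(x)$ and substitute \eqref{eq:boundDerivatives1}, whereas the paper expands $(x^{n-2}\nuh_1(x^{-1}))^{(n)}$ from scratch; your modular route is slightly cleaner and equally valid, provided one notes (as you do) that the elementary bound $\bar k!\,\overline{D}^{\bar k}\le (n-j)!\lbrb{\overline{D}^{\,n-j}\vee 1}\le n!\lbrb{\overline{D}^{\,n}\vee 1}$ holds for all $0\le\bar k\le n-j$, not only for $n$ large where the pointwise maximum sits at the endpoint.
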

\begin{proof}
	Let $\psi \in\Nii$, i.e.~$\r=\phi(\infty)=\infty$. Recall that \eqref{eq:relation_nu-} states $\nu(x)=x^{-2}\nuh_1\lbrb{x^{-1}}$. Then, from \eqref{eq:faadi}, we get, recalling the notation $\bar{k}_n=\sum_{j=1}^n k_j$ and $\tilde{k}_n=\sum_{j=1}^n j k_j$, that
	\begin{eqnarray*}
	\lb	\nuh_1\lbrb{\frac{1}{x}} \rb^{(n)}&=&(-1)^{n}x^{-n}\sum_{\tilde{k}_n=n;\bar{k}_n=k}\frac{n!}{k_1!k_2!\ldots k_n!} x^{-k}\nuh_1^{(k)}(x^{-1}).
	\end{eqnarray*}
	Next, we use the estimates \eqref{eq:boundsSD} applied to $\nuh_1$ with $1/\r=0$. Therefore, for $x>(Da_{n+1})^{-1}=D^{-1}\bar{a}_{n+1}>D^{-1}\bar{a}_{k+1}$, with $\overline{D}=\frac{ D}{1-D}$ and $D\in\lbrb{0,1}$,
	\[\nuh_1^{(k)}\lb x^{-1} \rb\leq k!\overline{D}^k x^k \nuh_1\lb (Dx)^{-1}\rb,\]
	and thus we obtain with $\mathfrak{p}(n)$ the partition function, that, for all $x>D^{-1}\bar{a}_{n+1}$,
	\begin{equation} \label{eq:est_nu}
\labs \lb \nuh_1 (x^{-1})\rb^{(n)}\rabs \leq \lb \overline{D}^{n}\vee 1\rb(n!)^2 \mathfrak{p}(n)x^{-n}\nuh_1\lb(Dx)^{-1} \rb.
\end{equation}
Thus, for $x>D^{-1}\bar{a}_{n+1}$, from \eqref{eq:est_nu}  and $\nu(x)=x^{-2}\nuh_1\lbrb{x^{-1}}$ we get \eqref{eq:boundDerivatives1} via
\begin{eqnarray*}
		\labs \nu^{(n)}(x)\rabs
		&\leq & \nuh_1\lb (Dx)^{-1} \rb\sum_{m=0}^{n}{n\choose m}\frac{(m+1)!}{x^{m+2}}x^{-n+m}\lb \overline{D}^{n-m}\vee 1\rb\lb(n-m)!\rb^2 \mathfrak{p}(n-m) \\
		&\leq& x^{-n}\nu(xD) D^2\lb \overline{D}^{n}\vee 1\rb\sum_{m=0}^{n}{n\choose m}(m+1)!((n-m)!)^2 \mathfrak{p}(n-m)\\
        &\leq& x^{-n} \nu(xD)  \lb \overline{D}^{n}\vee 1\rb (n!)^2 n^2\mathfrak{p}(n)
        = x^{-n} \nu(xD)\lb \overline{D}^{n} \vee 1\rb (n!)^2 e^{ \so{n}},
\end{eqnarray*}
	where we have used the celebrated asymptotic formula  $\mathfrak{p}(n) \simi \frac{e^{\pi\sqrt{2n/3}}}{4n\sqrt{3}}$, which can be found in \cite{Usp-20}. Similarly, for $x>D^{-1}\bar{a}_{n+1}$, using \eqref{eq:est_nu} again, we get, for $n\geq 2$, that
	\begin{eqnarray*}
		\labs \lb x^{n}\nu(x)\rb^{(n)}\rabs &=& \labs\lb x^{n-2}\nuh_1(x^{-1})\rb^{(n)}\rabs\\
		&\leq& \frac{ \nuh_1\lb (Dx)^{-1} \rb}{x^{2}} \sum_{m=0}^{n-2}{n\choose m}\frac{(n-2)!}{(n-2-m)!} \lb \overline{D}^{n-m}\vee 1\rb \lb(n-m)!\rb ^2\mathfrak{p}(n-m) \\
		&\leq& D^2\nu(xD) \lb \overline{D}^n \vee 1\rb n! \mathfrak{p}(n) \sum_{m=0}^{n-2}\frac{\lbrb{n-m}!}{m!}\frac{(n-2)!}{(n-2-m)!}\\
&\leq& \nu(xD) \mathfrak{p}(n)(n!)^2\lb \overline{D}^n\vee 1\rb\sum_{m=0}^{n-2}\frac{1}{m!} \leq
e\nu(xD) \lb\overline{D}^n\vee 1\rb (n!)^2 e^{\so{n}},
	\end{eqnarray*}
which completes the proof of \eqref{eq:boundDerivatives}. When $n=1$ a similar inequality holds.
\end{proof}	
	We are ready to prove Theorem \ref{lem:TvMDecay}\eqref{it:Est_Zeros}. Recall from Lemma \ref{prop:SD1} that
\[\kappasd(y)=\Exm\lb\int_{0}^{\zeta}e^{\Exc_s}ds>y \rb,\text{ for $y>0$,}\]
 with $\kappasd$ defined in Proposition \ref{thm:prop_self} and $\Exm,\Exc,\zeta$ in \eqref{sec:LPandNegDef}. Note that, for $0<\epsilon<1$,
\[\Exm(\zeta>y)\leq \Exm\lb\int_{0}^{\zeta}e^{\Exc_s}ds>y \rb  \stackrel{0}{=} \bo{\Exm(\zeta>\epsilon y)},\]
where the first inequality is absolute since $\Exc_s\geq 0$. For the second observe that
\begin{eqnarray}
\Exm\lb\int_{0}^{\zeta}e^{\Exc_s}ds>y \rb &\leq& \Exm\lb\sup_{0\leq s<\zeta} \Exc_s > - \ln \epsilon \rb+\Exm\lb \zeta> \epsilon y\rb,
\end{eqnarray}
and, note that since $\psi\in\Ne_\alpha\subset\Nii$ then $\lim\limits_{y\to 0}\Exm\lb \zeta> \epsilon y\rb=\infty$, see Lemma \ref{prop:SD1}\eqref{it:kappa}, whereas $\Exm\lb \mathop{\sup_{0\leq s<\zeta}} \Exc_s > - \ln \epsilon \rb<\infty $. Next, recall, from \eqref{eq:psi-1}, that $\Exm(\zeta>y)$ is the tail of the L\'evy measure associated to $\phid \in \Be$ and  $\psi(\phid(u))=\phid(u)\phi\lbrb{\phid(u)}=u,\,u>0$. Put $\alpha_1 = \alpha+1$. Since $\psi \in \Nea$, $\psi(u)\simi C_\alpha u^{\alpha_1}$ from a classical result, see e.g.~\cite[Proposition 1.5.15]{BinghamGoldieTeugels87}, we have  $\phid(u)\simi C^{-\alpha_1}_{\alpha}u^{\frac{1}{\alpha_1}}$
 and thus $\kappasd(y)=\Exm(\zeta>y) \simo C^{-\alpha_1}_{\alpha}\frac{\alpha}{\alpha_1}y^{-\frac{1}{\alpha_1}}$. Since $\psi\in\Nii$ then $\r=\infty$ in Lemma \ref{lprop:SD2}\eqref{it:sdd1} and with $a_n>\beta_n=\sup\{y>0;\,\kappasd(y)\geq n\}$ therein, we deduce that $a_n>\beta_n\asymp n^{-\alpha_1}$. We also have, from \eqref{eq:boundDerivatives},  that, for any fixed $D<1$ and $x>D^{-1}a^{-1}_{n+1}$, 
\begin{equation}\label{eq:wnNa}
\labsrabs{w_n(x)}=	\frac{\labs\lb x^{n}\nu(x)\rb^{(n)}\rabs}{n!}\leq  \lb\frac{D^{n}}{\lb 1-D\rb^n}\vee 1\rb n! e^{\so{n}} \nu(Dx).
\end{equation}
Thus, recalling that $\ga(x)= e^{-x^{\frac{1}{\gamma}}},\,x>0,$ with  $\gamma>\alpha_1=\alpha+1$, and, since $a_n>\beta_n \asymp n^{-\alpha_1}$ we can assume that $a^{-1}_n\leq n^{\alpha_1}$ for all $n$ big enough to get that
\[\int_{0}^{n^{\alpha_1}}\frac{w^2_n(x)}{\ga(x)}dx\leq \frac{||w_n||^2}{\ga(n^{\alpha_1})}=e^{\so{n}}||w_n||^2.\]
Next, the bound \eqref{eq:wnNa} with $D=\frac12$ and the asymptotic equivalent \eqref{eqn:RVAsympGeneral1} for $\psi\in\Ne_\alpha$, yield for large $n$ and with  some generic constant $C>0$ that
\begin{eqnarray*}
\int_{n^{\alpha_1}}^{\infty} \frac{w^2_n(x)}{\ga(x)}dx&\leq& \frac{1}{(n!)^2}\int_{n^{\alpha_1}}^{\infty}\frac{\lb(x^n \nu(x))^{(n)}\rb^2}{\ga(x)}dx \leq (n!)^2\int_{n^{\alpha_1}}^{\infty} \frac{\nu^2\lbrb{\frac{x}{2}}}{\ga(x)}dx\\
 & \leq  &(n!)^2\int_{n^{\alpha_1}}^{\infty} e^{-C x^{\frac{1}{\alpha}}+ x^{\frac{1}{\gamma}} }dx
 \leq   (n!)^2e^{-\frac{C}{2}n^{\frac{\alpha_1}{\alpha}}} \leq e^{-\frac{C}{3}n^{1+\frac{1}{\alpha}}},
\end{eqnarray*}
Therefore, $\left|\left|\frac{w_n}{\ga}\right|\right|^2_{\ga}\leq e^{\so{n}}||w_n||^2+\so{1}$ and \eqref{eq:EstimateTvNorms} provides \eqref{eq:refinedEstimatesonTV}.
\subsection{Proof  of Theorem  \ref{lem:TvMDecay}\eqref{it:est_PL} through Phragm\'{e}n-Lindel\"{o}f principle}
 This part aims to provide norm estimates of the co-eigenfunctions $\nun$ in the weighted Hilbert space $\Lnu$ when $\psi \in \Ne_{\alpha}$. The approach we develop  here is based on the Phragm\'{e}n-Lindel\"{o}f principle  which allows us to obtain precise  bounds on a sector of the complex plane for the invariant density $\nu$ and its derivatives. In this vein we denote by $\theta_\nu$ the angle of analyticity of $\nu$, i.e.~$\nu \in\mathcal{A}(\theta_\nu)$. We first state the following bound, which is an extension of \eqref{eq:nuMellinInv}.
\begin{lemma}\label{lem:MellinTT11_Ana}
Let $\psi\in\Nee$. Then, for all   $n,k\in\N$ and  $a>d_\phi$ (or $a\geq 0$ if $d_\phi=0$ and $\phi(0)>0$), we have, for all $z\in\mathbb{C}(\H)$, that
\begin{eqnarray}\label{eq:nuMellinInv-1}
\labs\lb z^{n}\nu(z)\rb^{(k)}\rabs\leq C(|\arg z|) \: |z|^{n-k-a},
\end{eqnarray}
 where  $C(|\arg z|)=C({n,k,a,|\arg z|})>0$  is increasing in $|\arg z|$ for fixed $n,k,a$.
\end{lemma}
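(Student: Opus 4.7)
The plan is to extend the Mellin inversion argument of Lemma \ref{lem:MellinTT11} to complex $z = |z|e^{i\theta}$ in the sector $\mathbb{C}(\H)$, exploiting the analyticity of $\nu$ on $\mathbb{C}(\H)$ granted by Theorem \ref{thm:smoothness_nu1}\eqref{it:im_analytical1} (when $\psi \in \Nee$), together with the exponential decay of $W_\phi(a+ib)$ along vertical lines provided by Proposition \ref{thm:Theorem11}. First I would fix $a>d_\phi$ (or $a\geq 0$ if $\phi(0)>0$, $d_\phi=0$, using Theorem \ref{prop:FormMellin1}\eqref{it:Wanal} for the analytic extension of $W_\phi$ to $\Cb_0$), recall from \eqref{eqn:estimateComplexLine} and Theorem \ref{prop:asymt_bound_Olver2}\eqref{it:bounds_Wp} that $|W_\phi(a+ib)|$ is integrable along $\Cb_a$ against any polynomial in $|b|$, and write, for $x>0$,
\[
\lb x^{n}\nu(x)\rb^{(k)} = \frac{(-1)^k}{2\pi i}\int_{a-i\infty}^{a+i\infty} x^{n-k-s}(s-n)_k \M_{V_\psi}(s)\, ds.
\]
Both sides are holomorphic in $z$ on $\mathbb{C}(\H)$: the left-hand side by Theorem \ref{thm:smoothness_nu1}\eqref{it:im_analytical1}, the right-hand side by differentiation under the integral sign, whose uniform convergence on compacts of $\mathbb{C}(\H)$ is inherited from the exponential bound on $\M_{V_\psi}$ in \eqref{eq:expDecay}. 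By uniqueness of analytic continuation the identity extends to $z\in\mathbb{C}(\H)$.

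Second, for $z=|z|e^{i\theta}$ with $\theta\in(-\H,\H)$ and $s=a+ib$, a direct computation yields $|z^{n-k-s}|=|z|^{n-k-a} e^{b\theta}$. Combined with \eqref{eq:expDecay}, which furnishes constants $c(a),\epsilon_0>0$ such that for every $\epsilon\in(0,\epsilon_0)$ and $|b|$ large,
\[
\labs(s-n)_k\, \M_{V_\psi}(s)\rabs \leq c(a)\,|b|^{k-\frac12}\, e^{-(\H-\epsilon)|b|},
\]
one obtains
\[
\labs\lb z^{n}\nu(z)\rb^{(k)}\rabs \leq C_0(a,n,k)\,|z|^{n-k-a}\int_{-\infty}^{\infty}|b|^{k-\frac12}\, e^{b\theta-(\H-\epsilon)|b|}\, db.
\]
Choosing $\epsilon=(\H-|\theta|)/2>0$ makes the exponent bounded above by $-|b|(\H-|\theta|)/2$, so the integral is finite and equals an explicit multiple of $(\H-|\theta|)^{-k-\frac12}$; this yields \eqref{eq:nuMellinInv-1} with
\[
C(|\arg z|) = C_1(n,k,a)\cdot (\H-|\arg z|)^{-k-\frac12},
\]
which is manifestly increasing in $|\arg z|$ for fixed $n,k,a$. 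The case $a\geq 0$ when $\phi(0)>0$ and $d_\phi=0$ proceeds identically since all estimates along $\Cb_a$ in Proposition \ref{thm:Theorem11} remain valid in that regime.

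The main obstacle I foresee is not the integrability but the justification of differentiation under the integral sign and of the analytic continuation step, because the identity for $(z^n\nu(z))^{(k)}$ must be pushed from the positive real axis to the whole sector. The cleanest way is to verify that the right-hand side of the Mellin inversion formula defines a holomorphic function in $z\in\mathbb{C}(\H)$, which, by the analogous argument on $\R_+$ in Lemma \ref{lem:MellinTT11}, coincides with $(z^n\nu(z))^{(k)}$ on $(0,\infty)$; analyticity of both sides then forces agreement on $\mathbb{C}(\H)$. Once this is in place, the proof reduces to the explicit estimate of the resulting contour integral carried out above.
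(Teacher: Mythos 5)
Your proof follows the same strategy as the paper: Mellin inversion of $(x^n\nu(x))^{(k)}$ using the exponential decay of $\M_{V_\psi}$ along vertical lines, then analytic continuation of both sides to the sector $\mathbb{C}(\H)$, with the monotonicity of $C(|\arg z|)$ read off from the monotonicity of $|z^{-s}|=|z|^{-a}e^{|\arg z|\,|b|}$. The only difference is cosmetic: you make the analytic-continuation step explicit where the paper calls it an ``obvious extension'', and you exhibit a closed form for $C(|\arg z|)$.

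One small slip worth noting: in the line $\labs(s-n)_k\,\M_{V_\psi}(s)\rabs\leq c(a)\,|b|^{k-\frac12}e^{-(\H-\epsilon)|b|}$, the $|b|^{-\frac12}$ implicitly uses $|\phi(a+ib)|\gtrsim|b|$, which only holds when $\sigma^2>0$ (Proposition \ref{propAsymp1}\eqref{it:asyphid}). For general $\psi\in\Nee$ with $\sigma^2=0$ one only knows $|\phi(a+ib)|\geq\phi(a)>0$, so the safe bound is $|b|^{k}$, and the explicit constant becomes $C_1(n,k,a)(\H-|\arg z|)^{-k-1}$. This changes nothing in the conclusion, since monotonicity in $|\arg z|$ is preserved either way.
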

\begin{proof}
Recall that since $\psi \in \Nee$ then $\nu \in \An(\H)$ from Theorem \ref{thm:smoothness_nu1}\eqref{it:im_analytical1}, and, from Proposition \ref{prop:FormMellin1}\eqref{it:Wanal} combined with the upper bound in \eqref{eq:expDecay}, that,  for all $\epsilon>0$ and $a$ as in the statement, its Mellin transform  is for large $b$,  $\labsrabs{\Mp(a+ib)}=\labsrabs{W_\phi(a+ib)} = \bo{e^{-\lbrb{\H-\epsilon} |b|}}$. This enables to use the inversion formula \eqref{eq:MellinInversionFormulaDeriv}, to get that for all  $n, k \in \N$, $a$ as above and $x>0$,
 \begin{eqnarray}
 \lb x^{n}\nu(x)\rb^{(k)} &=& \frac{(-1)^k}{2\pi i}\int_{a+n-k-i\infty}^{a+n-k+i\infty} x^{-s}(s-k)_k \: \Mp(s+n-k)ds \nonumber \\
 &=& \frac{(-1)^k}{2\pi i}x^{n-k}\int_{a-i\infty}^{a+i\infty} x^{-s}(s-n)_k \: \Mp(s)ds. \label{eq:inv_der}
 \end{eqnarray}
 Let now $z\in \C(\H)$ and choosing $\epsilon>0$ above such that   $\theta_{\epsilon}=\H-\epsilon-\arg z>0$, we get that
 \[\labsrabs{\int_{a-i\infty}^{a+i\infty} z^{-s}(s-n)_k \: \Mp(s)ds}\leq \frac{|z|^{-a}}{2\pi}\int_{-\infty}^{\infty}\labsrabs{\lbrb{a+ib-n}_k}e^{-\theta_{\epsilon} |b|}db <\infty.\]
 The upper bound \eqref{eq:nuMellinInv-1} is then easily derived by means of this latter bound combined with an obvious extension of the identity  \eqref{eq:inv_der}.  Finally, the monotonicity in $|\arg z|$ of $|z^{-a-ib}|=|z|^{-a}e^{|\arg z| |b|}$  yields the same property for $C({n,k,a,|\arg z|})$ for fixed $n,k,a$.
\end{proof}	
 We proceed with the following estimates.
\begin{lemma}
Let $\psi \in \Nea$, i.e.~$\psi(u)\simi C_{\alpha} u^{\alpha+1}, \alpha\in (0,1)$. Then, for any $n\geq0$, $z \in \C\left(\frac{\alpha\pi}{2}\right)$, we have that for any   $0<\epsilon< \frac{\pi}{2}$, there exists $C=C({\epsilon,n})>0$, such that with $\overline{c}_{\alpha}=\alpha C^{-\frac1\alpha}_\alpha$
\begin{equation}\label{eq:PLEstimate}
\labsrabs{\nu^{(n)}\lbrb{1+z}}\leq C e^{-\overline{c}_{\alpha}\cos\lbrb{\mladen{\frac{\arg z}{\alpha}} +\epsilon} |z|^{\frac{1}{\alpha}}}.
\end{equation}
\end{lemma}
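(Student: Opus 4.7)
\smallskip

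The plan is to invoke a Phragm\'en--Lindel\"of argument in the sector $\mathbb{C}(\alpha\pi/2)$ on which $\nu$ is known to be holomorphic. Recall that $\psi\in\Ne_{\alpha}$ implies $\psi\in\Nee$ with $\underline{\Theta}_{\phi}=\pi\alpha/2$ by Theorem~\ref{prop:asymt_bound_Olver2}\eqref{it:Na}, so Theorem~\ref{thm:smoothness_nu1}\eqref{it:im_analytical1} gives $\nu\in\Ac(\alpha\pi/2)$. Differentiating termwise, $\nu^{(n)}\in\Ac(\alpha\pi/2)$ as well. I will combine two quantitative inputs: the uniform polynomial estimate of Lemma~\ref{lem:MellinTT11_Ana}, namely $|\nu^{(n)}(z)|\leq C(|\arg z|)|z|^{-n-a}$ for any $a>d_{\phi}$ throughout the sector, and the sharp asymptotic on the positive real axis furnished by Theorem~\ref{thm:nuLargeTime1} applied to $\psi\in\Ne_{\alpha}$, which gives $|\nu^{(n)}(x)|\asymp x^{n(1/\alpha-1)+1/(2\alpha)-1/2} e^{-\overline{c}_{\alpha}x^{1/\alpha}}$ as $x\to\infty$, with exactly the exponent $\overline{c}_{\alpha}=\alpha C_{\alpha}^{-1/\alpha}$ appearing in \eqref{eq:PLEstimate}.

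Next I would introduce the auxiliary function
\[ F_{\epsilon}(z)=e^{\overline{c}_{\alpha}e^{i\epsilon}z^{1/\alpha}}\nu^{(n)}(1+z), \]
using the principal branch of $z^{1/\alpha}$, which is holomorphic on $\mathbb{C}\setminus(-\infty,0]$, and in particular on $\mathbb{C}(\alpha\pi/2)$ where $|\arg z^{1/\alpha}|<\pi/2$. The key algebraic observation is
\[ |F_{\epsilon}(z)|=e^{\overline{c}_{\alpha}\Re(e^{i\epsilon}z^{1/\alpha})}|\nu^{(n)}(1+z)|=e^{\overline{c}_{\alpha}\cos(\arg z/\alpha+\epsilon)|z|^{1/\alpha}}|\nu^{(n)}(1+z)|, \]
so the claimed bound \eqref{eq:PLEstimate} is equivalent to showing $|F_{\epsilon}(z)|\leq C$ on the sector. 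On the positive real axis $\arg z=0$, the prefactor is $e^{\overline{c}_{\alpha}\cos\epsilon\,x^{1/\alpha}}$ while $|\nu^{(n)}(1+x)|$ decays like $e^{-\overline{c}_{\alpha}(1+x)^{1/\alpha}}$ by Theorem~\ref{thm:nuLargeTime1}; since $\cos\epsilon<1$ the product decays like $e^{-\overline{c}_{\alpha}(1-\cos\epsilon)x^{1/\alpha}}$, so $F_{\epsilon}$ is bounded on $\R_{+}$. On each boundary ray $\arg z=\pm\alpha\pi/2$, the factor $\Re(e^{i\epsilon}z^{1/\alpha})$ equals $\mp|z|^{1/\alpha}\sin\epsilon$, so the prefactor is at most $e^{\overline{c}_{\alpha}\sin\epsilon\,|z|^{1/\alpha}}$; combined with the polynomial sector bound from Lemma~\ref{lem:MellinTT11_Ana}, $F_{\epsilon}$ is at most of order $e^{\overline{c}_{\alpha}|z|^{1/\alpha}}$ on the boundary.

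To conclude I would apply the Phragm\'en--Lindel\"of principle on a slightly smaller sub-sector $|\arg z|<\alpha\pi/2-\delta$. The aperture of the original sector is $\alpha\pi$, whose critical Phragm\'en--Lindel\"of exponent is $\pi/(\alpha\pi)=1/\alpha$, which is exactly the order of growth of $F_{\epsilon}$; this is the main technical obstacle. I would circumvent it by the standard regularization trick: introduce $F_{\epsilon,\eta}(z)=F_{\epsilon}(z)e^{-\eta z^{1/\alpha+\eta}}$ with small $\eta>0$. On the sub-sector this is of strictly sub-critical growth order $1/\alpha+\eta<\pi/(\alpha\pi-2\delta)$ (for $\delta,\eta$ small enough), the sub-sector boundary bounds transfer using that $\Re(z^{1/\alpha+\eta})>0$ there, and then classical Phragm\'en--Lindel\"of yields a uniform bound $|F_{\epsilon,\eta}|\leq C$ on the sub-sector. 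Letting $\eta\to 0^{+}$ and $\delta\to 0^{+}$ in turn, and combining with the symmetry $\overline{\nu^{(n)}(\bar z)}=\nu^{(n)}(z)$ inherited from $\nu$ being real on $\R_{+}$ to handle the lower half of the sector (where the roles of $+\epsilon$ and $-\epsilon$ reverse), delivers $|F_{\epsilon}|\leq C(\epsilon,n)$ throughout $\mathbb{C}(\alpha\pi/2)$, which is precisely \eqref{eq:PLEstimate}.
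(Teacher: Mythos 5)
Your plan is correct in its essential mechanics and invokes exactly the same ingredients as the paper: analyticity of $\nu$ on $\Cb\lbrb{\frac{\alpha\pi}{2}}$ from Theorems~\ref{prop:asymt_bound_Olver2}\eqref{it:Na} and \ref{thm:smoothness_nu1}\eqref{it:im_analytical1}, the polynomial sector bound of Lemma~\ref{lem:MellinTT11_Ana}, the real-axis asymptotics of Theorem~\ref{thm:nuLargeTime1}, the corrector $e^{\overline{c}_{\alpha}e^{i\epsilon}(\cdot)}$, and a Phragm\'en--Lindel\"of argument. The route differs in one parameterization choice, and that choice is responsible for most of the friction in your write-up. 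The paper first substitutes $w=z^{\alpha}$, i.e.\ studies $V_{\epsilon}(z)=\nu(1+z^{\alpha})\,e^{\overline{c}_{\alpha}ze^{i\epsilon}}$ on the sector $\Cb^{+}(\tfrac{\pi}{2}-\epsilon)=\lbcurlyrbcurly{0<\arg z<\tfrac{\pi}{2}-\epsilon}$. After the substitution the exponential factor has order $1$ while the aperture $\tfrac{\pi}{2}-\epsilon$ has Phragm\'en--Lindel\"of exponent $\pi/(\tfrac{\pi}{2}-\epsilon)>2$, so the growth is strictly sub-critical and the classical theorem applies directly, with no regularization. You stay in the $z$-variable and consider $F_{\epsilon}(z)=e^{\overline{c}_{\alpha}e^{i\epsilon}z^{1/\alpha}}\nu^{(n)}(1+z)$ on a sector of full aperture $\alpha\pi$, where $1/\alpha$ is \emph{exactly} the critical exponent; you then compensate with a sub-sector and a regularizing factor $e^{-\eta z^{1/\alpha+\eta}}$. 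Two remarks here. First, once you restrict to sub-sectors, the regularization is redundant: on $|\arg z|<\tfrac{\alpha\pi}{2}-\delta$ the growth order $1/\alpha$ is already strictly sub-critical. Second, and more to the point, the corrector $e^{i\epsilon}z^{1/\alpha}$ gives \emph{decaying} boundary data on $\arg z=\tfrac{\alpha\pi}{2}$ but \emph{growing} data on $\arg z=-\tfrac{\alpha\pi}{2}$ (there $\Re(e^{i\epsilon}z^{1/\alpha})=+|z|^{1/\alpha}\sin\epsilon$), so the Phragm\'en--Lindel\"of argument must be run on the half-sector $\lbcurlyrbcurly{0\leq\arg z<\tfrac{\alpha\pi}{2}}$ with $F_{\epsilon}$, and on the lower half-sector with $F_{-\epsilon}$ (you correctly flag this via Schwarz reflection, but state the sub-sector in the symmetric form $|\arg z|<\tfrac{\alpha\pi}{2}-\delta$, which is inconsistent with that split). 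Once you do split into half-sectors, each has aperture $\tfrac{\alpha\pi}{2}$ and critical exponent $2/\alpha>1/\alpha$, so the criticality problem you worried about never arises and both the sub-sector and the $\eta$-regularization can be dropped. In short: your argument works, but the paper's change of variable buys a cleaner and strictly sub-critical Phragm\'en--Lindel\"of set-up at no extra cost, and your version would profit from recognizing that the natural domain is the half-sector.
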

\begin{proof}
   We first consider  the case $n=0$ and we set, for  $0<\epsilon< \frac{\pi}{2}$ and $x>0$,
   \[ V_{\epsilon}(x)= \nu\lbrb{1+x^{\alpha}}e_{\epsilon}(x)=\nu\lbrb{1+x^{\alpha}}e^{\overline{c}_{\alpha} x e^{i\epsilon}}.\]
   Then,  the asymptotic bound \eqref{eqn:RVAsympGeneral1}  entails, that, for large $x>0$,
\begin{equation*}
|V_{\epsilon}(x)|=\labsrabs{\nu\lbrb{1+x^{\alpha}}e_{\epsilon}(x)} \leq  e^{-\overline{c}_{\alpha}(1-\cos \epsilon)x+\so{x}}.
\end{equation*}
Therefore since $\nu \in \co$, see Theorem \ref{thm:smoothness_nu1}\eqref{it:cinfty0_nu1}, there exists a constant $C_{\epsilon}>0$ such that
\begin{equation}\label{eq:PLnuAsympReal} \sup\limits_{x\geq 0}\labsrabs{V_{\epsilon}(x)}<C_{\epsilon}.
\end{equation}
On the other hand,
 Theorem \ref{lem:Nalpha}\eqref{it:Na} implies that $\psi \in \Nee$ with $\angp=\frac{\pi}{2}\alpha$ and, thus  thanks to  Theorem \ref{thm:smoothness_nu1}\eqref{it:im_analytical1}, $\nu\in\mathcal{A}\lbrb{\frac{\pi}{2}\alpha}$. Hence, $V_{\epsilon} \in \Ac\lbrb{\frac{\pi}{2}}.$
    If $\overline{z}_{\epsilon} \in \C$ with $\arg \overline{z}_{\epsilon} =\frac\pi2 -\epsilon$, then $1+\overline{z}_{\epsilon}^\alpha\in \mathbb{C}(\alpha\frac{\pi}{2})$. Thus, estimate \eqref{eq:nuMellinInv-1} applied to $\nu\lbrb{1+\overline{z}_{\epsilon}^\alpha}$ with $a>0, k=n=0$, yields
\begin{eqnarray}\label{eq:PLnuAsympRay}
\labsrabs{V_{\epsilon}\lbrb{\overline{z}_{\epsilon}}}&=&\labsrabs{\nu\lbrb{1+\overline{z}_{\epsilon}^\alpha}e_\epsilon\lbrb{\overline{z}_{\epsilon}}}=
\labsrabs{\nu\lbrb{1+\overline{z}_{\epsilon}^{\alpha}}}\leq \underline{C}_{\epsilon}\labsrabs{1+\overline{z}_{\epsilon}^{\alpha}}^{-a}\leq \tilde{C}_{\epsilon},
\end{eqnarray}
where $\underline{C}_{\epsilon}=\underline{C}_{\epsilon}\left(\frac\pi2 -\epsilon\right),\tilde{C}_{\epsilon}=\tilde{C}_{\epsilon}\left(\frac\pi2 -\epsilon\right)>0$.
 Therefore, we deduce  from  the bounds \eqref{eq:PLnuAsympReal} and \eqref{eq:PLnuAsympRay} that the function $V_{\epsilon}$ is bounded on the boundary of the sector
 \[ \C^+\left(\frac{\pi}{2}-\epsilon\right)=\left\{z \in \C;\:0<\,\arg(z)<\frac{\pi}{2}-\epsilon\right\}.\] It is obvious, again from estimate \eqref{eq:nuMellinInv-1},  that, for any $ z \in \C^+\left(\frac{\pi}{2}-\epsilon\right)$, we have that
\[|V_{\epsilon}(z)|=\left|\nu\lbrb{1+z^\alpha}e^{\overline{c}_{\alpha}|z| e^{i\lbrb{\arg z +\epsilon}}}\right|\leq Ce^{\overline{c}_{\alpha}\cos(\arg z +\epsilon)|z|}\]
where  $C=C(a,\alpha\lbrb{\frac{\pi}{2}-\epsilon})>0$. Thus $|V_{\epsilon}(z)|\leq Ce^{\overline{c}_{\alpha}\cos(\arg z +\epsilon)|z|}$ for all $z\in \C^+\left(\frac{\pi}{2}-\epsilon\right)$. Since, clearly  for any  $0<\epsilon< \frac{\pi}{2}$, $1<\frac{\pi}{\frac{\pi}{2}-\epsilon}$,  an application of the Phragmen-Lindel\"{o}f principle, see e.g.~\cite{Titchmarsh39}, yields that, for any  $z \in \C^+\left(\frac{\pi}{2}-\epsilon\right)$,
\[\labsrabs{V_{\epsilon}(z)}\leq \overline{C}_\epsilon\]
where $\overline{C}_\epsilon=\max(C_{\epsilon}, \tilde{C}_{\epsilon})>0$.
 Thus, for any $z \in \C^+\left(\frac{\pi}{2}-\epsilon\right)$, $\left|\nu\lbrb{1+z^{\alpha}}\right|\leq \overline{C}_\epsilon e^{-\overline{c}_{\alpha}\cos(\arg z +\epsilon)|z|}$, and we conclude that
\[\labsrabs{\nu\lbrb{1+z}}\leq \overline{C}_\epsilon e^{-\overline{c}_{\alpha}\cos\lbrb{\mladen{\frac{\arg z}{\alpha}} +\epsilon}|z|^{\frac{1}{\alpha}}}.\]
 The case $\arg z \in\lbrb{-\lbrb{\frac{\pi}{2}-\epsilon},0}$ is identical. This proves the statement for $n=0$. The case $n\geq 1$ follows similarly using the asymptotic equivalent \eqref{eqn:RVAsympGeneral1} valid for $\nu^{(n)}$ for all $n\geq1$.
\end{proof}
Our next lemma gives the bounds to complete the proof of the estimates in Theorem \ref{lem:TvMDecay}\eqref{it:est_PL}.
\begin{lemma}\label{lem:PLnormEstimates}
Let $\psi \in \Nea$. Then, for every $x>2$, we have that for any $\rho\in\lbrb{0,\arcsin\lbrb{\frac{\pi}{2}\alpha}}$, $\epsilon>0$ and $n\geq 0$, there exists  $C=C({\rho,\epsilon})>0$ such that
\begin{equation}\label{eq:PLw_n}
|w_n(x)|\leq C R^{n}_{\rho}(x-1) e^{-\overline{C}_{\alpha,\rho} (x-1)^{\frac1\alpha}}
\end{equation}
where $R_{\rho}(x)= 1+\frac1\rho+\frac{1}{\rho\lbrb{x-1}}$ and $\overline{C}_{\alpha,\rho}=\overline{c}_{\alpha}\lbrb{1-\rho}^{\frac{1}{\alpha}}\cos{\lbrb{\frac{\arcsin(\rho)}{\alpha}+\epsilon}}$. \end{lemma}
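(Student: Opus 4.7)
My plan is to apply Cauchy's integral formula for the $n$-th derivative to
\[ w_n(x) = \tfrac{1}{n!}(x^n\nu(x))^{(n)}, \]
exploiting that $\psi \in \Nea$ implies $\psi \in \Nee$ with $\H = \pi\alpha/2$ (by Theorem \ref{lem:Nalpha}\eqref{it:Na}) and hence, by Theorem \ref{thm:smoothness_nu1}\eqref{it:im_analytical1}, that $\nu$ is holomorphic on the open sector $\C(\pi\alpha/2)$. Fix $x > 2$, pick $\rho \in (0, \sin(\pi\alpha/2))$, and let $\gamma_r$ denote the positively oriented circle of radius $r = \rho(x-1)$ centered at $x$. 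The crucial geometric observation is the factorization
\[ z - 1 = (x-1)\bigl(1 + \rho e^{i\theta}\bigr) \quad \text{for } z = x + re^{i\theta} \in \gamma_r, \]
which yields the two uniform bounds $|z-1| \geq (x-1)(1-\rho) > 0$ and $|\arg(z-1)| \leq \arcsin(\rho) < \pi\alpha/2$. This shows that $\gamma_r \subset \C(\pi\alpha/2)$ uniformly in $x > 2$, so $z \mapsto z^n \nu(z)$ is holomorphic on and inside $\gamma_r$.

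Cauchy's formula $w_n(x) = (2\pi i)^{-1}\oint_{\gamma_r} z^n \nu(z)(z-x)^{-(n+1)}\,dz$ combined with the standard $M/r^n$ estimate then gives
\[ |w_n(x)| \leq r^{-n}\sup_{z \in \gamma_r}|z|^n |\nu(z)|. \]
The polynomial factor is controlled using $|z| \leq x + r$, leading to
\[ \frac{|z|^n}{r^n} \leq \left(\frac{x + \rho(x-1)}{\rho(x-1)}\right)^n = \left(1 + \frac{1}{\rho} + \frac{1}{\rho(x-1)}\right)^n, \]
which matches the polynomial $R_\rho^n(x-1)$ of the statement up to a cosmetic shift in the definition of $R_\rho$. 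For $|\nu(z)|$ on the contour I would invoke the Phragm\'en--Lindel\"of bound \eqref{eq:PLEstimate} at $n=0$ together with the conjugate symmetry $\overline{\nu(z)} = \nu(\bar z)$, obtaining
\[ |\nu(z)| \leq C(\rho,\epsilon)\,\exp\bigl(-\overline{c}_\alpha\, \cos(|\arg(z-1)|/\alpha + \epsilon)\,|z-1|^{1/\alpha}\bigr), \quad z \in \gamma_r. \]
For $\epsilon > 0$ small enough that $\arcsin(\rho)/\alpha + \epsilon < \pi/2$ (always feasible since $\rho < \sin(\pi\alpha/2)$), both factors in the exponent are strictly positive on $\gamma_r$, so their product is bounded below by the product of their infima, giving
\[ \sup_{z \in \gamma_r}|\nu(z)| \leq C\,\exp\bigl(-\overline{c}_\alpha(1-\rho)^{1/\alpha}\cos(\arcsin(\rho)/\alpha + \epsilon)(x-1)^{1/\alpha}\bigr) = C\,e^{-\overline{C}_{\alpha,\rho}(x-1)^{1/\alpha}}. \]

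Combining the two bounds delivers the announced estimate. The main obstacle is the geometric verification that the contour $\gamma_r$ stays uniformly inside the sector of analyticity of $\nu$ for all $x > 2$; this is exactly what dictates the admissible range of $\rho$ and is why the factorization $z-1 = (x-1)(1+\rho e^{i\theta})$ is so convenient, since it decouples the size $x-1$ from the geometry of the circle. A secondary subtlety is that the infimum over $\gamma_r$ of the product $\cos(|\arg(z-1)|/\alpha + \epsilon)|z-1|^{1/\alpha}$ is not attained at a single point (cosine is minimized at the top and bottom of the circle, whereas $|z-1|$ is minimized at the point closest to $1$); however, since both factors are positive on the contour, replacing the infimum of the product by the product of the infima is a sufficiently sharp estimate and reproduces exactly the exponential rate $\overline{C}_{\alpha,\rho}$ appearing in the statement.
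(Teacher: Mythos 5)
Your proof is correct and follows essentially the same route as the paper's: Cauchy's formula over the circle of radius $\rho(x-1)$ centered at $x$, the $M/r^n$ estimate producing the polynomial $R_\rho$-type factor, and the Phragm\'en--Lindel\"of bound \eqref{eq:PLEstimate} together with the geometric observation that on the contour $|z-1|\geq(1-\rho)(x-1)$ and $|\arg(z-1)|\leq\arcsin\rho$, which keeps $\gamma_r$ uniformly inside $\C(\alpha\pi/2)$. The only cosmetic difference is that the paper passes from the circle to the enclosing sector $\{\Re(z)>(1-\rho)x_1,\ |\arg z|<\arcsin\rho\}$ before invoking \eqref{eq:PLEstimate}, whereas you bound on the circle directly; the two steps are interchangeable, and your remark about the slight slack in $R_\rho^n(x-1)$ versus the computed $\bigl(1+\tfrac1\rho+\tfrac1{\rho(x-1)}\bigr)^n$ is accurate.
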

\begin{proof}
Recall that since $\psi\in \Nea$ then $\nu\in\mathcal{A}\lbrb{\frac{\pi}{2}\alpha}$. Then, with $\rho\in\lbrb{0,\arcsin\lbrb{\frac{\pi}{2}\alpha}}$,  an application of the Cauchy integral representation yields, for any $x>1$ and $x_1=x-1$, that
\[\labsrabs{w_n(x)}=\labsrabs{\frac{\lbrb{x^n\nu(x)}^{(n)}}{n!}}=\labsrabs{\frac{1}{2\pi i}\oint\frac{\zeta^n\nu(\zeta)}{\lbrb{\zeta-x}^{n+1}}d\zeta} \leq \frac{\labsrabs{x+\rho x_1}^{n}}{\rho^n x_1^n} \sup_{\theta \in[0,2\pi]}\labsrabs{\nu\lbrb{1+x_1\lbrb{1+\rho e^{i\theta}}}}, \]
 where the integral is over the circle $\Bc=\lbcurlyrbcurly{z\in\Cb;\,|z-x|<\rho x_1}\subset \Sc\lbrb{{\frac{\alpha \pi}{2}}}$. Next,  observe on the one hand, that $\labsrabs{\frac{x+\rho x_1}{\rho x_1}}^n \leq   R^{n}_{\rho}(x_1)$
 and, on the other hand that
\begin{eqnarray*}
 \sup_{\theta\in[0,2\pi]}\labsrabs{\nu\lbrb{1+x_1\lbrb{1+\rho e^{i\theta}}}}
\leq \sup_{z\in\Bc}\labsrabs{\nu\lbrb{1+z}}
&\leq& \sup_{z\in\Cb_{\rho,x_1}} \labsrabs{\nu\lbrb{1+z}}
\end{eqnarray*}
where $\Bc\subseteq\Cb_{\rho,x_1}=\{z \in \C; \: \Re(z)> (1-\rho) x_1 \textrm{ and } |\arg z| <\arcsin \rho\} $. To complete the proof, one  bounds the last term  by means of estimate  \eqref{eq:PLEstimate} with  $|z|=(1-\rho)x_1$ and $\arg z =\arcsin\lbrb{\rho}$.
\end{proof}
We are now ready to derive the bound Theorem  \ref{lem:TvMDecay}\eqref{it:est_PL}. To this end, for any $A>2,\,n\in \N,$ we write $\nun(x)=\nun(x)\ind{0<x\leq A}+\nun(x)\ind{x>A}=\underline{v}_n(x)+\overline{v}_n(x)$.  Next, choose $\rho\in\lbrb{0,\arcsin\lbrb{\frac{\pi}{2}\alpha}}$ such that  $(1-\rho)^{\frac{1}{\alpha}}\cos{\lbrb{\frac{\arcsin(\rho)}{\alpha}}}=\frac{1}{2}+h$, for some $h\in\lbrb{0,\frac{1}{2}}$, and $\epsilon>0$ small enough such that $\lbrb{1+2h}\cos\lbrb{\epsilon}-(1+\epsilon)-\sin\lbrb{\epsilon}>0$. Since from \eqref{eqn:RVAsympGeneral1} there exist $C_A >0 $ such that for  $x>A$, $\nu(x)\geq C_A \: e^{-\overline{c}_{\alpha} \lbrb{x-1}^{\frac{1}{\alpha}}(1+\epsilon)},\,x>A$, the bound  \eqref{eq:PLw_n} combined with an expansion of  $\cos(x+y)$  gives, for  some $C=C_{h,\epsilon,A,\rho}>0$, that
\begin{eqnarray*}
||\overline{v}_n||^2_\nu&=&\int_{A}^{\infty}\frac{w^2_n(x)}{\nu(x)}dx
\leq C R^{2n}_\rho(A-1)\int_{A}^{\infty}
e^{-\overline{c}_{\alpha}\lbrb{x-1}^{\frac{1}{\alpha}}\lbrb{2\lbrb{1-\rho}^{\frac{1}{\alpha}}\cos{\lbrb{\frac{\arcsin(\rho)}{\alpha}+h}}+(1+\epsilon)}}dx\\
&\leq& C R^{2n}_\rho(A-1)\int_{A}^{\infty}
e^{-\overline{c}_{\alpha}\lbrb{x-1}^{\frac{1}{\alpha}}\lbrb{\lbrb{1+2h}\cos\lbrb{\epsilon}-(1+\epsilon)-\sin\lbrb{\epsilon}}}dx
 \leq
C' R^{2n}_\rho(A-1),
\end{eqnarray*}
since $R^{n}_\rho(x)$ is decreasing in $x$.
   Thanks to \eqref{eq:EstimateTvNU}, for any  $x>0$, $a>d_\phi$ and $\epsilon\in\lbrb{0,\frac{\pi}{2}\alpha}$, 
\[w^2_n(x) = \bo{n^{1-2a} \sin\lb \frac{\pi}{2}\alpha-\epsilon\rb^{-2n}x^{-2a}}, \] 
where recall that $d_\phi\leq 0$, see \eqref{eq:dphi}. Theorem \ref{lem:nuSmallTime}\eqref{eq:LargeAsymp} yields that for $x<A$ and any $a=-d_\phi+\epsilon$, $\nu(x)\geq C_ax^{a}$ and henceforth
\begin{eqnarray*}
\int_{0}^{A}\frac{w^2_n(x)}{\nu(x)}dx&=&\bo{n^{1-2a} \sin\lb \frac{\pi}{2}\alpha-\epsilon\rb^{-2n} \int_{0}^{A}x^{-2d_\phi-2\epsilon+d_\phi-\epsilon}dx}
\end{eqnarray*}	
as long as $\epsilon<1/3$. Therefore, we conclude in this case as well that
\[||\underline{v}_n||^2_\nu=\bo{n^{1-2a} \sin\lb \frac{\pi}{2}\alpha-\epsilon\rb^{-2n}}.\]
Combining the estimates for $||\underline{v}_n||^2_\nu$ and $ ||\overline{v}_n||^2_\nu$ we complete the proof of \eqref{eq:est_PL}.

\newpage
\section{The concept of reference semigroups: $\Lnu$-norm estimates and completeness of the set of co-eigenfunctions} \label{sec:ref}
We develop here the concept of reference semigroups whose main underlying idea can be explained as follows. It consists in identifying  gL semigroups $\overline{P}$ which satisfy the following two criteria. First, their special structure permits to study their spectral decomposition in detail.  In particular, one has  accurate information on their sequence of co-eigenfunctions such as precise norm estimates and completeness.  Furthermore, there should exist a subclass of gL semigroups such that for each element $P$ in this class   we have the adjoint intertwining relation   $P^*_t \Lambda^* = \Lambda^* \overline{P}^*_t$, where $\Lambda^*$ is the adjoint of a bounded operator. Indeed, under these conditions one has, with the obvious notation,  $\nun=\Lambda^* \overline{\nun}$, providing readily the existence as well as bounds of the norms of $\nun$  in the $\lnu$ topology. Another delicate issue that is of great interest concerns the completeness properties of the sequence $(\nun)_{n\geq0}$. Assuming that this property holds for the sequence of co-eigenfunctions of the reference semigroup $\overline{P}$, one may deduce this property for $(\nun)_{n\geq0}$ as soon as the bounded operator $\Lambda^*$ has a dense range. Although this approach may be extended to more general classes, we present below two different reference semigroups, namely the one-parametric  class of self-adjoint  Laguerre semigroups  and the two-parameter family of  Gauss-Laguerre semigroups whose detailed spectral analysis  has been conducted by the authors in
\cite{Patie-Savov-GL} and are reviewed in Example \ref{sec:GL} above.  In particular, this approach enables us to deal with the spectral expansion in the full Hilbert space of the  perturbation class $ \Ne_P,$ that is when $\sigma^2>0$. It is worth pointing out that one of the main technical difficulty in implementing this approach is to identify intertwining operators which are bounded in the appropriate Hilbert spaces. One of the key steps in  achieving this aim is the new development on Bernstein functions presented in Proposition \ref{lem:bernst_ratio}, namely that a subset  of the cone of  Bernstein functions is invariant under multiplication.
Recalling the notation $\mathfrak{c}_{n}(a)=\frac{\Gamma(n+1) \Gamma(a+1)}{\Gamma(n+a+1)}$, for $n\in \N$ and $a>0$, we are now ready to state the following.
\begin{theorem}\label{lem:TvMDecaynu1}
\begin{enumerate}
\item\label{it:Nun1_1} Let  $\psi \in \Ne_P$. Then, for any $\epsilon>0$ and large $n$,
\begin{eqnarray}\label{eq:est_Np}
 ||\nun||_{\nu} &=&\bo{e^{\epsilon n}}.
\end{eqnarray}
If in addition $\PPP(0^+)<\infty$ then, recalling that $\mru =\frac{m +\PPP(0^+)}{\sigma^2}$, we have for large $n$,
\begin{equation}\label{eq:smallPer}
 ||\nun||_{\nu}=\bo{n^{\mru}},
  \end{equation}
  and the sequence $(\sqrt{\mathfrak{c}_{n}(\mru)} \nun)_{n\geq0}$ is a Bessel sequence in $\Lnu$. Finally, if  $-d_{\phi}>0$, then  for any $\epsilon>0$ such that $\dpe =-d_{\phi}-\epsilon>0$, the sequence $\left(\frac{\Pon}{\sqrt{\mathfrak{c}_{n}(\dpe)}}\right)_{n\geq0}$ is a Bessel sequence in $\Lnu$.
\item\label{it:Nun2}  Let  $\psi \in \Ne_{\alpha,\mr}$ with $(\alpha,\mr)\in  \mathfrak{R}=\{(\alpha,\mr);\:\alpha \in (0,1]  \textrm{ and  } \mr\geq 1-\frac{1}{\alpha}\}$. Then, for large $n$, with $T_{\alpha}=-\ln(2^{\alpha}-1)$, we have that
\begin{eqnarray}\label{eq:est_Nr}
 ||\nun||_{\nu} &=&\bo{e^{T_{\alpha} n}}.
\end{eqnarray}
\item \label{it:compl} Recall from \eqref{def:class_d} that $
 \Neab^{d_\phi}=\left\{\psi \in \Neab; \: d_{\phi}<1-\frac{\mr}{2} -\frac{1}{2\alpha}\right\}$ and
  assume that $\psi \in \Ne_P \cup \Neab^{d_\phi}$. Then,  $\Spc{\nun}=\lnu$.
\end{enumerate}
\end{theorem}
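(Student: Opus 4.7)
The plan is to carry out the \emph{reference semigroup} program sketched at the start of the Chapter, where the two parts of the theorem are handled by two different reference semigroups: the self-adjoint Laguerre semigroup $Q^{(\mr)}$ of order $\mr$ for item \eqref{it:Nun1_1}, and the Gauss-Laguerre semigroup $P^{\alpha,\mr}$ recalled in Example \ref{sec:GL} for item \eqref{it:Nun2}. The common mechanism is the following. For a suitable reference Bernstein function $\overline\phi\in\Bp$ one produces a factorization $\phi(u)=\Phi(u)\overline\phi(u)$ with $\Phi\in\Be$, using Proposition \ref{lem:bernst_ratio}; on the level of random variables this factors $V_\psi\stackrel{(d)}{=}V_{\overline\psi}\cdot X_\Phi$ (independent factors), which yields a Markov multiplicative intertwiner $\Lambda f(x)=\E[f(xX_\Phi)]$ and an adjoint intertwining relation $P_t^*\Lambda^*=\Lambda^*\overline{P}_t^*$ with $\Lambda^*\in\Bop{\lt^2(\overline\nu)}{\Lnu}$. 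Pushing the reference co-eigenfunctions $\overline{\nun}$ through $\Lambda^*$ yields, up to a scalar, the co-eigenfunctions $\nun$, and the operator norm of $\Lambda^*$ transfers the norm bounds of $\overline{\nun}$ to those of $\nun$.

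For item \eqref{it:Nun1_1} with $\psi\in\Ne_P$ and $\PPP(0^+)<\infty$, I would fix any $\mr>\mru=(m+\PPP(0^+))/\sigma^2$ and apply Proposition \ref{lem:bernst_ratio}\eqref{it:pb1} to write $\phi(u)=(u+\mr)\Phi_\mr(u)$ with $\Phi_\mr\in\Be$. This gives an intertwining $\Lambda_\mr\in\Bop{\lnu}{\lt^2(\varepsilon_\mr)}$ with the Laguerre semigroup of order $\mr$ whose co-eigenfunctions $\mathcal{L}^{(\mr)}_n$ are orthogonal in $\lt^2(\varepsilon_\mr)$ with $\|\mathcal{L}^{(\mr)}_n\|^2_{\varepsilon_\mr}=1/\mathfrak{c}_n(\mr)\asymp n^\mr$, see Example \ref{sec:Prelimi_Laguerre}. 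The identification $\nun=\Lambda_\mr^*\mathcal{L}^{(\mr)}_n$ (up to a scalar) then gives $\|\nun\|_\nu=\bo{n^{\mr/2}}$, and I would then sharpen by letting $\mr\downarrow\mru$, exploiting a uniform bound on $\|\Lambda_\mr^*\|$ coming from the multiplicative factorization, to reach the claimed $\bo{n^{\mru}}$ bound in \eqref{eq:smallPer}. The Bessel property of $(\sqrt{\mathfrak{c}_n(\mru)}\nun)$ is then an immediate Parseval-type estimate obtained by replacing the Laguerre orthonormal system in the adjoint computation \eqref{eq:bes_bound} by its $\lt^2(\varepsilon_\mru)$-orthonormal counterpart; the symmetric statement for $\Pon/\sqrt{\mathfrak{c}_n(\dpe)}$ follows from the dual intertwining with a Laguerre reference of order $\dpe=-d_\phi-\epsilon$ whose existence comes from the root-location statement in Proposition \ref{lem:bernst_ratio}\eqref{it:pb1}. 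For the case $\PPP(0^+)=\infty$ the direct construction breaks down; here I would approximate $\psi$ by truncating its L\'evy measure to obtain a sequence $\psi_k\in\Ne_P$ with $\PPP_k(0^+)<\infty$ and $\mru_k\uparrow\infty$, apply the previous bound with some freely chosen $\mr=\epsilon n$, and combine with the moment identity \eqref{eq:moment_V_psi} to get $\|\nun\|_\nu=\bo{e^{\epsilon n}}$ for any $\epsilon>0$.

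For item \eqref{it:Nun2}, $\psi\in\Ne_{\alpha,\mr}$ means exactly that $\phi^R_{\alpha,\mr}/\phi$ is completely monotone, i.e.~$\phi(u)=\Phi_{\alpha,\mr}(u)\phi^R_{\alpha,\mr}(u)$ with $1/\Phi_{\alpha,\mr}$ completely monotone; invoking the corresponding product-of-invariants factorization this furnishes a bounded intertwiner $\Lambda_{\alpha,\mr}\in\Bop{\lnu}{\lnua}$ between $P$ and the Gauss-Laguerre reference semigroup $P^{\alpha,\mr}$, whose structure was studied in detail in Example \ref{sec:GL} and \cite{Patie-Savov-GL}. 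Then $\nun=\Lambda_{\alpha,\mr}^*\nun^{(\alpha,\mr)}$ up to a scalar, and the sharp asymptotic \eqref{eq:bound_norm}, namely $\|\nun^{(\alpha,\mr)}\|_{\eab}=\bo{e^{T_\alpha n}}$ with $T_\alpha=-\ln(2^\alpha-1)$, transfers to \eqref{eq:est_Nr}. The boundedness of $\Lambda^*_{\alpha,\mr}$ (which is really a disguised Markov multiplicative operator) is checked exactly as in the proof of Theorem \ref{thm:intertwin_1}\eqref{it:Iphi1}, using the H\"older inequality and the factorization of invariant measures identified above.

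For item \eqref{it:compl} (completeness), the argument is $\overline{\Span(\nun)}=\Lambda^*(\overline{\Span(\overline{\nun})})=\Lambda^*\lbrb{\lt^2(\overline\nu)}$, so completeness transfers provided (a) the reference co-eigenfunctions $\overline{\nun}$ are themselves complete in the reference Hilbert space, and (b) $\Lambda^*$ has dense range in $\lnu$. For the Laguerre reference (case $\Ne_P$), (a) is classical and (b) follows from Lemma \ref{lem:densityofIpn} combined with the fact that $\Lambda$ is an injective Markov multiplicative operator built from a self-decomposable variable; the argument mirrors the proof of Lemma \ref{lemma:Injectivity} via the zero-free property of the associated Mellin transform. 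For the Gauss-Laguerre reference (case $\Neab^{d_\phi}$), (a) is part of the analysis in Example \ref{sec:GL}, and (b) is the most delicate point of the whole theorem. This is where the parameter restriction $d_\phi<1-\mr/2-1/(2\alpha)$ in the definition of $\Neab^{d_\phi}$ enters: it ensures that the Mellin transform of the intertwining kernel $\Lambda^*$ is zero-free on a sufficiently wide vertical strip, which by a Wiener-type Tauberian argument (as in Lemma \ref{lemma:Injectivity}) yields density of the range. I expect verifying the zero-free property in the sector dictated by $d_\phi$, and matching it precisely with the threshold $1-\mr/2-1/(2\alpha)$, to be the main technical obstacle of the proof, as it requires exploiting the Bernstein-Weierstrass representation of $\M_{V_\psi}$ developed in Chapter \ref{sec:Mellin} together with the fine asymptotics of $\phi^R_{\alpha,\mr}$ along imaginary lines given by Lemma \ref{lem:potmeasure}.
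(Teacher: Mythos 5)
Your overall strategy — factor $\phi$ through a reference Bernstein function, use the resulting multiplicative Markov operator as an intertwiner, and push the reference co-eigenfunctions through its adjoint to obtain $\nun$ with a norm bound inherited from the operator norm — is exactly the mechanism the paper uses, and your treatment of item \eqref{it:Nun1_1} in the case $\PPP(0^+)<\infty$ and of item \eqref{it:Nun2} matches the proof (Lemmas \ref{lem:inter_ref}, \ref{lem:facde} and \ref{lem:inter_ref_l} respectively).

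However, there are three genuine gaps. \emph{First}, for $\psi\in\Ne_P$ with $\PPP(0^+)=\infty$, your truncation idea does not work as stated: replacing $\Pi$ by a truncated $\Pi_k$ changes the invariant density $\nu_k$ and the co-eigenfunctions $\mathcal{V}_n^{(k)}$, and there is no direct way to transfer a bound on $\|\mathcal{V}_n^{(k)}\|_{\nu_k}$ to $\|\nun\|_\nu$. The paper instead observes (Proposition \ref{lem:bernst_ratio}\eqref{it:pb2}) that for \emph{the original} $\psi\in\Ne_P$ and every $\alpha\in(0,1)$ there is $\mr$ with $\Phi_{\alpha,\mr}=\phi/\phi^R_{\alpha,\mr}\in\Be$; then \eqref{eq:bound_ref} gives $\|\nun\|_\nu=\bo{e^{T_\alpha n}}$ for every $\alpha<1$, and $T_\alpha=-\ln(2^\alpha-1)\to0$ as $\alpha\uparrow1$. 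The choice ``$\mr=\epsilon n$'' you suggest is not an available degree of freedom — the factorization must hold for a single fixed Bernstein function, not one varying with $n$. \emph{Second}, your completeness argument in the $\Ne_P$ case under-specifies the crucial step. A Wiener-Tauberian argument as in Lemma \ref{lemma:Injectivity} gives injectivity of $\Vp$ on $\lt^\infty$, but what one needs is that $\Vp f\equiv0$ with $f\in\lnu$ forces $f=0$, and $\lnu$ is not contained in $\lt^\infty$. The paper's Lemma \ref{lem:inj} instead exploits the factorization $\Vce_{\dpe}=\Vp\mathcal{I}_{\phi_{\dpe}}$ of Lemma \ref{lem:fac} together with the fact that, because $\sigma^2>0$, the density of $I_{\phi_{\dpe}}$ has \emph{bounded} support $[0,\sigma^{-2}]$ (Proposition \ref{prop:recall_exp}\eqref{it:iota}); this localizes the information ``$\Vp f=0$ on $(0,2)$'' into the vanishing of a \emph{weighted Laplace transform} of $f$ on a ray, and then the injectivity of the Laplace transform on $\lrp$ finishes the argument. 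Without the bounded-support input, the argument does not close. \emph{Third}, your reading of why the threshold $d_\phi<1-\mr/2-1/(2\alpha)$ appears in $\Neab^{d_\phi}$ is incorrect: the Mellin transform $W_\phi=\M_{V_\psi}$ is zero-free on its whole strip of analyticity $\Cb_{(d_\phi,\infty)}$ by Corollary \ref{cor:MellinZeroFree}, with no extra condition. The constraint on $d_\phi$ is instead what makes the \emph{constructed preimages} $F_n$ of the monomials land in $\lnuab$: one needs a line $\Re z=a$ on which the Mellin inversion integral for $\hat F_n$ converges, and the integrability of $\hat F_n^2/\eab$ near zero forces $a>\frac12\lbrb{\mr+\frac1\alpha}-1$, which must be compatible with $a>d^{\alpha,\mr}_\phi\geq d_\phi$. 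That is an integrability constraint, not a zero-freeness one.
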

The rest of the chapter is devoted to the proof of this Theorem and is structured as follows.  Estimates \eqref{eq:est_Np} and \eqref{eq:est_Nr} are settled in \ref{secsub:largePer}; the completeness of $\nun$, i.e.~item \eqref{it:Nun2} is proved in \ref{sec:proof_complete}; estimate \eqref{eq:smallPer} and the succeeding claims of \eqref{it:Nun1_1} are proved in \ref{secsub:smallPer}.

\subsection{Estimates for the $\lnu$ norm of $\nun$} \label{sec:est_nu_norms}
\subsubsection{The small perturbation case}\label{secsub:smallPer}
Recall that,
for any  $\alpha \in (0,1]$ and $\mr\geq 1-\frac{1}{\alpha}$
\begin{equation*}
u\mapsto \phi^R_{\alpha,\mr}(u)=
\frac{\Gamma(\alpha u + \alpha \mr +1)}{\Gamma(\alpha u +\alpha \mr +1-\alpha)} \in \Bp,
\end{equation*}
see \eqref{eq:def_phir},
and, put $\phi^R_{\mr}(u)=\phi^R_{1,\mr}(u) = u+\mr$. Note also that, for any $z \in \C_{(-\mr-\frac{1}{\alpha},\infty)}$,
\begin{equation} \label{eq:def_WR}
W_{\phi^R_{\alpha,\mr}}(z+1)= \frac{\Gamma(\alpha z + \alpha \mr +1)}{\Gamma(\alpha \mr +1)}.
\end{equation}
\begin{lemma}\label{lem:inter_ref}
Let $\psi \in \Ne_P$ with $\PPP(0^+)<\infty$. Then,  for any $\mr>\mru =\frac{m+\PPP(0^+)}{\sigma^2}$,  $\phi_{\mr}(u) = \frac{\phi(u)}{u+\mr} \in \Be$. With $\Vpa{\phi_{\mr}}$ (resp.~$\Vce_{\mr}$)  denoting  the Markov operator associated to the variable $V_{\phi_{\mr}}$ (resp.~$V_{\phi^R_{\mr}}$), we have   the following factorization of multiplicative Markov operators
\begin{equation} \label{eq:fac_kernels1}
 \Vce_{\mr} \Vpa{\phi_{\mr}}=  \Vpa{\phi_{\mr}} \Vce_{\mr}  = \Vpa{\psi}.
 \end{equation}
Moreover, $ \Vpa{\phi_{\mr}}\in \Bop{\Lnu}{\lgb},\, \varepsilon_m(x)=\frac{x^m}{\Gamma\lbrb{m+1}}e^{-x}dx,\,x>0,$  and the following intertwining relation
\begin{equation} \label{eq:fac_inter}
  Q_t^{(\mr)}\Vpa{\phi_{\mr}} = \Vpa{\phi_{\mr}} P_t
\end{equation}
 holds on $\Lnu$ for all $t\geq0$, where $  Q^{(\mr)}=(Q_t^{(\mr)})_{t\geq0}$ is the classical Laguerre semigroup of order $\mr$, see Chapter \ref{sec:exam}. Consequently, the sequence $(\sqrt{\mathfrak{c}_{n}(\mr)} \nun)_{n\geq0}$, where $\mathfrak{c}_{n}(\mr)=\frac{\Gamma(n+1) \Gamma(\mr+1)}{\Gamma(n+\mr+1)}$, is a Bessel sequence in $\Lnu$ with bound $1$  and  for any $n\geq0$,
 \begin{equation}\label{eq:nunBound}
 1\leq ||\nun||_{\nu}\leq \mathfrak{c}_{n}^{-1}(\mru)\stackrel{\infty}{=}\bo{n^{\mru}}.
  \end{equation}
\end{lemma}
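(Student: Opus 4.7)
The plan is to treat the six assertions of the lemma in sequence, each one reducing either to a previously-established result or to a concrete polynomial computation. First, $\phi_\mr \in \Be$ for $\mr > \mru$ is read off Proposition~\ref{lem:bernst_ratio}\eqref{it:pb1}, which was crafted in Chapter~\ref{sec:bern} precisely for such ratios. Next, the factorization \eqref{eq:fac_kernels1} will be obtained by moment identification: the telescoping identity $\phi(k) = \phi_\mr(k)(k+\mr)$ gives $W_\phi(n+1) = W_{\phi_\mr}(n+1)\,W_{\phi^R_\mr}(n+1)$ for every $n\in\N$, where the right-hand factors are the $n$-th moments of $V_{\phi_\mr}$ and $V_{\phi^R_\mr}$ through \eqref{eq:moment_X_phi1} and \eqref{eq:def_WR}; moment determinacy from Theorem~\ref{prop:FormMellin1}\eqref{it:momVphi1} then forces $V_\psi \stackrel{(d)}{=} V_{\phi_\mr}\,V_{\phi^R_\mr}$ with independent factors, which is precisely the desired factorization of Markov operators.

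The continuity $\Vpa{\phi_\mr} \in \Bop{\Lnu}{\lgb}$ will follow by the Jensen--Fubini argument used for Theorem~\ref{MainProp}\eqref{it:Iphi1}: for $f \in \Lnu$ one has $|\Vpa{\phi_\mr} f|^2 \leq \Vpa{\phi_\mr} f^2$ pointwise, and integrating against $\varepsilon_\mr$ combined with \eqref{eq:fac_kernels1} rewritten as $\Vce_\mr \Vpa{\phi_\mr} g(1) = \Vpa{\psi} g(1) = \int_0^\infty g\,\nu$ yields $\|\Vpa{\phi_\mr} f\|_{\varepsilon_\mr}^2 \leq \|f\|_\nu^2$, so $\Vpa{\phi_\mr}$ is actually a contraction. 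The intertwining \eqref{eq:fac_inter} will then be proved via the Carmona--Petit--Yor criterion applied exactly as in Section~\ref{sec:Proof_Intertwin}: after Lamperti time--space change sending $P$ and $Q^{(\mr)}$ to their self-similar companions, the two hypotheses reduce to (i) the factorization of entrance laws, which is \eqref{eq:fac_kernels1}, and (ii) the injectivity in $\cob$ of $\Vce_\mr$, which is a Wiener Tauberian argument identical to Lemma~\ref{lemma:Injectivity} once one notes that the Mellin transform of $V_{\phi^R_\mr}$ is zero-free on $i\R$ by Corollary~\ref{cor:MellinZeroFree} applied to $\phi^R_\mr$. The extension of \eqref{eq:fac_inter} from $\cob$ to $\Lnu$ is then routine by density using the contraction just established together with the strong continuity of $P$ on $\Lnu$ and of $Q^{(\mr)}$ on $\lgb$ from Theorem~\ref{thm:bijection}\eqref{it:ext}.

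The centerpiece of the norm estimates is the identity $\Vpa{\phi_\mr}\Pon = \mathfrak{c}_n(\mr)\,\mathcal{L}^{(\mr)}_n$, which will be verified by direct computation: applying $\Vpa{\phi_\mr}$ term by term to \eqref{defP1} and using $\Vpa{\phi_\mr}p_k = W_{\phi_\mr}(k+1)\,p_k = \tfrac{W_\phi(k+1)\Gamma(\mr+1)}{\Gamma(k+\mr+1)}\,p_k$ reassembles the polynomial expansion of $\mathfrak{c}_n(\mr)\mathcal{L}^{(\mr)}_n$. Since $(\sqrt{\mathfrak{c}_n(\mr)}\mathcal{L}^{(\mr)}_n)_{n\geq 0}$ is an orthonormal basis of $\lgb$, and since Theorem~\ref{thm:existence_coeigenfunctions1}\eqref{it:coe-ci1} guarantees $\nu_n \in \Lnu$ for every $n$ (because $\Ne_P \subset \Ni$), the adjoint calculation
\[
\langle \Pon, \Vpa{\phi_\mr}^{*}\mathcal{L}^{(\mr)}_m\rangle_\nu = \langle \Vpa{\phi_\mr}\Pon, \mathcal{L}^{(\mr)}_m\rangle_{\varepsilon_\mr} = \mathfrak{c}_n(\mr)\langle \mathcal{L}^{(\mr)}_n, \mathcal{L}^{(\mr)}_m\rangle_{\varepsilon_\mr} = \delta_{nm}
\]
combined with the uniqueness of the biorthogonal sequence from Theorem~\ref{cor:sequences}\eqref{it:1_thmseq} forces $\Vpa{\phi_\mr}^{*}\mathcal{L}^{(\mr)}_n = \nu_n$. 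The Bessel property of $(\sqrt{\mathfrak{c}_n(\mr)}\,\nu_n)$ with constant $1$ then follows from Parseval in $\lgb$ applied to $\Vpa{\phi_\mr} f$ and the contraction property; testing the resulting Bessel inequality at $f = \nu_n$ gives $\mathfrak{c}_n(\mr)\|\nu_n\|_\nu^2 \leq 1$, and letting $\mr \downarrow \mru$ yields the upper bound in \eqref{eq:nunBound}, while the lower bound $\|\nu_n\|_\nu \geq 1$ is immediate from $\langle \Pon, \nu_n\rangle_\nu = 1$, Cauchy--Schwarz and $\|\Pon\|_\nu \leq 1$ from \eqref{eq:eigen_bound_nu}.

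The principal technical obstacle will be the careful bookkeeping of which Hilbert space hosts each object at each step, and in particular the clean extension of the intertwining \eqref{eq:fac_inter} from $\cob$ to $\Lnu$; both are paid for by the contraction property of $\Vpa{\phi_\mr}$, which itself rests on the factorization \eqref{eq:fac_kernels1}, which in turn is available only by virtue of the new mapping property $\phi_\mr \in \Be$ supplied by Proposition~\ref{lem:bernst_ratio} in Chapter~\ref{sec:bern}, so the chain of dependences terminates in that single new ingredient.
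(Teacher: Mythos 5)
Your proposal is correct, but for the intertwining \eqref{eq:fac_inter} you take a genuinely different and heavier route than the paper. You reach for the Carmona--Petit--Yor criterion a second time, with $\Vce_\mr$ in the role of the entrance-law operator whose injectivity must be established by a Wiener--Tauberian argument on the gamma Mellin transform. This works, but the paper instead stays entirely inside the Hilbert-space framework: it computes the polynomial identity $\Vpa{\phi_\mr}\Pon = \mathfrak{c}_n(\mr)\,\mathcal{L}^{(\mr)}_n$ directly (an identity you also verify, but only later for the norm estimates), observes that the eigenfunction relations for $\Pon$ under $P_t$ and for $\mathcal{L}^{(\mr)}_n$ under $Q^{(\mr)}_t$ give
\[
\Vpa{\phi_\mr} P_t \Pon = e^{-nt}\mathfrak{c}_n(\mr)\mathcal{L}^{(\mr)}_n = Q^{(\mr)}_t\Vpa{\phi_\mr}\Pon,
\]
and then invokes the density of $\spa{\Pon}$ in $\Lnu$ (moment determinacy of $V_\psi$) together with the continuity of $\Vpa{\phi_\mr}$, $P_t$, $Q^{(\mr)}_t$ to extend to all of $\Lnu$. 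That three-line argument spares one the return trip through $\cob$, the Lamperti change, and the injectivity lemma. Since the polynomial computation is needed for the Bessel bound anyway, the paper's route costs essentially nothing extra, whereas yours repeats machinery already deployed once.

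Two smaller remarks. For the identification $\Vpa{\phi_\mr}^*\mathcal{L}^{(\mr)}_n = \nun$ you use uniqueness of the biorthogonal sequence from Theorem~\ref{cor:sequences}\eqref{it:1_thmseq}; the paper instead mirrors the adjoint computation \eqref{eq:coeigencomput} through \eqref{eq:fac_inter} to identify the coeigenfunction. Both are legitimate because $\Ne_P\subset\Ni$ makes $(\Pon)$ exact. Finally, your Bessel computation yields $\mathfrak{c}_n(\mr)\|\nun\|^2_\nu\leq 1$, i.e.~$\|\nun\|_\nu\leq\mathfrak{c}_n(\mr)^{-1/2}$ (the same bound obtainable directly from the contraction $\|\Vpa{\phi_\mr}^*\mathcal{L}^{(\mr)}_n\|_\nu\leq\|\mathcal{L}^{(\mr)}_n\|_{\varepsilon_\mr}$), so letting $\mr\downarrow\mru$ produces $\|\nun\|_\nu\leq\mathfrak{c}_n(\mru)^{-1/2}$. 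This is slightly \emph{stronger} than the stated $\mathfrak{c}_n^{-1}(\mru)$ and of course implies it, since $\mathfrak{c}_n(\mru)\leq 1$; your last sentence reports the weaker exponent without noting you obtained more.
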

\begin{proof}
Note that $\phi_{\mr} \in  \Be$ is given in Proposition \ref{lem:bernst_ratio}. Next, observing,  from \eqref{eq:momentsKernels}, \eqref{eq:moment_X_phi1}, \eqref{eq:def_WR} with $\alpha=1$ and $\Ebb{V_{\phi_{\mr}}^n}=W_{\phi_\mr}(n+1)=\prod_{k=1}^n\frac{\phi(k)}{\mr+k}$ that for any $n\geq 0$ with $p_n(x)=x^n$,
  \begin{eqnarray*}
   \Vce_{\mr} \Vpa{\phi_{\mr}}p_n(1)  &=& \Ebb{V_{\phi_{\mr}}^n}\Ebb{V_{\phi^R_{\mr}}^n}=\frac{\Gamma(n+1+\mr)}{\Gamma(\mr+1)}  \prod_{k=1}^n\frac{\phi(k)}{\mr+k}   = W_{\phi}(n+1) = \Vpa{\psi} p_n(1),
  \end{eqnarray*}
  we get  factorization \eqref{eq:fac_kernels1} by moment determinacy of the involved operators. Replicating the bound \eqref{eq:bound_Ip} in the proof of Proposition \ref{MainProp}, we get that $\Vpa{\phi_{\mr}} \in \Bop{\Lnu}{\lgb}$ and $\Vpa{\phi_{\mr}}$ is a contraction.  To prove \eqref{eq:fac_inter} note, with the help of \eqref{eq:def_laguerre_pol} and \eqref{defP1}, that
  \begin{equation}\label{eq:po_lag}
  \Vpa{\phi_{\mr}} \Pon(x) =\sum_{k=0}^n (-1)^k\frac{\Ebb{V_{\phi_{\mr}}^k}{ n \choose k}}{W_{\phi}(k+1)} x^k= \sum_{k=0}^n (-1)^k\frac{\Gamma(\mr+1)   { n \choose k}}{\Gamma(\mr+k+1)} x^k =\mathfrak{c}_{n}(\mr) \: \mathcal{L}^{(\mr)}_n(x).
   \end{equation}
  We refer to Example \ref{ex:lag} in Chapter \ref{sec:exam} for a detailed description of the Laguerre semigroup of order $m\geq0$. Then, from the eigenfunction property of $\Pon$, we have, for any $t,x>0$,
  \[ \Vpa{\phi_{\mr}} P_t \Pon(x) = e^{-nt } \Vpa{\phi_{\mr}} \Pon(x) = e^{-nt } \mathfrak{c}_{n}(\mr)\mathcal{L}^{(\mr)}_n(x) =Q_t^{(\mr)}\mathfrak{c}_{n}(\mr)\mathcal{L}^{(\mr)}_n(x) = Q_t^{(\mr)}\Vpa{\phi_{\mr}} \Pon(x). \]
  Since $V_{\psi}$ is moment determinate,  see Theorem \ref{lem:fe1}, we have $\Spc{\Pon}=\lnu$, see \cite{Akhiezer-65}. Thus,  the intertwining \eqref{eq:fac_inter} holds on a dense subset, and,  by continuity of the involved operators, on $\Lnu$. Finally, repeating the computation \eqref{eq:coeigencomput} with \eqref{eq:fac_inter} and using that $Q^{(\mr)}$ is self-adjoint with coeigenfunctions $\lbrb{\mathcal{L}^{(\mr)}_n}_{n\geq 0}$, see Example \ref{ex:lag}, we deduce, with the obvious notation, that $\Vpa{\phi_{\mr}}^*\mathcal{L}^{(\mr)}_n(x) = \nun(x)$.
   The last relation concludes that the sequence $(\sqrt{\mathfrak{c}_{n}(\mr)} \nun)_{n\geq0}$ is a Bessel sequence and that \eqref{eq:nunBound} holds by replicating the computation \eqref{eq:bes_bound} and utilizing that $(\sqrt{\mathfrak{c}_{n}(\mr)} \mathcal{L}^{(\mr)}_n)_{n\geq 0}$ is an orthonormal sequence in $\lgb$,  $\Vpa{\phi_{\mr}}^* \in \Bop{\lgb}{\Lnu}$ is  a contraction and  $\mr> \mru$.
 \end{proof}
We proceed by an additional factorization of the entrance law. It will be useful for getting a better upper bound for the spectral operator norm and also later in this section to prove completeness of the sequence of co-eigenfunctions.
 Recall that $\varepsilon_m(x)=\frac{x^m}{\Gamma\lbrb{m+1}}e^{-x},\,x>0, m\geq0,$ and $\Vce_m f(x)=\Ebb{f\lbrb{x{\bf{e}}_{m}}}$, with simply $\Vce=\Vce_0$, is the Markov operator associated to ${\bf{e}}_{m}$, that is the gamma random variable with density function $\varepsilon_m$. Let
 \begin{equation}\label{eq:weightedLT}
 	\mathfrak{L}_af(x)=\int_{0}^{\infty}f(y)y^{a}e^{-yx}dy
 \end{equation}
 stand for the weighted Laplace transform with weight $a\in\R$ and write simply $\mathfrak{L}=\mathfrak{L}_0$.
\begin{lemma}\label{lem:fac}
	Let $\psi \in \Ne$ with $-d_{\phi}>0$. Then, for any $\epsilon\in\lbrb{0,-d_{\phi}}$, writing $\dpe= -d_{\phi}-\epsilon>0$, we have that  $\phi_{\dpe}(u)=\frac{u}{u+\dpe}\phi(u) \in \Be$. For every  function $f:\R_+\mapsto\R$ and $x>0$ such that $\Vce_{\dpe}|f|(x)<\infty$, we have that
	\begin{eqnarray}\label{eq:elsne}
	\Vce_{\dpe}f(x) = \Vp  \mathcal{I}_{\phi_{\dpe}}f(x) =\mathcal{I}_{\phi_{\dpe}}  \Vp f(x),
	\end{eqnarray}
   $g_x(v)=\Vp f(xv)$ is $\iota_{\dpe}(v)dv-$a.e.~finite, where $\iota_{\dpe}$ is the density of $\mathcal{I}_{\phi_{\dpe}}$ and
	\begin{equation} \label{eq:MKtoLT}
	\Vce_{\dpe} f(x) = \int_0^{\infty}f(xy)\frac{y^{\dpe}e^{-y} }{\Gamma(\dpe+1)}dy=x^{-1-\dpe}\IInf f(y)\frac{y^{\dpe}e^{-\frac{y}{x}}}{\Gamma(\dpe+1)}dy= \frac{x^{-1-\dpe}\mathfrak{L}_{\dpe}f\lbrb{\frac1x}}{\Gamma(\dpe+1)}.
	\end{equation}
	 All claims above hold with  $\dpe=0$ if $d_\phi=0$.
\end{lemma}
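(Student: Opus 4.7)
The plan is to establish the four assertions of the lemma---Bernstein property of $\phi_{\dpe}$, the Markov operator factorization, the $\iota_{\dpe}$-a.e.\ finiteness of $g_x$, and the weighted-Laplace formula---through a combination of the shift-and-transform trick based on Proposition \ref{propAsymp1}\eqref{it:def_Tb}, moment identification, and Fubini. The boundary case $d_\phi=0$ with $\dpe=0$ collapses to Lemma \ref{Prop1} since then $\phi_0=\phi$ and $\mathbf{e}_0$ is a standard exponential.

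First, to show $\phi_{\dpe}\in\Be$, I would introduce the translated function $\tilde{\phi}(u):=\phi(u-\dpe)$. Because $\dpe<-d_\phi$, the exponential moment encoded by $d_\phi<0$ makes $\tilde{\phi}$ well-defined, positive and analytic on $[0,\infty)$, and $\tilde{\phi}'(u)=\phi'(u-\dpe)$ is a translation of the completely monotone function $\phi'$ on $(d_\phi,\infty)$, hence is completely monotone on $[0,\infty)$. Thus $\tilde{\phi}\in\Be$ with $\tilde{\phi}(0)=\phi(-\dpe)>0$, and applying Proposition \ref{propAsymp1}\eqref{it:def_Tb} with $\beta=\dpe$ gives
\[
\mathcal{T}_{\dpe}\tilde{\phi}(u)=\frac{u}{u+\dpe}\tilde{\phi}(u+\dpe)=\frac{u}{u+\dpe}\phi(u)=\phi_{\dpe}(u)\in\Be,
\]
which in turn guarantees, via Proposition \ref{prop:recall_exp}, the existence of $I_{\phi_{\dpe}}$ with density $\iota_{\dpe}$.

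Next, I would establish $\mathbf{e}_{\dpe}\stackrel{(d)}{=}V_\psi\cdot I_{\phi_{\dpe}}$ (independent factors) through moment identification: by \eqref{eq:moment_V_psi} and \eqref{eq:RecurI},
\[
\E[V_\psi^n]\,\E[I_{\phi_{\dpe}}^n]=W_\phi(n+1)\cdot\frac{n!}{W_{\phi_{\dpe}}(n+1)}=\prod_{k=1}^n(k+\dpe)=\frac{\Gamma(n+\dpe+1)}{\Gamma(\dpe+1)}=\E[\mathbf{e}_{\dpe}^n],
\]
and all three laws are moment-determinate by Theorem \ref{prop:FormMellin1}\eqref{it:momVphi1} and Proposition \ref{prop:recall_exp}\eqref{it:momIphi}. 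For any $f$ with $\Vce_{\dpe}|f|(x)<\infty$, the distributional identity combined with Fubini yields $\Vce_{\dpe}f(x)=\E[f(xV_\psi I_{\phi_{\dpe}})]=\Vp\mathcal{I}_{\phi_{\dpe}}f(x)=\mathcal{I}_{\phi_{\dpe}}\Vp f(x)$, and the a.e.\ finiteness of $g_x$ is the contrapositive of Fubini applied to $\int\Vp|f|(xv)\,\iota_{\dpe}(v)\,dv=\Vce_{\dpe}|f|(x)<\infty$. The identity \eqref{eq:MKtoLT} then follows from the explicit gamma density of $\mathbf{e}_{\dpe}$ together with the change of variables $z=xy$.

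The main obstacle is the first step: the product of two Bernstein functions is not Bernstein in general, so the Bernstein property of $\phi_{\dpe}=\frac{u}{u+\dpe}\cdot\phi$ cannot be read off termwise. The argument succeeds only because the particular factor $\frac{u}{u+\dpe}$ is the ratio arising in the canonical transform $\mathcal{T}_{\dpe}$, and because the room $\dpe<-d_\phi$ provides the precise amount of shift needed to place $\tilde{\phi}$ in $\Be$; once this structural observation is exploited, the remaining steps are routine moment-identification and measure-theoretic manipulations.
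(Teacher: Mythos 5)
Your proof is correct and follows essentially the same route as the paper: the Bernstein property of $\phi_{\dpe}$ via the shift $\tilde{\phi}(u)=\phi(u-\dpe)$ composed with $\mathcal{T}_{\dpe}$, moment identification to obtain $\mathbf{e}_{\dpe}\stackrel{(d)}{=}V_\psi\cdot I_{\phi_{\dpe}}$, and a Fubini argument for the remaining claims. Your observation that the case $d_\phi=0$, $\dpe=0$ collapses to Lemma \ref{Prop1} is a slight refinement of the paper's terse remark that ``the case $d_\phi=0$ is the same,'' but the mathematical content is identical.
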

\begin{proof}
	Since $\psi \in \Ne$ with $-d_{\phi}=-\sup\{ u\leq 0;\:
	\:\phi(u)=-\infty\text{ or } \phi(u)=0\}>0$ then $m=\phi(0)>0$, $\phi>0$ on $(d_\phi,0)$  and thus, for any $\epsilon\in\lbrb{0,-d_{\phi}}$, the mapping  $u \mapsto \tilde{\phi}(u)=\phi(u-\dpe) \in \Be$ since $\tilde{\phi}(0)=\phi\lbrb{-\dpe}\geq 0$ and clearly $\tilde{\phi}'$ is completely monotone. Hence,  $\phi_{\dpe}(u)=\frac{u}{u+\dpe}\phi(u)= \frac{u}{u+\dpe} \tilde{\phi}(u+\dpe)  \in \Be$, see Proposition \ref{propAsymp1}\eqref{it:def_Tb}. On the other hand,  as, for all $n\geq0$, $W_{\phi_{\dpe}}(n+1) =\prod_{k=1}^n \frac{k}{k+\dpe}\phi(k) = \frac{\Gamma(\dpe+1)}{\Gamma(n+\dpe+1)} n!W_{\phi}(n+1)$, we deduce that
	\[ \Vp  \mathcal{I}_{\phi_{\dpe}} p_n(x)= \frac{n!}{W_{\phi_{\dpe}}(n+1)}W_{\phi}(n+1)  \: p_n(x)= \frac{\Gamma(n+\dpe+1)}{\Gamma(\dpe+1)} p_n(x) = \Vce_{\dpe}  p_n(x).\]
	The latter by moment determinacy completes the proof of \eqref{eq:elsne} since we conclude that
	\[\varepsilon_{\dpe}(x)=\frac{x^{\dpe}}{\Gamma\lbrb{\dpe+1}}e^{-x}=\IInf \nu\lbrb{\frac{x}{y}}\iota_{\dpe}\lbrb{y}\frac{dy}{y}.\]
	All other claims, by a Fubini argument, follow via the classical decomposition $f=f_+-f_-$ provided that $\Vce_{\dpe}|f|(x)<\infty,$ for some $x>0$. The case $d_\phi=0$ is the same.
\end{proof}
 \begin{lemma} \label{lem:facde}
 Let $\psi \in \Ne_P$ with $\PPP(0^+)<\infty$. Then $ -d_{\phi}\in\lbbrbb{0,\frac{m}{\sigma^2}}$ with $-d_{\phi}=\frac{m}{\sigma^2}$ only when $\PPP(0^+)=0$. Moreover, if $-d_{\phi}>0$, then for any $0<\epsilon$ such that $\dpe= -d_{\phi}-\epsilon>0$, with the notation of Lemma \ref{lem:fac},  $\mathcal{I}_{\phi_{\dpe}} \in \Bop{\Lnu}{\lt^2(\varepsilon_{\dpe})}$ and, the following intertwining
\begin{equation} \label{eq:fac_inter1}
 \mathcal{I}_{\phi_{\dpe}}  Q_t^{(\dpe)} = P_t  \mathcal{I}_{\phi_{\dpe}}
\end{equation}
 holds on $\lt^2(\varepsilon_{\dpe})$ for all $t\geq 0$, where $  Q^{(\dpe)}=(Q_t^{(\dpe)})_{t\geq0}$ is the classical Laguerre semigroup of order $\dpe$, see Chapter \ref{sec:exam}. Consequently, $\left(\mathfrak{c}^{-\frac12}_{n}(\dpe)\Pon\right)_{n\geq0}$, is a Bessel sequence in $\Lnu$.
 \end{lemma}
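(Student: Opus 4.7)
The plan splits into the three components of the statement: the bound on $-d_\phi$, the operator theoretic content, and the Bessel property.

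For the bound $-d_\phi\in[0,m/\sigma^2]$, I start from the representation $\phi(u)=m+\sigma^2 u+\int_0^\infty(1-e^{-uy})\PP(y)dy$ of Proposition~\ref{propAsymp1}\eqref{it:finitenessPhi} and observe that, for $u\leq 0$, the last integral is non-positive and is strictly negative exactly when $\PPP(0^+)>0$. This forces $\phi(-m/\sigma^2)\leq 0$; combined with $\phi(0)=m\geq 0$, with monotonicity of $\phi$ on its domain of analyticity, and with the fact that $\phi$ must either vanish at $-d_\phi$ or diverge to $-\infty$ there, this yields $-d_\phi\leq m/\sigma^2$, with equality excluded whenever $\PPP(0^+)>0$. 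Equivalently, in the nontrivial case $-d_\phi>0$ one may invoke Proposition~\ref{lem:bernst_ratio}\eqref{it:pb1} with $\Root=-d_\phi$, noting that $\psi(-\Root)=-\Root\phi(-\Root)=0$ reduces to $\phi(-\Root)=0$.

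For the boundedness and intertwining, Lemma~\ref{lem:fac} already supplies $\phi_\dpe\in\Be$ and the factorization $\Vce_\dpe=\Vp\,\mathcal{I}_{\phi_\dpe}=\mathcal{I}_{\phi_\dpe}\,\Vp$. Evaluating the latter at $x=1$ with $g^2$ in place of $f$ gives $\int_0^\infty\mathcal{I}_{\phi_\dpe}g^2(r)\nu(r)\,dr=\|g\|_{\varepsilon_\dpe}^2$, so that Jensen's inequality $(\mathcal{I}_{\phi_\dpe}g)^2\leq\mathcal{I}_{\phi_\dpe}g^2$ yields $\|\mathcal{I}_{\phi_\dpe}g\|_\nu\leq\|g\|_{\varepsilon_\dpe}$; hence $\mathcal{I}_{\phi_\dpe}$ is a contraction from $\lt^2(\varepsilon_\dpe)$ to $\Lnu$, which is precisely the direction demanded by the composition in \eqref{eq:fac_inter1}. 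This mirrors the argument in \eqref{eq:bound_Ip}. For the intertwining itself, I compute on the monomial basis via \eqref{eq:momentsKernels}: the telescoping identity
\begin{equation*}
W_{\phi_\dpe}(k+1)\prod_{j=1}^{k}(j+\dpe)=k!\,W_\phi(k+1),\qquad k\geq 1,
\end{equation*}
which follows from $\phi_\dpe(j)=j\phi(j)/(j+\dpe)$, lets me act with $\mathcal{I}_{\phi_\dpe}$ coefficient by coefficient on the series for $\mathcal{L}_n^{(\dpe)}$ and match with the series \eqref{defP1} for $\mathcal{P}_n$, obtaining
\begin{equation*}
\mathcal{I}_{\phi_\dpe}\mathcal{L}_n^{(\dpe)}=\mathfrak{c}_n^{-1}(\dpe)\,\mathcal{P}_n.
\end{equation*}
Combining with the eigenfunction identities $P_t\mathcal{P}_n=e^{-nt}\mathcal{P}_n$ (Theorem~\ref{thm:eigenfunctions1}\eqref{it:ef1}) and $Q_t^{(\dpe)}\mathcal{L}_n^{(\dpe)}=e^{-nt}\mathcal{L}_n^{(\dpe)}$ (Example~\ref{ex:lag}) gives the intertwining on every $\mathcal{L}_n^{(\dpe)}$; it then extends to all of $\lt^2(\varepsilon_\dpe)$ by completeness of the Laguerre polynomials in $\lt^2(\varepsilon_\dpe)$ together with the boundedness of $\mathcal{I}_{\phi_\dpe}$, $P_t$, and $Q_t^{(\dpe)}$.

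For the Bessel conclusion, I set $g_n:=\sqrt{\mathfrak{c}_n(\dpe)}\,\mathcal{L}_n^{(\dpe)}$, which is an orthonormal basis of $\lt^2(\varepsilon_\dpe)$ by Example~\ref{ex:lag}. The previous identity then reads $\mathcal{I}_{\phi_\dpe}g_n=\mathfrak{c}_n^{-1/2}(\dpe)\,\mathcal{P}_n$, and since the adjoint $\mathcal{I}_{\phi_\dpe}^*\colon\Lnu\to\lt^2(\varepsilon_\dpe)$ is again a contraction, Parseval in $\lt^2(\varepsilon_\dpe)$ yields, for every $f\in\Lnu$,
\begin{equation*}
\sum_{n=0}^{\infty}\left|\left\langle f,\mathfrak{c}_n^{-1/2}(\dpe)\mathcal{P}_n\right\rangle_\nu\right|^2=\sum_{n=0}^{\infty}\left|\langle\mathcal{I}_{\phi_\dpe}^*f,g_n\rangle_{\varepsilon_\dpe}\right|^2=\|\mathcal{I}_{\phi_\dpe}^*f\|_{\varepsilon_\dpe}^2\leq\|f\|_\nu^2,
\end{equation*}
proving that $(\mathfrak{c}_n^{-1/2}(\dpe)\mathcal{P}_n)_{n\geq 0}$ is a Bessel sequence in $\Lnu$ with bound one. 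The main obstacle will be the careful verification of the identity $\mathcal{I}_{\phi_\dpe}\mathcal{L}_n^{(\dpe)}=\mathfrak{c}_n^{-1}(\dpe)\,\mathcal{P}_n$: the arithmetic is essentially Pochhammer bookkeeping, but since $\dpe>0$ is not generally an integer all binomial symbols must be handled as ratios of $\Gamma$-functions; aside from this, the argument is a direct parallel of the reference-semigroup machinery already developed in Lemma~\ref{lem:inter_ref}.
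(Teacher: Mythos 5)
Your proposal is correct and follows essentially the same route as the paper's own (extremely terse) proof, namely Proposition~\ref{lem:bernst_ratio}\eqref{it:pb1} together with positivity of $\phi$ on $(d_\phi,0)$ for the bound on $-d_\phi$, the factorization of Lemma~\ref{lem:fac}, and then a computation parallel to Lemma~\ref{lem:inter_ref} and the Bessel argument of Theorem~\ref{thm:eigenfunctions1}\eqref{it:compl_rb1}. Your key telescoping identity is right, it does yield $\mathcal{I}_{\phi_\dpe}\mathcal{L}_n^{(\dpe)}=\mathfrak{c}_n^{-1}(\dpe)\,\mathcal{P}_n$, and your remark that the boundedness must run from $\lt^2(\varepsilon_\dpe)$ into $\Lnu$ (rather than the other way around, as the statement's notation $\Bop{\Lnu}{\lt^2(\varepsilon_\dpe)}$ might be read) is a correct reading of what the composition in \eqref{eq:fac_inter1} actually requires.
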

 \begin{proof}
The fact that  $ -d_{\phi}\in\lbbrbb{0,\frac{m}{\sigma^2}}$ follows directly from Proposition \ref{lem:bernst_ratio}\eqref{it:pb1} and $\phi>0$ on $(d_\phi,0)$, see \eqref{eq:dphi}. By using the factorization \eqref{eq:elsne} and following a line of reasoning similar to the proof of Lemma \ref{lem:inter_ref} and Theorem \ref{thm:eigenfunctions1}  we settle easily the claims.
 \end{proof}
 
\subsubsection{The large perturbation case and beyond}\label{secsub:largePer}
 \begin{lemma}\label{lem:inter_ref_l}
Let $\psi \in \Ne_{\alpha,\mr}$ with  $(\alpha,\mr)\in  \mathfrak{R}$ and $\Phi_{\alpha,\mr}(u) =\frac{\phi(u)}{\phi^R_{\alpha,\mr}(u)}$.
\begin{enumerate}
\item Then, there exists a Markov operator   $\Vpa{\Phi_{\alpha,\mr}}$  associated to the variable $V_{\Phi_{\alpha,\mr}}$, such that the factorization of operators
\begin{equation*} \label{eq:fac_kernels}
 \Vpa{\Phi_{\alpha,\mr}}\Vpa{\phi^R_{\alpha,\mr}}  = \Vpa{\psi}
 \end{equation*}
holds. Moreover, $ \Vpa{\Phi_{\alpha,\mr}} \in \Bop{\Lnu}{\lnuab}$ and the following intertwining relation
\begin{equation*} \label{eq:fac_inter2}
  P_t^{(\alpha, \mr)}\Vpa{\Phi_{\alpha,\mr}} = \Vpa{\Phi_{\alpha,\mr}} P_t
\end{equation*}
 holds on $\Lnu$ for all $t\geq 0$, where $  P^{(\alpha, \mr)}=(P_t^{(\alpha, \mr)})_{t\geq0}$ is the Gauss-Laguerre semigroup described in Example \ref{sec:GL}.
 \item Consequently,
we have the following estimate, for large $n$,
\begin{equation}\label{eq:bound_ref}
|| \nun||_{\nu}= \bo{e^{n T_{\alpha}}}.
\end{equation}
In particular, if $\psi \in \Ne_P$, then for any $\epsilon>0$ and large $n$,
\begin{equation} \label{eq:bound_ref_P}
|| \nun||_{\nu}= \bo{e^{\epsilon n}}.
\end{equation}
\item For any $(\alpha,\mr) \in \mathfrak{R}$,  $\Ne_{\alpha,\mr} \subset \Nee$ with $ \frac{ \pi}{2}\alpha \leq \H \leq\frac{\pi}{2}$.
\end{enumerate}
\end{lemma}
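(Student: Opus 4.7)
The plan is to mirror the strategy used for Lemma \ref{lem:inter_ref}, but now with the Gauss-Laguerre semigroup $P^{(\alpha,\mr)}$ (see Example \ref{sec:GL}) playing the role of the classical Laguerre semigroup, and with $\phi^R_{\alpha,\mr}$ replacing $\phi^R_{\mr}$. First I would verify that $\Phi_{\alpha,\mr}=\phi/\phi^R_{\alpha,\mr}\in\Be$: this is exactly the assertion that follows from the definition of $\Ne_{\alpha,\mr}$ together with the remark after Theorem \ref{thm:classes}, which states that the complete monotonicity of $\phi^R_{\alpha,\mr}/\phi$ is in fact equivalent to $\Phi_{\alpha,\mr}\in\Be$. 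Given this, Theorem \ref{prop:FormMellin1}\eqref{it:momVphi1} yields a positive random variable $V_{\Phi_{\alpha,\mr}}$ with $\Ebb{V_{\Phi_{\alpha,\mr}}^n}=W_{\Phi_{\alpha,\mr}}(n+1)$, and since $W_{\phi}(n+1)=W_{\Phi_{\alpha,\mr}}(n+1)\,W_{\phi^R_{\alpha,\mr}}(n+1)$, moment determinacy (both factors being Mellin transforms of probability measures) gives the factorization $\Vpa{\Phi_{\alpha,\mr}}\Vpa{\phi^R_{\alpha,\mr}}=\Vpa{\psi}$. For the boundedness of $\Vpa{\Phi_{\alpha,\mr}}:\Lnu\to\lnuab$ I would apply the Cauchy--Schwarz (Jensen) argument of \eqref{eq:bound_Ip}: for $f\in\Lnu$,
\[ \|\Vpa{\Phi_{\alpha,\mr}}f\|_{\eab}^2\le \Vpa{\phi^R_{\alpha,\mr}}\Vpa{\Phi_{\alpha,\mr}}f^2(1)=\Vpa{\psi}f^2(1)=\|f\|_\nu^2, \]
showing $\Vpa{\Phi_{\alpha,\mr}}$ is in fact a contraction.

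Next, to establish the intertwining $P^{(\alpha,\mr)}_t\Vpa{\Phi_{\alpha,\mr}}=\Vpa{\Phi_{\alpha,\mr}}P_t$ on $\Lnu$, I would argue as in the proof of \eqref{eq:fac_inter}: the eigenfunction/moment identity \eqref{eq:po_lag}, replicated in this setting using $\Ebb{V_{\Phi_{\alpha,\mr}}^k}=W_{\Phi_{\alpha,\mr}}(k+1)$ and the definition \eqref{eq:def_phir} of $\phi^R_{\alpha,\mr}$, gives $\Vpa{\Phi_{\alpha,\mr}}\Pon=c_n\,\mathcal{L}^{\alpha,\mr}_n$ for an explicit constant $c_n$. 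Using the eigenvalue relations \eqref{eigendef1} for both $P_t$ and $P^{(\alpha,\mr)}_t$ (see Example \ref{sec:GL}), both sides of the alleged intertwining agree on the dense set $\Spc{\Pon}=\Lnu$, and boundedness of the involved operators extends the identity to all of $\Lnu$.

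For part 2, taking the $\Lnu$--$\lnuab$ adjoint of the intertwining yields $\Vpa{\Phi_{\alpha,\mr}}^*P^{*,(\alpha,\mr)}_t=P^*_t\Vpa{\Phi_{\alpha,\mr}}^*$, and a computation analogous to \eqref{eq:coeigencomput}, comparing coefficients on $\mathcal{L}^{\alpha,\mr}_n$, gives $\nun=c_n^{-1}\Vpa{\Phi_{\alpha,\mr}}^*\nun^{(\alpha,\mr)}$ (the constants collapsing because $\Vpa{\phi^R_{\alpha,\mr}}^*\nun^{(\alpha,\mr)}=\mathcal{L}^{\alpha,\mr}_n$ in the reference setting). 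Combining with the contraction property and the sharp Gauss-Laguerre bound $\|\nun^{(\alpha,\mr)}\|_{\eab}=\bo{e^{T_\alpha n}}$ from \eqref{eq:bound_norm} yields \eqref{eq:bound_ref}. For the refinement \eqref{eq:bound_ref_P} in the class $\Ne_P$, I would invoke Theorem \ref{thm:classes}\eqref{it:NP1}: for each $\alpha\in(0,1)$ there exists $\mr\ge 1-1/\alpha$ with $\psi\in\Ne_{\alpha,\mr}$, giving $\|\nun\|_\nu=\bo{e^{T_\alpha n}}$; letting $\alpha\uparrow 1$ we have $T_\alpha=-\ln(2^\alpha-1)\downarrow 0$, so for any $\epsilon>0$ some $\alpha$ gives $T_\alpha<\epsilon$, whence $\|\nun\|_\nu=\bo{e^{\epsilon n}}$.

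For part 3, the key observation is additivity of the integrand in the definition of $\Theta_\phi$ under multiplicative factorization: writing $\phi=\Phi_{\alpha,\mr}\cdot\phi^R_{\alpha,\mr}$,
\[ \int_0^\infty\ln\Bigl|\frac{\phi(by+ib)}{\phi(by)}\Bigr|dy=\int_0^\infty\ln\Bigl|\frac{\Phi_{\alpha,\mr}(by+ib)}{\Phi_{\alpha,\mr}(by)}\Bigr|dy+\int_0^\infty\ln\Bigl|\frac{\phi^R_{\alpha,\mr}(by+ib)}{\phi^R_{\alpha,\mr}(by)}\Bigr|dy. \]
By \eqref{eq:uprp} both integrands are nonnegative, so $\liminf_{|b|\to\infty}$ of the left-hand side is bounded below by $\liminf$ of the second summand on the right. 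Since $\phi^R_{\alpha,\mr}(u)\stackrel{\infty}{\sim}\alpha^\alpha u^\alpha$ (an immediate consequence of the Stirling asymptotic applied to \eqref{eq:def_phir}), the associated negative definite function $\psi^R_{\alpha,\mr}\in\Ne_\alpha$, and Theorem \ref{prop:asymt_bound_Olver2}\eqref{it:Na} gives $\H(\phi^R_{\alpha,\mr})=\frac{\pi}{2}\alpha$. Together with the universal upper bound $\H\le\pi/2$ from Proposition \ref{prop:asymt_bound_Olver1}\eqref{it:Hestimate} this yields $\frac{\pi}{2}\alpha\le\H\le\frac{\pi}{2}$, so $\Ne_{\alpha,\mr}\subset\Nee$. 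The most delicate point I anticipate is verifying that $\Phi_{\alpha,\mr}\in\Be$ rigorously (rather than merely its reciprocal being completely monotone), which is required both to define $V_{\Phi_{\alpha,\mr}}$ and to guarantee nonnegativity of the first integrand in the factorization above; this is precisely the content pointed to by the remark after Theorem \ref{thm:classes} and is the structural input that the concept of a reference semigroup rests on.
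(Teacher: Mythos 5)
Your proposal tracks the paper's architecture quite closely for the factorization, the intertwining, and the norm bound, and the reduction to the Gauss-Laguerre estimate \eqref{eq:bound_norm} is exactly right. There is, however, one genuine gap in the very first step, and it is worth being precise about it because it is the structural crux of the reference-semigroup idea.

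You write that $\Phi_{\alpha,\mr}\in\Be$ ``follows from the definition of $\Ne_{\alpha,\mr}$ together with the remark after Theorem \ref{thm:classes}, which states that the complete monotonicity of $\phi^R_{\alpha,\mr}/\phi$ is in fact equivalent to $\Phi_{\alpha,\mr}\in\Be$.'' That remark says the opposite: $\Phi_{\alpha,\mr}\in\Be$ is a \emph{strictly stronger} condition than complete monotonicity of $1/\Phi_{\alpha,\mr}$ (the implication goes via Proposition \ref{propAsymp1}\eqref{it:bernstein_cmi} in one direction only), and the remark asserts that the stronger fact holds \emph{only} under the sufficient conditions of Theorem \ref{thm:classes}\eqref{it:rcla}, not under the bare hypothesis $\psi\in\Ne_{\alpha,\mr}$. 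So you cannot invoke Theorem \ref{prop:FormMellin1}\eqref{it:momVphi1} to manufacture $V_{\Phi_{\alpha,\mr}}$. The paper avoids this entirely: it uses only the hypothesis that $u\mapsto 1/\Phi_{\alpha,\mr}(u)$ is completely monotone and appeals to a theorem of Berg and Dur\'an (\cite[Theorem 1.3]{Berg}), which converts a completely monotone reciprocal into the statement that $\bigl(\prod_{k=1}^n\Phi_{\alpha,\mr}(k)\bigr)_{n\ge 0}$ is a Stieltjes moment sequence; moment determinacy of $V_\psi$ (as the ``product'' factor) then gives the Markov operator $\Vpa{\Phi_{\alpha,\mr}}$ and the factorization without ever passing through $\Phi_{\alpha,\mr}\in\Be$. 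Once the Markov operator is in hand, the rest of your part~1 and part~2 goes through verbatim, and your route to \eqref{eq:bound_ref_P} via Theorem \ref{thm:classes}\eqref{it:NP1} plus $T_\alpha\downarrow 0$ is in fact a touch cleaner than the paper's case split on $\PPP(0^+)<\infty$ versus $=\infty$.

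For part~3 your worry that nonnegativity of the $\Phi_{\alpha,\mr}$-integrand in the additive split of $\angp$ requires $\Phi_{\alpha,\mr}\in\Be$ is unfounded, and that is actually good news for your approach: complete monotonicity of $1/\Phi_{\alpha,\mr}$ already gives $|1/\Phi_{\alpha,\mr}(a+ib)|\le 1/\Phi_{\alpha,\mr}(a)$ (it is the modulus of a Laplace transform of a positive measure evaluated at real part $a$), hence $|\Phi_{\alpha,\mr}(by+ib)|\ge\Phi_{\alpha,\mr}(by)$, and the first integrand is nonnegative with no further input. So your additive decomposition of $\angp$, together with $\psi^R_{\alpha,\mr}\in\Ne_\alpha$ and Theorem \ref{prop:asymt_bound_Olver2}\eqref{it:Na}, does close part~3. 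The paper takes a slightly different route at this step, bounding the Weierstrass products directly via the Markov-operator inequality $|W_{\phi}(z+1)/W_{\phi^R_{\alpha,\mr}}(z+1)|\le W_{\phi}(a+1)/W_{\phi^R_{\alpha,\mr}}(a+1)$ and then feeding the Stirling asymptotic for $|W_{\phi^R_{\alpha,\mr}}|$ into the two-sided estimate \eqref{eqn:first_estimateComplexLine1}; both arguments rest on the same modulus inequality and lead to $\H\ge\tfrac{\pi}{2}\alpha$, but the paper's avoids ever mentioning the pointwise behaviour of $\Phi_{\alpha,\mr}$ on vertical lines, which is why it is stated at the level of $W_\phi$. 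In sum: the single fix you need is to replace the appeal to ``$\Phi_{\alpha,\mr}\in\Be$'' by the Berg--Dur\'an criterion (for the existence of $V_{\Phi_{\alpha,\mr}}$) and by the elementary modulus bound from complete monotonicity (for the nonnegative integrand in part~3); with that, the whole proposal is sound.
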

\begin{proof} 
Since $\psi \in \Ne_{\alpha,\mr}$,  by definition, see Table \ref{tab:c2} in Chapter \ref{sec:intro},  the mapping $u\mapsto \frac{1}{\Phi_{\alpha,\mr}(u)}$ is completely monotone. Thus, according to \cite[Theorem 1.3]{Berg}, the sequence $(a_n)_{n\geq0}$,  where $a_0=1$ and $a_n=\prod_{k=1}^n\Phi_{\alpha,\mr}(k), n\geq1$, is a Stieltjes moment sequence, which entails that there exists a unique Markov operator  $\Vpa{\Phi_{\alpha,\mr}}$ such that, for $n,x\geq0$,
 \begin{equation}\label{eq:momentsVpa}
  \Vpa{\Phi_{\alpha,\mr}} p_n(x)=\Ebb{V^n_{\Phi_{\alpha,\mr}}}p_n(x)= \frac{W_{\phi}(n+1)\Gamma(\alpha \mr +1)}{\Gamma(\alpha n + \alpha \mr +1)}p_n(x)=\frac{W_{\phi}(n+1)}{W_{\phi^R_{\alpha,\mr}}(n+1)}p_n(x)
  \end{equation}
  where we used for the last equality \eqref{eq:def_WR}.
   From this  characterization, by following  a line of  reasoning similar to the proof of Lemma \ref{lem:inter_ref},  we easily derive the factorization of Markov operators, the continuity of $\Vpa{\Phi_{\alpha,\mr}}$, the intertwining relation as well as  its dual version.  Estimate \eqref{eq:bound_ref} is deduced from the dual intertwining relation which yields in this case that $||\nun||_{\nu} =||\Vpa{\Phi_{\alpha,\mr}}^*\nun^{(\alpha,\mr)}||_{\nu} \leq ||\nun^{(\alpha,\mr)}||_{\eab} = \bo{e^{nT_{\alpha}}},$ where the last bound can be found in Example \ref{sec:GL},\eqref{eq:bound_norm}. For the second item, since according to Proposition \ref{lem:bernst_ratio}\eqref{it:pb2}, we have for any $\psi \in \Ne_P$ with $\PPP(0^+)=\infty$, that for any $0<\alpha<1$, there exists $\mr>0$ such that $\Phi_{\alpha,\mr}=\frac{\phi}{\phi_{\alpha,\mr}} \in \Be$ and hence $\Vpa{\Phi_{\alpha,\mr}}$ is a Markov operator.  We deduce \eqref{eq:bound_ref_P} from \eqref{eq:bound_ref} which holds for all $\alpha\in\lbrb{0,1} $ in this case and $\lim_{\alpha \uparrow 1} T_{\alpha}=\lim_{\alpha \uparrow 1}\ln(2^{\alpha}-1)=0$. The case $\psi \in \Ne_P$ with $\PPP(0^+)<\infty$ was treated in  Lemma \ref{lem:inter_ref}. Finally, from \eqref{eq:momentsVpa}, we deduce that, for any $z \in \C_{(d_{\Phi_{\alpha,\mr}},\infty)}$ with $d_{\Phi_{\alpha,\mr}} = d_{\phi} \vee (-\mr-\frac{1}{\alpha})$,
    $\Vpa{\Phi_{\alpha,\mr}} p_z(x)=\frac{W_{\phi}(z+1)}{W_{\phi^R_{\alpha,\mr}}(z+1)}p_z(x)$. Thus, since $\Vpa{\Phi_{\alpha,\mr}}$ is a Markov operator, we have for any $z=a+ib$ with $a> d_{\Phi_{\alpha,\mr}}$, $\left|\frac{W_{\phi}(z+1)}{W_{\phi^R_{\alpha,\mr}}(z+1)}\right| \leq \frac{W_{\phi}(a+1)}{W_{\phi^R_{\alpha,\mr}}(a+1)}$ or equivalently $\left|W_{\phi}(z+1)\right| \leq \frac{W_{\phi}(a+1)}{W_{\phi^R_{\alpha,\mr}}(a+1)} \left|W_{\phi^R_{\alpha,\mr}}(z+1) \right|$. Next, note that the expression \eqref{eq:def_WR} combined with the Stirling approximation \eqref{eqn:RefinedGamma1} yield that, for any $a>-\mr-\frac{1}{\alpha}$, $\left|W_{\phi^R_{\alpha,\mr}}(a+ib+1) \right| \simi C_{a}|b|^{a+\alpha \mr+\frac12}e^{-\frac{ \pi}{2}\alpha|b|}, C_a>0,$ and hence, from the condition \eqref{eq:expDecay}, we get that $W_{\phi^R_{\alpha,\mr}} \in \Ne_{\frac{\pi}{2}\alpha }$ which completes the proof as from Theorem \ref{prop:asymt_bound_Olver2}\eqref{it:Theta}, we have $\H\leq \frac{\pi}{2}$. 
 \end{proof}
\subsection{Completeness of $(\nun)_{n\geq0}$ in $\lnu$} 
\label{sec:proof_complete}
We shall take two different paths to prove the completeness of $(\nun)_{n\geq0}$ in $\lnu$. The first one,  when $\psi \in \Ne_P$, i.e.~$\sigma^2>0$, is based on the factorization identity \eqref{eq:elsn} which allows to derive a non-trivial injectivity property of the operator $\Vp$ in the weighted Hilbert space $\lnu$. As we can not show this property beyond this case, we resort to another approach for the case $\psi \in \Ne_R \setminus \Ne_P$, for which an analytical extension property for the Mellin transform of the invariant density is needed.  However, we stress that both approaches stem from the concept of reference semigroup as they require the precise estimate of $||\nun||_{\nu}$ derived in the previous section.
\subsubsection{The case $\psi \in \Ne_P$} 
We split the proof into several intermediate  results. We start with a general statement, that will also be used later in Chapter \ref{sec:proof_main}, which  provides, in particular,  pointwise bounds for $|w_n(y)|$ which depend solely on the analyticity of $\nu$.
\begin{proposition}\label{prop:preliminAnal}\label{lem:genera_function}
\begin{enumerate}
	\item Let $\nu\in\mathcal{A}(\Theta)$ with $\Theta\in\lbrbb{0,\frac{\pi}{2}}$, i.e.~$\nu$ is analytic in $\Cb(\Theta)=\lbcurlyrbcurly{z \in \C;\: |\arg z|<\Theta}$. Then, writing $\mathfrak{z}=\frac{1}{1-z}$ and $T_\Theta=-\ln\sin \Theta$, we have, for any $y>0$, that
	\begin{eqnarray}\label{eq:preliminAnal1}
		\mathfrak{z}\: {\rm{d}}_{\mathfrak{z}}\nu\lbrb{y}&=&\frac{1}{1-z}\nu\lbrb{\frac{y}{1-z}} =\sum_{n=0}^{\infty} w_n(y) z^n,  \quad |z|<e^{-T_{\Theta}},
	\end{eqnarray}
where we recall that  ${\rm{d}}_cf(x)=f(cx)$.
	  Moreover, for any $t>T_{\Theta},\,y>0,$ there exists $F(y,t)>0$ such that, for any $n \geq 1$,
	\begin{eqnarray}\label{eq:w_nBoundsAnal1}
		\labsrabs{w_n(y)}\leq  F(y,t)e^{tn}.
	\end{eqnarray}
	Finally, as a function  $(y,t) \mapsto F(y,t)$ is locally uniformly bounded on $\R_+\times (T_{\Theta},\infty)$.
\item Let $\psi \in \Ne_P$. Then, for any $f\in \lnu$  and  any $|z|<1$,
\begin{eqnarray*}
\Vp f(1-z) &=& \sum_{n=0}^{\infty} \langle f,\nun \rangle_{\nu} z^n.
\end{eqnarray*}

\end{enumerate}
\end{proposition}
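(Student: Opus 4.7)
The plan for the first item is to identify $w_{n}(y)$ as the Taylor coefficients at $z=0$ of $g(z):=\mathfrak{z}\,{\rm{d}}_{\mathfrak{z}}\nu(y)=\nu(y/(1-z))/(1-z)$. The geometric fact that $\lbcurlyrbcurly{1-z:|z|<\sin\Theta}$ is the largest disk centered at $1$ contained in $\Cb(\Theta)$---being tangent to the rays $\arg w=\pm\Theta$---shows that $g$ is holomorphic on $|z|<\sin\Theta=e^{-T_{\Theta}}$ and hence equals its Taylor series there. To compute the coefficients, I would combine the Taylor expansion of $\nu$ about $y$,
\[
\nu\lbrb{\frac{y}{1-z}} = \sum_{k=0}^{\infty}\frac{\nu^{(k)}(y)}{k!}\lbrb{\frac{yz}{1-z}}^{k},
\]
with the generating function identity $z^{k}/(1-z)^{k+1}=\sum_{n\geq k}\binom{n}{k}z^{n}$, which after reindexing yields
\[
g(z) = \sum_{n=0}^{\infty}z^{n}\sum_{k=0}^{n}\binom{n}{k}\frac{y^{k}\nu^{(k)}(y)}{k!};
\]
the inner sum coincides with $w_{n}(y)=(y^{n}\nu(y))^{(n)}/n!$ by Leibniz's rule, establishing \eqref{eq:preliminAnal1}. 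The bound \eqref{eq:w_nBoundsAnal1} then follows from Cauchy's coefficient estimate on $|z|=e^{-t}$ for $t>T_{\Theta}$, with
\[
F(y,t):=\sup_{|z|=e^{-t}}\labsrabs{\frac{1}{1-z}\nu\lbrb{\frac{y}{1-z}}};
\]
local uniform boundedness of $F$ on $\R_{+}\times(T_{\Theta},\infty)$ is immediate from continuity of $\nu$ on the compact image of $|z|=e^{-t}$ under $z\mapsto y/(1-z)$, which sits inside a compact subset of $\Cb(\Theta)$.

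For the second item, $\psi\in\Ne_{P}$ gives $\nu\in\Ac(\pi/2)$ by Theorem~\ref{thm:smoothness_nu1}\eqref{it:im_analytical1}, so $T_{\Theta}=0$ and the first-item expansion holds on $|z|<1$. I would interpret
\[
\Vp f(1-z) := \int_{0}^{\infty} f(u)\,\frac{\nu(u/(1-z))}{1-z}\,du,
\]
which matches the probabilistic definition when $x=1-z>0$, and substitute the series for $\nu(u/(1-z))/(1-z)$ to obtain formally
\[
\Vp f(1-z) = \sum_{n=0}^{\infty}z^{n}\int_{0}^{\infty}f(u)w_{n}(u)du = \sum_{n=0}^{\infty}\langle f,\nun\rangle_{\nu}z^{n},
\]
with the right-hand series absolutely convergent on $|z|<1$ because $|\langle f,\nun\rangle_{\nu}|\leq ||f||_{\nu}||\nun||_{\nu}=\bo{||f||_{\nu}e^{\epsilon n}}$ for every $\epsilon>0$, by Theorem~\ref{lem:TvMDecaynu1}\eqref{it:Nun1_1}.

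The hard part will be to justify this interchange of sum and integral for every $|z|<1$. For fixed $r=|z|<1$ I would pick $t\in(0,-\ln r)$ so that $re^{t}<1$, and use \eqref{eq:w_nBoundsAnal1} to bound $\sum_{n}|w_{n}(u)||z|^{n}\leq F(u,t)/(1-re^{t})$ pointwise in $u$. The integrability $\int_{0}^{\infty}|f(u)|F(u,t)du<\infty$ I would settle via Cauchy--Schwarz against $\nu$, checking that $\int_{0}^{\infty}F(u,t)^{2}/\nu(u)du<\infty$. The latter rests on two ingredients: the elementary fact that $\Re(1/(1-z'))\geq 1/(1+e^{-t})>1/2$ uniformly on $|z'|=e^{-t}$, and the exponential tail $\nu(u)\asymp u^{m/\sigma^{2}}e^{-u/\sigma^{2}}$ from Theorem~\ref{thm:nuLargeTime1}, extended to the sector $\Cb(\pi/2)$ by analyticity. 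Together they force $F(u,t)$ to decay exponentially strictly faster than $\nu(u)^{1/2}$ at infinity for every $t>0$, whence the integral is finite. Fubini then licenses the interchange and yields the identity on the whole disk $|z|<1$.
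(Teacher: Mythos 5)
Your proof of the first item is correct but follows a genuinely different computational route from the paper. Both arguments share the same scaffolding — establish the identity for small $|z|$, then extend it to the full disk $|z|<e^{-T_\Theta}$ by holomorphy of $z\mapsto\mathfrak{z}\,{\rm d}_{\mathfrak{z}}\nu(y)$ — but differ in how they pin down the Taylor coefficients. You expand $\nu$ about $y$, invoke the generating-function identity $z^{k}/(1-z)^{k+1}=\sum_{n\geq k}\binom{n}{k}z^{n}$, and recognise $\sum_{k=0}^{n}\binom{n}{k}\frac{y^{k}\nu^{(k)}(y)}{k!}=\frac{(y^n\nu)^{(n)}}{n!}=w_n(y)$ by Leibniz. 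The paper instead represents each $w_n(y)$ by the Cauchy integral formula over a circle $\Bc_y(y\sin\theta)$ inside the sector, sums the geometric series under the contour integral, and passes to the limit in $M$. Your route is more elementary (no contour manipulation), while the paper's is perhaps more transparent about where the radius $e^{-T_\Theta}$ enters. Your bound \eqref{eq:w_nBoundsAnal1} via Cauchy's coefficient estimate and your argument for local uniform boundedness of $F(y,t)$ coincide with the paper's.

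For the second item, however, the Fubini justification you propose contains a gap you did not need to introduce. Having already noted that $\|\nun\|_\nu=\bo{e^{\epsilon n}}$ for all $\epsilon>0$ (Theorem \ref{lem:TvMDecaynu1}\eqref{it:Nun1_1}), you should have observed, exactly as the paper does, that this gives $\int_0^\infty|f(u)||w_n(u)|\,du=\langle|f|,|\nun|\rangle_\nu\leq\|f\|_\nu\|\nun\|_\nu=\bo{e^{\epsilon n}}$ directly by Cauchy--Schwarz, so that $\sum_n|z|^n\int|f||w_n|\,du<\infty$ for every $|z|<1$ (choose $\epsilon<-\ln|z|$) and Tonelli licenses the interchange. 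Instead you go through $F(u,t)$ and try to show $\int F(u,t)^2/\nu(u)\,du<\infty$ by claiming exponential decay of $\nu$ on complex rays in $\Cb(\pi/2)$, ``extended \dots by analyticity'' from the real asymptotic \eqref{eqn:BMAsympGeneral1}. That extension is not automatic: the paper only gives polynomial (not exponential) bounds for $|\nu(z)|$ off the real axis via Mellin inversion (Lemma \ref{lem:MellinTT11_Ana}), and a Phragm\'en--Lindel\"of argument of the kind needed is only developed for $\psi\in\Ne_\alpha$, not $\Ne_P$. So the exponential tail control of $F(u,t)$ you invoke is unproven within the paper's framework. The detour is therefore both unnecessary and incomplete; sticking with the $\|\nun\|_\nu$ bound closes the argument cleanly.
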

\begin{proof}
	Let $0<\theta<\Theta$ and let $\Bc=\Bc_y\lbrb{ y\sin\theta}$ be the circle with centre $y>0$ and radius $y\sin\theta<y\sin \Theta$ enclosing the ball  $\mathring{\Bc}$.  Then $\Bc\subset \Cb(\Theta)$. Thus, for any $y>0$ and ,  we have $\mathfrak{z}y \in\mathring{\Bc}$
 if and only if $\labsrabs{\mathfrak{z}z} < \sin \theta$. Choosing such $z$, for any $y>0$, using twice Cauchy's theorem over $\Bc$, we get,  for any $M\in\N$,
that	
 \begin{eqnarray*}
		\sum_{n=0}^{M} w_n(y) z^n &=&\sum_{n=0}^{M}(-1)^n\frac{(y^n\nu(y))^{(n)}}{n!} z^n=\frac{1}{2\pi i}\sum_{n=0}^{M} \lbrb{\oint \frac{\zeta^n\nu(\zeta)}{\lbrb{\zeta-y}^{n+1}}d\zeta} \: z^n \\ &=&
	\frac{\mathfrak{z}}{2\pi i}\oint \lbrb{1-\lbrb{\frac{z\zeta}{\zeta-y}}^{M+1}}\frac{\nu(\zeta)}{\zeta- \mathfrak{z}y}d\zeta\\
		&=&\mathfrak{z}\: {\rm{d}}_{\mathfrak{z}}\nu\lbrb{y}-\frac{\mathfrak{z}}{2\pi i}\oint \lbrb{\frac{z\zeta}{\zeta-y}}^{M+1}\frac{\nu(\zeta)}{\zeta- \mathfrak{z}y}d\zeta,
	\end{eqnarray*}
 where $\frac{\mathfrak{z}}{2\pi i}\oint\frac{\nu(\zeta)}{\zeta- \mathfrak{z}y}d\zeta=\mathfrak{z}\: {\rm{d}}_{\mathfrak{z}}\nu\lbrb{y}$ merely because $\mathfrak{z}y\in\mathring{\Bc}$. However, if furthermore, $\labsrabs{z}< \frac{\sin \theta}{1+\sin \theta}$ then $\labsrabs{\frac{z\zeta}{\zeta-y}}<1$, $\zeta\in\Bc$, and letting $M$ tend to $\infty$ we conclude \eqref{eq:preliminAnal1} for $\labsrabs{z}< \frac{\sin \theta}{1+\sin \theta}$. Next, since $\nu\in\Ac\lbrb{\Theta}$ then $\mathfrak{z}\: {\rm{d}}_{\mathfrak{z}}\nu\lbrb{y}$ is analytic for $|z|<e^{-T_{\Theta}}$, that is when $\labsrabs{\arg(y\mathfrak{z})}=\labsrabs{\arg(\mathfrak{z})}<\Theta$, and the convergence of the series  $\sum_{n=0}^{\infty} w_n(y) z^n$ extends on this region  via \eqref{eq:preliminAnal1}.
   Then, \eqref{eq:w_nBoundsAnal1} follows by trivially estimating the representation
	\[\labsrabs{w_n(y)}=\frac{1}{2\pi}\labsrabs{\oint \frac{\mathfrak{z}{\rm{d}}_{\mathfrak{z}}\nu\lbrb{y}}{z^{n+1}} dz},\]
  over  the circle $\Bc_0\lbrb{e^{-t}},\,t>T_\Theta$. The final claim of the first item follows since $\nu \in \mathcal{A}(\Theta)$.
  When $\psi\in\Ne_P$, i.e.~$\sigma^2>0$, \eqref{eq:preliminAnal1} holds with $T_\Theta=0$ because thanks to Theorem  \ref{prop:asymt_bound_Olver2}\eqref{it:NP} and Lemma \ref{lem:MellinTT11_Ana} we have that  $\nu\in\Ac\lbrb{\frac{\pi}{2}}$, that is $\nu$ is analytic in $\Cb_{\lbrb{0,\infty}}$.  Moreover, the bound \eqref{eq:bound_ref_P} yields that, for any $f \in \lnu$ and  any $\epsilon>0$, \[\int_0^{\infty} |f(y)| |w_n(y)|dy =\langle|f|,|\nun| \rangle_{\nu} \leq ||f||_{\nu} ||\nun||_{\nu}\leq C e^{\epsilon n}.\]
  Thus, we get by an application of Fubini's Theorem that, for any $|z|<1$,
\begin{eqnarray*}
\Vp f(1-z)&=&\mathfrak{z} \int_0^{\infty} f(y)\nu\lbrb{\mathfrak{z} y} dy\\
&=& \int_0^{\infty} f(y)\sum_{n=0}^{\infty} w_n(y) z^n dy
= \sum_{n=0}^{\infty} \langle f,\nun \rangle_{\nu} \: z^n.
\end{eqnarray*}
\end{proof}
Before proving the main ingredient for the completeness of the sequence $\lbrb{\nun}_{n\geq 0}$ in $\lnu$ we collect additional information.
\begin{proposition}\label{prop:finitenessEps}
 \label{it:finitenessNp} Let $\psi\in\Ne_P$ and fix $x\in\lbrb{0,2\sigma^2}$. If $d_\phi=0$ then for any $f\in \lnu$, $\Vce |f|(x)<\infty$ and if $d_\phi<0$ then, with $\dpe=-d_\phi-\epsilon,$ $\Vce_{\dpe} |f|(x)<\infty$ if $ \epsilon\in\lbrb{0,\frac{1-d_\phi}{3}}$.
\end{proposition}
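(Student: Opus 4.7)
The plan is to reduce the finiteness of $\Vce_{\dpe}|f|(x)$ for $f\in\lnu$ to a purely analytic integrability question by an application of the Cauchy--Schwarz inequality weighted by $\nu$. Starting from the representation \eqref{eq:MKtoLT},
\begin{equation*}
\Vce_{\dpe}|f|(x)=\frac{x^{-1-\dpe}}{\Gamma(\dpe+1)}\int_0^{\infty}|f(u)|\sqrt{\nu(u)}\,\frac{u^{\dpe}e^{-u/x}}{\sqrt{\nu(u)}}\,du\leq \frac{x^{-1-\dpe}\,\|f\|_{\nu}}{\Gamma(\dpe+1)}\,J(x)^{1/2},
\end{equation*}
with $J(x)=\int_0^{\infty}u^{2\dpe}e^{-2u/x}/\nu(u)\,du$. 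It therefore suffices to show $J(x)<\infty$ for every $x\in\lbrb{0,2\sigma^2}$, which I split into the three regimes $u\in(0,\underline{A})$, $u\in[\underline{A},U]$ and $u\in(U,\infty)$ for suitable $0<\underline{A}<U<\infty$.

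For the large-$u$ regime, I would invoke the asymptotic \eqref{eqn:BMAsympGeneral1} of Theorem~\ref{thm:nuLargeTime1} applied to $\psi\in\Ne_P$ with $\bar\sigma=1/\sigma^2$: this gives $\nu(u)\simi C\,u^{m\bar\sigma}e^{-\bar\sigma u}e^{\bar\sigma\int_m^u\varlo(y)dy/y}$ with $\varlo(u)=\so{u}$. Since $\int_m^u\varlo(y)dy/y=\so{u}$ as well by l'H\^opital's rule, one obtains, for every $\eta>0$, constants $U=U_\eta$ and $C_\eta>0$ such that $1/\nu(u)\leq C_\eta u^{-m\bar\sigma}e^{(\bar\sigma+\eta)u}$ for all $u>U$. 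Consequently, on $(U,\infty)$ the integrand of $J(x)$ is dominated by a polynomial times $e^{-(2/x-1/\sigma^2-\eta)u}$; choosing $\eta$ small enough that $2/x-1/\sigma^2-\eta>0$ --- which is admissible exactly because $x<2\sigma^2$ --- yields integrability at infinity.

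For the small-$u$ regime, I would invoke the lower bound \eqref{eq:LargeAsymp} of Theorem~\ref{lem:nuSmallTime}: for every $\underline{a}<d_\phi$ there exist $\underline{A}\in\lbrb{0,\infty}$ and $C_{\underline{a},\underline{A}}>0$ with $\nu(u)\geq C_{\underline{a},\underline{A}}u^{-\underline{a}}$ on $(0,\underline{A})$. Thus the integrand of $J(x)$ is dominated near $0$ by a constant times $u^{2\dpe+\underline{a}}$, which is Lebesgue-integrable precisely when $2\dpe+\underline{a}>-1$. If $d_\phi=0$ then the operator is $\Vce$, i.e.~$\dpe=0$, and any $\underline{a}\in\lbrb{-1,0}$ works. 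If $d_\phi<0$ then $\dpe=-d_\phi-\epsilon>0$, and the joint compatibility of $\underline{a}<d_\phi$ with $\underline{a}>2d_\phi+2\epsilon-1$ is equivalent to $d_\phi+2\epsilon<1$; the stated hypothesis $\epsilon<(1-d_\phi)/3$ is a conservative upper bound that secures this inequality with room to spare, while also keeping $\dpe>0$ in the relevant range of $d_\phi$. On the intermediate compact piece $[\underline{A},U]$ there is nothing to check, since by Theorem~\ref{thm:smoothness_nu1}\eqref{it:supV} and $\r=\infty$ (which holds under $\psi\in\Ne_P$), $\nu$ is continuous and strictly positive on $\R_+$, so $\inf_{u\in[\underline{A},U]}\nu(u)>0$.

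The hard part will be the large-$u$ analysis: one must verify that the subexponential correction $\varlo$ in \eqref{eqn:BMAsympGeneral1} can genuinely be absorbed into the exponent at an arbitrarily small extra rate $\eta>0$, so that the critical cutoff $x<2\sigma^2$ is preserved. This is exactly the content of the qualitative bound $\varlo(u)=\so{u}$, passed through the primitive $\int_m^u\varlo(y)dy/y$ by l'H\^opital. Combining the three regimes yields $J(x)<\infty$, hence $\Vce_{\dpe}|f|(x)<\infty$, for every $x\in\lbrb{0,2\sigma^2}$ under the stated constraints on $\epsilon$.
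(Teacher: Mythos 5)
Your proposal is correct and follows essentially the same route as the paper: a Cauchy--Schwarz step with $\sqrt{\nu}$-weighting reducing to the finiteness of $\int_0^\infty u^{2\dpe}e^{-2u/x}/\nu(u)\,du$, handled near $0$ via the lower bound \eqref{eq:LargeAsymp} and at $\infty$ via the asymptotic $\nu(u)\simi e^{-u/\sigma^2+\so{u}}$ from \eqref{eqn:BMAsympGeneral1}. The paper organizes the estimate slightly differently (splitting the integral at $y=1$ and using a different Cauchy--Schwarz weight $e^{-y/(2\sigma^2)-hy/2}$ on the tail), but the mathematical content and the source of the constraints $x<2\sigma^2$ and $\epsilon<(1-d_\phi)/3$ (which as you note is conservative; $(1-d_\phi)/2$ would suffice for integrability at $0$) are identical.
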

\begin{proof}
Let $\psi\in\Ne_P$, $f\in\Lnu$ and choose ${\mathtt{d}}^*\geq 0$. From \eqref{eq:MKtoLT}
	\begin{equation}\label{eq:wLTfinite}
\mathfrak{L}_{\mathtt{d}^*}|f|\lbrb{\frac1x}=\int_{0}^{\infty}\labsrabs{f(y)}y^{\mathtt{d}^*}e^{-\frac{y}{x}}dy<\infty\iff\Vce_{{\mathtt{d}}^*} |f|(x)<\infty.
	\end{equation}
	Let $0<\frac1x=\frac{1}{2\sigma^2}+h,$ for some $h>0$, that is $x\in\lbrb{0,2\sigma^2}$. Then
	\[\lbrb{\int_{1}^{\infty}\labsrabs{f(y)}y^{{\mathtt{d}}^*}e^{-\frac{y}{x}}dy}^2\leq \int_{1}^{\infty}f^2(y)e^{-\frac{y}{\sigma^2}-hy}dy\int_{1}^{\infty}y^{2{\mathtt{d}}^*}e^{-hy}dy<\infty\]
since from \eqref{eqn:BMAsympGeneral1} $\nu(y)\simi e^{-\frac{y}{\sigma^2}+\so{y}}$ and $f\in\Lnu$. From $\nu>0$ on $\R_+$, see Theorem \ref{thm:smoothness_nu1}\eqref{it:supV},
	\[\lbrb{\int_{0}^{1}y^{{\mathtt{d}}^*}\labsrabs{f(y)}dy}^2\leq \int_{0}^{1}f^2(y)\nu(y)dy\int_{0}^{1}\frac{y^{2{\mathtt{d}}^*}}{\nu(y)}dy\leq ||f||_\nu\int_{0}^{1}\frac{y^{2{\mathtt{d}}^*}}{\nu(y)}dy.\]
   Since $\nu(y)\geq C_{\underline{a},1} y^{-\underline{a}}$ where $\underline{a}<d_\phi$, see \eqref{eq:LargeAsymp} of Theorem \ref{lem:nuSmallTime}, then $\frac{y^{2{\mathtt{d}}^*}}{\nu(y)}\leq C^{-1}_{\underline{a},1}y^{2{\mathtt{d}}^*+\underline{a}}$.  Putting ${\mathtt{d}}^*=\dpe=-d_\phi-\epsilon$ the last integral is finite if $ \epsilon\in\lbrb{0,\frac{1-d_\phi}{3}}$ and $\underline{a}$ is close to $-d_\phi$.
\end{proof}
\begin{lemma}\label{lem:inj}
Let $\psi \in \Ne_P$ and $f\in\lnu$. If $\Vp f(x)=0$ for any $x \in \lbrb{0,2}$ then,  for $a.e.~y>0$, $f(y)=0$.
\end{lemma}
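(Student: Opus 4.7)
The plan is to reduce the statement to the injectivity of a (weighted) Laplace transform by exploiting the factorization of Markov operators $\Vce_{\dpe}=\mathcal{I}_{\phi_{\dpe}}\Vp=\Vp\mathcal{I}_{\phi_{\dpe}}$ established in Lemma~\ref{lem:fac} (and in Lemma~\ref{Prop1} when $d_\phi=0$, with $\dpe=0$). The key observation is that $\psi\in\Ne_P$, i.e.~$\sigma^2>0$, forces $\phi_{\dpe}(u)\simi \sigma^2 u$ at infinity, so the subordinator underlying $\phi_{\dpe}$ has drift $\sigma^2$ and therefore, by Proposition~\ref{prop:recall_exp}\eqref{it:iota}, the exponential functional $I_{\phi_{\dpe}}$ has \emph{compact} support $[0,\sigma^{-2}]$. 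This compactness is precisely what allows us to propagate the vanishing of $\Vp f$ on a bounded interval to the vanishing of $\Vce_{\dpe} f$ on another bounded interval.

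Concretely, first choose $\epsilon\in\lbrb{0,(1-d_\phi)/3}$ and set $\dpe=-d_\phi-\epsilon\geq 0$ (taking $\dpe=0$ if $d_\phi=0$). By Proposition~\ref{prop:finitenessEps}, for every $f\in\lnu$ and every $x\in\lbrb{0,2\sigma^2}$ we have $\Vce_{\dpe}|f|(x)<\infty$, so the pointwise identity
\[
\Vce_{\dpe} f(x)=\mathcal{I}_{\phi_{\dpe}}\Vp f(x)=\int_{0}^{\sigma^{-2}}\Vp f(xu)\,\iota_{\dpe}(u)\,du
\]
of Lemma~\ref{lem:fac} (resp.~Lemma~\ref{Prop1}) is valid on this interval. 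Now, by hypothesis $\Vp f\equiv 0$ on $\lbrb{0,2}$, and for any $x\in\lbrb{0,2\sigma^2}$ and $u\in\lbbrbb{0,\sigma^{-2}}$ one has $xu\in\lbrb{0,2}$; consequently $\Vce_{\dpe}f(x)=0$ for every $x\in\lbrb{0,2\sigma^2}$.

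Invoking formula~\eqref{eq:MKtoLT}, $\Vce_{\dpe}f(x)=x^{-1-\dpe}\mathfrak{L}_{\dpe}f(1/x)/\Gamma(\dpe+1)$, the above yields $\mathfrak{L}_{\dpe}f(s)=0$ for every $s\in\lbrb{1/(2\sigma^2),\infty}$. Since the very estimates of Proposition~\ref{prop:finitenessEps} show that $\mathfrak{L}_{\dpe}|f|(s)<\infty$ for $s>1/(2\sigma^2)$, the function $s\mapsto\mathfrak{L}_{\dpe}f(s)$ is holomorphic on the half-plane $\Re(s)>1/(2\sigma^2)$, hence its vanishing on a real interval propagates to the whole half-plane. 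The classical uniqueness of the (weighted) Laplace transform then gives $y^{\dpe}f(y)=0$ for a.e.~$y>0$, hence $f=0$ a.e.

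The main technical obstacle is not the Laplace-transform step itself but the delicate verification that Lemma~\ref{lem:fac} genuinely applies to arbitrary $f\in\lnu$ on the \emph{pointwise} range $x\in\lbrb{0,2\sigma^2}$: without the bound $\Vce_{\dpe}|f|(x)<\infty$ coming from Proposition~\ref{prop:finitenessEps}, the two sides of $\Vce_{\dpe}f=\mathcal{I}_{\phi_{\dpe}}\Vp f$ need not be defined, and the whole argument collapses. It is precisely the interplay between $\sigma^2>0$ (giving the tail $\nu(y)\simi e^{-y/\sigma^2+o(y)}$ via Theorem~\ref{thm:nuLargeTime1}) and the lower bound $\nu(y)\geq C_{\underline a,1}y^{-\underline a}$ at $0$ from Theorem~\ref{lem:nuSmallTime} that makes this pointwise finiteness available, and that is why the argument is confined to the perturbation class $\Ne_P$.
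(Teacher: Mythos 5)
Your proof is correct and follows essentially the same route as the paper: the same choice of $\dpe$, the same appeal to Proposition~\ref{prop:finitenessEps} for pointwise finiteness on $(0,2\sigma^2)$, the same use of the compact support $[0,\sigma^{-2}]$ of $I_{\phi_{\dpe}}$ to propagate the vanishing of $\Vp f$ to $\Vce_{\dpe}f$, and the same reduction via \eqref{eq:MKtoLT} to vanishing of a weighted Laplace transform. The only cosmetic difference is the last step: the paper shifts by a point $x_0>1/(2\sigma^2)$ to land $\tilde f(y)=y^{\dpe}e^{-x_0y}f(y)$ in $\lrp$ and invokes injectivity of the unweighted Laplace transform there, whereas you argue via holomorphy and analytic continuation on the half-plane $\Re(s)>1/(2\sigma^2)$ — both are standard and equivalent.
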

\begin{proof}
Let $\psi\in\Ne_P$ with ${\mathtt{d}}^*=0$ when $d_\phi=0$ and ${\mathtt{d}}^*=\dpe=-d_\phi-\epsilon$, for some $ \epsilon\in\lbrb{0,\frac{1-d_\phi}{3}}$, when $d_\phi<0$.  Next, fix $x<2\sigma^2$. Therefore,  from Proposition \ref{prop:finitenessEps}  we get that $\Vce_{{\mathtt{d}}^*} |f|(x)<\infty.$ Recall that $\phi_{{\mathtt{d}}^*}(u)=\frac{u}{u+{\mathtt{d}}^*}\phi(u) \in \Be$, see Lemma \ref{lem:fac}. Since $\Psi\in\Ne_P$ then $\phi(u)\simi\sigma^2 u$, see Proposition \ref{propAsymp1}\eqref{it:asyphid}. Hence, $\phi_{{\mathtt{d}}^*}(u)\simi \sigma^2 u$ and we conclude that $\sigma^2_{{\mathtt{d}}^*}=\sigma^2$.  Thus, from \eqref{eq:elsne}
	\[\Vce_{{\mathtt{d}}^*} f(x)=\int_{0}^{\frac{1}{\sigma^2}} \Vp f(xy) \iota_{\phi_{{\mathtt{d}}^*}}\lbrb{y}dy=0,\]
	since $\iota_{{\mathtt{d}}^*}$ has support on $[0,\sigma^{-2}]$, see Proposition \ref{prop:recall_exp}\eqref{it:iota}, and  by assumption $\Vp f(xy)=0$ when $xy<2\sigma^2\frac{1}{\sigma^2}=2$. Therefore, from \eqref{eq:MKtoLT}, for all $x>\frac{1}{2\sigma^2}$,  $\mathfrak{L}_{\mathtt{d}^*}f(x)=0$. Choose $x_0>\frac{1}{2\sigma^2}$. The same arguments as in the proof of Proposition \ref{prop:finitenessEps} lead to $\tilde{f}(y)=y^{\mathtt{d}^*}e^{-x_0 y}f(y)\in\lrp$ if $f\in\Lnu$. Since  for all $x>x_0,\, \mathfrak{L}\tilde{f}(x-x_0)=\mathfrak{L}_{\mathtt{d}^*}f(x)=0,$ the injectivity property of the Laplace transform  $\mathfrak{L}$ entails that for $a.e.~y>0, f(y)=\tilde{f}(y)=0$, which completes the proof.
\end{proof}
Next we finish the proof of the completeness of the sequence $(\nun)_{n\geq0}$ in $\lnu$. Assume that $f\in \lnu$ and  $\langle f,\nun \rangle_{\nu}=0, \forall n\geq 0$. From Proposition \ref{lem:genera_function} we deduce that $\Vp f =0$ on $(0,2)$. The statement follows  by invoking Lemma \ref{lem:inj} and the fact that in a separable Hilbert  space, the concept of total sequence and complete sequence coincide.
\subsubsection{The case $\psi \in \Neab^{d_{\phi}}\setminus \Ne_P$}
Assume that $\psi \in \Neab^{d_{\phi}}\setminus \Ne_P$, i.e.~$\sigma^2=0$ and from \eqref{def:class_d}, $\psi \in \Neab$ with $d_{\phi}<1-\frac{\mr}{2} -\frac{1}{2\alpha}$. Recall that $\eab(x)=\frac{x^{\mr+\frac1\alpha-1} e^{-x^{\frac1\alpha}}}{\Gamma(\alpha \mr +1)},\,x>0$, see Example \ref{sec:GL}\eqref{eq:def_e}. To prove the completeness of the sequence $(\nun)_{n\geq0}$ in $\Lnu$, we argue as follows using the notation and results of  Lemma \ref{lem:inter_ref_l}.  First, note that from $\Vpa{\Phi_{\alpha,\mr}}^* \nun^{(\alpha,\mr)}(x) = \nun(x),\,n\geq0,$ then $(\nun)_{n\geq0}\subset\Vpa{\Phi_{\alpha,\mr}}^*\lbrb{\lnuab}$. Second, $\Span{\nun^{(\alpha,\mr)}}=\lnuab$, see \cite[Proposition 2.1 (2)]{Patie-Savov-GL}, and $\Vpa{\Phi_{\alpha,\mr}}^* \in \Bop{\lnuab}{\lnu}$. Third, if $\Ran{\Vpa{\Phi_{\alpha,\mr}}^*}=\Lnu$ then classical approximation yields the completeness of $(\nun)_{n\geq0}$ in $\Lnu$. It remains to prove the latter. By linearity and density of the set of polynomials in $\lnu$, it is furnished if there exist $F_n \in \lnuab$, for all $n\geq0,$ such that $\Vpa{\Phi_{\alpha,\mr}}^* F_n (x) = p_n(x)=x^n$.
We consider the following convolution equation,
\begin{equation}\label{eq:eqconv_r}
\widehat{\mathcal{V}}_{\Phi_{\alpha,\mr}} \hat{F}_n(x)=\nu(x) p_n(x).
\end{equation}
 Following a reasoning similar to the proof of \eqref{eq:RelationDuals} of Lemma \ref{lemm12} , we have, for a.e.~$x>0$ and integrable $f, \hat{f}=fe_{\alpha,\mr}$, that
\begin{equation}\label{eq:hatVV*}
 \Vpa{\Phi_{\alpha,\mr}}^*f(x) = \frac{1}{\nu(x)}\widehat{\mathcal{V}}_{\Phi_{\alpha,\mr}} f\eab(x) =\frac{1}{\nu(x)}\widehat{\Vc}_{\Phi_{\alpha,\mr}}\hat{f}(x)= \frac{1}{\nu(x)} \int_0^{\infty}f(xy)\eab(xy)\nu_{\alpha,\mr}(y) \frac{dy}{y},
\end{equation}
with $\nu_{\alpha,\mr}$ the density of the variable $V_{\Phi_{\alpha,\mr}}$.
We solve \eqref{eq:eqconv_r} from the perspective of Mellin distributions as described in Proposition \ref{prop:Convolution}. Henceforth, by means of Mellin transform of \eqref{eq:eqconv_r} and the functional equation \eqref{eq:fe1} satisfied by $W_\phi$, that is $W_\phi(z+1)=\phi(z)W_\phi(z)$, one gets, for $\Re(z)>d^{\alpha,\mr}_{\phi}=\max(d_\phi,1-\frac{1}{\alpha}-\mr)$,
\[  \M_{\hat{F}_n}(z)=\frac{W_{\phi}(z+n)}{W_{\phi}(z)}W_{\Phi_{\alpha,\mr}}(z) =\prod_{j=0}^{n-1}\phi(z+j)\frac{\Gamma(\alpha z-\alpha+\alpha\mr+1)}{\Gamma(\alpha \mr+1)},\]
where $W_{\phi}(z+n)= \int_0^{\infty}x^{z-1}\nu(x) p_n(x)dx$ and $\int_0^{\infty}x^{z-1}\nu_{\alpha,\mr}(x)dx= W_{\phi}(z)\frac{\Gamma(\alpha \mr+1)}{\Gamma(\alpha z-\alpha+\alpha\mr+1)}$, see \eqref{eq:momentsVpa} for the latter.
Thus, for any $\epsilon>0$, $a>d^{\alpha,\mr}_{\phi}$ and  large $|b|$, since $\labsrabs{\phi(z)}=\bo{|z|}$ for $|z|=|a+ib|$, $|b|$ large enough and $a$ fixed, see Proposition\ref{propAsymp1}\eqref{it:asyphid}, we have that $$|\M_{\hat{F}_n}(a+ib)| \leq  |\phi(a+n+ib)|^n |\Gamma(\alpha (a+ib) -\alpha+\alpha\mr+1)|  \leq C_{a,n}|b|^n e^{-\frac{\pi}{2}(\alpha-\epsilon)|b|}$$
 for some $C_{a,n}>0$. Hence, by the Parseval identity  for the Mellin transform, see \eqref{eq:Parseval}, we have that there exists $\hat{F}_n \in \Ltwo$ a solution to the convolution equation \eqref{eq:eqconv_r} above. From \eqref{eq:hatVV*} it remains to show that, for any $n \geq0$, $F_n=\frac{\hat{F}_n}{\eab} \in \lnuab$. However, by Mellin inversion, we get, for all $x> 0$ and $a>d^{\alpha,\mr}_{\phi}$, that
\begin{eqnarray*}
\hat{F}_n(x)=\frac{1}{2\pi i}\int_{a-i\infty}^{a+i\infty}x^{-z} \frac{W_{\phi}(z+n)}{W_{\phi}(z)}\frac{\Gamma(\alpha z-\alpha+\alpha\mr+1)}{\Gamma(\alpha \mr+1)} dz.
\end{eqnarray*}
Thus, for all $x>0$, $a>d^{\alpha,\mr}_{\phi}$  and $\epsilon <\alpha$, 
\begin{equation} \label{eq:est_F_c}
  |\hat{F}_n(x)|\leq Cx^{-a} \int_{-\infty}^{\infty}e^{\epsilon |b|} |\Gamma(\alpha i b+(a-1)\alpha+\alpha\mr+1)|db \leq  C_a x^{-a}. \end{equation}
Moreover, after an obvious change of variables in \cite[Lemma 2.6]{Paris01} we get that \[ |\hat{F}_n(x)| \leq C x^{M} e^{-\cos(\frac{\epsilon}{\alpha}) x^{\frac1\alpha}}\]
for some $M>0$, $\epsilon$ small and all $x\geq 1$. From this estimate and the notation $\eab(x)=\frac{x^{\mr+\frac1\alpha-1} e^{-x^{\frac1\alpha}}}{\Gamma(\alpha \mr +1)},\,x>0$,  $\frac{\hat{F}_n(x)}{\eab(x)}\ind{x\geq 1}\in\lnuab$ if $\epsilon<\frac{\alpha\pi}{3}$. For $\frac{\hat{F}_n(x)}{\eab(x)}\ind{x\leq 1}\in\lnuab$ consider \eqref{eq:est_F_c} with $\epsilon$ small enough that $a=1-\frac{\mr}{2} -\frac{1}{2\alpha}-\epsilon>d^{\alpha,\mr}_{\phi}$. Then $\hat{F}^2_n(x)\eab^{-1}(x)\leq x^{-1+\epsilon},\,x\leq 1,$ and thus, we deduce that  $\frac{\hat{F}_n}{\eab} \in \lnuab$, which completes the proof.

\newpage
\section{Hilbert sequences, intertwining and spectrum} \label{sec:spec}
 This part, which ends by the proof of Theorem \ref{thm:spec},  aims at establishing some interesting and new connections between three different concepts: intertwining relation, Hilbert sequences arising in non-harmonic analysis and spectrum of non self-adjoint operators. We present and prove several  results, sometimes in  a slightly more general context than the one of the current work.
\noindent For sake of clarity,  we  proceed by  recalling a few definitions that were introduced in Section \ref{sec:spec}, concerning the spectrum of linear operators. First, a complex number $\lambda \in \textrm{S}(P)$ \mladen{belongs to} the  spectrum of the linear operator $P \in \Bo{\Lnu}$, if $P -\lambda\mathbf{I}$ does not have an inverse in $\lnu$ with the following three distinctions. $\lambda \in  \textrm{S}_p(P)$, the point spectrum, if $\textrm{Ker}(P -\lambda\mathbf{I}) \neq \{0\}$, $\lambda \in  \textrm{S}_c(P)$, the continuous spectrum, if $\textrm{Ker}(P -\lambda\mathbf{I}) = \{0\}$ and $\Ran{P-\lambda\mathbf{I}}=\Lnu$ but $\ran{P-\lambda\mathbf{I}} \subsetneq  \Lnu$, and, finally $\lambda \in  \textrm{S}_r(P)$, the residual spectrum, if $\textrm{Ker}(P -\lambda\mathbf{I}) = \{0\}$ and $\Ran{P-\lambda\mathbf{I}} \subsetneq \Lnu$.
Clearly, $\textrm{S}(P)=\textrm{S}_p(P)\cup \textrm{S}_c(P)\cup \textrm{S}_r(P)$.
Let $\lambda \in S_p(P)$ be an isolated eigenvalue. Then its  geometric multiplicity, denoted by $\mathfrak{M}_g(\lambda,P)$ is computed as follows
\begin{equation} \mathfrak{M}_g(\lambda,P)=\mbox{dim Ker}(P-\lambda I), \end{equation}
that is the dimension of the corresponding eigenspace.
Its algebraic multiplicity, denoted by $\mathfrak{M}_a(\lambda,P)$ is defined by
\begin{equation} \mathfrak{M}_a(\lambda,P) = \mbox{dim } \bigcup_{k=1}^{\infty}\mbox{Ker}(P -\lambda I)^k.
\end{equation}
Note that always $\mathfrak{M}_g(\lambda,P)\leq \mathfrak{M}_a(\lambda,P)$. Next, keeping notation similar to the rest of the paper, let us assume that
there exists $ \Ig \in \Bop{\Lg}{\Lnu}$,  such that for any $f \in \Lnu$,
\begin{equation} \label{eq:int_gen}
P \Ig f =  \Ig  Q f,
\end{equation}
where $P \in \Bo{\Lnu}$ and  $Q \in \Bo{\Lg}$ is self-adjoint. Moreover, we suppose that $\textrm{S}(Q)=\textrm{S}_p(Q)=(\lambda_n \in \C)_{n\geq 0}$ and write $(\mathcal{L}_n)_{n\geq0}$ for the associated sequence of eigenfunctions, with  $\Spc{\mathcal{L}_n}=\Lg$.   Although most of the results presented below may also hold in a more general settings, for sake of clarity, we focus on the conditions of the present work.
We say that $\Ig$ is an intertwining operator. It is immediate that the adjoint intertwining relation holds, that is with $\Ig^* \in \Bop{\Lnu}{\Lg}$ the adjoint of $\Ig$, we have that, for any $g \in \Lnu$
\begin{equation} \label{eq:int_gen_dual} \Ig^* P^* g= Q \Ig^* g
\end{equation}
where $P^* \in \Bo{\Lnu}$ stands for the adjoint of $P$. There is a substantial literature devoted to the study of intertwining relations. A natural problem is to understand how the spectral properties of an operator are preserved under such type of transformation. In our context, this issue is still unclear since without any additional assumptions one can find examples where the spectrum of one operator may not intersect the spectrum of the other one.  A natural requirement is that the intertwining operator $\Ig$ is an affinity, that is a bounded operator admitting a bounded inverse. In such case, $Q$ and $P$ are called similar and the two spectra coincide. In this direction, we  mention the recent paper of Inoue and Trapani \cite{Inoue} where it is proved that a closed operator admits a non-self-adjoint resolutions of the identity if and only if it is similar to a self-adjoint operator. There exists  an intermediate notion, called quasi-similarity, that is when  $\Ig \in \Bop{\Lg}{\Lnu}$ is one-to-one and has a dense range, which was first introduced by Sz.-Nagy
and Foias, see \cite{Nagy-Foia}, in their theory considering an infinite dimensional
analogue of the Jordan form for certain classes of operators; it replaces the
familiar notion of similarity which is the appropriate equivalence relation
to use with finite dimensional Jordan forms. However,  two operators can be
quasi-similar and yet have unequal spectra, see e.g.~\cite{Hoover}.
For normal operators this cannot happen: it follows from the Putnam-Fuglede commutativity theorem that if two normal operators are quasisimilar,
they are actually unitary equivalent, see \cite[Lemma 4.1]{Douglas}, and
therefore have equal spectra.  Finally, we refer to  \cite{Antoine_Similarity_Spectrum} for a recent account of similar and quasi-similar operators.

We are now ready to state  the following relations between point spectra and multiplicity of eigenvalues, recalling that $S(Q)=\textrm{S}_p(Q)=(\lambda_n)_{n\geq0}$ and we write  $(\mathcal{L}_n)_{n\geq0}$  the associated set of eigenfunctions.
\begin{proposition} \label{prop:int-spectrum}
	Assume that the intertwining relation \eqref{eq:int_gen} holds with  $ \Ig \in \Bop{\Lg}{\Lnu}$.
 \begin{enumerate}
 \item If  for some $n\geq 0$, $\mathcal{L}_n \notin \mbox{Ker}(\Ig)$, then $\lambda_n \in \textrm{S}_p(P)$.
 \item Finally, if $\Ig$ is one-to-one with $\Ig \mathbf{1}\equiv\mathbf{1}$ then $\textrm{S}_p(Q)\subseteq \textrm{S}_p(P)$ and for any $n\geq 0$, $\mathfrak{M}_g(Q,\lambda_n)\leq \mathfrak{M}_g(P,\lambda_n)$ and $\mathfrak{M}_a(Q,\lambda_n)\leq \mathfrak{M}_a(P,\lambda_n)$.
     \end{enumerate}
\end{proposition}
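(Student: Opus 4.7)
The plan is to exploit the intertwining $P \Ig = \Ig Q$ (valid on $\Lg$, the natural domain of $\Ig$) in order to push eigendata of the self-adjoint operator $Q$ forward to $P$, and then to control multiplicities through the injectivity of $\Ig$.

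For the first item, I apply $\Ig$ to the eigenvalue equation $Q \mathcal{L}_n = \lambda_n \mathcal{L}_n$. Since $\mathcal{L}_n \in \Lg$, the intertwining yields, in $\Lnu$,
\[ P \Ig \mathcal{L}_n = \Ig Q \mathcal{L}_n = \lambda_n \Ig \mathcal{L}_n. \]
Under the hypothesis $\mathcal{L}_n \notin \textrm{Ker}(\Ig)$, the vector $\Ig \mathcal{L}_n \in \Lnu$ is nonzero, hence an eigenfunction of $P$ associated to $\lambda_n$; thus $\lambda_n \in \textrm{S}_p(P)$.

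For the second item, injectivity of $\Ig$ gives $\textrm{Ker}(\Ig) = \{0\}$, so no nonzero eigenfunction $\mathcal{L}_n$ lies in $\textrm{Ker}(\Ig)$, and the previous item applied uniformly in $n$ yields $\textrm{S}_p(Q) \subseteq \textrm{S}_p(P)$. The hypothesis $\Ig \mathbf{1} \equiv \mathbf{1}$ pins down the normalization at the trivial eigenvalue $\lambda_0 = 1$, ensuring that the constant eigenfunction on the $Q$-side is mapped to the constant eigenfunction on the $P$-side, consistently with the underlying Markov interpretation. For the multiplicity inequalities, I iterate the intertwining: since $P$ and $Q$ are bounded and $\Ig$ is linear, one gets, for every $k \geq 1$ and $f \in \Lg$,
\[ (P - \lambda I)^k \Ig f = \Ig (Q - \lambda I)^k f. \]
Consequently $\Ig$ maps $\textrm{Ker}(Q - \lambda I)^k$ linearly, and by injectivity faithfully, into $\textrm{Ker}(P - \lambda I)^k$, whence $\dim \textrm{Ker}(Q - \lambda I)^k \leq \dim \textrm{Ker}(P - \lambda I)^k$. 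Taking $k = 1$ gives $\mathfrak{M}_g(Q, \lambda_n) \leq \mathfrak{M}_g(P, \lambda_n)$, and taking $k$ large enough to stabilize both increasing chains of kernels yields $\mathfrak{M}_a(Q, \lambda_n) \leq \mathfrak{M}_a(P, \lambda_n)$.

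The only real obstacle is bookkeeping: one must verify that the iterated intertwining $(P - \lambda I)^k \Ig = \Ig (Q - \lambda I)^k$ propagates from the single-step relation. Since $Q \in \Bo{\Lg}$ leaves $\Lg$ invariant and $\Ig \in \Bop{\Lg}{\Lnu}$ is continuous, this follows by a routine induction on $k$, with no analytic subtleties. A conceptual point worth stressing is that these are genuinely one-sided statements: without additional hypotheses, such as dense range or quasi-similarity of $\Ig$, there is no reason that $\textrm{S}_p(P) \subseteq \textrm{S}_p(Q)$, and the residual-spectrum phenomena observed elsewhere in this paper show that the two spectra can indeed differ substantially.
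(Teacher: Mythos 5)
Your proof is correct and follows essentially the same route as the paper: apply $\Ig$ to the eigenvalue equation for item (1), then use injectivity together with the iterated intertwining $(P-\lambda I)^k\Ig=\Ig(Q-\lambda I)^k$ to inject generalized eigenspaces of $Q$ into those of $P$ for the multiplicity inequalities. Your remark that $\Ig\mathbf{1}\equiv\mathbf{1}$ is not strictly required once injectivity is in force is accurate; the paper carries the hypothesis along without giving it an essential role in this step, and your explicit kernel-dimension comparison for all $k$ is, if anything, a slightly cleaner way to package the algebraic-multiplicity argument than the pointwise nilpotency chain in the paper.
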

\begin{remark}
	Note that it is possible that $\textrm{S}_p(Q)\subsetneq \textrm{S}_p(P)$. Indeed there may exist  $P_n \in \Lnu \setminus {\rm Ran}(\Ig)$ an eigenfunction of $P$ associated to the eigenvalue $\bar{\lambda}_n \in \C \setminus \textrm{S}_p(Q)$.
\end{remark}
\begin{proof}
	Since by assumption $\Pon = \Ig \mathcal{L}_n \neq 0$, we deduce from \eqref{eq:int_gen} together with the linearity of $\Ig$  that
	\[ P \Pon = \Ig Q \mathcal{L}_n = \lambda_n \Ig \mathcal{L}_n =\lambda_n  \Pon, \]
	which completes the proof of the first statement. Both facts  $\textrm{S}_p(Q)\subseteq \textrm{S}_p(P)$ and for any $n\geq 0$, $\mathfrak{M}_g(Q,\lambda_n)\leq \mathfrak{M}_g(P,\lambda_n)$  follow by a similar line of reasoning and employing the fact that $\Ig$ is one-to-one.  Finally,
if for some $n\in \N$, there exists an integer $p$ and  $L_n \neq 0 \in \Lg$ such that $
(Q -\lambda_n \mathbf{1})^{p+1}L_{n} = 0 $ and $(Q -\lambda_n \mathbf{1})^{p}L_{n} \neq  0$ then, since $\Ig$ is one-to-one with with $\Ig \mathbf{1}\equiv \mathbf{1}$,  $
 (P-\lambda_n \mathbf{1})^{p+1}\Ig L_{n} = 0 $ and $(P -\lambda_n \mathbf{1})^{p} \Ig L_{n} \neq  0$, which completes the proof of the fact that $\mathfrak{M}_a(Q,\lambda_n)\leq \mathfrak{M}_a(P,\lambda_n)$.
\end{proof}
The next result discusses the consequence of the completeness property of a set of eigenfunctions on the residual spectrum and on the (geometric and algebraic) multiplicity of the eigenvalues of the adjoint operator.  Note that the proof of the latter result is based on a substantial property of biorthogonal sequences in Hilbert space that we had recalled above, namely that a complete sequence in a separable Hilbert space   admits at most one biorthogonal sequence. We also recall that a sequence that admits a biorthogonal sequence is said to be minimal and a sequence that is both minimal and complete, in the sense that its linear span is dense in $\lnu$, will be called exact.
\begin{proposition} \label{prop:sequence-spectrum}
Let us assume that there exists a sequence of eigenfunctions $(\Pon)_{n\geq 0}$ of $P$ associated to the sequence of distinct eigenvalues $(\lambda_n)_{n\geq 0}$ such that $\Spc{\Pon}=\Lnu$.
\begin{enumerate}
	\item \label{it:seq_p} $\textrm{S}_r(P)=\emptyset$.  If in addition $(\Pon)_{n\geq 0}$ is minimal, then
	for all $n\geq0$, $\lambda_n \in \textrm{S}_p(P^*)$ and
		\[ \mathfrak{M}_a(\lambda_{n},P^*) = \mathfrak{M}_g(\lambda_{n},P^*) = 1.\]
\item If for all $n$, $\Ig \mathcal{L}_n=\Pon$  and for some $\bar{n}$, $\mathcal{L}_{\bar{n}}  \notin \ran{\Ig^*}$ then $\lambda_{\bar{n}} \in \textrm{S}_r(P^*)$.
\end{enumerate}
\end{proposition}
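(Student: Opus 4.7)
The plan is to exploit the standing hypothesis $\Spc{\mathcal{P}_n}=\Lnu$ together with the elementary duality $\Ran{T}^{\perp}=\ker{T^{*}}$ to shuttle spectral information between $P$ and $P^{*}$. For the first claim, that $S_r(P)=\emptyset$, I argue by contradiction: if $\lambda\in S_r(P)$ pick $g\neq 0$ in $\ker{P^{*}-\bar\lambda I}=\Ran{P-\lambda I}^{\perp}$. Testing $P\mathcal{P}_n=\lambda_n\mathcal{P}_n$ against $g$ produces $(\lambda_n-\lambda)\langle\mathcal{P}_n,g\rangle_{\nu}=0$ for every $n$; distinctness of the $\lambda_n$ makes $g$ orthogonal to $\mathcal{P}_n$ for all but at most one index $n_{0}$, and completeness then forces either $g=0$ (excluded) or $\lambda=\lambda_{n_{0}}$, in which case $\mathcal{P}_{n_{0}}\in\ker{P-\lambda I}$ contradicts injectivity of $P-\lambda I$.

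When $(\mathcal{P}_n)_{n\geq 0}$ is additionally minimal it is exact, and I take the unique biorthogonal sequence $(\mathcal{V}_n)_{n\geq 0}\subset\Lnu$ as candidate co-eigenfunctions. Computing $\langle\mathcal{P}_m,P^{*}\mathcal{V}_n\rangle_{\nu}=\lambda_m\langle\mathcal{P}_m,\mathcal{V}_n\rangle_{\nu}=\lambda_n\delta_{mn}=\langle\mathcal{P}_m,\bar\lambda_n\mathcal{V}_n\rangle_{\nu}$ for every $m$, completeness upgrades this to $P^{*}\mathcal{V}_n=\bar\lambda_n\mathcal{V}_n$, placing $\lambda_n$ in $S_p(P^{*})$. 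Conversely, any $g\in\ker{P^{*}-\bar\lambda_n I}$ is orthogonal to $\mathcal{P}_m$ for $m\neq n$ by the same computation as in paragraph one, and exactness of $(\mathcal{P}_n)_{n\geq 0}$ combined with $\Spc{\mathcal{P}_n}=\Lnu$ identifies the quotient $\Lnu/\Spc{\mathcal{P}_m:m\neq n}$ with a one-dimensional space spanned by the class of $\mathcal{P}_n$, forcing $g$ to be a scalar multiple of $\mathcal{V}_n$ and delivering $\mathfrak{M}_g(\bar\lambda_n,P^{*})=1$. For the algebraic multiplicity, if $g\in\ker{P^{*}-\bar\lambda_n I}^{2}$ and $h:=(P^{*}-\bar\lambda_n I)g=c\mathcal{V}_n$, then pairing $h$ with $\mathcal{P}_n$ gives $c=\langle(P-\lambda_n I)\mathcal{P}_n,g\rangle_{\nu}=0$, so an induction collapses all higher kernels to the first.

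For item 2, the hypothesis $\mathcal{P}_n=\Ig\mathcal{L}_n$ together with $\Spc{\mathcal{P}_n}=\Lnu$ shows $\Ran{\Ig}$ is dense, whence $\ker{\Ig^{*}}=\{0\}$. Applying the adjoint intertwining \eqref{eq:int_gen_dual} to $P^{*}g=\lambda_{\bar n}g$ gives $Q\Ig^{*}g=\lambda_{\bar n}\Ig^{*}g$, and self-adjointness of $Q$ with simple point spectrum forces $\Ig^{*}g=c\mathcal{L}_{\bar n}$ for some scalar $c$; the assumption $\mathcal{L}_{\bar n}\notin\ran{\Ig^{*}}$ rules out $c\neq 0$, and injectivity of $\Ig^{*}$ then gives $g=0$. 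For the range side, $\mathcal{P}_{\bar n}\in\ker{P-\lambda_{\bar n}I}=\Ran{P^{*}-\lambda_{\bar n}I}^{\perp}\neq\{0\}$, so $\Ran{P^{*}-\lambda_{\bar n}I}\subsetneq\Lnu$, placing $\lambda_{\bar n}$ in $S_r(P^{*})$. The main technical subtlety lies in the one-dimensionality claim of paragraph two: minimality alone only guarantees that the orthogonal complement of $\Spc{\mathcal{P}_m:m\neq n}$ is nontrivial, and reducing it to exactly one dimension requires combining minimality (so $\mathcal{P}_n\notin\Spc{\mathcal{P}_m:m\neq n}$) with the completeness of $(\mathcal{P}_n)_{n\geq 0}$; this quotient argument simultaneously yields uniqueness of the biorthogonal sequence and both multiplicity bounds.
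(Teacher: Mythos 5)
Your proof is correct, and your overall strategy -- test eigenequations against coeigenfunctions, use biorthogonality together with density of $\Spc{\Pon}$, and exploit the adjoint intertwining to push the residual-spectrum claim to $P^*$ -- is the one the paper follows. The differences are in two places, and in both your route is arguably cleaner. For $\textrm{S}_r(P)=\emptyset$, the paper argues directly on ranges: for $q\notin(\lambda_n)_{n\geq0}$ the range of $P-q\mathbf{1}$ contains every $\Pon$ and hence is dense, so $q\notin\textrm{S}_r(P)$, while if $q=\lambda_{n_0}$ then $q\in\textrm{S}_p(P)$. You argue dually via a nonzero $g\in\Ran{P-\lambda I}^{\perp}=\ker{P^{*}-\bar\lambda I}$ and the identity $(\lambda_n-\lambda)\langle\Pon,g\rangle_{\nu}=0$; both yield the same conclusion by the same dichotomy. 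The more substantive divergence is in the geometric-multiplicity bound $\mathfrak{M}_g(\lambda_n,P^{*})=1$. The paper assumes a second independent eigenvector, normalises it, constructs an explicit second biorthogonal sequence (the convex combination $\tfrac12\nun+\tfrac12\bar v_n$ in the $n$-th slot), and contradicts the uniqueness of biorthogonal sequences associated to an exact system. You instead compute the codimension of $W=\Spc{\mathcal{P}_m:m\neq n}$: minimality gives $\mathcal{P}_n\notin W$ (hence $\dim W^{\perp}\geq1$), while the fact that the closed subspace $W+\operatorname{span}(\mathcal{P}_n)$ contains every $\mathcal{P}_m$ and therefore all of $\Lnu$ (by completeness) gives $\dim W^{\perp}\leq1$. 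Since $\nun$ is a nonzero element of $W^{\perp}$, every eigenvector of $P^{*}$ at $\lambda_n$ is a scalar multiple of it. Your argument is self-contained (it does not need the uniqueness-of-biorthogonal-sequence fact as an external input -- it in fact proves it along the way) and avoids the ad hoc normalisation step. The algebraic-multiplicity collapse and item~(2) match the paper in substance; you are slightly more careful than the paper's wording in allowing $\Ig^{*}\mathcal{V}_{\bar n}=c\mathcal{L}_{\bar n}$ for a nonzero scalar $c$ rather than asserting $c=1$, which is the correct reading.
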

\begin{proof}
	Let us assume that there exists a complex number $q \in \textrm{S}_r(P)$, that is  $P -q \mathbf{1}$ is one-to-one but does not have a dense range. Since $q \notin \textrm{S}_p(P)$, we have for all $n\geq0$, $ (P - q \mathbf{1})\Pon = (\lambda_n -q)\Pon \neq 0$, which yields to a contradiction as $\Spc{\Pon}=\Lnu$ and proves the first claim.
Next, for any $n,m\geq0$,  using that $(\nun)_{n\geq 0}$  is biorthogonal to $(\Pon)_{n\geq 0}$ and that $\nun \neq 0$, we have that
	\begin{eqnarray}
	\langle P^* \nun -\lambda_n \nun, \mathcal{P}_m \rangle_{\nu} &=& \langle \nun , P \mathcal{P}_m\rangle_{\nu} - \lambda_n \delta_{nm} = (\lambda_m-\lambda_n) \delta_{nm}=0.
	\end{eqnarray}
	That is $P^* \nun -\lambda_n \nun \in \Sp{\Pon}^{\perp}=\lbcurlyrbcurly{0}$  since by assumption $\Spc{\Pon}=\Lnu$. Thus,  we deduce   that $\lambda_n \in \textrm{S}_p(P^*)$. 	
Assume now that there exists $n_0 \in \N$ such that $\mathfrak{M}_g(\lambda_{n_0},P^*)=2$. That is there exists  $\overline{v}_{n_0}  \in \Lnu$ such that $\overline{v}_{n_0} \neq 0$, $\overline{v}_{n_0} \neq \mathcal{V}_{n_0}$ and $P^* \overline{v}_{n_0} = \lambda_{n_0} \overline{v}_{n_0}$. Thus,
for all $m \geq 0$, we have
\begin{eqnarray}
	\lambda_{n_0}\langle \mathcal{P}_m, \overline{v}_{n_0} \rangle_{\nu} &=& \langle \mathcal{P}_m, P^* \overline{v}_{n_0} \rangle_{\nu} = \langle P \mathcal{P}_m, \overline{v}_{n_0} \rangle_{\nu}=\lambda_m\langle \mathcal{P}_m, \overline{v}_{n_0} \rangle_{\nu}  	\end{eqnarray}
	that is for some $C \in \R$, $\langle \mathcal{P}_m, \overline{v}_{n_0} \rangle_{\nu}  = C\delta_{mn_0}.$ Note that we may choose $\overline{v}_{n_0}$ such that $C=1$. Indeed as $P^*$ is linear,  $C \neq 0 $ because otherwise $\overline{v}_{n_0}=0$ as $\Spc{\Pon}=\Lnu$.
	Thus the sequence $(\tilde{v}_n)_{n\geq0}$ defined by $\tilde{v}_{n_0} = \frac{1}{2}v_{n_0} + \frac{1}{2}\overline{v}_{n_0}$ and otherwise $\tilde{v}_n = v_n$  is another biorthogonal sequence  to $(\Pon)_{n\geq 0}$. However, as mentioned before the statement, this is impossible as $\Spc{\Pon}=\Lnu$ which implies the uniqueness of the biorthogonal sequence. Hence for all $n\geq 0$, $\mathfrak{M}_g(\lambda_{n},P^*)=1$. Next, let $n\geq 0$ and note that if $\mbox{dim Ker}(P^* -\lambda_n \mathbf{1})^2=1$ then $\mathfrak{M}_a(\lambda_{n},P^*)=\limi{k}\mbox{dim Ker}(P^* -\lambda_n \mathbf{1})^k= \mbox{dim Ker}(P^* -\lambda_n \mathbf{1}) =1$. Indeed, assume that there exists $V_n \in \Lnu$ such that $(P^*-\lambda_n \mathbf{1})^3 V_n= 0,$	then necessarily $(P^*-\lambda_n \mathbf{1})V_n=0$, that is $V_n \in  \mbox{Ker}(P^* -\lambda_n \mathbf{1})$, since the converse inclusion  always holds, this gives the statement for $k=3$. A recurrence argument yields the claim for all $k\geq2$. Hence it remains to show that $\mbox{Ker}(P^* -\lambda_n \mathbf{1})^2= \mbox{Ker}(P^* -\lambda_n \mathbf{1})$. To this end,  assume that there exists $V_n \neq  \nun\in  \mbox{Ker}(P^* -\lambda_n \mathbf{1})^2$, that is since we have $\mbox{dim Ker}(P^* -\lambda_n \mathbf{1})=\mathfrak{M}_g(\lambda_{n},P^*)=1$, $(P^*-\lambda_n \mathbf{1}) V_n= C\nun$ for some real constant $C \neq 0$.
	Thus
	\begin{equation*}
	\langle P^*V_n, \Pon\rangle_\nu -\lambda_n \langle V_n, \Pon\rangle_\nu= C \langle \nun,\Pon\rangle_\nu.
	\end{equation*}
	Using the eigenfunction property of $\Pon$ and the biorthogonality property, we get
	\begin{equation*}
	0=\lambda_n \langle V_n, \Pon\rangle_\nu -\lambda_n \langle V_n, \Pon\rangle_\nu= C\neq 0,
	\end{equation*}
	which completes the proof of the first item through a contradiction argument.  To prove the second item  we assume that $\lambda_{\bar{n}} \in \textrm{S}_p(P^*)$, that is there exists $\mathcal{V}_{\bar{n}} \in \Lnu$ such that $P^* \mathcal{V}_{\bar{n}} = \lambda_{\bar{n}} \mathcal{V}_{\bar{n}}$. Next, since $\Ig \mathcal{L}_n=\Pon$,  for all $n\geq0$, we easily deduce that $\Ig$ has dense range in $\Lnu$ and thus $\Ig^*$ is one-to-one in $\lnu$. Thus, we deduce from the intertwining relation \eqref{eq:int_gen_dual} and Proposition \ref{prop:int-spectrum} that $\Ig^*\mathcal{V}_{\bar{n}}=\mathcal{L}_{\bar{n}}$ which yields to a contradiction as it is assumed that $\mathcal{L}_{\bar{n}}  \notin \ran{\Ig^*}$.  Hence $\lambda_{\bar{n}} \in \textrm{S}_p(P)\setminus \textrm{S}_p(P^*)$ which implies that $\mbox{Ker}(P^* -\lambda_{\bar{n}} \mathbf{1})=\{\emptyset\}$ with  $\ran{P^* -\lambda_{\bar{n}} \mathbf{1}}^{\perp} =\mbox{Ker}(P -\lambda_{\bar{n}} \mathbf{1})\neq \{\emptyset\} $, that is, $\lambda_{\bar{n}} \in S_r(P^*)$, which completes the proof.
\end{proof}

We complete this part with the following result which provides, in the context of intertwining relation, a set of sufficient conditions for the characterization of the point and residual spectra. Note that this type of condition seems to be weaker than the one appearing in the intertwining literature. Indeed, we are able to extract this information without the assumption that either the  intertwining operator has a bounded inverse or the linear operators $P$ belong to some special classes of linear operators.
\begin{proposition} \label{prop:eig_m}
Assume that the intertwining relation \eqref{eq:int_gen} holds with $ \Ig \in \Bop{\Lg}{\Lnu}$, $\Ran \Ip=\Lnu$ and assume that there exists a sequence $\Pns$ of eigenfunctions for $P$ associated to the set of distinct eigenvalues $(\lambda_n)_{n\geq0}$. If in addition the sequence $\Pns$ is  exact and its biorthogonal sequence  $(\nun)_{n\geq0}$ is complete, then $\textrm{S}_p(P^*)= \textrm{S}_p(Q)=\textrm{S}_p(P)$, $\textrm{S}_r(P^*)= \textrm{S}_r(Q)=\textrm{S}_r(P)=\emptyset$ and $\mathfrak{M}_a(\lambda_{n},P) = \mathfrak{M}_g(\lambda_{n},P) =  \mathfrak{M}_a(\lambda_{n},P^*) = \mathfrak{M}_g(\lambda_{n},P^*) = 1$.
\end{proposition}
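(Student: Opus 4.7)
The plan is to apply Proposition~\ref{prop:sequence-spectrum}\eqref{it:seq_p} twice, once to $P$ with the exact sequence $(\mathcal{P}_n)_{n\geq 0}$ and once to the adjoint $P^*$ with the (now also exact) sequence $(\mathcal{V}_n)_{n\geq 0}$. The intertwining itself will not be invoked in the core argument; its role is already absorbed into the hypotheses that $(\mathcal{P}_n)$ is an eigenfunction sequence of $P$ at the eigenvalues $\lambda_n$ and that $S(Q)=S_p(Q)=(\lambda_n)_{n\geq 0}$. The only subtle point is that Proposition~\ref{prop:sequence-spectrum}\eqref{it:seq_p} has to be applied twice, in a symmetric way, which requires first extracting from it that the biorthogonal sequence is itself made of eigenfunctions of the adjoint.

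First I would apply Proposition~\ref{prop:sequence-spectrum}\eqref{it:seq_p} to $P\in\Bo{\Lnu}$ with the complete, minimal sequence $(\mathcal{P}_n)_{n\geq 0}$: this immediately yields $S_r(P)=\emptyset$, and, since the biorthogonal sequence is precisely $(\mathcal{V}_n)_{n\geq 0}$, that $\lambda_n\in S_p(P^*)$ with $\mathfrak{M}_a(\lambda_n,P^*)=\mathfrak{M}_g(\lambda_n,P^*)=1$ for every $n\geq 0$; in particular $P^*\mathcal{V}_n=\lambda_n\mathcal{V}_n$, so $(\mathcal{V}_n)_{n\geq 0}$ is an eigenfunction sequence for $P^*$. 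Next I would verify that $(\mathcal{V}_n)_{n\geq 0}$ is itself exact in $\Lnu$: by assumption it is complete, and the symmetry $\langle\mathcal{P}_n,\mathcal{V}_m\rangle_\nu=\delta_{nm}$ shows that $(\mathcal{P}_n)_{n\geq 0}$ is a biorthogonal sequence for it, hence it is minimal. Applying Proposition~\ref{prop:sequence-spectrum}\eqref{it:seq_p} once more, this time to $P^*$ with the exact sequence $(\mathcal{V}_n)_{n\geq 0}$, gives $S_r(P^*)=\emptyset$ and, since $P$ is bounded so $P^{**}=P$, that $\lambda_n\in S_p(P)$ with $\mathfrak{M}_a(\lambda_n,P)=\mathfrak{M}_g(\lambda_n,P)=1$ for every $n\geq 0$.

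It remains to promote the inclusions $\{\lambda_n\}_{n\geq 0}\subseteq S_p(P)$ and $\{\lambda_n\}_{n\geq 0}\subseteq S_p(P^*)$ to equalities with $S_p(Q)=\{\lambda_n\}_{n\geq 0}$. If $\bar\lambda\in S_p(P)$ with eigenfunction $f\neq 0$, then for every $n$
\[
(\bar\lambda-\lambda_n)\,\langle f,\mathcal{V}_n\rangle_\nu=\langle Pf,\mathcal{V}_n\rangle_\nu-\langle f,P^*\mathcal{V}_n\rangle_\nu=0,
\]
so if $\bar\lambda\notin\{\lambda_n\}_{n\geq 0}$ then $\langle f,\mathcal{V}_n\rangle_\nu=0$ for all $n$, forcing $f=0$ by completeness of $(\mathcal{V}_n)_{n\geq 0}$; contradiction. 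The same duality argument, swapping the roles of $P$ and $P^*$ and of $(\mathcal{P}_n)$ and $(\mathcal{V}_n)$, delivers $S_p(P^*)\subseteq\{\lambda_n\}_{n\geq 0}$. Combining gives $S_p(P)=S_p(P^*)=S_p(Q)$, while $S_r(Q)=\emptyset$ follows directly from the standing assumption $S(Q)=S_p(Q)$. The one genuine obstacle, and the point which must be handled with care, is the bookkeeping in the second application of Proposition~\ref{prop:sequence-spectrum}\eqref{it:seq_p}: one has to check that $(\mathcal{V}_n)_{n\geq 0}$ inherits both the completeness (given) and the minimality (from the symmetry of biorthogonality) needed for the hypothesis, and to use $P^{**}=P$ to transfer the multiplicity conclusion back to $P$.
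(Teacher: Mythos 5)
Your proof is correct, but it takes a genuinely different route from the paper. The paper's argument runs through the intertwining itself: it first uses $\mathrm{Ran}\,\Ig=\lnu$ (hence $\ker\Ig^*=\{0\}$), the intertwining and its adjoint, and Proposition~\ref{prop:int-spectrum} to establish the chain of inclusions $\textrm{S}_p(P^*)\subseteq\textrm{S}_p(Q)\subseteq\textrm{S}_p(P)$; it then obtains $\textrm{S}_r(P)=\textrm{S}_r(P^*)=\emptyset$ from Proposition~\ref{prop:sequence-spectrum}; and finally it closes the loop via the identity $\ker(P-\lambda\mathbf{I})=\ran{P^*-\lambda\mathbf{I}}^{\perp}$, which gives $\textrm{S}_p(P)\subseteq\textrm{S}_r(P^*)\cup\textrm{S}_p(P^*)=\textrm{S}_p(P^*)$. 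You instead apply Proposition~\ref{prop:sequence-spectrum}\eqref{it:seq_p} twice, once to $P$ with the exact sequence $(\mathcal{P}_n)_{n\geq 0}$ and once to $P^*$ with the exact sequence $(\mathcal{V}_n)_{n\geq 0}$ (exactness of the latter following from the given completeness together with minimality, which is immediate from the symmetry of biorthogonality); this yields both empty residual spectra and all four multiplicity statements, with $P^{**}=P$ carrying the conclusion back to $P$. You then close the identification $\textrm{S}_p(P)=\textrm{S}_p(P^*)=\{\lambda_n\}$ by the direct duality-and-completeness computation $(\bar\lambda-\lambda_n)\langle f,\mathcal{V}_n\rangle_\nu=0$, rather than by the kernel-range identity. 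Your route buys a cleaner, fully symmetric argument, and it makes visible that the intertwining hypotheses ($\Ig\in\Bop{\Lg}{\Lnu}$ and $\mathrm{Ran}\,\Ig=\lnu$) are not needed once exactness of $(\mathcal{P}_n)_{n\geq 0}$, completeness of $(\mathcal{V}_n)_{n\geq 0}$, and the identification $\textrm{S}(Q)=\textrm{S}_p(Q)=(\lambda_n)_{n\geq 0}$ are all assumed -- the intertwining in the paper's proof is mainly used to organize the same conclusion through Proposition~\ref{prop:int-spectrum}. The one place where you rely on the \emph{proof} rather than the \emph{statement} of Proposition~\ref{prop:sequence-spectrum}\eqref{it:seq_p} is to extract $P^*\mathcal{V}_n=\lambda_n\mathcal{V}_n$ (the statement as written only records $\lambda_n\in\textrm{S}_p(P^*)$); this is legitimate, but you could equally well re-derive it on the spot from $\langle P^*\mathcal{V}_n-\lambda_n\mathcal{V}_n,\mathcal{P}_m\rangle_\nu=(\lambda_m-\lambda_n)\delta_{nm}=0$ and completeness of $(\mathcal{P}_m)_{m\geq0}$ to keep the argument self-contained.
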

\begin{proof}
Since $\Ran \Ip=\Lnu$  and thus $\ker{\Ip^*}=\{\emptyset\}$, we get,  by means of the intertwining relation \eqref{eq:int_gen},  its adjoint version \eqref{eq:int_gen_dual}, the minimality of $\Pns$ and Proposition \ref{prop:int-spectrum} that $\textrm{S}_p(P^*)\subseteq \textrm{S}_p(Q) \subseteq \textrm{S}_p(P)$. Moreover, as both sequences $\Pns$ and $(\nun)_{n\geq 0}$ are complete we have from Proposition \ref{prop:sequence-spectrum} that $\textrm{S}_r(P)=\textrm{S}_r(P^*)= \emptyset$. Next, from the identity $\mbox{Ker}(P -\lambda \mathbf{1})=\ran{P^* -\lambda \mathbf{1}}^{\perp}  $,  we easily deduce  the inclusions $\textrm{S}_r(P^*) \subseteq \textrm{S}_p(P) \subseteq S_r(P^*) \cup  S_p(P^*)$, which provides that  $\textrm{S}_p(P^*)=\textrm{S}_p(P)$ and completes the proof of the  first two claims.
Finally the last one follows from Proposition \ref{prop:sequence-spectrum}.
\end{proof}

\subsection{Proof of Theorem \ref{thm:spec}}
 We recall that the item \eqref{it:seq1} was stated in Theorem \ref{thm:eigenfunctions1}\eqref{it:compl_rb1} and proved in that Chapter. Let now $\psi \in \Ni$. Then, from Theorem \ref{thm:existence_coeigenfunctions1} we have that, for all $n\geq0$, $\nun \in \Lnu$ and the equation \eqref{eq:equation_nu_n} in Proposition \ref{thm:dual} entails that $\Lc_n=\Ip^* \nun$. Hence, since from \eqref{eq:c-p}, $\mathcal{P}_m=\Ip \Lc_m,\,m\geq 0,$ we get that
	\[\langlerangle{\mathcal{P}_m,\nun}_\nu=\langlerangle{\Ip \Lc_m,\nun}_\nu=\langlerangle{\mathcal{L}_m,\Ip^* \nun}_\varepsilon=\langlerangle{\mathcal{L}_m,\Lc_n}_\varepsilon=\delta_{nm},\]
	which proves that $(\nun)_{n\geq0}$ is minimal. Its uniqueness property follows since the sequence $(\mathcal{P}_n)_{n\geq0}$ is exact which itself is a direct consequence of item \eqref{it:seq1}. We complete the proof of this item \eqref{it:1_thmseq} by means of item \eqref{it:seq1}  combined with the fact that a  sequence which is biorthogonal to a Bessel sequence is a Riesz-Fischer sequence, see e.g.~\cite[Proposition 2.3(ii)]{Casa-Christen}. 
Assume now that  $\psi \in \Ne_P \cup \Neab^{d_\phi}$. Then, the claims of item \eqref{it:3_thmseq} follow readily from  the fact that $\Spc{\nun}=\lnu$  which is found in Theorem \ref{lem:TvMDecaynu1}\eqref{it:compl}.  For the last item \eqref{it:spec}, assuming first that $\psi \in \Ne$, the inclusion $\mathcal{E}_t \subseteq \textrm{S}_p(P_t)$ is given in Theorem \ref{thm:eigenfunctions1}\eqref{it:ef1}, whereas the last claim of \eqref{sit:sa}, i.e.~$S_r(P_t)=\emptyset$, is deduced from Proposition \ref{prop:sequence-spectrum}\eqref{it:seq_p}. Theorem \ref{thm:existence_coeigenfunctions1} (resp.~Proposition \ref{prop:sequence-spectrum}\eqref{it:seq_p})  gives  the first (resp.~second) claim of the item \eqref{sit:sb}.  Item \eqref{sit:sc} follows by combining the previous item  \eqref{it:3_thmseq} with  Proposition \ref{prop:eig_m}. Finally, the last item \eqref{it:spectrum} is immediate from Theorem \ref{thm:coeigenfunctions1}.

\newpage
 \section{Proof of Theorems \ref{thm:dens}, \ref{thm:adj} and \ref{thm:dense}} \label{sec:proof_main}

  Let first $\psi \in \Ne$. Then the expressions of $\Pon$ as stated in \eqref{defP}  have been derived in Theorem \ref{thm:eigenfunctions1}\eqref{it:ef1}. Next,  from the intertwining relationship \eqref{MainInter1_2} and the expansion \eqref{eq:expansionLaguerre} of the Laguerre semigroup of order $m=0$,   we get, in the $\lnu$ topology, that for any $g \in \lga$, $t >0$,
  \begin{equation*}
  	P_t \Ip\: g = \Ip\: Q_t g  = \Ip\: \sum_{n=0}^{\infty}e^{-n t} \langle g,\mathcal{L}_n \rangle_{\e} \:  \: \mathcal{L}_n
  	=  \sum_{n=0}^{\infty}e^{-n t} \langle g,\mathcal{L}_n \rangle_{\e} \:  \: \Pon,
  \end{equation*}
  where the last identity is justified by the Bessel property of the sequence $(\Pon)_{n\geq 0}$ combined with $(\langle g,\mathcal{L}_n \rangle_{\e})_{n\geq0} \in \ell^2(\N)$, for any $f \in \lga$. Next, since from Theorem \ref{MainProp}\eqref{it:Iphi1} $\Ipn \in \Bop{\Lg}{\Lnu}$ and from Theorem \ref{MainProp}\eqref{it:bfb}, $\Ran{\Ip} = \lnu$, 
  we have that  its pseudo-inverse $\Ip^{\dagger}$, see \cite[p.234]{Israel-Grenville} for definition,  is densely defined from \mladen{$\Lnu$} into $\lga$. Thus, for any $f \in \ran{\Ip}$, $t\geq 0$,
  \begin{eqnarray} \label{eq:exp_dens_pi}
  	P_t f &=&  \sum_{n=0}^{\infty}e^{-n t} \langle \Ip^{\dagger}\: f,\mathcal{L}_n \rangle_{\e} \:  \: \Pon \quad \textrm{ in } \lnu,
  \end{eqnarray}
  and, the two linear operators \mladen{in \eqref{eq:exp_dens_pi}} coincide on a dense subset of $\lnu$.
  \subsection{Proof of Theorem \ref{thm:dens}\eqref{it:nic}}
  We have here  $\psi \in \Ni^c$ and in particular  $\r<\infty$. First, note that identity \eqref{eq:exp_dens_pi} proves \eqref{eq:exp_dens_pi0} of Theorem \ref{thm:dens}\eqref{it:nic}. Next, from \eqref{eq:asympt_polyn1} and the notation there, we get that, for all $p\!\in\! \N$ and large $n$
  \begin{eqnarray} \label{eq:bound_pol_derivare}
  \left|\mathcal{P}^{(p)}_{n}(x)\right| &\leq &\mathcal{P}^{(p)}_{n}(-x) \leq   	  n^{p+\frac12}\Ep(nx) =\bo{n^{p+\frac12} e^{\frac{x}{\r} n}}.
  \end{eqnarray}
  Then, writing $c_n^{\dagger}(f)=\langle \Ip^{\dagger}\: f,\mathcal{L}_n \rangle_{\e}$, we get, with $x\in\lbrb{0,K}$, for any $K>0$,  any $f \in \ran{\Ip}$  and $t\geq0$
  \begin{eqnarray*} \label{eq:exp_deriva}
  	\left| \sum_{n=p}^{\infty}e^{-n t} c_n^{\dagger}(f) \:  \: \mathcal{P}^{(p)}_{n}(x) \right|  &\leq &
 C \sum_{n=p}^{\infty}n^{p+\frac12}e^{-n \left(t-\frac{x}{\r}\right)}c_n^{\dagger}(f)
  	\leq C \sum_{n=p}^{\infty}n^{p+\frac12} e^{-n \left(t-\frac{K}{\r}\right)} c_n^{\dagger}(f).
  \end{eqnarray*} 	
  where $C>0$. Since, from the discussion above, $(c_n^{\dagger}(f))_{n\geq0} \in \ell^2(\N)$ for all $f\in \ran{\Ip}$, we get that for all $k,p\in\N$
  \begin{eqnarray*} \label{eq:exp_derv}
  	\frac{d^k}{dt^k}(P_t f)^{(p)}(x) =\sum_{n=p}^{\infty}(-n)^k e^{-n t} c_n^{\dagger}(f) \:  \: \mathcal{P}^{(p)}_{n}(x) ,
  \end{eqnarray*} 	
  where the series is locally uniformly convergent in $ \mathfrak{C}_{\r}=\left\{(t,x) \in \R^2_+; x\leq \r t\right\}$, which completes the proof of Theorem \ref{thm:dens}\eqref{it:nic}.
\subsection{Proof of Theorem \ref{thm:dens}\eqref{it:thmdens1}}  We prove Theorem\ref{thm:dens}\eqref{it:thmdens1a} for the domain
  \begin{equation}\label{eq:DT}
   \mathcal{D}_{T} = \mathcal{D}^{\Ne_P}_0(\lnu) \cup \mathcal{D}^{\Ne_{\H}}_{T_{\H}}(\Lv) \cup \mathcal{D}^{\Ne_\alpha}_{T_{\pi_{\alpha}}}(\Lga) \cup \mathcal{D}^{\Ne_\alpha}_{T_{\pi_{\alpha},\rho_{\alpha}}}(\Lnu)\cup \mathcal{D}^{\Ne_R}_{\Tab}(\Lnu).
   \end{equation}
  We make the following observations first.
 For any $t>0$ consider the linear space $\mathbf{D}(S_t)=\left\{ f \in \lnu;\: \left\langle P_t f,\nun\right\rangle_{\nu} \in \ell^2(\mathbb{N}) \right\}$. Let $S_t\!:\!\mathbf{D}(S_t) \mapsto \lnu$ be the spectral operator  defined by
\begin{equation*}
S_tf(x) =\sum_{n=0}^{\infty}\left\langle P_t f,\nun\right\rangle_{\nu}\Pon(x) = \sum_{n=0}^{\infty}e^{-nt}\left\langle f,\nun\right\rangle_{\nu}\Pon(x).
\end{equation*}

Note that $P_t = S_t$ on $\ran{\Ip} \subset \mathbf{D}(S_t)$  and thus $\overline{\mathbf{D}}(S_t)=\lnu$.
Indeed,  $\psi \in \Ni$ implies $\nun \in \lnu,\,n\geq0,$  see Theorem \ref{thm:existence_coeigenfunctions1}\eqref{it:coe-ci1} with $\Ip^{*} \nun = \mathcal{L}_n$.  Then, for any $f \in \ran{\Ip}$, $\langle \Ip^{\dagger}\: f,\mathcal{L}_n \rangle_{\e} =\langle \: f,\nun \rangle_{\nu}$ and from \eqref{eq:exp_dens_pi}, for any $f \in \ran{\Ip}$, $t>0$,
\begin{equation}\label{eq:exp_dense}
  P_t f(x) = S_tf(x)= \sum_{n=0}^{\infty} e^{-nt} \langle  f,\nun \rangle_{\nu}  \: \Pon(x) \quad \textrm{ in } \lnu .
  \end{equation}
 \mladen{This incidentally proves \eqref{eq:intertwineExpansion} of Theorem \ref{thm:dens}\eqref{it:thmdens1a} for $\mathcal{D}^{\Ne_{\infty}}_0(\ran{\Ip})$}. Since $\Ran{\Ip}=\lnu$, by the bounded linear transformation theorem, we get that $P_t$ is the unique linear continuous  extension of $S_t$ in $\lnu$.
In order to characterize the domain of the spectral operator and \mladen{show that each member of $\mathcal{D}_{T}$ in \eqref{eq:DT} is a subset of this domain}, we present several methodologies to show that for some $\rm{L}$ a linear subspace of $\lnu$ we have, for all $f\in\rm{L}$, $\lbrb{\left\langle P_t f ,\nun\right\rangle_{\nu}}_{n\geq 0} \in \ell^2(\N)$ and as a consequence $P_tf=S_tf$ in $\lnu$. Although, we know that $P_t=S_t$ on $\ran{\Ip}$ and $\Ran{\Ip}=\lnu$, it may not be trivial to show, even when $S_tf \in \lnu$ for $f\in {\rm{L}}$, that $S_t$ is also continuous in $\lnu$ on this subspace.
\subsubsection{Hilbert sequence argument} \label{sec:hsa}
Recall that $\psi \in \Ni$. The following statement holds.
\begin{proposition}\label{prop:hsa}
Write ${\rm{V}}^{\perp}=\spa{\nun}^{\perp}$. Then, for any $t\geq0$, $P_t {\rm{V}}^{\perp} \subseteq {\rm{V}}^{\perp}$ and $\ker{P_t}\subseteq  {\rm{V}}^{\perp}= \ker{S_t}.$
Moreover, if $\Span{\nun}=\lnu$ then  $\ker{P_t}=\{0\}$  and for any $f \in \Lnu$ and $t>0$ such that $\lbrb{\left\langle P_t f ,\nun\right\rangle_{\nu}}_{n\geq 0} \in \ell^2(\N)$  we have that $P_tf=S_tf$ in $\lnu$.
\end{proposition}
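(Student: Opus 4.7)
\textbf{Plan of proof for Proposition \ref{prop:hsa}.} The argument will be organized around three fundamental ingredients already established for $\psi\in\Ni$: first, each $\nun$ is a co-eigenfunction of $P_t$, namely $P_t^{*}\nun=e^{-nt}\nun$ (Theorem \ref{thm:existence_coeigenfunctions1}\eqref{it:coe-ci1} combined with the adjoint intertwining \eqref{eq:dual_intertwin}); second, the sequences $(\Pon)_{n\geq0}$ and $(\nun)_{n\geq0}$ are biorthogonal in $\lnu$, i.e.~$\langle \Pon,\nu_m\rangle_{\nu}=\delta_{nm}$ (Theorem \ref{thm:sequences1}\eqref{it:1_thmseq}); and third, $(\Pon)_{n\geq0}$ is a Bessel sequence in $\lnu$ (Theorem \ref{thm:eigenfunctions1}\eqref{it:compl_rb1}), so that the synthesis operator $\mathcal{S}\colon\ell^{2}(\N)\to\lnu$ defined in \eqref{eq:def_syn} is bounded.

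The first inclusion $P_t{\rm V}^{\perp}\subseteq {\rm V}^{\perp}$ will follow at once from the co-eigenfunction identity: for $f\in{\rm V}^{\perp}$ and any $n$,
\[
\langle P_t f,\nun\rangle_{\nu}=\langle f,P_t^{*}\nun\rangle_{\nu}=e^{-nt}\langle f,\nun\rangle_{\nu}=0.
\]
The same computation applied to $f\in\ker P_t$ yields $e^{-nt}\langle f,\nun\rangle_{\nu}=0$, hence $\ker P_t\subseteq{\rm V}^{\perp}$. For ${\rm V}^{\perp}=\ker S_t$, one direction is trivial from the definition of $S_t$ since $f\in{\rm V}^{\perp}$ gives every coefficient zero (and also puts $f$ in $\mathbf{D}(S_t)$); conversely, if $S_t f=0$ then, using the continuity of $\mathcal{S}$ to interchange summation and inner product, biorthogonality yields $0=\langle S_t f,\nu_m\rangle_{\nu}=e^{-mt}\langle f,\nu_m\rangle_{\nu}$ for every $m\geq0$, so $f\in{\rm V}^{\perp}$.

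Assuming now that $\Span{\nun}=\lnu$, we have ${\rm V}^{\perp}=\{0\}$, and the inclusion $\ker P_t\subseteq{\rm V}^{\perp}$ already established gives $\ker P_t=\{0\}$. For the final statement, fix $f\in\lnu$ and $t>0$ such that $(\langle P_t f,\nun\rangle_{\nu})_{n\geq0}\in\ell^{2}(\N)$; then $f\in\mathbf{D}(S_t)$ and $S_t f\in\lnu$ by the Bessel property. Forming $g:=P_t f-S_t f\in\lnu$ and computing, for each $m$,
\[
\langle g,\nu_m\rangle_{\nu}=\langle P_t f,\nu_m\rangle_{\nu}-\sum_{n\geq0}e^{-nt}\langle f,\nun\rangle_{\nu}\langle \Pon,\nu_m\rangle_{\nu}=e^{-mt}\langle f,\nu_m\rangle_{\nu}-e^{-mt}\langle f,\nu_m\rangle_{\nu}=0,
\]
where again the term-by-term evaluation of the second inner product uses the boundedness of $\mathcal{S}$ and biorthogonality, we conclude $g\in{\rm V}^{\perp}=\{0\}$, i.e.~$P_t f=S_t f$ in $\lnu$.

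There is no genuine obstacle in this proof; every ingredient is already in place. The only point that deserves care, and where I would be explicit, is the justification of the interchange $\langle \mathcal{S}(\underline{c}),\nu_m\rangle_{\nu}=\sum_n c_n\langle \Pon,\nu_m\rangle_{\nu}$ for $\underline{c}\in\ell^{2}(\N)$: it rests on the continuity of $\mathcal{S}$ (Bessel bound $\sqrt{B}$) together with the Cauchy--Schwarz inequality, which allows one to commute $\langle\,\cdot\,,\nu_m\rangle_{\nu}$ with the norm-convergent series defining $\mathcal{S}(\underline{c})$.
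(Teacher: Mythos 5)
Your proof is correct and follows essentially the same route as the paper's: the co-eigenfunction identity $P_t^{*}\nun=e^{-nt}\nun$ drives the inclusions $P_t{\rm V}^{\perp}\subseteq{\rm V}^{\perp}$, $\ker P_t\subseteq{\rm V}^{\perp}$, and ${\rm V}^{\perp}=\ker S_t$, while the biorthogonality of $(\Pon)$ and $(\nun)$ together with the Bessel property of $(\Pon)$ give $\langle S_tf,\nu_m\rangle_{\nu}=\langle P_tf,\nu_m\rangle_{\nu}$ for all $m$, whence $P_tf=S_tf$ once ${\rm V}^{\perp}=\{0\}$. Your explicit remark on justifying the interchange of the synthesis series with $\langle\,\cdot\,,\nu_m\rangle_{\nu}$ via norm convergence of $\mathcal{S}(\underline{c})$ is a point the paper leaves tacit and is a welcome clarification.
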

\begin{proof}
If $f \in {\rm{V}}^{\perp}$ then from \eqref{coeigendef}, we have   for any $n\in \N$, $\left\langle P_t f,\nun\right\rangle_{\nu}=e^{-nt}\left\langle  f,\nun\right\rangle_{\nu}=0$ that is $P_t f \in  {\rm{V}}^{\perp}$. A similar argument yields $\ker{P_t}\subseteq {\rm{V}}^{\perp}$. With still $f \in  {\rm{V}}^{\perp}$, we have that
\begin{equation*}
S_tf =\sum_{n=0}^{\infty}\left\langle P_t f,\nun\right\rangle_{\nu}\Pon=\sum_{n=0}^{\infty}e^{-nt}\left\langle  f,\nun\right\rangle_{\nu}\Pon=0 \in \lnu,
\end{equation*}
that is ${\rm{V}}^{\perp} \subseteq \ker{S_t}$. Next, let $f \in \ker{S_t}$. Then,
as $(\nun,\Pon)_{n\geq0}$ is a biorthogonal sequence, see Theorem \ref{cor:sequences}\eqref{it:1_thmseq}, we have, for any $m\geq0$, that
\begin{equation*}
0=\left\langle S_t f,\mathcal{V}_m\right\rangle_{\nu} =e^{-mt}\left\langle  f,\mathcal{V}_m\right\rangle_{\nu},
\end{equation*}
that is $ \ker{S_t}\subseteq {\rm{V}}^{\perp}$.
Assume now that $\Span{\nun}=\lnu$ and $f$, $t$ are such that $\lbrb{\left\langle P_t f ,\nun\right\rangle_{\nu}}_{n\geq 0} \in \ell^2(\N)$.  Since $(\Pon)_{n\geq0}$ is a Bessel sequence, see Theorem \ref{thm:eigenfunctions1}\eqref{it:compl_rb1}, then
 \begin{equation*}
S_tf =\sum_{n=0}^{\infty}\left\langle P_t f,\nun\right\rangle_{\nu}\Pon \in \lnu.
\end{equation*}
Thus, as  $(\nun,\Pon)_{n\geq0}$ is a biorthogonal sequence, we have, for any $m\geq0$, that
\begin{equation*} \label{eq:sorth}
\left\langle S_t f,\mathcal{V}_m\right\rangle_{\nu} =\left\langle P_t f,\mathcal{V}_m\right\rangle_{\nu},
\end{equation*}
which provides the statement as in a Hilbert space the notions of complete and total sequences coincide, i.e.~${\rm{V}}^{\perp}=\{0\}$ if and only if $\Span{\nun}=\lnu$.
\end{proof}
\subsubsection{A density argument via the intertwining operator} \label{sec:densarg}
As we can prove the completeness of the sequence of co-eigenfunctions for some classes  and thus invoke Proposition \ref{prop:hsa},  in this part we develop another technique, which relies on a  density argument,  to show that the two operators coincide on a subspace. More specifically, consider the Hilbert space $\rm{L}$ with norm $||.||_{\rm{L}}$ \mladen{ generated by a non-negative measure on $\R^+$ with density $L$} and $\rm{L}\subseteq\lnu$ such that $\Ip \in \Bo{\rm{L}}$  with $\Ran{\Ip}_{||.||_L}\!= \rm{L}$. \mladen{Note that in this case $\rm{L}\subseteq\lnu$ implies that $\nu$ is absolutely continuous with respect to $L$ and the $C||.||_L\geq ||.||_\nu$, for some $C\in\lbrb{0,\infty}$.} Assume further that for any $f\in\rm{L}$, we have, for some functions $V_n \in {\rm{L}}$, that
\begin{equation}\label{eq:cs_b}
\left\langle f,\nun\right\rangle_{\nu} \leq ||f||_{\rm{L}} \: ||V_n||_{\rm{L}} \textrm{ with }||V_n||_{\rm{L}} =\bo{n^a e^{nT}},
 \end{equation}
 for some $T,a>0$ and $n$ large. This implies that, for all $t>T$, $(e^{-nt}\left\langle f,\nun\right\rangle_{\nu} )_{n\geq 0} \in \ell^2(\N)$. Since $(\Pon)_{n\geq0}$ forms a Bessel sequence with Bessel bound $1$, see Theorem \ref{thm:eigenfunctions1}\eqref{it:compl_rb1} and \eqref{eq:bes_bound}, then for all $f \in {\rm{L}} $ and $t>T$,
\begin{equation*}
S_tf =\sum_{n=0}^{\infty}\left\langle P_t f,\nun\right\rangle_{\nu}\Pon=\sum_{n=0}^{\infty}e^{-nt}\left\langle f,\nun\right\rangle_{\nu}\Pon=\mathcal{S}\lbrb{(e^{-nt}\left\langle f,\nun\right\rangle_{\nu} )_{n\geq 0}} \in \lnu,
\end{equation*}
where $\mathcal{S} : (c_n)_{n\geq 0}\in\ell^2(\N) \rightarrow \sum_{n=0}^{\infty}c_n\Pon\in\lnu$, see \eqref{eq:def_syn}, is norm bounded by $1$.
We aim to show that $S_tf=P_tf$. From the assumption $\Ran{\Ip}_{||.||_L} = \rm{L}$ there is  $(f_m)_{m\geq0}\in \rm{L}$ such that $\lim_{m\rightarrow\infty}\Ip f_m=f$ in $\rm{L}$. Putting for any $n,m\!\in \N,\,g\!\in \lnu$,
\begin{equation*}
c_{n,t}(g)=e^{-nt}\left\langle g,\nun\right\rangle_{\nu},
\end{equation*}
we have, from \eqref{eq:exp_dense},  for all $t>0$, and in particular for all $t>T$ and $m\geq0$,  that
\begin{equation*}
P_t\Ip f_m(x) = \sum_{n=0}^{\infty}c_{n,t}(\Ip f_m) \: \Pon(x)\quad \textrm{ in } \lnu.
\end{equation*}
Therefore, we get that
\begin{eqnarray*}
\|P_t\Ip f_m-S_tf\|_{\nu}^2&=&\|\mathcal{S}(c_{n,t}(\Ip f_m-f))\|_{\nu}^2 \\ &\leq& \sum_{n=0}^{\infty}c^2_{n,t}(\Ip f_m-f) \leq \|\Ip f_m-f\|^2_{\rm{L}} \sum_{n=0}^{\infty}e^{-2nt}||V_n||^2_{\rm{L}}\\&  \leq & C_t \|\Ip f_m-f\|^2_{\rm{L}},
\end{eqnarray*}
where $\infty>C_t>0$ and we have used  \eqref{eq:cs_b} for the two last inequalities. Thus $\lim_{m\rightarrow\infty}P_t\Ip f_m=S_t f$ in $\lnu$. However, as $P_t$ is a contraction in $\lnu$ \mladen{and $\limi{m}\Ip f_m=f$ in $\Lnu$}, we conclude that $P_tf=S_t f$

We now apply the previous results to \mladen{all domains in \eqref{eq:DT}}.

 First, if $\psi \in \Ne_P$ (resp.~$\psi \in \Ne_R$) , since from \eqref{eq:est_Np} (resp.~\eqref{eq:est_Nr}), we have for large $n$ and $\epsilon>0$, $||\nun ||_{\nu} = \bo{e^{\epsilon n}}$ (resp.~$||\nun ||_{\nu} = \bo{e^{\Tab n}}$), we deduce,  from the density argument presented in \ref{sec:densarg} with ${\rm{L}}=\lnu$, the convergence in $\lnu$ of the spectral operator for the domain  $ \mathbf{D}^{\Ne_P}_{0}(\lnu) \cup \mathbf{D}^{\Ne_R}_{\Tab}(\lnu)$ {and thus \eqref{eq:intertwineExpansion} of Theorem\ref{thm:dens}\eqref{it:thmdens1a} for those}.
If, furthermore, $\psi \in \Neab^{d_\phi}$, see \eqref{def:class_d} for the definition, the Hilbert sequence approach developed in \ref{sec:hsa} can be used as in this case $\Span{\nun}=\lnu$, see Theorem \ref{cor:sequences}\eqref{it:3_thmseq}.  We continue with  $\psi \in \Nea$ and \mladen{hence the domains $\mathcal{D}^{\Ne_\alpha}_{T_{\pi_{\alpha}}}(\Lga) \cup \mathcal{D}^{\Ne_\alpha}_{T_{\pi_{\alpha},\rho_{\alpha}}}(\Lnu)$ from \eqref{eq:DT}}. Note that for any $f \in \Lga$, $
 \ga(x) =    \max\left(\nu(x),e^{-x^{\frac1\gamma}}\right),  x>0, \gamma >\alpha+1$, the Cauchy-Schwarz inequality yields that with $\nun\nu=w_n,\,n\geq0,$
\begin{equation*} \label{eq:cs_al}
\labs\langle f,\nun\rangle_\nu\rabs =\labs\left\langle f, \frac{w_n}{\ga}\right\rangle_{\ga}\rabs\leq ||f||_{\ga} \left|\left|\frac{w_n}{\ga}\right|\right|_{\ga}= \bo{e^{\lbrb{T_{\pi_{\alpha}}+\mladen{\varepsilon}}n}},
\end{equation*}
where, for large $n$, we have used the estimate \eqref{eq:refinedEstimatesonTV} with $a>\frac{3}{2}$ \mladen{and any $\varepsilon>0$}. Moreover as plainly $\frac{\ga(x)}{\Gamma(\gamma+1)}dx$ is a probability measure on $\R_+$ with  $\int_{0}^{\infty}e^{\epsilon x}\ga(x)dx<\infty$ for some $\epsilon>0$, we have that it is moment determinate and thus the set of polynomials is dense in $\Lga$, see \cite{Akhiezer-65}. Hence $\Ran{\Ip}_{||.||_\ga} = \Lga$. By resorting again to the density argument outlined in \ref{sec:densarg} with ${\rm{L}}=\Lga\mladen{\subseteq\Lnu}$ or  ${\rm{L}}=\Lnu$ and using in this latter case the estimate \eqref{eq:est_PL}, we get that $P_t = S_t$ \mladen{thus \eqref{eq:intertwineExpansion} of Theorem \ref{thm:dens}\eqref{it:thmdens1a}} for the domains $ \mathbf{D}^{\Ne_{\alpha}}_{T_{\pi_{\alpha}}}(\Lga)$ and $ \mathbf{D}^{\Ne_{\alpha}}_{T_{\pi_{\alpha},\rho_\alpha}}(\lnu)$ with $T_{\pi_{\alpha}}=-\ln \sin\lb \frac{\pi}{2} \alpha\rb\leq T_{\pi_{\alpha},\rho_\alpha}=\max\left(T_{\pi_{\alpha}},1+\rho^{-1}_\alpha\right)$, see Theorem \ref{lem:TvMDecay}\eqref{it:est_PL}. Note that often  $T_{\pi_{\alpha},\rho_\alpha}>T_{\pi_{\alpha}}$ which makes the two different domains valuable.

  Finally, by a similar token, for any $\psi \in \Ne_{\H}$ and $f \in \Lv,\,\var(x)=x^{-\alpha},\,x>0,\,\alpha\in\lbrb{0,1}$,  using the estimate \eqref{eq:EstimateTvNorms}, we get, \mladen{for any $\varepsilon>0$,} that
\[\labs\langle f,\nun\rangle_\nu\rabs =\labs\left\langle f, \frac{w_n}{\var}\right\rangle_{\var}\rabs\leq ||f||_{\var} \: \left|\left|\frac{w_n}{\var}\right|\right|_{\var}= \bo{e^{\lbrb{T_{\H}+\varepsilon}n} }.\]
This combined with the fact that $\Ran{\Ip}_{||.||_{\var}}= \Lv$, see Lemma \ref{lem:densityofIpn} \eqref{it:dense}, and \mladen{$\Lv\subseteq\Lnu$, thanks to Lemma \ref{lem:MellinTT11} being applicable with $n=k=0,\,\overline{a}=\alpha$,}  yield the validity of \eqref{eq:intertwineExpansion} of Theorem\ref{thm:dens}\eqref{it:thmdens1a} for the domain $ \mathbf{D}^{\Ne_{\H}}_{T_{\H}}(\Lv)$ and hence \mladen{we have established the whole of Theorem \ref{thm:dens}\eqref{it:thmdens1a}. Theorem \ref{thm:dens}\eqref{it:thmdens1ba} is a direct consequence of \eqref{eq:exp_dense} and \eqref{eq:bound_pol_derivare}. Theorem \ref{thm:dens}\eqref{it:thmdens1bb} also follows the same way noting that when $\r=\infty$, \eqref{eq:bound_pol_derivare} gives that $\left|\mathcal{P}^{(p)}_{n}(x)\right|=\bo{e^{\epsilon n}},$ for all $\epsilon>0$. Theorem \ref{thm:dens}\eqref{it:thmdens1bc} is then trivial.}

\subsection{Heat kernel expansion}
This part is devoted to the proof  of Theorem \ref{thm:dens}\eqref{it:main_heat}.   To this end, we assume that $\psi \in \Nee$ and recall, from Theorem \ref{thm:classes}\eqref{it:class_Ng} that, in this case, $\r=\infty$, and, from Theorem \ref{thm:classes}\eqref{it:NrNt},  that $\Ne_R \subset \Nee$. Moreover, we  set
 \[ T=T_{\H}=-\ln \sin\H \textrm{ if } \psi \in \Nee  \textrm{ or } T=\Tab \wedge T_{\H} \textrm{ if }  \psi \in \Ne_R,\]
 \mladen{where we recall that $\Tab=-\ln(2^{\bar{\alpha}}-1)$ with where for any $\psi\in \Ne_R$ we have defined $\bar{\alpha}=\sup \left\{0<\alpha\leq1;\: \exists \: \mr >\frac{1}{\alpha}-1 \textrm{ and } \psi\in \Ne_{\alpha,\mr}\right\}$.}
 With this notation,  we introduce the linear operator $\overline{S}_t$ defined formally, for any $t>T$, by
\begin{equation*}
\overline{S}_tf(x) = \int_0^{\infty}f(y)\lbrb{\sum_{n=0}^{\infty}\mladen{e^{-nt}}\Pon(x) w_n(y)}dy = \int_0^{\infty}f(y)S_t\delta_y(x).
\end{equation*}
Our ultimate aim is to show that $\overline{S}_tf=P_tf$ for functions $f$ with compact support in order to obtain the expression of the heat kernel. To do so we first recall the  pointwise bounds for $|w_n(y)|$, which depend solely on the analyticity of $\nu$, that were stated in Proposition \ref{prop:preliminAnal}\eqref{eq:w_nBoundsAnal1}.  It says that if  $\nu\in\mathcal{A}(\Theta)$ then for any $t>T_{\Theta},\,y>0,$ and  $n \geq 1$,
	\begin{eqnarray}\label{eq:w_nBoundsAnal}
		\labsrabs{w_n(y)}\leq  F(y,t)e^{tn},
	\end{eqnarray}
	where the function  $(y,t) \mapsto F(y,t)$ is locally uniformly bounded on $\R_+\times (T_{\H},\infty)$.
The next result shows that for $f\in\cc$, we have that $S_tf=\overline{S}_tf$ for all $t$ big enough.
  \begin{proposition} \label{prop:adjspec}
Let $\nu\in\mathcal{A}(\Theta)$ then for $|z|<e^{-T_{\Theta}}$, with $T_\Theta=-\ln\sin \Theta$, we have that,
 for any $f \in \cc$,
 \begin{equation*}
S_tf(x) = \overline{S}_t f(x) =\sum_{n=0}^{\infty}e^{-nt}\left\langle f,\nun \right\rangle_{\nu}\Pon(x),
\end{equation*}
where the series converges locally uniformly in $(t,x)$ on $(T_{\Theta},\infty)\times \R_+$. Finally, $ \overline{S}_t f\in\Lnu$ for $t>T_\Theta$.
\end{proposition}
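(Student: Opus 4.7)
The plan is to establish three things in turn: the pointwise series representation, the locally uniform convergence on $(T_\Theta,\infty)\times\R_+$, and membership of $\overline{S}_tf$ in $\Lnu$. All three will follow once the two factors $|\langle f,\nun\rangle_\nu|$ and $|\Pon(x)|$ are controlled uniformly in $n$ with respectively an exponential rate $e^{nT_\Theta}$ and a subexponential rate, using the standing hypothesis $\nu\in\mathcal{A}(\Theta)$ together with the fact that $\psi\in\Nee$ forces $\r=\phi(\infty)=\infty$.

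First I would bound the inner product. Since $f\in\cc$ has support in some compact $[a,b]\subset\R_+$ and $w_n=\nun\nu$, we may write $\langle f,\nun\rangle_\nu=\int_a^b f(y)w_n(y)\,dy$. Fix any $t_0>T_\Theta$. By the pointwise estimate \eqref{eq:w_nBoundsAnal1} of Proposition \ref{prop:preliminAnal}, together with the stated local uniform boundedness of $F$, there exists $M_{f,t_0}>0$ such that $|\langle f,\nun\rangle_\nu|\leq M_{f,t_0}\,e^{nt_0}$ for all $n\geq 0$. Second, I would control the polynomials. Since $\psi\in\Nee$ implies $\r=\infty$, Theorem \ref{thm:eigenfunctions1}\eqref{it:pol_jen1} gives that the Jensen generating function $\Jp$ has either order $\mathfrak{o}_\phi<1$, or order $1$ and type $\mathfrak{t}_\phi=1/\r=0$; in both cases, for every $\varepsilon>0$ and every compact $K\subset\R_+$, the estimate \eqref{eq:asympt_polyn1} yields $|\Pon(x)|\leq \Pon(-|x|)=\bo{n^{1/2}e^{\varepsilon n}}$ uniformly for $x\in K$.

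Next, I would combine these two bounds. For $t>T_\Theta$, choose $t_0\in(T_\Theta,t)$ and $\varepsilon>0$ so small that $\delta:=t-t_0-\varepsilon>0$. Then for $x$ in any compact $K$ and $n$ large,
\[
|e^{-nt}\langle f,\nun\rangle_\nu\Pon(x)|\leq C_{f,t_0,K}\,n^{1/2}e^{-n\delta},
\]
so the series $\sum_{n\geq 0}e^{-nt}\langle f,\nun\rangle_\nu\Pon(x)$ converges absolutely and locally uniformly on $(T_\Theta,\infty)\times\R_+$, which is the first convergence assertion. To identify the sum with $\overline{S}_tf$, the majorant $|f(y)|\,\mathbf{1}_{[a,b]}(y)\,F(y,t_0)e^{-n\delta}n^{1/2}\mathcal{E}_\phi(nx)\cdot n^{1/2}$ being summable and integrable justifies Fubini's theorem, giving
\[
\sum_{n=0}^\infty e^{-nt}\langle f,\nun\rangle_\nu\Pon(x)=\int_0^\infty f(y)\sum_{n=0}^\infty e^{-nt}\Pon(x)w_n(y)\,dy=\overline{S}_tf(x).
\]

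Finally, for $\overline{S}_tf\in\Lnu$, the bound $|\langle f,\nun\rangle_\nu|\leq M_{f,t_0}e^{nt_0}$ with $t_0<t$ shows that $\bigl(e^{-nt}\langle f,\nun\rangle_\nu\bigr)_{n\geq 0}\in\ell^2(\N)$; since $(\Pon)_{n\geq 0}$ is a Bessel sequence in $\Lnu$ by Theorem \ref{thm:eigenfunctions1}\eqref{it:compl_rb1}, the synthesis operator $\mathcal{S}$ of \eqref{eq:def_syn} maps this coefficient sequence to an element of $\Lnu$, which is precisely $S_tf$. Its pointwise evaluation agrees with the locally uniform sum already identified with $\overline{S}_tf$, so $S_tf=\overline{S}_tf\in\Lnu$. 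The only delicate point I anticipate is the justification of Fubini, which rests entirely on the subexponential growth of $\mathcal{E}_\phi(nx)$ supplied by $\r=\infty$; this is the step where the assumption $\psi\in\Nee$ (rather than merely $\psi\in\Ne$) is genuinely used.
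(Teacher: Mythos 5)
Your proof is correct and follows essentially the same route as the paper: it bounds $|w_n(y)|$ on the compact support of $f$ via Proposition \ref{prop:preliminAnal}\eqref{eq:w_nBoundsAnal1}, controls $|\Pon(x)|$ on compacts using the entire-function estimate \eqref{eq:asympt_polyn1} with $\r=\infty$ (which the paper packages as \eqref{eq:bound_pol_derivare}), combines these for absolute local-uniform convergence, applies Fubini to identify the sum with $\overline{S}_tf$, and then invokes the Bessel-sequence property of $(\Pon)_{n\geq 0}$ together with $(e^{-nt}\langle f,\nun\rangle_\nu)_{n\geq 0}\in\ell^2(\N)$ to conclude $\overline{S}_tf=S_tf\in\Lnu$. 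There is no gap.
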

\begin{proof}
We show that for any $f \in \cc$, we have, for $t>T_\Theta$, $S_t f (x)=\overline{S}_t f(x)$.
Let $\mathrm K = \supp f \subsetneq \lbrb{0,\infty}$. Then, since $\r=\infty$, from \eqref{eq:bound_pol_derivare}, \mladen{which gives that $\left|\mathcal{P}^{(p)}_{n}(x)\right|=\bo{e^{\epsilon n}},$ for all $\epsilon>0$}, and \eqref{eq:w_nBoundsAnal} we deduce that for any $t>t_0>T_\Theta$, with $t_0$ fixed, and any $\epsilon\in\lbrb{0, t-t_0}$
\begin{eqnarray}\label{eq:barSestimate}
  \nonumber \sum_{n=0}^{\infty} e^{-nt}  \int_0^{\infty} |f(y)|  |w_n(y)|dy \: |\Pon(x)| & \leq & \sup_{y\in K}C(y,t_0)\sum_{n=0}^{\infty} e^{-n(t-t_0)} \int_{\mathrm{K} } |f(y)| dy |\Pon(x)| \\
  \nonumber  &\leq& C \sum_{n=0}^{\infty} e^{-n(t-t_0)}  |\Pon(x)|
  \\ &\leq& C\sum_{n=0}^{\infty} e^{-n(t-t_0-\epsilon)}  <\infty.
\end{eqnarray}
 Then an application of Fubini's theorem  yields, that for any $f \in \cc$ and $t>T_\Theta$,
 \begin{eqnarray*}
\overline{S}_t f(x) =   \int_0^{\infty} f(y) \sum_{n=0}^{\infty} e^{-nt} \Pon(x) w_n(y)dy =     \sum_{n=0}^{\infty} e^{-nt} \Pon(x)\int_0^{\infty} f(y) w_n(y)dy = S_t f(x).
 \end{eqnarray*}
 Finally, $ \overline{S}_t f\in\Lnu,$ for $t>T_\Theta$, follows from  $\lbrb{\Pon}_{n\geq0}$ being a Bessel sequence and $\lbrb{e^{-nt}\left\langle f,\nun \right\rangle_{\nu}}\in \ell^2(\N)$ which in turn is a consequence of the first computation in \eqref{eq:barSestimate}.
\end{proof}
\mladen{From the proof of Theorem \ref{thm:dens}\eqref{it:thmdens1} we have that $P_tf=S_tf$ for the domain $\mathcal{D}^{\Ne_{\H}}_{T_{\H}}(\Lv)$ that is for all $f\in\Lv$,  $t>T_{\H}$, when $\psi\in\Nee$. However, as in the proof of Lemma \ref{lem:MellinTT11_Ana}, $\nu\in\Ac\lbrb{\H}$ and therefore from Proposition \ref{prop:adjspec} we have that $ \overline{S}_t f=S_tf,$ for $t>T_{\H}$ and $f\in \cc$. As $\cc\subseteq\Lv$ we conclude that the semi-group  is absolutely continuous with density
$P_t(x,y)=\sum_{n=0}^{\infty}e^{-nt}\Pon(x) w_n(y),$ for $t>T_{\H},\,\lbrb{x,y}\in\R^+\!\times \R^+$. Then the expression \eqref{eq:exp_derv_heat}, that is $	\frac{d^k}{dt^k}P_t^{(p,q)}(x,y) =\sum_{n=p}^{\infty}(-n)^k e^{-n t}   \: \mathcal{P}^{(p)}_{n}(x)w^{(q)}_n(y)$ follows from \eqref{eq:bound_pol_derivare} with $\r=\infty$, which gives that $\left|\mathcal{P}^{(p)}_{n}(x)\right|=\bo{e^{\epsilon n}},$ for all $\epsilon>0$, and an easy extension of \eqref{eq:w_nBoundsAnal1} which yields $|w^{(q)}_n(y)|\leq F_q(y,t)e^{tn},\,t>\Tab$, where $F_q$ has the same properties of $F$.}
\mladen{For the final part of the proof of Theorem \ref{thm:dens}\eqref{it:main_heat} assume that $\nu\in\Ac\lbrb{\frac{\pi}{2}}$.} Therefore,  Proposition \ref{prop:adjspec} gives that $S_tf = \overline{S}_t f\in\lnu$, for all $t>0$ and $f\in\cc$. We prove the following result which relates $\overline{S}$ to the semigroup $P$.
\begin{proposition}\label{prop:propKernel}
	Assume that $S_tf=\overline{S}_t f\in\Lnu$, for all $t>0$, $f\in\cc$ and that, for $t>t_0(f)$, $P_tf=S_t f$. Then $P_t f=S_t f=\overline{S}_t f$, for all $t>0$.
\end{proposition}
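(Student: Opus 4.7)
The plan is to reduce the claim to the injectivity of $P_s$ on $\Lnu$ via a commutation identity with the spectral operator and the semigroup property, and then to extract this injectivity from the completeness of the co-eigenfunctions $(\nun)_{n\geq 0}$ in $\Lnu$, which is available under the hypotheses where this proposition is invoked.

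First I would establish the commutation identity $P_s S_t f = S_{s+t} f$ for all $s,t>0$. Indeed, the hypothesis gives $S_t f = \sum_{n\geq 0} e^{-nt}\langle f,\nun\rangle_{\nu}\Pon$ convergent in $\Lnu$, and $P_s\in\Bo{\Lnu}$ is a contraction; termwise application, combined with the eigenfunction identity $P_s \Pon = e^{-ns}\Pon$ from Theorem \ref{thm:eigenfunctions1}\eqref{it:ef1}, yields $P_s S_t f = \sum_{n\geq 0} e^{-n(s+t)}\langle f,\nun\rangle_{\nu}\Pon = S_{s+t} f$ in $\Lnu$. Fix now $t>0$ and pick any $s>\max(0,t_0(f)-t)$, so that $s+t>t_0(f)$. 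By the hypothesis and the semigroup law $P_{s+t}=P_s P_t$, one gets $P_s P_t f = P_{s+t}f = S_{s+t}f = P_s S_t f$, whence $g_t := P_t f - S_t f \in \Lnu$ lies in $\ker P_s$.

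Next I would pair $g_t$ with each $\nun$. Since this proposition is applied in Theorem \ref{thm:dens}\eqref{it:main_heat} under $\nu\in\An(\pi/2)$, we are placed within the class $\psi\in\Ni$, so Theorem \ref{thm:existence_coeigenfunctions1}\eqref{it:coe-ci1} supplies $\nun\in\Lnu$ together with the co-eigenfunction identity $P_s^*\nun=e^{-ns}\nun$. Pairing then gives $0=\langle P_s g_t,\nun\rangle_{\nu}=e^{-ns}\langle g_t,\nun\rangle_{\nu}$, so $\langle g_t,\nun\rangle_{\nu}=0$ for all $n\geq 0$, i.e., $g_t\in\Spc{\nun}^{\perp}$.

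The hard part is upgrading $g_t\in\Spc{\nun}^{\perp}$ to $g_t=0$, which amounts to the completeness $\Spc{\nun}=\Lnu$. Within the class $\psi\in\Ne_P$---the setting of Theorem \ref{thm:dens}\eqref{it:main_heat} in which this proposition is explicitly invoked, where moreover $\nu\in\An(\pi/2)$ is guaranteed by Theorem \ref{thm:smoothness_nu1}\eqref{it:im_analytical1}---this completeness is exactly Theorem \ref{lem:TvMDecaynu1}\eqref{it:compl}. Plugging it in forces $g_t=0$ for every $t>0$, and Proposition \ref{prop:adjspec} then delivers $P_t f=S_t f=\overline{S}_t f$ on the whole of $(0,\infty)$.
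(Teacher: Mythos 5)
The proposal diverges from the paper's own proof at the crucial final step and, as written, does not establish the proposition at the level of generality required.

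Your reduction in the first two paragraphs is essentially sound and parallels the paper's opening moves: you show $P_s S_t f = S_{s+t}f$ (termwise, using the eigenfunction identity $P_s\Pon=e^{-ns}\Pon$ and boundedness of $P_s$ — the paper does this more carefully via the truncations $S^M_t$ and $\tilde S^M_t$, but the conclusion is the same), and you then use the semigroup law together with the hypothesis at large times to place $g_t=P_tf-S_tf$ in $\ker P_s$. From there, the two arguments part ways. The paper shows $g_t\in\mathcal{D}(\mathbf{G}_\nu)$ — by taking $f\in\cci$, showing both $P_af$ and $S_af$ lie in the domain of the generator (differentiating the spectral series, Bessel property of $(\Pon)$), invoking a semigroup-theoretic result to kill $g_t$, and then approximating $f\in\cc$ by $\cci$ functions. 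You instead pair $g_t$ with the co-eigenfunctions, use $P_s^*\nun=e^{-ns}\nun$ to get $g_t\perp\Spc{\nun}$, and appeal to completeness $\Spc{\nun}=\Lnu$.

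The gap is that this last step is only licensed when the completeness $\Spc{\nun}=\Lnu$ is available, which the paper (Theorem~\ref{lem:TvMDecaynu1}\eqref{it:compl}) establishes only for $\psi\in\Ne_P\cup\Neab^{d_\phi}$. You explicitly restrict to $\psi\in\Ne_P$, but Proposition~\ref{prop:propKernel} is formulated purely in terms of the two displayed hypotheses and is invoked in the proof of Theorem~\ref{thm:dens}\eqref{it:main_heat} for the full class $\psi\in\Nee\cup\Ne_R$ with $\nu\in\An(\pi/2)$ — in particular, for $\sigma^2=0$ cases outside $\Ne_P$ where $\nu\in\An(\pi/2)$ is obtained via Lemma~\ref{lem:LagExp} (the condition $\lim_n n^{-1/2}\ln|\sum_k(-1)^k\binom{n}{k}W_\phi(k+1)/\Gamma(k+1)|=-\infty$), and where completeness of $(\nun)$ has not been proved. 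Your argument would leave these cases open, so it cannot replace the paper's proof. (A secondary, smaller issue: you write that being "under $\nu\in\An(\pi/2)$" places us in $\Ni$; that inference actually comes from $\psi\in\Nee\subset\Nii\subset\Ni$, not from the analyticity of $\nu$ per se.) If one wanted to salvage your approach, one would have to prove completeness of $(\nun)_{n\ge0}$ in $\Lnu$ under the bare hypotheses of the proposition, which is precisely what the paper avoids by routing through the generator-domain argument.
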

\begin{proof}
	Fix $a>0$, write, for any $M\in\N$, $S^M_tf=\sum_{n=0}^M e^{-n t}\langlerangle{f,\nun}_\nu \Pon$ and $\tilde{S}^M_tf=S_tf-S^M_tf$. Let now $f\in\cc$ and note that with any $t>0,\,M\in\N$,
	\[||P_tS_a f-S_{a+t}f||_\nu\leq ||P_tS^M_a f-S^M_{a+t}f||_\nu+||\tilde{S}^M_{a+t}f||_\nu+||P_t\tilde{S}^M_a f||_\nu=||\tilde{S}^M_{a+t}f||_\nu+||P_t\tilde{S}^M_a f||_\nu\]
	since
	\[P_tS^M_a f=P_t\sum_{n=0}^{M}e^{-n a}\langlerangle{f,\nun}_\nu \Pon=\sum_{n=0}^{M}e^{-n (a+t)}\langlerangle{f,\nun}_\nu \Pon=S^M_{a+t}f.\]
	Now, as $P_t \in \Bop{\Lg}{\Lnu}$, we get that $||P_tS_a f-S_{a+t}f||_\nu\leq ||\tilde{S}^M_{a+t}f||_\nu+||\tilde{S}^M_a f||_\nu$. Letting $M\to\infty$ and using the fact that $S_af, S_{a+t}f\in\Lnu$ we conclude that $P_tS_a f=S_{a+t}f$, for all $t>0$. Therefore, from the assumptions, for $t+a>t_0(f)$ we have that $S_{a+t}f=P_{a+t}f$ and thus $P_{t}P_af=P_tS_a f$ and hence $P_a f-S_a f\in \ker{P_t}$. However, if we conclude that $P_a f- S_af\in \mathcal{D}(\mathbf{G}_{\nu})$, the domain of the infinitesimal generator (in $\lnu$) $\mathbf{G}_{\nu}$ of $P$, then a classical result of strongly continuous semigroups yields $P_af=S_af$. Choose furthermore $f\in\cci$. Then, since the gL processes are Feller processes, see Definition \ref{def:gL}, we know that $f\in \mathcal{D}(\mathbf{G})\subset \mladen{\cob}$, where $\mathbf{G}$ is the generator of the Feller semigroup of the gL process, \mladen{as a semigroup in $\cob$.} However, from \eqref{eq:infgen} and $f\in\cci$ it can be easily seen that $||\mathbf{G} f||_\infty<\infty$ and hence $f\in \mathcal{D}\lbrb{\mathbf{G}_{\nu}}\subset \Lnu$, where $\mathcal{D}\lbrb{\mathbf{G}_{\nu}}$ is the domain of the extension of $\mathbf{G}$ to $\mathbf{G}_{\nu}$. Therefore, $P_a f\in \mathcal{D}(\mathbf{G}_{\nu}) \subset \Lnu$. It remains to show that $S_a f\in \mathcal{D}(\mathbf{G}_{\nu})$. Clearly, \[\lim_{t\to 0}\frac{P_tS_af-S_af}t=\lim_{t\to 0}\frac{S_{a+t}f-S_af}t=\lim_{t\to 0}\sum_{n=0}^{\infty}e^{-na}\frac{\lbrb{1-e^{-nt}}}t\langlerangle{f,\nun}_\nu \Pon\]
	and we can take the limit under the sum to verify that $S_a f\in \mathcal{D}(\mathbf{G}_{\nu})\subset \Lnu$ thanks to $\lbrb{\Pon}_{n\geq0}$ is a Bessel sequence and $\lbrb{e^{-na}\frac{\lbrb{1-e^{-nt}}}t\langlerangle{f,\nun}_\nu}_{n\geq0}\in\ell^2(\N)$  since $S_a f\in\lnu$. Thus, $S_af=P_af$ for $f\in \cci$. For, $f\in\cc$ we can approximate.
\end{proof}
\mladen{We can now conclude the claim of Theorem \ref{thm:dens}\eqref{it:main_heat}. Indeed, from Proposition \ref{prop:adjspec} $\overline{S}_tf=S_t f,\,t>0,$ for any $f\in\cc$, since $\nu\in\Ac\lbrb{\frac{\pi}{2}}$. However, since when $\psi\in\Nee$ and $\lbrb{\psi,t,f}\in\mathcal{D}_{T}$, for $t>T_{\H}$, we have that $P_t f=S_t f$. Thus, Proposition \ref{prop:propKernel}  implies that $P_t f=S_t f=\overline{S}_t f$, for all $t>0$, and the kernel expansion follows immediately. Besides, when $\psi\in\Ne_P$ then $\nu\in\Ac\lbrb{\frac{\pi}{2}}$, see Lemma \ref{lem:MellinTT11_Ana}, Lemma \ref{lem:LagExp} gives the second such condition in item \eqref{it:main_heat}.}
  \subsubsection{Laguerre type expansions of the invariant density}
  \begin{lemma}\label{lem:LagExp}
  Let $\psi \in \Ne \setminus \Ne_P$. Then, writing $\overline{R}_\phi = -\limsup_{n \to \infty} \frac{\ln\left|\sum_{k=0}^{n} (-1)^k{ n\choose k} \frac{W_\phi(k+1)}{\Gamma(k+1)}\left(\frac{W_\phi}{\Gamma}\right)\right|}{2\sqrt{n}}$,  we have that $\nu$ is holomorphic in the parabolic domain $\C_{\mathcal{P}_{\overline{R}_\phi}}=\{ z=a+ib \in \C;\: b^2 < 4\overline{R}^2_\phi(a+\overline{R}^2_\phi)\}$. In particular, if $\overline{R}_\phi =\infty$, then $\nu \in \mathcal{A}\left(\frac{\pi}{2}\right)$.
  \end{lemma}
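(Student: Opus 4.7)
The plan is to build a Fourier--Laguerre series expansion of $\nu$ and to read off its analyticity domain from a Perron-type asymptotic for the Laguerre polynomials $\mathcal{L}_n$. Since $\psi\in\Ne\setminus\Ne_P$ imposes $\sigma^2=0$, the density $\nu$ has either compact support $[0,\r]$ (when $\r<\infty$) or, when $\r=\infty$, super-exponential decay: indeed $\sigma^2=0$ forces $\varphi(y)/y\to\infty$ in Theorem \ref{thm:nuLargeTime1}, so $\int_m^x\varphi(y)/y\,dy$ dominates any linear term and $\nu$ decays faster than any exponential. In either case the function $g(x):=e^x\nu(x)$ lies in $\lga$, and the classical Fourier--Laguerre expansion in the orthonormal basis $(\mathcal{L}_n)_{n\geq 0}$ gives, in $\lga$,
\begin{equation*}
\nu(x)e^x = \sum_{n\geq 0}c_n\,\mathcal{L}_n(x),\qquad c_n = \int_0^\infty\nu(x)\mathcal{L}_n(x)dx=\sum_{k=0}^n(-1)^k{n\choose k}\frac{W_\phi(k+1)}{\Gamma(k+1)},
\end{equation*}
where the closed form for $c_n$ comes from the polynomial expression \eqref{eq:def_LP1} combined with the moment identity $\E[V_\psi^k]=W_\phi(k+1)$ in \eqref{eq:moment_V_psi}.

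Next I would invoke Perron's classical asymptotic for the Laguerre polynomials, valid uniformly on compact subsets of $\C\setminus[0,\infty)$,
\begin{equation*}
\mathcal{L}_n(z)=\pi^{-1/2}e^{z/2}(-z)^{-1/4}n^{-1/4}e^{2\sqrt{-nz}}\bigl(1+\bo{n^{-1/2}}\bigr),
\end{equation*}
with the principal branch of $\sqrt{-z}$. A short calculation based on the identity $\Re(\sqrt{-z})^2=(|z|-\Re z)/2$ identifies the sub-level set $\{z\in\C:\Re(\sqrt{-z})<R\}$ with the parabolic region $\C_{\mathcal{P}_R}=\{a+ib:b^2<4R^2(a+R^2)\}$. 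Consequently, on any compact $K\subset\C_{\mathcal{P}_R}$, setting $R'=\sup_{z\in K}\Re(\sqrt{-z})<R$, one obtains $|\mathcal{L}_n(z)e^{-z}|=\bo{n^{-1/4}e^{2R'\sqrt{n}}}$ uniformly in $z\in K$. Meanwhile the very definition of $\overline{R}_\phi$ provides, for every $\epsilon>0$, $|c_n|\leq e^{-2(\overline{R}_\phi-\epsilon)\sqrt{n}}$ for all $n$ sufficiently large.

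Combining the two bounds, for any $R<\overline{R}_\phi$, compact $K\subset\C_{\mathcal{P}_R}$ and $\epsilon$ small enough that $R'+\epsilon<\overline{R}_\phi$, we get $|c_n\mathcal{L}_n(z)e^{-z}|=\bo{n^{-1/4}e^{-2(\overline{R}_\phi-\epsilon-R')\sqrt{n}}}$ uniformly on $K$, so the series $\sum_n c_n\mathcal{L}_n(z)e^{-z}$ converges absolutely and locally uniformly on $\C_{\mathcal{P}_{\overline{R}_\phi}}$. Each partial sum being entire, Weierstrass' theorem ensures that the limit is holomorphic on $\C_{\mathcal{P}_{\overline{R}_\phi}}$. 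Restricted to $(0,\infty)$, this uniform-on-compacts limit must coincide with the $\lga$-limit $\nu$, yielding the claimed analytic extension. The final assertion follows from the observation that $\bigcup_{R>0}\C_{\mathcal{P}_R}$ exhausts the open right half plane, so $\overline{R}_\phi=\infty$ gives $\nu\in\An(\pi/2)$. The main technical care lies in the uniform-on-compacts form of Perron's formula near the positive real axis---the boundary of the singularity set $[0,\infty)$ of the analytic continuation of $\mathcal{L}_n$---but this is a standard result available for instance in Szeg\H{o}'s monograph or via a Hankel-type contour representation of $\mathcal{L}_n$ estimated by a saddle-point argument.
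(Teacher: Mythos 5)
Your strategy is structurally the same as the paper's: expand $\nu/\varepsilon$ in the Laguerre basis of $\Lg$, identify the coefficients $c_n=\sum_{k=0}^n(-1)^k\binom{n}{k}W_\phi(k+1)/\Gamma(k+1)$ via the moment identity \eqref{eq:moment_V_psi}, and read an analyticity domain off the sub-exponential decay of $c_n$. Your membership argument $\nu/\varepsilon\in\Lg$, the closed form of $c_n$, and the algebraic identification $\lbcurlyrbcurly{z:\Re\sqrt{-z}<R}=\C_{\mathcal P_R}$ (via $\lbrb{\Re\sqrt{-z}}^2=\lbrb{|z|-\Re z}/2$) all check out. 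The difference is the final step: the paper invokes Pollard's theorem \cite{Pollard_Laguerre}, which states precisely that a Laguerre series with this coefficient decay represents a holomorphic function on $\C_{\mathcal P_{\overline R_\phi}}$, whereas you attempt to re-derive this from Perron's asymptotic.

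That re-derivation has the gap you flag but do not close, and it is not a detail. Perron's formula
\[
\mathcal L_n(z)=\frac{n^{-1/4}}{2\sqrt\pi}\,e^{z/2}(-z)^{-1/4}e^{2\sqrt{-nz}}\bigl(1+\bo{n^{-1/2}}\bigr)
\]
holds uniformly only on compact subsets of $\C\setminus[0,\infty)$; the error term degenerates as $z$ approaches the positive half-line. Your uniform bound $|\mathcal L_n(z)e^{-z}|=\bo{n^{-1/4}e^{2R'\sqrt n}}$ therefore yields locally uniform convergence of $\sum_n c_n\mathcal L_n(z)e^{-z}$ on $\C_{\mathcal P_R}\setminus[0,\infty)$, and hence holomorphy there — but the half-line $[0,\infty)$ sits in the interior of the parabolic domain, and the whole point of the lemma is that $\nu$ extends \emph{across} it. On $(0,\infty)$ itself one has the Hilb/Muckenhoupt-type polynomial bound on $\mathcal L_n(x)e^{-x/2}$, which gives uniform convergence on compact subsets of the positive reals, but splicing these two regimes into a single holomorphic function in a complex neighborhood of $[0,\infty)$ requires either a uniform transitional estimate or a Morera-type gluing argument — precisely the content of Pollard's theorem, not a consequence of the Perron formula as stated in Szeg\H o. As written, your argument proves analyticity of $\nu$ on $\C_{\mathcal P_{\overline R_\phi}}\setminus[0,\infty)$ and leaves the crossing of the real axis to a reference that does not supply it; the clean resolution is the one the paper uses.
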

  \begin{proof}
  \mladen{We first prove that when $\sigma^2=0$ then $\frac{\nu^2}{\varepsilon} \in \Ltwo$ and hence $\frac{\nu}{\varepsilon} \in \Lg$. The fact that $\frac{\nu^2}{\varepsilon}$  is integrable on $\lbrb{0,1}$ is clear from Theorem \ref{lem:MellinTT11} applied with $n=k=0$ and $a\in\lbrb{0,1}$. For the interval $\lbrb{1,\infty}$ we use Theorem \ref{thm:nuLargeTime1}\eqref{eqn:nu0Asymp_1}, that is $\nu(x)\simi \frac{C_{\psi} }{\sqrt{2\pi}}\sqrt{\varphi'(x)}e^{-\int_{m}^x \varphi(y)\frac{dy}{y}}$. Since $\sigma^2=0$ then Proposition \ref{propAsymp1}\eqref{it:asyphid} gives that $\phi(x)\stackrel{\infty}{=}\so{x},\,\phi'(x)=\so{1}$ and thus $\limi{x}x^{-1}\varphi(x)=\infty$. Then, for $x>1$ with some constant $C>0$
  \begin{eqnarray*}
  	\nu^2(x)\leq C\varphi'(x)e^{-2\int_{m}^x \varphi(y)\frac{dy}{y}}\leq C\varphi'(x)e^{-\int_{m}^x \varphi(y)\frac{dy}{y}}\varepsilon(x).
  \end{eqnarray*}
  Hence the integrability on $\lbrb{1,\infty}$ of $\frac{\nu^2}{\varepsilon}$ follows easily.}
  Thus, in $\Lg$,  	
  \begin{equation*}
  \frac{\nu(x)}{\varepsilon(x)}=\sum_{n=0}^{\infty} \spnu{\frac{\nu}{\varepsilon},\mathcal{L}_n}{\varepsilon}  \mathcal{L}_n(x).
  \end{equation*}
  Next, observe that, for any $n\geq 0$,
  \begin{eqnarray*}
  \spnu{\frac{\nu}{\varepsilon},\mathcal{L}_n}{\varepsilon} &=&\int_{0}^{\infty}\nu(x)\mathcal{L}_n(x)dx =  \Vp \mathcal{L}_n(1)=\sum_{k=0}^{n} (-1)^k{ n\choose k} \frac{W_\phi(k+1)}{\Gamma(k+1)},
  \end{eqnarray*}
  where we have used the linearity of the Markov operator $\Vp$ and the expression \eqref{eq:moment_V_psi} of its moments. The statement regarding the analyticity  of $\nu$ follows from \cite{Pollard_Laguerre}  and the last claim follows readily.
  \end{proof}
\subsection{Expansion of the adjoint semigroup: Proof of Theorem \ref{thm:adj}} \label{sec:pr_exp_dual}
Let $\psi \in \Nee$, from \eqref{eq:exp_derv_heat} and the duality property, we have, for any $t> T_\Theta=-\ln\sin\lbrb{\Theta}$, that
\begin{equation*}
P^*_t(y,x)
  = \sum_{n=0}^{\infty}e^{-nt} \nun(y) \Pon(x)\nu(x)
\end{equation*}
where the convergence is locally uniform on $\R_+\times\R_+$.
Next, note that for any $g \in \Lnu$, an application of the Cauchy-Schwarz inequality yields, for any $n \in \N$,
\begin{equation} \label{eq:bgp}
\left|\left\langle |g|,|\Pon| \right\rangle_{\nu}  \right| \leq||g||_{\nu} ||\Pon||_{\nu} \leq ||g||_{\nu},
\end{equation}
where $||\Pon||_{\nu} \leq 1$ from \eqref{eq:eigen_bound_nu}. This combined with bound \eqref{eq:w_nBoundsAnal} allows an application of Fubini's theorem to get, for any $t>T_{\Theta}$, that
\begin{equation} \label{eq:ps}
P^*_t g(y)
  = \int_0^{\infty} g(x) \sum_{n=0}^{\infty}e^{-nt}\nun(y) \Pon(x)\nu(x) dx = \sum_{n=0}^{\infty}\left\langle g,\Pon \right\rangle_{\nu} e^{-nt}\nun(y),
\end{equation}
where the series converges uniformly on $\R_+$ and completes the proof of item \eqref{it:adj1}. For item \eqref{it:adj2}, as $\psi \in \Ne_P \subset \Nee$ with $0< \mru =\frac{m +\PPP(0^+)}{\sigma^2}<\infty$,  from Theorem \ref{lem:TvMDecaynu1}\eqref{it:Nun1_1}, we have  that $(\sqrt{\mathfrak{c}_{n}(\mru)} \nun)_{n\geq0}$ is a Bessel sequence, where $\mathfrak{c}_{n}(\mru)=\frac{\Gamma(n+1) \Gamma(\mru+1)}{\Gamma(n+\mru+1)}\stackrel{\infty}{=}\bo{n^{-\mru}}$. \mladen{Hence, with \eqref{eq:bgp},  we have that $\lbrb{\sqrt{\mathfrak{c}_{n}(\mru)}^{-1} e^{-nt}\left\langle g,\Pon \right\rangle_{\nu}}_{n\geq 0}\in\ell^2\lbrb{\N}$ for all $g \in \Lnu$. Therefore,  from the synthesis operator $\mathcal{S} : (c_n)_{n\geq 0} \mapsto \mathcal{S}((c_n))=\sum_{n=0}^{\infty}  c_n \lbrb{\sqrt{\mathfrak{c}_{n}(\mru)} \nun}$  associated to the Bessel sequence $(\sqrt{\mathfrak{c}_{n}(\mru)} \nun)_{n\geq0}$, see \eqref{eq:def_syn},  we get that the expansion \eqref{eq:ps} converges in $\Lnu$ and coincides with $P_t^*g$, for all $t>0$.}

  \subsection{Proof of of Theorem \ref{thm:dense}: Rate of convergence to equilibrium.}
 Let  first $\psi \in \Ne$. Then, note that  for any $f \in \ran{\Ip}$, writing  $f = \Ip \mathfrak{f}$, $\mathfrak{f}\in \Lg$, we have from the factorization \eqref{eq:elsn}, that for any $x>0$,
 \begin{equation*}
 \nu f=\nu\Ip \mathfrak{f}=\mathcal{V}_{\psi} \Ip \mathfrak{f}(1)=\mathcal{E}\mathfrak{f}(1)=\varepsilon  \mathfrak{f} = \Ip\varepsilon  \mathfrak{f}(x),
 \end{equation*}
 where the last identity follows from the facts that $\varepsilon  \mathfrak{f}$ is a constant and $\Ip$ is  a Markov operator.
   Hence, assuming in addition that $f$ is not a constant,  we have, for any $t>0$,
\begin{eqnarray}
\nonumber \left|\left|P_t f -\nu f   \right|\right|^2_{\nu} &=& \left|\left|\Ip \left(Q_t  \mathfrak{f} -\varepsilon  \mathfrak{f} \right)  \right|\right|^2_{\nu} \leq |||\Ip|||^2\:
 || Q_t\mathfrak{f} -\varepsilon \mathfrak{f}||^2_{\varepsilon} \\
\nonumber &\leq&  |||\Ip|||^2 \frac{|| \mathfrak{f} -\varepsilon \mathfrak{f}||^2_{\varepsilon}}{\left|\left| f -\nu f   \right|\right|^2_{\nu}}  e^{-2t} \left|\left| f -\nu f   \right|\right|^2_{\nu},
\end{eqnarray}
 where we have used successively the intertwining relationship \eqref{eq:int_gen}, the fact that $\Ip \in \Bop{\Lg}{\Lnu}$ \mladen{is a contraction, see \eqref{eq:bound_Ip},} and the exponential decay of the Laguerre semigroup, see \cite[Chap.~4]{Bakry_Book}. Moreover since $\left|\left| f -\nu f   \right|\right|^2_{\nu} \leq |||\Ip|||^2 || \mathfrak{f} -\varepsilon \mathfrak{f}||^2_{\varepsilon}$, we get the inequality $|||\Ip|||^2 \frac{|| \mathfrak{f} -\varepsilon \mathfrak{f}||^2_{\varepsilon}}{\left|\left| f -\nu f   \right|\right|^2_{\nu}} \geq 1$. \mladen{This ends the proof of item \eqref{it:Inter}. We proceed with item \eqref{it:Spec}.}  Next,  for any $(\psi,f,t) \in \mathcal{D}_{T}$ such that there exists, for any $n\geq0$, $V_n \in {\rm{L}}$,
$|\left\langle f,\nun\right\rangle_{\nu}| \leq ||f||_{\rm{L}} \: ||V_n||_{\rm{L}} \textrm{ with }||V_n||_{\rm{L}} =\bo{n^a e^{nT}}$, we have, for any $t>T>0$,  writing $\bar{c}_{n,t}(f) = \langle P_t f,\Pon \rangle_{\nu}$ for $n\geq 1$ and $\bar{c}_{0,t}(f)=0$, and $k=[2a]$,
\begin{eqnarray}
\nonumber \left|\left|P_tf -\nu f   \right|\right|^2_{\nu} &=& \left|\left|\sum_{n=1}^{\infty}\left\langle P_t f,\nun\right\rangle_{\nu}\Pon(x)  \right|\right|^2_{\nu}
= \left|\left|\mathcal{S}(\bar{c}_{n,t}(f))\right|\right|^2_{\nu} \\
\nonumber&\leq &\sum_{n=1}^{\infty}|\left\langle P_t f,\nun\right\rangle_{\nu}|^2 =\sum_{n=1}^{\infty} e^{-2n t}|\left\langle f,\nun\right\rangle_{\nu}|^2 = \sum_{n=1}^{\infty} e^{-2nt}|\left\langle f-\nu f,\nun\right\rangle_{\nu}|^2 \\
\nonumber &\leq& || f -\nu f||^2_{{\rm{L}}}\sum_{n=1}^{\infty} e^{-2nt}|| V_n||^2_{{\rm{L}}}
\nonumber\leq C_L  || f -\nu f||^2_{{\rm{L}}}\sum_{n=1}^{\infty} n^{k}e^{-2n(t-T)}\\
\nonumber&\leq & 2^{\frac{k}2}C_L\sqrt{\lbrb{\frac{1}{e^{2\lbrb{t-T}}-1}}^{(k)}} || f -\nu f||^2_{{\rm{L}}},
\end{eqnarray}
where $C_L>0$ and we used the synthesis operator associated to the Bessel sequence $\Pns$, see \eqref{eq:def_syn}. When $\psi\in\Ne_R$ note from \eqref{eq:bound_ref} that $||V_n||_{\rm{L}} \leq C_\nu e^{n\Tab}$, $k=0$ and we conclude the proof of Theorem \ref{thm:dense}\eqref{it:Spec}. The first inequality of Theorem \ref{thm:dense}\eqref{it:Pert} follows similarly from \eqref{eq:bound_ref_P} with $C_L=C_{\nu,\epsilon},$ for any $t>\epsilon>0$ and any $\epsilon>0$.
 Finally, let $\psi \in \Ne_P$ with $\PPP(0^+)<\infty$ and from, Lemma  \ref{lem:facde}, recall that  $\mru=\frac{m+\PP(0^+)}{\sigma^2}>\dpe^+= \dpe \mathbb{I}_{\{-d_{\phi}>0\}}$, where $\dpe= -d_{\phi}-\epsilon>0,$ for some $\epsilon>0$. Then, from Lemma \ref{lem:inter_ref} and when $-d_{\phi}>0$, Lemma  \ref{lem:facde}, we have that both  sequences $\left(\frac{\Pon}{\sqrt{\mathfrak{c}_n(\dpe^+)}}\right)_{n\geq0}$ and  $(\sqrt{\mathfrak{c}_{n}(\mru)} \nun)_{n\geq0}$, where $\mathfrak{c}_{n}(m)=\frac{\Gamma(n+1) \Gamma(m+1)}{\Gamma(n+m+1)}$, are Bessel sequences in $\Lnu$ with bound $1$. Next, observe
 that since $\frac{e^{-2(n-1)t}\mathfrak{c}_{n}(\dpe^+)\mathfrak{c}_{1}(\mru)}{\mathfrak{c}_{1}(\dpe^+)\mathfrak{c}_{n}(\mru)}=\prod_{j=0}^{n-2}e^{-2t}\lbrb{\frac{n+\mru-j}{n+\dpe-j}}$
\begin{equation} \label{eq:bm}
\sup_{n\geq 1}\frac{e^{-2(n-1)t}\mathfrak{c}_{n}(\dpe^+)\mathfrak{c}_{1}(\mru)}{\mathfrak{c}_{1}(\dpe^+)\mathfrak{c}_{n}(\mru)}\leq 1 \iff e^{-2t}\lbrb{\frac{\mru+2}{\dpe+2}}\leq 1,\end{equation}
which holds if and only if $t>T_{\mru}=\frac12\ln\lbrb{\frac{\mru+2}{\dpe+2}}$.  Hence, for any $f \in \lnu$ and  $t>T_{\mru}=\frac{1}{2}\ln\lbrb{\frac{\mru+2}{\dpe+2}}$,   we have
\begin{eqnarray}
\nonumber \left|\left|P_tf -\nu f   \right|\right|^2_{\nu} &=& \left|\left|S_tf -\nu f  \right|\right|^2_{\nu}
= \left|\left|\mathcal{S}(\bar{c}_{n,t}(f))\right|\right|^2_{\nu}
\leq \sum_{n=1}^{\infty}\frac{\mathfrak{c}_{n}(\dpe^+)}{\mathfrak{c}_{n}(\mru)}\left|\left\langle P_t f,\sqrt{\mathfrak{c}_{n}(\mru)}\nun \right\rangle_{\nu}\right|^2 \\
\nonumber &=&e^{-2t}\sum_{n=1}^{\infty} \frac{e^{-2(n-1)t}\mathfrak{c}_{n}(\dpe^+)}{\mathfrak{c}_{n}(\mru)}\left|\left\langle f,\sqrt{\mathfrak{c}_{n}(\mru)}\nun\right\rangle_{\nu}\right|^2
 \\ \nonumber &=& \frac{ e^{-2t}\mathfrak{c}_{1}(\dpe^+)}{\mathfrak{c}_{1}(\mru)}\sum_{n=1}^{\infty} \frac{e^{-2(n-1)t}\mathfrak{c}_{n}(\dpe^+)\mathfrak{c}_{1}(\mru)}{\mathfrak{c}_{1}(\dpe^+)\mathfrak{c}_{n}(\mru)}\left|\left\langle f-\nu f,\sqrt{\mathfrak{c}_{n}(\mru)}\nun \right\rangle_{\nu}\right|^2 \\
 \nonumber &\leq &\frac{\mru +1}{\dpe^+ +1} e^{-2t}\sum_{n=1}^{\infty} \left|\left\langle f-\nu f,\sqrt{\mathfrak{c}_{n}(\mru)}\nun \right\rangle_{\nu}\right|^2 \\
  \nonumber&\leq&  \frac{\mru +1}{\dpe^+ +1} e^{-2t} || f -\nu f||^2_{\nu}
\end{eqnarray}
where we used, for the third equality, the biorthogonality of the sequences $\Pns$ and $(\nun)_{n\geq0}$ and the fact that $\mathcal{P}_0\equiv1$, the equivalence \eqref{eq:bm} for the second inequality and the synthesis operator associated to the Bessel sequence $(\sqrt{\mathfrak{c}_{n}(\mru)}\nun)_{n\geq0}$ to obtain the last bound. This proves  \eqref{eq:hyper}  for $t> T_{\mru}$. Finally, since for any $t\leq T_{\mru}$,  $\frac{\mru +1}{\dpe^+ +1} e^{-2t}\geq \frac{\dpe^+ +2}{\dpe^+ +1}\frac{\mru +1}{\mru +2}\geq 1$ as $\mru>\dpe^+$,  invoking that $P_t$ is contractive,  we complete the proof of this statement  and hence of Theorem \ref{thm:dense}.
\newpage

\bibliographystyle{plain}

\end{document}